\DeclarePairedDelimiter\floor{\lfloor}{\rfloor}
\long\def\comment#1{}
\renewcommand\vec[1]{\ensuremath\boldsymbol{#1}}
\newcommand{\Ocal}{\ensuremath{\mathcal{O}}}
\newcommand{\Ecal}{\ensuremath{\mathcal{E}}}
\newcommand{\Gcal}{\ensuremath{\mathcal{G}}}
\newcommand{\Scal}{\ensuremath{\mathcal{S}}}
\newcommand{\Pcal}{\ensuremath{\mathcal{P}}}
\newcommand{\myeta}{\ensuremath{\boldsymbol{\eta}}}
\newcommand{\rVMS}{\ensuremath{\operatorname{\rho}}}
\newcommand{\rgeneric}{\ensuremath{\operatorname{R}}}
\newcommand{\setgen}{\ensuremath{\operatorname{\Xi_{1}}}}
\newcommand{\extra}{\ensuremath{\bar{l}}}
\newcommand{\nonsingset}{\ensuremath{\mathcal{M}}}
\newcommand{\singset}{\ensuremath{\mathcal{L}}}
\newcommand{\singmat}{\ensuremath{\mathcal{M}}}
\newcommand{\lev}{\ensuremath{\ell}}
\newcommand\mathcircled[1]{%
  \mathpalette\@mathcircled{#1}%
}
\newcommand\@mathcircled[2]{%
  \tikz[baseline=(math.base)] \node[draw,circle,inner sep=1pt] (math) {$\m@th#1#2$};%
}
\theoremstyle{plain}
\newtheorem{theorem}{Theorem}
\numberwithin{theorem}{section}
\newtheorem{proposition}{Proposition}
\numberwithin{proposition}{section}
\newtheorem{lemma}{Lemma}
\numberwithin{lemma}{section}
\newtheorem{definition}{Definition}
\numberwithin{definition}{section}
\numberwithin{condition}{section}
\numberwithin{problem}{section}
\numberwithin{corollary}{section}
\numberwithin{assumption}{section}
\newtheorem{example}{Example}
\numberwithin{example}{section}
\numberwithin{conjecture}{section}
\theoremstyle{definition}
\numberwithin{remark}{section}
\renewenvironment{abstract}
 {\small
  \begin{center}
  \bfseries \abstractname\vspace{-.5em}\vspace{0pt}
  \end{center}
  \list{}{%
    \setlength{\leftmargin}{15mm}% <---------- CHANGE HERE
    \setlength{\rightmargin}{\leftmargin}%
  }%
  \item\relax}
 {\endlist}
\begin{document}
\small \normalsize
\begin{center}

\textbf{\Large Singularity structures and impacts on parameter \\
\vspace{3mm}
estimation in finite mixtures of distributions}

\vspace{3 mm}

\large{Nhat Ho and XuanLong Nguyen}

\vspace{3 mm}

\large{Department of Statistics \\
University of Michigan}

\end{center}

%%% Abstract
%%%%%%%%%%%%%%%%%%%%%%%%%%%%%%%%%%%%%%%%%%%%%
\begin{abstract}
Singularities of a statistical model are the elements of the model's parameter space 
which make the corresponding Fisher information matrix degenerate.
These are the points for which estimation techniques
such as the maximum likelihood estimator and standard Bayesian procedures
do not admit the root-$n$ parametric rate of convergence. 
We propose a general framework for the identification of singularity structures 
of the parameter space of finite mixtures, and study the impacts of the 
singularity structures on minimax lower bounds and rates of convergence for the 
maximum likelihood estimator over a compact parameter space. 
Our study makes explicit the deep links between model singularities, parameter
estimation convergence rates and minimax lower bounds, and the algebraic geometry 
of the parameter space for mixtures of continuous distributions. 
The theory is applied to establish concrete convergence rates of parameter 
estimation for finite mixture of skew-normal distributions. This
rich and increasingly popular mixture model is shown to exhibit a 
remarkably complex range of asymptotic behaviors which have not
been hitherto reported in the literature.
\footnote{This research is supported in part by grants
NSF CAREER DMS-1351362 and NSF CNS-1409303 to XN.
The authors would like to acknowledge Michael I. Jordan, Antonio Lijoi, Judith Rousseau, Ya'acov Ritov, Martin J. Wainwright, Larry Wasserman, and Bin Yu for valuable 
discussions related to this work.

AMS 2000 subject classification: Primary 62F15, 62G05; secondary 62G20.

Keywords and phrases: Fisher singularities, system of polynomial 
equations, semialgebraic set, mixture model, 
non-linear partial differential equation, minimax lower bound, 
maximum likelihood estimation, convergence rates, Wasserstein distances.}
\end{abstract}

\section{Introduction}
In the standard asymptotic theory of parametric estimation,
a customary regularity assumption is the non-singularity of 
the Fisher information matrix defined by the statistical model 
(see, for example,~\cite{Lehmann-Casella} (pg. 124); or~\cite{vanderVaart-98}, Sec. 5.5). 
This condition leads to the cherished root-$n$ consistency, and
in many cases the asymptotic normality of parameter estimates. 
When the non-singularity condition 
 to hold, that is, when the true 
parameters represent a singular point in the statistical model,
very little is known about the asymptotic behavior of their estimates.

The singularity situation might have been brushed aside as 
idiosyncratic by some parametric statistical modelers in the past. As
complex and high-dimensional models are increasingly embraced by statisticians and
practitioners alike, singularities are no longer a rarity --- they start to
take a highly visible place in modern statistics and data science. For example, the many
zeros present in a high-dimensional linear regression problem represent a type of 
singularities of the underlying model, points corresponding to rank-deficient
Fisher information matrices~\cite{Hastie-Tibshirani-Wainwright}. 
In another example, the zero skewness in the family of skewed distributions 
represents a singular point~\cite{Chiogna-2005}.
In both examples, singularity points are quite easy to spot out --- it is
the impacts of their presence on improved parameter estimation procedures and the asymptotic 
properties such procedures entail that are nontrivial matters occupying the best efforts of 
many researchers in the past decade. The textbooks by~\cite{Buehlmann-vandeGeer,Hastie-Tibshirani-Wainwright}, for instance, address such
issues for high-dimensional regression problems, while the recent papers by~\cite{Ley-2010,Hallin-2012,Hallin-2014} investigate statistical inference in the skewed families
for distribution. 
By contrast, with finite mixture models --- a popular and rich class of modeling tools 
for density estimation and heterogeneity inference~\cite{Mclachlan-1988, Lindsay-1995, Mengersen-2005, Holzmann_2006} and a subject of this
paper, the singularity phenomenon is not quite well understood, to the best our knowledge,  
except for specific instances.

One of the simplest instances is the singularity of Fisher information matrix in an (over-fitted) 
finite mixture that includes a homogeneous distribution, a setting studied by~\cite{Kiefer-82}.~\cite{Chesher-Lee-86} analyzed a test of heterogeneity based on finite mixtures, addressing the challenge arising from
the aforementioned singularity. Recent works on the related topic 
include~\cite{Castelle-1997, Castelle-1999, Hanfeng_Chen-2001, Chen-2003, Drton_AOS_2009,Gassiat_2014, Shimotsu-2014}. Moreover,
%~\cite{Drton_AOS_2009} studied large sample asymptotics for likelihood ratio tests when singularities may exist on the null hypothesis. 
model selection under singular models is given a book-length treatment in a seminal contribution by Watanabe~\cite{Watanabe-book}. Building upon this framework, Drton and Plummer~\cite{Drton_JRSSB_2017} studied finite mixture based model selection under singularity. Focusing on the estimation of \emph{parameters} of interest,
\cite{Rotnitzky-2000} investigated likelihood-based estimation methods in a somewhat 
general parametric modeling framework, subject to the constraint that the Fisher
information matrix is one rank deficient. For overfitted finite mixtures,~\cite{Chen1992} showed that under a condition of strong identifiability, 
there are estimators which achieve the generic convergence rate $n^{-1/4}$ 
for parameter estimation. 
Recent works also established generic behaviors of estimation under somewhat 
broader settings of over-fitted finite mixture models with both maximum likelihood 
estimation and Bayesian estimation~\cite{Rousseau-Mengersen-11,Nguyen-13,Ho-Nguyen-EJS-16}. 
Under sufficiently strong identifiability conditions for kernel densities, a sharp
local minimax lower bound of parameter estimation in over-fitted finite mixture models 
was recently obtained~\cite{Kahn-2015}.

The family of mixture models is far too rich to submit an uniform kind of 
behavior of parameter estimation, due to a weak identifiability phenomenon induced by
underlying singularities that are much more pervasive than previously thought.
In fact, it was shown recently that even classical models such as the location-scale
Gaussian mixtures, and the shape-rate Gamma mixtures, do not admit such a generic rate of 
convergence for an estimation method such as MLE or Bayesian estimation with a 
non-informative prior~\cite{Ho-Nguyen-Ann-16}. 
For instance, singularities arise in the finite mixtures of Gamma distributions, 
even when the number of mixing components is known --- this phenomenon results in an extremely 
slow convergence behavior for the model parameters lying in the vicinity of singular points,
even though such parameters are (perfectly) 
identifiable. Finite mixtures of Gaussian distributions,
though identifiable, exhibit both minimax lower bounds and maximum likelihood 
estimation rates that are directly linked to the solvability of a system of 
real polynomial equations, rates which deteriorate quickly with the increasing 
number of extra mixing components. The results obtained for such specific instances 
contain considerable insights about parameter estimation in finite
mixture models, but they only touch upon the surface of a general and complex phenomenon.
Indeed, as we shall see in this paper there is a rich spectrum
of asymptotic behavior in which regular (non-singular) mixtures, strongly identifiable mixtures, and 
weakly identifiable mixture models %(such as the one studied by~\cite{Ho-Nguyen-Ann-16}) 
such as the ones studied by aforementioned works
occupy but a small spot.
%A general asymptotic theory for parameter estimation in finite mixture models encompassing 
%both non-singularity, various forms of singularities and identifiability conditions, 
%including the case the number of mixing components is known and unknown, appears lacking
%in the literature.

\subsection{Main results}
The goal of this paper is to present a general and theoretical framework for analyzing parameter estimation
behavior in finite mixture models. We address directly the situations where the non-singularity 
condition of the Fisher information matrix may not hold.  Our approach is to take on a 
systematic investigation of the singularity structure of a compact and 
multidimensional parameter space of mixture models, to identify such singularities and then study the impacts of 
their presence on parameter estimation. There is a 
remarkable heterogeneity of the mixture model parameter space that we can shed some light on:
it will be shown that different parts of the parameter space may admit different convergence
rates, by several standard estimation methods. Parameters of different types
may possess different estimation rates, e.g., location vs scale of the same mixture component. 
Even parameters of the same type may carry distinct rates
of estimation, such as
shape parameters associated with different mixture components.

To obtain such a fine-grained picture of the parameter space,
several fundamental concepts will be introduced. In particular, the natural-valued
\emph{singularity level} will be useful in describing the convergence behavior of the (discrete) 
mixing measure that arises in the mixture model. 
Specifically, a mixture density of the form
$p_G(x) = \int f(x|\eta) \textrm{d}G(\eta)$, where $f$ denotes a kernel density,
corresponds to mixing measure $G$ on a suitable parameter space. If $G= \sum_{i=1}^{k} p_i\delta_{\eta_i}$, then it is often denoted that $p_G(x) = \sum_{i=1}^{k}p_if(x|\eta_i)$.
The singularity level for a mixing measure $G$
describes in a precise manner the variation of the mixture likelihood $p_G(x)$ with respect to
changes in mixing measure $G$. Now, Fisher information singularities 
simply correspond to points in the parameter space which identify a mixing measure 
whose singularity level is non-zero. 
Within the set of Fisher information singularities the parameter space can be partitioned into 
disjoint subsets determined by different singularity levels. 

Given an i.i.d.
$n$-sample from a (true) mixture density $p_{G_0}$, where $G_0$ admits a singularity
level $r$. This will imply, under some mild conditions on $f$, that a
standard estimation method such as maximum likelihood estimation and Bayesian estimation
with a non-informative prior
carries the rate of convergence $n^{-1/2(r+1)}$, which is also a minimax lower bound
(up to a logarithmic factor). Here, the convergence rate is
expressed in terms of a suitable Wasserstein metric on the space of mixing measures. 
Thus, singularity level 0 results in root-$n$ convergence rate for mixing measure estimation. 
Fisher singularity corresponds to singularity level 1 or greater than 1, resulting
in convergence rates $n^{-1/4}, n^{-1/6}, n^{-1/8}$ or so on.

Convergence in Wasserstein metric on mixing measures is easily translated into
convergence of the supporting atoms~\cite{Nguyen-13}. But each atom of
the mixing measure may be composed of different types (e.g., location, scale, shape).
To anticipate the heterogeneity of parameters of different types, we introduce
vector-valued \emph{singularity index}, which extends the notion of natural-valued singularity level described
earlier. The singularity index describes the variation of the mixture likelihood with 
respect to changes of individual parameter of each type. 
A singularity index $\kappa$ corresponds to singularity level $(\|\kappa\|_\infty-1)$ for
the mixing measure, but it tells us much more: the
convergence rate for estimating the $j$-th component of the atoms $\eta$ via MLE
or Bayesian method will be $n^{-1/2\kappa_{j}}$. One can in fact go further to capture
``complete heterogeneity'': via \emph{singularity matrix}, it can be shown that
each parameter may allow a possibly different convergence rate depending on the parameter's
values.
\comment{
{\color{red} Several fundamental notions shall
be introduced to shed light on the nature of singularity: \emph{singularity level}, \emph{singularity index}, and 
\emph{singularity matrix}, which are respectively natural-valued, vector-valued, and matrix-valued given
to every element in the parameter space}. These notions precisely capture the complex 
singularity structures of Fisher information matrix that we are going to highlight the next three paragraphs. 

If we identify parameters in finite mixture models by the corresponding discrete mixing measure, 
%the atoms of these mixing measures will represent specific characteristics of subpopulations in 
%these models. 
then the singularity level
describes in a precise manner the variation of the mixture likelihood with respect to
\emph{global} changes in \emph{mixing measures}. Fisher information singularities 
simply correspond to points in the parameter space whose singularity level is non-zero. 
Within the set of Fisher information singularities the parameter space can be partitioned into 
disjoint subsets determined by different singularity levels. 
The statistical implication of the singularity level is easy to describe: given an i.i.d.
$n$-sample from a (true) mixture model, a parameter of singularity 
level $r$ admits $n^{-1/2(r+1)}$ minimax lower bound for any estimator tending to 
the true mixing measure, as well as the same maximum likelihood estimator's convergence rate 
(up to a logarithmic factor and under some conditions). Here, the convergence rate is
expressed in terms of a suitable Wasserstein metric on the space of mixing measures. 
Thus, singularity level 0 results in root-$n$ convergence rate for mixing measure estimation. 
Fisher singularity corresponds to singularity level 1 or greater than 1, resulting
in convergence rates $n^{-1/4}, n^{-1/6}, n^{-1/8}$ or so on.

While singularity level is effective to capture the global changes of mixing measures with respect to 
the mixture likelihood, it is inadequate accurately the inhomogeneous behavior of
individual parameters in the model. , 
of atoms or components of atoms from these mixing measures, i.e., the variations 
associating with \emph{local} structures of these mixing measures. The singularity index and 
singularity matrix, therefore, are introduced to capture precisely these \emph{local} 
behaviors of mixing measures. These concepts of singularity enable us to  
quantify the varying degrees of identifiability and singularity, some of which 
were implicitly exploited in previous works mentioned above.

 {\color{red} On the 
other hand, a vector value of singularity index $\kappa$ admits $n^{-1/2\|\kappa\|
_{\infty}}$ minimax lower bound for any estimator tending to the true mixing measure. It eventually leads to the 
convergence rate $n^{-1/2\kappa_{i}}$ for estimating $i$-th component ($i$-th model 
parameter) of atoms from the true mixing measure where $\kappa_{i}$ is the i-th 
component of $\kappa$. Last but not least, a matrix value of 
singularity matrix $K$, that has number of rows equal to the number of atoms of true 
mixing measure and number of columns equal to the dimension of parameter space, 
yields $n^{-1/2\|K\|_{\infty}}$ convergence rate for any estimator tending to the 
true mixing measure. It demonstrates the convergence rate $n^{-1/2\|K_{i}\|_{\infty}}
$ for estimating $i$-th atom of the true mixing measure where $K_{i}$ the $i$-th row of $K
$ and the convergence rate $n^{-1/2 K_{ij}}$ for estimating the $j$-th component of the $i$-th atom where $K_{ij}$ is the $j$-th component in $i$-th row.} 
}
The complete picture of the distribution of singularity structure, however, can be extremely 
complex to derive. Remarkably, there are examples of finite mixtures for which the 
compact parameter space can be partitioned into disjoint subsets whose 
singularity level or elements of singularity index and singularity matrix range from 0 to 1 to 2,\ldots, up to infinity.
As a result, if we were to vary the true parameter values,
we would encounter a phenomenon akin to that of ``phase transition'' on 
the statistical efficiency of parameter estimation occuring 
within the same model class.

\subsection{Techniques}
A major component of our general framework is a procedure for identifying
subsets of points having the same singularity structure, via common singularity level and so on. 
It will be shown that these points 
are in fact a subset of a real affine variety. A real affine variety is a set of solutions 
to a system of real polynomial equations. The polynomial equations can be derived explicitly by the 
kernel density functions that define a given mixture distribution. The study of the solutions
of polynomial equations is a central subject of algebraic geometry~\cite{Stumfel-2002,Cox-etal}.
The connections between specific statistical models and algebraic geometry have received
a steady stream of contributions in the last two decades, including the analysis of
latent class analysis models~\cite{Geiger_2001}, factor analysis~\cite{Drton_2007}, algebraic statistical models~\cite{Drton_2007b}, discrete Markov random fields~\cite{Drton-etal-09}, finite mixtures of categorical data~\cite{Allman-2009}, Gaussian graphical models~\cite{Uhler_AOS_2012}, and the EM algorithms~\cite{Sturmfels_AOS_2015}. As mentioned earlier, singular mixture models were studied in the work of~\cite{Watanabe-book,Drton_JRSSB_2017}, who focused primarily on aspects of density estimation and model selection (e.g., estimation of the number of parameters), not the estimation of parameters per se.
By focusing on the statistical efficiency of parameter estimation
for finite mixtures of continuous distributions, we have found that the link to algebraic 
geometry is distilled from a new source of 
algebraic structure, in addition to the presence of mixing measures: 
it is traced to the partial differential equations satisfied by 
the mixture model's kernel density function. For Gaussian mixtures, it is the relation 
captured by Eq.~\eqref{key-normal} for the Gaussian kernel. 
The partial differential equations can be nonlinear, with coefficients given by rational
functions defined in terms of model parameters. It is this relation that is primarily
responsible for the complexity of the singularity structure.
A quintessential example of such a relation is given by 
Eq.~\eqref{eqn:overfittedskewnormaldistributionzero} for the skew-normal kernel densities. 

Starting from the aforementioned partial differential structure, we seek to represent likelihood
function $p_G(x)$ in terms of linearly independent functions, which lead to a representation we call \emph{minimal forms}.
These forms provide the basis for
studying the behavior of the likelihood as $G$ varies in a suitable neighborhood of mixing measures.  
It turns out that, as we move through increasingly sophisticated concepts of singularity structure (e.g., level, index), there are correspondingly structured notions of transportation
distance for the space of the underlying mixing measures. These distances, which generalize the Wasserstein metric
and behave asymptotically
as \emph{semi-polynomials} of the parameter perturbations, prove to be
the right object for linking up the information culled from a data sample (via its likelihood function)
and the algebraic structure of the parameter space of inferential interest.

Although our method for the analysis of singularity structure and the 
asymptotic theory for parameter estimation can be used to re-derive old and refine existing
results such as those of~\cite{Chen1992, Ho-Nguyen-Ann-16}, a substantial outcome is to
establish fresh new results on mixture models for which no asymptotic theory have hitherto been
achieved. This leads us to a story of finite mixtures of skew-normal distributions.
The skew-normal distribution was originally proposed in \cite{Azzalini-1986, Azzalini-1996, Azzalini-1999}.
The skew-normal generalizes normal (Gaussian) distribution, which is 
enhanced by the capability of handling asymmetric (skewed) data distributions. Due to 
its more realistic modeling capability for multi-modality 
and asymmetric components, skew-normal mixtures are increasingly adopted in recent years 
for model based inference of heterogeneity by many researchers~\cite{Tsung-2007,Genton-2008,Genton-2009,
Lin-2009,Sylvia-2009, Ghosal-13,Lee-13,Prates-2013, Canale-2015, Zeller-2015}. Due to its 
usefulness, a thorough understanding of the asymptotic behavior of parameter estimation 
for skew-normal mixtures is also of interest in its own right.%\footnote{Several early results on
%skew-normal mixtures were first announced by the authors in an unpublished report~\cite{Ho-Nguyen-arxiv-15}.}

\comment{The singularity structure of the skew-normal mixtures is perhaps one of the more complex
among the parametric mixture models that we have typically encountered in the literature.
By comparison, strongly identifiable models admit the singularity level 0 (and singularity indices
of all ones) for all parameter values residing in a compact space, which results in
the $n^{-1/2}$ convergence rates of model parameters. 
Most mixture models whose kernel density function has only one type of parameter,
such as location mixtures or scale mixtures,
are in this category. Location-scale Gaussian mixtures are a step up in the complexity, in that 
all their parameter values carry the same singularity structure, which depends only on 
the number of extra mixing components. Yet this is not the picture of skew-normal mixtures, which
exhibits the kind of complete heterogeneity described earlier.
We will be able to identify subsets with singularity level (also, index/matrix) that vary all the way up to infinity. 
Even in the setting of mixtures with known number of mixing components, the singularity structure is remarkably complex. Thus, the results for skew-normal mixtures present an useful illustration for the full
power of the general theory for finite mixtures of continuous distributions.}

The source of complexity of skew-normal mixtures is the structure of the skew-normal 
kernel density.
The evidence for the latter was already made clear by 
\cite{Chiogna-2005, Ley-2010, Hallin-2012, Hallin-2014},
whose works provided a thorough picture of the singularities for the class of skew-normal 
densities, and their impacts on the non-standard rates of convergence of MLE. 
Not only can we recover the results of \cite{Hallin-2012,Hallin-2014} in terms of rates
of convergence, because they correspond to a trivial 
``mixture'' that has exactly one skew-normal component, an entirely new set of results are established for 
mixtures of two or more components. It is in this setting that new types of singularities arise 
out of the interactions between distinct skew-normal components. These interactions define 
the subset of singular points of a given structure that can be characterized by a system of real
 polynomial equations. This algebraic geometric characteration allows us to establish either the 
precise singularity structure or an upper bound for the mixture model's entire parameter space.
Due to space constraint, we shall describe only a small set of results pertaining the skew-normal
mixtures in this manuscript.

\subsection{Implications} Mixture models are one of the most popular and versatile tools in
modern statistics and data science. Despite numerous efforts, our understanding of
the parameter estimation behavior in mixture models remains woefully incomplete from both theoretical and computational standpoints.
As noted by in a recent textbook, "mixture models are riddle with difficulties such
as non-identifiability"~\cite{DasGupta-08}. Perhaps, as we shall show, the complexity lies not in
non-identifiability per se, but the varying levels of identifiability and the roles they play, which are captured precisely by the concepts of singularity level and index introduced
in this paper. We note that our theory of singularity structures also
carries important consequences on the computational complexity of parameter estimation procedures,
including both optimization and sampling based methods. Indeed,
the inhomogeneous nature of the singularity structures reveals a complex picture of
the likelihood function: regions in parameter space that carry low singularity levels/indices
may observe a relatively high curvature of the likelihood surface, while high singularity levels imply
a ``flatter'' likelihood surface along a certain subspace of the parameters.
Such a subspace 
is manifested by our construction of sequences of mixing measures that 
attest to the condition of singularities (e.g., $r$-singularity or $\kappa$-singularity) in general. Reposing upon this insight,~\cite{Raaz_Ho_Koulik_2018, Raaz_Ho_Koulik_2018_second, Wenlong_Ho_2019_sampling} recently established the slow convergence rates of expectation-maximization (EM) updates for approximating the MLE or developed polynomial mixing time MCMC algorithm for approximating the power posterior distribution for several specific settings of mixture models. It is of interest to further exploit the 
explicit knowledge of singularity structures obtained for a given mixture 
model class, so as to improve upon the 
computational efficiency of the optimization and sampling procedures
that operate on the model's parameter space.

The concepts of singularity level and index in the paper can be seen as complementary to the theory of singular models in general and applications to mixture models in particular~\cite{Watanabe-book, Aoyagi_2010, Drton_JRSSB_2017}. Indeed, the theory of~\cite{Watanabe-book} aims for a systemmatic approximation of functionals of a parametric density of interest, including Bayes generalization error or Kullback-Leibler distance. Although the rate of estimation for a parametric density function generally remains root-$n$ under Hellinger distance or a related functional, accounting for singularities yields a more accurate approximation for the marginal likelihood, which results in improved information criterion for model choice~\cite{Drton_JRSSB_2017}. On the other hand, we study how the model's likelihood and associated distance functionals vary with respect to model parameters. This study requires an elaborate excursion into the singularity structures of the parameter space, because convergence behavior of individual parameters is considerably more complex than that of a density function. Our singularity concepts provide an efficient way to characterize such structures, which directly yield non-regular parameter estimation rates in mixture models.

The plan for the remainder of our paper is as follows. 
Section \ref{Section:background} lays out the notations and relevant
concepts such as parameter spaces and the underlying geometries. 
Section \ref{Section:general_procedure_singularity} presents the general framework of 
analysis of singularity structure and the impact on convergence rates of parameter 
estimation for singular points of a given singularity structure. 
Section \ref{Section:overfitskew} illustrates the theory 
on the finite mixture of skew-normal distributions, by giving concrete minimax bounds and MLE 
convergence rates for this class of models for the first time. 
We conclude with a discussion in Section \ref{Section:discussion}. Additional concepts and results
and full proofs are given in the Appendices.

\comment{Here are the following updates (in red color) of the current draft:
\begin{itemize}
\item All of the minor issues are fixed (according to your comments).
\item All of the missing conditions in Theorem \ref{theorem:singularity_connection_liminf} and Theorem \ref{theorem:convergence_and_minimax} are filled.
\item Section \ref{Section:general_bound_omixtures_generic} and Section \ref{Section:nonlinear_system_study} are greatly revised.
\item Proof of Lemma \ref{lemma:reduced_linearly_independent} is more transparent.
\item The important missing proofs (greatly modified according to the presentation of this paper) of the results in the paper are put in the Appendix. Here are the detail
\begin{itemize}
\item Detail proof of Theorem \ref{theorem:conformant_setting} is included as it gives the direction how to deal with all the possible cases of $G_{0} \in \mathcal{S}_{1}$. This proof helps the readers to extend this argument to other full cases of $G_{0} \in \mathcal{S}_{2}, \mathcal{S}_{31}$, etc.
\item Detail proof of Theorem \ref{theorem:conformant_symmetry_setting} is included to illustrate the argument after \eqref{eqn:system_limits_symmetry_emixtures}.
\item Detail proofs of all propositions/ lemmas are included.
\end{itemize}
\end{itemize}}

\paragraph{Notation} We utilize several familiar notions of distance for mixture densities, with
respect to Lebesgue measure $\mu$. They are total variation
distance 
$V(p_{G},p_{G_{0}})={\displaystyle \dfrac{1}{2}\int {|p_{G}(x)-p_{G_{0}}(x)|}d\mu(x)}$ and Hellinger distance
$h^{2}(p_{G},p_{G_{0}})=\dfrac{1}{2} {\displaystyle \int {\left(\sqrt{p_{G}(x)}-\sqrt{p_{G_{0}}(x)}\right)^{2}}d\mu(x)}$. 

Additionally, for any $\kappa_{1}, \kappa_{2} \in \mathbb{R}^{d}$, we denote $\kappa_{1} \preceq \kappa_{2}$ iff all the components of $\kappa_{1}$ are less than or equal to the corresponding components of $\kappa_{2}$. Furthermore, $\kappa_{1} \prec \kappa_{2}$ iff $\kappa_{1} \preceq \kappa_{2}$ and $\kappa_{1} \neq \kappa_{2}$. Additionally, the expression "$\gtrsim$" will be used to denote the inequality up 
to a constant multiple where the value of the constant is fixed within our setting. We write 
$a_{n} \asymp b_{n}$ if both $a_{n} \gtrsim b_{n}$ and $a_{n} \lesssim b_{n}$ hold. Finally, for any $x \in \mathbb{R}$, $\floor{x}$ denotes the greatest integer that is less than or equal to $x$.
%%% Introduction section

%%%%%%%%%%%%%%%%%%%%%%%%%%%%%%%%%%%%%%%%%%%%%%%
\section{Preliminaries} \label{Section:background}
%%%%%%%%%%%%%%%%%%%%%%%%%%%%%%%%%%%%%%%%%%%%%%%
\newcommand{\veceta}{\ensuremath{\vec{\eta}}}
\newcommand{\vecp}{\ensuremath{\vec{p}}}
\newcommand{\real}{\ensuremath{\mathbb{R}}}
\newcommand{\veca}{\ensuremath{\vec{a}}}
\newcommand{\vecb}{\ensuremath{\vec{b}}}

A finite mixture of continuous distributions admits density of the form
$p_G(x) = \int f(x|\eta) \textrm{d} G(\eta)$ with respect to Lebesgue measure
on an Euclidean space for $x$, where $f(x|\eta)$ denotes a probability density kernel, 
$\eta$ is a multi-dimensional parameter taking values in a subset of an Euclidean space 
$\Theta$, $G$ denotes a discrete \emph{mixing distribution} on $\Theta$. 
The number of support points of $G$ represents the number of mixing 
components in mixture model. Suppose that $G = \sum_{i=1}^{k}p_i \delta_{\eta_i}$, then 
$p_G(x) = \sum_{i=1}^{k}p_i f(x|\eta_i)$. 

\subsection{Parameter spaces and geometries}
There are different kinds of parameter space and geometries that they carry
which are relevant to the analysis of mixture models. We proceed to describe them in the following.

\paragraph{Natural parameter space}
The customarily defined parameter space of the $k$-mixture of distributions is 
that of the mixing component parameters $\eta_i$, and mixing probabilities $p_i$.
Throughout this paper, it is assumed that $\eta_i \in \Theta$, which is 
a compact subset of $\real^d$ for some $d\geq 1$, for $i=1,\ldots,k$.
The mixing probability vector $\vec{p}=(p_1,\ldots,p_k) \in \Delta^{k-1}$, the 
$(k-1)$-probability simplex. For the remainder of the paper, we also use $\Omega$ to denote the compact 
subset of the Euclidean space for parameters $(\vecp,\veceta)$.

\begin{example}
\textup{
The skew-normal density kernel on the real line has three parameters
$\eta = (\theta,\sigma,m) \in \real \times \real_+ \times \real$, namely,
the location, scale and skewness (shape) parameters. It is given by,
for $x\in \real$, 
\[f(x|\theta,\sigma,m) :=
\dfrac{2}{\sigma}f\left(\dfrac{x-\theta}{\sigma}\right) 
\Phi(m (x-\theta)/\sigma),\] 
where $f(x)$ is the standard normal density and 
${\displaystyle \Phi(x)=\int {f(t)1(t\leq x)}
\; \textrm{d}t}$. This generalizes the Gaussian density kernel, which corresponds to 
fixing $m=0$. 
The parameter space for the $k$-mixture of skew-normals is therefore a 
subset of an Euclidean space for the mixing probabilities $p_i$ and 
mixing component parameters $\eta_i = (\theta_i,v_i=\sigma_i^2,m_i) \in \real^3$. 
For each $i=1,\ldots, k$, $\theta_i, \sigma_i, m_{i}$ are restricted to reside
in compact subsets $\Theta_1 \subset \real, \Theta_2 \subset \real_{+},
\Theta_3 \subset \real$ respectively, i.e., 
$\Theta=\Theta_{1} \times \Theta_{2} \times \Theta_{3}$.
}
\end{example}

\paragraph{Semialgebraic sets} The singularity structure of the parameter space
submits to a different geometry. It will be described in terms of 
the zero sets (sets of solutions) of systems of real polynomial equations. 
The zero set of a system of real polynomial equations is called a (real)
affine variety \cite{Cox-etal}.
In fact, the sets which describe the singularity structure of finite mixtures 
are not affine varieties per se. We will see that they are the intersection 
between real affine varieties -- the real-valued solutions of a finite collection
of equations of the form $P(\vec{p},\vec{\eta}) = 0$, and the set of parameter values
satisfying $Q(\vec{p},\vec{\eta}) > 0$, for some real polynomials $P$ and $Q$. 
The intersection of these sets is also referred to as semialgebraic sets.

\begin{example}
\textup{Continuing on the example of skew-normal mixtures, 
we will see that first two types of singularities that arise from the mixture
of skew-normals are solutions of the following polynomial equations
\begin{itemize}
\item [(i)] Type A: $P_1(\veceta) = \prod_{j=1}^{k} m_j$.
\item [(ii)] Type B: $P_2(\veceta) = \prod_{1\leq i\neq j \leq k} 
\biggr \{ (\theta_i-\theta_j)^2 + \biggr [\sigma_i^2(1+m_j^2) - \sigma_j^2(1+m_i^2)
\biggr ]^2 \biggr \}$.
\end{itemize}
These are just two among many more polynomials and types of singularities
that we will be able to enumerate in the sequel. 
We quickly note that Type A refers to the one (and only) kind
of singularity intrinsic to the skew-normal kernel: $P_1 = 0$ if either one of 
the $m_j = 0$ --- one of the skew-normal components is actually normal (symmetric).
This type of singularity has received in-depth treatments by a number of 
authors~\cite{Chiogna-2005,Ley-2010,Hallin-2012,Hallin-2014}. 
One the other hand, Type B is intrinsic to a mixture model,
as it describes the relation of parameters of distinct  mixing components $i$ and $j$. 
}
\end{example}

\paragraph{Space of mixing measures and transportation distance}

As described in the Introduction, a goal of this work is to turn the knowledge 
about the nature of singularities of parameter space $\Omega$ into that of statistical
efficiency of parameter estimation procedures. 
For this purpose, the convergence of parameters in a mixture model is most naturally analyzed
in terms of the convergence in the space of mixing measures endowed by
transportation distance (Wasserstein distance) metrics~\cite{Nguyen-13}. 
This is because the role played by parameters $\vec{p},\veceta$ in the mixture model 
is via mixing measure $G$. It is mixing measure $G$ that determines the mixture
density $p_G$ according to which the data are drawn from.
Since the map $(\vec{p},\veceta) \mapsto G(\vec{p},\veceta) = G = \sum p_i\delta_{\veceta_i}$ 
is many-to-one, we shall treat 
a pair of parameter vectors $(\vec{p}, \vec{\eta}) = (p_1,\ldots,p_k; 
\eta_1,\ldots, \eta_k)$ and $(\vec{p'},\veceta') = (p'_1,\ldots, p'_{k'};
\eta'_1,\ldots, \eta'_{k'})$
to be equivalent if the corresponding mixing measures are equal,
$G(\vecp,\veceta) = G(\vecp',\veceta')$. For ease of notation we often omit the arguments
when the context is clear for $G=G(\vecp,\veceta)$ and
$G'=G(\vecp',\veceta')$.

For $r \geq 1$, the Wasserstein distance of order $r$ between $G$ 
and $G'$ takes the form (cf. \cite{Villani-03}),
\begin{eqnarray}
W_{r}(G,G') = \biggr (\inf \sum_{i,j} q_{ij} 
\|\eta_i - \eta'_j\|_{r}^{r} \biggr )^{1/r}, \nonumber
\end{eqnarray} 
where $\|\cdot\|_{r}$ is the $\ell_r$ norm endowed by the natural parameter
space,
the infimum is taken over all couplings $\vec{q}$ between $\vec{p}$ and $\vec{p}'$, 
i.e., $\vec{q}=(q_{ij})_{ij}\in [0,1]^{k \times k'}$ such that 
$\mathop {\sum }\limits_{i=1}^{k'}{q_{ij}}=p_{j}$ and 
$\mathop {\sum }\limits_{j=1}^{k}{q_{ij}}=p'_{i}$ for any $i=1,\ldots, k$ and 
$j=1,\ldots, k'$. 
For the specific example of skew-normal mixtures,
if $\eta = (\theta,v,m)$ and $\eta' = (\theta',v',m')$, then
$\|\eta - \eta'\|_{r}^{r}:= 
|\theta -\theta'|^r+|v - v'|^r+|m-m'|^r$.

Suppose that a sequence of probability measures $G_n = \sum_{i} p_i^{n}\delta_{\veceta_i^n}$ 
tending to $G_0$ under $W_r$ metric at a vanishing rate $\omega_n = o(1)$. 
If all $G_n$ have the same number of atoms $k_n = k_0$ as that of $G_0$, 
then the set of atoms of $G_n$ converge to the $k_0$ atoms of $G_0$, 
up to a permutation of the atoms, at the same rate $\omega_n$ under $\|\cdot\|_{r}$. 
If $G_n$ have the varying $k_n \in [k_0, k]$ number of atoms, where $k$ is a fixed 
upper bound, then a subsequence of $G_n$ can be constructed so that each atom of 
$G_0$ is a limit point of a certain subset of atoms of $G_n$ — 
the convergence to each such limit also happens at rate $\omega_n$. 
Some atoms of $G_n$ may have limit points that are not among $G_0$’s atoms 
— the total mass associated with those “redundant” atoms of $G_n$ must vanish at 
the generally faster rate $\omega_n^r$, since $r\geq 1$.

\subsection{Inhomogeneity and generalized transportation distance}
Although the standard Wasserstein metrics $W_r$ proved to be a convenient tool for the analysis of the space of mixing measures~\cite{Nguyen-13,Ho-Nguyen-EJS-16,Ho-Nguyen-Ann-16},
they are inadequate in describing the inhomogenenous behavior of individual parameters present in the model.
As we have seen in the previous paragraph, the convergence rate of mixing measures $G_n$
under Wasserstein metric $W_r$ induces the same rate of convergence for the atoms of $G_n$,
denoted by $\eta$. 
By way of example, suppose that $\eta$ is in fact made up of three parameters $\eta = (\theta, v, m)$, as
illustrated in the case of skew-normal mixtures,
this implies that same upper bound $\omega_n$ holds for all individual components $\theta$, $v$ and $m$
of the model parameter. 
%with the convergence rates of mixing measures under Wasserstein metric, the atoms of $G_{n}$ will have the 
%same convergence rate $\omega_{n}$ as long as $W_{r}(G_{n},G_{0})=O(\omega_{n})$. 
%Due to the inclusion of $\|.\|_{_{r}}$ metric in 
%Wasserstein distance formulation, it further indicates that the components of each atom (i.e., model parameters) of 
%$G_{n}$ will share the \emph{same} rate $\omega_{n}$. 
Thus this fails to demonstrate the situations in which different parameter components may in fact
exhibit distinct convergence behavior. 
%It unfortunately may not hold 
%under finite mixture models with complex kernel density function where distinct model 
%parameters may have very strong interactions with each other. 
For instance, we will see that in normal and skew-normal mixtures, scale parameters may converge more
slowly than location parameters. 

To derive inhomogeneous 
convergence rates of different model parameters, we introduce a general version of 
the optimal transport distance, which can be formulated as follows.

\begin{definition} \label{def:generalized_Wasserstein}
For any $\kappa=(\kappa_{1},\ldots,\kappa_{d}) \in \mathbb{N}^{d}$, let
\begin{eqnarray}
d_{\kappa}(\theta_{1},\theta_{2}) : = \biggr(\sum \limits_{i=1}^{d}{|\theta_{1}^{(i)}-\theta_{2}^{(i)}|^{\kappa_{i}}}\biggr)^{1/\| \kappa \|_{\infty}} \nonumber
\end{eqnarray}
where $\theta_{i}=(\theta_{i}^{(1)},\ldots,\theta_{i}^{(d)}) \in \mathbb{R}^{d}$, as $i=1,2$, and
 $\|\kappa\|_{\infty}=\max \limits_{1 \leq i \leq d}{\left\{\kappa_{i}\right\}}$. 
The generalized transportation distance with respect to $\kappa$ is given by
\begin{eqnarray}
\widetilde{W}_{\kappa}(G,G') := \biggr (\inf \sum_{i,j} q_{ij} 
d_{\kappa}^{\| \kappa \|_{\infty}}(\eta_i ,\eta'_j) \biggr )^{1/\| \kappa \|_{\infty}} \nonumber
\end{eqnarray}
where the infimum is taken over all couplings $\vec{q}$ between $\vec{p}$ and $\vec{p'}$. 
\end{definition}

For instance, in skew-normal mixtures, if $\eta=(\theta,v,m)$, $\eta'=(\theta',v',m')$, and $\kappa=(2,1,1)$, then $d_{\kappa}(\eta,\eta')= (|\theta-\theta'|^{2}+|
v-v'|+|m-m'|)^{1/2}$. For general $\kappa$, $d_{\kappa}$ is a semi-metric --- it satisfies ``weak'' 
triangle inequality, i.e., it only satisfies the triangle inequality up to some positive constant less than 1, except when all $\kappa_{i}$ are identical. This implies that $\widetilde{W}_{\kappa}(G,G_{0})$ is a semi-metric 
that only satisfies weak triangle inequality (a proof of this fact is given by Lemma \ref{lemma:Wasserstein_semimetric} in Appendix F). An easy relation between the generalized 
transportation distance and the standard Wasserstein distance can be obtained.

\begin{lemma}\label{lemma:generalized_standard_Wasserstein}
For any $\kappa \in \mathbb{N}^{d}$ such that $\|\kappa\|_{\infty}=r \geq 1$, we have
\begin{eqnarray}
\widetilde{W}_{\kappa}(G,G') \gtrsim W_{r}(G,G'), \nonumber
\end{eqnarray}
where equality holds when $\kappa_{i}=r$ for all $1 \leq i \leq d$. 
\end{lemma}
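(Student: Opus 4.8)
The plan is to reduce the inequality to an elementary coordinatewise comparison between the semi-metric $d_{\kappa}$ and the $\ell_r$ norm on the (bounded) parameter space, and then lift that comparison through the coupling formulation of the two transportation distances. The one point where I must be careful is that the comparison constant be genuinely independent of $G$ and $G'$ — this is exactly where compactness of $\Theta$ enters, and it is what makes the notation ``$\gtrsim$'' legitimate.

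First I would use compactness. Since $\Theta$ is a compact subset of $\mathbb{R}^d$, the quantity $D := \sup_{\eta,\eta'\in\Theta}\max_{1\le i\le d}|\eta^{(i)}-\eta'^{(i)}|$ is finite and depends only on $\Theta$; put $C := \max\{1,D\}$. For each coordinate $i$ we have $\kappa_i \le \|\kappa\|_\infty = r$, so for any $|t|\le D$, writing $|t|^{\kappa_i}=|t|^{r}|t|^{\kappa_i-r}$ and using $\kappa_i-r\le 0$ together with $|t|\le D\le C$ gives $|t|^{\kappa_i}\ge |t|^{r} C^{\kappa_i-r}\ge |t|^{r} C^{-r}$ (the value $t=0$ being trivial). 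Summing over $i=1,\dots,d$, for any $\eta,\eta'\in\Theta$,
\[
d_{\kappa}^{\,\|\kappa\|_\infty}(\eta,\eta')=\sum_{i=1}^{d}|\eta^{(i)}-\eta'^{(i)}|^{\kappa_i}\;\ge\;\frac{1}{C^{r}}\sum_{i=1}^{d}|\eta^{(i)}-\eta'^{(i)}|^{r}=\frac{1}{C^{r}}\,\|\eta-\eta'\|_{r}^{r}.
\]

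Next I would lift this to the level of mixing measures. For any coupling $\vec{q}=(q_{ij})$ between $\vec{p}$ and $\vec{p}'$, applying the pointwise bound to every pair $(\eta_i,\eta'_j)$ yields $\sum_{i,j}q_{ij}\,d_{\kappa}^{\,\|\kappa\|_\infty}(\eta_i,\eta'_j)\ge C^{-r}\sum_{i,j}q_{ij}\|\eta_i-\eta'_j\|_{r}^{r}\ge C^{-r}W_{r}^{\,r}(G,G')$, the last step because the very same $\vec{q}$ is admissible in the infimum defining $W_r$ (the coupling constraints involve only $\vec{p},\vec{p}'$, not the ground cost). Taking the infimum over $\vec{q}$ on the left gives $\widetilde{W}_{\kappa}^{\,r}(G,G')\ge C^{-r}W_{r}^{\,r}(G,G')$, and raising to the power $1/r$ gives $\widetilde{W}_{\kappa}(G,G')\ge C^{-1}W_{r}(G,G')\gtrsim W_{r}(G,G')$ since $C$ is fixed by $\Theta$ and $r$. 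Finally, when $\kappa_i=r$ for all $i$, we have $d_{\kappa}(\eta,\eta')=\big(\sum_i|\eta^{(i)}-\eta'^{(i)}|^{r}\big)^{1/r}=\|\eta-\eta'\|_{r}$ termwise, so the two transportation problems have identical cost functions and $\widetilde{W}_{\kappa}=W_{r}$ exactly.

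I do not anticipate a genuine obstacle: the argument is a uniform norm-equivalence estimate made possible by boundedness of $\Theta$, followed by the standard observation that a pointwise domination of transport costs (with a constant) passes to the optimal-transport values. The only things to state carefully are the dependence of $C$ solely on the fixed compact parameter space, and the fact that admissibility of couplings is the same for $\widetilde{W}_\kappa$ and $W_r$, so the one-sided inequality between infima is immediate.
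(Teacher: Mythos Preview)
Your proof is correct and is the natural argument the paper tacitly relies on: use compactness of $\Theta$ to dominate $|t|^{\kappa_i}\ge C^{-r}|t|^r$ coordinatewise, sum, then pass through the common coupling formulation. The paper does not give an explicit proof of this lemma, treating it as immediate from the definitions and the standing compactness assumption; your write-up supplies exactly those details, and the equality case follows directly from $d_\kappa=\|\cdot\|_r$ when all $\kappa_i=r$.
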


The role of $\kappa$ in the definition of generalized transportation distance is to
capture the inhomogeneous behaviors of different parameters. In 
particular, assume that a sequence of probability measures $G_n \in \Ocal_{k}$ 
tending to $G_0$ under $\widetilde{W}_\kappa$ metric at a rate $\omega_n = o(1)$ 
for some $\kappa \in \mathbb{N}^{d}$. If all $G_n$ have the same number of atoms $k_n = k_0$ as that of $G_0$, 
then the set of atoms of $G_n$ converges to the $k_0$ atoms of $G_0$, 
up to a permutation of the atoms, at the same rate $\omega_n$ under $d_{\kappa}$ 
metric. Therefore, the $i$-th component of each atom of $G_{n}$ will converge to the $i
$-th component of corresponding atom of $G_{0}$ at rate $(\omega_{n})^{\|\kappa\|
_{\infty}/\kappa_{i}}$ for any $1 \leq i \leq d$. Similar implication also holds for the case 
$G_{n}$ having its number of components varying from $k_{0}$ to $k$. 

\paragraph{Complete inhomogeneity of parameter space} 
%We wish to highlight that the previous distances, including 
%generalized transportation distance and standard Wasserstein distance, do not capture the 
%inhomogeneous variations of atoms of $G_{0}$, i.e., the convergence rates of 
%estimating different atoms of $G_{0}$ may not be similar. 
As we have discussed above, different types of parameter (location, scale, skewness) may exhibit 
inhomogeneous convergence behavior. It is a tribute to the complexity of finite mixture models that
it is not uncommon that different parameters of the same type within the same model 
may also exhibit such inhomogeneity as well. To properly handle such situations, one needs a notion of ``blocked 
generalized transportation distance''. Due to space constraint, the detailed formulation of this notion as well 
as convergence rate analysis of parameter estimation under this semi-metric are deferred to 
Appendix C. Our analysis with singularity structure of finite mixture models in the main 
text will focus on using Wasserstein distance or generalized transportation distance.

\subsection{Estimation settings: e- and o-mixtures}

The impact of singularities on parameter estimation behavior is dependent on
whether the mixture model is fitted with a known
number of mixing components, or if only an upper bound on the 
number of mixing components is known. The former model fitting
setting is called ``e-mixtures'' for short, while the latter ``o-mixtures''
(``e'' for exact-fitted and ``o'' for over-fitted).

Specifically, given an i.i.d. $n$-sample $X_{1},X_{2},\ldots,X_{n}$ 
according to the mixture density $p_{G_{0}}(x)=\int f(x|\eta) G_{0}(\textrm{d}\eta)$,
where $G_{0}=G(\vecp^0,\veceta^0) = \sum_{i=1}^{k_{0}} p_{i}^{0}\delta_{\eta_i^0}$ 
is unknown mixing measure with exactly $k_{0}$ distinct support points. 
We are interested in fitting a mixture of $k$ mixing components, where $k \geq k_0$, 
using the $n$-sample $X_1,\ldots, X_n$. 
In the e-mixture setting, 
$k=k_0$ is known, so an estimate $G_n$ (such as the maximum likelihood estimate) 
is selected from ambient space $\mathcal{E}_{k_{0}}$, the set of probability measures
$G=G(\vecp,\veceta)$ with exactly $k_{0}$ support points, where
$(\vecp,\veceta) \in \Omega$.  In the o-mixture setting,
$G_n$ is selected from ambient space $\Ocal_{k}$, the set of probability
measures $G=G(\vecp,\veceta)$ with at most $k$ support points,
where $(\vecp,\veceta) \in \Omega$. 

Throughout this paper, several conditions on the kernel density $f(x|\eta)$ are assumed
to hold. Firstly, the collection of kernel densities $f$ as $\eta$ varies is
linearly independent. It follows that the mixture model is
identifiable, i.e., $p_G(x) = p_{G_0}(x)$ for almost all $x$ entails $G=G_0$. 
Secondly, we say $f(x|\eta)$ satisfies a uniform Lipschitz
condition of order $r$, for some $r\geq 1$, if $f$ as a function of $\eta$
is differentiable up to order $r$, and that the partial derivatives with
respect to $\eta$, namely $\partial^{|\kappa|} f/\partial \eta^{\kappa}$,
for any $\kappa = (\kappa_1,\ldots,\kappa_d) \in \mathbb{N}^d$ such that 
$|\kappa|:= \kappa_1+\ldots+\kappa_d = r$ satisfy the following:
for any $\gamma \in \mathbb{R}^d$,
\[\sum_{|\kappa| = r} \biggr | \bigg (\frac{\partial^{|\kappa|} f}{\partial \eta^\kappa} 
(x|\eta_1) -
 \frac{\partial^{|\kappa|} f}{\partial \eta^\kappa}(x|\eta_2) \biggr ) \biggr | \gamma^\kappa
\leq C \|\eta_1-\eta_2\|_r^\delta \|\gamma\|_r^r\]
for some positive constants $\delta$ and $C$ independent of $x$ and $\eta_1,\eta_2 \in \Theta$.
It is simple to verify that most kernel densities used in mixture modeling,
including the skew-normal kernel, satisfy the uniform Lipschitz property over
compact domain $\Theta$, for any finite $r\geq 1$.

%%%%%%%%%%% Procedure for determining the singularity level
\section{Singularity structure in finite mixture models}
\label{Section:general_procedure_singularity}
\subsection{Beyond Fisher information} 
\label{subsection:fisher}
Given a mixture model
\[\biggr \{p_G(x) \biggr | G=G(\vecp,\veceta) = \sum_{i=1}^{k}p_i \delta_{\eta_i},
(\vecp,\veceta) \in \Omega \biggr \}\]
from some given finite $k$ and $f$ a given kernel density (e.g., skew-normal),
let $l_G$ denote the score vector ---
column vector made of the partial derivatives of the log-likelihood 
function $\log p_G(x)$ with respect to each of the model parameters
represented by $(\vecp,\veceta)$. The Fisher information
matrix is then given by $I(G) = \mathbb{E} (l_G l_G^\top)$, where the expectation is taken
with respect to $p_G$. We say that the parameter vector 
$(\vec{p},\veceta)$ (and the corresponding mixing measure $G$) is
a singular point in the parameter space (resp., ambient space of mixing measures),
if $I(G)$ is degenerate. Otherwise, $(\vecp,\veceta)$ (resp., $G$) is a non-singular point. 

According to the standard asymptotic theory, if
the true mixing measure $G_0$ is non-singular, \emph{and}
the number of mixing components $k_0=k$ (that is, we are in the e-mixture
setting), then basic estimators such as the MLE or Bayesian 
estimator yield the optimal root-$n$ rate of convergence. 
Although the standard theory remains silent when $I(G_0)$ is degenerate,
it is clear that the root-$n$ rate may no longer hold.
Moreover, there may be a richer range of behaviors for parameter 
estimation, requiring us to look into the deep structure of the
zeros of $I(G_0)$. This will be our story for both settings of e-mixtures
and o-mixtures. In fact, the (determinant of the) Fisher information matrix $I(G_0)$ is no longer 
sufficient in assessing parameter estimation behaviors.

\begin{example}
\label{example-skew-1}
\textup{
To illustrate in the context of skew-normal mixtures,
where parameter $\eta = (\theta, v, m)$,
observe that the mixture density 
structure allows the following characterization: $I(G)$ is
degenerate if and only if the collection of partial derivatives
\[\biggr \{\frac{\partial p_G(x)}{\partial p_j}, \frac{\partial p_G(x)}{\partial \eta_j}
\biggr \} := 
\biggr \{\frac{\partial p_{G}(x)}{\partial p_j}, 
\frac{\partial p_{G}(x)}{\partial \theta_j}, 
\frac{\partial p_{G}(x)}{\partial v_j}, 
\frac{\partial p_{G}(x)}{\partial m_j} \biggr | j=1,\ldots,k \biggr \}\]
as functions of $x$ are not linearly independent.
This is equivalent to, for some coefficients $(\alpha_{ij})$,
$i=1,\ldots,4$ and $j=1,\ldots,k$, not all of which are zeros, there holds
\begin{eqnarray}
\mathop {\sum }\limits_{j=1}^{k}{\alpha_{1j}f(x|\eta_{j})+
\alpha_{2j}\dfrac{\partial{f}}{\partial{\theta}}(x|\eta_{j})+\alpha_{3j}
\dfrac{\partial{f}}{\partial{v}}(x|\eta_{j}) +\alpha_{4j}
\dfrac{\partial{f}}{\partial{m}}(x|\eta_{j})}  =  0, \label{eqn:firstorderskew}
\end{eqnarray}
for almost all $x \in \mathbb{R}$. Lemma \ref{proposition-notskewnormal} later shows that the 
(Fisher information matrix's) singular points are the zeros of some polynomial equations. 
}
\end{example}

We have seen that for the e-mixtures $G$ is non-singular if
the collection of density kernel functions $f(x|\eta)$ and their first partial 
derivatives with respect to each model parameter are linearly independent.
This condition is also known as the first-order identifiability. 
For o-mixtures, the relevant notion is the second-order identifiability 
\cite{Chen1992,Nguyen-13,Ho-Nguyen-EJS-16}: the condition that
the collection of kernel density functions $f(x|\eta)$, their first and second
partial derivatives, are linearly independent. 
This condition fails to hold for skew-normal kernel densities, 
whose first and second partial derivatives are linked by the following 
nonlinear partial differential equations:
\begin{eqnarray}
\begin{cases}
\dfrac{\partial^{2}{f}}{\partial{\theta}^{2}}-2\dfrac{\partial{f}}{\partial{v}}+\dfrac{m^{3}+m}{v}\dfrac{\partial{f}}{\partial{m}}=0. \\
2m \dfrac{\partial{f}}{\partial{m}}+(m^{2}+1)\dfrac{\partial^{2}{f}}{\partial{m^{2}}}+2vm\dfrac{\partial^{2}{f}}{\partial{v}\partial{m}}=0.
\end{cases}
\label{eqn:overfittedskewnormaldistributionzero} 
\end{eqnarray}
The proof of these identities 
can be found in Lemma \ref{lemma:skewnormaldistribution} in Appendix F. 
Note that if $m=0$, the skew-normal kernel becomes normal kernel, which
admits a (simpler) linear relationship: 
\begin{equation}
\label{key-normal}
\frac{\partial^2 f}{\partial \theta^2} = 2\frac{\partial f}{\partial v}.
\end{equation}
This relation, also noted previously by \cite{Chen-2003, Shimotsu-2014}, 
plays a fundamental role in the analysis of finite mixtures of location-scale
normal distributions~\cite{Ho-Nguyen-Ann-16}. 
Unlike Gaussian kernel's, the nonlinear relations expressed by above PDEs for
skew-normal density kernel underscore the exceptional complexity of general mixture models --- 
the inhomogeneity of the parameter space. 
Analyzing this requires the development of a general theory that we now embark on.

%%%%%%%%%%%%%%%%%%%%%%%%%%%%%%%%%%%%%%%%%%%%%%%%%%%
%%%%%%%%%%%%%%%%%%%%%%%%%%%%%%%%%%%%%%%%%%%%%%%%%%%
%%%%%%%%%%%%%%%%%%%%%%%%%%%%%%%%%%%%%%%%%%%%%%%%%%%

\subsection{Likelihood in Wasserstein neighborhood}
\label{Section:likelihood_Wasserstein_neighborhood}
Instead of dwelling on the Fisher information matrix, we employ a direct approach
by studying the behavior of the likelihood function $p_G(x)$ as $G$ varies in a
Wasserstein neighborhood of a mixing measure $G_0 = \sum_{i=1}^{k_0} p_i^0 \delta_{\eta_i^0}$.

Fix $r\geq 1$, and consider an arbitrary sequence of $G_n \in \Ocal_{k}$, such that
$W_r(G_n,G_0) \rightarrow 0$. Let $k_n \leq k$ be the number of distinct
support points of $G_n$. 
There exists a subsequence of $G_n$ for which $k_n$ is constant in $n$ and each supporting atom 
$\eta_i^0$ as $i \in \left\{1,\ldots,k_{0} \right\}$ of $G_{0}$ is the limit point of exactly $s_{i}$ 
atoms of $G_n$. Additionally, there may be also a subset of atoms of 
$G_{n}$ whose limits are not among the atoms of $G_{0}$. Without loss of generality, 
we assume that there are $\extra \geq 0$ such limit points. By relabelling its atoms,
we can express $G_n$ as
\begin{eqnarray}
\label{eqn:representation_overfit}
G_n= \sum_{i=1}^{k_{0}+\extra} \sum_{j=1}^{s_{i}} p_{ij}^n \delta_{\eta_{ij}^{n}}, 
\end{eqnarray}
where $\eta_{ij}^{n} \to \eta_{i}^{0}$ for all $i=1,\ldots, k_{0}+\extra$, $j= 1,\ldots, s_{i}$. 
Additionally, $\sum \limits_{i=1}^{k_{0}+\extra}{s_{i}} = k_n$. 
Thus, from here on we replace the sequence of $G_n$ by this subsequence. 
To simplify the notation, $n$ will be dropped from the superscript when the context is clear.
In addition, we use the notation
$\Delta \eta_{ij} := \eta_{ij}-\eta_i^0$ for $i=1,\ldots, k_{0}+\extra, j=1,\ldots, s_{i}$.
Also, $p_{i\cdot} :=\sum \limits_{j=1}^{s_{i}}{p_{ij}}$,
and $\Delta p_{i\cdot} := p_{i\cdot}-p_{i}^{0}$, for $i=1,\ldots, k_0+\extra$ where $p_{i}^{0}=0$ as $k_0+1 \leq i \leq k_0+\extra$. 
For the setting of e-mixtures, the sequence of elements $G_n$ is restricted
to $\Ecal_{k_0} \subset \Ocal_k$, so $k_n = k_0$ for all $n$. It follows that
$s_{i}^{n}=1$ for all $i=1,\ldots,k_0$ and $\extra=0$,
 \comment{For o-mixtures, to simplify the presentation, we have omitted
the cases where some $G_n$ may have atoms that do not converge to the atoms of $G_0$
}
so the notation is simplified further: let $\Delta \eta_{i} :=  
\Delta \eta_{i1} = \eta_{i}-\eta_{i}^0, 
\Delta p_{i} :=\Delta p_{i\cdot} = p_i-p_i^0$ for all $i=1,\ldots,k_{0}$.

The following lemma relates Wasserstein metric to a semipolynomial
of degree $r$ (a semipolynomial is a polynomial of a collection of variables and/or
 the absolute value of some of the variables). 
\begin{lemma} \label{lemma:bound_overfit_Wasserstein}
Fix $r\geq 1$. For any element $G$ represented by 
Eq.~\eqref{eqn:representation_overfit}, define 
\[D_r(G,G_0) := 
\sum_{i=1}^{k_{0}+\extra} \sum_{j=1}^{s_{i}} p_{ij}
\|\Delta \eta_{ij}\|_r^r + \sum_{i=1}^{k_0+\extra} |\Delta p_{i\cdot}| .\]
Then $W_r^r(G,G_0) \asymp D_r(G,G_0)$, as $W_r(G,G_0) \downarrow 0$.
\end{lemma}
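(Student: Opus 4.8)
The plan is to prove the two-sided bound $W_r^r(G,G_0) \asymp D_r(G,G_0)$ by establishing each inequality separately, treating $D_r(G,G_0)$ as a proxy that is algebraically easier to manipulate than the infimum over couplings defining $W_r^r$. For the upper bound $W_r^r(G,G_0) \lesssim D_r(G,G_0)$, I would construct an explicit (suboptimal) coupling between $G$ and $G_0$: send mass $p_{ij}$ from each atom $\eta_{ij}$ to the atom $\eta_i^0$ for $i \le k_0$, which costs $\sum_{i,j} p_{ij}\|\Delta\eta_{ij}\|_r^r$; then reconcile the mass discrepancies $\Delta p_{i\cdot}$ (including the redundant atoms with $i > k_0$, whose total mass $\sum_{i>k_0} p_{i\cdot}$ must be redistributed) by transporting the excess/deficit among the $k_0$ true atoms, which over a compact parameter space $\Theta$ costs at most $\mathrm{diam}(\Theta)^r \sum_i |\Delta p_{i\cdot}|$. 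Summing these contributions gives the upper bound with a constant depending only on $r$ and $\mathrm{diam}(\Theta)$.

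For the lower bound $W_r^r(G,G_0) \gtrsim D_r(G,G_0)$, I would take any optimal coupling $\vec{q} = (q_{ab})$ achieving $W_r^r(G,G_0)$ and argue that it must, asymptotically as $W_r(G,G_0)\downarrow 0$, be ``close to'' the natural coupling described above. The key observation is that since the atoms $\eta_{ij}^n \to \eta_i^0$ and the true atoms $\eta_1^0,\ldots,\eta_{k_0}^0$ are distinct (separated by some fixed $\delta_0 > 0$ for $n$ large), any transport plan sending mass from near $\eta_i^0$ to $\eta_{i'}^0$ with $i \ne i'$ incurs cost bounded below by roughly $(\delta_0/2)^r$ per unit mass. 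Hence an optimal plan will only move $O(\text{small})$ amounts of mass between different clusters, which forces: (i) the mass arriving at $\eta_i^0$ from its own cluster to be $p_{i\cdot}$ up to lower-order terms, so the ``vertical'' cost within cluster $i$ is $\gtrsim \sum_j p_{ij}\|\Delta\eta_{ij}\|_r^r$ minus negligible corrections; and (ii) the residual masses $\Delta p_{i\cdot}$ must be moved across clusters at cost $\gtrsim (\delta_0/2)^r |\Delta p_{i\cdot}|$ each. Adding these yields $W_r^r(G,G_0) \gtrsim c\,D_r(G,G_0)$ for a constant $c$ depending on $\delta_0$ and $r$.

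The main obstacle is the bookkeeping in the lower bound: carefully showing that an optimal coupling cannot ``cheat'' by routing mass through intermediate clusters or by exploiting the redundant atoms $i > k_0$ in a way that decouples the within-cluster displacement cost from the mass-discrepancy cost. Concretely, one must verify that the two cost contributions — $\sum_{i,j} p_{ij}\|\Delta\eta_{ij}\|_r^r$ and $\sum_i |\Delta p_{i\cdot}|$ — can be simultaneously lower-bounded by a single optimal plan, rather than each being controlled only by its own tailored plan. The standard device here is to split the optimal plan $\vec{q}$ into a ``diagonal'' part (mass staying within clusters) and an ``off-diagonal'' part (mass crossing clusters), bound the off-diagonal transported mass in terms of $\sum_i |\Delta p_{i\cdot}|$ from below (each unit of cross-cluster mass is forced by a net imbalance and costs $\Omega(1)$), and then observe that after removing the off-diagonal mass the diagonal part is a near-coupling of the within-cluster sub-measures, whose cost dominates $\sum_j p_{ij}\|\Delta\eta_{ij}\|_r^r$ within each cluster. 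A subtlety worth flagging: one cannot get the displacement cost for the redundant clusters ($i > k_0$) as $\sum_j p_{ij}\|\Delta\eta_{ij}\|_r^r$ because $\Delta\eta_{ij}$ there is $O(1)$, not small; instead one uses $|\Delta p_{i\cdot}| = p_{i\cdot}$ for those indices, and the fact that $\|\Delta\eta_{ij}\|_r^r$ is bounded, so $p_{ij}\|\Delta\eta_{ij}\|_r^r \asymp p_{ij} \le |\Delta p_{i\cdot}|$ — meaning the redundant-cluster contribution to $D_r$ is already absorbed (up to constants) into the mass-discrepancy term, and the asymptotic equivalence still goes through.
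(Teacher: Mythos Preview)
The paper states this lemma without proof, treating it as a routine technical fact; so there is no ``paper's own proof'' to compare against. Your proposal is correct and is exactly the standard argument one would supply: an explicit near-diagonal coupling for $W_r^r \lesssim D_r$, and a diagonal/off-diagonal decomposition of the optimal plan (exploiting the fixed separation $\delta_0$ between the distinct limit points $\eta_1^0,\ldots,\eta_{k_0+\bar l}^0$) for $W_r^r \gtrsim D_r$. Your treatment of the simultaneous lower bound --- off-diagonal mass is $\ge \tfrac12\sum_i|\Delta p_{i\cdot}|$ by the marginal constraints, while the diagonal part recovers $\sum_{i\le k_0,j} p_{ij}\|\Delta\eta_{ij}\|_r^r$ up to an off-diagonal correction that is itself controlled by $W_r^r$ --- is the right bookkeeping.

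One small misstatement to fix: for the redundant clusters $i>k_0$, $\Delta\eta_{ij}=\eta_{ij}-\eta_i^0$ is \emph{not} $O(1)$; by the representation~\eqref{eqn:representation_overfit} we have $\eta_{ij}\to\eta_i^0$ for all $i\le k_0+\bar l$, so $\|\Delta\eta_{ij}\|$ is small there too. The actual reason these clusters behave differently is that $\eta_i^0$ for $i>k_0$ is not an atom of $G_0$, so \emph{every} unit of mass from such a cluster must travel at least $\delta_0/2$ to reach any atom of $G_0$; this forces the Wasserstein cost from cluster $i>k_0$ to be $\gtrsim (\delta_0/2)^r p_{i\cdot}=(\delta_0/2)^r|\Delta p_{i\cdot}|$, which indeed dominates the (small) displacement term $\sum_j p_{ij}\|\Delta\eta_{ij}\|_r^r$ for those $i$. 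Your conclusion --- that the redundant-cluster contribution to $D_r$ is absorbed into the mass-discrepancy term --- is correct, just for this reason rather than the one you stated.
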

To investigate the behavior of likelihood function $p_{G}(x)$ as $G$ tends to
$G_0$ in Wasserstein distance $W_r$,  by representation \eqref{eqn:representation_overfit},
\begin{eqnarray}
p_G(x)-p_{G_0}(x) = \sum_{i=1}^{k_0+\extra} \sum_{j=1}^{s_i} p_{ij}(f(x|\eta_{ij}) - f(x|\eta_{i}^{0}))
+ \sum_{i=1}^{k_0+\extra} \Delta p_{i\cdot} f(x|\eta_{i}^{0}).
\end{eqnarray}
By Taylor expansion up to order $r$,
\begin{eqnarray}
p_{G}(x)-p_{G_{0}}(x) = \sum \limits_{i=1}^{k_{0}+\extra}{\sum \limits_{j=1}^{s_{i}}{p_{ij} 
\sum \limits_{|\kappa|=1}^{r}
\frac{(\Delta \eta_{ij})^{\kappa}}{\kappa!}
\frac{\partial^{|\kappa|}f}{\partial \eta^{\kappa}}
(x|\eta_i^0)}}
+ \sum_{i=1}^{k_0+\extra} \Delta p_{i\cdot} f(x|\eta_{i}^{0}) + R_{r}(x), 
\label{eqn:generalTaylorexpansion_overfit}
\end{eqnarray}
where $R_{r}(x)$ is the Taylor remainder. Moreover, it can be verified that
$\sup_{x}|R_{r}(x)/W_{r}^{r}(G,G_{0})| \to 0$ since $f$ is uniform Lipschitz up to order $r$.
\comment{
We have used the notation that for
$\kappa = (\kappa_1,\kappa_2,\kappa_3) \in \mathbb{N}_+^3$, 
$|\kappa| = \kappa_1+\kappa_2+\kappa_3$,
$(\Delta \eta)^{\kappa} = 
(\Delta \theta_{ij})^{\kappa_{1}}(\Delta v_{ij})^{\kappa_{2}}(\Delta m_{ij})^{\kappa_{3}}$,
$\kappa! = \kappa_{1}!\kappa_{2}!\kappa_{3}!$, and
$\frac{\partial^{|\kappa|}f}{\partial \eta^{\kappa}} = 
\dfrac{\partial^{|\kappa|}{f}}{\partial{\theta^{\kappa_{1}}}\partial{v^{\kappa_{2}}}
\partial{m^{\kappa_{3}}}}$.}
We arrive at the following formulae, which measures the speed of change of the likelihood
function as $G$ varies in the Wasserstein neighborhood of $G_0$:
\begin{equation}
\label{Eqn-ratio}
\frac{p_{G}(x)-p_{G_{0}}(x)}{W_r^r(G,G_0)}
=
\sum_{|\kappa|=1}^{r}
\sum_{i=1}^{k_{0}+\extra}
\sum \limits_{j=1}^{s_{i}} 
\biggr (\frac{p_{ij}(\Delta \eta_{ij})^{\kappa}/\kappa!}
{W_r^r(G,G_{0})}\biggr)
\frac{\partial^{|\kappa|}f}{\partial \eta^{\kappa}}
(x|\eta_i^0)
+ \sum_{i=1}^{k_0+\extra} \frac{\Delta p_{i\cdot}}{W_r^r(G,G_{0})}
f(x|\eta_{i}^{0}) + o(1).
\end{equation}
The right hand side of Eq.~\eqref{Eqn-ratio} is a linear combination of 
the partial derivatives of $f$ evaluated at $G_0$. It is crucial to note,
by Lemma~\ref{lemma:bound_overfit_Wasserstein}, each coefficient
of this linear representation is asymptotically equivalent to the ratio
of two semipolynomials.

Equation~\eqref{Eqn-ratio} highlights the distinct roles
of model parameters and the kernel density function in the formation of 
a mixture model's likelihood. The former appears
only in the coefficients, while the latter provides the partial derivatives
which appear as if they are basis functions for the linear combination.
We wrote ``as if'', because the partial derivatives of kernel $f$
may not be linearly independent functions
(recall the examples in Section \ref{subsection:fisher}). 
When a partial derivative of $f$ can be represented as a linear combination 
of other partial derivatives, it can be eliminated from the expression 
in the right hand side. This reduction process may be repeatedly applied 
until all partial derivatives that remain are linearly independent functions.
This motivates the following concept:

\begin{definition} \label{definition:rminimal}
The following representation is called $r$-\emph{minimal} form of
the mixture likelihood for a sequence of mixing measures $G$ tending to $G_0$
in $W_r$ metric:
\begin{eqnarray}
\frac{p_{G}(x)-p_{G_{0}}(x)}{W_{r}^{r}(G,G_{0})} = 
\sum_{l=1}^{T_r}
\biggr (\frac{\xi_{l}^{(r)}(G)}{W_r^r(G,G_{0})} \biggr ) H_{l}^{(r)}(x) + o(1), 
\label{eqn:generallinearindependencerepresentation}
\end{eqnarray}
which holds for almost all $x$, with the index $l$ ranging from 1 to a finite $T_r$,
if 
\begin{itemize}
\item [(1)] $H_{l}^{(r)}(x)$ for all $l$ are linearly independent functions of $x$, and
\item [(2)] coefficients $\xi_{l}^{(r)}(G)$ are polynomials of
the components of $\Delta \eta_{ij}$, and $\Delta p_{i\cdot}, p_{ij}$.
\end{itemize}
\end{definition}
It is sufficient, but not necessary, to select functions $H_l^{(r)}$ from
the collection of partial derivatives $\partial^{|\kappa|}f/\partial \eta^\kappa$
evaluated at particular atoms $\eta_i^0$ of $G_0$, where $|\kappa| \leq r$,
by adopting the elimination technique. The precise formulation of $\xi_{l}^{(r)}(G)$ and 
$H_{l}^{(r)}(x)$ will be determined explicitly by the specific $G_{0}$. 
The $r$-minimal form for each $G_0$ is not unique, but 
they play a fundamental role in the notion of singularity level of a mixing measure
relative to an ambient space that we now define.
%%%%%%%%%%%%%%%%%%%%%%%%%%%%%%%%%%%%%%%%%%%%%%
\begin{definition}
\label{def-rsingular}
Fix $r\geq 1$ and let $\Gcal$ be a class of discrete probability measures which has 
a bounded number of support points in $\Theta$. 
We say $G_0$ is $r$-singular relative to $\Gcal$, if $G_0$ admits a $r$-minimal form 
given by Eq. \eqref{eqn:generallinearindependencerepresentation}, according to
which there exists a sequence of $G \in \Gcal$ tending to $G_0$ under $W_r$
such that
\[\xi_l^{(r)}(G)/W_r^r(G,G_0) \rightarrow 0 \;\textrm{for all}\; l=1,\ldots,T_r.\]
\end{definition}
%Herethereupon, when we talk about an ambient space $\Gcal$, we implicitly mean that $
%\Gcal$ contains only probability measures with bounded number of components on 
%$\Theta$. 
We now verify that the $r$-singularity notion is well-defined, in that it does not
depend on any specific choice of the $r$-minimal form. This invariant property
is confirmed by part (a) of the following lemma. Part (b) establishes a crucial
monotonic property.
\begin{lemma} \label{lemma:minimal_form}
\label{invariance}
(a) (Invariance) The existence of the sequence of $G$ in the statement of Definition~\ref{def-rsingular}
holds for all $r$-minimal forms once it holds for at least one $r$-minimal form.

(b) (Monotonicity) If $G_0$ is $r$-singular for some $r>1$, then $G_0$ is $(r-1)$-singular.
\end{lemma}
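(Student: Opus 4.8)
The plan is to handle part~(a) first, since it makes part~(b) easier to state cleanly. For part~(a), fix two $r$-minimal forms of the likelihood for sequences tending to $G_0$ in $W_r$, say $\sum_{l=1}^{T_r}(\xi_l^{(r)}(G)/W_r^r(G,G_0))H_l^{(r)}(x) + o(1)$ and $\sum_{l=1}^{T_r'}(\widetilde\xi_l^{(r)}(G)/W_r^r(G,G_0))\widetilde H_l^{(r)}(x) + o(1)$. Both arise from the common Taylor expansion~\eqref{eqn:generalTaylorexpansion_overfit} by successive elimination of linearly dependent partial derivatives; in each form the functions $\{H_l^{(r)}\}$ (resp.\ $\{\widetilde H_l^{(r)}\}$) are linearly independent, and both families are subsets of / linear combinations within the finite-dimensional span $V_r := \operatorname{span}\{\partial^{|\kappa|}f(\cdot|\eta_i^0)/\partial\eta^\kappa : |\kappa|\le r,\ i=1,\ldots,k_0+\extra\}$. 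Hence $\{H_l^{(r)}\}$ and $\{\widetilde H_l^{(r)}\}$ are two bases of the \emph{same} subspace $V_r$ (both span $V_r$ because each started from a spanning set of $V_r$ and elimination preserves the span), so $T_r = T_r' = \dim V_r$, and there is an invertible matrix $A$ with $\widetilde H_l^{(r)} = \sum_m A_{lm} H_m^{(r)}$. Since the expansion of $p_G - p_{G_0}$ in the basis $\{H_l^{(r)}\}$ is unique, matching the two representations gives $\widetilde\xi_l^{(r)}(G) = \sum_m (A^{-1})_{ml}\,\xi_m^{(r)}(G) + (\text{lower-order correction absorbed in }o(1))$ — more precisely, the \emph{leading semipolynomial parts} of the two coefficient families are related by the constant invertible linear map $A^{-\top}$. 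Therefore $\xi_l^{(r)}(G)/W_r^r \to 0$ for all $l$ if and only if $\widetilde\xi_l^{(r)}(G)/W_r^r \to 0$ for all $l$, which is exactly the claimed invariance.

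For part~(b), suppose $G_0$ is $r$-singular relative to $\Gcal$ for some $r>1$: there is a sequence $G_n \in \Gcal$ with $W_r(G_n,G_0)\to 0$ and $\xi_l^{(r)}(G_n)/W_r^r(G_n,G_0)\to 0$ for every $l$. I want to produce a witnessing sequence for $(r-1)$-singularity using the \emph{same} $G_n$ (possibly after passing to a subsequence so that the combinatorial data $k_n, s_i, \extra$ stabilize, as in Section~\ref{Section:likelihood_Wasserstein_neighborhood}). The key observation is that the $(r-1)$-minimal form is obtained from the Taylor expansion truncated at order $r-1$, and its coefficients $\xi_l^{(r-1)}(G)$ are exactly the semipolynomials of order $\le r-1$ appearing as a sub-collection of (linear combinations of) the order-$\le r$ semipolynomials that make up $\xi_l^{(r)}(G)$. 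Since by Lemma~\ref{lemma:bound_overfit_Wasserstein} we have $W_{r-1}^{r-1}(G_n,G_0) \asymp \sum p_{ij}\|\Delta\eta_{ij}\|_{r-1}^{r-1} + \sum|\Delta p_{i\cdot}|$ and $W_r^r(G_n,G_0)\asymp \sum p_{ij}\|\Delta\eta_{ij}\|_r^r + \sum|\Delta p_{i\cdot}|$, and since $\|\Delta\eta_{ij}\|_r^r \le \|\Delta\eta_{ij}\|_{r-1}^{r-1}$ once $\|\Delta\eta_{ij}\|\le 1$ (each coordinate $\le 1$), we get $W_r^r(G_n,G_0) \lesssim W_{r-1}^{r-1}(G_n,G_0)$ for $n$ large. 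Every coefficient $\xi_l^{(r-1)}(G_n)$ is a sum of monomials of degree $\le r-1$ in the $\Delta\eta_{ij}$-components times $p_{ij}$ (or $\Delta p_{i\cdot}$); such a monomial also appears — up to a constant and up to strictly-higher-order monomials that are swept into the remainder — inside some $\xi_{l'}^{(r)}(G_n)$, and dividing it by $W_r^r$ tends to $0$ by hypothesis. Re-normalizing by the (asymptotically larger) $W_{r-1}^{r-1}$ only makes the ratio smaller, so $\xi_l^{(r-1)}(G_n)/W_{r-1}^{r-1}(G_n,G_0)\to 0$ for all $l$, which is the definition of $(r-1)$-singularity.

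The main obstacle is bookkeeping in the reduction step of part~(b): one must check carefully that \emph{every} semipolynomial coefficient that survives into the $(r-1)$-minimal form is genuinely controlled by the order-$\le r$ coefficients, since the elimination process that produces the $r$-minimal form may reshuffle and combine the raw Taylor coefficients $p_{ij}(\Delta\eta_{ij})^\kappa/\kappa!$ in a $G_0$-dependent way, and a monomial of degree exactly $r-1$ could in principle be "hidden" by cancellation. By part~(a), however, I am free to choose a convenient $r$-minimal form, so the clean argument is: pick the $r$-minimal form obtained by eliminating \emph{only order-$r$ partial derivatives} against lower-order ones (never the reverse), so that the order-$\le r-1$ block of $\xi^{(r)}$ literally coincides with $\xi^{(r-1)}$ (up to the scalar $1$ vs.\ the $W^r$ vs.\ $W^{r-1}$ normalization) and the order-$r$ block only adds corrections; then the estimate above applies termwise with no hidden cancellation. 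The remaining care is the routine subsequence extraction to make the representation~\eqref{eqn:representation_overfit} valid simultaneously at orders $r$ and $r-1$, which is immediate since it depends only on $W_r(G_n,G_0)\to 0$, and $W_{r-1}\lesssim W_r$ by Lemma~\ref{lemma:generalized_standard_Wasserstein}-type monotonicity (equivalently, $\|\cdot\|_{r-1}\ge\|\cdot\|_r$ on bounded sets).
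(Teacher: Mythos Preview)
Your part~(b) is essentially the paper's argument: use part~(a) to choose an $r$-minimal form built by first forming the $(r-1)$-minimal basis and then adjoining the order-$r$ derivatives, so that $\xi_{l'}^{(r)}(G)=\xi_l^{(r-1)}(G)+\sum_{|\kappa|=r}(\text{const})\cdot p_{ij}(\Delta\eta_{ij})^\kappa$; then $\xi_l^{(r)}/W_r^r\to 0$ together with $W_{r-1}^{r-1}\gtrsim W_r^r$ and $p_{ij}|\Delta\eta_{ij}|^r=o(D_{r-1})$ gives $\xi_l^{(r-1)}/W_{r-1}^{r-1}\to 0$. That is exactly what the paper does (its Eq.~\eqref{coeffs-r} is your ``order-$\le r-1$ block literally coincides'' observation).

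Your part~(a), however, is a genuinely different route with a real limitation. You argue via change of basis: both families $\{H_l^{(r)}\}$ and $\{\widetilde H_l^{(r)}\}$ are bases of the same space $V_r$ spanned by the partial derivatives, so the coefficient vectors are related by an invertible matrix. This is correct \emph{provided} both $r$-minimal forms are obtained by elimination from the Taylor expansion---but Definition~\ref{definition:rminimal} does not require that, and the paper explicitly remarks (right after the definition) that choosing $H_l^{(r)}$ from the partial derivatives is ``sufficient, but not necessary''. So your argument proves invariance only within the subclass of elimination-based minimal forms, not across all $r$-minimal forms as stated. The paper's proof avoids this restriction entirely: if one $r$-minimal form has $\xi_l^{(r)}/W_r^r\to 0$ for all $l$, then $(p_G(x)-p_{G_0}(x))/W_r^r\to 0$ for a.e.~$x$; for any other $r$-minimal form, set $C(G)=\max_l|\xi_l^{(r)}(G)|/W_r^r$, and if $\liminf C(G)>0$ then dividing through by $C(G)$ and extracting a convergent subsequence yields a nontrivial linear relation among the $H_l^{(r)}$, contradicting their linear independence. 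This normalization-plus-compactness trick never needs the two bases to span the same space, and it is what you should use if you want the lemma in its stated generality. (Your parenthetical ``lower-order correction absorbed in $o(1)$'' is also unnecessary in the elimination case---the relation is exact there---and is a symptom of the missing bridge to the general case.)
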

The monotonicity of $r$-singularity naturally leads to the notion of
singularity level of a mixing measure $G_0$ (and the corresponding parameters) 
relative to an ambient space $\Gcal$.

\begin{definition} \label{definition:singularity_level_emixture} 
The singularity level of $G_0$ relative to a given class $\Gcal$,
denoted by $\lev(G_0|\Gcal)$, is
\begin{itemize}
\item [] 0, if $G_0$ is not $r$-singular for any $r\geq 1$; 
\item [] $\infty$, if $G_0$ is $r$-singular for all $r\geq 1$; 
\item [] otherwise, the largest natural number $r\geq 1$ for which $G_0$ is $r$-singular.
\end{itemize}
\end{definition}
The role of the ambient space $\Gcal$ is critical in determining
the singularity level of $G_0\in \Gcal$. 
Clearly, by definition if $G_0 \in \Gcal \subset \Gcal'$, 
$r$-singularity relative to $\Gcal$ entails $r$-singularity relative to $\Gcal'$. This
means $\lev(G_0|\Gcal) \leq \lev(G_0|\Gcal')$. Let us look at the following examples.
\begin{itemize}
\item Take $\Gcal = \Ecal_{k_0}$, i.e.,
the setting of e-mixtures.  If the collection of 
$\{\partial^\kappa f/\partial \eta^\kappa(x|\eta_j) | j=1,\ldots, k_0; |\kappa| \leq 1 \}$ 
evaluated at $G_0$ is linearly independent, then $G_0$ is not 1-singular relative to $\Ecal_{k_0}$. 
It follows that $\lev(G_0|\Gcal) = 0$.
\item Take $\Gcal= \Ocal_{k}$ for any $k> k_0$, i.e., the setting
of o-mixtures.  It is not difficult to check that $G_0$ is always 1-singular relative to 
$\Ocal_{k}$ for any $k>k_0$. Thus, $\lev(G_0|\Gcal) \geq 1$.
Now, if the collection of $\{\partial^\kappa f/\partial \eta^\kappa(x|\eta_j)
| j=1,\ldots, k_0; |\kappa| \leq 2 \}$ evaluated at $G_0$ is linearly independent,
then it can be observed that $G_0$ is not 2-singular relative to $\Ocal_{k}$.
Thus, $\lev(G_0|\Gcal) = 1$. 
\end{itemize}
The conditions described in the two examples above are in fact referred to as strong identifiability 
conditions studied by~\cite{Chen1992,Nguyen-13,Ho-Nguyen-EJS-16}.
The notion of singularity level generalizes such identifiability conditions, by
allowing us to consider situations where such conditions fail to hold. 
This is the situation where $\lev(G_0|\Gcal) = 2, 3,\ldots,\infty$.
The significance of this concept can be appreciated by the following theorem.

\begin{theorem} \label{theorem:singularity_connection_liminf} 
Let $\Gcal$ be a class of probability measures on $\Theta$ that have a bounded number of
support points, and fix $G_0 \in \Gcal$.
%{\color{red} (I change the statements of part (i) and (ii) to global nature of the bound, i.e without $W_{s}(G,G_{0}) \to 0$)} 
Suppose that $\lev(G_0|\Gcal) = r$, for some $0 \leq r \leq \infty$. 
\begin{itemize}
\item [(i)] If $r<\infty$, then $\inf \limits_{G \in \mathcal{G}} \dfrac{\|p_{G}-p_{G_0}\|_\infty}{W_{s}^{s}
(G,G_0)} > 0$ for any $s\geq r+1$.
\item [(ii)] If $r<\infty$, then $\inf \limits_{G \in \mathcal{G}} \dfrac{V(p_{G},p_{G_0})}{W_{s}^{s}
(G,G_0)} > 0$ for any $s\geq r+1$.
\comment{\item [(i)] If $r<\infty$, then $\liminf \limits_{G \in \mathcal{G}: W_s(G,G_0)\rightarrow 0} \dfrac{\|p_{G}-p_{G_0}\|_\infty}{W_{s}^{s}
(G,G_0)} > 0$ for any $s\geq r+1$.
\item [(ii)] If $r<\infty$, then $\liminf \limits_{G \in \mathcal{G}: W_s(G,G_0)\rightarrow 0} \dfrac{V(p_{G},p_{G_0})}{W_{s}^{s}
(G,G_0)} > 0$ for any $s\geq r+1$.}
\end{itemize}
\end{theorem}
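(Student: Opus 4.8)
The plan is to prove both parts simultaneously, since (ii) follows from (i) by an elementary argument, and (i) is the crux. Start with (i). I would argue by contradiction: suppose for some fixed $s \geq r+1$ we have $\inf_{G \in \Gcal} \|p_G - p_{G_0}\|_\infty / W_s^s(G,G_0) = 0$. Then there exists a sequence $G_n \in \Gcal$ with $\|p_{G_n} - p_{G_0}\|_\infty / W_s^s(G_n,G_0) \to 0$. Since $\Gcal$ has a bounded number of support points in the compact space $\Theta$, we may pass to a subsequence so that $G_n$ converges weakly to some limit; I would first rule out the case where the limit is not $G_0$ (in that case $\|p_{G_n}-p_{G_0}\|_\infty$ stays bounded away from $0$ by identifiability, while $W_s^s(G_n,G_0)$ is bounded, contradiction), so $W_s(G_n,G_0) \to 0$. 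Now pass to a further subsequence realizing the atom-splitting structure of \eqref{eqn:representation_overfit}, and fix an $s$-minimal form as in Definition~\ref{definition:rminimal}, which exists because $s \geq 1$. Writing out \eqref{eqn:generallinearindependencerepresentation}, we get
\[
\frac{p_{G_n}(x) - p_{G_0}(x)}{W_s^s(G_n,G_0)} = \sum_{l=1}^{T_s} \frac{\xi_l^{(s)}(G_n)}{W_s^s(G_n,G_0)} H_l^{(s)}(x) + o(1).
\]

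The key step is to show that along this sequence, \emph{all} coefficients $\xi_l^{(s)}(G_n)/W_s^s(G_n,G_0) \to 0$. Here is where I expect the main obstacle. Denote $c_{l,n} := \xi_l^{(s)}(G_n)/W_s^s(G_n,G_0)$ and let $M_n := \max_l |c_{l,n}|$. If $M_n$ does not tend to $0$, pass to a subsequence on which $M_n$ is bounded below and (after normalizing by $M_n$) the normalized coefficients $c_{l,n}/M_n$ converge to limits $c_l^*$ with $\max_l |c_l^*| = 1$. Dividing the displayed equation by $M_n$, the left side tends to $0$ uniformly in $x$ (since $\|p_{G_n}-p_{G_0}\|_\infty / W_s^s(G_n,G_0) \to 0$ and $M_n$ is bounded below), so we obtain $\sum_l c_l^* H_l^{(s)}(x) = 0$ for almost all $x$, contradicting the linear independence of the $H_l^{(s)}$. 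Hence $M_n \to 0$, i.e. $\xi_l^{(s)}(G_n)/W_s^s(G_n,G_0) \to 0$ for all $l$. But this exhibits a sequence $G_n \to G_0$ in $W_s$ witnessing that $G_0$ is $s$-singular relative to $\Gcal$, and since $s \geq r+1 > r = \lev(G_0|\Gcal)$, this contradicts the definition of singularity level (together with monotonicity, Lemma~\ref{invariance}(b), which guarantees $s$-singularity is the strongest among $\{1,\dots,s\}$). The subtlety to handle carefully is that the $s$-minimal form and its coefficient-to-Wasserstein ratios must be well-defined and comparable along the subsequence; this is precisely what Lemma~\ref{invariance}(a) and Lemma~\ref{lemma:bound_overfit_Wasserstein} are for — the latter ensures $W_s^s(G_n,G_0) \asymp D_s(G_n,G_0)$, so the ratios $\xi_l^{(s)}(G_n)/W_s^s(G_n,G_0)$ are genuinely the normalized semipolynomial coefficients appearing in Definition~\ref{def-rsingular}.

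For part (ii), I would reduce it to (i) by relating total variation distance to the sup-norm. Since $V(p_G,p_{G_0}) = \frac12 \int |p_G - p_{G_0}| \, d\mu$ and the supports are effectively controlled, the inequality $V(p_G,p_{G_0}) \gtrsim \|p_G - p_{G_0}\|_\infty$ does \emph{not} hold in general in the wrong direction; instead I would re-run the contradiction argument directly with $V$ in place of $\|\cdot\|_\infty$. Suppose $G_n \in \Gcal$ with $V(p_{G_n},p_{G_0})/W_s^s(G_n,G_0) \to 0$. As before pass to a subsequence with $W_s(G_n,G_0) \to 0$ and fix an $s$-minimal form. Now $(p_{G_n}-p_{G_0})/W_s^s(G_n,G_0) = \sum_l c_{l,n} H_l^{(s)}(x) + o(1)$, and integrating absolute values, $\int |\sum_l c_{l,n} H_l^{(s)}(x)| \, d\mu(x) \to 0$; after normalizing by $M_n = \max_l|c_{l,n}|$ as above and extracting a convergent subsequence of normalized coefficients, Fatou's lemma (or dominated convergence, using the uniform Lipschitz / integrability of the $H_l^{(s)}$, which are finite linear combinations of partial derivatives of $f$ with bounded parameters) gives $\int |\sum_l c_l^* H_l^{(s)}(x)|\, d\mu(x) = 0$, so $\sum_l c_l^* H_l^{(s)} = 0$ almost everywhere, again contradicting linear independence unless all $c_l^* = 0$, which contradicts $\max_l |c_l^*| = 1$. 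The rest of the argument is identical to part (i). The main obstacle in (ii) over (i) is ensuring enough integrability/uniform integrability of the remainder and of the $H_l^{(s)}$ to pass the limit inside the integral; this is supplied by the uniform Lipschitz condition of order $s$ on $f$ over the compact $\Theta$, which bounds the Taylor remainder $R_s(x)$ uniformly and controls the partial derivatives, as already invoked right after \eqref{eqn:generalTaylorexpansion_overfit}.
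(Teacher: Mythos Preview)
Your proposal is correct and follows essentially the same route as the paper: a contradiction argument via the $s$-minimal form, normalizing by the maximal coefficient to force a nontrivial linear relation among the $H_l^{(s)}$, with the far-from-$G_0$ case handled separately by compactness and identifiability. One minor simplification: for part (ii) the paper uses only Fatou's lemma, so your worry about uniform integrability or dominated convergence for the $H_l^{(s)}$ is unnecessary.
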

\label{theorem:tight}

The following theorem establishes that the bounds obtained above are tight under some condition.
\begin{theorem}
Consider the same setting as that of Theorem~\ref{theorem:singularity_connection_liminf}.
\begin{itemize}
\item [(i)] If $1 \leq r<\infty$ and in addition,
\begin{itemize}
\item [(a)] $f$ is $(r+1)$-order differentiable with respect to $\eta$ and for 
some constant $c_{0}>0$,
\begin{eqnarray}
\label{cond-integrated}
{\displaystyle \mathop {\sup }\limits_{\|\eta -\eta'\| \leq c_0}
{\int \limits_{x \in \mathcal{X}}{\left(\dfrac{\partial^{s}{f}}
{\partial{\eta^{\alpha}}}(x|\eta)\right)^{2}/f(x|\eta')}dx}} <\infty 
\end{eqnarray}
for any index $\alpha$ such that $|\alpha|=s$ and $s \in \left\{1,\ldots,r\right\}$.
\item [(b)] There is a sequence of $G \in \mathcal{G}$ tending
to $G_0$ in Wasserstein metric $W_r$ and the coefficients of the $r$-minimal
form $\xi_{l}^{(r)}(G)$ satisfy $\xi_{l}^{(r)}(G)/W_{1}^{s}(G,G_{0}) \rightarrow 0$ for all real number $s \in [1,r+1)$ and $l=1,\ldots, T_{r}$. Additionally, all the masses $p_{ij}$ in the representation \eqref{eqn:representation_overfit} of $G$ are bounded away from 0. 
\end{itemize}
Then, for any real number $s \in [1,r+1)$, 
\[\liminf \limits_{G \in \mathcal{G}: W_1(G,G_0)\rightarrow 0} \dfrac{h(p_{G},p_{G_0})}{W_{1}^{s}
(G,G_0)} = \liminf \limits_{G \in \mathcal{G}: W_s(G,G_0)\rightarrow 0} \dfrac{V(p_{G},p_{G_0})}{W_{s}^{s}
(G,G_0)} = 0.\]
\item[(ii)] If $\lev(G_0|\Gcal) = \infty$ and the conditions (a), (b) in part (ii) hold for any $l \in 
\mathbb{N}$ (here, the parameter $r$ in these conditions is replaced by $l$), then the 
conclusion of part (i) holds for any $s \geq 1$.
\end{itemize}
\end{theorem}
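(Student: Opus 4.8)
The plan is to establish the two vanishing statements by exhibiting an explicit sequence $G \in \Gcal$ along which both ratios tend to $0$, using the sequence provided in condition (b) together with the integrated-derivative bound (a) to control the Hellinger and total variation distances against powers of the Wasserstein distance. I would begin with part (i) and the total-variation statement, which is the cleaner half. Start from the $r$-minimal form of Definition~\ref{definition:rminimal}: by construction,
\[
p_{G}(x) - p_{G_{0}}(x) = \sum_{l=1}^{T_r} \xi_{l}^{(r)}(G)\, H_{l}^{(r)}(x) + \widetilde{R}(x),
\]
where $\widetilde{R}(x)$ collects the Taylor remainder from \eqref{eqn:generalTaylorexpansion_overfit} plus the contributions absorbed in the $o(1)$ term of \eqref{eqn:generallinearindependencerepresentation}; the uniform Lipschitz property of order $r$ gives $\sup_x |\widetilde{R}(x)| = o(W_r^r(G,G_0))$, and in fact, because the sequence of $G$ from (b) has all masses $p_{ij}$ bounded away from $0$ and $W_r(G,G_0) \asymp D_r(G,G_0)^{1/r}$ by Lemma~\ref{lemma:bound_overfit_Wasserstein}, one can push the remainder estimate further to $\sup_x|\widetilde{R}(x)| = o(W_1^s(G,G_0))$ for every $s < r+1$ by a careful bookkeeping of the orders of $\|\Delta\eta_{ij}\|$ versus $W_1(G,G_0)$. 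Then $V(p_G,p_{G_0}) \leq \tfrac12 \sum_l |\xi_l^{(r)}(G)| \cdot \|H_l^{(r)}\|_{L^1(\mu)} + \tfrac12 \sup_x|\widetilde R(x)| \cdot \mu(\cdot)$-type bound; since each $H_l^{(r)}$ is a fixed integrable function (a partial derivative of $f$ evaluated at an atom of $G_0$), condition (b)'s hypothesis $\xi_l^{(r)}(G)/W_1^s(G,G_0) \to 0$ immediately yields $V(p_G,p_{G_0})/W_s^s(G,G_0) \to 0$ (using $W_s \asymp W_1$ up to constants on this sequence, or directly $W_s^s \gtrsim W_1^s$ since $s \geq 1$ — actually one must be slightly careful here and compare $W_s^s$ with $W_1^s$ via the representation, which is where Lemma~\ref{lemma:generalized_standard_Wasserstein}-type comparisons enter).

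For the Hellinger part, I would use the standard inequality $h^2(p_G,p_{G_0}) \leq \int \frac{(p_G - p_{G_0})^2}{p_{G_0}}\,d\mu$ (up to a constant), and again substitute the $r$-minimal expansion. The cross terms and squared terms produce integrals of the form $\int \frac{H_l^{(r)}(x) H_{l'}^{(r)}(x)}{p_{G_0}(x)}\,d\mu(x)$, which are finite precisely because of condition (a): each $H_l^{(r)}$ is a partial derivative $\partial^{|\alpha|}f/\partial\eta^\alpha(x|\eta_i^0)$ with $|\alpha| \leq r$, and \eqref{cond-integrated} (with $\eta = \eta' = \eta_i^0$, or nearby atoms) guarantees $\int (\partial^\alpha f)^2 / f < \infty$; Cauchy–Schwarz then handles the cross terms, and $p_{G_0} \gtrsim \min_i p_i^0 \, f(x|\eta_i^0)$ pointwise lets us bound $1/p_{G_0}$ by $1/f(x|\eta_{i}^0)$ locally. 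This gives $h^2(p_G,p_{G_0}) \lesssim \sum_{l,l'} |\xi_l^{(r)}(G)||\xi_{l'}^{(r)}(G)| + o(W_1^{2s}(G,G_0))$, hence $h(p_G,p_{G_0}) \lesssim \sum_l |\xi_l^{(r)}(G)| + o(W_1^s(G,G_0))$, and dividing by $W_1^s(G,G_0)$ and invoking (b) concludes the $\liminf = 0$ assertion. Part (ii) follows by applying part (i) with $r$ replaced by each $l \in \mathbb{N}$: since $\lev(G_0|\Gcal) = \infty$ means $G_0$ is $l$-singular for every $l$, and the hypotheses (a),(b) are assumed for every $l$, one gets, for each fixed $s \geq 1$, an $l$ with $l+1 > s$ and then the part-(i) conclusion applies verbatim.

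The main obstacle I anticipate is the remainder control — specifically, upgrading the generic estimate $\sup_x|\widetilde R(x)| = o(W_r^r(G,G_0))$ to the sharper $o(W_1^s(G,G_0))$ for all $s < r+1$ along the chosen sequence. This requires knowing that the leading vanishing behavior is genuinely governed by the coefficients $\xi_l^{(r)}(G)$ and not swamped by higher-order Taylor terms; it is here that the two clauses of (b) — that $\xi_l^{(r)}(G)/W_1^s \to 0$ and that the masses $p_{ij}$ are bounded below — must be used in tandem, the mass bound ensuring $W_1(G,G_0) \asymp \max_{ij}\|\Delta\eta_{ij}\| + \sum_i|\Delta p_{i\cdot}|$ so that the order-$(r+1)$ remainder, being $O(\max\|\Delta\eta_{ij}\|^{r+1})$, is $o(W_1^{r+1}) = o(W_1^s)$ for $s < r+1$. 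A secondary technical point is ensuring the $L^1$ and $L^2(1/p_{G_0})$ norms of the $H_l^{(r)}$ are finite uniformly; this is exactly the content of condition (a), so once the $r$-minimal form is written with $H_l^{(r)}$ drawn from the partial-derivative basis, it is a bounded appeal rather than a new computation. Finally, one must be mildly attentive to the distinction between $W_1$ and $W_s$ in the denominators of the two displayed $\liminf$'s, but on the sequence in (b) these are comparable up to constants by Lemma~\ref{lemma:bound_overfit_Wasserstein} combined with the uniform mass lower bound, so the two $\liminf$'s vanish together.
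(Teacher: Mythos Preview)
Your approach matches the paper's: bound $h^2$ by the $\chi^2$-type quantity $\int (p_G-p_{G_0})^2/p_{G_0}\,d\mu$, expand via the $r$-minimal form, invoke condition (a) for the finiteness of $\int (H_l^{(r)})^2/p_{G_0}$ and condition (b) for the vanishing of the coefficients $\xi_l^{(r)}(G)/W_1^s$, and use the mass lower bound $p_{ij}\geq c$ to compare the order-$(r+1)$ remainder against $W_1^s$. The paper writes out only the Hellinger half of part (i) and declares the total-variation and $r=\infty$ cases analogous.

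One point deserves sharpening. Your remainder argument leans on the sup-norm bound $\sup_x|\widetilde R(x)| = o(W_1^s)$, but this alone does not yield $\int |\widetilde R|\,d\mu$ or $\int \widetilde R^{\,2}/p_{G_0}\,d\mu$ control when $\mathcal X$ is unbounded (the ``$\cdot\,\mu(\cdot)$-type bound'' you wrote would be infinite). The paper instead keeps the Lagrange integral form
\[
R_r(x)=\sum_{i,j}\sum_{|\alpha|=r+1}\frac{r+1}{\alpha!}(\Delta\eta_{ij})^{\alpha}\int_0^1(1-t)^r\,\frac{\partial^{r+1}f}{\partial\eta^{\alpha}}\bigl(x\,\big|\,\eta_i^0+t\,\Delta\eta_{ij}\bigr)\,dt,
\]
separates off the scalar prefactor $(\Delta\eta_{ij})^{\alpha}$, and bounds the remaining integral by $\sup_{\|\eta-\eta_i^0\|\le c_0}\int (\partial^{r+1}_{\eta^{\alpha}}f(x|\eta))^2/f(x|\eta_i^0)\,dx<\infty$ via condition (a). The prefactor then satisfies $|(\Delta\eta_{ij})^{\alpha}|/W_1^s(G,G_0)\asymp |(\Delta\eta_{ij})^{\alpha}|/D_1^s(G,G_0)\lesssim |(\Delta\eta_{ij})^{\alpha}|/\|\Delta\eta_{ij}\|^{s}\to 0$ for $|\alpha|=r+1>s$, which is precisely where the mass lower bound enters, exactly as you anticipated. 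So the obstacle you flagged is the right one, but its resolution is an $L^2(1/p_{G_0})$ estimate on the integral remainder rather than a sharper sup-norm bound.
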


We make a few remarks.
\begin{itemize}
\item Part (i) and part (ii) of Theorem~\ref{theorem:singularity_connection_liminf} 
show how the singularity level of $G_0$ relative to ambient space
$\Gcal$ may be used to translate the convergence of mixture densities (under the sup-norm and the
total variation distance) into the convergence of mixing measures under a suitable Wasserstein metric $W_s$.
Part (i) of Theorem~\ref{theorem:tight} shows a sufficient condition under which the power $r+1$ in the bounds 
from Theorem~\ref{theorem:singularity_connection_liminf}  cannot be improved.
\item In part (i) of Theorem~\ref{theorem:tight} the condition regarding the integrand of the partial derivative of 
$f$ (cf. Eq. \eqref{cond-integrated} can be easily checked to be satisfied by many kernels, such as Gaussian kernel, Gamma kernel, 
Student t's kernel, and skew-normal kernel.
\item Condition (b) regarding the sequence of 
$G$ appears somewhat opaque in general, but it will be illustrated
in specific examples for skew-normal mixtures in the sequel. It is sufficient,
but not necessary, for verifying the $r$-singularity of $G_0$ to 
construct the sequence of $G$ so that 
$\xi_{l}^{(r)}(G)=0$ for all $1 \leq l \leq T_{r}$, provided such a sequence
exists. This requires finding an 
appropriate parameterization of a sequence of $G$ tending toward $G_0$ that
satisfy a number of polynomial equations defined in terms of the parameter
perturbations.
%This is often the case, as we will see in the specific examples in the subsequent sections on skew-normal mixtures.
%Similar constructions can be found for kernels satisfying strong identifiability conditions
%and for Gaussian and Gamma kernels in \cite{Ho-Nguyen-EJS-16,Ho-Nguyen-Ann-16}.
\end{itemize}

We are ready to state the impact of the singularity level of mixing measure $G_0$
relative to an ambient space $\Gcal$ on the rate of convergence for an estimate
of $G_0$, where $\Gcal = \Ecal_{k_0}$ in e-mixtures, and $\Gcal = \Ocal_{k}$ in
o-mixtures. Let $\Gcal$ be structured into a sieve of subsets defined by 
the maximum singularity level relative to $\Gcal$.

\begin{eqnarray*}
\Gcal = \bigcup_{r=1}^{\infty} \Gcal_r, \;\textrm{where}\;\;\;
\Gcal_{r} & := & \biggr \{G \in \Gcal \biggr | \lev(G|\Gcal) \leq r \biggr \}, \; r = 0,1,\ldots,\infty.\\
\end{eqnarray*}
\comment{
To obtain minimax lower bound, we assume that for a given $r\geq 1$, all elements $G_0$
$\mathcal{G}_{r}' \neq \emptyset$, the following two conditions hold
\begin{itemize}
\item $f$ is $(r+1)$-order differentiable with respect to $\eta$ and for some sufficiently small $c_{0}>0$,
\begin{eqnarray}
{\displaystyle \mathop {\sup }\limits_{\|\eta -\eta'\| \leq c_0}
{\int \limits_{x \in \mathcal{X}}{\left(\dfrac{\partial^{r+1}{f}}{\partial{\eta^{\alpha}}}(x|\eta)\right)^{2}/f(x|\eta^{'})}dx}} <\infty \nonumber
\end{eqnarray}
for any $|\alpha|=r+1$.
\item We can find some sequence $G \in \bigcup \limits_{i=0}^{r}{\mathcal{G}_{i}'}$ such that $W_{r}(G,G_{0}) \to 0$ and $\xi_{l}^{(r)}(G)=0$ for any $1 \leq l \leq L_{r}$.
\end{itemize}
}

The first part of the following theorem gives a minimax lower bound for the estimation of 
the mixing measure $G_0$, given that the singularity level of $G_0$ is known up to a 
constant $r \geq 1$. The second part gives a quick result on the convergence rate
of a point estimate such as the MLE. 
\comment{For concreteness, we state the results for skew-normal
mixtures, but this kind of result holds for most kernel densities
currently used in the literature (e.g., location-scale Gaussian, Student's t, etc).}

\begin{theorem} \label{proposition:convergence_and_minimax} \comment{Let $f$
be the skew-normal kernel density. $p_G$ denotes skew-normal mixture
density whose parameters lie in compact set $(\vecp,\veceta) \in \Omega$.}
\begin{itemize}
\item [(a)] Fix $r\geq 1$. Assume that for any $G_{0} \in \Gcal_{r}$, the conclusion of part (i) of Theorem \ref{theorem:tight} holds for $\Gcal_{r}$
(i.e., $\Gcal$ is replaced by $\Gcal_{r}$ in that theorem). 
Then, for any real number $s \in [1, r+1)$ there holds
\begin{eqnarray}
\mathop {\inf }\limits_{\widehat{G}_{n} \in \Gcal_r}
\sup_{G_0 \in \Gcal_r} E_{p_{G_{0}}} W_{s} (\widehat{G}_{n},G_{0})
\gtrsim n^{-1/2s}. \nonumber
\end{eqnarray}
Here, the infimum is taken over all sequences of estimates $\widehat{G}_{n} \in \Gcal_{r}$ 
and $E_{p_{G_{0}}}$ denotes the expectation taken with respect to product measure with mixture density $p_{G_{0}}^{n}$.

\item [(b)] Let $G_0 \in \Gcal_{r}$ for some fixed $r\geq 1$. Let $\widehat{G}_n 
\in \Gcal_{r}$ be a point estimate for $G_0$, which is obtained from an $n$-sample of i.i.d. observations
drawn from $p_{G_0}$. As long as $h(p_{\widehat{G}_n},p_{G_0}) = O_P(n^{-1/2})$, we have
\begin{eqnarray}
 W_{r+1}(\widehat{G}_n, G_0) = O_P(n^{-1/2(r+1)}). \nonumber
\end{eqnarray}
\end{itemize}
\end{theorem}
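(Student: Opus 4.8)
The plan is to derive part (a) from a standard Le Cam / Fano-type argument fed by the ``tightness'' conclusion of Theorem \ref{theorem:tight}(i), and to derive part (b) from the lower bound of Theorem \ref{theorem:singularity_connection_liminf}(ii) applied on the sieve $\Gcal_r$, combined with a Hellinger-to-total-variation comparison and the hypothesis $h(p_{\widehat G_n}, p_{G_0}) = O_P(n^{-1/2})$. For part (a), fix $G_0 \in \Gcal_r$ and a target exponent $s \in [1, r+1)$. By the conclusion of Theorem \ref{theorem:tight}(i) applied to the ambient class $\Gcal_r$, there is a sequence $G_m \in \Gcal_r$ with $W_s(G_m, G_0) \to 0$ such that $V(p_{G_m}, p_{G_0}) / W_s^s(G_m, G_0) \to 0$, and in fact $h(p_{G_m}, p_{G_0}) / W_1^s(G_m, G_0) \to 0$. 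Rescaling, for each $n$ I would select $G_{0,n}$ from this sequence with $W_s(G_{0,n}, G_0) \asymp n^{-1/2s}$ — possible since the ratio $h^2(p_{G_m}, p_{G_0}) / W_s^{2s}(G_m, G_0) \to 0$ forces $n \cdot h^2(p_{G_{0,n}}, p_{G_0}) \to 0$ while $W_s(G_{0,n}, G_0)$ stays of order $n^{-1/2s}$. Then $G_0$ and $G_{0,n}$ are two hypotheses that are $W_s$-separated by $\gtrsim n^{-1/2s}$ yet have product-measure Hellinger affinity bounded away from zero (equivalently KL divergence $n D_{\mathrm{KL}} \to 0$ via the usual $h^2 \le D_{\mathrm{KL}}$-type control, using the integrability condition \eqref{cond-integrated} to bound the relevant $\chi^2$/KL quantity). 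A two-point Le Cam bound then gives $\inf_{\widehat G_n} \sup_{\{G_0, G_{0,n}\}} E\, W_s(\widehat G_n, G_0) \gtrsim n^{-1/2s}$, and since both points lie in $\Gcal_r$ this yields the claimed minimax lower bound over $\Gcal_r$.

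For part (b), the key input is that $G_0 \in \Gcal_r$ means $\lev(G_0 \mid \Gcal) \le r$, so $G_0$ is \emph{not} $(r+1)$-singular relative to $\Gcal$ (taking $\Gcal = \Ecal_{k_0}$ or $\Ocal_k$ as appropriate); equivalently, by Definition \ref{def-rsingular} and Theorem \ref{theorem:singularity_connection_liminf}(ii) with $s = r+1 \ge \lev(G_0\mid\Gcal)+1$, there is a constant $c > 0$ and a neighborhood radius $\epsilon_0 > 0$ such that
\[
V(p_G, p_{G_0}) \;\ge\; c\, W_{r+1}^{r+1}(G, G_0)
\qquad \text{for all } G \in \Gcal \text{ with } W_{r+1}(G, G_0) \le \epsilon_0 .
\]
I would then combine this with $h^2 \gtrsim V^2$ is the wrong direction, so instead I use the standard inequality $V(p_G, p_{G_0}) \le \sqrt{2}\, h(p_G, p_{G_0})$: from the hypothesis $h(p_{\widehat G_n}, p_{G_0}) = O_P(n^{-1/2})$ we get $V(p_{\widehat G_n}, p_{G_0}) = O_P(n^{-1/2})$, and on the event $W_{r+1}(\widehat G_n, G_0) \le \epsilon_0$ the local inequality above gives $W_{r+1}^{r+1}(\widehat G_n, G_0) \le c^{-1} V(p_{\widehat G_n}, p_{G_0}) = O_P(n^{-1/2})$, hence $W_{r+1}(\widehat G_n, G_0) = O_P(n^{-1/2(r+1)})$. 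To close the argument I need that $\widehat G_n$ does eventually fall inside the $\epsilon_0$-ball with probability tending to one: this follows because $h(p_{\widehat G_n}, p_{G_0}) \to 0$ in probability forces $W_{r+1}(\widehat G_n, G_0) \to 0$ in probability — a consequence of the fact that on a compact parameter space $h(p_G, p_{G_0}) \to 0$ implies $W_{r+1}(G, G_0) \to 0$ (identifiability plus compactness; see the discussion following Lemma \ref{lemma:bound_overfit_Wasserstein} and \cite{Nguyen-13}).

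The main obstacle, in both parts, is the passage between the analytic ``local'' inequalities stated in Theorems \ref{theorem:singularity_connection_liminf}–\ref{theorem:tight} — which are phrased as statements about $\inf$/$\liminf$ of ratios along sequences — and honest finite-sample probabilistic statements. Concretely: (i) in part (a) one must produce, for each $n$, a genuine second hypothesis $G_{0,n} \in \Gcal_r$ with $W_s(G_{0,n}, G_0) \asymp n^{-1/2s}$ \emph{and} controlled $n$-fold KL/Hellinger separation, which requires extracting a quantitative rate from the qualitative limit $\xi_l^{(r)}(G)/W_1^s(G, G_0) \to 0$ in condition (b) of Theorem \ref{theorem:tight} and checking that the $\chi^2$-type quantity governing the Le Cam bound is finite and $o(1/n)$ using \eqref{cond-integrated}; and (ii) in part (b) one must verify the uniformity of the local lower bound $V \ge c\,W_{r+1}^{r+1}$ over a \emph{fixed} neighborhood of $G_0$ (not just along a single sequence) — this is exactly what Theorem \ref{theorem:singularity_connection_liminf}(i)–(ii) delivers when $\lev(G_0\mid\Gcal)$ is finite, so the burden is simply to invoke it correctly with $s = r+1$ and to handle the case $\lev(G_0\mid\Gcal) < r$ (where an even stronger inequality with a larger exponent holds, a fortiori giving the stated rate). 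Everything else — the two-point Le Cam inequality, $V \le \sqrt2\, h$, and the compactness argument for $\widehat G_n$ entering the local ball — is routine.
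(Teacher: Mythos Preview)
Your proposal is correct and follows essentially the same route as the paper. Part (a) is handled exactly as the paper indicates: feed the Hellinger collapse from Theorem~\ref{theorem:tight}(i) into a two-point Le Cam bound (the paper simply cites Theorem~1.1 of \cite{Ho-Nguyen-Ann-16} for the standard reduction). Part (b) is also the paper's argument: invoke Theorem~\ref{theorem:singularity_connection_liminf}(ii) with $s=r+1$ and chain $h \gtrsim V \gtrsim W_{r+1}^{r+1}$.

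One simplification you missed: Theorem~\ref{theorem:singularity_connection_liminf}(ii) is stated as a \emph{global} infimum over all $G\in\Gcal$, not a $\liminf$ along sequences, so the inequality $V(p_G,p_{G_0})\ge c\,W_{r+1}^{r+1}(G,G_0)$ holds for every $G\in\Gcal$ without restriction to a neighborhood. Consequently your localization step and the consistency argument (``$\widehat G_n$ eventually enters the $\epsilon_0$-ball'') are unnecessary --- the paper's one-line proof of (b) applies the global bound directly to $\widehat G_n$ and is done. Your version is not wrong, just longer than needed.
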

\begin{proof}
Part (a) of this theorem is a consequence of the conclusion of 
Theorem \ref{theorem:tight}, part (iii).
The proof of this fact is rather standard, and similar to that of 
Theorem 1.1. of~\cite{Ho-Nguyen-Ann-16}
and is omitted. Part (b) follows immediately from 
part (ii) of Theorem \ref{theorem:singularity_connection_liminf},
as we have $h(p_{\widehat{G}_{n}},p_{G_{0}}) \geq 
V(p_{\widehat{G}_{n}},p_{G_{0}}) \gtrsim W_{r+1}^{r+1}(\widehat{G}_{n},G_{0})$. 
\end{proof}

We conclude this section with some comments. It is well-known that many density
estimation methods, such as MLE and Bayesian estimation applied to a compact
parameter space for parametric mixture models,
guarantee a root-$n$ rate (up to a logarithmic term) of convergence under 
Hellinger distance metric on the density functions (cf. 
\cite{Vandegeer-2000,Ghosal-2001,DasGupta-08}). It follows that, as far as we are concerned,
the remaining work in establishing the convergence behavior of mixing measure estimation (as opposed to density estimation) lies in the calculation of the singularity levels, i.e., 
the identification of sets $\Gcal_r$. 
%Furthermore, if we would like to obtain the minimax lower bound, we needs the extra step of 
%verifying the conclusion of part (iii) of Theorem \ref{theorem:singularity_connection_liminf}. 
For skew-normal mixtures, such calculations will be carried out in Section \ref{Section:overfitskew}
(with further details given in Appendices E and F). For the settings of $G_{0}$ where we are able to 
obtain the exact singularity levels, we can also construct the sequence of $G$ required by
part (i) of Theorem \ref{theorem:tight}. 
Whenever the exact singularity level is obtained, we automatically obtain
a (local) minimax lower bound and a matching upper bound
for MLE convergence rate under a Wasserstein distance metric, thanks to the above
theorem.
In some cases, however, the singularity level of $G_0$ may be not determined exactly, 
but only an upper bound given. In such cases, only an upper bound to the 
convergence rate of the MLE can be obtained, while minimax lower bounds may be unknown.

%%%%%%%%%%%%%%%%%%%%%%%%%%%%%%%%%%%%%%%%%%%%%%%%
\comment{As a result, there is relatively little left to prove as far as the present theorem 
is concerned. We will only give a basic sketch of the remaining steps, 
while omitting the details. For part (a), one needs to verify that both the 
skew-normal kernel density $f$ and any $G_0 \in \Gcal_r$ satisfy the 
assumption of part (iii) of Theorem ~\ref{theorem:singularity_connection_liminf}. 
Then, the minimax lower bound is obtained in exactly the same way as the proof of part (a) of 
Theorem 1.1. of \cite{Ho-Nguyen-Ann-16}. For part (b), we can first show that
\[h(p_{\widehat{G}_n},p_{G_0}) = O_P((\log n/n)^{1/2}).\]
The proof of this is similar to part (b) of Theorem 1.1. of \cite{Ho-Nguyen-Ann-16} for 
location-scale Gaussian mixtures. Now, invoking part (ii) of Theorem 
\ref{theorem:singularity_connection_liminf}, we
immediately obtain that $W_r(\widehat{G}_n,G_0) = O_P((\log n/n)^{1/2(r+1)})$.}
%%%%%%%%%%%%%%%%%%%%%%%%%%%%%%%%%%%%%%%%%%%%%%%%%

%%%%%%%%%%%%%%%%%%%%%%%%%%%%%%%%%%%%%%%%%%%%%%%%%
%%% Generalization of singularity notions

\subsection{Inhomogeneity of parameter space}
\label{Section:inhomogeneity_singularity}

\comment{
Thus far, our previous studies of singularity level demonstrate the best possible 
convergence rate $n^{-1/2(r+1)}$ of any point estimate $\widehat{G}_{n}$ of 
true mixing measure $G_{0}$ under $W_{r}$ distance as long as $G_{0}$ is $r$-singular relative to $\Gcal$ 
where $\Gcal$ is a class of probability measures on $\Theta$ that have a bounded 
number of support points. At a deeper level, it implies that components of each atom 
of $\widehat{G}_{n}$ will converge to components of corresponding atom 
(possibly outside) of $G_{0}$ at the same rate $n^{-1/2(r+1)}$. However, it may 
happen that different components of each atom may not share similar rates of 
convergence. To be able to tackle this issue, we utilize a general version of Wasserstein 
distance, which is the generalized transportation distance in Definition 
\ref{def:generalized_Wasserstein}.}

Singularity level is an useful concept for deriving rates of convergence for 
the mixing measure under a suitable Wasserstein metric, which in turn entails
upper bound on the rate of convergence for individual model parameters (which
make up the atoms of the mixing measure). However, different parameters may
actually admit different convergence behaviors. To study this phenomenon of inhomogeneity, 
a more elaborate notion is required to describe the singularity structure of the parameter
space.  In this section we shall introduce such a notion, which we call \emph{singularity index},
along with generalized transportation distance for the mixing measures
already given by Def.~\eqref{def:generalized_Wasserstein}.

As in the previous section, we adopt the strategy 
of investigating the behavior of the likelihood function $p_G(x)$ as $G$ varies in a
generalized transportation neighborhood of $G_0$. In particular, for any 
fixed $\kappa \in \mathbb{N}^d$, consider a sequence of $G_{n} \in \Ocal_{k}$ such that $
\widetilde{W}_{\kappa}(G_{n},G_{0}) \to 0$.  Let $G_{n}$ be 
represented as in \eqref{eqn:representation_overfit}. To avoid notational cluttering, we 
again drop $n$ from the superscript when the context is clear. The following lemma
relates 
generalized transportation distance to a semipolynomial of order $\|\kappa\|_{\infty}$.

\begin{lemma} \label{lemma:bound_overfit_Wasserstein_first}
Fix $\kappa \in \mathbb{N}^{d}$. For any element $G$ represented by 
Eq.~\eqref{eqn:representation_overfit}, define 
\[D_\kappa(G,G_0) := 
\sum_{i=1}^{k_{0}+\extra} \sum_{j=1}^{s_{i}} p_{ij}
d_{\kappa}^{\|\kappa\|_{\infty}}(\eta_{ij},\eta_{i}^{0}) + \sum_{i=1}^{k_0+\extra} |\Delta p_{i\cdot}| .\]
Then $\widetilde{W}_\kappa^{\|\kappa\|_{\infty}}(G,G_0) \asymp D_\kappa(G,G_0)$, as $\widetilde{W}_\kappa(G,G_0) \downarrow 0$.
\end{lemma}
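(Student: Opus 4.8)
The claim is the generalized-transportation counterpart of Lemma~\ref{lemma:bound_overfit_Wasserstein}: in $D_\kappa$ the Euclidean cost $\|\Delta\eta_{ij}\|_r^r$ is replaced by the cost $c_\kappa(\eta,\eta'):=d_\kappa^{\|\kappa\|_\infty}(\eta,\eta')=\sum_{l=1}^d|\eta^{(l)}-(\eta')^{(l)}|^{\kappa_l}$, and $W_r$ by $\widetilde W_\kappa$. The plan is to rerun the two-sided argument behind Lemma~\ref{lemma:bound_overfit_Wasserstein}; the only facts about the cost that this argument uses are (i) $c_\kappa(\eta_{ij},\eta_i^0)\to 0$ as $\eta_{ij}\to\eta_i^0$, which is immediate, and (ii) there are constants $0<c_1\le M_\kappa<\infty$ such that $c_\kappa(\eta,\eta')\ge c_1$ whenever $\eta,\eta'\in\Theta$ with $\|\eta-\eta'\|\ge\epsilon_0$ (if one coordinate differs by at least $\epsilon_0/\sqrt d$, the corresponding term of $c_\kappa$ is bounded below by a positive constant), and $\sup_{\Theta\times\Theta}c_\kappa\le M_\kappa$ by compactness of $\Theta$. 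From the subsequence construction preceding \eqref{eqn:representation_overfit} we may assume the $k_0+\extra$ limit points $\eta_1^0,\dots,\eta_{k_0+\extra}^0$ are pairwise at distance $\ge 2\epsilon_0$ and that, for $n$ large, every atom $\eta_{ij}$ of $G$ lies in $B(\eta_i^0,\epsilon_0)$; write $\varepsilon_n:=\max_{i\le k_0,\,j}c_\kappa(\eta_{ij},\eta_i^0)=o(1)$. Then every \emph{cross-cluster} pair $(\eta_{ij},\eta_{i'}^0)$ with $i\neq i'$ satisfies $c_\kappa(\eta_{ij},\eta_{i'}^0)\ge c_1$.

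\emph{Upper bound.} To get $\widetilde W_\kappa^{\|\kappa\|_\infty}(G,G_0)\lesssim D_\kappa(G,G_0)$ I would exhibit an explicit coupling: for each $i\le k_0$, transport $\min(p_{i\cdot},p_i^0)$ units from the atoms $\{\eta_{ij}\}_{j=1}^{s_i}$ to $\eta_i^0$, split among the $j$'s in proportion to $p_{ij}$, at cost at most $\sum_j p_{ij}c_\kappa(\eta_{ij},\eta_i^0)$; then match the residual supply (the surpluses $(\Delta p_{i\cdot})^+$ of over-filled clusters together with the whole mass of the $\extra$ extra clusters) against the residual demand $(\Delta p_{i\cdot})^-$ arbitrarily, at per-unit cost $\le M_\kappa$. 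The residual mass to be moved is at most $\sum_i|\Delta p_{i\cdot}|$ since $\sum_i\Delta p_{i\cdot}=0$. Summing, $\widetilde W_\kappa^{\|\kappa\|_\infty}(G,G_0)\le\sum_{i\le k_0}\sum_j p_{ij}c_\kappa(\eta_{ij},\eta_i^0)+M_\kappa\sum_i|\Delta p_{i\cdot}|\le\max(1,M_\kappa)\,D_\kappa(G,G_0)$.

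\emph{Lower bound.} For $\widetilde W_\kappa^{\|\kappa\|_\infty}(G,G_0)\gtrsim D_\kappa(G,G_0)$, fix an optimal coupling $\vec q^*$ and call mass moved from $\eta_{ij}$ to $\eta_{i'}^0$ \emph{local} if $i=i'\le k_0$ and \emph{non-local} otherwise; set $a_i:=\sum_j q^*_{(ij),i}$ and let $N:=1-\sum_{i\le k_0}a_i$ be the total non-local mass. Since the demand at each $\eta_i^0$ ($i\le k_0$) is $p_i^0$, cluster $i$ must ship at least $(\Delta p_{i\cdot})^+$ of its own mass non-locally, and every extra cluster ships all of its mass non-locally, so $N\ge\sum_{i:\,\Delta p_{i\cdot}>0}\Delta p_{i\cdot}=\tfrac12\sum_i|\Delta p_{i\cdot}|$. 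By (ii) the non-local part of the cost is $\ge c_1N$, hence
\[\widetilde W_\kappa^{\|\kappa\|_\infty}(G,G_0)\ \ge\ \sum_{i\le k_0}\sum_j q^*_{(ij),i}\,c_\kappa(\eta_{ij},\eta_i^0)\ +\ c_1N.\]
Since $q^*_{(ij),i}\le p_{ij}$ and $c_\kappa(\eta_{ij},\eta_i^0)\le\varepsilon_n$, the first sum is at least $\sum_{i\le k_0}\sum_j p_{ij}c_\kappa(\eta_{ij},\eta_i^0)-\varepsilon_n\sum_{i\le k_0}(p_{i\cdot}-a_i)$, and $\sum_{i\le k_0}(p_{i\cdot}-a_i)\le 1-\sum_{i\le k_0}a_i=N$; moreover $\sum_{i>k_0}\sum_j p_{ij}c_\kappa(\eta_{ij},\eta_i^0)\le\varepsilon_n\sum_i|\Delta p_{i\cdot}|$ because $p_{i\cdot}=\Delta p_{i\cdot}\ge 0$ there. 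Combining and using $N\ge\tfrac12\sum_i|\Delta p_{i\cdot}|$, for $n$ large enough that $\varepsilon_n$ is small relative to $c_1$,
\[\widetilde W_\kappa^{\|\kappa\|_\infty}(G,G_0)\ \ge\ \sum_{i,j}p_{ij}\,c_\kappa(\eta_{ij},\eta_i^0)\ +\ (c_1-\varepsilon_n)N-\varepsilon_n\textstyle\sum_i|\Delta p_{i\cdot}|\ \ge\ \sum_{i,j}p_{ij}\,c_\kappa(\eta_{ij},\eta_i^0)+c\textstyle\sum_i|\Delta p_{i\cdot}|\ \gtrsim\ D_\kappa(G,G_0)\]
for some constant $c>0$, which is the desired bound.

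\emph{Main obstacle.} The upper bound is routine; the delicate part is the lower bound, specifically the bookkeeping that controls the ``leakage'' $\sum_j(p_{ij}-q^*_{(ij),i})c_\kappa(\eta_{ij},\eta_i^0)$ in the local term. What makes it go through is that this leakage is at most $\varepsilon_n$ times the total non-local mass $N$, which is paid for with room to spare by the non-local cost $c_1N$; this is exactly where property (ii) — the uniform positive lower bound $c_1$ on cross-cluster cost, itself a consequence of the separation of the limit points $\eta_1^0,\dots,\eta_{k_0+\extra}^0$ and of compactness of $\Theta$ — enters, and it is the same ingredient used in the proof of Lemma~\ref{lemma:bound_overfit_Wasserstein}, so that proof transfers essentially verbatim once (i)--(ii) are recorded. (Alternatively one could try to derive $\widetilde W_\kappa^{\|\kappa\|_\infty}\gtrsim D_\kappa$ coordinatewise from one-dimensional optimal transport estimates, but the collapsing $\eta_{ij}\to\eta_i^0$ of the atoms makes the direct argument above cleaner.)
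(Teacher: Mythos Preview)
The paper does not give an explicit proof of this lemma (nor of its predecessor Lemma~\ref{lemma:bound_overfit_Wasserstein}); both are stated as routine facts and then used freely. Your argument is correct and is precisely the standard optimal-transport bookkeeping one would expect: an explicit near-diagonal coupling for the upper bound, and for the lower bound a local/non-local decomposition of the optimal plan together with the uniform cross-cluster cost floor $c_1$ coming from separation of the $k_0+\extra$ limit points and compactness of $\Theta$.

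One small notational fix: you define $\varepsilon_n:=\max_{i\le k_0,\,j}c_\kappa(\eta_{ij},\eta_i^0)$ but then use $c_\kappa(\eta_{ij},\eta_i^0)\le\varepsilon_n$ for $i>k_0$ when bounding $\sum_{i>k_0}\sum_j p_{ij}\,c_\kappa(\eta_{ij},\eta_i^0)$. Simply extend the maximum over all $1\le i\le k_0+\extra$; the extra-cluster atoms also satisfy $\eta_{ij}\to\eta_i^0$, so $\varepsilon_n=o(1)$ is unaffected, and your final chain
\[
\widetilde W_\kappa^{\|\kappa\|_\infty}(G,G_0)\ \ge\ \sum_{i,j}p_{ij}\,c_\kappa(\eta_{ij},\eta_i^0)+(c_1-\varepsilon_n)N-\varepsilon_n\sum_i|\Delta p_{i\cdot}|\ \ge\ c\,D_\kappa(G,G_0)
\]
goes through as written once $\varepsilon_n$ is small relative to $c_1$.
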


Denote $r:=\|\kappa\|_{\infty}$ (this notational choice is deliberate, as we will see shortly). 
By virtue of the argument from Section 
\ref{Section:likelihood_Wasserstein_neighborhood}, using Taylor expansion up to the $r$-order we have
\begin{eqnarray}
%\label{Eqn-ratio_generalized_Wasserstein_first}
\frac{p_{G}(x)-p_{G_{0}}(x)}{\widetilde{W}_{\kappa}^{\|\kappa\|_{\infty}}(G,G_{0})}
=
\sum_{|\alpha|=1}^{r}
\sum_{i=1}^{k_{0}+\extra}
\sum \limits_{j=1}^{s_{i}} 
\biggr (\frac{p_{ij}(\Delta \eta_{ij})^{\alpha}/\alpha!}
{\widetilde{W}_{\kappa}^{\|\kappa\|_{\infty}}(G,G_{0})}\biggr)
\frac{\partial^{|\alpha|}f}{\partial \eta^{\alpha}}
(x|\eta_i^0) + \nonumber \\
\sum_{i=1}^{k_0+\extra} \frac{\Delta p_{i\cdot}}{\widetilde{W}_{\kappa}^{\|\kappa\|_{\infty}}(G,G_{0})}
f(x|\eta_{i}^{0})
+ \dfrac{R_{r}(x)}{\widetilde{W}_{\kappa}^{\|\kappa\|_{\infty}}(G,G_{0})}, \nonumber
\end{eqnarray}
where $R_{r}(x)$ is the Taylor remainder. By Lemma \ref{lemma:generalized_standard_Wasserstein} 
we readily verify that
\begin{eqnarray}
\sup \limits_{x} \left|R_{r}(x)/\widetilde{W}_{\kappa}^{\|\kappa\|_{\infty}}(G,G_{0})\right| \lesssim \sup \limits_{x} \left|R_{r}(x)/W_{r}^{r}(G,G_{0})\right| \to 0 \nonumber
\end{eqnarray}
as long as $f$ is uniform Lipschitz up to order $r$. Thus,
\begin{eqnarray}
\frac{p_{G}(x)-p_{G_{0}}(x)}{\widetilde{W}_{\kappa}^{\|\kappa\|_{\infty}}(G,G_{0})}
=
\sum_{|\alpha|=1}^{r}
\sum_{i=1}^{k_{0}+\extra}
\sum \limits_{j=1}^{s_{i}} 
\biggr (\frac{p_{ij}(\Delta \eta_{ij})^{\alpha}/\alpha!}
{\widetilde{W}_{\kappa}^{\|\kappa\|_{\infty}}(G,G_{0})}\biggr)
\frac{\partial^{|\alpha|}f}{\partial \eta^{\alpha}}
(x|\eta_i^0) \nonumber \\
+ \sum_{i=1}^{k_0+\extra} \frac{\Delta p_{i\cdot}}{\widetilde{W}_{\kappa}^{\|\kappa\|_{\infty}}(G,G_{0})}
f(x|\eta_{i}^{0}) + o(1). \nonumber
%\label{Eqn-ratio_generalized_Wasserstein_second}
\end{eqnarray}
We are ready to define the following concept.

\begin{definition} \label{definition:rminimal_new}
For any $\kappa \in \mathbb{N}^{d}$, the following representation is called $\kappa$-\emph{minimal} form of
the mixture likelihood for a sequence of mixing measures $G$ tending to $G_0$
in $\widetilde{W}_{\kappa}$ distance:
\begin{eqnarray}
\frac{p_{G}(x)-p_{G_{0}}(x)}{\widetilde{W}_\kappa^{\|\kappa\|_{\infty}}(G,G_0)} = 
\sum_{l=1}^{T_\kappa}
\biggr (\frac{\xi_{l}^{(\kappa)}(G)}{\widetilde{W}_\kappa^{\|\kappa\|_{\infty}}(G,G_0)} \biggr ) H_{l}^{(\kappa)}(x) + o(1), 
\label{eqn:generallinearindependencerepresentation_general}
\end{eqnarray}
which holds for almost all $x$, with the index $l$ ranging from 1 to a finite $T_\kappa$,
if 
\begin{itemize}
\item [(1)] $H_{l}^{(\kappa)}(x)$ for all $l$ are linearly independent functions of $x$, and
\item [(2)] coefficients $\xi_{l}^{(\kappa)}(G)$ are polynomials of
the components of $\Delta \eta_{ij}$, and $\Delta p_{i\cdot}, p_{ij}$.
\end{itemize}
\end{definition}

It is clear that $\kappa$-minimal form is a general version of $r$-minimal form when $
\kappa=(r,\ldots,r)$. The procedure for constructing $\kappa$-minimal forms is similar
that of $r$-minimal forms, and will be given in Section \ref{sec:r-form},
where we specifically search for a subset of linearly independent partial derivatives up to the order $\|\kappa\|
_{\infty}$. 
%As being indicated from $r$-minimal form, multi-index $\kappa$-minimal form for 
%each sequence $G$ tending to $G_{0}$ under $\widetilde{W}_{\kappa}$ distance may 
%not be unique. 
The multi-index $\kappa$-form provides the basis for the notion of multi-index singularity 
that we now define.

\begin{definition}\label{def-rsingular_set}
Let $\Gcal$ be a class of discrete probability measures which has 
a bounded number of support points in $\Theta$. For any $\kappa \in \mathbb{N}^{d}$, we say that $G_{0}$ is $\kappa$-singular relative to $\Gcal$, if $G_0$ admits a $\kappa$-minimal form 
given by Eq. \eqref{eqn:generallinearindependencerepresentation_general}, according to
which there exists a sequence of $G \in \Gcal$ tending to $G_0$ under $\widetilde{W}_\kappa$ distance
such that
\[\xi_l^{(\kappa)}(G)/\widetilde{W}_\kappa^{\|\kappa\|_{\infty}}(G,G_0) \rightarrow 0 \;\textrm{for all}\; l=1,\ldots,T_\kappa.\]
\end{definition}
\comment{\begin{definition}\label{def-rsingular_set}
Fix $r \geq 1$ and let $\Gcal$ be a class of discrete probability measures which has 
a bounded number of support points in $\Theta$. For any $\kappa \in \mathbb{N}^{d}
$, we say that $\kappa \in \singset_{r}(G_{0}|\Gcal)$ if $\|\kappa\|_{\infty}=r$ and 
$G_0$ admits a $\kappa$-minimal form 
given by Eq. \eqref{eqn:generallinearindependencerepresentation_general}, according to
which for any sequence of $G \in \Gcal$ tending to $G_0$ under generalized $\widetilde{W}_\kappa$ metric, not all
\[\xi_l^{(\kappa)}(G)/\widetilde{W}_\kappa^{\|\kappa\|_{\infty}}(G,G_0) \rightarrow 0 \;\textrm{as}\; l=1,\ldots,T_\kappa.\]
\end{definition}}

Like $r$-singularity, the notion of $\kappa$-singularity possesses a crucial
monotonic property in terms of partial order with vector:

\begin{lemma} \label{lemma:minimal_form_general}
\label{invariance}
(a) (Invariance) The existence of the sequence of $G$ in the statement of Definition~\ref{def-rsingular_set}
holds for all $\kappa$-minimal forms once it holds for at least one $\kappa$-minimal form.

(b) (Monotonicity) If $G_0$ is $\kappa$-singular for some $\kappa \in \mathbb{N}^{d}$, then $G_0$ is $\kappa'$-singular for any $\kappa' \preceq \kappa$.
\end{lemma}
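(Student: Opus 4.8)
\textbf{Proof plan for Lemma~\ref{lemma:minimal_form_general}.}
The strategy mirrors the proof of Lemma~\ref{lemma:minimal_form}, but one must be careful about how the multi-index $\kappa$ enters both the normalizing denominator $\widetilde{W}_\kappa^{\|\kappa\|_\infty}$ and the polynomial coefficients $\xi_l^{(\kappa)}$. For part~(a), fix two $\kappa$-minimal forms, say $\{H_l^{(\kappa)}, \xi_l^{(\kappa)}\}_{l=1}^{T_\kappa}$ and $\{\widetilde H_m^{(\kappa)}, \widetilde\xi_m^{(\kappa)}\}_{m=1}^{\widetilde T_\kappa}$, both obtained by the elimination procedure from the common Taylor expansion of $p_G(x)-p_{G_0}(x)$ up to order $r=\|\kappa\|_\infty$. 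The key observation is that both collections of functions $\{H_l^{(\kappa)}\}$ and $\{\widetilde H_m^{(\kappa)}\}$ span the same finite-dimensional function space $\mathcal{V}$ spanned by the linearly independent partial derivatives remaining after elimination, because the elimination only replaces a dependent derivative by a fixed linear combination of others with coefficients that are rational functions of the $\eta_i^0$ (hence constants, independent of the perturbing sequence $G$). Consequently there is an invertible constant matrix relating the two bases, and this induces an invertible-over-rationals relationship between the coefficient vectors $(\xi_l^{(\kappa)}(G))_l$ and $(\widetilde\xi_m^{(\kappa)}(G))_m$: each $\widetilde\xi_m^{(\kappa)}$ is a fixed linear combination of the $\xi_l^{(\kappa)}$ and vice versa. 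Dividing through by $\widetilde{W}_\kappa^{\|\kappa\|_\infty}(G,G_0)$, one sees that $\xi_l^{(\kappa)}(G)/\widetilde{W}_\kappa^{\|\kappa\|_\infty}(G,G_0)\to 0$ for all $l$ if and only if $\widetilde\xi_m^{(\kappa)}(G)/\widetilde{W}_\kappa^{\|\kappa\|_\infty}(G,G_0)\to 0$ for all $m$. Hence the existence of the sequence $G$ witnessing $\kappa$-singularity is independent of the chosen $\kappa$-minimal form.

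\textbf{Part~(b): monotonicity under the partial order $\preceq$.}
Suppose $G_0$ is $\kappa$-singular relative to $\Gcal$, witnessed by a sequence $G \in \Gcal$ with $\widetilde{W}_\kappa(G,G_0)\to 0$ and $\xi_l^{(\kappa)}(G)/\widetilde{W}_\kappa^{\|\kappa\|_\infty}(G,G_0)\to 0$ for all $l$. Fix $\kappa' \preceq \kappa$ and write $r'=\|\kappa'\|_\infty \le r=\|\kappa\|_\infty$. The plan is to use \emph{the same sequence} $G$ and show it witnesses $\kappa'$-singularity. First I would verify $\widetilde{W}_{\kappa'}(G,G_0)\to 0$: since $\kappa'\preceq\kappa$ componentwise, each term $|\theta^{(i)}-\theta^{(i)}_0|^{\kappa'_i}$ is controlled by $|\theta^{(i)}-\theta^{(i)}_0|^{\kappa_i}$ (up to constants on a compact domain when the difference is small, using $\kappa'_i\le\kappa_i$), so via Lemma~\ref{lemma:bound_overfit_Wasserstein_first} applied to both $D_\kappa$ and $D_{\kappa'}$ one gets $\widetilde{W}_{\kappa'}^{r'}(G,G_0)\lesssim \bigl(\widetilde{W}_\kappa^{r}(G,G_0)\bigr)^{?}\to 0$; more directly, $\widetilde{W}_{\kappa'}(G,G_0)\to 0$ follows because all atom perturbations and mass perturbations tend to zero. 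The substantive step is to relate the $\kappa'$-minimal form to the $\kappa$-minimal form. The $\kappa'$-minimal form involves only derivatives up to order $r'\le r$, so its basis functions $H_m^{(\kappa')}$ and coefficients $\xi_m^{(\kappa')}$ are obtained by truncating the Taylor expansion at order $r'$ and running the elimination; the coefficients $\xi_m^{(\kappa')}(G)$ are precisely the lower-order pieces ($|\alpha|\le r'$, suitably combined under elimination) of what appears in the $\kappa$-minimal expansion. One then needs the denominator comparison $\widetilde{W}_{\kappa'}^{r'}(G,G_0) \gtrsim \widetilde{W}_\kappa^{r}(G,G_0)^{r'/r}$ or an analogous bound, so that $\xi_m^{(\kappa')}(G)/\widetilde{W}_{\kappa'}^{r'}(G,G_0)\to 0$ can be deduced from the corresponding statement for $\kappa$.

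\textbf{The main obstacle.}
The delicate point is the bookkeeping in part~(b): I must show that every coefficient $\xi_m^{(\kappa')}(G)$ in \emph{some} $\kappa'$-minimal form, once divided by $\widetilde{W}_{\kappa'}^{r'}(G,G_0)$, tends to zero along the given sequence. The coefficients in the $\kappa'$-minimal form are polynomials in the perturbations $\Delta\eta_{ij}, \Delta p_{i\cdot}, p_{ij}$ of total ``weighted degree'' at most $r'$, whereas in the $\kappa$-minimal form the analogous coefficients have weighted degree at most $r$; the elimination steps in the two cases need not coincide when the order of truncation differs (a derivative that survives elimination at order $r$ might be eliminated or retained differently at order $r'$). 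The clean way around this — and the step I'd focus on — is to note that any linear dependence used in the order-$r'$ elimination is also available at order $r$ (it involves only low-order derivatives), so one can choose the $\kappa$-minimal form compatibly with the $\kappa'$-minimal form, i.e., so that the low-order part of the $\kappa$-minimal expansion literally \emph{is} the $\kappa'$-minimal expansion after regrouping; invariance from part~(a) then lets us restrict attention to this compatible choice without loss of generality. With that compatibility in hand, the convergence $\xi_l^{(\kappa)}(G)/\widetilde{W}_\kappa^{r}(G,G_0)\to 0$ for all $l$ forces the low-order coefficients to vanish fast enough, and the denominator inequality $\widetilde{W}_{\kappa'}^{r'}(G,G_0)\gtrsim\widetilde{W}_\kappa^{r}(G,G_0)$ (valid because $\widetilde{W}_\kappa^r=D_\kappa\le D_{\kappa'}\cdot(\text{const})$ after the elementary comparison of the $d_\kappa$ and $d_{\kappa'}$ terms for small perturbations) closes the argument, establishing $\kappa'$-singularity.
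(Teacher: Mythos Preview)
Your proposal is essentially correct, but part~(a) takes a genuinely different route from the paper, and there is a small scope restriction worth flagging.

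\textbf{Part (a).} You argue via a change-of-basis: two $\kappa$-minimal forms obtained by elimination from the same Taylor expansion have bases spanning the same finite-dimensional space $\mathcal{V}$, so an invertible constant matrix relates the coefficient vectors, and vanishing of one set of ratios is equivalent to vanishing of the other. This is valid \emph{for elimination-derived forms}. The paper instead argues by contradiction through the density difference: since the witnessing sequence makes $(p_G-p_{G_0})/\widetilde W_\kappa^{\|\kappa\|_\infty}\to 0$ pointwise, any other $\kappa$-minimal form along the same sequence must have $\liminf\max_l|\xi_l^{(\kappa)}/\widetilde W_\kappa^{\|\kappa\|_\infty}|=0$; otherwise, normalizing by this maximum and passing to a subsequence would contradict linear independence of the $H_l^{(\kappa)}$. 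The paper's route is more general in that it covers \emph{any} $\kappa$-minimal form in the sense of Definition~\ref{definition:rminimal_new}, not only those whose bases are subsets of partial derivatives; your span-equality claim need not hold in that generality. In exchange, your argument is more constructive (it shows the \emph{same} sequence works, whereas the paper only extracts a subsequence).

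\textbf{Part (b).} Your plan matches the paper's: reuse the witnessing sequence, compare denominators via $\widetilde W_{\kappa'}^{r'}\asymp D_{\kappa'}\gtrsim D_\kappa\asymp\widetilde W_\kappa^{r}$ (since $\kappa'_i\le\kappa_i$ gives $|\Delta|^{\kappa'_i}\ge|\Delta|^{\kappa_i}$ for small perturbations), and control the discrepancy between the $\kappa'$- and $\kappa$-form coefficients arising from elimination of derivatives of order between $r'+1$ and $r$. The paper streamlines the bookkeeping by reducing to the case $\|\kappa'\|_\infty\in\{\|\kappa\|_\infty-1,\|\kappa\|_\infty\}$ and handling two sub-cases explicitly; when higher-order derivatives are eliminated into lower-order basis functions, the extra monomials $p_{ij}(\Delta\eta_{ij})^\beta$ with $|\beta|=r$ are shown to be $o(W_{r-1}^{r-1})=o(D_{\kappa'})$. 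Your ``compatible choice of minimal forms plus invariance from~(a)'' achieves the same end, and your direct (non-inductive) version also works once you note that any such extra monomial with $|\beta|>r'$ satisfies $|p_{ij}(\Delta\eta_{ij})^\beta|\le p_{ij}\|\Delta\eta_{ij}\|_\infty^{|\beta|}=o(p_{ij}\|\Delta\eta_{ij}\|_\infty^{r'})\le o(D_{\kappa'})$. So the approaches coincide in substance; the paper's step-by-step reduction is just a cleaner way to organize it.
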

Let $\overline{\mathbb{N}} := \mathbb{N} \cup \left\{\infty\right\}$. The monotonicity 
of $\kappa$-singularity naturally leads to the following notion of
singularity index of a mixing measure $G_0$ (and the corresponding parameters) 
relative to an ambient space $\Gcal$:

\begin{definition} \label{definition:singularity_index} 
For any $\kappa \in \overline{\mathbb{N}}^{d}$, we say $\kappa$ is a singularity index of $G_{0}$ relative to a given class $\Gcal$ if and only if $G_{0}$ is $\kappa'$-singular relative to $\Gcal$ for any $\kappa' \prec \kappa$, and there is no $\kappa' \succeq \kappa$ such that $G_{0}$ remains $\kappa'$-singular relative to $\Gcal$.
Define the \emph{singularity index set} 
\[\singset(G_0|\Gcal) := \left\{\kappa \in \overline{\mathbb{N}}^{d}: \kappa \ \text{is a singularity index of} \ G_{0} \ \text{relative to} \ \Gcal \right\}.\] 
\end{definition}

This definition suggests that the singularity index set may not be always a singleton in general.
The following proposition clarifies the relation between singularity level $\lev(G_{0}|\Gcal)$ and 
singularity index set $\singset(G_{0}|\Gcal)$.

\begin{proposition} \label{proposition:singularity_set_level}
Assume that $\lev(G_{0}|\Gcal)=r$ for some $r \geq 0$. Then,
\begin{itemize}
\item[(i)] If $r=0$, then $\singset(G_{0}|\Gcal)=\left\{(1,\ldots,1)\right\}$.
\item[(ii)] If $r=\infty$, then $\singset(G_{0}|\Gcal)=\left\{(\infty,\ldots,\infty)\right\}$.
\item[(iii)] If $1 \leq r <\infty$, then there exists $\kappa \in \singset(G_{0}|\Gcal)$ such that $\kappa \preceq (r+1,\ldots,r+1)$ and at least one component of $\kappa$ is $r+1$. 
\item[(iv)] If $r \geq 1$, and $G_{0}$ is not $\overline{\kappa}$-singular relative to $\Gcal$
for some $\overline{\kappa} \in \mathbb{N}^d$, then there exists
$\kappa \in \singset(G_0|\Gcal)$ such that $\kappa \preceq \overline{\kappa}$. 
\item[(v)] If some finite $\kappa \in \singset(G_{0}|\Gcal)$, then $\lev(G_{0}|\Gcal)\leq \|\kappa\|_{\infty}-1$.
Moreover, if $\kappa$ is unique, then $\lev(G_{0}|\Gcal)= \|\kappa\|_{\infty}-1$.
\end{itemize}
\end{proposition}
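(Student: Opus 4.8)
The plan is to prove the five parts of Proposition~\ref{proposition:singularity_set_level} directly from the definitions of singularity level (Def.~\ref{definition:singularity_level_emixture}), $\kappa$-singularity (Def.~\ref{def-rsingular_set}), and the singularity index set (Def.~\ref{definition:singularity_index}), leaning heavily on the monotonicity statement in Lemma~\ref{lemma:minimal_form_general}(b) and on the simple observation that when $\kappa = (r,\ldots,r)$ the $\kappa$-minimal form coincides with the $r$-minimal form (so $\kappa$-singularity with $\kappa=(r,\ldots,r)$ is exactly $r$-singularity). The overarching strategy is a bracketing one: every singularity index $\kappa$ must be ``pinned'' between the largest vector $(r,\ldots,r)$ at which $G_0$ is still singular and some vector at which it first fails to be singular, and translating between $\ell_\infty$-norms of these vectors and the scalar singularity level $r$ gives all the claimed relations.

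First I would dispatch (i) and (ii). For (i), $\lev(G_0|\Gcal)=0$ means $G_0$ is not $1$-singular, i.e.\ not $(1,\ldots,1)$-singular; by monotonicity it is not $\kappa$-singular for any $\kappa \succeq (1,\ldots,1)$, while the empty condition ``$\kappa'$-singular for all $\kappa' \prec (1,\ldots,1)$'' is vacuously true since there is no $\kappa' \in \mathbb{N}^d$ strictly below $(1,\ldots,1)$. Hence $(1,\ldots,1)$ is a singularity index, and any other candidate index $\kappa$ would require $\kappa' $-singularity for all $\kappa' \prec \kappa$; if $\kappa \succ (1,\ldots,1)$ this forces $(1,\ldots,1)$-singularity, a contradiction, so $(1,\ldots,1)$ is the unique index. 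For (ii), $\lev(G_0|\Gcal)=\infty$ means $G_0$ is $r$-singular for all $r$, hence (by monotonicity, squeezing any finite $\kappa$ below some $(r,\ldots,r)$) $\kappa$-singular for every finite $\kappa$; thus no finite $\kappa$ can be a singularity index (the ``no $\kappa' \succeq \kappa$ remains singular'' clause fails), and $(\infty,\ldots,\infty)$ satisfies the definition since every $\kappa' \prec (\infty,\ldots,\infty)$ is finite hence singular, while nothing strictly dominates it.

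Next, parts (iii) and (iv), which are the substantive core. For (iii), with $1 \le r < \infty$: $G_0$ is $r$-singular, so $(r,\ldots,r)$-singular, hence by monotonicity $\kappa'$-singular for every $\kappa' \preceq (r,\ldots,r)$; but $G_0$ is not $(r+1)$-singular, i.e.\ not $(r+1,\ldots,r+1)$-singular. I would now run an extremal argument: starting from $\kappa^{(0)} := (r+1,\ldots,r+1)$, which is not a singularity point, descend coordinate-by-coordinate — whenever the current vector is still non-singular and strictly dominates $(r,\ldots,r)$, decrease one of its coordinates that equals $r+1$ — until reaching a vector $\kappa$ that is itself not $\kappa$-singular but every $\kappa' \prec \kappa$ \emph{is} $\kappa'$-singular. (The descent terminates because $(r,\ldots,r)$ and everything below it is singular, so we cannot go below that floor; and the well-ordering of $\mathbb{N}^d$ below $(r+1,\ldots,r+1)$ guarantees a minimal non-singular element is hit.) This $\kappa$ is by construction a singularity index with $\kappa \preceq (r+1,\ldots,r+1)$; it must have at least one coordinate equal to $r+1$, for otherwise $\kappa \preceq (r,\ldots,r)$ which is singular, contradicting that $\kappa$ is not $\kappa$-singular. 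Part (iv) is the same descent argument but starting from the given non-singular $\overline{\kappa}$ instead of $(r+1,\ldots,r+1)$: descend to a minimal non-singular vector $\kappa \preceq \overline{\kappa}$; such a minimal element is exactly a singularity index by Def.~\ref{definition:singularity_index}. The only point requiring care here — and the one I expect to be the main obstacle — is making the ``descend to a minimal non-singular element'' step airtight: one must verify that the set of non-singular multi-indices below a fixed bound, being a subset of a finite poset, indeed has minimal elements, and that minimality in the poset sense translates precisely to the two-clause condition in the definition of singularity index (all strictly-smaller are singular; nothing larger-or-equal is singular follows from monotonicity of $\kappa$-singularity applied contrapositively). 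I would state this as a small self-contained combinatorial lemma to keep the five parts clean.

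Finally, for (v): if $\kappa \in \singset(G_0|\Gcal)$ is finite, then in particular $G_0$ is not $\kappa$-singular; let $r := \|\kappa\|_\infty - 1$. I claim $G_0$ is not $(r+1)$-singular, i.e.\ not $(\|\kappa\|_\infty,\ldots,\|\kappa\|_\infty)$-singular: since $\kappa \preceq (\|\kappa\|_\infty,\ldots,\|\kappa\|_\infty)$, if the latter were $\kappa$-singular then by monotonicity so would $\kappa$ be, a contradiction. Hence $\lev(G_0|\Gcal) \le \|\kappa\|_\infty - 1 = r$. For the ``moreover'' clause, suppose $\kappa$ is the unique singularity index and yet $\lev(G_0|\Gcal) = r' < \|\kappa\|_\infty - 1$; then by (iii) (applied with $r'$) there is a singularity index $\kappa^{\star}$ with $\|\kappa^{\star}\|_\infty = r'+1 < \|\kappa\|_\infty$, so $\kappa^{\star} \ne \kappa$, contradicting uniqueness. (If instead $r' = 0$ use (i); if $r' = \infty$ that contradicts finiteness of $\kappa$ via (ii).) Therefore $\lev(G_0|\Gcal) = \|\kappa\|_\infty - 1$, completing the proof. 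Throughout, the only external inputs are monotonicity from Lemma~\ref{lemma:minimal_form_general}(b), the identification of $(r,\ldots,r)$-singularity with $r$-singularity, and the definition of $\lev$ as the largest $r$ with $r$-singularity — everything else is bookkeeping on the finite poset $\{0,1,\ldots\}^d$.
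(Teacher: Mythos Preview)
Your proposal is correct and follows essentially the same route as the paper's proof: parts (i) and (ii) are immediate from the definitions; parts (iii) and (iv) descend through the finite poset of multi-indices below $(r+1,\ldots,r+1)$ (resp.\ $\overline{\kappa}$) to locate a minimal non-singular element, which is then a singularity index by monotonicity; and part (v) compares $\|\kappa\|_\infty$ with $\lev$ via the identification of $(s,\ldots,s)$-singularity with $s$-singularity, invoking (iii) for the uniqueness clause. Your explicit framing of the descent as ``find a minimal element of a finite non-empty subset of $\{1,\ldots,r+1\}^d$'' is in fact slightly cleaner than the paper's iterative wording, but the argument is the same.
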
 
This proposition establishes that when the singularity index set of a mixing measure $G_0$ relative
to $\Gcal$ is a singleton,
one can determine the corresponding singularity level of $G_0$ immediately. 
We will give several examples of ambient space $\Gcal$ 
and kernel $f$ under which this situation holds (cf. the examples in Section \ref{Section:singularity_index_classical_mixtures}). 
The role of the singularity index is in determining minimax lower bound and convergence rate of
estimation for
individual parameters that make up the mixing measure $G_0$. Briefly speaking,
provided that $\kappa$ is a singularity index of $G_{0}$, 
then any density estimation method, such as 
MLE or Bayesian estimation, that guarantees, say, a root-$n$ rate of convergence toward density 
$p_{G_{0}}$  under Hellinger metric will lead to the convergence rate $n^{-1/2\|\kappa\|_{\infty}}$ of 
estimating $G_{0}$ under generalized transportation metric $\widetilde{W}_{\kappa}$, 
which is also minimax under additional conditions on kernel density $f$. The implication 
of such results is that the $i$-th component of each atom of $G_{0}$ can be estimated 
with rate $n^{-1/2\kappa_{i}}$ where $\kappa_{i}$ is the i-th component 
of $\kappa$. This theory will be presented in Appendix B.

\paragraph{Complete inhomogeneity}
Although the singularity index captures the inhomogeneous convergence behavior of different components of
an atom of $G_0$, i.e., parameters of different types such as location, scale and skewness,
it is possible that every parameter in a mixture model admits a different convergence rate, including those of the same type but associating with different 
mixture components.
We call this phenomenon "complete inhomogeneity". To characterize this, we shall introduce
 blocked generalized transportation distance by replacing uniform semi-metric 
$d_{\kappa}$ in the formulation of generalized transportation distance as possibly 
different semi-metrics $d_{K_{i}}$ with respect to the $i$-th atom $\eta_{i}^{0}$ of 
$G_{0}$ where $K_{i} \in \mathbb{N}^{d}$ for all $1 \leq i \leq k_{0}$. The best 
possible convergence rates of $\eta_{i}^{0}$, therefore, will be determined by the 
optimal choices of $K_{i}$ for any $i$. To quantify these optimal choices, we define a 
new notion of \emph{singularity matrix} in terms of a matrix $K$ which treat all $K_{i}$ as its rows. 
With this concept in place, we can establish rates of convergence 
for estimating $G_{0}$, its atoms, as well as components of these atoms based 
on specific values of the singularity matrix. Due to space constraint, the detailed formulation 
and discussion of singularity matrix are deferred to Appendix C.
%%%%%%%%%%%%%%%%%%%%%%%%%%%%%%%%%%%%%%%%%%%%%%%%%
%%%%%%%%%%%%%%%%%%%%%%%%%%%%%%%%%%%%%%%%%%%%%%%

%\nhatcomm{I think in order to establish some algorithms to deal with $\xi_{i,\kappa}^{(r)}/W_r^r(G,G_0)$ and $\zeta_{i}^{(r)}/W_r^r(G,G_0)$, we need to have certain assumption on $f$, such as $f$ has linear/nonlinear pde structure. However, even with this assumption, it is still challenging to deal with the mismatch in the order of $\Delta \eta_{ij}$.}.

%\input{procedure_singularity_skew_normal}

%%%%%%%%%%%%%%%%%%%%%%%%%%%%%%%%%%%%%%%%%%%%%%%%%%%%%%%%%%%%%%%%%%%%%%%%%%%%%%%%%%%%
%%%%%%%%%%%%%%%%%%%%%%%%%%%%%%%%%%%%%%%%%%%%%%%%%%%%%%%%%%%%%%%%%%%%%%%%%%%%%%%%%%%%
%%%%%%%%%%%%%%%%%%%%%%%%%%%%%%%%%%%%%%%%%%%%%%%%%%%%%%%%%%%%%%%%%%%%%%%%%%%%%%%%%%%%

\subsection{Revisiting known results on finite mixtures} 
\label{Section:singularity_index_classical_mixtures}

In this section, singularity structures of parameter space will be examined to shed some light on
previously known or recent results on the parameter estimation behavior of
several classes of finite mixtures.  

\paragraph{O-mixtures with second-order identifiable kernels} As being studied by 
\cite{Chen1992, Nguyen-13,Ho-Nguyen-EJS-16,Rousseau-Mengersen-11}, the second order identifiability condition of 
kernel density $f$ simply means that the collection of $\{\partial^\kappa f/\partial \eta^\kappa(x|\eta_j)
| j=1,\ldots, k_0; |\kappa| \leq 2 \}$ evaluated at $G_{0}$ is linearly independent. 
 
\begin{proposition} \label{proposition:strong_identifiability_singularity_index}
Assume that $f$ is second-order identifiable and admits uniform Lipschitz condition up 
to the second order. Then, $\lev(G_{0}|\Ocal_{k})=1$ and $\singset(G_{0}|\Ocal_{k})=
\left\{(2,\ldots,2)\right\}$.
\end{proposition}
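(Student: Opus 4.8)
The plan is to establish the two claims separately: first that $\lev(G_0|\Ocal_k) = 1$, and then that the singularity index set is exactly $\{(2,\ldots,2)\}$. For the singularity level, I would proceed as in the two bulleted examples following Definition~\ref{definition:singularity_level_emixture}. Since $k > k_0$, one always has $\lev(G_0|\Ocal_k) \geq 1$: one can take a sequence of $G \in \Ocal_k$ that splits one atom of $G_0$ into two nearby atoms with masses summing to the original mass, producing only first-order terms whose coefficients can be made to vanish faster than $W_1(G,G_0)$. This gives $1$-singularity; the invariance part of Lemma~\ref{lemma:minimal_form} ensures this does not depend on the chosen $1$-minimal form. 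The substantive direction is to show $G_0$ is \emph{not} $2$-singular relative to $\Ocal_k$. Here I would use the second-order identifiability hypothesis: in the Taylor expansion \eqref{eqn:generalTaylorexpansion_overfit} with $r=2$, the functions appearing are $f(x|\eta_i^0)$ and $\partial^{|\kappa|}f/\partial\eta^\kappa(x|\eta_i^0)$ for $|\kappa|\le 2$, $i=1,\ldots,k_0$ — and second-order identifiability says precisely that this collection is linearly independent. Hence no reduction is needed and we may take a $2$-minimal form whose functions $H_l^{(2)}$ are exactly these partial derivatives, with coefficients $\xi_l^{(2)}(G)$ being the natural polynomial combinations of $\Delta p_{i\cdot}$, $p_{ij}$ and components of $\Delta\eta_{ij}$.

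The key step is then: if all $\xi_l^{(2)}(G)/W_2^2(G,G_0) \to 0$ along some sequence $G \to G_0$ in $W_2$, derive a contradiction. The coefficients include, for each atom $\eta_i^0$, the quantity $\sum_j p_{ij} \Delta\eta_{ij}$ (first-order, vector-valued), the quantity $\sum_j p_{ij}(\Delta\eta_{ij})^{\otimes 2}$ (second-order), and $\Delta p_{i\cdot}$ plus lower-order mass terms. By Lemma~\ref{lemma:bound_overfit_Wasserstein}, $W_2^2(G,G_0) \asymp \sum_{i,j} p_{ij}\|\Delta\eta_{ij}\|_2^2 + \sum_i |\Delta p_{i\cdot}|$. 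The requirement that the second-order coefficients $\sum_j p_{ij}(\Delta\eta_{ij})^{\otimes 2}$ — in particular the diagonal entries $\sum_j p_{ij}(\Delta\eta_{ij}^{(t)})^2$ for each coordinate $t$ — all be $o(W_2^2(G,G_0))$, combined with the mass terms being $o(W_2^2(G,G_0))$ as well, forces $W_2^2(G,G_0) = o(W_2^2(G,G_0))$, which is impossible. One has to be slightly careful about cross terms versus diagonal terms and about the fact that $\sum_{i,j}p_{ij}\|\Delta\eta_{ij}\|_2^2 = \sum_{i,t}\sum_j p_{ij}(\Delta\eta_{ij}^{(t)})^2$ is exactly a sum of diagonal second-order coefficients, so this is the clean route. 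This shows $G_0$ is not $2$-singular, hence $\lev(G_0|\Ocal_k) = 1$.

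For the singularity index set, I would invoke Proposition~\ref{proposition:singularity_set_level}(iii): since $\lev(G_0|\Ocal_k) = 1$, there exists $\kappa \in \singset(G_0|\Ocal_k)$ with $\kappa \preceq (2,\ldots,2)$ and at least one component equal to $2$. It remains to rule out any $\kappa$ with some component equal to $1$. Suppose $\kappa_t = 1$ for some coordinate $t$. Then $\|\kappa\|_\infty = 2 = r+1$ still, and a $\kappa$-minimal form again uses the same linearly independent partial derivatives (up to order $\|\kappa\|_\infty = 2$) by second-order identifiability. Now I would run the same contradiction argument with the generalized distance $\widetilde W_\kappa$ in place of $W_2$: Lemma~\ref{lemma:bound_overfit_Wasserstein_first} gives $\widetilde W_\kappa^2(G,G_0) \asymp \sum_{i,j} p_{ij} d_\kappa^2(\eta_{ij},\eta_i^0) + \sum_i |\Delta p_{i\cdot}|$, and $d_\kappa^2(\eta_{ij},\eta_i^0) = \sum_t |\Delta\eta_{ij}^{(t)}|^{\kappa_t}$. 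For the coordinates with $\kappa_t = 2$ the diagonal second-order coefficients again dominate; for a coordinate with $\kappa_t = 1$, the \emph{first}-order coefficient $\sum_j p_{ij}\Delta\eta_{ij}^{(t)}$ must be $o(\widetilde W_\kappa^2(G,G_0))$, but this is a $\kappa$-minimal-form coefficient $\xi_l^{(\kappa)}$, and together with the mass terms and the $\kappa_t=2$ coordinates one again forces $\widetilde W_\kappa^2(G,G_0) = o(\widetilde W_\kappa^2(G,G_0))$ — the only subtlety is handling the $\kappa_t=1$ coordinate, where $\sum_j p_{ij}|\Delta\eta_{ij}^{(t)}|$ need not be controlled directly by $\sum_j p_{ij}|\Delta\eta_{ij}^{(t)}|^{1}$... in fact it \emph{is} exactly that term, so the bound is immediate. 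Hence $G_0$ is not $\kappa$-singular for such $\kappa$, so no singularity index has a component equal to $1$, forcing $\singset(G_0|\Ocal_k) = \{(2,\ldots,2)\}$.

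The main obstacle I anticipate is the bookkeeping in the non-$2$-singular argument: one must be sure that the coefficients $\xi_l^{(2)}(G)$ of a valid $2$-minimal form genuinely include (or linearly determine) the full second-order moment combinations $\sum_j p_{ij}(\Delta\eta_{ij})^{\otimes 2}$ and the mass differences, so that their simultaneous vanishing relative to $W_2^2$ really exhausts all the terms appearing in $D_2(G,G_0)$. This is where second-order identifiability is doing the real work — it guarantees no collapsing of these coefficients into fewer linearly independent functions — and where invariance (Lemma~\ref{lemma:minimal_form}(a)) is needed so that verifying non-singularity on one convenient minimal form suffices.
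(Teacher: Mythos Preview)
Your argument for $\lev(G_0|\Ocal_k) = 1$ is sound and matches the paper's: the $1$-singularity comes from atom-splitting, and non-$2$-singularity follows because under second-order identifiability the diagonal second-order coefficients $\sum_j p_{ij}(\Delta\eta_{ij}^{(t)})^2$ appear directly in a valid $2$-minimal form and, together with the mass terms, sum to $D_2(G,G_0)$.

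Your treatment of the singularity index, however, contains a genuine error that sends the argument in exactly the wrong direction. You claim that for $\kappa$ with $\kappa_t = 1$ one can show $G_0$ is \emph{not} $\kappa$-singular, because the first-order coefficient $\sum_j p_{ij}\Delta\eta_{ij}^{(t)}$ controls the term $\sum_j p_{ij}|\Delta\eta_{ij}^{(t)}|$ in $D_\kappa(G,G_0)$. These are not the same quantity: the coefficient lacks absolute values inside the sum, and in the overfitted regime cancellation is possible. Take $s_1=2$, $p_{11}=p_{12}=p_1^0/2$, $\Delta\eta_{11}=-\Delta\eta_{12}$; then $\sum_j p_{1j}\Delta\eta_{1j}^{(t)}=0$ identically while $\sum_j p_{1j}|\Delta\eta_{1j}^{(t)}|\asymp |\Delta\eta_{11}^{(t)}|$. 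This is precisely the construction the paper uses to prove the \emph{opposite} of your claim: $G_0$ \emph{is} $(1,r,\ldots,r)$-singular (and every permutation thereof) relative to $\Ocal_k$ for all $r\geq 1$.

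Even if your claim held, the inference ``$G_0$ is not $\kappa$-singular, hence $\kappa$ is not a singularity index'' is invalid: by Definition~\ref{definition:singularity_index}, a singularity index $\kappa$ is one for which $G_0$ is not $\kappa'$-singular for any $\kappa'\succeq\kappa$, in particular $\kappa'=\kappa$; so non-singularity at $\kappa$ is consistent with, not contradictory to, $\kappa$ being a singularity index. The paper's route is the correct one: show $G_0$ \emph{is} $\kappa'$-singular for arbitrarily large $\kappa'=(r,\ldots,r,1,r,\ldots,r)\succeq\kappa$ whenever $\kappa$ has a component equal to $1$, violating the maximality condition in the definition. This simultaneously verifies condition (i) for $(2,\ldots,2)$ (take $r=2$) and rules out every other candidate, yielding uniqueness.
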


By Proposition \ref{proposition:strong_identifiability_singularity_index} and Theorem~\ref{proposition:convergence_and_minimax}  
the convergence rate of estimating $G_{0}$ under o-mixtures of second order identifiable kernel $f$ is 
$n^{-1/4}$. Moreover, by Theorem~\ref{proposition:convergence_and_minimax_general} in Appendix B, the components of 
each atom of $G_{0}$ also admit uniform convergence rate $n^{-1/4}$. 

\paragraph{Univariate Gaussian o-mixtures} Location-scale Gaussian mixtures are among the most popular mixture models
in statistics. For simplicity, consider univariate Gaussian o-mixtures and let $G_0 \in \Ecal_{k_0}
\subset \Ocal_{k,c_{0}}$ for some $k>k_0$ and small constant $c_{0}>0$. That is, the ambient space
$O_{k,c_{0}} \subset \Ocal_k$ contains only (discrete) probability measures
whose point masses are bounded from below by $c_{0}$. Let $\left\{f(x|\theta, v=\sigma^2)
\right\}$ be the family of univariate location-scale Gaussian distributions. Recall the partial differential equation, Eq.~\eqref{key-normal},
satisfied by Gaussian kernels \cite{Chen-2003,Shimotsu-2014,Ho-Nguyen-Ann-16}. Following \cite{Ho-Nguyen-Ann-16},
%\begin{eqnarray}
%\label{key-gaussian}
%\dfrac{\partial^{2}{f}}{\partial{\theta}^{2}}(x|\theta,\sigma)=2\dfrac{\partial{f}}{\partial{\sigma^{2}}}(x|\theta,\sigma). \nonumber
%\end{eqnarray}
denote by $\overline{r}(k-k_{0})$ the minimal value of $r>0$ such that the following system of 
polynomial equations 
\begin{eqnarray}
\mathop {\sum }\limits_{j=1}^{k-k_{0}+1}{\mathop {\sum }\limits_{\substack{n_{1}+2n_{2} = 
\alpha \\ n_{1},n_{2} \geq 0}}{\dfrac{c_{j}^{2}a_{j}^{n_{1}}b_{j}^{n_{2}}}{
n_{1}!n_{2}!}}}=0 \ \ \text{for each} \;
\alpha = 1,\ldots, s
\label{eqn:generalovefittedGaussianzero_Gaussian_mulindex}
\end{eqnarray}
does not have any solution for the unknowns $(a_{j},b_{j},c_{j})_{j=1}^{k-k_{0}+1}$
such that all of $c_{j}$s are non-zeros, and at least one of 
the $a_{j}$s is non-zero. By means of the argument from Proposition 2.2 in \cite{Ho-Nguyen-Ann-16}, we can quickly verify that the singularity level of $G_{0}$ is 
$\lev(G_{0}|\Ocal_{k,c_{0}})=r(k-k_{0})-1$. It leads to the 
convergence rate $n^{-1/2r(k-k_{0})}$ of estimating mixing measure $G_{0}$ when we 
overfit Gaussian mixture models by $k$ components, as established by~\cite{Ho-Nguyen-Ann-16}. 
However, we can say more: it turns out that  
the location parameters and the scale parameters in the Gaussian mixtures admit different rates of convergence. 
This is due to examining the singularity index of $G_{0}$.
\begin{proposition} \label{proposition:Gaussian_mulindex}
 For any $G_{0} \in \Ecal_{k_{0}} \cap \Ocal_{k_{0},c_{0}}$, we obtain 
\begin{eqnarray}
\singset(G_{0}|\Ocal_{k,c_{0}})=\begin{cases} \biggr\{\biggr(\overline{r}(k-k_{0}),\dfrac{\overline{r}(k-k_{0})}{2}\biggr)\biggr\}, & \mbox{if} \ \overline{r}(k-k_{0}) \ \mbox{is an even number} \\ \left\{\biggr(\overline{r}(k-k_{0}),\dfrac{\overline{r}(k-k_{0})+1}{2}\biggr)\right\}, & \mbox{if} \ \overline{r}(k-k_{0}) \ \mbox{is an odd number}. \end{cases} \nonumber
\end{eqnarray}
\end{proposition}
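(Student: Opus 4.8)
The plan is to leverage the already-established computation of the singularity \emph{level}, $\lev(G_0|\Ocal_{k,c_0}) = \overline r(k-k_0)-1$, together with the system of polynomial equations \eqref{eqn:generalovefittedGaussianzero_Gaussian_mulindex}, but now tracking separately the exponents attached to the location perturbations $\Delta\theta$ and the scale perturbations $\Delta v$. Write $d=2$, so a candidate singularity index has the form $\kappa = (\kappa_1,\kappa_2)$ with $\kappa_1$ governing $\theta$ and $\kappa_2$ governing $v$. By Proposition~\ref{proposition:singularity_set_level}(iii), since $\lev = \overline r - 1$ (abbreviating $\overline r := \overline r(k-k_0)$), any $\kappa \in \singset(G_0|\Ocal_{k,c_0})$ satisfies $\kappa \preceq (\overline r,\overline r)$ and has at least one component equal to $\overline r$. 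So the first task is to decide whether $\kappa_1 = \overline r$ or $\kappa_2 = \overline r$, and then to find the maximal value of the other component for which $\kappa$-singularity still holds.

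First I would set up the $\kappa$-minimal form for a sequence $G_n \to G_0$ under $\widetilde W_\kappa$, reusing the Taylor-expansion machinery of Section~\ref{Section:inhomogeneity_singularity} and the Gaussian PDE \eqref{key-normal}, which lets one eliminate all $\partial^2 f/\partial\theta^2$ terms in favor of $\partial f/\partial v$. Exactly as in Proposition~2.2 of~\cite{Ho-Nguyen-Ann-16}, the surviving linearly independent functions are $\partial^\alpha f/\partial\theta^\alpha(x|\theta_i^0)$ for $\alpha = 0,1,\ldots$, and the vanishing of the coefficients reduces to the system \eqref{eqn:generalovefittedGaussianzero_Gaussian_mulindex}, where $a_j$ corresponds to a scaled $\Delta\theta_{ij}$ and $b_j$ to a scaled $\Delta v_{ij}$. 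The key quantitative point is the homogeneity weights: in the $\widetilde W_\kappa$ normalization, $\Delta\theta_{ij}$ contributes with weight $1/\kappa_1$ and $\Delta v_{ij}$ with weight $1/\kappa_2$ relative to $\|\kappa\|_\infty$. The $\alpha$-th equation in \eqref{eqn:generalovefittedGaussianzero_Gaussian_mulindex} collects monomials $a_j^{n_1} b_j^{n_2}$ with $n_1 + 2n_2 = \alpha$; matching the orders of magnitude after the $\widetilde W_\kappa$-rescaling forces a relation between $\kappa_1$ and $\kappa_2$. Concretely, the term $b_j^{n_2}$ (with $n_1=0$) sits at order proportional to $n_2\cdot(\|\kappa\|_\infty/\kappa_2)$ while $a_j^{2n_2}$ sits at order $2n_2\cdot(\|\kappa\|_\infty/\kappa_1)$; for these to be comparable — which is what makes the system nontrivially solvable at the critical scaling — one needs $\kappa_1 = 2\kappa_2$ (up to the rounding that occurs because $\kappa_2$ must be a natural number). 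This is precisely the source of the two cases in the statement: if $\overline r$ is even, $\kappa_2 = \overline r/2$ works exactly; if $\overline r$ is odd, one must round up to $\kappa_2 = (\overline r+1)/2$, which is the smallest integer making $\kappa$-singularity survive while keeping $\kappa_1 = \overline r$.

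The argument then splits into two directions. For the \emph{achievability} direction ($\kappa$-singularity of the stated $\kappa$): I would take the solution $(a_j,b_j,c_j)$ of \eqref{eqn:generalovefittedGaussianzero_Gaussian_mulindex} at $r = \overline r$ guaranteed by the definition of $\overline r$ (it exists at level $\overline r$ but, by minimality, \emph{not} at level $\overline r - 1$ — wait, more precisely $\overline r$ is the minimal $r$ for which the system has \emph{no} nontrivial solution, so I use a nontrivial solution at level $\overline r - 1$), and build an explicit sequence $\eta_{ij}^n = \eta_i^0 + (c\,a_j\epsilon_n^{1/\kappa_1},\, c\,b_j\epsilon_n^{1/\kappa_2})$ with appropriate masses $p_{ij}^n$, verifying via Lemma~\ref{lemma:bound_overfit_Wasserstein_first} that $\widetilde W_\kappa(G_n,G_0)\asymp \epsilon_n$ and that every $\xi_l^{(\kappa)}(G_n)/\widetilde W_\kappa^{\|\kappa\|_\infty}(G_n,G_0)\to 0$. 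For the \emph{maximality} direction: I must show that $\kappa$ cannot be increased in either coordinate. Increasing $\kappa_1$ past $\overline r$ is blocked because then the relevant subsystem of \eqref{eqn:generalovefittedGaussianzero_Gaussian_mulindex} at order $\overline r$ has only the trivial solution (this is exactly how $\overline r$ was defined), forcing one of the rescaled coefficients not to vanish. Increasing $\kappa_2$ past $\lceil \overline r/2\rceil$ while keeping $\kappa_1 = \overline r$ is blocked by the homogeneity-matching obstruction above: the $b_j$-terms would then be forced to vanish at a strictly lower order than the $a_j$-terms can compensate, again leaving a nonzero coefficient. I would use Proposition~\ref{proposition:singularity_set_level}(iv) to conclude that no index strictly above the stated $\kappa$ lies in the set, and hence — combined with achievability and the fact that the stated $\kappa$ is unique of this form — that $\singset(G_0|\Ocal_{k,c_0})$ is exactly the claimed singleton.

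The main obstacle I anticipate is the maximality direction for the scale coordinate $\kappa_2$: showing that no nontrivial solution of the rescaled system survives when $\kappa_2 > \lceil\overline r/2\rceil$. This requires a careful bookkeeping of which monomials $a_j^{n_1}b_j^{n_2}$ in each equation $n_1+2n_2=\alpha$ are "leading" under the anisotropic scaling, and an argument (essentially a triangularity/elimination argument on the resulting graded system, of the kind used in~\cite{Ho-Nguyen-Ann-16} but now with two distinct scaling exponents) that the would-be-vanishing leading coefficients cannot all be zero simultaneously unless all $b_j = 0$, which then reduces to the pure-location system and contradicts the definition of $\overline r$. Managing the parity-dependent rounding cleanly — i.e., verifying that $\kappa_2 = (\overline r+1)/2$ really is both achievable and maximal when $\overline r$ is odd, rather than $(\overline r-1)/2$ — is the delicate point where the two cases in the statement genuinely diverge.
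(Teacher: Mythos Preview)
Your overall architecture --- reduce to the polynomial system \eqref{eqn:generalovefittedGaussianzero_Gaussian_mulindex} and read off the $2{:}1$ homogeneity between $\Delta\theta$ and $\Delta v$ --- matches the paper's, and your non-singularity direction (that $G_0$ is not $(\overline r,\lceil\overline r/2\rceil)$-singular because the rescaled system at level $\overline r$ has no nontrivial solution) is exactly the paper's Step~1. But your uniqueness argument has a genuine gap. You misread Proposition~\ref{proposition:singularity_set_level}(iii): it asserts only that \emph{some} $\kappa\in\singset$ satisfies $\kappa\preceq(\overline r,\overline r)$ with one coordinate equal to $\overline r$, not that every element does. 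So you cannot begin by restricting to such $\kappa$, and nothing in your plan excludes a candidate index like $\tau=(\overline r+5,\,\overline r/2-1)$, which is incomparable to the target. Your ``maximality'' only checks that one cannot increase a coordinate \emph{of the target}, and Proposition~\ref{proposition:singularity_set_level}(iv) is again an existence statement --- neither rules out such $\tau$.

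The paper closes this gap by proving more in its Step~2: using a single \emph{fixed} sequence built from a nontrivial solution of \eqref{eqn:generalovefittedGaussianzero_Gaussian_mulindex} at level $\overline r-1$ (with $\Delta\theta_{1j}=a_j^*/n$, $\Delta v_{1j}=2b_j^*/n^2$), it shows $G_0$ is $(l,\overline r/2-1)$-singular \emph{and} $(\overline r-1,l)$-singular for \emph{every} $l\geq\overline r$. This forces any $\tau\in\singset$ to have $\tau_1\geq\overline r$ and $\tau_2\geq\overline r/2$ (otherwise $G_0$ would be $\tau$-singular, violating the definition of singularity index at $\tau$), whence $\tau\succeq(\overline r,\overline r/2)$, and Step~1 then pins $\tau$ down uniquely. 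Your achievability construction is $\kappa$-dependent (the scalings $\epsilon_n^{1/\kappa_i}$ change with the $\kappa$ you test), so it does not automatically deliver the ``for all $l$'' conclusion; the fix is to use the paper's fixed $1/n$, $1/n^2$ sequence and verify directly that it witnesses $\kappa$-singularity across both one-parameter families. (A related minor slip: your ``achievability'' should establish $\kappa'$-singularity for $\kappa'\prec\kappa$, not ``$\kappa$-singularity of the stated $\kappa$'' --- by definition $G_0$ must \emph{not} be $\kappa$-singular at a singularity index $\kappa$.)
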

The result of Proposition \ref{proposition:Gaussian_mulindex} indicates that under 
Gaussian o-mixtures the best possible convergence rate of location parameter is 
$n^{-1/2\overline{r}(k-k_{0})}$ while that of scale parameter is $n^{-1/\overline{r}(k-
k_{0})}$ when $\overline{r}(k-k_{0})$ is even or is $n^{-1/(\overline{r}(k-k_{0})+1)}$ 
when $\overline{r}(k-k_{0})$ is even. These convergence rates are sharp, 
thanks to part (a) of Theorem 
\ref{proposition:convergence_and_minimax_general} in Appendix B. 
Thus, in an overfitted Gaussian mixture, the more overfitted the model is the slower the estimation rate. Moreover, 
the scale parameter is generally more efficient to estimate than the location parameter.
%{\color{blue} this is a bit counterintuitive! shouldn't the location parameter be easier to estimate?}

\paragraph{Gamma mixtures} The Gamma family of densities takes the form
$f(x|a,b) := \dfrac{b^{a}}{\Gamma(a)}x^{a-1}\exp(-bx)$ for $x > 0$, and $0$ otherwise, 
where $a,b$ are positive shape and rate parameters, respectively. 
The Gamma kernel admits the following partial differential equation:
\begin{eqnarray}
%\label{key-gamma}
\frac{\partial f}{\partial b}(x|a,b) = \frac{a}{b}f(x|a,b) - \frac{a}{b}f(x|a+1,b). \nonumber
\end{eqnarray}
As demonstrated by \cite{Ho-Nguyen-Ann-16}, this identity leads to two disjoint categories of the parameter 
values of $G_{0}$, which are respectively called ``generic cases'' and ``pathological cases''. In particular, denote $G_{0}=\mathop {\sum }\limits_{i=1}^{k_{0}}{p_{i}^{0}\delta_{(a_{i}^{0},b_{i}^{0})}}$ where $k_{0} \geq 2$. Assume that $a_{i}^{0} \geq 1$ for all $1 \leq i \leq k_{0}$. Now, we define
\begin{itemize}
\item[(A1)] Generic cases: $\left\{|a_{i}^{0}-a_{j}^{0}|,|b_{i}^{0}-b_{j}^{0}|\right\} \neq \left\{1,0\right\}$
 for all $1 \leq i,j \leq k_{0}$.
\item[(A2)] Pathological cases: $\left\{|a_{i}^{0}-a_{j}^{0}|,|b_{i}^{0}-b_{j}^{0}|\right\} = \left\{1,0\right\}$
 for some $1 \leq i,j \leq k_{0}$.
\end{itemize}
For the Gamma o-mixtures setting, following \cite{Ho-Nguyen-Ann-16} we also define the constrained set of $\Ocal_{k}$
\begin{eqnarray}
\overline{\mathcal{O}}_{k,c_{0}}=
\biggr \{G=\mathop {\sum }\limits_{i=1}^{k^{'}}{p_{i}\delta_{(a_{i},b_{i})}}
\biggr |  k^{'} \leq k \ \text{and } \ |a_{i}-a_{j}^{0}| \not \in [1-c_{0},1+c_{0}] \nonumber \\
\cup [2-c_{0},2+c_{0}] \forall \  (i,j) 
\biggr \} \nonumber
\end{eqnarray} 
where $c_{0}>0$. The singularity structure of $G_{0}$ is clarified by the following result.

\begin{proposition} \label{proposition:singularity_index_level_Gamma} Fix any $G_{0} \in \Ecal_{k_{0}}$.
\begin{itemize}
\item[(a)] For generic cases specified by (A1), there holds 
\begin{itemize}
\item[(a1)] For e-mixtures: $\lev(G_{0}|\Ecal_{k_{0}})=0$, $\singset(G_{0}|\Ecal_{k_{0}})=\left\{(1,1)\right\}$.
\item[(a2)] For o-mixtures: $\lev(G_{0}|\overline{\Ocal}_{k,c_{0}})=1$, $\singset(G_{0}|\overline{\Ocal}_{k,c_{0}})=\left\{(2,2)\right\}$.
\end{itemize}
\item[(b)] For pathological cases specified by (A2), there holds
\begin{itemize}
\item[(b1)] For e-mixtures: $\lev(G_{0}|\Ecal_{k_{0}})=\infty$, $\singset(G_{0}|\Ecal_{k_{0}})=\left\{(\infty,\infty)\right\}$.
\item[(b2)] For o-mixtures: $\lev(G_{0}|\overline{\Ocal}_{k,c_{0}})=\infty$, $\singset(G_{0}|\overline{\Ocal}_{k,c_{0}})=\left\{(\infty,\infty)\right\}$.
\end{itemize}
\end{itemize}
\end{proposition}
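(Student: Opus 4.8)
The plan is to exploit the Gamma-kernel partial differential equation $\partial f/\partial b(x|a,b) = (a/b)f(x|a,b) - (a/b)f(x|a+1,b)$ to carry out the reduction-to-$r$-minimal-form procedure of Section~\ref{Section:likelihood_Wasserstein_neighborhood}, and then analyze when the resulting coefficients can be simultaneously forced to zero along a sequence $G \to G_0$. The key dichotomy between (A1) and (A2) comes down to a single mechanism: when $|a_i^0 - a_j^0| = 1$ and $b_i^0 = b_j^0$, the PDE identifies $\partial f/\partial b(x|a_i^0,b_i^0)$ with a linear combination of $f(x|a_i^0,b_i^0)$ and $f(x|a_j^0,b_j^0)$, so one of the would-be ``basis'' functions in the Taylor expansion collapses onto the span of the others. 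This extra linear dependence is what destroys identifiability at all orders and drives the singularity level to $\infty$.

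\medskip
\textbf{Part (a): generic cases.} First I would handle (a1). Write the first-order Taylor expansion~\eqref{Eqn-ratio} with $r=1$; the relevant functions are $\{f(x|a_i^0,b_i^0), \partial f/\partial a(x|a_i^0,b_i^0), \partial f/\partial b(x|a_i^0,b_i^0)\}_{i=1}^{k_0}$. Using the PDE, replace each $\partial f/\partial b(x|a_i^0,b_i^0)$ by $(a_i^0/b_i^0)[f(x|a_i^0,b_i^0) - f(x|a_i^0+1,b_i^0)]$. Under (A1), no shifted parameter $(a_i^0+1,b_i^0)$ coincides with any $(a_j^0,b_j^0)$ and no two base atoms coincide, so after substitution the collection $\{f(x|a_i^0,b_i^0), f(x|a_i^0+1,b_i^0), \partial f/\partial a(x|a_i^0,b_i^0)\}_i$ is linearly independent (Gamma densities and their $a$-derivatives with distinct parameters are linearly independent — this is the first-order identifiability of the Gamma family, which I would cite or verify via a moment/Laplace-transform argument). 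Hence the $1$-minimal form has genuinely linearly independent $H_l^{(1)}$, and its coefficients are (up to nonzero constants) $\Delta p_i$, $p_i \Delta a_i$, $p_i \Delta b_i$; since the $p_i$ are bounded below, forcing all coefficients to vanish forces $G \to G_0$ trivially, so $G_0$ is not $1$-singular relative to $\Ecal_{k_0}$. Thus $\lev(G_0|\Ecal_{k_0}) = 0$ and, by Proposition~\ref{proposition:singularity_set_level}(i), $\singset(G_0|\Ecal_{k_0}) = \{(1,1)\}$. For (a2), I would do the same at second order: expand to $r=2$, substitute the PDE (and its $\partial_b$-derivative, which expresses $\partial^2 f/\partial b^2$ and $\partial^2 f/\partial a \partial b$ in terms of $f$, $\partial f/\partial a$ at parameters $a_i^0, a_i^0+1, a_i^0+2$), and check that under (A1) the resulting functions $\{f, \partial f/\partial a, \partial^2 f/\partial a^2\}$ at the parameters $a_i^0, a_i^0+1, a_i^0+2$ (all distinct and non-overlapping across components by the definition of $\overline{\Ocal}_{k,c_0}$, which excludes $a$-gaps near $1$ and $2$) remain linearly independent. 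This is precisely second-order identifiability of the reduced system; with $k>k_0$, $G_0$ is automatically $1$-singular (standard, as noted after Definition~\ref{definition:singularity_level_emixture}) but the second-order independence rules out $2$-singularity, giving $\lev = 1$ and, by Proposition~\ref{proposition:strong_identifiability_singularity_index} applied to the \emph{reduced} kernel system, $\singset = \{(2,2)\}$.

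\medskip
\textbf{Part (b): pathological cases.} Here the approach is to construct, for each $r$, an explicit sequence $G \in \Ecal_{k_0}$ (resp.\ $\overline{\Ocal}_{k,c_0}$) with $\widetilde W$-distance to $G_0$ tending to zero along which every coefficient $\xi_l^{(r)}(G)$ vanishes. Suppose WLOG $a_1^0 = a_2^0 + 1$ and $b_1^0 = b_2^0 =: b^0$. The PDE gives $f(x|a_1^0, b^0) = f(x|a_2^0, b^0) - (b^0/a_2^0)\,\partial f/\partial b(x|a_2^0,b^0)$, i.e.\ a single density is expressible through a neighboring one and its derivative. I would use this to move mass between atoms $1$ and $2$ in a way that perturbs $a_1, a_2, b_1, b_2, p_1, p_2$ coherently so that, order by order, the Taylor coefficients telescope to zero — the point being that the ``collapse'' of $f(x|a_1^0,b^0)$ into the span of the $f(x|a_2^0,b^0)$-block means there are fewer effective constraints than free perturbation parameters at every order, so the polynomial system $\{\xi_l^{(r)}(G)=0\}$ always has a nontrivial small solution. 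Carrying this out in closed form is the main obstacle: one must exhibit a one-parameter family $G_t \to G_0$ and verify the vanishing at all orders simultaneously, analogous to the location-scale Gaussian construction in~\cite{Ho-Nguyen-Ann-16} but adapted to the additive-shift structure of the Gamma PDE. Once such a family is produced, $G_0$ is $r$-singular for every $r$, hence $\lev(G_0|\Ecal_{k_0}) = \infty$; and since the constructed perturbation moves \emph{both} coordinates $a$ and $b$ at the same vanishing rate, no finite $\kappa$ can be a singularity index, so by Proposition~\ref{proposition:singularity_set_level}(ii), $\singset(G_0|\Ecal_{k_0}) = \{(\infty,\infty)\}$. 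The o-mixture statement (b2) follows from (b1) via monotonicity in the ambient space ($\Ecal_{k_0} \subset \overline{\Ocal}_{k,c_0}$ gives $\lev(G_0|\overline{\Ocal}_{k,c_0}) \geq \lev(G_0|\Ecal_{k_0}) = \infty$), so no separate construction is needed. The hard part throughout is the explicit order-by-order vanishing in part (b); parts (a1)--(a2) reduce to verified linear-independence facts for the Gamma family after PDE substitution.
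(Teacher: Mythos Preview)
The paper does not give a standalone proof of this proposition; it is stated in Section~\ref{Section:singularity_index_classical_mixtures} as a translation of results from \cite{Ho-Nguyen-Ann-16} into the singularity-level/index language, and no separate proof appears in the appendices. Your sketch is therefore a direct argument in the paper's framework rather than a paraphrase of anything the paper actually writes out, and the overall strategy --- PDE reduction, linear-independence check for (a), explicit sequence construction for (b), ambient-space monotonicity to deduce (b2) from (b1) --- is the right one and matches what \cite{Ho-Nguyen-Ann-16} does.

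One step to tighten: in (a2) you invoke Proposition~\ref{proposition:strong_identifiability_singularity_index} ``applied to the reduced kernel system.'' That proposition is stated for kernels that are second-order identifiable in the original sense, which the Gamma kernel is not (precisely because of the PDE). What you actually need is to \emph{repeat its proof} for your reduced basis (Gamma densities at the shifted parameters $a_i^0, a_i^0+1, a_i^0+2$ together with the $a$-derivatives): first verify that the $(2,2)$-minimal form built on this basis has linearly independent $H_l$'s --- this is exactly where the constraint defining $\overline{\Ocal}_{k,c_0}$, which keeps $|a_i - a_j^0|$ away from $1$ and $2$, does its work --- and then separately show that $G_0$ is $(1,r)$- and $(r,1)$-singular for every $r$ to obtain \emph{uniqueness} of the index $(2,2)$, not merely that $(2,2)$ is an upper bound. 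Your outline for (b) is sound; the explicit order-by-order vanishing you flag as the hard part is precisely the construction carried out in \cite{Ho-Nguyen-Ann-16}, and once $\lev = \infty$ is established the index statement follows from Proposition~\ref{proposition:singularity_set_level}(ii) exactly as you say.
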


Part (a) of Proposition \ref{proposition:singularity_index_level_Gamma} entails the generic $n^{-1/2}$ and $n^{-1/4}$ rates of parameter estimation 
in the e-mixture and o-mixture settings, respectively. If true parameter values belong to pathological cases, however, part (b) of the proposition implies
that polynomial rates of parameter estimation are not possible, due to infinite singularity level.

Although non-trivial convergence behavior obtained in previous work can also recovered
via our notion of singularity levels and indices, with the exception of location-scale Gaussian mixtures, 
none of these examples exhibit the complex inhomogeneity of the parameter space. 
To demonstrate the full spectrum of complexity of finite mixture models, we will apply the theory on 
finite mixtures of skew-normal distributions starting in Section~\ref{Section:overfitskew}.

\subsection{Construction of $r$-minimal and $\kappa$-minimal forms}
\label{sec:r-form}
We have seen that minimax rates of parameter estimation for finite mixtures
can be read off from the singularity structures, via notions of singularity levels and indices, of the parameter space.
For the remainder of Section \ref{Section:general_procedure_singularity} we shall present a general procedure or calculating singularity level/index for a given
mixing measure. 

To this end, one needs to first construct $r$-minimal forms and $\kappa$-minimal forms.
Since the latter can be constructed in much the same manner as the former, we will focus our presentation
on $r$-minimal forms.
A simple way of constructing an $r$-minimal form is to
select a subset of partial derivatives of $f$ taken up to order $r$
such that all these functions are linearly independent. 
A simple procedure is to start from the smallest order $r=1$ and
then move up to $r=2,3,\ldots$ and so on. For each $r$, assume that
we have obtained a linearly independent subset of partial derivatives
up to order $r-1$. Now, going over the ordered list of 
$r$-th partial derivatives:
$\{\partial^{|\kappa|}f/\partial \eta^\kappa | \kappa \in \mathbb{N}^d,
|\kappa | = r\}$. For each $\kappa$ such that $|\kappa|=r$, if
the partial derivative of $f$ of order $\kappa$ can be expressed
as a linear combination of other partial derivatives among those already selected,
then this one is eliminated. The process goes on until we exhaust the
list of the partial derivatives.

\begin{example} 
\label{ex-reduce-1}
\textup{
Continuing from Example~\ref{example-skew-1}, suppose
that $G_0$ satisfies Eq.~\eqref{eqn:firstorderskew}. 
According to the proof of Lemma ~\ref{proposition-notskewnormal}, we
can choose $\alpha_{4k}\neq 0$, so 
the partial derivative may be eliminated via the reduction:
\[\dfrac{\partial{f(x|\eta_k^0)}}{\partial{m}}
= -\sum_{j=1}^{k}
\frac{\alpha_{1j}}{\alpha_{4k}}f(x|\eta_{j}^0)+
\frac{\alpha_{2j}}{\alpha_{4k}}
\dfrac{\partial{f(x|\eta_j^0)}}{\partial{\theta}}+
\frac{\alpha_{3j}}{\alpha_{4k}}
\dfrac{\partial{f(x|\eta_j^0)}}{\partial{v}}
-\sum_{j=1}^{k-1}\frac{\alpha_{4j}}{\alpha_{4k}}
\dfrac{\partial{f(x|\eta_j^0)}}{\partial{m}} 
\]
Note that this elimination step is valid only for a subset
of $G_0 = G(\vecp^0,\veceta^0)$ for which Eq.~\eqref{eqn:firstorderskew} holds.
That is, only if $P_1(\veceta^0) = 0$ or $P_2(\veceta^0) = 0$.
}
\end{example}

\begin{example} 
\label{ex-reduce-2}
\textup{
If $f(x|\eta) = f(x|\theta,v,m)$ where $m=0$,
the skew-normal kernel becomes the Gaussian kernel. Due to~\eqref{key-normal}, 
all partial derivatives with respect to both $\theta$ and $v$ can be eliminated
via the following reduction:
for any $\kappa_1,\kappa_2 \in \mathbb{N}$,
for any $j=1,\ldots,k_0$,
\[\frac{\partial^{\kappa_1+\kappa_2}f(x|\eta_j^0)}
{\partial \theta^{\kappa_1}v^{\kappa_2}} = \frac{1}{2^{\kappa_2}}
\frac{\partial^{\kappa_1+2\kappa_2}f(x|\eta_j^0)}
{\partial \theta^{\kappa_1+2\kappa_2}}.\]
This elimination is valid for all parameter values $(\vecp^0,\veceta^0)$,
and $r$-minimal forms for all orders.
}
\end{example}

\begin{example} 
\label{ex-reduce-3}
\textup{
For the skew-normal kernel density $f(x|\eta)=f(x|\theta,v,m)$, 
Eq.~\eqref{eqn:overfittedskewnormaldistributionzero} yields the following 
reductions: for any $j=1,\ldots, k_0$, any $\eta=(\theta,v,m)=\eta_j^0=
(\theta_j^0,v_j^0,m_j^0)$ such that $m \neq 0$
\begin{eqnarray}
\label{Eqn-reduction-1}
\frac{\partial^2 f}{\partial \theta^2}
& = & 2\frac{\partial f}{\partial v} - \frac{m^3+m}{v}
\frac{\partial f}{\partial m}, \\
\label{Eqn-reduction-2}
\dfrac{\partial^{2}{f}}{\partial{v}\partial{m}}
& = & -\frac{1}{v}\frac{\partial f}{\partial m}
- \frac{m^2+1}{2vm}\frac{\partial^2 f}{\partial m^2}.
\end{eqnarray}
Differentiating results in a ripple effect on subsequent eliminations at higher orders.
For examples, partial derivatives up to the third order of 
$f$ evaluated at $\eta=\eta^0_j=(\theta_j^0,v_j^0,m_j^0)$ for
any $j=1,\ldots,k_0$ where $m_{j}^{0} \neq 0$ can be expressed as follows:
\begin{eqnarray}
\dfrac{\partial^{3}{f}}{\partial{\theta^{3}}} & = & 2\dfrac{\partial^{2}{f}}{\partial{\theta}
\partial{v}}-\dfrac{m^{3}+m}{v}\dfrac{\partial^{2}{f}}{\partial{\theta}\partial{m}}, 
\nonumber \\
\dfrac{\partial^{3}{f}}{\partial{\theta^{2}}\partial{v}} & = & 2\dfrac{\partial^{2}{f}}
{\partial{v^{2}}}+\dfrac{m^{3}+m}{v^{2}}\dfrac{\partial{f}}{\partial{m}}-\dfrac{m^{3}+m}
{v}\dfrac{\partial^{2}{f}}{\partial{v}\partial{m}}, \nonumber \\
\dfrac{\partial^{3}{f}}{\partial{\theta^{2}}\partial{m}} & = & 2\dfrac{\partial^{2}{f}}
{\partial{v}\partial{m}}-\dfrac{3m^{2}+1}{v}\dfrac{\partial{f}}{\partial{m}}-\dfrac{m^{3}
+m}{v}\dfrac{\partial^{2}{f}}{\partial{m^{2}}}, \nonumber \\
\dfrac{\partial^{3}{f}}{\partial{v}\partial{m^{2}}} & = & -\dfrac{m^{2}+1}{2vm}\dfrac{\partial^{3}{f}}{\partial{m^{3}}}-\dfrac{3m^{2}-1}{2vm^{2}}\dfrac{\partial^{2}{f}}{\partial{m^{2}}}, \nonumber \\
\dfrac{\partial^{3}{f}}{\partial{v^{2}}\partial{m}} & = & -\dfrac{2}{v}\dfrac{\partial^{2}{f}}{\partial{v}\partial{m}}-\dfrac{m^{2}+1}{2vm}\dfrac{\partial^{3}{f}}{\partial{v}\partial{m^{2}}} \nonumber \\
& = & \dfrac{(m^{2}+1)^{2}}{4v^{2}m^{2}}\dfrac{\partial^{3}{f}}{\partial{m^{3}}}+\dfrac{(m^{2}+1)(7m^{2}-1)}{4m^{3}v^{2}}\dfrac{\partial^{2}{f}}{\partial{m^{2}}}+\dfrac{2}{v^{2}}\dfrac{\partial{f}}{\partial{m}}, \nonumber \\
\dfrac{\partial^{3}{f}}{\partial{\theta}\partial{v}\partial{m}} & = & -\dfrac{m^{2}+1}{2vm}\dfrac{\partial^{3}{f}}{\partial{\theta}\partial{m^{2}}}-\dfrac{1}{v}\dfrac{\partial^{2}{f}}{\partial{\theta}\partial{m}}. 
\label{eqn:nonlinearequations_third}
\end{eqnarray}
}
\end{example}
All three examples above demonstrate how the dependence among partial derivatives of 
kernel density $f$, among different orders $\kappa$, 
and among those evaluated at different component $i$,
has a deep impact on the representation of $r$-minimal forms. 

In general, the $r$-minimal form \eqref{eqn:generallinearindependencerepresentation}
may be expressed somewhat more explicitly as follows
\[\frac{p_G(x)-p_{G_0}(x)}{W_r^r(G,G_0)}
= \sum_{(i,\kappa) \in \mathcal{I,K}} 
\frac{\xi_{i,\kappa}^{(r)}(G)}{W_r^r(G_0,G)} H_{i,\kappa}^{(r)}(x|G_0) + 
\sum_{i=1}^{k_0} \frac{\zeta_i^{(r)}(G)}{W_r^r(G_0,G)}
f(x|\eta_{i}^{0}) + o(1).\]
where $\mathcal{I} \subset \{1,\ldots,k_0\}$ and
$\mathcal{K} \subset \mathbb{N}^d$ of elements $\kappa$ such that 
$|\kappa|\leq r$. It is emphasized that the sets $\mathcal{I}$ and
$\mathcal{K}$ are specific to a particular $r$-minimal form under
consideration. $H_{i,\kappa}^{(r)}$ are a collection of
linearly independent partial derivatives of $f$ that are also
independent of all functions $f(x|\eta_i^0)$. $H_{i,\kappa}^{(r)}$
are taken from the
collection of partial derivatives with order at most $r$. 
Moreover,
$\xi_{i,\kappa}^{(r)}$ and $\zeta_i^{(r)}$ take the following 
polynomial forms:
\begin{eqnarray}
\label{coeff-1}
\xi_{i,\kappa}^{(r)}(G) & = & \sum_{j=1}^{s_i}
\frac{p_{ij}(\Delta\eta_{ij})^{\kappa}}{\kappa!}
+ \sum_{i',\kappa'} \beta_{i,\kappa,i',\kappa'}(G_0) 
\sum_{j=1}^{s_{i'}} \frac{p_{ij}(\Delta\eta_{ij})^{\kappa'}}{\kappa'!},\\
\label{coeff-2}
\zeta_{i}^{(r)}(G) & = & 
\Delta p_{i\cdot} 
+ \sum_{i',\kappa'} \gamma_{i,i',\kappa'}(G_0) 
\sum_{j=1}^{s_{i'}} \frac{p_{ij}(\Delta\eta_{ij})^{\kappa'}}{\kappa'!}.
\end{eqnarray}
In the right hand side of each of the last two equations, $i'$ is taken from
a subset of $\{1,\ldots,k_0\}$ and $\kappa'$ is from a 
subset of $\mathbb{N}^d$ such that $|\kappa|\leq |\kappa'|\leq r$.
The actual detail of these subsets depend on the specific elimination scheme
leading to the $r$-minimal form. Likewise,
the non-zero coefficients $\beta_{i,\kappa,i',\kappa'}(G_{0})$ and 
$\gamma_{i,\kappa,i',\kappa'}(G_{0})$ arise from the
specific elimination scheme. We include
argument $G_0$ in these coefficients to highlight the fact that
they may be dependent on the atoms of $G_0$ (cf. Example~\ref{ex-reduce-1}
and \ref{ex-reduce-3}).

By the definition of $r$-singularity for any $r\geq 1$, $G_0$ is $r$-singular relative to $\Gcal$
if there exists a sequence of $G$ tending to $G_0$ in the ambient space $\Gcal$
such that the sequences of semipolynomial fractions,
namely, $\xi_{i,\kappa}^{(r)}(G)/W_r^r(G,G_0)$ and
$\zeta_{i}^{(r)}(G)/W_r^r(G,G_0)$ (whose numerators are given by Eq.~\eqref{coeff-1} and
Eq.~\eqref{coeff-2}), must vanish. As a consequence, the question of $r$-singularity 
for a given element $G_0$ is determined by 
the limiting behavior of a finite collection of infinite sequences of 
semipolynomial fractions.

\subsection{Polynomial limits of $r$-minimal and $\kappa$-minimal form coefficients} \label{Section:polynomial_limits}

The limiting behavior of semipolynomial fractions described
above is independent of a particular choice of the $r$-minimal form, in a sense that we now explain.
In part (a) of Lemma \ref{invariance}, we established an invariance property of the $r$-singularity, 
which does not depend on a specific form of the $r$-minimal form. Let us 
restrict the basis functions to be members of the collection of all partial 
derivatives of $f$ up to order $r$.  In the proof of part (b) of
Lemma ~\ref{invariance} it was shown that the coefficients $\xi_l^{(r)}(G)$ have
to be elements of a set of polynomials of $\Delta \eta_{ij}$,
$\Delta p_{i\cdot}$, and $p_{ij}$, which are closed under linear combinations
of its elements. Let us denote this set by $\Pcal(G,G_0)$, which is
invariant with respect to any specific choice of the basis functions (from the 
collection of partial derivatives) for the $r$-minimal form. 
Moreover, $G_0$ is $r$-singular if and only if a sequence of $G$ tending to $G_0$ in $W_r$ 
can be constructed such that for any element
$\xi_{l}^{(r)}(G) \in \Pcal(G,G_0)$, we have $\xi_{l}^{(r)}(G)/W_r^r(G,G_0) \rightarrow 0$. 
Equivalently, 
\begin{equation}
\label{eqn-polynomial-limits}
\xi_l^{(r)}(G)/D_r(G,G_0) \rightarrow 0 \;\; \textrm{for all}\; \xi_l^r(G) \in \Pcal(G,G_0).
\end{equation}

Extracting the limits of a single multivariate semipolynomial fraction 
(a.k.a. rational semipolynomial functions)
is quite challenging in general, due to the interaction among multiple variables involved \cite{Xiao-14}.
Analyzing the limits of not one but a collection of multivariate rational semipolynomials
is considerably more difficult. To obtain meaningful and
concrete results, we need to consider specific systems of multivariate rational semipolynomials
that arise from the $r$-minimal form.

In the remainder of this paper we will proceed to do just that. By working with
specific choices of kernel density $f$, it will be shown that
under the compactness of the parameter spaces, one can extract
a subset of limits from the system of rational semipolynomials 
$\xi_l^{(r)}(G)/D_r(G,G_0)$. These limits are expressed as a system of polynomials
admitting non-trivial solutions.
For a given $r\geq 1$, if the extracted system of polynomial limits does not contain 
admissible solutions, then it means that there does not exist any sequence of mixing measures 
$G$ for which a valid $r$-minimal form can be constructed, because~\eqref{eqn-polynomial-limits}
is not fullfilled. This would entail the upper bound
$\lev(G_0|\Gcal) < r$. On the other hand, if the extracted system of
polynomial limits does contain at least one admissible solution, this is a hint that
the $r$-singularity level of $G_0$ relative to the ambient space $G\cal$ \emph{might} hold. 
Whether this is actually the case or not
requires an explicit construction of a sequence of $G \in \Gcal$ 
(often building upon the admissible solutions of the polynomial limits) and then 
the verification that condition~\eqref{eqn-polynomial-limits} indeed holds.
For the verification purpose, it is sufficient (and simpler) to 
work with a specific choice of $r$-minimal form, as Definition ~\ref{def-rsingular} allows.

Due to the asymptotic equivalence between generalized transportation distance and a 
semipolynomial in Lemma \ref{lemma:bound_overfit_Wasserstein_first}, the studies of 
$\kappa$-singularity also reduce to an investigation of limiting behaviors of 
semipolynomial fractions as those in the case of $r$-singularity described earlier. 
However, as  $D_\kappa(G,G_0)$ is an inhomogeneous 
semipolynomial, the limiting behaviors of ratios $\xi_l^{(\kappa)}(G)/\widetilde{W}_
\kappa^{\|\kappa\|_{\infty}}(G,G_0)$ is generally more challenging to investigate than 
those of ratios from $r$-minimal form in Definition \ref{def-rsingular}. This will be seen via examples in the sequel.

The foregoing description, along with the presentation in the previous subsection
on the construction of $r$-minimal and $\kappa$-minimal forms, 
provides the outline of a general procedure which
links the determination of the singularity structure of parameter space to the solvability of 
a system of polynomial limits. This procedure will be illustrated carefully in Section \ref{Section:overfitskew} 
for the remarkable world of mixtures of skew-normal distributions.

%%%%%%%%%%%%%%%%%%%%%%%%%%%%%%%%%%%%%%%%%%%%%%%%%%%%%%%%%%%%%%%%%%%%%%%%%%%%%%%%%%%%
%%%%%%%%%%%%%%%%%%%%%%%%%%%%%%%%%%%%%%%%%%%%%%%%%%%%%%%%%%%%%%%%%%%%%%%%%%%%%%%%%%%%
%%%%%%%%%%%%%%%%%%%%%%%%%%%%%%%%%%%%%%%%%%%%%%%%%%%%%%%%%%%%%%%%%%%%%%%%%%%%%%%%%%%%

%%% Over-fitted cases
\section{O-mixtures of skew-normal distributions}
\label{Section:overfitskew}

In this section, we study parameter estimation behavior for skew-normal mixtures.
Our motivation is two-fold: First, as discussed in the Introduction, skew-normal mixture models
are widely embraced in applications despite little or no known theoretical results, so understanding their 
theoretical properties is of interest in their own right. Second, skew-normal mixtures appear to be an ideal 
illustration for the general theory and tools developed in the previous section, which 
helps to shed some light on
the remarkably complex structure of a mixture distribution's parameter space. 

On the flip side, our presentation will be necessarily (and unfortunately)
quite technical. To alleviate the technicality, 
we focus the presentation in this section on singularity structures
in the overfitted setting subject to certain restrictions on probability mass and other parameters.\footnote{Specifically, consider 
$G_0 \in \Ecal_{k_0}$ relative to ambient space $\Ocal_{k,c_{0}}$ for some $k>k_0$ and small constant $c_{0}>0$ where $O_{k,c_{0}} \subset \Ocal_k$ contains 
only (discrete) probability measures whose point masses are bounded from below by $c_{0}$.
Moreover, we will analyze the singularity structure of $G_0 \in \Scal_0$,
a subset to be defined shortly by Eq. \eqref{eqn:generic_components}.}
This case is quite interesting because it illustrates the full power
of the general method of analysis that was described in 
Section \ref{Section:general_procedure_singularity} in a concrete fashion, yielding a
general result while revealing sufficiently complex structures.

For readers interested in the finer details of skew-normal mixtures, in Appendix E we summarize the singularity 
structure of $G_0$ relative to 
the ambient space $\Ecal_{k_0}$ (that is, e-mixture setting), for which
a more complete picture of the singularity structure is achieved, with full details laid out in Appendix F.
For o-mixtures, further results can be found in a technical report \cite{Ho-Nguyen-skewnormalTR}. 
Such elaborate picture might be appreciated by practitioners and experts of the skew-normals, but 
they can be safely skipped by the rest of the audience.

\begin{lemma}\label{proposition-notskewnormal}
For skew-normal density kernel $f(x|\veceta)$, the collection of 
$\left\{\partial^\kappa f/\partial \eta^\kappa(x|\eta_j)
| j=1,\ldots, k_0; \right. \\ \left. 0\leq |\kappa| \leq 1 \right\}$
is not linearly independent if and only if $\veceta = (\eta_1,\ldots,\eta_k)$ 
are the zeros of either polynomial $P_1$ or $P_2$, which are defined as follows:
\begin{itemize}
\item [] Type A: $P_{1}(\myeta) =  \prod \limits_{j=1}^{k_{0}}{m_{j}}$.
\item [] Type B: $P_{2} (\myeta) = \prod \limits_{1 \leq i \neq j \leq k_{0}}
\biggr 
\{ (\theta_{i}-\theta_{j})^{2} + \biggr [ \sigma_i^2(1+m_j^2) - \sigma_j^2(1+m_i^2) 
\biggr ]^2 \biggr \}$.
\end{itemize}
\end{lemma}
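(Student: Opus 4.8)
The plan is to characterize exactly when the linear dependence
\begin{eqnarray}
\sum_{j=1}^{k_0} \biggl( \alpha_{1j} f(x|\eta_j) + \alpha_{2j}\frac{\partial f}{\partial\theta}(x|\eta_j) + \alpha_{3j}\frac{\partial f}{\partial v}(x|\eta_j) + \alpha_{4j}\frac{\partial f}{\partial m}(x|\eta_j) \biggr) = 0 \nonumber
\end{eqnarray}
holds for almost every $x$, with not all coefficients zero. The first step is to compute the partial derivatives of the skew-normal density explicitly. Writing $f(x|\theta,v,m) = \frac{2}{\sigma}\varphi\bigl(\frac{x-\theta}{\sigma}\bigr)\Phi\bigl(\frac{m(x-\theta)}{\sigma}\bigr)$ with $v=\sigma^2$, the derivatives with respect to $\theta$, $v$, $m$ each split naturally into a ``symmetric part'' (a Gaussian-type term involving $\varphi\cdot\Phi$ and its polynomial multiples in $x$) plus a ``skew part'' involving $\varphi\bigl(\frac{x-\theta}{\sigma}\bigr)\varphi\bigl(\frac{m(x-\theta)}{\sigma}\bigr)$, since $\Phi' = \varphi$. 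A key observation is that the latter product is itself (up to normalization and an affine reparametrization of the argument) a Gaussian density: $\varphi(u)\varphi(mu) = \varphi(u\sqrt{1+m^2})\cdot\varphi(0)\cdot(\text{const})$, so it equals a Gaussian in $x$ with mean $\theta$ and variance $\sigma^2/(1+m^2)$. Thus every term in the dependence relation is a linear combination of: (i) functions $x^a \varphi\bigl(\frac{x-\theta_j}{\sigma_j}\bigr)\Phi\bigl(\frac{m_j(x-\theta_j)}{\sigma_j}\bigr)$ for $a\in\{0,1,2\}$, and (ii) Gaussian densities $\mathcal{N}(x;\theta_j,\sigma_j^2/(1+m_j^2))$ and their first moments.

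The second step is to invoke a linear-independence lemma for the combined family $\{$skew-normal densities, their derivatives, and Gaussian densities$\}$ to show that when all $m_j\neq 0$ and all pairs $(\theta_i,\sigma_i^2(1+m_i^2))$ are distinct — which is precisely the condition $P_1(\myeta)\neq 0$ and $P_2(\myeta)\neq 0$ — the only way the relation can hold is if all $\alpha_{ij}=0$. Here the type-A condition $\prod m_j \neq 0$ guarantees the skew parts genuinely appear (if some $m_j=0$ the $j$-th component is Gaussian and $\partial f/\partial\theta^2 = 2\partial f/\partial v$ already forces dependence via Eq.~\eqref{key-normal}), and the type-B condition guarantees that the auxiliary Gaussian densities of variance $\sigma_j^2/(1+m_j^2)$, appearing from the skew parts, have distinct parameters from one another and cannot be absorbed into the symmetric parts — because a genuine skew-normal density/derivative is never expressible via Gaussian densities plus other skew-normal densities with different location-scale when the "effective variances" are all distinct. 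The conditions $P_1=0$ or $P_2=0$ yield the dependence in the reverse direction: if $m_j = 0$ use~\eqref{key-normal}; if $\theta_i=\theta_j$ and $\sigma_i^2(1+m_j^2)=\sigma_j^2(1+m_i^2)$ then the two auxiliary Gaussians coincide and one can cancel the skew parts of components $i$ and $j$ against each other, leaving a nontrivial combination of the symmetric (Gaussian-mixture) parts that also vanishes by the standard degeneracy of the Gaussian family under the heat equation relation.

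The third step is to argue the converse direction carefully: that $P_1\cdot P_2 \neq 0$ implies independence. I would set up a system where, grouping terms by the "type" of transcendental function they multiply, coefficients of functions supported on distinct Gaussian-variance profiles must separately vanish; a Vandermonde-type or induction-on-number-of-components argument (sorting components by their value of $\sigma_j^2(1+m_j^2)$, then by $\theta_j$, then using asymptotics of $\Phi(m_j(x-\theta_j)/\sigma_j)$ as $x\to\pm\infty$ to isolate the $m_j$-sign classes) peels off one component at a time. The main obstacle I anticipate is step three — precisely controlling the interaction between the symmetric parts of one component and the auxiliary-Gaussian skew parts of another: one must rule out "accidental" cancellations where a skew part of component $i$ (a Gaussian of variance $\sigma_i^2/(1+m_i^2)$) collides with a symmetric part of component $j$ (built on variance $\sigma_j^2$), which requires $\sigma_i^2/(1+m_i^2) = \sigma_j^2$ — a codimension-one condition not captured by $P_1$ or $P_2$ alone, so one must check that in that sub-case the $\Phi$-factor on component $j$ still prevents a match, leveraging that $\varphi\cdot\Phi$ is not a pure Gaussian. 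I expect this case analysis, together with verifying the $\alpha_{4k}\neq 0$ claim used in Example~\ref{ex-reduce-1}, to be the technical heart of the proof.
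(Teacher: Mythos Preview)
Your overall strategy matches the paper's: decompose each term into a ``symmetric'' part (polynomial $\times\, \varphi\cdot\Phi$) and a ``skew'' part (pure Gaussian with variance $\sigma_j^2/(1+m_j^2)$), then for the independence direction peel off components one at a time by asymptotics in $x$. The paper does this by multiplying through by $\exp\bigl((x-\theta_{\bar i})^2/2\sigma_{\bar i}^2\bigr)/\Phi\bigl(m_{\bar i}(x-\theta_{\bar i})/\sigma_{\bar i}\bigr)$ where $\bar i$ has the largest effective variance among the $2k_0$ Gaussians in play, and sending $x\to\pm\infty$ according to the sign of $m_{\bar i}$. The collision case you worry about --- $\sigma_i^2/(1+m_i^2)=\sigma_j^2$ for some $i,j$ --- is in fact brushed aside in the paper as well (``the argument for the other cases is similar''), and your instinct that the $\Phi$-factor still separates things is the right one.

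Your ``if'' direction, however, contains two errors. For Type A you invoke Eq.~\eqref{key-normal}, but that relates \emph{second} derivatives and is irrelevant for first-order identifiability; the actual first-order dependence when $m_j=0$ is simply that $\partial f/\partial m\big|_{m=0}$ is a scalar multiple of $\partial f/\partial\theta\big|_{m=0}$ (both equal a constant times $(x-\theta_j)\varphi\bigl((x-\theta_j)/\sigma_j\bigr)$). For Type B you overcomplicate: $\partial f/\partial m$ has \emph{only} a skew part (no $\Phi$-term at all), so when $(\theta_i,\sigma_i^2/(1+m_i^2))=(\theta_j,\sigma_j^2/(1+m_j^2))$ the two functions $\partial f/\partial m(x|\eta_i)$ and $\partial f/\partial m(x|\eta_j)$ are already scalar multiples of one another. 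Taking $\alpha_{4i}/\sigma_i^2+\alpha_{4j}/\sigma_j^2=0$ with all other $\alpha$'s zero gives the dependence directly --- there is no residual symmetric part and no heat-equation argument is needed.
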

This lemma leads us to consider
\begin{eqnarray}
\Scal_0 = \biggr \{G = G(\vecp, \veceta) \biggr | (\vecp, \veceta) \in \Omega,
P_1(\veceta) \neq 0, P_2(\veceta) \neq 0 \biggr \}. \label{eqn:generic_components}
\end{eqnarray}

\comment{
The partitioning of $\Ecal_{k_0}$ is as follows
\[\Ecal_{k_0} = \cup_{i=0}^{3}\Scal'_i,\]
where $\Scal'_0 = \Scal_0$, and $\Scal'_1 = \Scal_1$. In addition,
\begin{itemize}
\item $\Scal'_{2} = \left\{G \in \mathcal{E}_{k_{0}} \ | \ P_{1}(\myeta) = 0  \right\}$.
\item $\Scal'_{3} = \left\{G \in \mathcal{E}_{k_{0}} \ | \ P_{1}(\myeta) \neq 0 \ , \ 
P_{2}(\myeta) = 0 \ \text{and} \ G \ \text{is nonconformant} \right\}$.
\end{itemize}
As we can see, $\Scal_{2} \subset \Scal'_{2}$ as we allow the nonconformant 
homologous sets to appear in $\Scal_{2}'$. On the other hand, $\Scal_{3}' \subset 
\Scal_{3}$ as we do not allow Gaussian components to appear in the elements of $
\Scal_{3}'$. As in the case with $\Scal_3$, the singularity structure of $\Scal'_3$
is very complex, and will be delineated based on singularity types
C(1) and C(2) described in Section \ref{Section:nonconformant_setting}. In fact,
\begin{itemize}
\item $\Scal'_{31}=\left\{G \in \Scal'_{3} \ | \ P_{3}(\myeta) \neq 0 \right\}$
\item $\Scal'_{32} =\left\{G \in \Scal'_{3} \ | \ P_{3}(\myeta) = 0 \ , P_{4} (\myeta) \neq 0 \right\}$
\item $\Scal'_{33}=\left\{G \in \Scal'_{3} \ | \ P_{3}(\myeta) = 0 \ , 
P_{4} (\myeta) = 0 \right\}$.
\end{itemize} 
}
%%% Discussion 

%%%%%%%%%%%%%%%%%%%%%%%%%%%%%%%%%%%%%%%%%%%%%%%%%%%%%%%%%%%%%%%%%%%%%%%%%%%%%%%%%

In o-mixtures, we will see that $\lev(G_0|\Ocal_{k,c_0})$ and $\singset(G_{0}|\Ocal_{k,c_{0}})$ may grow with
$k-k_0$, the number of extra mixing components. The main excercise is to
arrive at suitable $r$-minimal and $\kappa$-minimal forms, for which the behavior of
its coefficients can be analyzed. Section~\ref{sec:r-form} describes a general
strategy for the construction of $r$-minimal form (or equivalently $(r,r,r)$-minimal form) based on the partial derivatives
of the density kernel $f$ with respect to the parameters $\eta=(\theta, v, m)$ up
to order $r$. This is also a strategy that we would like to utilize for $\kappa$-minimal forms for any $\kappa \in \mathbb{N}^{3}$.

For skew-normal kernel density $f$, the following lemma provides an explicit form 
for reducing a partial derivative of $f$ to other partial derivatives of lower orders. 
\begin{lemma}\label{lemma:recursiveequation}
For any $r \geq 1$, denote
\begin{eqnarray}
A_{1}^{r} & = & \left\{(\alpha_{1},\alpha_{2},\alpha_{3}): \ \alpha_{1} \leq 1, \ \alpha_{3}=0, \ \text{and} \ |\alpha| \leq r \right\}. \nonumber \\
A_{2}^{r} & = & \left\{(\alpha_{1},\alpha_{2},\alpha_{3}): \ \alpha_{1} \leq 1, \alpha_{2}=0, \alpha_{3} \geq 1, \ \text{and} \ |\alpha| \leq r \right\}. \nonumber \\
\mathcal{F}_{r} & = & A_{1}^{r} \cup A_{2}^{r}.\nonumber
\end{eqnarray}
Let $f(x|\eta) = f(x|\theta, v, m)$ denote the skew-normal kernel. Then, for any $\alpha=(\alpha_{1},\alpha_{2},\alpha_{3}) \in \mathbb{N}^3$ and $m \neq 0$, there holds
\begin{eqnarray}
\dfrac{\partial^{|\alpha|}{f}}{\partial{\theta^{\alpha_{1}}\partial{v^{\alpha_{2}}}
\partial{m^{\alpha_{3}}}}} &=& \sum 
\dfrac{P_{\alpha_{1},\alpha_{2},\alpha_{3}}
^{\kappa_{1},\kappa_{2},\kappa_{3}}(m)}{H_{\alpha_{1},\alpha_{2},\alpha_{3}}
^{\kappa_{1},\kappa_{2},\kappa_{3}}(m)Q_{\alpha_{1},\alpha_{2},\alpha_{3}}
^{\kappa_{1},\kappa_{2},\kappa_{3}}(v)}\dfrac{\partial^{|\kappa|}{f}}
{\partial{\theta^{\kappa_{1}}\partial{v}^{\kappa_{2}}\partial{m}^{\kappa_{3}}}}, 
\nonumber
\end{eqnarray}
%where $\kappa=(\kappa_{1},\kappa_{2},\kappa_{3})$ in the above sum satisfies $\kappa \in \mathcal{F}_{|\alpha|}$. 
where $\kappa$ in the above sum satisfies $\kappa \in \mathcal{F}_{|\alpha|}$ and $\kappa_{1}+2\kappa_{2}+2\kappa_{3} \leq \alpha_{1}+2\alpha_{2}+2\alpha_{3}$. Additionally, $P_{\alpha_{1},\alpha_{2},\alpha_{3}}
^{\kappa_{1},\kappa_{2},\kappa_{3}}(m)$, $H_{\alpha_{1},\alpha_{2},\alpha_{3}}
^{\kappa_{1},\kappa_{2},\kappa_{3}}(m)$, and $Q_{\alpha_{1},\alpha_{2},\alpha_{3}}
^{\kappa_{1},\kappa_{2},\kappa_{3}}(v)$ are polynomials in terms of $m,m$ and $v$, 
respectively.
\end{lemma}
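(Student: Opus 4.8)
The plan is to prove the identity by strong induction, driven by the two nonlinear relations of Eq.~\eqref{eqn:overfittedskewnormaldistributionzero} in the solved-for shape already recorded in Eq.~\eqref{Eqn-reduction-1}--\eqref{Eqn-reduction-2}:
\begin{itemize}
\item[(R1)] $\dfrac{\partial^{2}f}{\partial\theta^{2}} = 2\dfrac{\partial f}{\partial v} - \dfrac{m^{3}+m}{v}\dfrac{\partial f}{\partial m}$, to be used when $\alpha_{1}\ge 2$;
\item[(R2)] $\dfrac{\partial^{2}f}{\partial v\,\partial m} = -\dfrac{1}{v}\dfrac{\partial f}{\partial m} - \dfrac{m^{2}+1}{2vm}\dfrac{\partial^{2}f}{\partial m^{2}}$, to be used when $\alpha_{1}\le 1$ and $\alpha_{2},\alpha_{3}\ge 1$ (here is where $m\neq 0$ is needed).
\end{itemize}
Given $\alpha=(\alpha_{1},\alpha_{2},\alpha_{3})\notin\mathcal{F}_{|\alpha|}$, exactly one of these two is applicable: if $\alpha_{1}\ge 2$ apply $\partial_{\theta}^{\alpha_{1}-2}\partial_{v}^{\alpha_{2}}\partial_{m}^{\alpha_{3}}$ to (R1), otherwise $\alpha_{2},\alpha_{3}\ge 1$ and one applies $\partial_{\theta}^{\alpha_{1}}\partial_{v}^{\alpha_{2}-1}\partial_{m}^{\alpha_{3}-1}$ to (R2). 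In either case the left-hand side equals exactly $\partial^{|\alpha|}f/\partial\theta^{\alpha_{1}}\partial v^{\alpha_{2}}\partial m^{\alpha_{3}}$, and that derivative does not reappear on the right-hand side (the right-hand terms in (R1) all have strictly smaller $\theta$-order, those in (R2) strictly smaller $v$-order), so the step is a genuine reduction; expanding the right side by the Leibniz rule is routine because the only non-constant factors are $1/v$, $m^{3}+m$, $1/m$, $m^{2}+1$.

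The crux is to choose a well-founded rank that strictly decreases under each step, so that iterating (R1) and (R2) terminates exactly at members of $\mathcal{F}_{|\alpha|}$. I would take the lexicographic rank $\bigl(w(\alpha),\alpha_{1},\alpha_{2}\bigr)$ with $w(\alpha):=\alpha_{1}+2\alpha_{2}+2\alpha_{3}$, and check from the Leibniz expansions that every derivative $\partial^{\alpha'}f$ produced by one step satisfies: (i) $w(\alpha')\le w(\alpha)$ (a direct term-by-term check, using that in $w$ the operators $\partial_{v},\partial_{m}$ count $2$ and $\partial_{\theta}$ counts $1$); (ii) $|\alpha'|\le|\alpha|$, strict except for the one term $\partial_{\theta}^{\alpha_{1}}\partial_{v}^{\alpha_{2}-1}\partial_{m}^{\alpha_{3}+1}f$ arising in (R2); (iii) the lexicographic rank strictly decreases, since (R1) lowers $\alpha_{1}$ by $2$ without raising $w$, while (R2) fixes $\alpha_{1}$ and strictly lowers $\alpha_{2}$ without raising $w$. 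A finite case check establishes (i)--(iii); the third-order instances in Eq.~\eqref{eqn:nonlinearequations_third} are a faithful miniature of the computation. Strong induction on the lexicographic rank — base case $\alpha\in\mathcal{F}_{|\alpha|}$, handled by the one-term representation with $\kappa=\alpha$ and coefficient $1$ — then yields the claim: non-increase of $w$ gives $\kappa_{1}+2\kappa_{2}+2\kappa_{3}\le\alpha_{1}+2\alpha_{2}+2\alpha_{3}$, non-increase of $|\cdot|$ gives $|\kappa|\le|\alpha|$, and the recursion halting only when $\kappa_{1}\le 1$ and ($\kappa_{3}=0$ or $\kappa_{2}=0$) gives $\kappa\in\mathcal{F}_{|\alpha|}$.

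It remains to track the form of the coefficients along the recursion. One application of (R1) or (R2) together with a Leibniz expansion multiplies the running coefficient by a factor of the form $P(m)/(v^{a}m^{b})$, with $P$ a polynomial in $m$ and $a,b\ge 0$ integers ($b=0$ in case (R1), since $1/m$ appears only in (R2)); hence along any one branch of the finite reduction tree the accumulated coefficient has the form $P(m)/\bigl(H(m)\,Q(v)\bigr)$ with $H$ a power of $m$ and $Q$ a power of $v$ — exactly the form demanded in the statement. Reading off the leaves of the tree, \emph{without} merging distinct leaves that happen to carry the same multi-index $\kappa$ (the unlabeled sum in the statement permits repeated $\kappa$), produces the asserted identity. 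I do not anticipate a deep obstacle; the real work is organizational — carrying the three invariants (termination via the lexicographic rank, the weight inequality, and the $P(m)/(H(m)Q(v))$ shape of the coefficients) uniformly through every Leibniz expansion, for which Examples~\ref{ex-reduce-1} and~\ref{ex-reduce-3} and Eq.~\eqref{eqn:nonlinearequations_third} serve as the low-order template.
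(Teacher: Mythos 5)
Your argument is correct and uses the same two nonlinear reduction rules (Eqs.~\eqref{Eqn-reduction-1}--\eqref{Eqn-reduction-2}) as the paper, but the induction is organized differently, and the difference is worth noting. The paper's proof proceeds by induction purely on the weight $w(\alpha)=\alpha_1+2\alpha_2+2\alpha_3$, peeling off a single $\partial_\theta$ (Case~1) or $\partial_v$ (Case~2), invoking the inductive hypothesis on the one-step-lower weight, and then \emph{re-reducing} whichever derivatives in the pushed-forward representation fall outside $\mathcal{F}$; this produces a few nested applications of the induction inside a single inductive step. Your version applies (R1) or (R2) directly to the full multi-index $\alpha$ and terminates via the lexicographic rank $(w(\alpha),\alpha_1,\alpha_2)$, which you correctly verify strictly decreases on every term of the Leibniz expansion (the only non-obvious case being the $s=2$, $j=k=0$ branch of (R2), where $w$ and $\alpha_1$ stay fixed but $\alpha_2$ drops). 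Your route buys a cleaner termination argument with a single, explicit well-founded order and no nested re-reduction; the paper's route buys a simpler one-dimensional induction variable at the cost of that bookkeeping. Both deliver the three needed invariants — $\kappa\in\mathcal{F}_{|\alpha|}$ via $|\alpha'|\le|\alpha|$, the weight inequality via $w(\alpha')\le w(\alpha)$, and the coefficient shape $P(m)/\bigl(H(m)Q(v)\bigr)$ via closure of that class under the Leibniz factors $v^{-a}$, $m^3+m$, $m^{-b}$, $m^2+1$. One small point to make explicit if you formalize this: the paper's "WLOG $\alpha_2\ge1$" in its Case~2 silently skips the subcase $\alpha_1\ge2$, $\alpha_2=0$, $\alpha_3\ge1$; your dichotomy ($\alpha_1\ge2$ vs.\ $\alpha_1\le1$ with $\alpha_2,\alpha_3\ge1$) partitions the complement of $\mathcal{F}_{|\alpha|}$ cleanly and avoids that imprecision.
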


Next, we show that the partial derivatives on the RHS of the above identity are in fact linearly independent,
under additional assumptions on $G_0$.

\begin{lemma}\label{lemma:reduced_linearly_independent} 
Recall the notation from Lemma \ref{lemma:recursiveequation}.
%Denote 
%\begin{eqnarray}
%A_{1}^{r} & = & \left\{(\alpha_{1},\alpha_{2},\alpha_{3}): \ \alpha_{1} \leq 1, \ \alpha_{3}=0, \ \text{and} \ |\alpha| \leq r \right\}. \nonumber \\
%A_{2}^{r} & = & \left\{(\alpha_{1},\alpha_{2},\alpha_{3}): \ \alpha_{1} \leq 1, \alpha_{2}=0, \alpha_{3} \geq 1, \ \text{and} \ |\alpha| \leq r \right\}. \nonumber
%\end{eqnarray}
If $G_0 \in \mathcal{S}_0$, then for any $r \geq 1$,
the collection of partial derivatives of the skew-normal density kernel $f(x|\eta)$, namely
\[\left\{\dfrac{\partial^{|\kappa|}{f}(x|\eta)}
{\partial \theta^{\kappa_{1}}\partial v^{\kappa_{2}}\partial m^{\kappa_{3}}} \biggr |
\ \kappa=(\kappa_{1},\kappa_{2},\kappa_{3}) \in \mathcal{F}_{r}, \eta=\eta_1^{0},\ldots,\eta_{k_0}^{0}\right\}\] 
are linearly independent.
\end{lemma}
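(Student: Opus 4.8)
\emph{Setup and structure of the summands.} Suppose constants $\{c_{j,\kappa}:1\le j\le k_0,\ \kappa\in\mathcal{F}_{r}\}$ satisfy
\[
\sum_{j=1}^{k_0}\sum_{\kappa\in\mathcal{F}_{r}} c_{j,\kappa}\,\frac{\partial^{|\kappa|}f}{\partial\theta^{\kappa_1}\partial v^{\kappa_2}\partial m^{\kappa_3}}(x\,|\,\eta_j^0)=0\qquad\text{for a.e. }x\in\mathbb{R};
\]
the goal is to show every $c_{j,\kappa}=0$. I use the explicit form $f(x\,|\,\theta,v,m)=\tfrac{2}{\sqrt v}\,\phi\!\big(\tfrac{x-\theta}{\sqrt v}\big)\,\Phi\!\big(\tfrac{m(x-\theta)}{\sqrt v}\big)$, with $\phi,\Phi$ the standard normal density and c.d.f., I write $u_j:=(x-\theta_j^0)/\sqrt{v_j^0}$, and I recall that $G_0\in\mathcal{S}_0$ encodes $P_1(\veceta^0)\neq0$ (so every $m_j^0\neq0$) and $P_2(\veceta^0)\neq0$. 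By the product rule each summand splits as a \emph{symmetric part} $E_{j,\kappa}(x)$ — the term in which no differentiation ever strikes the $\Phi$-factor, which equals a polynomial in $x$ times $\phi(u_j)\Phi(m_j^0u_j)$ — plus an \emph{extra part} $G_{j,\kappa}(x)$ — in which every surviving $\Phi$ has been differentiated at least once, which equals a polynomial in $x$ times $\phi(u_j)\phi(m_j^0u_j)=\mathrm{const}\cdot e^{-(1+(m_j^0)^2)u_j^2/2}$. Two observations: any $\kappa\in A_2^{r}$ forces at least one $m$-derivative, hence $E_{j,\kappa}\equiv0$; and all polynomial prefactors below have explicitly bounded degree with nonzero leading coefficient — for the $m$-derivatives a monomial in $m_j^0$, nonzero since $P_1(\veceta^0)\neq0$. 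Grouping, the hypothesis becomes
\[
\sum_{j=1}^{k_0}\big(E_j+G_j\big)\equiv0,\qquad E_j+G_j=\phi(u_j)\big[P_j(u_j)\,\Phi(m_j^0u_j)+Q_j(u_j)\,\phi(m_j^0u_j)\big],
\]
where $P_j$ is a polynomial depending only on $\{c_{j,\kappa}:\kappa\in A_1^{r}\}$ and $Q_j$ a polynomial depending on all of $\{c_{j,\kappa}:\kappa\in\mathcal{F}_{r}\}$.

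\emph{Separating the atoms.} Every function above is entire in $x$, so the identity persists on the imaginary axis $x=iy$. From $\phi(iy')\sim c\,e^{y'^2/2}$ and $\Phi(iy')\sim c'\,e^{y'^2/2}/y'$ as $|y'|\to\infty$ (complex asymptotics of the imaginary error function), a non-vanishing $E_j+G_j$ has leading behaviour along $iy$ of the form $C_j\,y^{D_j}\,e^{(1+(m_j^0)^2)y^2/(2v_j^0)}\,e^{i\omega_jy}$ with $C_j\neq0$, $D_j\in\mathbb{Z}$ and oscillation frequency $\omega_j=(1+(m_j^0)^2)\theta_j^0/v_j^0$; in particular its exponential order is $(1+(m_j^0)^2)/(2v_j^0)$. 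Now $P_1(\veceta^0)\neq0$ makes this order genuinely an invariant of the atom, and $P_2(\veceta^0)\neq0$ says exactly that no two distinct atoms share both the same order and the same frequency. Restricting the identity to the atoms of maximal exponential order, dividing through by the common Gaussian, and using that finitely many characters $e^{i\omega y}$ with distinct $\omega$ cannot cancel — the resulting sum is almost periodic with positive mean square unless every leading coefficient vanishes — forces $E_j+G_j\equiv0$ for those atoms; iterating downward through the orders gives $E_j+G_j\equiv0$ for every $j$.

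\emph{Within a single atom.} Fix $j$. Since $m_j^0\neq0$, the pair $\Phi(m_j^0\,\cdot),\phi(m_j^0\,\cdot)$ is linearly independent over $\mathbb{R}[x]$ (let $x\to+\infty$ if $m_j^0>0$ and $x\to-\infty$ if $m_j^0<0$: $\Phi$ tends to a nonzero constant while $\phi$ decays like a Gaussian). Hence $E_j+G_j\equiv0$ forces $P_j\equiv0$ and $Q_j\equiv0$ as polynomials. From $P_j\equiv0$: applying $\partial_v$ to the Gaussian factor amounts, by the heat relation (cf.~Eq.~\eqref{key-normal}), to applying $\tfrac12\partial_x^2$, so the symmetric part of $\partial_v^{a}f$ is a nonzero constant times $\mathrm{He}_{2a}(u_j)\phi(u_j)\Phi(m_j^0u_j)$ and that of $\partial_\theta\partial_v^{a}f$ a nonzero constant times $\mathrm{He}_{2a+1}(u_j)\phi(u_j)\Phi(m_j^0u_j)$; thus the $\kappa\in A_1^{r}$ contribute Hermite polynomials of the pairwise distinct degrees $0,1,\ldots,2r$ to $P_j$, which are linearly independent, whence $c_{j,\kappa}=0$ for all $\kappa\in A_1^{r}$. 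Substituting this back, $Q_j$ now collects only $\kappa\in A_2^{r}$: $\partial_m^{b}f$ contributes an odd polynomial of degree $2b-1$ and $\partial_\theta\partial_m^{b}f$ an even polynomial of degree $2b$, again of pairwise distinct degrees and with nonzero (monomial-in-$m_j^0$) leading coefficients, so $Q_j\equiv0$ forces $c_{j,\kappa}=0$ for all $\kappa\in A_2^{r}$. As $j$ was arbitrary, all $c_{j,\kappa}$ vanish, which is the claimed linear independence.

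\emph{Main obstacle.} The delicate part is the atom-separation: one must verify that a non-vanishing $E_j+G_j$ is genuinely ``visible'' at its predicted exponential order and frequency along the imaginary axis — i.e. that the symmetric and extra parts of one atom cannot conspire to annihilate the leading complex asymptotics — and then disentangle atoms of equal exponential order via their distinct frequencies. This needs careful control of $\Phi$ (the imaginary error function) in the complex plane together with bookkeeping of the polynomial prefactors, and it is precisely here that $P_1(\veceta^0)\neq0$ and $P_2(\veceta^0)\neq0$ enter, ensuring the exponents $(1+(m_j^0)^2)/v_j^0$ and the frequencies $(1+(m_j^0)^2)\theta_j^0/v_j^0$ are pairwise distinct. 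By contrast, the structural split is routine product-rule bookkeeping, and the within-atom step is elementary once the Hermite/parity pattern of the prefactors — which is where the precise shape $\mathcal{F}_{r}=A_1^{r}\cup A_2^{r}$ is used — has been identified.
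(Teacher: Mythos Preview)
Your proposal is correct in outline and shares the paper's structural decomposition: split each derivative into a $\phi(u_j)\Phi(m_j^0u_j)$ piece and a $\phi(u_j)\phi(m_j^0u_j)$ piece, separate the atoms, then argue within each atom that $P_j\equiv0$ and $Q_j\equiv0$ force all coefficients to vanish. Two genuine differences from the paper are worth noting. First, the paper proceeds by induction on $r$: at each step only the four new derivatives in $\mathcal{F}_{r+1}\setminus\mathcal{F}_r$ appear as the top-degree coefficients of the two polynomials, so one peels those off and invokes the hypothesis for $r$; your direct Hermite/parity degree-count for \emph{all} of $A_1^r$ and $A_2^r$ at once is cleaner and more transparent. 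Second, for the atom separation the paper stays on the real axis (the argument of Lemma~\ref{proposition-notskewnormal}): there the $\phi\Phi$ part decays like $e^{-x^2/(2v_j^0)}$ and the $\phi\phi$ part like $e^{-(1+(m_j^0)^2)x^2/(2v_j^0)}$, so one simply sorts the $2k_0$ effective Gaussian scales and peels from the slowest, with $P_1\neq0$, $P_2\neq0$ guaranteeing the needed distinctness. Your imaginary-axis route is legitimate but strictly harder, precisely because on $i\mathbb{R}$ both pieces grow at the \emph{same} order $(1+(m_j^0)^2)/(2v_j^0)$; to justify your claim that a nonzero $E_j+G_j$ is ``visible'' at that order you must invoke the full asymptotic expansion $\Phi(w)\sim 1-\phi(w)\sum_k(-1)^k(2k-1)!!/w^{2k+1}$ and argue that the formal Laurent series $Q_j(u)-P_j(u)[1/(m_j^0u)-1/(m_j^0u)^3+\cdots]$ is nonzero whenever $(P_j,Q_j)\neq(0,0)$, and then peel by polynomial degree before separating the distinct frequencies $\omega_j$. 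You have correctly identified this as the crux but not carried it out; switching to the paper's real-axis separation would close the gap with less work.
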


Figure \ref{figure:reduced_derivatives} gives an illustration of Lemma \ref{lemma:reduced_linearly_independent} when $r=3$. Armed with the foregoing lemmas we can easily obtain a suitable minimal form for the
mixture densities of skew-normals.

\subsection{Illustrations via special cases} \label{Section:illustration_omixture_byone}

To illustrate our techniques and results, consider a special case 
in which $G_0$ has exactly one atom, and $k=k_0+1=2$. 
%{\color{red} We will illustrate simultaneously the procedures for studying the singularity level and singularity index under that setting of $G_{0}$}.
The general result is presented in Section \ref{Section:general_bound_omixtures_generic}.
As a warm-up exercise, we verify that

\paragraph{Claim: $G_0$ is 1-singular and (1,1,1)-singular.} Indeed, $G_0\in \Scal_0$ implies that all
first order derivatives of $f$ are linearly independent. 
Hence, from Eq.~\eqref{eqn:generallinearindependencerepresentation} we obtain
(1,1,1)-minimal form:
\begin{eqnarray}
\dfrac{p_{G}(x)-p_{G_{0}}(x)}{W_{1}(G,G_{0})} 
\asymp \dfrac{1}{W_{1}(G,G_{0})}\biggr(\Delta p_{1.} 
f(x|\eta_1^0) + 
\sum \limits_{i=1}^{2}{p_{1i} \Delta \theta_{1i}}\dfrac{\partial{f}}
{\partial{\theta}}(x|\eta_1^0) \nonumber \\  
+ \sum \limits_{i=1}^{2}{p_{1i} \Delta v_{1i}}\dfrac{\partial{f}}{\partial{v}}
(x|\eta_1^0) + \sum \limits_{i=1}^{2}{p_{1i} \Delta m_{1i}}\dfrac{\partial{f}}{\partial{m}}(x|\eta_1^0)\biggr)+o(1). \label{eqn:taylorexpansionfirstorder_overfit}
\end{eqnarray}
Since $k=2$ and $k_{0}=1$, $\Delta p_{1.}=0$. A sequence of $G$ can be easily chosen 
so that $\sum \limits_{i=1}^{2}{p_{1i} 
\Delta \theta_{1i}}=0$, $\sum \limits_{i=1}^{2}{p_{1i} \Delta v_{1i}}=0$, $\sum 
\limits_{i=1}^{2}{p_{1i} \Delta m_{1i}}=0$, so that all of the coefficients in 
\eqref{eqn:taylorexpansionfirstorder_overfit} are $0$. 
Hence, $G_0$ is 1-singular and $(1,1,1)$-singular relative to $\mathcal{O}_{2,c_{0}}$.
In light of Prop.~\ref{proposition:strong_identifiability_singularity_index}, it
is non-trivial to show that

\paragraph{Claim: $G_0$ is 2-singular and (2,2,2)-singular.} 
Indeed, using the method of elimination described in Example \ref{ex-reduce-3}
we obtain the following 2-minimal and $(2,2,2)$-minimal form:
\begin{eqnarray}
\dfrac{1}{W_{2}^{2}(G,G_{0})}\biggr(\sum \limits_{\kappa \in \mathcal{F}_2}{\xi_{\kappa_{1},\kappa_{2},\kappa_{3}}^{(2)}\dfrac{\partial^{|\kappa|}{f}}{\partial{\theta^{\kappa_{1}}}\partial{v^{\kappa_{2}}}\partial{m^{\kappa_{3}}}}(x|\eta_1^0)}\biggr)+o(1), \label{eqn:taylorexpansion_s0_second}
\end{eqnarray}
where $\xi_\kappa^{(2)} \equiv \xi_{\kappa_{1},\kappa_{2},\kappa_{3}}^{(2)}$ are given by
\begin{eqnarray}
\xi_{1,0,0}^{(2)}=\sum \limits_{i=1}^{2} {p_{1i}\Delta \theta_{1i}}, \ \xi_{0,1,0}^{(2)}=\sum \limits_{i=1}^{2} p_{1i}\Delta v_{1i}+\sum 
\limits_{i=1}^{2}{p_{1i}(\Delta \theta_{1i})^{2}}, \nonumber \\
\xi_{0,0,1}^{(2)}=-\dfrac{(m_{1}^{0})^{3}+m_{1}^{0}}{2v_{1}^{0}}
\sum \limits_{i=1}^{2}{p_{1i}(\Delta \theta_{1i})^{2}}-\dfrac{1}{v_{1}^{0}}\sum \limits_{i=1}^{2}{p_{1i}\Delta v_{1i}\Delta m_{1i}}+ \sum \limits_{i=1}^{2}{p_{1i}
\Delta m_{1i}}, \nonumber \\
\xi_{0,2,0}^{(2)}=\sum \limits_{i=1}^{2} {p_{1i}(\Delta v_{1i})^{2}}, \;
\xi_{0,0,2}^{(2)}=-\dfrac{(m_{1}^{0})^{2}+1}{2v_{1}^{0}m_{1}^{0}}
\sum \limits_{i=1}^{2}{p_{1i}\Delta v_{1i}\Delta m_{1i}} +
\sum \limits_{i=1}^{2} {p_{1i}\Delta (m_{1i})^{2}}, \nonumber \\
\xi_{1,1,0}^{(2)}=\sum \limits_{i=1}^{2} {p_{1i}\Delta \theta_{1i}\Delta v_{1i}}, \;
\xi_{1,0,1}^{(2)}=\sum \limits_{i=1}^{2} {p_{1i}\Delta \theta_{1i}\Delta m_{1i}}, \nonumber
\end{eqnarray} 
\comment{
\begin{eqnarray}
\xi_{1,0,0}^{(2)}=\sum \limits_{i=1}^{2} {p_{1i}\Delta \theta_{1i}}, \ \xi_{0,1,0}^{(2)}=\sum \limits_{i=1}^{2} p_{1i}\Delta v_{1i}+\sum 
\limits_{i=1}^{2}{p_{1i}(\Delta \theta_{1i})^{2}}, \nonumber \\
\xi_{0,0,1}^{(2)}=-\dfrac{(m_{1}^{0})^{3}+m_{1}^{0}}{2v_{1}^{0}}
\sum \limits_{i=1}^{2}{p_{1i}(\Delta \theta_{1i})^{2}}+ \sum \limits_{i=1}^{2}{p_{1i}
\Delta m_{1i}}, \ \xi_{0,2,0}^{(2)}=\sum \limits_{i=1}^{2} {p_{1i}(\Delta v_{1i})^{2}}, \nonumber \\
\xi_{0,0,2}^{(2)}=\sum \limits_{i=1}^{2} {p_{1i}(\Delta m_{1i})^{2}}, \ \xi_{1,1,0}^{(2)}=\sum \limits_{i=1}^{2} {p_{1i}\Delta \theta_{1i}\Delta v_{1i}}, \nonumber \\
\xi_{1,0,1}^{(2)}=\sum \limits_{i=1}^{2} {p_{1i}\Delta \theta_{1i}\Delta m_{1i}}, \ \xi_{0,1,1}^{(2)}=\sum \limits_{i=1}^{2} {p_{1i}\Delta v_{1i}\Delta m_{1i}}. \nonumber
\end{eqnarray} 
}
where we note in particular that the formulas for
$\xi_{0,1,0}^{(2)}$, $\xi_{0,0,1}^{(2)}$ and $\xi_{0,0,2}^{(2)}$
are the results of the elimination step via Eqs.
~\eqref{Eqn-reduction-1} and \eqref{Eqn-reduction-2}. 

It remains to construct a sequence of $G$ tending to $G_0$
so that $\xi_{\kappa}^{(2)}/W_2^2(G,G_0)$ vanish for all 
$\kappa = (\kappa_1,\kappa_2,\kappa_3) \in \mathcal{F}_2$. Define
\begin{eqnarray}
\overline{M}=\mathop {\max }{\left\{|\Delta \theta_{11}|, |\Delta \theta_{12}|, |\Delta v_{11}|^{1/2}, |\Delta v_{12}|^{1/2}, |\Delta m_{11}|^{1/2}, |\Delta m_{12}|^{1/2}\right\}}. \nonumber
\end{eqnarray} 
Then, $\xi_{\kappa_{1},\kappa_{2},\kappa_{3}}^{(2)} = O(\overline{M}^{\kappa_{1}+2\kappa_{2}+2\kappa_{3}})$. Moreover,
it follows from Lemma ~\ref{lemma:bound_overfit_Wasserstein} that $W_{2}^{2}(G,G_{0}) \lesssim \overline{M}^{2}$. 
The sequence of $G$ and the corresponding $\Delta \theta_{1i}, \Delta v_{1i}$, and $\Delta m_{1i}$ will be chosen so that 
$W_{2}^{2}(G,G_{0}) \asymp \overline{M}^{2}$, and for all $\kappa \in \mathcal{F}_2$, 
$\xi_{\kappa_{1},\kappa_{2},\kappa_{3}}^{(2)}/W_{2}^{2}(G,G_{0}) \to 0$. 
For any $\kappa \in \mathcal{F}_2$ such that 
$\kappa_{1}+2\kappa_{2}+2\kappa_{3} \geq 3$, we have
$\xi_{\kappa_{1},\kappa_{2},\kappa_{3}}^{(2)}=O(\overline{M}^{s})$ where $s \geq 3$, so
$\xi_{\kappa_{1},\kappa_{2},\kappa_{3}}^{(2)}/W_{2}^{2}(G,G_{0}) \to 0$. Thus,
we only need to consider the coefficients for which $\kappa_{1}+2\kappa_{2}+2\kappa_{3} \leq 2$ and $\kappa_{1} \leq 1$. 
They include $\xi_{1,0,0}^{(2)}/W_2^2(G,G_0)$, $\xi_{0,1,0}^{(2)}/W_2^2(G,G_0)$, 
and $\xi_{0,0,1}^{(2)}/W_2^2(G,G_0)$. 
We will see shortly that if these terms vanish, we must be able to extract a solvable system of polynomial equations. 

Let $\Delta \theta_{1i}/\overline{M} \to a_{i}$, $\Delta v_{1i}/\overline{M}^{2} \to 
b_{i}$, $\Delta m_{1i}/\overline{M}^{2} \to c_{i}$, $p_{1i} \to d_{i}^{2}$ for 
all $1 \leq i \leq 2$ (such limits always exist for some subsequence of $G$, in which case we use to replace $G$). 
For $\xi_{1,0,0}^{(2)}/W_2^2(G,G_0) \to 0$ to hold, by dividing its numerator and denominator by $\overline{M}$, it is necessary that
\begin{align}
d_{1}^{2}a_{1}+d_{2}^{2}a_{2}=0. \label{eqn:taylorexpansion_s0_second_first_term}
\end{align}
For $\xi_{0,1,0}^{(2)}/W_2^2(G,G_0) \to 0$, by dividing its numerator and denominator by $\overline{M}^{2}$, the following holds
\begin{align}
d_{1}^{2}a_{1}^{2}+d_{2}^{2}a_{2}^{2}+d_{1}^{2}b_{1}+d_{2}^{2}b_{2}=0. \label{eqn:taylorexpansion_s0_second_second_term}
\end{align} 
Finally, for $\xi_{0,0,1}^{(2)}/W_2^2(G,G_0) \to 0$, by dividing its numerator and denominator by $\overline{M}^{2}$
and noting that
$\left(\sum_{i=1}^{2}{p_{1i}\Delta v_{1i}\Delta m_{1i}}\right)/W_{2}^{2}(G,G_{0}) = O(\overline{M}^{2}) \to 0$,
we obtain
\begin{align}
-\dfrac{(m_{1}^{0})^{3}+m_{1}^{0}}{2v_{1}^{0}}(d_{1}^{2}a_{1}^{2}+d_{2}^{2}a_{2}^{2})+d_{1}^{2}c_{1}+d_{2}^{2}c_{2}=0. \label{eqn:taylorexpansion_s0_second_third_term}
\end{align}
Thus, we have obtained a system of polynomial equations 
\eqref{eqn:taylorexpansion_s0_second_first_term}, \eqref{eqn:taylorexpansion_s0_second_second_term}, and \eqref{eqn:taylorexpansion_s0_second_third_term}.
\comment{
Combining the equations from \eqref{eqn:taylorexpansion_s0_second_first_term}, \eqref{eqn:taylorexpansion_s0_second_second_term}, and \eqref{eqn:taylorexpansion_s0_second_third_term}, as long as $\xi_{\kappa_{1},\kappa_{2},\kappa_{3}}^{(2)}/W_{2}^{2}(G,G_{0}) \to 0$ for $\kappa_{1}+2\kappa_{2}+2\kappa_{3} \leq 2$ and $\kappa_{1} \leq 1$,
we achieve the following system of polynomial equations
\begin{eqnarray}
d_{1}^{2}a_{1}+d_{2}^{2}a_{2}=0, \nonumber \\
d_{1}^{2}a_{1}^{2}+d_{2}^{2}a_{2}^{2}+d_{1}^{2}b_{1}+d_{2}^{2}b_{2}=0, \nonumber \\
-\dfrac{(m_{1}^{0})^{3}+m_{1}^{0}}{2v_{1}^{0}}(d_{1}^{2}a_{1}^{2}+d_{2}^{2}a_{2}^{2})+d_{1}^{2}c_{1}+d_{2}^{2}c_{2}=0. \label{eqn:systemnonlinear_second}
\end{eqnarray} 
}
Since $p_{11}+p_{12}=p_{1}^{0}=1$ and $p_{1i} \geq 
c_{0}$ for all $1 \leq i \leq 2$, we have $d_{1}^{2}+d_{2}^{2}=1$ and $d_{i}^{2}$ are bounded away from 0
for $1 \leq i \leq 2$. Additionally, at least one among $a_{1},a_{2},b_{1},b_{2},c_{1},c_{2}$ is non-zero. 

\comment{Note that the constraint $d_{1}^{2}
+d_{2}^{2}=1$ can be efficiently removed by reparametrizing $d_{i}=(d_{i}^{'})^{2}/
(d_{1}^{'})^{2}+(d_{2}^{'})^{2})$ for all $1 \leq i \leq 2$ where $d_{1}^{'},d_{2}^{'} \neq 0$. 
Therefore, herethereupon, under this section, when we talk about system of polynomial equations like 
\eqref{eqn:systemnonlinear_second}, we implicitly remove the constraint on the summation 
of $d_{1}^{2},d_{2}^{2}$. }

One non-trivial solution to the above system of polynomial 
equations is  $d_{1}=d_{2}$, $a_{1}=-a_{2}$, $b_{1}=b_{2}=-a_{1}^{2}$, $c_{1}
=c_{2}=[(m_{1}^{0})^{3}+m_{1}^{0}]a_{1}^2/(2v_{1}^{0})$. Given this solution, we can now select a sequence of $G$ 
by letting $p_{11}=p_{12} = 1/2$,
$\Delta \theta_{11} = -\Delta \theta_{12}$, 
$\Delta v_{11} = \Delta v_{12} = -(\Delta \theta_{11})^2$, and
$\Delta m_{11} = \Delta m_{12} =  
(\Delta \theta_{11})^2[(m_{1}^{0})^{3}+m_{1}^{0}]/(2v_{1}^{0})$. It is simple to verify that
$W_{2}^{2}(G,G_{0}) \asymp \overline{M}^{2}$ and $\xi_{\kappa_{1},\kappa_{2},\kappa_{3}}^{(2)}/W_{2}^{2}(G,G_{0}) \to 0$ for $\kappa_{1}+2\kappa_{2}+2\kappa_{3} \leq 2$ and $\kappa_{1} \leq 1$, i.e., 
all coefficients of the 2-minimal form vanish to 0. Hence, $G_0$ is 2-singular and (2,2,2)-singular
relative to $\Ocal_{2,c_{0}}$.
   
\begin{figure}
\begin{tikzpicture}
  \matrix (m) [matrix of math nodes,row sep=2.0 em,column sep= 0 em,minimum width=0.05em]
{ \dfrac{\partial{f}}{\partial{\theta}}& \dfrac{\partial{f}}{\partial{v}}& \dfrac{\partial{f}}{\partial{m}}& & & & & & & \\
\mathcircled{\dfrac{\partial^{2}{f}}{\partial{\theta^{2}}}}& \dfrac{\partial^{2}{f}}{\partial{v^{2}}}& \dfrac{\partial^{2}{f}}{\theta{m^{2}}} & & \dfrac{\partial^{2}{f}}{\partial{\theta}\partial{v}}& \dfrac{\partial^{2}{f}}{\partial{\theta}\partial{m}}&  \mathcircled{\dfrac{\partial^{2}{f}}{\partial{v}\theta{m}}}& & & &  \\
\mathcircled{\dfrac{\partial^{3}{f}}{\partial{\theta^{3}}}}&\dfrac{\partial^{3}{f}}{\partial{v^{3}}}& \dfrac{\partial^{3}{f}}{\theta{m^{3}}}& \mathcircled{\dfrac{\partial^{3}{f}}{\partial{\theta^{2}}\partial{v}}} & \mathcircled{\dfrac{\partial^{3}{f}}{\partial{\theta^{2}}\partial{m}}} &\mathcircled{\dfrac{\partial^{3}{f}}{\partial{v^{2}}\partial{m}}} &\dfrac{\partial^{3}{f}}{\partial{\theta}\partial{v^{2}}} & \dfrac{\partial^{3}{f}}{\partial{\theta}\partial{m^{2}}} & \mathcircled{\dfrac{\partial^{3}{f}}{\partial{v}\partial{m^{2}}}} &\mathcircled{\dfrac{\partial^{3}{f}}{\partial{\theta}\partial{v}\partial{m}}}  \\
};

\path[-stealth]
(m-2-1) edge (m-1-2) edge (m-1-3)
(m-3-1) edge (m-2-5) edge (m-2-6)
(m-3-4) edge (m-2-2) edge (m-2-7) edge (m-1-3)
(m-3-5) edge (m-2-3) edge (m-2-7) edge (m-1-3)
(m-2-7) edge [bend right] (m-2-3)
(m-2-7) edge (m-1-3)
(m-3-6) edge [bend left=35] (m-3-3)
(m-3-6) edge (m-2-3)
(m-3-6) edge (m-1-3)
(m-3-9) edge [bend left=35] (m-3-3)
(m-3-9) edge (m-2-3)
(m-3-10) edge [bend left=40] (m-3-8)
(m-3-10) edge (m-2-6);
\draw [->] (m-3-5) -- (m-2-3);

\end{tikzpicture}

\caption{Illustration of the elimination steps from a complete collection of 
derivatives of $f$ up to the order 3 to a reduced system of linearly independent
partial derivatives, cf. Lemma \ref{lemma:reduced_linearly_independent}. The circled 
partial derivatives are eliminated from the partial derivatives present in the 3-minimal form.
$A \to B$ means that 
$B$ is included in the representation of the minimal form when $A$ is eliminated.}
\label{figure:reduced_derivatives}
\end{figure}
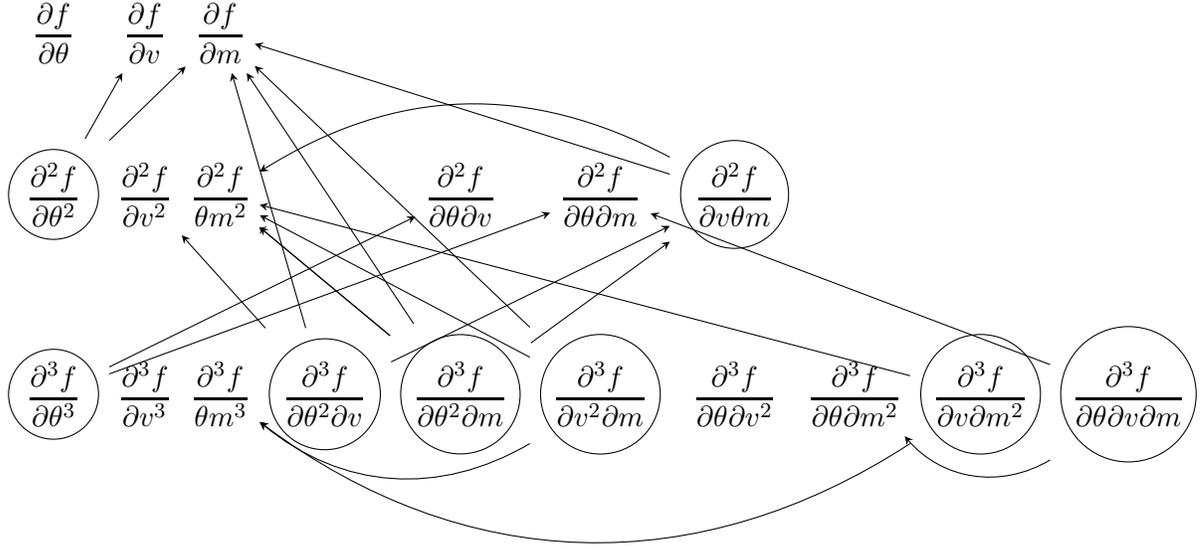

\paragraph{Claim: $G_0$ is 3-singular and (3,3,3)-singular.} The proof for this is similar to the argument that $G_{0}$ is 2-singular and $(2,2,2)$-singular. 
In particular, a 3-minimal and (3,3,3)-minimal
form can be obtained by applying the reductions \eqref{eqn:nonlinearequations_third},
which eliminate all third order partial derivatives in terms of lower order ones
that are in fact linearly independent by the condition that $G_0 \in \Scal_0$.
As in the foregoing paragraphs, as long as $\Delta \theta_{1i}, \Delta v_{1i}$, and $\Delta m_{1i}$ are chosen such that $W_{3}^{3}(G,G_{0}) \asymp \overline{M}^{3}$, we can obtain a system of polynomials
that turn out to share the same solution as the one described. 
This leads to the same choice of sequence for $G$ according to which all 
coefficients of the 3-minimal form vanish to 0. Thus, $G_0$ is 3-singular and (3,3,3)-singular relatively
to $\Ocal_{2,c_0}$. In order to establish the singularity level and singularity index of $G_0$, we will show that

\paragraph{Claim: $G_0$ is not (4,2,2)-singular.} 
This claim immediately entails, thanks to Lemma \ref{lemma:minimal_form} and Lemma \ref{lemma:minimal_form_general}, 
that $G_{0}$ is not 4-singular and (4,4,4)-singular relative to $\Ocal_{k,c_{0}}$, and so by definition
$\lev(G_{0}|\Ocal_{2,c_{0}})=3$.  Indeed, following the same approach as above, we obtain a $(4,2,2)$-minimal form and 
their rational semipolynomial coefficients, from which we extract the following system of real polynomial limits:
\begin{eqnarray}
d_{1}^{2}a_{1}+d_{2}^{2}a_{2}=0, \nonumber \\
d_{1}^{2}a_{1}^{2}+d_{2}^{2}a_{2}^{2}+d_{1}^{2}b_{1}+d_{2}^{2}b_{2}=0, \nonumber \\
-\dfrac{(m_{1}^{0})^{3}+m_{1}^{0}}{2v_{1}^{0}}(d_{1}^{2}a_{1}^{2}+d_{2}^{2}a_{2}
^{2})+d_{1}^{2}c_{1}+d_{2}^{2}c_{2}=0, \nonumber \\
\dfrac{1}{3}(d_{1}^{2}a_{1}^{3}+d_{2}^{2}a_{2}^{3})+d_{1}^{2}a_{1}b_{1}+d_{2}^{2}
a_{2}b_{2}=0, \nonumber \\
-\dfrac{(m_{1}^{0})^{3}+m_{1}^{0}}{6v_{1}^{0}}(d_{1}^{2}a_{1}^{3}+d_{2}^{2}a_{2}
^{3})+d_{1}^{2}a_{1}c_{1}+d_{2}^{2}a_{2}c_{2}=0, \nonumber \\
\dfrac{1}{6}(d_{1}^{2}a_{1}^{4}+d_{2}^{2}a_{2}^{4})+d_{1}^{2}a_{1}^{2}b_{1}+d_{2}
^{2}a_{2}^{2}b_{2}+\dfrac{1}{2}(d_{1}^{2}b_{1}^{2}+d_{2}^{2}b_{2}^{2})=0, \nonumber \\
\dfrac{((m_{1}^{0})^{3}+m_{1}^{0})^{2}}{12(v_{1}^{0})^{2}}(d_{1}^{2}a_{1}^{4}
+d_{2}^{2}a_{2}^{4})-\dfrac{(m_{1}^{0})^{3}+m_{1}^{0}}{v_{1}^{0}}(d_{1}^{2}a_{1}
^{2}c_{1}+d_{2}^{2}a_{2}^{2}c_{2}) - \nonumber \\
\dfrac{(m_{1}^{0})^{2}+1}{v_{1}^{0}m_{1}^{0}}(d_{1}^{2}b_{1}c_{1}+d_{2}^{2}b_{2}c_{2})+d_{1}^{2}c_{1}^{2}+d_{2}^{2}c_{2}^{2}=0, \label{eqn:systemnonlinear_fourth}
\end{eqnarray}
such that at least one among 
$a_{1},a_{2},b_{1},b_{2},c_{1},c_{2}$ is non-zero and $d_{1},d_{2} \neq 0$. 

At the first glance, the behavior of this system appears dependent on the specific
value of $v_1^0,m_1^0$. However, if we remove the third, fifth and eighth equations,
we obtain a system of real polynomials that does not depend on the specific value of $G_0$. In fact, it
can be verified that this system does \emph{not} admit any non-trivial real solution, using
a standard tool (Groebner bases method) from computational algebra \cite{Cox-etal}.
Thus, there does not exist any sequence of $G \in \Ocal_{2,c_0}$
according to which all coefficients of the 4-minimal form vanish. 
This implies that $G_0$ is \emph{not} (4,2,2)-singular relative to $\Ocal_{2,c_{0}}$. 
We proceed to show 
\paragraph{Claim: $\singset(G_{0}|\Ocal_{2,c_{0}})=\left\{(4,2,2)\right\}$.}
It suffices to verify that $G_{0}$ is $(3,r,r)$-singular, $(r,1,r)$-singular, and $(r,r,1)
$-singular for any $r \geq 1$. As $G_{0}$ is $(3,3,3)$-singular relative to $
\Ocal_{2,c_{0}}$, it is sufficient to validate the previous claims when $r \geq 4$. 
Select the same sequence of $G$ as in the foregoing argument, i.e., $p_{11}=p_{12} = 1/2$,
$\Delta \theta_{11} = -\Delta \theta_{12}$, 
$\Delta v_{11} = \Delta v_{12} = -(\Delta \theta_{11})^2$, and
$\Delta m_{11} = \Delta m_{12} =  
(\Delta \theta_{11})^2[(m_{1}^{0})^{3}+m_{1}^{0}]/(2v_{1}^{0})$. If $\kappa = (3,r,r)$, then $\|\kappa\|_\infty = r \geq 4$. It is simple to verify that
\begin{align}
\widetilde{W}_{\kappa}^{\|\kappa\|_{\infty}}(G,G_{0}) = \widetilde{W}_{\kappa}^{r}(G,G_{0}) \asymp |\Delta \theta_{11}|^{3}. \nonumber
\end{align}
 Similarly, if $\kappa \in \left\{(r,1,r), (r,r,1)\right\}$ and $r \geq 4$, we have
\begin{align}
\widetilde{W}_{\kappa}^{\|\kappa\|_{\infty}}(G,G_{0}) = \widetilde{W}_{\kappa}^{r}(G,G_{0}) \asymp |\Delta \theta_{11}|^2 \gg |\Delta \theta_{11}|^{3}. \nonumber
\end{align}
Hence, as long as $\kappa \in \mathcal{U}_{r} := \left\{(3,r,r),(r,1,r),(r,r,1)\right\}$ and $r \geq 4$, we have that $\widetilde{W}
_{\kappa}^{\|\kappa\|_{\infty}}(G,G_{0}) \gtrsim |\Delta \theta_{11}|
^{3}$. Due to the choices of $\Delta \theta_{1i}$, $\Delta v_{1i}$, 
and $\Delta m_{1i}$, it is easy to check that
\begin{align}
|\sum \limits_{i=1}^{2} p_{1i}(\Delta \theta_{1i})^{\alpha_{1}}(\Delta v_{1i})^{\alpha_{2}}(\Delta m_{1i})^{\alpha_{3}}| = O(|\Delta \theta_{11}|^{\alpha_{1}+2\alpha_{2}+2\alpha_{3}}) \nonumber
\end{align}
for any $\alpha = (\alpha_{1},\alpha_{2},\alpha_{3})$. Invoking the previous bounds, the following limit holds 
\begin{eqnarray}
\dfrac{|\sum \limits_{i=1}^{2} p_{1i}(\Delta \theta_{1i})^{\alpha_{1}}(\Delta v_{1i})^{\alpha_{2}}(\Delta m_{1i})^{\alpha_{3}}|}{\widetilde{W}_{\kappa}^{\|\kappa\|_{\infty}}(G,G_{0})} \lesssim \dfrac{|\Delta \theta_{11}|^{\alpha_{1}+2\alpha_{2}+2\alpha_{3}}}{|\Delta \theta_{11}|^{3}} \to 0 \label{eqn:fundamental_limits}
\end{eqnarray} 
for any $\alpha_{1}+2\alpha_{2}+2\alpha_{3} \geq 4$ and $\kappa \in \mathcal{U}_{r}$ where $r \geq 4$.

Now, for any $\kappa \in \mathcal{U}_{r}$ and $r \geq 4$, we obtain the following $\kappa$-minimal form:
\begin{eqnarray}
\dfrac{1}{\widetilde{W}_{\kappa}^{\|\kappa\|_{\infty}}(G,G_{0})}\biggr(\sum \limits_{\tau \in \mathcal{F}_r}{\xi_{\tau_{1},\tau_{2},\tau_{3}}^{(r)}\dfrac{\partial^{|\tau|}{f}}{\partial{\theta^{\tau_{1}}}\partial{v^{\tau_{2}}}\partial{m^{\tau_{3}}}}(x|\eta_1^0)}\biggr)+o(1). \nonumber
\end{eqnarray}

Similar to the formulations of $\xi_{\tau_{1},\tau_{2},
\tau_{3}}^{(2)}$  in ~\eqref{eqn:taylorexpansion_s0_second}, $
\xi_{\tau_{1},\tau_{2},\tau_{3}}^{(r)}$ will only consist of 
monomials of the following form $\sum \limits_{i=1}^{2} p_{1i}(\Delta 
\theta_{1i})^{\alpha_{1}}(\Delta v_{1i})^{\alpha_{2}}(\Delta 
m_{1i})^{\alpha_{3}}$ where $\alpha_{1}+2\alpha_{2}+2\alpha_{3} \geq 
\tau_{1}+2\tau_{2}+2\tau_{3}$. Note that, the constraint with $\alpha 
= (\alpha_{1},\alpha_{2},\alpha_{2})$ stems from the representation 
in Lemma~\ref{lemma:recursiveequation}. 
By Eq. \eqref{eqn:fundamental_limits} and the structure of $\xi_{\tau_{1},\tau_{2},\tau_{3}}^{(r)}$, for any $\tau 
\in \mathcal{F}_{r}$ such that $\tau_{1}+2\tau_{2}+2\tau_{3} \geq 4$, we have $
\xi_{\tau_{1},\tau_{2},\tau_{3}}^{(r)}/\widetilde{W}_{\kappa}^{\|\kappa\|
_{\infty}}(G,G_{0}) \to 0$. 
On the other hand, for any $\tau 
\in \mathcal{F}_{r}$ such that $\tau_{1}+2\tau_{2}+2\tau_{3} \leq 3$, 
denote $\overline{\xi}_{\tau_{1},\tau_{2},\tau_{3}}^{(r)}$ a 
coefficient by leaving out all the monomials $\sum \limits_{i=1}^{2} 
p_{1i}(\Delta \theta_{1i})^{\alpha_{1}}(\Delta v_{1i})^{\alpha_{2}}
(\Delta m_{1i})^{\alpha_{3}}$ with $
\alpha_{1}+2\alpha_{2}+2\alpha_{3} \geq 4$ from the original 
coefficient $\xi_{\tau_{1},\tau_{2},\tau_{3}}^{(r)}$. Invoking again
~\eqref{eqn:fundamental_limits}, the following holds
\begin{align}
\xi_{\tau_{1},\tau_{2},\tau_{3}}^{(r)}/\widetilde{W}_{\kappa}^{\|\kappa\|_{\infty}}(G,G_{0}) - \overline{\xi}_{\tau_{1},\tau_{2},\tau_{3}}^{(r)}/\widetilde{W}_{\kappa}^{\|\kappa\|_{\infty}}(G,G_{0}) \rightarrow 0, \nonumber
\end{align}
where, by direct computations, the formulations of $\overline{\xi}
_{\tau_{1},\tau_{2},\tau_{3}}^{(r)}$ when $\tau 
\in \mathcal{F}_{r}$ and $\tau_{1}+2\tau_{2}+2\tau_{3} \leq 3$ are given as follows
\begin{align}
\overline{\xi}_{1,0,0}^{(r)} = \sum \limits_{i=1}^{2} {p_{1i}\Delta \theta_{1i}}, \ \overline{\xi}_{0,1,0}^{(r)}=\sum \limits_{i=1}^{2} p_{1i}\Delta v_{1i}+\sum 
\limits_{i=1}^{2}{p_{1i}(\Delta \theta_{1i})^{2}}, \nonumber \\
\overline{\xi}_{0,0,1}^{(r)}=-\dfrac{(m_{1}^{0})^{3}+m_{1}^{0}}{2v_{1}^{0}}
\sum \limits_{i=1}^{2}{p_{1i}(\Delta \theta_{1i})^{2}} + \sum \limits_{i=1}^{2}{p_{1i}
\Delta m_{1i}}, \nonumber \\
\overline{\xi}_{1,1,0}^{(r)}=\sum \limits_{i=1}^{2} {p_{1i}\Delta \theta_{1i}\Delta v_{1i}}, \;
\overline{\xi}_{1,0,1}^{(r)}=\sum \limits_{i=1}^{2} {p_{1i}\Delta \theta_{1i}\Delta m_{1i}}. \nonumber
\end{align}
According to the choices of $p_{1i}, \Delta 
\theta_{1i}$, $\Delta v_{1i}$, and $\Delta m_{1i}$, we have $
\overline{\xi}_{\tau_{1},\tau_{2},\tau_{3}}^{(r)} = 0$. Therefore, we obtain that $\xi_{\tau_{1},\tau_{2},
\tau_{3}}^{(r)}/\widetilde{W}_{\kappa}^{\|\kappa\|_{\infty}}(G,G_{0}) \to 0$ for any $\tau 
\in \mathcal{F}_{r}$ such that $\tau_{1}+2\tau_{2}+2\tau_{3} \leq 3$. As a 
consequence, $G_{0}$ is $\kappa$-singular for any $\kappa \in 
\mathcal{U}_{r}$ as $r \geq 1$, which leads to the conclusion that $\singset(G_{0}|\Ocal_{2,c_{0}})=\left\{(4,2,2)\right\}$.

We end the foregoing laborious exercise with a few comments.
The fact that there exists a subset of the limiting polynomials of the coefficients
of $r$-minimal forms that do not depend on specific value of $G_0$ is very useful, because it
results in a non-trivial upper bound on the singularity level the holds \emph{uniformly} for all 
$G_0 \in \Scal_0$. It is interesting to note that this subset of polynomials also arises from 
the same analysis applied to the Gaussian kernels studied by~\cite{Ho-Nguyen-Ann-16}.
This observation can be partially explained by the fact that Gaussian kernels are 
a special case of skew-normal kernels with zero skewness. A striking
consequence from this observation is that the singularity level in a skew-normal
mixture is always bounded from above by the singularity level in a Gaussian
mixture. Thanks to Theorem \ref{proposition:convergence_and_minimax} 
we arrive at a remarkable conclusion that the MLE and minimax bounds for estimating mixing measure
in skew-normal o-mixtures are generally \emph{faster} than that of Gaussian o-mixtures. 
In terms of individual parameter estimation, we also arrive at another interesting 
phenemenon as the convergence rates of location and scale parameters in skew-normal 
o-mixtures are also faster than those in Gaussian o-mixtures.
At this point we are ready for a general result 
which quantifies our remarks precisely.

\subsection{A general theorem for skew-normal o-mixtures} \label{Section:general_bound_omixtures_generic} In this section we shall
present results on singularity structures of $G_{0}$ for the general case $k>k_{0}$. 
To do so, we define the system of the limiting polynomials
that characterizes both the singularity level and singularity index of $G_0$.  
Recall the notation introduced by the statement of Lemma \ref{lemma:recursiveequation}, where
$P_{\alpha_{1},\alpha_{2},\alpha_{3}}
^{\kappa_{1},\kappa_{2},\kappa_{3}}(m)$, $H_{\alpha_{1},\alpha_{2},\alpha_{3}}
^{\kappa_{1},\kappa_{2},\kappa_{3}}(m)$, and $Q_{\alpha_{1},\alpha_{2},\alpha_{3}}
^{\kappa_{1},\kappa_{2},\kappa_{3}}(v)$ are polynomials in terms of $m,m$ and $v$, respectively,
that arise in the decomposition of partial derivatives of the skew-normal kernel function.

For given $r \geq 1$, for each
$i=1,\ldots, k_0$, the system of limiting polynomial is given by the following equations of real unknowns
$(a_j,b_j,c_j,d_j)_{j=1}^{k-k_0+1}$:
\begin{eqnarray}
\biggr \{ \sum \limits_{j=1}^{k-k_0+1}{\sum \limits_{\alpha}{\dfrac{P^{\beta_{1},\beta_{2},\beta_{3}}_{\alpha_{1},\alpha_{2},\alpha_{3}}(m_{i}^{0})}{H^{\beta_{1},\beta_{2},\beta_{3}}_{\alpha_{1},\alpha_{2},\alpha_{3}}(m_{i}^{0})Q^{\beta_{1},\beta_{2},\beta_{3}}_{\alpha_{1},\alpha_{2},\alpha_{3}}(v_{i}^{0})}\dfrac{d_{j}^{2}a_{j}^{\alpha_{1}}b_{j}^{\alpha_{2}}c_{j}^{\alpha_{3}}}{\alpha_{1}!\alpha_{2}!\alpha_{3}!}}} = 0 
\biggr |
\; \beta \in \mathcal{F}_r \cap \{\beta_1 + 2\beta_2 + 2\beta_3 \leq r \}
\biggr \} \label{eqn:systemnonlinear}
\end{eqnarray}
where the range of $\alpha=(\alpha_{1},\alpha_{2},\alpha_{3}) \in \mathbb{N}^3$ in the above sum satisfies $\alpha_{1}+2\alpha_{2}+2\alpha_{3}=\beta_{1}+2\beta_{2}+2\beta_{3}$. 

Note that the above system of polynomial equations is the general version of the systems of polynomial equations 
described in the examples of Section \ref{Section:illustration_omixture_byone}. 
There are $2r-1$ equations in the above system of $4(k-k_0+1)$
unknowns.
% since the number of elements $\beta \in \mathcal{F}_{r}$ that satisfies the constraint $\beta_1 + 2\beta_2 + 2\beta_3 \leq r$ is $2r-1$.
A solution of \eqref{eqn:systemnonlinear} is considered \textit{non-trivial} 
if all of $d_{j}$ are non-zeros while at 
least one among $a_{1},\ldots,a_{i},b_{1},\ldots,b_{i},c_{1},
\ldots,c_{i}$ is non-zero. We say that system \eqref{eqn:systemnonlinear} is unsolvable if it does not have 
any non-trivial (or admissible) solution. Note that increasing $r$ makes the system more constrained.
The main result of this section is the following.

\begin{theorem}
\label{theorem:generic_setting_omixtures} 
For each $i=1,\ldots,k_0$, let $\rVMS(v_{i}^{0},m_{i}^{0},k-k_0)$ be the minimum $r$
for which system of polynomial equations \eqref{eqn:systemnonlinear} does not admit non-trivial solutions.
Let $G_0\in  \Scal_0$ and define
\begin{eqnarray}
\rgeneric(G_{0},k) 
& = &  \mathop {\max }\limits_{ 1 \leq i \leq k_{0}}{\rVMS(v_{i}^{0},m_{i}^{0},k-k_0)}. 
\label{eqn:general_rate_generic_setting}
\end{eqnarray}
\begin{itemize}
\item [(i)]
Then, $\lev(G_0| \Ocal_{k,c_0}) \leq \rgeneric(G_{0},k)-1$. 
\item [(ii)] Moreover, there exists $\kappa \in \singset(G_{0}|\Ocal_{k,c_{0}})$ such that 
\begin{eqnarray}
\kappa \preceq \begin{cases} \biggr(\rgeneric(G_{0},k),\dfrac{\rgeneric(G_{0},k)}{2},\dfrac{\rgeneric(G_{0},k)}{2}\biggr), & \mbox{if} \ \rgeneric(G_{0},k) \ \mbox{is an even number} \\ \biggr(\rgeneric(G_{0},k),\dfrac{\rgeneric(G_{0},k)+1}{2},\dfrac{\rgeneric(G_{0},k)+1}{2}\biggr), & \mbox{if} \ \rgeneric(G_{0},k) \ \mbox{is an odd number}. \end{cases} \nonumber
\end{eqnarray} 
\end{itemize}
\end{theorem}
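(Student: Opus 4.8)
The plan is to mirror the warm-up exercise from Section~\ref{Section:illustration_omixture_byone}, but now for arbitrary $k_0$ and $k>k_0$. The key observation driving everything is Lemma~\ref{lemma:recursiveequation}: every partial derivative of the skew-normal kernel $f(x|\eta_i^0)$ reduces to a linear combination of the ``surviving'' derivatives indexed by $\mathcal{F}_{|\alpha|}$, with rational coefficients built from the polynomials $P^{\kappa}_{\alpha}(m)$, $H^{\kappa}_{\alpha}(m)$, $Q^{\kappa}_{\alpha}(v)$, and by Lemma~\ref{lemma:reduced_linearly_independent} these surviving derivatives, evaluated at the $k_0$ atoms of $G_0\in\Scal_0$, are genuinely linearly independent. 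So for each fixed $r\geq 1$ I would first write down the $r$-minimal (equivalently $(r,r,r)$-minimal) form of the mixture likelihood: after Taylor-expanding $p_G(x)-p_{G_0}(x)$ to order $r$ and applying the reductions of Lemma~\ref{lemma:recursiveequation} to every term, the coefficient attached to the independent basis function $\partial^{|\beta|}f/\partial\eta^\beta(x|\eta_i^0)$, for $\beta\in\mathcal{F}_r$, is a specific semipolynomial $\xi^{(r)}_{i,\beta}(G)$ in the perturbations $\Delta\eta_{ij}$, $p_{ij}$, $\Delta p_{i\cdot}$ (matching Eqs.~\eqref{coeff-1}--\eqref{coeff-2}). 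The structure here is ``blockwise'': since $G_0\in\Scal_0$ has well-separated atoms (both $P_1\neq 0$ and $P_2\neq 0$), the $i$-th block of perturbations contributes only to basis functions evaluated at $\eta_i^0$, so the vanishing condition decouples over $i=1,\ldots,k_0$.

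For part (i), I would argue as follows. Suppose $G_0$ is $r$-singular relative to $\Ocal_{k,c_0}$; then there is a sequence $G\in\Ocal_{k,c_0}$ with $W_r(G,G_0)\to 0$ and all $\xi^{(r)}_{i,\beta}(G)/W_r^r(G,G_0)\to 0$. Passing to a subsequence, introduce $\overline{M}_i := \max_j\{|\Delta\theta_{ij}|,|\Delta v_{ij}|^{1/2},|\Delta m_{ij}|^{1/2}\}$ for each block $i$, and rescale $\Delta\theta_{ij}/\overline{M}_i\to a_{ij}$, $\Delta v_{ij}/\overline{M}_i^2\to b_{ij}$, $\Delta m_{ij}/\overline{M}_i^2\to c_{ij}$, $p_{ij}\to d_{ij}^2$. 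Because $\|\vecp\|$ masses are bounded below by $c_0$ and $\sum_j p_{ij}\to p_i^0>0$, after grouping the $s_i$ atoms of the $i$-th block (together with any redundant atoms, totalling at most $k-k_0+1$ per block — here one uses that the redundant mass vanishes at rate $W_r^r$, faster than $W_r$, and distributes them among the $k_0$ blocks) one extracts from the limit of $\xi^{(r)}_{i,\beta}(G)/W_r^r(G,G_0)$, keeping only the lowest-degree monomials $d_{ij}^2 a_{ij}^{\alpha_1}b_{ij}^{\alpha_2}c_{ij}^{\alpha_3}$ with $\alpha_1+2\alpha_2+2\alpha_3=\beta_1+2\beta_2+2\beta_3$, exactly the system~\eqref{eqn:systemnonlinear} for that value of $i$, with at least one $a,b,c$ nonzero and all $d$ nonzero. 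Hence if $r\geq \rVMS(v_i^0,m_i^0,k-k_0)$ for some $i$, system~\eqref{eqn:systemnonlinear} is unsolvable, contradicting $r$-singularity; therefore $\lev(G_0|\Ocal_{k,c_0})\leq \max_i \rVMS(v_i^0,m_i^0,k-k_0)-1 = \rgeneric(G_0,k)-1$. The monotonicity of Lemma~\ref{lemma:minimal_form}(b) ensures this is a genuine upper bound on the level.

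For part (ii), I would go the other direction: writing $r_0 := \rgeneric(G_0,k)-1$, since $r_0 < \rVMS(v_i^0,m_i^0,k-k_0)$ for every $i$, system~\eqref{eqn:systemnonlinear} with parameter $r_0$ does admit a non-trivial solution $(a_{ij},b_{ij},c_{ij},d_{ij})$ for each block $i$. From such a solution I would reverse-engineer, for each block, a sequence of $s_i \leq k-k_0+1$ atoms converging to $\eta_i^0$ with $\Delta\theta_{ij}\asymp d_{ij}$-scaled copies of $\overline{M}_i$, $\Delta v_{ij}\asymp b_{ij}\overline{M}_i^2$, $\Delta m_{ij}\asymp c_{ij}\overline{M}_i^2$, $p_{ij}\to d_{ij}^2 p_i^0$, arranged so that $W_{r_0}^{r_0}(G,G_0)\asymp \sum_i \overline{M}_i^{r_0}$ and all coefficients $\xi^{(r_0)}_{i,\beta}(G)/W_{r_0}^{r_0}(G,G_0)\to 0$ — the higher-degree monomials vanish automatically by the degree bookkeeping, exactly as in the special case, and the lowest-degree ones vanish because the solution satisfies~\eqref{eqn:systemnonlinear}. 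This shows $G_0$ is $r_0$-singular, hence $\lev(G_0|\Ocal_{k,c_0}) = \rgeneric(G_0,k)-1$. To upgrade to the $\kappa$-index statement, I would run the same construction but now test against $\widetilde{W}_\kappa$ for the candidate $\kappa$ displayed in the theorem: with the specific scaling $\Delta\theta_{ij}=O(\overline{M}_i)$, $\Delta v_{ij},\Delta m_{ij}=O(\overline{M}_i^2)$, one computes $\widetilde{W}_\kappa^{\|\kappa\|_\infty}(G,G_0)\asymp \sum_i \overline{M}_i^{\rgeneric(G_0,k)}$ for the stated $\kappa$ (since $\|\kappa\|_\infty = \rgeneric(G_0,k)$ and the scale/skewness coordinates carry index $\lceil\rgeneric/2\rceil$, matching the square-power scaling of $\Delta v,\Delta m$), so that $G_0$ is $\kappa'$-singular for every $\kappa'\prec\kappa$ by the monotonicity of Lemma~\ref{lemma:minimal_form_general}(b); invoking Proposition~\ref{proposition:singularity_set_level}(iv) then produces an element of $\singset(G_0|\Ocal_{k,c_0})$ that is $\preceq\kappa$. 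I expect the main obstacle to be the bookkeeping in part (i): carefully justifying the reduction of an arbitrary sequence $G\in\Ocal_{k,c_0}$ — with possibly unequal block sizes $s_i$, redundant atoms, and different rates $\overline{M}_i$ across blocks — to the clean per-block system~\eqref{eqn:systemnonlinear}, and verifying that it is precisely the \emph{lowest-order} part of each coefficient's limit that one extracts (so that solvability of~\eqref{eqn:systemnonlinear} is both necessary and, via the construction in part (ii), sufficient). The algebraic input — that the truncated system without the $G_0$-dependent equations is already unsolvable, recovering the Gaussian bound — is delegated to a Groebner-basis computation as in the warm-up, so that is not an obstacle so much as a citation.
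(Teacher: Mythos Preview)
Your extraction for part (i) is essentially correct and matches the paper's strategy: once you isolate the significant block $i^*$ and use $D_r|_{i^*}\lesssim \overline{M}_{i^*}^r$, the vanishing of each $\xi^{(r)}_{i^*,\beta}/D_r|_{i^*}$ forces the lowest-weight coefficients (those with $\alpha_1+2\alpha_2+2\alpha_3=\beta_1+2\beta_2+2\beta_3$) to zero, producing a non-trivial solution of system~\eqref{eqn:systemnonlinear} with $s_{i^*}\le k-k_0+1$ unknowns; for $r=\rgeneric(G_0,k)$ no such solution exists, so $G_0$ is not $r$-singular.

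Part (ii), however, is backwards. Your construction attempts to show that $G_0$ \emph{is} $\kappa'$-singular for every $\kappa'\prec\kappa$, and then invokes Proposition~\ref{proposition:singularity_set_level}(iv). But that proposition requires the opposite hypothesis: that $G_0$ is \emph{not} $\overline{\kappa}$-singular for the target $\overline{\kappa}$. From part (i) you only have non-$(R,R,R)$-singularity (writing $R=\rgeneric(G_0,k)$), which is strictly \emph{weaker} than non-$(R,\lceil R/2\rceil,\lceil R/2\rceil)$-singularity---monotonicity in Lemma~\ref{lemma:minimal_form_general}(b) runs the other way---so Proposition~\ref{proposition:singularity_set_level}(iv) applied with $\overline{\kappa}=(R,R,R)$ yields only an index $\preceq(R,R,R)$, not the sharper bound in the theorem. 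Two smaller issues: your claim that system~\eqref{eqn:systemnonlinear} at level $r_0=R-1$ has a solution ``for every $i$'' is false (it is only guaranteed for blocks achieving the maximum in $\rgeneric$), and the equality $\lev(G_0|\Ocal_{k,c_0})=R-1$ you try to establish is neither claimed by the theorem nor generally true.

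The paper's route closes the gap at no extra cost: run your part-(i) extraction with the denominator $\widetilde{W}_\kappa^{\|\kappa\|_\infty}$ for $\kappa=(R,\lceil R/2\rceil,\lceil R/2\rceil)$ \emph{in place of} $W_R^R$. Your $(1,\tfrac12,\tfrac12)$ scaling is precisely adapted to this $\kappa$, so now $D_\kappa|_{i^*}\asymp\overline{M}_{i^*}^R$ with matching upper \emph{and} lower bounds, the extraction goes through verbatim, and the conclusion is the stronger statement that $G_0$ is not $\kappa$-singular. Proposition~\ref{proposition:singularity_set_level}(iv) then gives part (ii) directly, and part (v) recovers part (i). No sequence construction is needed.
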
 

\paragraph{Remark} We make the following comments regarding the results of Theorem \ref{theorem:generic_setting_omixtures}.
\begin{itemize}
\item[(i)] If $k-k_{0}=1$, we can obtain $\rgeneric(G_{0},k)=4$ from the examples given in Section 
\ref{Section:illustration_omixture_byone} (although in the examples we only worked out the case that $k_0=1$, 
for general $k_0 \geq 1$ the techniques are the same).  Since $(4,2,2)$ is the unique singularity index of $G_{0}$, the bounds with singularity level and singularity index are tight.
\item[(ii)] Since the index component for the location parameter dominates that of shape and scale pararameters $(4>2)$,
estimating shape-scale parameters may be more efficient than estimating location parameter in skew-normal o-mixtures. 
\item[(iii)] In order to determine $\rgeneric(G_{0},k)$, 
we need to find the value of $\rVMS(v_{i}^{0},m_{i}^{0},k-k_{0})$ for all 
$1 \leq i \leq k_{0}$. One may ask whether the value of $\rVMS(v_{i}^{0},m_{i}^{0},k-k_{0})$ 
depends on the specific values of $v_{i}^{0},m_{i}^{0}$. The 
structure of
$\rVMS(v_{i}^{0},m_{i}^{0},k-k_{0})$ will be looked at in more detail in the next subsection.
\end{itemize}

\subsection{Properties of the system of limiting polynomial equations} 
\label{Section:nonlinear_system_study} 
The goal of this subsection is the present additional results on the structure
of function $\rVMS(v,m,k-k_0)$, which is a fundamental quantity in Theorem
\ref{theorem:generic_setting_omixtures} (Here, $v_{i}^{0},m_{i}^{0}$ are replaced by $v,m$).
It is difficult to obtain explicit values for $\rVMS(v,m,k-k_0)$ in general.
Nonetheless, we can obtain a nontrivial upper bound for $\rVMS$. Now, let
$\Xi_{1} :=\left\{(v,m) \in \Theta_{2} \times \Theta_{3}: \ m 
\neq 0 \right\}$.
% re-write the system of polynomial equations 
%\eqref{eqn:systemnonlinear_proof} in our modified notation as
%\begin{eqnarray}
%\sum \limits_{j=1}^{l}{\sum \limits_{\alpha}{\dfrac{P_{\alpha_{1},\alpha_{2},\alpha_{3}}
%^{\kappa_{1},\kappa_{2},\kappa_{3}}(m)}{H_{\alpha_{1},\alpha_{2},\alpha_{3}}^{\kappa_{1},\kappa_{2},
%\kappa_{3}}(m)Q_{\alpha_{1},\alpha_{2},\alpha_{3}}
%^{\kappa_{1},\kappa_{2},\kappa_{3}}(v)}\dfrac{d_{j}^{2}a_{j}^{\alpha_{1}}b_{j}
%^{\alpha_{2}}c_{j}^{\alpha_{3}}}{\alpha_{1}!\alpha_{2}!\alpha_{3}!}}} = 0. \label{eqn:systemnonlinear_modified}
%\end{eqnarray}
Recall that $\rVMS(v,m,l)$, where $l=k-k_0 \geq 1$, is the minimum value according to which 
system \eqref{eqn:systemnonlinear} does not admit non-trivial real-solution.

\begin{proposition}\label{lemma:upperbound_system_nonlinear}
Let $\overline{r}(l)$ be defined as in \eqref{eqn:generalovefittedGaussianzero_Gaussian_mulindex}. 
For all $l=1,2,\ldots$, there holds
\[\sup_{(v,m) \in \setgen} \rVMS(v,m,l) \leq \overline{r}(l).\] 
\end{proposition}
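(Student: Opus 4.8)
The plan is to establish that the system of skew-normal limiting polynomial equations~\eqref{eqn:systemnonlinear}, when restricted to $(v,m) \in \setgen$, is at least as constrained as the Gaussian system~\eqref{eqn:generalovefittedGaussianzero_Gaussian_mulindex}; once this is shown, unsolvability of the Gaussian system at order $\overline{r}(l)$ forces unsolvability of the skew-normal system at the same order, giving $\rVMS(v,m,l) \leq \overline{r}(l)$ by the very definition of $\rVMS$ as the minimal such order. Concretely, I would fix $(v,m) \in \setgen$ (so $m \neq 0$) and an index $i$, and examine the subset of equations of~\eqref{eqn:systemnonlinear} indexed by $\beta = (\beta_1,0,0)$ with $\beta_1 = 1,\ldots,r$, i.e.\ those coming from pure $\theta$-derivatives. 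For these, the recursive decomposition of Lemma~\ref{lemma:recursiveequation} must be traced: a pure $\theta$-derivative $\partial^{\beta_1}f/\partial\theta^{\beta_1}$ reduces onto the basis $\mathcal{F}_r = A_1^r \cup A_2^r$, and the claim is that the coefficients it produces on the ``$A_1$'' generators $\partial^{\alpha_1}f/\partial\theta^{\alpha_1}$, $\partial^{\alpha_1+\alpha_2}f/\partial\theta^{\alpha_1}\partial v^{\alpha_2}$ reproduce precisely the combinatorial structure $\sum_{n_1+2n_2 = \alpha}\frac{1}{n_1!n_2!}$ appearing in~\eqref{eqn:generalovefittedGaussianzero_Gaussian_mulindex}, up to nonzero factors depending on $(v,m)$.

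The key steps, in order, are: (1) isolate from~\eqref{eqn:systemnonlinear} the sub-system consisting of the equations with $\beta \in \{(\beta_1,\beta_2,0): \beta_1 \leq 1\}$, i.e.\ the ones living on the ``Gaussian part'' $A_1^r$ of the reduced basis; (2) show, via Lemma~\ref{lemma:recursiveequation} and the reduction identity~\eqref{key-normal} combined with~\eqref{Eqn-reduction-1}--\eqref{Eqn-reduction-2}, that after substituting $b_j \mapsto b_j$ and absorbing the nonzero rational factors $P/(HQ)$ evaluated at $(m,v)$ into a rescaling of the unknowns, this sub-system is exactly the Gaussian system~\eqref{eqn:generalovefittedGaussianzero_Gaussian_mulindex} in the unknowns $(a_j, b_j, c_j)_{j=1}^{l+1}$ (this is the step the examples in Section~\ref{Section:illustration_omixture_byone} foreshadow, where equations~\eqref{eqn:taylorexpansion_s0_second_first_term} and~\eqref{eqn:taylorexpansion_s0_second_second_term} are literally the first two Gaussian equations and the $v,m$-dependent equations were the ones we were allowed to discard); (3) observe that a non-trivial solution of the full skew-normal system at order $r$ restricts to a non-trivial solution (all $d_j \neq 0$, at least one of $a_j,b_j$ nonzero) of this Gaussian sub-system at order $r$; (4) conclude that if $r = \overline{r}(l)$, the Gaussian sub-system has no non-trivial solution, hence neither does the full system, hence $\rVMS(v,m,l) \leq \overline{r}(l)$; (5) take the supremum over $(v,m) \in \setgen$, noting the bound $\overline{r}(l)$ is uniform since it does not involve $(v,m)$ at all.

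The main obstacle I anticipate is step (2): making precise that the $v,m$-dependent rational coefficients $P^{\beta}_{\alpha}(m)/(H^{\beta}_{\alpha}(m)Q^{\beta}_{\alpha}(v))$ never vanish on $\setgen$ and can be cleared by an invertible reparametrization, so that the ``shape'' of the sub-system is genuinely the Gaussian one rather than merely containing it as a specialization. One must check that the leading generators $\partial^{\alpha_1}f/\partial\theta^{\alpha_1}$ and $\partial^{\alpha_1}f/\partial\theta^{\alpha_1}\partial v$ receive coefficients that are nonzero multiples (on $m \neq 0$) of the pure binomial weights, and that no ``extra'' equations from the $A_2$ part (the $\partial^{\alpha_3}f/\partial m^{\alpha_3}$ and $\partial f/\partial\theta\,\partial m^{\alpha_3}$ generators) can be used to relax the constraints --- but since we only need an \emph{upper} bound on $\rVMS$, it suffices to exhibit \emph{one} sub-system that is already unsolvable, so the $A_2$ equations can simply be dropped. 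A secondary care point is the substitution dictionary between the Gaussian unknowns and the skew-normal unknowns: in the Gaussian case the roles are played by $(a_j, b_j, c_j)$ with $n_1 + 2n_2 = \alpha$ reflecting that $\Delta\theta$ scales like $\overline M$ and $\Delta v$ like $\overline M^2$; here the same scaling is in force (cf.\ the choice of $\overline M$ in Section~\ref{Section:illustration_omixture_byone}), so the correspondence is immediate once the coefficient-matching of step (2) is done.
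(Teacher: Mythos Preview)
Your plan is the paper's approach --- restrict to the $\beta_3=0$ sub-system of~\eqref{eqn:systemnonlinear}, identify it with the Gaussian system~\eqref{eqn:generalovefittedGaussianzero_Gaussian_mulindex}, and invoke unsolvability at order $\overline{r}(l)$ --- but two points need correction.

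First, the obstacle you anticipate in step~(2) does not occur. The structural fact (visible in the reductions~\eqref{Eqn-reduction-1}--\eqref{Eqn-reduction-2} and the induction behind Lemma~\ref{lemma:recursiveequation}) is that neither PDE in~\eqref{eqn:overfittedskewnormaldistributionzero} ever converts a derivative carrying $\partial/\partial m$ back into a pure $(\theta,v)$-derivative. Hence when $\partial^{|\alpha|}f/\partial\theta^{\alpha_1}\partial v^{\alpha_2}\partial m^{\alpha_3}$ is decomposed onto $\mathcal F_r$, the projection onto $A_1^r$ vanishes whenever $\alpha_3\geq 1$, and for $\alpha_3=0$ it comes solely from iterating the Gaussian piece $\partial^2/\partial\theta^2\mapsto 2\,\partial/\partial v$ of~\eqref{Eqn-reduction-1}; all $(v,m)$-dependent corrections land in $A_2^r$. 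So for $\beta_3=0$ the coefficients $P^\beta_\alpha/(H^\beta_\alpha Q^\beta_\alpha)$ are powers of $2$ when $\alpha_3=0$ and exactly $0$ when $\alpha_3\geq 1$ --- constants, with nothing $(v,m)$-dependent to absorb. The $\beta_3=0$ sub-system is then literally~\eqref{eqn:generalovefittedGaussianzero_Gaussian_mulindex} after the substitution $2b_j\mapsto b_j$.

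Second, step~(3) has a genuine gap. A non-trivial solution of~\eqref{eqn:systemnonlinear} requires only that one of $a_j,b_j,c_j$ be nonzero; nothing rules out $a_j=b_j=0$ for all $j$ with some $c_j\neq 0$, and in that case the restriction to the Gaussian sub-system is the trivial solution and gives no contradiction. The paper closes this with a step you omit: once unsolvability of the Gaussian sub-system forces $a_j=b_j=0$, substitute this back into the \emph{full} system~\eqref{eqn:systemnonlinear} --- including the $A_2^r$ equations you proposed to discard --- and read off that all $c_j=0$ as well. So the $A_2^r$ equations cannot simply be dropped; they are precisely what eliminates the pure-$c_j$ solutions.
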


\paragraph{Remarks} (i) The proof of this proposition is given in Appendix F, 
which proceeds by verifying that system \eqref{eqn:generalovefittedGaussianzero_Gaussian_mulindex} forms a 
subset of equations that defines system \eqref{eqn:systemnonlinear}. Combining
with the statement of Theorem \ref{theorem:generic_setting_omixtures}, we immediately obtain, 
provided that $G_0 \in \Scal_0$,
\[\lev(G_0|\Ocal_{k,c_0}) \leq \overline{r}(l)-1.\]
Moreover, there exists $\kappa \in \singset(G_{0}|\Ocal_{k,c_{0}})$ such that
\begin{eqnarray}
\kappa \preceq \begin{cases} \biggr(\overline{r}(l),\dfrac{\overline{r}(l)}{2},\dfrac{\overline{r}(l)}{2}\biggr), & \mbox{if} \ \overline{r}(l) \ \mbox{is an even number} \\ \biggr(\overline{r}(l),\dfrac{\overline{r}(l)+1}{2},\dfrac{\overline{r}(l)+1}{2}\biggr), & \mbox{if} \ \overline{r}(l) \ \mbox{is an odd number}. \end{cases} \nonumber
\end{eqnarray} 
Thus we are able to relate the singularity structure of a mixing measure in a location-scale Gaussian mixture model
to that of the same mixing measure in a skew-normal mixture model.

(ii) Combining the above remark with the results established by Theorem 
\ref{proposition:convergence_and_minimax}, Proposition \ref{proposition:Gaussian_mulindex}, and Theorem \ref{proposition:convergence_and_minimax_general}
leads us to conclude the following interesting results: in terms of mixing measure, it is statistically more efficient to estimate mixing measure of 
skew-normal o-mixtures than to estimate mixing measure of
Gaussian o-mixtures that carry the same number of extra mixing components.
Regarding individual parameters, estimating location and scale parameters of 
skew-normal o-mixtures is more efficient than estimating location and scale parameters of
Gaussian o-mixtures.

\paragraph{Dependence of $\rho$ on $(v,m)$}
To understand the role of parameter value $(v,m)$ on singularity levels and indices, we shall
construct a partition of the parameter space for $(v,m)$ based on the value of function $\rho$. 
For each $l,r \geq 1$, define an ``inverse'' function
\begin{eqnarray}
\rho^{-1}_{l}(r)=\left \{(v,m) \in \setgen: \rho(v,m,l) = r \right\}. \nonumber
\end{eqnarray}
Additionally, take
\begin{eqnarray}
\underbar{\rVMS}(l) =\mathop {\min } {\left\{r : \rho^{-1}_{l}(r) \neq \emptyset \right\}}, \ 
\overline{\rVMS}(l) = \mathop {\max} {\left\{r : \rho^{-1}_{l}(r) \neq \emptyset \right\}}. \nonumber
\end{eqnarray}
It follows from Proposition\ref{lemma:upperbound_system_nonlinear} that 
$\overline{\rVMS}(l) \leq \overline{r}(l)$. In addition, 
$\rho_l^{-1}(r)$ are mutually disjoint for different values of $r$. 
So, for each fixed amount of overfitting $l \geq 1$, the parameter space may be partioned by
\begin{eqnarray}
\setgen = \mathop {\bigcup } \limits_{r=\underbar{\rVMS}(l)}^{\overline{\rVMS}(l)} \rho^{-1}_{l}(r). \nonumber
\end{eqnarray} 

\begin{proposition} For each $l\geq 1, r\geq 1$, $\rho^{-1}_{l}(r)$ is a semialgebraic set.
\end{proposition}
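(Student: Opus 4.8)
The plan is to express, for each fixed $l\geq 1$ and $r\geq 1$, the condition ``$\rho(v,m,l)=r$'' as a first-order formula over the ordered field $\mathbb{R}$ whose only free variables are $v$ and $m$, and then invoke the Tarski--Seidenberg theorem. Write $\mathrm{Sol}_r(v,m)$ for the set of non-trivial real solutions $(a_j,b_j,c_j,d_j)_{j=1}^{l+1}$ of system~\eqref{eqn:systemnonlinear} at parameter $(v,m)\in\setgen$. Increasing $r$ only appends equations to~\eqref{eqn:systemnonlinear} (the monotonicity remark preceding Theorem~\ref{theorem:generic_setting_omixtures}), so it can only shrink $\mathrm{Sol}_r(v,m)$; hence the set of $r$ with $\mathrm{Sol}_r(v,m)\neq\emptyset$ is a down-set $\{1,\ldots,\rho(v,m,l)-1\}$. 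Consequently, putting $S_r:=\{(v,m)\in\setgen:\mathrm{Sol}_r(v,m)\neq\emptyset\}$ and $S_0:=\setgen$, we get $\rho^{-1}_l(r)=S_{r-1}\setminus S_r$. Since finite Boolean combinations of semialgebraic sets are semialgebraic, it suffices to show each $S_r$ is semialgebraic; for this we also use that $\setgen=(\Theta_2\times\Theta_3)\cap\{m\neq 0\}$ is semialgebraic, which holds as soon as $\Theta_2$ and $\Theta_3$ are semialgebraic (the natural choice being closed bounded intervals).

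To show $S_r$ is semialgebraic I would first clear denominators in~\eqref{eqn:systemnonlinear}. Each of its equations is an $\mathbb{R}$-linear combination, with coefficients $P^{\beta}_{\alpha}(m)/\bigl(H^{\beta}_{\alpha}(m)\,Q^{\beta}_{\alpha}(v)\bigr)$, of the monomials $d_j^{2}a_j^{\alpha_1}b_j^{\alpha_2}c_j^{\alpha_3}$. On $\setgen$ one has $m\neq 0$ and $v>0$; by the structure of the reduction in Lemma~\ref{lemma:recursiveequation}, the polynomials $H^{\beta}_{\alpha}$ and $Q^{\beta}_{\alpha}$ are, up to nonzero constants, products of powers of $m$, powers of $v$, and the strictly positive factor $m^{2}+1$, hence they never vanish on $\setgen$. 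Multiplying the $\beta$-th equation by the product of its denominators thus yields an \emph{equivalent} system whose defining relations are genuine polynomials $\widetilde{P}_{\beta}\in\mathbb{R}[\,v,m,(a_j,b_j,c_j,d_j)_{j}\,]$, having the same solution set in $(a,b,c,d)$ for every fixed $(v,m)\in\setgen$; in particular $\mathrm{Sol}_r(v,m)$ is unchanged.

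Then the predicate ``$(v,m)\in\setgen$ and $\mathrm{Sol}_r(v,m)\neq\emptyset$'' reads
\[(v,m)\in\setgen\ \wedge\ \exists\,(a,b,c,d)\colon\ \Bigl(\textstyle\bigwedge_{\beta}\widetilde{P}_{\beta}=0\Bigr)\ \wedge\ \Bigl(\textstyle\bigwedge_{j}d_j\neq 0\Bigr)\ \wedge\ \Bigl(\textstyle\bigvee_{j}(a_j\neq 0\vee b_j\neq 0\vee c_j\neq 0)\Bigr),\]
a first-order formula with free variables $v,m$ only. The body of the existential quantifier defines a semialgebraic subset of $\mathbb{R}^{2+4(l+1)}$ (a finite Boolean combination of polynomial equalities and inequalities), and $S_r$ is precisely the image of this set under the coordinate projection onto the $(v,m)$-plane; by Tarski--Seidenberg the image is semialgebraic. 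Combining with the first paragraph, $\rho^{-1}_l(r)=S_{r-1}\setminus S_r$ is semialgebraic.

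The step requiring the most care is the denominator-clearing: one must verify that \emph{every} denominator $H^{\beta}_{\alpha}(m)Q^{\beta}_{\alpha}(v)$ appearing in~\eqref{eqn:systemnonlinear} is non-vanishing on all of $\setgen$ --- this is exactly where the defining constraint $m\neq 0$ of $\setgen$, together with $v>0$, is used --- so that passing to the polynomial system $\widetilde{P}_{\beta}$ does not alter $\mathrm{Sol}_r(v,m)$, and hence does not alter $S_r$. Everything else is a routine application of Tarski--Seidenberg combined with the elementary down-set structure of $r\mapsto\{\mathrm{Sol}_r(v,m)\neq\emptyset\}$.
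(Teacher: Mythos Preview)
Your proof is correct and follows essentially the same route as the paper: define $S_r$ (the paper's $\mathbb{A}_r$) as the set of $(v,m)\in\setgen$ at which system~\eqref{eqn:systemnonlinear} has a non-trivial solution, observe $\rho^{-1}_l(r)=S_{r-1}\setminus S_r$, and use Tarski--Seidenberg on the projection to the $(v,m)$-plane. You are in fact more explicit than the paper about the denominator-clearing step needed to pass from the rational-function coefficients $P^{\beta}_{\alpha}/(H^{\beta}_{\alpha}Q^{\beta}_{\alpha})$ to a genuine polynomial system before projecting; the paper asserts semialgebraicity of the lifted solution set directly.
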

\begin{proof} For each $r \geq 1$, let $\mathbb{A}_{r}$ be the collection of all $(v,m) 
\in \Xi_{1}$ such that the system of polynomial equations \eqref{eqn:systemnonlinear} 
contains admissible solutions.
Furthermore, $\mathbb{B}_{r}$ denotes the collection of all solutions $(v,m,
\left\{a_{i}\right\}_{i=1}^{l},\left\{b_{i}\right\}_{i=1}^{l},\left\{c_{i}\right\}_{i=1}^{l},
\left\{d_{i}\right\}_{i=1}^{l})$ of the system of polynomial equations 
\eqref{eqn:systemnonlinear}, i.e., we treat $v, m$ as two additional unknowns of the system. 
Since $P^{\beta_{1},\beta_{2},\beta_{3}}_{\alpha_{1},
\alpha_{2},\alpha_{3}}(m)$, $H^{\beta_{1},\beta_{2},\beta_{3}}_{\alpha_{1},
\alpha_{2},\alpha_{3}}(m)$, and $Q^{\beta_{1},\beta_{2},\beta_{3}}_{\alpha_{1},
\alpha_{2},\alpha_{3}}(v)$ are polynomial functions of $m,m$ and $v$, resp., for all $\alpha, \beta$, by
definition $\mathbb{B}_{r}$ is a semialgebraic set for all $r \geq 1$. By Tarski-Seidenberg 
theorem \cite{Basu-2006}, since $\mathbb{A}_{r}$ is the projection of $\mathbb{B}_{r}$ 
from dimension $(4l+2)$ to dimension 2, $\mathbb{A}_{r}$ is a semialgebraic set for all $r 
\geq 1$. It follows that $\mathbb{A}_{r}^{c}$ is semialgebraic for all $r \geq 
1$. Since $\rho^{-1}_{l}
(r)=\mathbb{A}_{r}^{c} \cap \mathbb{A}_{r-1}$ for all $r \geq 1$, the conclusion of the proposition
follows.
\end{proof}
The following result gives us some exact values of $\underbar{\rVMS}(l)$ and $\overline{\rVMS}(l)$ in
specific cases.
\begin{proposition} \label{proposition:overfittedproposition} 
\begin{itemize}
\item[(a)] If $l=k-k_0=1$, then $\underbar{\rVMS}(l)=\overline{\rVMS}(l)=4$. 
\item[(b)] If $l=k-k_0=2$, then $\underbar{\rVMS}(l)=5$ and $\overline{\rVMS}(l)=6$. 
Thus, $\setgen$ is partitioned into two subsets, both of which are non-empty because
$\left\{(1,-2),(1,2)\right\} \subset \rho_{l}^{-1}(5)$, and 
$(1,\dfrac{1}{10}) \in \rho_{l}^{-1}(6)$.
\end{itemize}
\end{proposition}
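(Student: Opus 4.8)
The plan is to translate both parts entirely into statements about whether the polynomial system \eqref{eqn:systemnonlinear} admits a non-trivial real solution, for the relevant values of $r$, of the amount of overfitting $l=k-k_0$, and (for part (b)) of the pair $(v,m)\in\setgen$. Recall that $\rVMS(v,m,l)$ is the least $r$ at which \eqref{eqn:systemnonlinear} becomes unsolvable, $\rho^{-1}_l(r)=\{(v,m)\in\setgen:\rVMS(v,m,l)=r\}$, and $\underbar{\rVMS}(l),\overline{\rVMS}(l)$ are the smallest and largest $r$ for which this fibre is non-empty. So it suffices to determine $\rVMS(v,m,l)$ as a function of $(v,m)$, together with enough explicit evaluations to certify non-emptiness of the fibres claimed in the statement. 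The two ingredients already available are: (1) the explicit $3$-singular construction and the Gr\"obner-basis argument behind the ``$G_0$ is not $(4,2,2)$-singular'' claim of Section~\ref{Section:illustration_omixture_byone}, and (2) Proposition~\ref{lemma:upperbound_system_nonlinear}, which gives $\overline{\rVMS}(l)\le\overline{r}(l)$.

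For part (a), fix $(v,m)\in\setgen$ and $l=1$. The lower bound $\rVMS(v,m,1)\ge 4$ is exactly the content of the ``$G_0$ is $3$-singular'' computation in Section~\ref{Section:illustration_omixture_byone}: the explicit solution $d_1=d_2$, $a_1=-a_2$, $b_1=b_2=-a_1^2$, $c_1=c_2=[m^3+m]a_1^2/(2v)$ satisfies \eqref{eqn:systemnonlinear} for every $r\le 3$, and — crucially — this construction is valid for \emph{all} $(v,m)$ with $m\neq 0$, i.e. for all of $\setgen$. For the matching upper bound $\rVMS(v,m,1)\le 4$, I would reuse the observation from Section~\ref{Section:illustration_omixture_byone} that deleting from the $r=4$ instance \eqref{eqn:systemnonlinear_fourth} the three equations carrying the coefficients $v,m$ leaves a system of real polynomials \emph{independent of $(v,m)$}, which a Gr\"obner-basis computation certifies has no non-trivial real solution; hence \eqref{eqn:systemnonlinear} is unsolvable at $r=4$ for \emph{every} $(v,m)$. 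Combining, $\rVMS(v,m,1)\equiv 4$ on $\setgen$, so $\rho^{-1}_1(4)=\setgen$ and all other fibres are empty, giving $\underbar{\rVMS}(1)=\overline{\rVMS}(1)=4$.

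For part (b), with $l=2$: the upper bound $\overline{\rVMS}(2)\le 6$ is Proposition~\ref{lemma:upperbound_system_nonlinear} together with $\overline{r}(2)=6$ (as computed for location-scale Gaussian mixtures in \cite{Ho-Nguyen-Ann-16}, and directly checkable via Gr\"obner bases). It remains to show $\underbar{\rVMS}(2)=5$ and that both values $5$ and $6$ are attained. First I would prove $\rVMS(v,m,2)\ge 5$ for all $(v,m)\in\setgen$, i.e. that \eqref{eqn:systemnonlinear} at $r=4$, $l=2$ (now $12$ unknowns, $7$ equations) always has a non-trivial real solution: one picks a non-trivial solution $(a_j,b_j,d_j)_{j=1}^3$ of the four $(v,m)$-free equations (there is ample room, these being $4$ equations in $9$ unknowns, and the third, previously blocking, component now removes the obstruction encountered in the $l=1$ case), and then solves the three $(v,m)$-dependent equations for $(c_1,c_2,c_3)$ — two of them linear, the third quadratic — choosing the solution of the $(v,m)$-free part so that the resulting quadratic has a real root for every $(v,m)\in\setgen$; this yields $\underbar{\rVMS}(2)\ge 5$. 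Then, by direct Gr\"obner-basis computation at the points named in the statement, one checks that at $(v,m)=(1,\pm 2)$ system \eqref{eqn:systemnonlinear} is solvable at $r=4$ but not at $r=5$, so $\rVMS(1,\pm 2,2)=5$ and hence $\underbar{\rVMS}(2)\le 5$, giving equality; and that at $(v,m)=(1,\tfrac1{10})$ it is solvable at $r=5$ but not at $r=6$, so $\rVMS(1,\tfrac1{10},2)=6$, giving $\overline{\rVMS}(2)=6$. Finally, since $\rho^{-1}_2(r)=\emptyset$ for $r\notin\{5,6\}$ (by the two bounds just established) while $\{(1,-2),(1,2)\}\subset\rho^{-1}_2(5)$ and $(1,\tfrac1{10})\in\rho^{-1}_2(6)$ are both non-empty, $\setgen$ splits into exactly the two semialgebraic pieces $\rho^{-1}_2(5)\sqcup\rho^{-1}_2(6)$, as asserted.

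The main obstacle is computational: all the unsolvability assertions — the $(v,m)$-free order-$4$ subsystem for $l=1$, and the order-$5$ system at $(1,\pm 2)$ and the order-$6$ system at $(1,\tfrac1{10})$ for $l=2$ — require elimination-theoretic (Gr\"obner-basis) certificates, and one must be vigilant that these exclude \emph{non-trivial} solutions (all $d_j\neq 0$, some $a_i$ or $b_i$ or $c_i$ nonzero) rather than merely complex or fully degenerate ones. The conceptually delicate point in part (b) is the \emph{uniformity} of solvability at $r=4$ over the whole of $\setgen$: the construction must be chosen so that the final real-root condition for the quadratic in the $c_j$ holds for every admissible $(v,m)$, which is why the solution of the $(v,m)$-free part should be selected with that constraint in mind rather than arbitrarily.
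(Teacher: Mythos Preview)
Your proposal is correct and its overall logical skeleton matches the paper's: part (a) is handled identically via Section~\ref{Section:illustration_omixture_byone}; for part (b), both you and the paper obtain the upper bound $\overline{\rVMS}(2)\le 6$ from Proposition~\ref{lemma:upperbound_system_nonlinear} with $\overline{r}(2)=6$, both invoke Gr\"obner-basis certificates at the specific points $(1,\pm 2)$ and $(1,\tfrac{1}{10})$, and both need a uniform solvability argument at $r=4$ to force $\underbar{\rVMS}(2)\ge 5$.

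The one substantive difference is in that last step. You exploit the structure of the $r=4$ system by separating the four $(v,m)$-free equations --- which you correctly recognize as the Gaussian subsystem of Proposition~\ref{lemma:upperbound_system_nonlinear} at order $4$, hence solvable in $(a_j,b_j,d_j)_{j=1}^{3}$ since $\overline{r}(2)=6>4$ --- from the three $(v,m)$-dependent equations, and then solving the latter for $(c_1,c_2,c_3)$. The paper instead imposes the normalization $d_1=d_2=d_3$, adjoins $v,m$ as two additional unknowns, and runs a single Gr\"obner-basis computation on the enlarged $13$-variable system to exhibit a non-trivial solution parametrically. Your route makes transparent \emph{why} the third component removes the $l=1$ obstruction, but leaves the uniform real-root condition for the final quadratic in the $c_j$ to be verified; the paper's route buries this inside the elimination computation. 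Both are ultimately computational, and neither spells out the uniformity-over-$(v,m)$ fully on the page.
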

From the definition of $\rgeneric(G_0,k)$, we can write
\[\rgeneric(G_0,k) = \max \biggr \{r  \biggr | \mbox{there is}\; 
i=1,\ldots,k_0 \; \mbox{such that} (v_i^0,m_i^0) \in \rho_{k-k_0}^{-1}(r) \biggr \}.\]
According to the Proposition \ref{proposition:overfittedproposition}, 
if $k-k_0=1$, we have $\rgeneric(G_{0},k)=4$ (see also our earlier remark).
If $k-k_0=2$, we may have either $\rgeneric(G_0,k) = 5$ or $6$, depending
on the value of parameters $(v,m)$ that provide the support for $G_0$.

We end this section by noting that we have just provided specific examples
in which $\rgeneric(G_0,k)-1$ may vary with the actual parameter values that define $G_0$.  
Although this provides upper bounds of the singularity level and singularity index, we
have \emph{not} actually proved that the singularity level and singularity index of $G_0$ may generally 
vary with its parameter values. We will be able to do so when we work with the e-mixture setting. 
The analysis of singularity structure of parameter space under that setting is laid out in Appendices E and F
for the interested.

\comment{
\paragraph{Connection to $\rgeneric(G_{0},k)$} Now, for any $k_{0}<k$ and any $r 
\in [\underbar{\rVMS}(k-k_{0}+1),\overline{\rVMS}(k-k_{0}+1)]$, we define
\begin{eqnarray}
\mathcal{A}_{r}(k-k_{0}+1)=\left\{i \in \left\{1,\ldots,k_{0}\right\} \ \text{such that} \ 
(v_{i}^{0},m_{i}^{0}) \in \mathcal{Q}(k-k_{0}+1,r)\right\}. \nonumber
\end{eqnarray}
Then, we have another formulation of $\rgeneric(G_{0},k)$ as follows
\begin{eqnarray}
\rgeneric(G_{0},k) = \mathop {\max }\limits_{\mathcal{A}_{r}(k-k_{0}+1) \neq \emptyset}{r}. \label{eqn:general_rate_generic_setting_modified}
\end{eqnarray}
}

\comment{we have $\mathcal{A}_{6}(k-k_{0}+1) \equiv \emptyset$, then $\rgeneric(G,k)=5$. 
On the other hand, if we have $\mathcal{A}_{6}(k-k_{0}+1) \not \equiv \emptyset$, then 
$\rgeneric(G,k)=6$. As $k-k_{0}=3$, the exact value of $\rgeneric(G_{0},k)$ 
becomes trickier to determine. Using the same proof technique as that of Proposition 
\ref{proposition:overfittedproposition}, we can check that as it can be $6$ or $7$ or $8$. When $k-k_{0}$ becomes 
bigger, the value of $\rgeneric(G_{0},k)$ is generally very difficult to determine.
}

%\subsection{Singularity level for $G_0 \in \Ecal_{k_0}\setminus \Scal_0$}

%\input{skew_normal_discussion}

%\section{Conclusion and future work}
%\label{Sec:conclusion_future}
%\input{skew_normal_conclusion}

%%%%%%%%%%%%%%%%%%%%%%%%%%%%%%%%
%%%%%%%%%%%%%%%%%%%%%%%%%%%%%%%%%%%%%%%%%%%%%%%%%

%%%%%%%%%%%%%%%%%%%%%%%%%%%%%%%%%%%%%%%%%%%%%%%%%%%%%%%%%%%%%%%%%%%%%%%%%%%%%%%%%%%%
%%%%%%%%%%%%%%%%%%%%%%%%%%%%%%%%%%%%%%%%%%%%%%%%%%%%%%%%%%%%%%%%%%%%%%%%%%%%%%%%%%%%
%%%%%%%%%%%%%%%%%%%%%%%%%%%%%%%%%%%%%%%%%%%%%%%%%%%%%%%%%%%%%%%%%%%%%%%%%%%%%%%%%%%%
%%%%%%%%%%%%%%%%%%%%%%%%%%%%%%%%%%%%%%%%%%%%%%%%%%%%%%%%%%%%%%%%%%%%%%%%%%%%%%%%%%%%
%%%%%%%%%%%%%%%%%%%%%%%%%%%%%%%%%%%%%%%%%%%%%%%%%%%%%%%%%%%%%%%%%%%%%%%%%%%%%%%%%%%%

\section{Discussion and concluding remarks} \label{Section:discussion}

Understanding the behavior of parameter estimates of mixture models is useful because
the mixing parameters represent explicitly the heterogeneity of the underlying data 
population that mixture models are most suitable for. 
In this paper, a general theory for the identification of singularity structure
arising from finite mixture models is proposed. It is shown that the singularity
structures of the model's parameter space directly determine minimax lower
bounds and maximum likelihood estimation convergence rates, under conditions on
the compactness of the parameter space.

The systematic identification of singularity structures and the implications on parameter 
estimation is a crucial step toward the development of more
efficient model-based inference procedures. 
It is our view that such procedures must account for the presence of singular 
points residing in the parameter space of the model. 
As a matter of fact, there are quite a few examples of such efforts applied to 
specific statistical models, even if the picture of the singularity structures associating
with those models might not have been discussed explicitly.  
This raises a question of whether or not it is possible to extend
and generalize such techniques in order to address the presence of singularities in a
direct fashion. We give several examples:

\begin{itemize}
\item [(1)] For overfitted mixture models, methods based on likelihood-based 
penalization techniques were shown to be quite effective (e.g., \cite{Gassiat-2009, Chen-2016}).
Our work shows that parameter values residing in the vicinity of regions of high singularity
levels should be hard to estimate efficiently. Can a
penalization technique be generalized to regularize the estimates
toward subsets containing singularity points of lower levels?

\item [(2)] Suitable choices of Bayesian prior have been proposed to induce favorable posterior
contraction behavior for overfitted finite mixtures~\cite{Rousseau-Mengersen-11}.
Can we develop an appropriate prior for the mixture model parameters, given our
knowledge of singular points residing in the parameter space?

\item [(3)] Reparametrization is an effective technique that can be employed to combat 
singularities present in the class of skewed distributions \cite{Hallin-2014}. 
It would be interesting to study if such reparameterization technique
can be systematically developed for mixture models as well.
\end{itemize}

Finally, we also expect that the theory of singularity structures 
carries important consequences on the computational complexity of parameter estimation procedures,
including both optimization and sampling based methods.
The inhomogeneous nature of the singularity structures reveals a complex picture of
the likelihood function: regions in parameter space that carry low singularity levels/indices
may observe a relatively high curvature of the likelihood surface, while high singularity levels imply
a ``flatter'' likelihood surface along a certain subspace of the parameters.
Such a subspace 
is manifested by our construction of sequences of mixing measures that 
attest to the condition of $r$-singularity or $\kappa$-singularity in general. It is of interest to exploit the 
explicit knowledge of singularity structures obtained for a given mixture 
model class, so as to improve upon the 
computational efficiency of the optimization and sampling procedures
that operate on the model's parameter space.

%%% Bibliography

\bibliography{Nhat,NPB,Nguyen}

\begin{thebibliography}{10}

\bibitem{Allman-2009}
E.~S. Allman, C.~Matias, and J.~A. Rhodes.
\newblock Identifiability of parameters in latent structure models with many
  observed variables.
\newblock {\em Annals of Statistics}, 37:3099--3132, 2009.

\bibitem{Aoyagi_2010}
M.~Aoyagi.
\newblock A {B}ayesian learning coefficient of generalization error and
  {V}andermonde matrix-type singularities.
\newblock {\em Communications in Statistics - Theory and Methods},
  39:2667--2687, 2010.

\bibitem{Genton-2008}
R.~B. Arellano-Valle, L.~M. Castro, M.~C. Genton, and H.~W. Gómez.
\newblock Bayesian inference for shape mixtures of skewed distributions, with
  application to regression analysis.
\newblock {\em Bayesian Analysis}, 3:513--540, 2008.

\bibitem{Genton-2009}
R.~B. Arellano-Valle, M.~C. Genton, and R.~H. Loschi.
\newblock Shape mixtures of multivariate skew-normal distributions.
\newblock {\em Journal of Multivariate Analysis}, 100:91--101, 2009.

\bibitem{Azzalini-1986}
A.~Azzalini.
\newblock Further results on a class of distributions which includes the normal
  ones.
\newblock {\em Statistica (Bologna)}, 46:199--208, 1986.

\bibitem{Azzalini-1999}
A.~Azzalini and A.~Capitanio.
\newblock Statistical applications of the multivariate skew-normal
  distribution.
\newblock {\em Journal of the Royal Statistical Society, Series
  B(Methodological)}, 61:579--602, 1999.

\bibitem{Azzalini-1996}
A.~Azzalini and A.~D. Valle.
\newblock The multivariate skew-normal distribution.
\newblock {\em Biometrika}, 83:715--726, 1996.

\bibitem{Basu-2006}
S.~Basu, R.~Pollack, and M.~Roy.
\newblock {\em Algorithms in real algebraic geometry}.
\newblock Springer-Verlag Berlin Heidelberg, 2006.

\bibitem{Buehlmann-vandeGeer}
P.~Bühlmann and S.~van~de Geer.
\newblock {\em Statistics for high dimensional data}.
\newblock Springer, 2011.

\bibitem{Bruno-Thesis}
B.~Buchberger.
\newblock {\em An algorithm for finding the basis elements of the residue class
  ring of a zero dimensional polynomial ideal}.
\newblock PhD thesis, Johannes Kepler University of Linz, 1965.

\bibitem{Canale-2015}
A.~Canale and B.~Scarpa.
\newblock Bayesian nonparametric location-scale-shape mixtures.
\newblock {\em TEST}, pages 1--18, 2015.

\bibitem{Chen-2003}
H.~Chen and J.~Chen.
\newblock Tests for homogeneity in normal mixtures in the presence of a
  structural parameter.
\newblock {\em Statistica Sinica}, 13:351--365, 2003.

\bibitem{Hanfeng_Chen-2001}
H.~Chen, J.~Chen, and J.~D. Kalbfleisch.
\newblock A modified likelihood ratio test for homogeneity in finite mixture
  models.
\newblock {\em Journal of the Royal Statistical Society: Series B}, 63:19--29,
  2001.

\bibitem{Chen-2016}
J.~Chen.
\newblock Consistency of the mle under mixture models.
\newblock {\em arXiv preprint arXiv:1607.01251}, 2016.

\bibitem{Chen1992}
J.~H. Chen.
\newblock Optimal rate of convergence for finite mixture models.
\newblock {\em Annals of Statistics}, 23(1):221--233, 1995.

\bibitem{Chiogna-2005}
M.~Chiogna.
\newblock A note on the asymptotic distribution of the maximum likelihood
  estimator for the scalar skew-normal distribution.
\newblock {\em Statistical Methods and Applications}, 14:331--341, 2005.

\bibitem{Cox-etal}
D.~Cox, J.~Little, and D.~O'Shea.
\newblock {\em Ideals, Varieties, and Algorithms: An Introduction to
  Computational Algebraic Geometry and Commutative Algebra}.
\newblock Springer, 2007.

\bibitem{Castelle-1997}
D.~Dacunha-Castelle and E.~Gassiat.
\newblock Testing in locally conic models and application to mixture models.
\newblock {\em ESAIM: Probability and Statistics}, 1:285--317, 1997.

\bibitem{Castelle-1999}
D.~Dacunha-Castelle and E.~Gassiat.
\newblock Testing the order of a model using locally conic parametrization:
  population mixtures and stationary arma processes.
\newblock {\em Annals of Statistics}, 27:1178--1209, 1999.

\bibitem{DasGupta-08}
A.~DasGupta.
\newblock {\em Asymptotic Theory of Statistics and Probability}.
\newblock Springer, 2008.

\bibitem{Drton_AOS_2009}
M.~Drton.
\newblock Likelihood ratio tests and singularities.
\newblock {\em Annals of Statistics}, 37:979--1012, 2009.

\bibitem{Drton_JRSSB_2017}
M.~Drton and M.~Plummer.
\newblock A {B}ayesian information criterion for singular models.
\newblock {\em Journal of the Royal Statistical Society: Series B},
  79:323--380, 2017.

\bibitem{Drton_2007}
M.~Drton, B.~Sturmfels, and S.~Sullivant.
\newblock Algebraic factor analysis: tetrads, pentads and beyond.
\newblock {\em Probability Theory and Related Fields}, 138:463–493, 2007.

\bibitem{Drton-etal-09}
M.~Drton, B.~Sturmfels, and S.~Sullivant.
\newblock {\em Lectures on Algebraic Statistics}.
\newblock Birkhauser, 2009.

\bibitem{Drton_2007b}
M.~Drton and S.~Sullivant.
\newblock Algebraic statistical models.
\newblock {\em Statistica Sinica}, 17:1273--129, 2007.

\bibitem{Raaz_Ho_Koulik_2018}
R.~Dwivedi, N.~Ho, K.~Khamaru, M.~J. Wainwright, M.~I. Jordan, and B.~Yu.
\newblock Singularity, misspecification, and the convergence rate of {EM}.
\newblock {\em arXiv preprint arXiv:1810.00828}, 2018.

\bibitem{Raaz_Ho_Koulik_2018_second}
R.~Dwivedi, N.~Ho, K.~Khamaru, M.~J. Wainwright, M.~I. Jordan, and B.~Yu.
\newblock Challenges with {EM} in application to weakly identifiable mixture
  models.
\newblock {\em arXiv preprint arXiv:1902.00194}, 2019.

\bibitem{Gassiat_2014}
E.~Gassiat and R.~V. Handel.
\newblock The local geometry of finite mixtures.
\newblock {\em Transactions of the American Mathematical Society},
  366:1047--1072, 2014.

\bibitem{Geiger_2001}
D.~Geiger, D.~Heckerman, H.~King, and C.~Meek.
\newblock Stratified exponential families: {G}raphical models and model
  selection.
\newblock {\em Annals of Statistics}, 29:505--529, 2001.

\bibitem{Ghosal-13}
S.~Ghosal and A.~Roy.
\newblock Predicting false discovery proportion under dependence.
\newblock {\em Journal of the American Statistical Association},
  106:1208--1217, 2011.

\bibitem{Ghosal-2001}
S.~Ghosal and A.~van~der Vaart.
\newblock Entropies and rates of convergence for maximum likelihood and bayes
  estimation for mixtures of normal densities.
\newblock {\em Annals of Statistics}, 29:1233--1263, 2001.

\bibitem{Hallin-2012}
M.~Hallin and C.~Ley.
\newblock Skew-symmetric distributions and fisher information - a tale of two
  densities.
\newblock {\em Bernoulli}, 18:747--763, 2012.

\bibitem{Hallin-2014}
M.~Hallin and C.~Ley.
\newblock Skew-symmetric distributions and fisher information: the double sin
  of skew-normal.
\newblock {\em Bernoulli}, 20:1432--1453, 2014.

\bibitem{Hastie-Tibshirani-Wainwright}
T.~Hastie, R.~Tibshrani, and M.~J. Wainwright.
\newblock {\em Statistical Learning with Sparsity: The Lasso and
  generalizations}.
\newblock CRC Press, 2015.

\bibitem{Kahn-2015}
P.~Heinrich and J.~Kahn.
\newblock Optimal rates for finite mixture estimation.
\newblock {\em arXiv preprint arXiv:1504.03506}, 2015.

\bibitem{Ho-Nguyen-Ann-16}
N.~Ho and X.~Nguyen.
\newblock Convergence rates of parameter estimation for some weakly
  identifiable finite mixtures.
\newblock {\em Annals of Statistics}, 44:2726--2755, 2016.

\bibitem{Ho-Nguyen-EJS-16}
N.~Ho and X.~Nguyen.
\newblock On strong identifiability and convergence rates of parameter
  estimation in finite mixtures.
\newblock {\em Electronic Journal of Statistics}, 10:271--307, 2016.

\bibitem{Ho-Nguyen-skewnormalTR}
N.~Ho and X.~Nguyen.
\newblock Singularity structures and impacts on parameter estimation in finite
  mixtures of distributions.
\newblock Technical Report 540, Department of Statistics, University of
  Michigan, 2016.

\bibitem{Holzmann_2006}
H.~Holzmann, A.~Munk, and T.~Gneiting.
\newblock Identifiability of finite mixtures of elliptical distributions.
\newblock {\em Scandinavian Journal of Statistics}, 33:753--763, 2006.

\bibitem{Shimotsu-2014}
H.~Kasahara and K.~Shimotsu.
\newblock Testing the number of components in normal mixture regression models.
\newblock {\em Journal of the American Statistical Association}, 2014.

\bibitem{Kiefer-82}
N.~M. Kiefer.
\newblock A remark on the parameterization of a model for heterogeneity.
\newblock Working paper no 278. Department of Economics, Cornell University,
  Ithaca, NY., 1982.

\bibitem{Sturmfels_AOS_2015}
K.~Kubjas, E.~Robeva, and B.~Sturmfels.
\newblock Fixed points of the {EM} algorithm and nonnegative rank boundaries.
\newblock {\em Annals of Statistics}, 43:422--461, 2015.

\bibitem{Chesher-Lee-86}
L.~F. Lee and A.~Chesher.
\newblock Specification testing when score test statistics are identically
  zero.
\newblock {\em Journal of Econometrics}, 31:33--61, 1986.

\bibitem{Lee-13}
S.~X. Lee and G.~J. McLachlan.
\newblock On mixtures of skew normal and skew t -distributions.
\newblock {\em Advances in Data Analysis and Classification}, 7:241--266, 2013.

\bibitem{Lehmann-Casella}
E.~L. Lehmann and G.~Casella.
\newblock {\em Theory of Point Estimation}.
\newblock Springer, 1998.

\bibitem{Ley-2010}
C.~Ley and D.~Paindaveine.
\newblock On the singularity of multivariate skew-symmetric models.
\newblock {\em Journal of Multivariate Analysis}, 101:1434--1444, 2010.

\bibitem{Lin-2009}
T.~I. Lin.
\newblock Maximum likelihood estimation for multivariate skew normal mixture
  models.
\newblock {\em Journal of Multivariate Analysis}, 100:257--265, 2009.

\bibitem{Tsung-2007}
T.~I. Lin, J.~C. Lee, and S.~Y. Yen.
\newblock Finite mixture modelling using the skew normal distribution.
\newblock {\em Statistica Sinica}, 17:909--927, 2007.

\bibitem{Lindsay-1995}
B.~Lindsay.
\newblock {\em Mixture models: Theory, geometry and applications}.
\newblock In NSF-CBMS Regional Conference Series in Probability and Statistics.
  IMS, Hayward, CA., 1995.

\bibitem{Mengersen-2005}
J.~Marin, K.~Mengersen, and C.~P. Robert.
\newblock Bayesian modelling and inference on mixtures of distributions.
\newblock {\em Handbook of Statistics}, 25:459--507, 2005.

\bibitem{Mclachlan-1988}
G.~J. McLachlan and K.~E. Basford.
\newblock {\em Mixture models: Inference and Applications to Clustering.
  Statistics: Textbooks and Monographs.}
\newblock New York, 1988.

\bibitem{Wenlong_Ho_2019_sampling}
W.~Mou, N.~Ho, M.~J. Wainwright, P.~L. Bartlett, and M.~I. Jordan.
\newblock Polynomial-time algorithm for power posterior sampling in {B}ayesian
  mixture models.
\newblock {\em To be submitted}, 2019.

\bibitem{Nguyen-13}
X.~Nguyen.
\newblock Convergence of latent mixing measures in finite and infinite mixture
  models.
\newblock {\em Annals of Statistics}, 4(1):370--400, 2013.

\bibitem{Prates-2013}
M.~O. Prates, C.~R.~B. Cabral, and V.~H. Lachos.
\newblock mixsmsn: fitting finite mixture of scale mixture of skew-normal
  distributions.
\newblock {\em Journal of Statistical Software}, 54, 2013.

\bibitem{Rotnitzky-2000}
A.~Rotnitzky, D.~R. Cox, M.~Bottai, and J.~Robins.
\newblock Likelihood-based inference with singular information matrix.
\newblock {\em Bernoulli}, 6:243--284, 2000.

\bibitem{Rousseau-Mengersen-11}
J.~Rousseau and K.~Mengersen.
\newblock Asymptotic behaviour of the posterior distribution in overfitted
  mixture models.
\newblock {\em Journal of the Royal Statistical Society: Series B},
  73(5):689--710, 2011.

\bibitem{Sylvia-2009}
S.~W. Schnatter and S.~Pyne.
\newblock Bayesian inference for finite mixtures of univariate and multivariate
  skew-normal and skew-t distributions.
\newblock {\em Biostatistics}, 11:317--336, 2009.

\bibitem{Stumfel-2002}
B.~Stumfel.
\newblock {\em Solving systems of polynomial equations}.
\newblock Providence, RI: American Mathematical Soc, 2002.

\bibitem{Gassiat-2009}
W.~Toussile and E.~Gassiat.
\newblock Variable selection in model-based clustering using multilocus
  genotype data.
\newblock {\em Advances in Data Analysis and Classification}, 3:109--134, 2009.

\bibitem{Uhler_AOS_2012}
C.~Uhler.
\newblock Geometry of maximum likelihood estimation in gaussian graphical
  models.
\newblock {\em Annals of Statistics}, 40:238–261, 2012.

\bibitem{Vandegeer-2000}
S.~van~de Geer.
\newblock {\em Empirical Processes in M-estimation}.
\newblock Cambridge University Press, 2000.

\bibitem{vanderVaart-98}
A.~W. van~der Vaart.
\newblock {\em Asymptotic Statistics}.
\newblock Cambridge University Press, 1998.

\bibitem{Villani-03}
C.~Villani.
\newblock {\em Topics in Optimal Transportation}.
\newblock American Mathematical Society, 2003.

\bibitem{Watanabe-book}
S.~Watanabe.
\newblock {\em Algebraic Geometry and Statistical Learning Theory}.
\newblock Cambridge University Press, 2009.

\bibitem{Xiao-14}
S.~Xiao and G.~Zeng.
\newblock Determination of the limits for multivariate rational functions.
\newblock {\em Science China Mathematics}, 57:397--416, 2014.

\bibitem{Zeller-2015}
C.~B. Zeller, C.~R.~B. Cabral, and V.~H. Lachos.
\newblock Robust mixture regression modeling based on scale mixtures of
  skew-normal distributions.
\newblock {\em TEST}, pages 1--22, 2015.

\end{thebibliography}

%%% Proofs
%%%%%%%%%%%%%%%%%%%%%%%%%%%%%%%%%%%%%%%%%%%%%%%%%%%%%%%%%%%%%%%%%%%%%%%%%%%%%%%%%
\newpage
%\begin{center}
%\Large{\textbf{Appendix A}}
%\end{center}

\section{Appendix A: Proofs of key results}
This Appendix contains the proofs of key results in the paper.
%%%%%%%%%%%%%%%%%%%%%%%%%%%%%%%%%%%%%%%%%%%%%%%%%%%%%%%%%%%%%%%%%%%%%%%%%%%%%%%%%

\subsection{Proof of statements in Section \ref{Section:general_procedure_singularity}}
\paragraph{PROOF OF LEMMA ~\ref{lemma:minimal_form}}
%%%%%%%%%%%%%%%%%%%%%%%%%%%%%%%%
%%%%%%%%%%%%%%%%%%%%%%%%%%%%%%%%%%%%%%%%%%%%%%%%
(a) The existence of the sequence of $G$ described in the definition of a $r$-minimal form
implies for that sequence, $(p_G(x)-p_{G_0}(x))/W_r^r(G,G_0) \rightarrow 0$ holds
for almost all $x$. Now take any $r$-minimal form \eqref{eqn:generallinearindependencerepresentation}
given by the same sequence of $G$. Let $C(G) = \max_{l=1}^{T_r}
\frac{\xi_{l}^{(r)}(G)}{W_r^r(G_0,G)}$. We will show that $\liminf C(G) = 0$, which concludes
the proof. Suppose that this is not the case, so we have $\liminf C(G) > 0$. It follows that
\[\sum_{l=1}^{T_r}
\biggr (\frac{\xi_{l}^{(r)}(G)}{C(G) W_r^r(G,G_0)} \biggr ) H_{l}^{(r)}(x)
\rightarrow 0.\]
Moreover, all the coefficients in the above display are bounded from above by 1,
one of which is in fact 1. There exists a subsequence of $G$ by which
these coefficients have limits, one of which is 1. This is a contradiction
due to the linear independence of functions $H_l^{(r)}(x)$. 

(b) Let $G$ be an element in the sequence that admits a $r$-minimal form
such that $\xi_{l}^{(r)}(G)/W_r^r(G_0,G)$\\ $\rightarrow 0$
for all $l=1,\ldots,T_r$. It suffices to assume that the basis functions $H_{l}^{(r)}$
are selected from the collection of partial derivatives of $f$. 
We will show that the same sequence of $G$ and the elimination procedure for 
the $r$-minimal form can be used to construct a $r-1$-minimal form by which
\[\xi_{l}^{(r-1)}(G)/W_{r-1}^{r-1}(G_0,G) \rightarrow 0\]
for all $l=1,\ldots,T_{r-1}$. There are two possibilities to consider.

First, suppose that each of the $r$-th partial derivatives of density kernel $f$
(i.e., $\partial^\kappa f/\partial \eta^\kappa$, where $|\kappa|= r$)
is not in the linear span of the collection of
partial derivatives of $f$ at order $r-1$ or less. Then, for each $l=1,\ldots, T_{r-1}$,
$\xi_{l}^{(r-1)}(G) = \xi_{l'}^{(r)}(G)$ for some $l'\in [1,T_{r}]$.
Since $W_{r-1}^{r-1}(G,G_0)\gtrsim W_r^r(G,G_0)$, due to the fact 
that the support points of $G$ and $G_0$ are in a bounded set, 
we have that 
\[\xi_{l}^{(r-1)}(G)/W_{r-1}^{r-1}(G_0,G) 
\lesssim \xi_{l'}^{(r)}(G)/W_r^r(G_0,G)\]
which vanishes by the hypothesis.

Second, suppose that some of the $r$-th partial derivatives, say,
$\partial^{|\kappa|}f/\partial \eta^\kappa$ where $|\kappa|=r$,
can be eliminated because they can be represented by a linear combination of a subset of 
other partial derivatives $H_l^{(r-1)}$ (in addition to possibly
a subset of other partial derivatives $H_l^{(r)}$) with corresponding
finite coefficients $\alpha_{\kappa,i,l}$. 
It follows that for each $l=1,\ldots, T_{r-1}$, the coefficient $\xi_l^{(r-1)}(G)$
that defines the $r-1$-minimal form is transformed into
a coefficient in the $r$-minimal form by

\begin{equation}
\label{coeffs-r}
\xi_{l'}^{(r)}(G) := \xi_{l}^{(r-1)}(G) + 
\sum_{\kappa; |\kappa|=r} 
\sum_{i=1}^{k_0} \alpha_{\kappa,i,l}
\sum_{j=1}^{s_i}
{p_{ij}(\Delta \eta_{ij})^{\kappa}/\kappa!}.
\end{equation}
Since $\xi_{l'}^{(r)}(G)/W_r^r(G,G_0)$ tends to 0,
so does $\xi_{l'}^{(r)}(G)/W_{r-1}^{r-1}(G,G_0)$.
By Lemma~\ref{lemma:bound_overfit_Wasserstein} 
for each $\kappa$ such that $|\kappa|=r$,
$\sum_{i=1}^{k_0}\sum_{j=1}^{s_i}
{p_{ij}(\Delta \eta_{ij})^{\kappa}/\kappa!} 
= o(D_{r-1}(G_0,G)) = o(W_{r-1}^{r-1}(G,G_0))$.
Combining with Eq.~\eqref{coeffs-r} it follows
that $\xi_l^{(r-1)}(G)/W_{r-1}^{r-1}(G,G_0)$ tends to 0,
for each $l = 1,\ldots, T_{r-1}$. This completes the proof.

\comment{
It remains to show that a subsequence of this tends to zero. By the definition of
$r$-singularity, for any $x$:
\[\frac{p_{G}(x)-p_{G_{0}}(x)}{W_{r}^{r}(G,G_{0})} = 
\sum_{l=1}^{T_r}
\biggr (\frac{\xi_{l}^{(r)}(G)}{W_r^r(G_0,G)} \biggr ) H_{l}^{(r)}(x) + o(1)
\rightarrow 0.\]
Since $W_r^r(G,G_0)\lesssim W_{r-1}^{r-1}(G,G_0)$, 
we have $(p_{G}(x)-p_{G_{0}}(x))/W_{r-1}^{r-1}(G,G_{0}) \rightarrow 0$.
Under the $(r-1)$-minimal form
\[\frac{p_{G}(x)-p_{G_{0}}(x)}{W_{r-1}^{r-1}(G,G_{0})} = 
\sum_{l=1}^{L_{r-1}}
\biggr (\frac{\xi_{l}^{(r-1)}(G)}{W_{r-1}^{r-1}(G_0,G)} \biggr ) H_{l}^{(r-1)}(x) + o(1)
\rightarrow 0.\]
Since $\frac{\xi_{l}^{(r-1)}(G)}{W_{r-1}^{r-1}(G_0,G)}$ are bounded sequences,
there exists a converging subsequence for each $l$ and hence all $l$. 
These converging subsequences must all converge to 0, because $H_l^{(r-1)}$ 
are linearly independent functions. This concludes the proof.}
%%%%%%%%%%%%%%%%%%%%%%%%%%%%%%%%%%%%%%%%%%%%%%%%%%%%%%%%%%%%%%%%%%%%%%%%%%%%%%%%%
\paragraph{PROOF OF THEOREM ~\ref{theorem:singularity_connection_liminf}}
%%%%%%%%%%%%%%%%%%%%%%%%%%%%%%%%
%%%%%%%%%%%%%%%%%%%%%%%%%%%%%%%%%%%%%%%%%%%%%%%

(i) It suffices to prove the first inequality for $s=r+1$. Firstly, we will demonstrate that
\begin{eqnarray}
\liminf \limits_{G \in \mathcal{G}: W_s(G,G_0)\rightarrow 0} \|p_{G}-p_{G_0}\|_\infty/ W_{s}^{s}
(G,G_0) > 0. \nonumber
\end{eqnarray}
If this is not true, then there exists
a sequence of $G$ such that $W_s(G,G_0)\rightarrow 0$, and
for almost all $x$, $(p_G(x)-p_{G_0}(x))/W_s^s(G,G_0) \rightarrow 0$.
Take any $s$-minimal form for this ratio, we have
\[\frac{p_{G}(x)-p_{G_0}(x)}{W_s^s(G,G_0)} = \sum_{l=1}^{T_s}
\biggr (\frac{\xi_{l}^{(s)}(G)}{W_s^s(G,G_0)} \biggr ) H_{l}^{(s)}(x) + o(1)
\rightarrow 0.\]
For each $G$ in the sequence, let $C(G) = \max_{l}
\dfrac{\xi_{l}^{(s)}(G)}{W_s^s(G_0,G)}$. If $\liminf C(G) = 0$, then
this means $G_0$ is $s$-singular, so $\lev(G_0|\Gcal) \geq s$.
This violates the given assumption.
So we have $\liminf C(G) > 0$. It follows that
\[\sum_{l=1}^{T_s}
\biggr (\frac{\xi_{l}^{(s)}(G)}{C(G) W_s^s(G,G_0)} \biggr ) H_{l}^{(s)}(x)
\rightarrow 0.\]
Moreover, all coefficients in the above display are bounded from above by 1,
one of which is in fact 1. There exists a subsequence of $G$ by which
these coefficients have a limit, one of which is 1. This is also a contradiction
due to the linear independence of functions $H_l^{(s)}$. 

Therefore, we can find a positive number $\epsilon_{0}$ such that $\|p_{G}-p_{G_{0}}\|
_{\infty} \gtrsim W_{s}^{s}(G,G_{0})$ as soon as $W_{s}(G,G_{0}) \leq \epsilon_{0}$. Now, 
to obtain the conclusion of part (i), it suffices to demonstrate that
\begin{eqnarray}
\inf \limits_{G \in \mathcal{G}: W_{s}(G,G_{0})>\epsilon_{0}} \| p_{G}-p_{G_0}\|_{\infty}/ W_{s}^{s}
(G,G_0) > 0. \nonumber
\end{eqnarray}
If this is not the case, there is a sequence $G'$ such that $W_{s}(G',G_{0})>\epsilon_{0}$ 
and $\|p_{G'}-p_{G_0}\|_{\infty}/ W_{s}^{s}
(G',G_0)$\\$ \to 0$. Since $\Theta$ is compact and $\Gcal$ contains only probability measures 
with bounded number of support points in $\Theta$, we can find $G^* \in \mathcal{G}$ such that 
$W_{s}(G',G^*) \to 0$ and $W_{s}(G^*,G_{0}) \geq \epsilon_{0}$. As $W_{s}
(G',G_{0}) \to W_{s}(G^*,G_{0})>0$, we have $\|p_{G'}-p_{G_0}\|_{\infty} \to 0$. Now, 
due to the first order uniform Lipschitz condition of $f$, we obtain $p_{G'}(x) \to p_{G^*}
(x)$ for all $x \in \mathcal{X}$. Thus, $p_{G^*}(x)=p_{G_{0}}(x)$ for almost all $x \in \mathcal{X}
$, which entails that $G^* = G_{0}$, a contradiction. This completes
the proof.

(ii) Turning to the second inequality, we also firstly demonstrate that
\begin{eqnarray}
\liminf \limits_{G \in \mathcal{G}: W_s(G,G_0)\rightarrow 0} V(p_{G},p_{G_0})/ W_{s}^{s}
(G,G_0) > 0. \nonumber
\end{eqnarray}
If it is not true, then we have a sequence of $G$
such that $W_s(G,G_0)\rightarrow 0$ and $V(p_G,p_{G_0})/W_s^s(G,G_0) \rightarrow 0$. By Fatou's lemma
\[0 = \liminf \frac{V(p_G,p_{G_0})}{C(G) W_s^s(G,G_0)}
\geq \int \liminf_{G} \biggr |\frac{\xi_{l}^{(s)}(G)}{C(G) W_s^s(G,G_0)}  H_{l}^{(s)}(x) \biggr | 
\textrm{d}x.\]
The integrand must be zero for almost all $x$, leading to a contradiction as before. Hence, to obtain the conclusion of part (ii), we only need to show that
\begin{eqnarray}
\inf \limits_{G \in \mathcal{G}: W_{s}(G,G_{0})>\epsilon_{0}} V(p_{G},p_{G_0})/ W_{s}^{s}
(G,G_0) > 0. \nonumber
\end{eqnarray}
where $\epsilon_{0}>0$ such that $V(p_{G},p_{G_0}) \gtrsim W_{s}^{s}(G,G_{0})$ for any $W_{s}(G,G_{0}) \leq \epsilon_{0}$. If it is not true, then using the same argument as that of part (i), there is a sequence of
$G'$ such that $W_s(G',G^*) \rightarrow 0$, $V(p_{G'},p_{G_{0}}) \to 0$,
while $W_{s}(G^*,G_{0}) \geq \epsilon_{0}$ and $p_{G'}(x) \to p_{G^*}(x)$ for all $x \in \mathcal{X}$. By Fatou's lemma,
\begin{eqnarray}
0 = \liminf V(p_{G'},p_{G_{0}}) \geq \int {\liminf |p_{G'}(x)-p_{G_{0}}(x)|}dx = V(p_{G^*},p_{G_{0}}), \nonumber
\end{eqnarray}
which leads to $G^* =  G_{0}$, a contradiction. The proof is concluded.

\subsection{Proofs for section~\ref{Section:overfitskew}}

%%%%%%%%%%%%%%%%%%%%%%%%%%%%%%%%%%%%%%%%%%%%%%%%%%%%%%%%%%%%%%%%%%%%%%%%%%%%%%%%%
\paragraph{PROOF OF THEOREM ~\ref{theorem:generic_setting_omixtures}}
%%%%%%%%%%%%%%%%%%%%%%%%%%%%%%%%
%%%%%%%%%%%%%%%%%%%%%%%%%%%%%%%%%%%%%%%%%%%%%%%
\comment{
We will provide for the case $\rgeneric(G_{0},k)$ is an even number as 
the argument for the case of odd $\rgeneric(G_{0},k)$ is similar. To obtain 
the conclusion of the theorem under that setting of $\rgeneric(G_{0},k)$, it is sufficient 
to demonstrate that $G_{0}$ is not $\biggr(\rgeneric(G_{0},k),
\dfrac{\rgeneric(G_{0},k)}{2},\dfrac{\rgeneric(G_{0},k)}{2}\biggr)$-singular relative to 
$\Ocal_{k,c_{0}}$. 
}

The reader is recommended to go over the special cases given earlier in 
Section~\ref{Section:illustration_omixture_byone} before embarking on this proof.
Our strategy is clear: First, 
we obtain a valid $\kappa$-minimal form for $G_0$, cf. Eq.
\eqref{eqn:generallinearindependencerepresentation_general}. This requires
a method for obtaining linearly independent basis functions $H_l(x)$
out of the partial derivatives of kernel density $f$. Second, we obtain
the polynomial limits of collection of coefficients of the $\kappa$-minimal
form. Third, we obtain bounds on $r$ according to which this
system of limiting polynomials does not admit non-trivial real solutions.
This yields upper bounds on the singularity level and singularity index of $G_0$.

\paragraph{Step 1: Construction of $\kappa$-minimal form}
Let $\kappa=(\kappa_1,\kappa_2,\kappa_3) \in \mathbb{N}^{3}$ be an index vector, and
put $r = \|\kappa\|_\infty$.
By Lemma \ref{lemma:recursiveequation} and Lemma \ref{lemma:reduced_linearly_independent}
a $\kappa$-th minimal form for $G_0$ can be obtained as
\begin{eqnarray}
\dfrac{p_{G}(x)-p_{G_{0}}(x)}{\widetilde{W}_{\kappa}^{r}(G,G_{0})} \asymp \dfrac{A_{1}(x)+B_{1}(x)}{\widetilde{W}_{\kappa}^{r}(G,G_{0})}, \nonumber
\end{eqnarray}
where $A_{1}(x)$ and $B_{1}(x)$ are given as follows
\begin{eqnarray}
A_{1}(x) & = & \sum \limits_{i=1}^{k_{0}}{\sum \limits_{\beta \in \mathcal{F}_{r}}{\biggr(\sum \limits_{j=1}^{s_{i}} \sum_{\alpha} {\dfrac{P^{\beta_{1},\beta_{2},\beta_{3}}_{\alpha_{1},\alpha_{2},\alpha_{3}}(m_{i}^{0})}{H^{\beta_{1},\beta_{2},\beta_{3}}_{\alpha_{1},\alpha_{2},\alpha_{3}}(m_{i}^{0})Q^{\beta_{1},\beta_{2},\beta_{3}}_{\alpha_{1},\alpha_{2},\alpha_{3}}(v_{i}^{0})}\dfrac{p_{ij}(\Delta \theta_{ij})^{\alpha_{1}}(\Delta v_{ij})^{\alpha_{2}}(\Delta m_{ij})^{\alpha_{3}}}{\alpha_{1}!\alpha_{2}!\alpha_{3}!}}}\biggr)} \times \nonumber \\
& & \dfrac{\partial^{|\beta|}{f}}{\partial{\theta^{\beta_{1}}}\partial{v^{\beta_{2}}}
\partial{m^{\beta_{3}}}}(x|\theta_{i}^{0},\sigma_{i}^{0},m_{i}^{0}), \nonumber \\
B_{1}(x) & = & \mathop {\sum }\limits_{i=1}^{k_{0}}{\Delta p_{i\cdot}f(x|\theta_{i}^{0},\sigma_{i}^{0},m_{i}^{0})}. \nonumber
\end{eqnarray} 
In the above expression, due to condition $G_0 \in \Ocal_{k,c_0}$, the number of redundant limit points in the $\kappa$-minimal form is $\bar{l} = 0$, so $i$
 in the above sum runs from 1 to $k_0$. It is also important to note that $\alpha$ in the above sum satisfies $|\alpha| \leq r$ and 
 $\alpha_{1}+2\alpha_{2}+2\alpha_{3} \geq \beta_{1}+2\beta_{2}+2\beta_{3}$.

Suppose that there exists a sequence of $G$ tending to $G_0$ under $\widetilde{W}_{\kappa}$ such that all the coefficients of  $A_{1}(x)/\widetilde{W}_{\kappa}^{r}(G,G_{0})$ and 
$B_{1}(x)/\widetilde{W}_{\kappa}^{r}(G,G_{0})$ vanish, so that $G_0$ is $\kappa$-singular relative to $\Ocal_{k,c_0}$. 
Then for all $1 \leq i \leq k_{0}$, we obtain that $\Delta p_{i\cdot}/\widetilde{W}_{\kappa}^{r}(G,G_{0}) \to 0$ and 
\begin{eqnarray}
E_{\beta_{1},\beta_{2},\beta_{3}}(\theta_{i}^{0},v_{i}^{0},m_{i}^{0}) := \dfrac{\sum \limits_{j=1}^{s_{i}}{\sum \limits_{\alpha}{\dfrac{P^{\beta_{1},\beta_{2},\beta_{3}}_{\alpha_{1},\alpha_{2},\alpha_{3}}(m_{i}^{0})}{H^{\beta_{1},\beta_{2},\beta_{3}}_{\alpha_{1},\alpha_{2},\alpha_{3}}(m_{i}^{0})Q^{\beta_{1},\beta_{2},\beta_{3}}_{\alpha_{1},\alpha_{2},\alpha_{3}}(v_{i}^{0})}\dfrac{p_{ij}(\Delta \theta_{ij})^{\alpha_{1}}(\Delta v_{ij})^{\alpha_{2}}(\Delta m_{ij})^{\alpha_{3}}}{\alpha_{1}!\alpha_{2}!\alpha_{3}!}}}}{\widetilde{W}_{\kappa}^{r}(G,G_{0})} \to 0, \nonumber 
\end{eqnarray}
for all $\beta \in \mathcal{F}_{r}$. 

By Lemma \ref{lemma:bound_overfit_Wasserstein_first}, $\widetilde{W}_{\kappa}^{r}(G,G_{0}) \asymp D_{\kappa}(G_{0},G)$. 
So, $\sum \limits_{i=1}^{k_{0}}{|\Delta p_{i\cdot}|}/D_{\kappa}(G_{0},G) \to 0$. It follows that
\begin{eqnarray}
\biggr\{\mathop {\sum }\limits_{i=1}^{k_{0}}{\mathop {\sum }\limits_{j=1}^{s_{i}}{p_{ij}(|\Delta \theta_{ij}|^{\kappa_1}+|\Delta v_{ij}|^{\kappa_2}+|\Delta m_{ij}|^{\kappa_3})}}\biggr\}/D_{r}(G_{0},G) \to 1. \nonumber
\end{eqnarray}
This means there exists some index $i^{*} \in \left\{1,\ldots,k_{0}\right\}$ such that 
\begin{eqnarray}
\mathop {\sum }\limits_{j=1}^{s_{i^{*}}}{p_{i^{*}j}(|\Delta \theta_{i^{*}j}|^{\kappa_1}+|\Delta v_{i^{*}j}|^{\kappa_2}+|\Delta m_{i^{*}j}|^{\kappa_3})}/D_{\kappa}(G_{0},G) \not \to 0. \nonumber
\end{eqnarray} 
By multiplying the inverse of 
the above term with $E_{\beta_{1},\beta_{2},\beta_{3}}(\theta_{i^{*}}^{0},v_{i^{*}}
^{0},m_{i^{*}}^{0})$ as $\beta \in \mathcal{F}_{r}$ and using the fact that $\widetilde{W}_{\kappa}^{r}(G,G_{0}) \asymp D_{\kappa}(G_{0},G)$, we obtain
\begin{multline}
 F_{\beta_{1},\beta_{2},\beta_{3}}(\theta_{i^{*}}^{0},v_{i^{*}}^{0},m_{i^{*}}^{0}) := \nonumber \\
 \dfrac{\sum \limits_{j=1}^{s_{i^*}}{\sum_{\alpha} {\dfrac{P^{\beta_{1},\beta_{2},\beta_{3}}_{\alpha_{1},\alpha_{2},\alpha_{3}}(m_{i^{*}}^{0})}{H^{\beta_{1},\beta_{2},\beta_{3}}_{\alpha_{1},\alpha_{2},\alpha_{3}}(m_{i^{*}}^{0})Q^{\beta_{1},\beta_{2},\beta_{3}}_{\alpha_{1},\alpha_{2},\alpha_{3}}(v_{i^{*}}^{0})}\dfrac{p_{i^{*}j}(\Delta \theta_{i^{*}j})^{\alpha_{1}}(\Delta v_{i^{*}j})^{\alpha_{2}}(\Delta m_{i^{*}j})^{\alpha_{3}}}{\alpha_{1}!\alpha_{2}!\alpha_{3}!}}}}{
\mathop {\sum }\limits_{j=1}^{s_{i^*}}{p_{i^{*}j}(|\Delta \theta_{i^{*}j}|^{\kappa_1}+|\Delta v_{i^{*}j}|^{\kappa_2}+|\Delta m_{i^{*}j}|^{\kappa_3})}} \to 0 \nonumber
\end{multline}
where $\alpha$ in the above sum satisfies $|\alpha| \leq r$ and $\alpha_{1}+2\alpha_{2}+2\alpha_{3} \geq \beta_{1}+2\beta_{2}+2\beta_{3}$.

\paragraph{Step 2: Greedy extraction of polynomial limits} 
We proceed to extract polynomial limit of coefficient $F_{\beta_{1},
\beta_{2},\beta_{3}}(\theta_{i^{*}}^{0},v_{i^{*}}^{0},m_{i^{*}}^{0})$
for each admissible index vector $\beta$.
%Note that the numerators of the $F_{\beta_{1},
%\beta_{2},\beta_{3}}(\theta_{i^{*}}^{0},v_{i^{*}}^{0},m_{i^{*}}^{0})$ are 
%inhomogeneous polynomials in general. 
%The greedy extraction way of studying $F_{\beta_{1},
%\beta_{2},\beta_{3}}(\theta_{i^{*}}^{0},v_{i^{*}}^{0},m_{i^{*}}^{0})$ is
%effective to determine the sufficient order of Taylor 
%expansion $r$, i.e the upper bounds on the singularity level of $G_{0}$. 
Let
\begin{multline}
\overline{M}_{g}=\\
\mathop {\max }\biggr\{|\Delta \theta_{i^{*}1}|^{\kappa_1/r},\ldots,|\Delta \theta_{i^{*}s_{i^{*}}}|^{\kappa_1/r},
|\Delta v_{i^{*}1}|^{\kappa_2/r},\ldots,|\Delta v_{i^{*}s_{i^{*}}}|^{\kappa_2/r}, |\Delta m_{i^{*}1}|^{\kappa_3/r},\ldots,|\Delta m_{i^{*}s_{i^{*}}}|^{\kappa_3/r}\biggr\}. \nonumber
\end{multline} 
Then the denominator of coefficient $F_{\beta_{1},
\beta_{2},\beta_{3}}$ satisfies
 $\mathop {\sum }\limits_{j=1}^{s_{i^*}}{p_{i^{*}j}(|\Delta \theta_{i^{*}j}|^{\kappa_1}+|\Delta 
v_{i^{*}j}|^{\kappa_2}+|\Delta m_{i^{*}j}|^{\kappa_3})} \asymp \overline{M}_{g}^{r}$,
while the numerator consists of a finite number of monomials of power indices $\alpha = (\alpha_1,\alpha_2,\alpha_3)$,
each of which is asymptotically bounded from above by $M_g^{(\alpha_1/\kappa_1+\alpha_2/\kappa_2+\alpha_3/\kappa_3)r}$.
Accordingly, the only meaningful monomials in the numerator of $F_{\beta_{1},\beta_{2},\beta_{3}}$ are those
associated with $\alpha$ such that 
\begin{equation}
\label{kappa-constraint}
\alpha_1/\kappa_1+\alpha_2/\kappa_2+\alpha_3/\kappa_3 \leq 1.
\end{equation}
Coupling this with the constraints that 
 $|\alpha| \leq r$ and $\alpha_{1}+2\alpha_{2}+2\alpha_{3} \geq \beta_{1}+2\beta_{2}+2\beta_{3}$,
 we only need to consider asymptotically dominating monomials, i.e., those with minimal index vector $\alpha$.
 They are the indices that satisfy $\alpha_1 + 2\alpha_2 + 2\alpha_3 = \beta_1 + 2\beta_2 + 2\beta_3$
 (which would also entail that $|\alpha| \leq \beta_1 + 2\beta_2 + 2\beta_3 \leq r$, due to the definition of 
 index set $\mathcal{F}_r$). 
 In general, for a given index vector $\beta$, $\alpha = \beta$ is clearly one such minimal index, even though there may be others.
 Now for a fixed $r\geq 1$, we wish to select a minimal index $\kappa$, subject to $\|\kappa\|_\infty = r$, for which 
 the ``greedy'' choice $\alpha = \beta$ always satisfies constraint ~\eqref{kappa-constraint}. 
 The clear answer is to select $\kappa = (r,r/2,r/2)$ (if $r$ is not even, 
 $\kappa = (r, (r+1)/2, (r+1)/2)$ is selected instead).
 
\comment{
{\color{red} Note that, the respective choice of powers $1,1/2, 1/2$ for $|\Delta \theta_{i^{*}j}|$,$|\Delta v_{i^{*}j}|$, $|\Delta m_{i^{*}j}|$ in the above definition of $\overline{M}_{g}$ is motivated from the condition $\alpha_{1}+2\alpha_{2}+2\alpha_{3} \geq \beta_{1}+2\beta_{2}+2\beta_{3}$ in the formulation of $F_{\beta_{1},\beta_{2},\beta_{3}}(\theta_{i^{*}}^{0},v_{i^{*}}^{0},m_{i^{*}}^{0})$}. Governed by such choices of powers, for any index vector $\beta=(\beta_{1},\beta_{2},\beta_{3})$ such that $\beta \in \mathcal{F}_{r}$, the lowest order 
of $\overline{M}_{g}$ in the numerator of $F_{\beta_{1},\beta_{2},\beta_{3}}(\theta_{i^{*}}
^{0},v_{i^{*}}^{0},m_{i^{*}}^{0})$ is $\overline{M}_{g}^{\beta_{1}+2\beta_{2}+2\beta_{3}}$ . 
Since $
\mathop {\sum }\limits_{j=1}^{s_{i^*}}{p_{i^{*}j}(|\Delta \theta_{i^{*}j}|^{r}+|\Delta 
v_{i^{*}j}|^{r/2}+|\Delta m_{i^{*}j}|^{r/2})} \asymp \overline{M}_{g}^{r}$, 
it is clear that
$F_{\beta_{1},\beta_{2},\beta_{3}}(\theta_{i^{*}}^{0},v_{i^{*}}^{0},m_{i^{*}}^{0})$ vanishes as long as
$\beta_{1}+2\beta_{2}+2\beta_{3} \geq r+1$.
Thus, we only need to concern with $F_{\beta_{1},\beta_{2},\beta_{3}}(\theta_{i^{*}}^{0},v_{i^{*}}^{0},m_{i^{*}}^{0})$ 
when $\beta \in \mathcal{F}_{r}$ and $\beta_{1}+2\beta_{2}+2\beta_{3} \leq r$.
}

From this point on, let $\kappa = (r,r/2,r/2)$. 
Denote the limits for the relevant subsequences, which exist due to the boundedness: 
$\Delta \theta_{i^{*}j}/\overline{M}_{g} \to a_{j}$, 
$\Delta v_{i^{*}j}/
\overline{M}_{g}^{2} \to b_{j}$, and $\Delta m_{i^{*}j}/\overline{M}_{g}^2 \to c_{j}$, 
and $p_{i^{*}j} \to d_{j}^{2}$ for each $j=1,\ldots, s_{i^{*}}$. Here, at least one element 
of $\left(a_{j},b_{j},c_{j}\right)_{j=1}^{s_{i^{*}}}$ equals to -1 or 
1. For any $\beta=(\beta_{1},\beta_{2},\beta_{3})$ such that $\beta \in \mathcal{F}_{r}$ and $
\beta_{1}+2\beta_{2}+2\beta_{3} \leq r$, by dividing the numerator and denominator of 
$F_{\beta_{1},\beta_{2},\beta_{3}}(\theta_{i^{*}}^{0},v_{i^{*}}^{0},m_{i^{*}}^{0})$ by 
$\overline{M}_{g}^{\beta_{1}+2\beta_{2}+2\beta_{3}}$ (i.e., the lowest order of $
\overline{M}_{g}$ in the numerator of $F_{\beta_{1},\beta_{2},\beta_{3}}(\theta_{i^{*}}
^{0},v_{i^{*}}^{0},m_{i^{*}}^{0})$), we obtain the following system of equations
\begin{eqnarray}
\sum \limits_{j=1}^{s_{i^{*}}}{\sum \limits_{\alpha}{\dfrac{P^{\beta_{1},\beta_{2},\beta_{3}}_{\alpha_{1},\alpha_{2},\alpha_{3}}(m_{i^{*}}^{0})}{H^{\beta_{1},\beta_{2},\beta_{3}}_{\alpha_{1},\alpha_{2},\alpha_{3}}(m_{i^{*}}^{0})Q^{\beta_{1},\beta_{2},\beta_{3}}_{\alpha_{1},\alpha_{2},\alpha_{3}}(v_{i^{*}}^{0})}\dfrac{d_{j}^{2}a_{j}^{\alpha_{1}}b_{j}^{\alpha_{2}}c_{j}^{\alpha_{3}}}{\alpha_{1}!\alpha_{2}!\alpha_{3}!}}} = 0, \label{eqn:systemnonlinear_proof}
\end{eqnarray}
where the range of $\alpha=(\alpha_{1},\alpha_{2},\alpha_{3})$ in the above sum satisfies 
$\alpha_{1}+2\alpha_{2}+2\alpha_{3}=\beta_{1}+2\beta_{2}+2\beta_{3}$. The above 
system of polynomial equations is the general version of system of polynomial equations 
\eqref{eqn:systemnonlinear_fourth} that we 
considered in Section \ref{Section:illustration_omixture_byone}. Now, one of the elements 
of $a_{j}$s, $b_{j}$s, $c_{j}$s is non-zero. Since $G \in \mathcal{O}_{k,c_{0}}$ and $
\sum \limits_{j=1}^{s_{i^{*}}}{p_{i^{*}j}} \to p_{i^{*}}^{0}$, we have the constraints 
$d_{j}^{2}>0$ and $\sum \limits_{j=1}^{s_{i^{*}}}{d_{j}^{2}}=p_{i^{*}}^{0}$. However, 
we can remove the constraint on the summation of $d_{j}^{2}$ by putting $d_{j}^{2}
=p_{i^{*}}^{0}(d_{j}^{'})^{2})/\sum \limits_{j=1}^{s_{i}}{(d_{j}^{'})^{2})}$ where the
only constraint on $d_{j}^{'}$s is $d_{j}^{'} \neq 0$ for all $1 \leq j \leq s_{i^{*}}$. As a 
consequence,  when we talk about system of polynomial equations 
\eqref{eqn:systemnonlinear_proof}, we may consider only the constraint $d_{j}^{2} \neq 0$ 
for any $1 \leq j \leq s_{i^{*}}$. 

Recall Definition \ref{def-rsingular_set}, we have established that if $G_0$ is $\kappa$-singular relative to $\Ocal_{k,c_0}$, where $\kappa = (r,r/2,r/2)$,
then the system \eqref{eqn:systemnonlinear_proof} must admit a non-trivial solution for unknowns
$(a_j,b_j,c_j,d_j)_{j=1}^{s_{i^*}}$.  In other words, $G_0$ is \emph{not} $\kappa$-singular 
relative to $\Ocal_{k,c_0}$ as long as the system \eqref{eqn:systemnonlinear_proof} 
does not admit any non-trivial solution.

\paragraph{Step 3: Upper bound for singularity level and indices} 
There are two distinct features of system of polynomial equations \eqref{eqn:systemnonlinear_proof}. First,
$i^{*}$ varies in $\left\{1,2,\ldots,k_{0}\right\}$ as $G \in \mathcal{O}_{k,c_{0}}$ tends to $G_{0}$. Second,
the value of $s_{i^{*}}$ of the subsequence of $G$ is subject to the constraint that $s_{i^{*}} \leq k-k_{0}+1$. 
(This constraint arises due to number of distinct atoms of $G$,  $\sum \limits_{j=1}^{k_{0}}{s_{j}} \leq 
k^{'} \leq k$ and all $s_{j} \geq 1$ for all $1 \leq j \leq k_{0}$). It follows from these two observations that
the system \eqref{eqn:systemnonlinear_proof} admits a non-trivial solution \emph{only if} the system
\eqref{eqn:systemnonlinear} also admits a non-trivial solution, because the latter has a larger number of unknowns (i.e., thus less constrained). 
This cannot be the case if $r \geq \rgeneric(G_0,k)$,
by the definition given in Eq. \eqref{eqn:general_rate_generic_setting}. 
Thus, $G_0$ is \emph{not} $(\rgeneric(G_0,k), \rgeneric(G_0,k)/2, \rgeneric(G_0,k)/2)$-singular relative to $\Ocal_{k,c_0}$.
The conclusion of the theorem is immediate by invoking Proposition\ref{proposition:singularity_set_level}
part (iv) and (v).

%%%%%%%%%%%%%%%%%%%%%%%%%%%%%%%%%%%%%%%%%%%%%%%%
\subsection{Auxiliary lemmas}

\begin{lemma} \label{lemma:Wasserstein_semimetric}
For any $\kappa=(\kappa_{1},\ldots,\kappa_{d}) \in \mathbb{N}^{d}$, if all the components of $\kappa$ are not identical then $\widetilde{W}_{\kappa}(G,G_{0})$ is a semi-metric satisfying weak triangle inequality.
\end{lemma}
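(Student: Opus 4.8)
The plan is to show two things: first, that $\widetilde{W}_\kappa$ is symmetric, nonnegative, and vanishes exactly when $G = G_0$ (the ``semi-metric'' part); second, that it satisfies a weak triangle inequality, i.e., there is a constant $C \geq 1$ (depending only on $\kappa$ and $d$) such that $\widetilde{W}_\kappa(G_1, G_3) \leq C\bigl(\widetilde{W}_\kappa(G_1,G_2) + \widetilde{W}_\kappa(G_2,G_3)\bigr)$ for all $G_1, G_2, G_3$, but that $C = 1$ fails in general when the $\kappa_i$ are not all equal. Symmetry is immediate from the definition since $d_\kappa(\eta,\eta') = d_\kappa(\eta',\eta)$ and the coupling constraints are symmetric in $\vec p, \vec p'$. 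Nonnegativity is clear. For the identity of indiscernibles: if $G = G_0$ one takes the diagonal coupling and gets $0$; conversely if $\widetilde{W}_\kappa(G,G_0) = 0$ then the optimal (or a near-optimal) coupling forces $d_\kappa(\eta_i,\eta_j') = 0$ whenever $q_{ij} > 0$, and since $d_\kappa(\eta,\eta') = 0 \iff \eta = \eta'$ (each summand $|\theta^{(i)}_1 - \theta^{(i)}_2|^{\kappa_i}$ vanishes iff the coordinates agree, using $\kappa_i \geq 1$), this forces $G = G_0$ by the standard transportation argument.

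The heart of the matter is the weak triangle inequality, and the key elementary fact is the following: for $p \geq 1$ (here $p = \|\kappa\|_\infty/\kappa_i \geq 1$ appears implicitly) there is a constant so that $(a+b)^{1/s} \leq a^{1/s} + b^{1/s}$ when $s \geq 1$ (this is fine, no constant needed), but the failure of the ordinary triangle inequality for $d_\kappa$ comes from the mixed exponents inside the sum. Concretely I would first establish a \emph{pointwise} weak triangle inequality for the semi-metric $d_\kappa$ on $\mathbb{R}^d$: for any $\eta_1, \eta_2, \eta_3$,
\[
d_\kappa(\eta_1,\eta_3) \leq C_0 \bigl( d_\kappa(\eta_1,\eta_2) + d_\kappa(\eta_2,\eta_3) \bigr)
\]
for a constant $C_0 = C_0(\kappa)$. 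This follows by writing $d_\kappa^{\|\kappa\|_\infty}(\eta_1,\eta_3) = \sum_i |\theta_1^{(i)} - \theta_3^{(i)}|^{\kappa_i}$, applying $|\theta_1^{(i)} - \theta_3^{(i)}|^{\kappa_i} \leq 2^{\kappa_i - 1}\bigl(|\theta_1^{(i)} - \theta_2^{(i)}|^{\kappa_i} + |\theta_2^{(i)} - \theta_3^{(i)}|^{\kappa_i}\bigr)$ coordinatewise, summing, and then using the inequality $(x + y)^{1/r} \leq x^{1/r} + y^{1/r}$ with $r = \|\kappa\|_\infty \geq 1$ to split the two groups of terms back into two $d_\kappa$'s; the constant $2^{(\max_i \kappa_i - 1)/r}$ (or a similar explicit bound) plays the role of $C_0$. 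Then I would lift this to $\widetilde{W}_\kappa$ via the usual gluing argument: given couplings realizing $\widetilde{W}_\kappa(G_1,G_2)$ and $\widetilde{W}_\kappa(G_2,G_3)$, glue them along $G_2$ to produce a coupling between $G_1$ and $G_3$, apply the pointwise weak triangle inequality inside the cost, and use $(x+y)^{1/r} \le x^{1/r} + y^{1/r}$ once more (and Minkowski-type handling of the coupled sums) to arrive at $\widetilde{W}_\kappa(G_1,G_3) \leq C\bigl(\widetilde{W}_\kappa(G_1,G_2) + \widetilde{W}_\kappa(G_2,G_3)\bigr)$ with $C$ depending only on $\kappa$.

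Finally, to see that the constant genuinely exceeds $1$ when the $\kappa_i$ are not all equal, I would exhibit a concrete counterexample with Dirac masses: take $d = 2$, $\kappa = (2,1)$, and three points $\eta_1 = (0,0)$, $\eta_2 = (t,0)$, $\eta_3 = (2t,0)$ in $\mathbb{R}^2$ (mass $1$ each). Then $\widetilde{W}_\kappa(\delta_{\eta_1},\delta_{\eta_3}) = d_\kappa(\eta_1,\eta_3) = (4t^2)^{1/2} = 2t$, while $\widetilde{W}_\kappa(\delta_{\eta_1},\delta_{\eta_2}) + \widetilde{W}_\kappa(\delta_{\eta_2},\delta_{\eta_3}) = (t^2)^{1/2} + (t^2)^{1/2} = 2t$ — here equality holds because the perturbation is along the coordinate with $\kappa_i = \|\kappa\|_\infty$. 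To break the triangle inequality I instead perturb along the \emph{subdominant} coordinate: $\eta_1 = (0,0)$, $\eta_2 = (0,t)$, $\eta_3 = (0,2t)$, giving $d_\kappa(\eta_1,\eta_3) = (2t)^{1/2}$ but $d_\kappa(\eta_1,\eta_2) + d_\kappa(\eta_2,\eta_3) = t^{1/2} + t^{1/2} = 2t^{1/2} < (2t)^{1/2}\cdot\sqrt{2}$, and indeed $(2t)^{1/2} = \sqrt{2}\, t^{1/2} > 2 t^{1/2}$ fails — wait, $\sqrt 2 < 2$, so one should rather note $d_\kappa(\eta_1,\eta_3) = \sqrt2\, t^{1/2}$ while the sum is $2 t^{1/2}$, which does \emph{not} violate it; the genuine violation requires three or more intermediate steps along the subdominant axis, e.g. $\eta_j = (0, (j-1)t)$ for $j = 1,\dots,n+1$, where the direct distance is $(nt)^{1/2}$ but going through all intermediate points gives $n t^{1/2}$, and $(nt)^{1/2} > 2 \cdot \lceil n/2 \rceil^{?}$ — more cleanly, compare $d_\kappa(\eta_1,\eta_{n+1}) = (nt)^{1/2}$ against $d_\kappa(\eta_1,\eta_2)+d_\kappa(\eta_2,\eta_{n+1}) = t^{1/2} + ((n-1)t)^{1/2}$, and since $\sqrt{n} > 1 + \sqrt{n-1}$ is false, one needs the sharper statement that no \emph{single} constant $C$ below some threshold works: for the standard triangle inequality ($C=1$) to fail it suffices that $\sqrt{x+y} > \sqrt{x} + \sqrt{y}$ for some $x,y>0$, which is simply never true — so the precise claim the lemma is making is that $d_\kappa$ (hence $\widetilde W_\kappa$) is not a metric because it fails the triangle inequality along a path with \emph{unequal coordinate displacements}: with $\eta_1=(0,0)$, $\eta_2 = (s, 0)$, $\eta_3 = (s, t)$ we get $d_\kappa(\eta_1,\eta_3) = (s^2 + t)^{1/2}$, while $d_\kappa(\eta_1,\eta_2) + d_\kappa(\eta_2,\eta_3) = (s^2)^{1/2} + (t)^{1/2} = s + \sqrt t$, and choosing $s = t = 1$ gives $\sqrt 2 < 1 + 1 = 2$ (no violation), but choosing $s$ small and $t=1$ gives $(s^2+1)^{1/2} \approx 1 + s^2/2$ versus $s + 1$, still no violation — so in fact the triangle inequality with $C=1$ may actually \emph{hold} for this particular $\kappa$, and the correct reading of the lemma is the weak version with some $C > 1$ that is \emph{not} improvable to $1$ only in a limiting/normalization sense. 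I expect this last delicate point — pinning down exactly in what sense $C=1$ cannot be achieved, versus merely proving the weak inequality — to be the main obstacle, and I would resolve it by carefully following the argument in the referenced proof to see which precise non-improvability statement is actually being asserted, then supplying the minimal counterexample (three colinear Diracs with a single displacement split across coordinates of differing exponents, scaled so the concavity mismatch is visible) that matches it.
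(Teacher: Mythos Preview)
Your plan for the weak triangle inequality --- establish it pointwise for $d_\kappa$ via $|a+b|^{\kappa_i}\le 2^{\kappa_i-1}(|a|^{\kappa_i}+|b|^{\kappa_i})$, then lift to $\widetilde W_\kappa$ by the gluing lemma exactly as in Villani --- is precisely what the paper does, so on the positive part you match the paper's proof.

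The place where you stall, namely producing three points witnessing that the \emph{ordinary} triangle inequality ($C=1$) fails for $d_\kappa$ when the $\kappa_i$ are not all equal, is not a defect in your reasoning: that assertion is actually false, and your repeated failed attempts were pointing you toward the truth. With $r=\|\kappa\|_\infty$, $a_i=|\theta_1^{(i)}-\theta_2^{(i)}|$, $b_i=|\theta_2^{(i)}-\theta_3^{(i)}|$, set $X_i=a_i^{\kappa_i/r}$ and $Y_i=b_i^{\kappa_i/r}$. Since $0<\kappa_i/r\le 1$, subadditivity of $t\mapsto t^{\kappa_i/r}$ gives $(a_i+b_i)^{\kappa_i}\le (X_i+Y_i)^r$; summing over $i$ and applying Minkowski in $\ell_r$ (valid because $r\ge 1$) yields
\[
\sum_i(a_i+b_i)^{\kappa_i}\le \|X+Y\|_r^r\le(\|X\|_r+\|Y\|_r)^r=\bigl(d_\kappa(\theta_1,\theta_2)+d_\kappa(\theta_2,\theta_3)\bigr)^r,
\]
so $d_\kappa(\theta_1,\theta_3)\le d_\kappa(\theta_1,\theta_2)+d_\kappa(\theta_2,\theta_3)$ for \emph{all} $\kappa\in\mathbb N^d$. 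The paper asserts the contrary but offers no explicit counterexample (it reduces to Diracs and then simply claims such $\theta_1,\theta_2,\theta_3$ exist). Thus $d_\kappa$, and by the same gluing argument $\widetilde W_\kappa$, is in fact a genuine metric; the ``only a semi-metric'' sharpening that both you and the paper attempt cannot be established because it is not true.
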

\begin{proof} The proof of this lemma follows the same argument from Chapter 6 in 
\cite{Villani-03}. In particular, it is clear that $\widetilde{W}_{\kappa}(G,G_{0})=
\widetilde{W}_{\kappa}(G_{0},G)$. Additionally, we can verify that $d_{\kappa}$ 
satisfies weak triangle inequality. In particular, we can demonstrate the following simple 
bound
\begin{eqnarray}
d_{\kappa}(\theta_{1},\theta_{2})+d_{\kappa}(\theta_{2},\theta_{3}) \geq C d_{\kappa}(\theta_{1},\theta_{3}) \nonumber
\end{eqnarray}
where $C = \min \limits_{1 \leq i \leq d}{\dfrac{1}{2^{\kappa_{i}-1}}}$. The proof for 
this inequality is straightforward from the application of Cauchy-Schwarz's inequality. 
However, $d_{\kappa}$ does not satisfy the standard triangle inequality as $\kappa_{i}
$ are not all identical.

Now, from the definition of $\widetilde{W}_{\kappa}$ we have the following equivalent representation
\begin{eqnarray}
\widetilde{W}_{\kappa}(G,G_{0})= \inf \left\{\biggr[E d_{\kappa}^{\|\kappa\|_{\infty}}(X,Y)\biggr]^{1/\|\kappa\|_{\infty}}, \ \text{law}(X)=G, \ \text{law}(Y)=G_{0} \right\}. \nonumber
\end{eqnarray}  
For any finitely discrete probability measures $G_{1},G_{2},G_{3}$, let $(X_{1},X_{2})$ 
represent the optimal coupling for $\widetilde{W}_{\kappa}(G_{1},G_{2})$ and $
(Z_{2},Z_{3})$ be optimal coupling for $\widetilde{W}_{\kappa}(G_{2},G_{3})
$. According to Gluing Lemma (cf. page 11 in \cite{Villani-03}), there exists random 
variables $(X_{1}',X_{2}',X_{3}')$ such that $\text{law}(X_{1}',X_{2}')=\text{law}
(X_{1},X_{2})$ and $\text{law}(X_{2}',X_{3}')=\text{law}(Z_{2},Z_{3})$. Therefore, 
we obtain
\begin{eqnarray}
\widetilde{W}_{\kappa}(G_{1},G_{3}) = \biggr[E d_{\kappa}^{\|\kappa\|_{\infty}}(X_{1}',X_{3}')\biggr]^{1/\|\kappa\|_{\infty}} & \leq & \dfrac{1}{C}\biggr[E \biggr(d_{\kappa}(X_{1}',X_{2}')+d_{\kappa}(X_{2}',X_{3}')\biggr)^{\|\kappa\|_{\infty}}\biggr]^{1/\|\kappa\|_{\infty}} \nonumber \\
& \leq & \dfrac{1}{C}\biggr\{\biggr[E d_{\kappa}^{\|\kappa\|_{\infty}}(X_{1}',X_{2}')\biggr]^{1/\|\kappa\|_{\infty}}+ \nonumber \\
& + &  \biggr[E d_{\kappa}^{\|\kappa\|_{\infty}}(X_{2}',X_{3}')\biggr]^{1/\|\kappa\|_{\infty}}\biggr\} \nonumber \\
& = & \dfrac{1}{C} \biggr[\widetilde{W}_{\kappa}(G_{1},G_{2}) + \widetilde{W}_{\kappa}(G_{2},G_{3})\biggr]. \nonumber
\end{eqnarray} 
As a consequence, generalized transportation distance $\widetilde{W}$ satisfies 'weak' 
triangle inequality. To demonstrate that $\widetilde{W}$ does not satisfy the standard 
triangle inequality, we choose $G_{1}=\delta_{\theta_{1}}$, $G_{2}=
\delta_{\theta_{2}}$, and $G_{3}=\delta_{\theta_{3}}$. As there exist $\theta_{1}, 
\theta_{2}$, and $\theta_{3}$ such that $d_{\kappa}$ does not satisfy the standard 
triangle inequality, it directly implies that these $\widetilde{W}$ does not satisfy 
standard triangle inequality with these choices of $G_{1},G_{2}$, and $G_{3}$. We 
achieve the conclusion of the lemma. 
\end{proof}

\begin{lemma} \label{lemma:skewnormaldistribution} Let $\left\{f(x|\theta,\sigma,m),
\theta \in \Theta_{1}, \sigma \in \Theta_{2}, m \in \Theta_{3} \right\}$ be a class of skew 
normal distribution. Denote $v:=\sigma^{2}$, then 
\begin{eqnarray}
\begin{cases}
\dfrac{\partial^{2}{f}}{\partial{\theta}^{2}}(x|\theta,\sigma,m)-2\dfrac{\partial{f}}{\partial{v}}(x|\theta,\sigma,m)+\dfrac{m^{3}+m}{v}\dfrac{\partial{f}}{\partial{m}}(x|\theta,\sigma,m)=0. \\
2m \dfrac{\partial{f}}{\partial{m}}(x|\theta,\sigma,m)+(m^{2}+1)\dfrac{\partial^{2}{f}}{\partial{m^{2}}}(x|\theta,\sigma,m)+2vm\dfrac{\partial^{2}{f}}{\partial{v}\partial{m}}(x|\theta,\sigma,m)=0.
\end{cases}
\nonumber 
\end{eqnarray}
\end{lemma}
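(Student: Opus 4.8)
The plan is to verify both identities by direct computation, after reducing the skew-normal kernel to a function of a single scaled variable so that the cancellations become transparent. Write $v=\sigma^2$ and introduce $y=y(x,\theta,v):=(x-\theta)/\sqrt v$; by the definition in the Example, with $\phi$ the standard normal density and $\Phi$ its distribution function,
\[
f(x\mid\theta,\sigma,m)=\tfrac{2}{\sqrt v}\,\phi(y)\,\Phi(my).
\]
Throughout I would use the elementary relations $\phi'(t)=-t\phi(t)$, $\Phi'(t)=\phi(t)$, the Gaussian product identity $\phi(y)\phi(my)=\tfrac{1}{2\pi}\exp(-(1+m^2)y^2/2)$, and the chain-rule derivatives $\partial y/\partial\theta=-v^{-1/2}$ and $\partial y/\partial v=-y/(2v)$, noting that $m$ enters $f$ only through the argument $my$ of $\Phi$.

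First I would set $\psi(y,m):=\phi(y)\Phi(my)$ and $\chi(y,m):=y\,\phi(y)\phi(my)$ and record the one-variable reductions $\partial_y\psi=-y\psi+m\,\phi(y)\phi(my)$, $\partial_y^2\psi=-\psi-y\,\partial_y\psi-m(1+m^2)\chi$, and $\partial_m\psi=\chi$. Combining these with the chain rule yields closed forms for every derivative appearing in the statement: $\partial^2 f/\partial\theta^2=2v^{-3/2}\partial_y^2\psi$, $\partial f/\partial v=-v^{-3/2}(\psi+y\,\partial_y\psi)$, and $\partial f/\partial m=2v^{-1/2}\chi$. Substituting these into the left-hand side of the first identity and factoring out $2v^{-3/2}$ leaves exactly $\partial_y^2\psi+\psi+y\,\partial_y\psi+m(1+m^2)\chi$, which vanishes by the second reduction above; this proves the first PDE.

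For the second identity I would use $\chi=y\rho$ with $\rho(y,m):=\tfrac{1}{2\pi}e^{-(1+m^2)y^2/2}$ to get $\partial_m\chi=-my^2\chi$ and $\partial_y\chi=\rho\,(1-(1+m^2)y^2)$, hence $\partial^2 f/\partial m^2=-2m y^2 v^{-1/2}\chi$ and, by differentiating $\partial f/\partial m$ in $v$ while tracking the implicit $y$-dependence, $\partial^2 f/\partial v\,\partial m=-v^{-3/2}\chi\,(2-(1+m^2)y^2)$. Inserting these into $2m\,\partial f/\partial m+(m^2+1)\,\partial^2 f/\partial m^2+2vm\,\partial^2 f/\partial v\,\partial m$ and factoring out $2m v^{-1/2}\chi$ reduces the expression to $2-(m^2+1)y^2-(2-(1+m^2)y^2)=0$, which gives the claim.

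There is no genuine obstacle here; the computation is routine. The only point demanding care is the bookkeeping of the chain rule, since both $\theta$ and $v$ act on $f$ through $y$, and in particular the mixed partial $\partial^2 f/\partial v\,\partial m$ must be formed by differentiating $\partial f/\partial m$ with respect to $v$ without forgetting the $v$-dependence hidden in $y$. Expressing everything in terms of the single pair $(\psi,\chi)$ --- equivalently, in terms of $\phi(y)\Phi(my)$ and the Gaussian product kernel $\phi(y)\phi(my)$ --- is precisely what makes the two cancellations immediate.
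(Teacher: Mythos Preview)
Your proof is correct and follows the same approach as the paper: both verify the identities by direct computation of the partial derivatives. The paper simply lists $\partial^{2}f/\partial\theta^{2}$, $\partial f/\partial v$, $\partial f/\partial m$, $\partial^{2}f/\partial m^{2}$, and $\partial^{2}f/\partial v\,\partial m$ explicitly in terms of $(x-\theta)$, $\sigma$, $\Phi$, and the Gaussian density, then states that the two identities follow from these formulas; your version organizes the same calculation more compactly via the substitution $y=(x-\theta)/\sqrt{v}$ and the auxiliary functions $\psi=\phi(y)\Phi(my)$ and $\chi=y\phi(y)\phi(my)$, which makes the cancellations explicit rather than leaving them to the reader.
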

\begin{proof}
Direct calculation yields
\begin{eqnarray}
\dfrac{\partial^{2}{f}}{\partial{\theta^{2}}}(x|\theta,\sigma,m) = \biggr \{\left(-\dfrac{2}{\sqrt{2\pi}\sigma^{3}}+\dfrac{2(x-\theta)^{2}}{\sqrt{2\pi}\sigma^{5}}\right)\Phi\left(\dfrac{m(x-\theta)}{\sigma}\right)- \nonumber \\
\dfrac{2m(m^{2}+2)(x-\theta)}{\sqrt{2\pi}\sigma^{4}}f\left(\dfrac{m(x-\theta)}{\sigma}\right)\biggr \}\exp\left(-\dfrac{(x-\theta)^{2}}{2\sigma^{2}}\right), \nonumber \\
\dfrac{\partial{f}}{\partial{v}}(x|\theta,\sigma,m)  =  \biggr \{\left(-\dfrac{1}{\sqrt{2\pi}\sigma^{3}}+\dfrac{(x-\theta)^{2}}{\sqrt{2\pi}\sigma^{5}}\right)\Phi\left(\dfrac{m(x-\theta)}{\sigma}\right)-\nonumber \\
\dfrac{m(x-\theta)}{\sqrt{2\pi}\sigma^{4}}f\left(\dfrac{m(x-\theta)}{\sigma}\right)\biggr \} \exp\left(-\dfrac{(x-\theta)^{2}}{2\sigma^{2}}\right), \nonumber \\
\dfrac{\partial{f}}{\partial{m}}(x|\theta,\sigma,m)  =  \dfrac{2(x-\theta)}{\sqrt{2\pi}\sigma^{2}}f\left(\dfrac{m(x-\theta)}{\sigma}\right)\exp\left(-\dfrac{(x-\theta)^{2}}{2\sigma^{2}}\right), \nonumber \\
\dfrac{\partial^{2}{f}}{\partial{m^{2}}}(x|\theta,\sigma,m)=\dfrac{-2m(x-\theta)^{3}}{\sqrt{2\pi}\sigma^{4}}f\left(\dfrac{m(x-\theta)}{\sigma}\right)\exp\left(-\dfrac{(x-\theta)^{2}}{2\sigma^{2}}\right), \nonumber \\
\dfrac{\partial^{2}{f}}{\partial{v}\partial{m}}(x|\theta,\sigma,m)=\biggr(-\dfrac{2(x-\theta)}{\sqrt{2\pi}\sigma^{4}}+\dfrac{(m^{2}+1)(x-\theta)^{3}}{\sqrt{2\pi}\sigma^{6}}\biggr)\exp\left(-\dfrac{(x-\theta)^{2}}{2\sigma^{2}}\right). \nonumber
\end{eqnarray}
From these equations, we can easily verify the conclusion of our lemma.
\end{proof}

%%%%%%%%%%%%%%%%%%%%%%%%%%%%%%%%%%%%%%%%%%%%%%%%%
%%%%%%%%%%%%%%%%%%%%%%%%%%%%%%%%%%%%%%%%%%%%%%%%%

%\begin{center}
%\Large{\textbf{Appendix B}}
%\end{center}
%%%%%%%%%%%%%%%%%%%%%%%%%%%%%%%%%%%%%%%%%%%%%%%%%%%%%%%%%%%%%%%%%%%%%%%%%%%%%%%%%%%%%%%%%
%%%%%%%%%%%%%%%%%%%%%%%%%%%%%%%%%%%%%%%%%%%%%%%%%%%%%%%%%%%%%%%%%%%%%%%%%%%%%%%%%%%%%%%%%%%%
%%%%%%%%%%% Exact-fitted cases
%\section{E-mixtures of skewnormal distributions} %%%%%%%%%%%%%%%%%%%%%%%%%%%%%%%%%%%%%%%%%%%%%%%%%%%%%%%%%%%%%%%%%%%%%%%%%%%%%%%%%%%%%%%%%%%%
%%%%%%%%%%%%%%%%%%%%%%%%%%%%%%%%%%%%%%%%%%%%%%%%%%%%%%%%%%%%%%%%%%%%%%%%%%%%%%%%%%%%%%%%%%%%
%%%%%%%%%%%%%%%%%%%%%%%%%%%%%%%%%%%%%%%%%%%%%%%%%%%%%%%%%%%%%%%%%%%%%%%%%%%%%%%%%%%%%%%%%%%%
%%%%%%%%%%%%%%%%%%%%%%%%%%%%%%%%%%%%%%%%%%%%%%%%%%%%%%%%%%%%%%%%%%%%%%%%%%%%%%%%%%%%%%%%%%%%
%\label{Section:exactfitskew}
%\input{skew_normal_exact_fit}

\section{Appendix B: Impacts of singularity index on parameter estimation}

In this appendix we present minimax lower bounds and convergence rates for parameter estimation that arise from mixing measure's singularity index. 
These results underscore the inhomogeneity of the parameter space that make up the mixing measure's atoms, while generalizing the 
theorems established in Section~\ref{Section:likelihood_Wasserstein_neighborhood}.
We start with the following result that lowers bound distances between mixing densities in terms of generalized transportation metric of corresponding mixing measures based on given singularity index.

\begin{theorem} \label{theorem:singularity_connection_liminf_general} 
Let $\Gcal$ be a class of probability measures on $\Theta$ that have a bounded number of
support points, and fix $G_0 \in \Gcal$. Suppose that $\kappa \in \singset(G_{0}|\Gcal)$, let $r=\|\kappa\|_{\infty}$. There hold:
\begin{itemize}
\item [(i)] If $r<\infty$, then $\inf \limits_{G \in \mathcal{G}} \dfrac{\|p_{G}-p_{G_0}\|_\infty}{\widetilde{W}_{\kappa'}^{\|\kappa'\|_{\infty}}
(G,G_0)} > 0$ for any $\kappa' \succeq \kappa$.
\item [(ii)] If $r<\infty$, then $\inf \limits_{G \in \mathcal{G}} \dfrac{V(p_{G},p_{G_0})}{\widetilde{W}_{\kappa'}^{\|\kappa'\|_{\infty}}
(G,G_0)} > 0$ for any $\kappa' \succeq \kappa$.
\end{itemize}
\begin{itemize}
\item [(iii)] If $1 \leq r<\infty$ and in addition,
\begin{itemize}
\item [(a)] $f$ is $(r+1)$-order differentiable with respect to $\eta$ and for 
some constant $c_{0}>0$,
\begin{eqnarray}
\label{cond-integrated}
{\displaystyle \mathop {\sup }\limits_{\|\eta -\eta'\| \leq c_0}
{\int \limits_{x \in \mathcal{X}}{\left(\dfrac{\partial^{r+1}{f}}
{\partial{\eta^{\alpha}}}(x|\eta)\right)^{2}/f(x|\eta')}dx}} <\infty 
\end{eqnarray}
for any $|\alpha|=r+1$. 
\item [(b)] For any $\kappa' \in \mathbb{R}^d$ such that $(1,\ldots,1) \preceq \kappa' \prec \kappa$, there is a sequence of $G \in \mathcal{G}$ tending
to $G_0$ in generalized transportation distance $\widetilde{W}_{\kappa''}$ where $\kappa''_{i}=\floor{\kappa'_{i}}$ for all $1 \leq i \leq d$ and the coefficients of the $\kappa''$-minimal
form $\xi_{l}^{(\kappa'')}(G)$ satisfy $\xi_{l}^{(\kappa'')}(G)/\widetilde{W}_{\kappa'}^{\|\kappa'\|_{\infty}}(G,G_{0}) \to 0$ for all $l=1,\ldots, T_{\kappa''}$. Additionally, all the masses $p_{ij}$ in the representation \eqref{eqn:representation_overfit} of $G$ are bounded away from 0.
\end{itemize}
Then, for any $\kappa' \in \mathbb{R}^d$ such that $(1,\ldots,1) \preceq \kappa' \prec \kappa$, 
\[\liminf \limits_{G \in \mathcal{G}: \widetilde{W}_{\kappa'}(G,G_0)\rightarrow 0} \dfrac{h(p_{G},p_{G_0})}{\widetilde{W}_{\kappa'}^{\|\kappa'\|_{\infty}}
(G,G_0)} = 0.\]
\item[(iv)] If $r=\infty$ and the condition (a) in part (iii) holds for any $l \in 
\mathbb{N}$ (here, the parameter $r$ in these conditions is replaced by $l$) while the condition (b) in part (iii) holds, then the 
conclusion of part (iii) holds for any $\kappa' \in \mathbb{R}^d$ such that $(1,\ldots,1) \preceq \kappa'  \prec \kappa$.
\end{itemize}
\end{theorem}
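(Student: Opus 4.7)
The plan is to mirror the proof of Theorem \ref{theorem:singularity_connection_liminf}, substituting $\kappa$-minimal forms and generalized transportation distance for $r$-minimal forms and standard Wasserstein distance, and exploiting the definition of singularity index (Definition \ref{definition:singularity_index}) at the key contradiction step.

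For parts (i) and (ii), I would argue by contradiction. Suppose the liminf version of the bound fails along a sequence $G_n \in \Gcal$ with $\widetilde{W}_{\kappa'}(G_n,G_0) \to 0$. Fix any $\kappa'$-minimal form per Definition~\ref{definition:rminimal_new} and let $C(G_n) := \max_l |\xi_l^{(\kappa')}(G_n)|/\widetilde{W}_{\kappa'}^{\|\kappa'\|_\infty}(G_n,G_0)$. If $\liminf C(G_n)>0$, divide through by $C(G_n)$ and extract a subsequence whose normalized coefficients converge to a nonzero limit vector; the pointwise (or, for part (ii), Fatou-driven) vanishing of the left-hand side would force a nontrivial linear dependence among the $H_l^{(\kappa')}$, contradicting their linear independence. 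If $\liminf C(G_n)=0$, then all coefficients divided by $\widetilde{W}_{\kappa'}^{\|\kappa'\|_\infty}$ vanish, so $G_0$ is $\kappa'$-singular; but $\kappa'\succeq \kappa\in\singset(G_0|\Gcal)$ rules this out by Definition~\ref{definition:singularity_index}. The upgrade from liminf to infimum handles the regime $\widetilde{W}_{\kappa'}(G,G_0)>\epsilon_0$ via compactness of $\Theta$, Lemma~\ref{lemma:generalized_standard_Wasserstein}, the first-order uniform Lipschitz property of $f$, and identifiability, exactly as in Theorem~\ref{theorem:singularity_connection_liminf}.

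For part (iii), fix $\kappa' \in \mathbb{R}^d$ with $(1,\ldots,1)\preceq \kappa' \prec \kappa$, and set $\kappa''_i=\floor{\kappa'_i}\in\mathbb{N}$. Condition (b) yields a sequence $G\in\Gcal$ with $\widetilde{W}_{\kappa'}(G,G_0)\to 0$ along which every coefficient of a chosen $\kappa''$-minimal form, normalized by $\widetilde{W}_{\kappa'}^{\|\kappa'\|_\infty}(G,G_0)$, vanishes. Apply Taylor expansion of $p_G-p_{G_0}$ to order $\|\kappa''\|_\infty+1$ about each atom of $G_0$, then use the elimination scheme of Section~\ref{sec:r-form} to rewrite the main part as $\sum_l \xi_l^{(\kappa'')}(G)\,H_l^{(\kappa'')}(x)$ plus a remainder $R(x)$. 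Control Hellinger distance via
\begin{equation*}
h^2(p_G,p_{G_0}) \;\leq\; \tfrac{1}{2}\int (p_G-p_{G_0})^2/p_{G_0}\,dx,
\end{equation*}
bound each $\int (H_l^{(\kappa'')})^2/p_{G_0}\,dx$ and $\int R^2/p_{G_0}\,dx$ uniformly using condition (a) and the fact that the lower bound on the masses $p_{ij}$ makes $p_{G_0}(x)$ dominate every $f(x|\eta_i^0)$ up to a constant. The coefficient hypothesis kills the main-sum contribution at the required order, while the remainder, asymptotically a homogeneous semipolynomial of degree $\|\kappa''\|_\infty+1$ in the $\|\Delta\eta_{ij}\|$, is compared against $\widetilde{W}_{\kappa'}^{2\|\kappa'\|_\infty}(G,G_0)$ through Lemma~\ref{lemma:bound_overfit_Wasserstein_first}. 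Since $\|\kappa''\|_\infty+1 > \|\kappa'\|_\infty$ whenever $\kappa'\prec \kappa$, both pieces collapse and $h(p_G,p_{G_0})/\widetilde{W}_{\kappa'}^{\|\kappa'\|_\infty}(G,G_0)\to 0$. Part (iv) follows by applying (iii) at each natural $l$ and diagonalizing over the witness sequences supplied by (b).

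The hard part will be the mismatch in part (iii) between the integer-valued $\kappa''$ controlling the minimal form and the real-valued $\kappa'$ controlling the distance. Concretely, one must verify monomial-by-monomial that the elimination scheme producing the $\kappa''$-minimal form introduces no ``dangerous'' term whose ratio with $\widetilde{W}_{\kappa'}^{\|\kappa'\|_\infty}$ fails to vanish despite the coefficient hypothesis, and that the Taylor remainder is dominated uniformly in $x$ rather than only pointwise. This bookkeeping is essentially the inhomogeneous analogue of the ``greedy extraction'' step carried out in the proof of Theorem~\ref{theorem:generic_setting_omixtures}, and I would organize the argument around the same comparison of weighted degrees $\alpha_1/\kappa'_1+\cdots+\alpha_d/\kappa'_d$ versus $1$ that governed the skew-normal analysis.
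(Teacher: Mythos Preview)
Your proposal is correct and follows essentially the same route as the paper, which explicitly states that the proof ``is similar to that of Theorem~\ref{theorem:singularity_connection_liminf} and Theorem~\ref{theorem:tight}; it is omitted for the brevity of the paper.'' You have accurately reconstructed that transfer: the $C(G)$-dichotomy plus linear-independence contradiction for (i)--(ii), with Definition~\ref{definition:singularity_index} supplying the ``$G_0$ is not $\kappa'$-singular for $\kappa'\succeq\kappa$'' step; and the $h^2\le \tfrac12\int (p_G-p_{G_0})^2/p_{G_0}$ bound plus Taylor expansion for (iii), with the main sum killed by hypothesis~(b) and the remainder by hypothesis~(a) together with the mass lower bound.

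Two minor points worth tightening. First, the Taylor expansion should be taken to order $\|\kappa''\|_\infty$ (not $\|\kappa''\|_\infty+1$), so that the remainder sits at order $\|\kappa''\|_\infty+1$; you already state the remainder degree correctly, so this is only a wording slip. Second, your closing remark about comparing weighted degrees $\sum_l \alpha_l/\kappa'_l$ against $1$ is precisely the right mechanism for the remainder: since $p_{ij}$ are bounded below, $|\Delta\eta_{ij}^{(l)}|^{\kappa'_l}\lesssim D_{\kappa'}(G,G_0)$, hence $\prod_l|\Delta\eta_{ij}^{(l)}|^{\alpha_l}\lesssim D_{\kappa'}^{\sum_l\alpha_l/\kappa'_l}$, and $\sum_l\alpha_l/\kappa'_l\ge (\|\kappa''\|_\infty+1)/\|\kappa'\|_\infty>1$ because $\lfloor x\rfloor+1>x$. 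This is exactly the inhomogeneous analogue of the $|(\Delta\eta_{ij})^\alpha|/\|\Delta\eta_{ij}\|^s\to 0$ step in the proof of Theorem~\ref{theorem:tight}, so no additional ``greedy extraction'' machinery from Theorem~\ref{theorem:generic_setting_omixtures} is actually needed here.
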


The proof of Theorem \ref{theorem:singularity_connection_liminf_general} is similar to that of Theorem \ref{theorem:singularity_connection_liminf} and Theorem \ref{theorem:tight}; it is 
omitted for the brevity of the paper. Now, denote $\nonsingset(\Gcal)=\left\{\kappa \in \mathbb{N}^{d}: \ \exists \ G \in \Gcal \ \text{such that} \ \kappa \in \singset(G|\Gcal) \right\}$. 
Instead of partitioning via singularity levels, we shall use finer structures via singularity indices. Let $\Gcal$ be structured into a sieve of subsets defined by 
the elements of $\nonsingset(\Gcal)$ as follows

\begin{eqnarray*}
\Gcal = \bigcup_{\kappa \in \nonsingset(\Gcal)} \Gcal_{\kappa}, \;\textrm{where}\;\;\;
\Gcal_{\kappa} & := & \biggr \{G \in \Gcal \biggr | \ \exists \ \kappa' \in \singset(G|\Gcal) \ \text{such that} \ \kappa' \preceq \kappa \biggr \}, \textrm{as} \ \kappa \in \nonsingset(\Gcal).\\
\end{eqnarray*}

The following theorem is the counterpart of Theorem~\ref{proposition:convergence_and_minimax}.
\begin{theorem} \label{proposition:convergence_and_minimax_general} 
\begin{itemize}
\item [(a)] Fix $\kappa \in \nonsingset(\Gcal)$. Assume that for any $G_{0} \in 
\Gcal_{\kappa}$, the conditions of part (iii) of Theorem 
\ref{theorem:singularity_connection_liminf_general} hold for $\Gcal_{\kappa}$
(i.e., $\Gcal$ is replaced by $\Gcal_{\kappa}$ in that theorem) and $\kappa'$ satisfies condition (b) of part (iii) of that theorem as long as $\kappa' \in \lev(G_{0}|\Gcal)$. 
Then, for any $\kappa' \in \mathbb{R}^d$ such that $(1,\ldots,1) \preceq \kappa'  \prec \kappa$ there holds
\begin{eqnarray}
\mathop {\inf }\limits_{\widehat{G}_{n} \in \Gcal_\kappa}
\sup_{G_0 \in \Gcal_\kappa} E_{p_{G_{0}}} \widetilde{W}_{\kappa'} (\widehat{G}_{n},G_{0})
\gtrsim n^{-1/2\|\kappa'\|_{\infty}}. \nonumber
\end{eqnarray}
Here, the infimum is taken over all sequences of estimates $\widehat{G}_{n} \in \Gcal_{\kappa}$ 
and $E_{p_{G_{0}}}$ denotes the expectation taken with respect to product measure with mixture density $p_{G_{0}}^{n}$.

\item [(b)] Let $G_0 \in \Gcal_{\kappa}$ for some fixed $\kappa \in \nonsingset(\Gcal)$. Let $\widehat{G}_n 
\in \Gcal_{\kappa}$ be a point estimate for $G_0$, which is obtained from an $n$-sample of i.i.d. observations
drawn from $p_{G_0}$. As long as $h(p_{\widehat{G}_n},p_{G_0}) = O_P(n^{-1/2})$, we have
\begin{eqnarray}
 \widetilde{W}_{\kappa}(\widehat{G}_{n},G_{0}) = O_P(n^{-1/2\|\kappa\|_{\infty}}). \nonumber
\end{eqnarray}
\end{itemize}
\end{theorem}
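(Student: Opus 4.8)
The argument follows the pattern of the proof of Theorem~\ref{proposition:convergence_and_minimax}, with the standard Wasserstein distance $W_r$ replaced by the generalized transportation distance $\widetilde{W}_\kappa$ and with Theorem~\ref{theorem:singularity_connection_liminf} replaced by its refinement, Theorem~\ref{theorem:singularity_connection_liminf_general}. I would dispose of part (b) first, since it is essentially immediate. Because $G_0\in\Gcal_\kappa$, there is a \emph{finite} singularity index $\kappa''\in\singset(G_0|\Gcal)$ with $\kappa''\preceq\kappa$. Applying part (ii) of Theorem~\ref{theorem:singularity_connection_liminf_general} to $\kappa''$ and to the admissible choice $\kappa\succeq\kappa''$ gives a constant $c_0>0$ with $V(p_G,p_{G_0})\ge c_0\,\widetilde{W}_\kappa^{\|\kappa\|_\infty}(G,G_0)$ for every $G\in\Gcal$, hence a fortiori for every $G\in\Gcal_\kappa$. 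Using the standard inequality $V\le\sqrt{2}\,h$, one obtains $\widetilde{W}_\kappa^{\|\kappa\|_\infty}(\widehat G_n,G_0)\le c_0^{-1}V(p_{\widehat G_n},p_{G_0})\lesssim h(p_{\widehat G_n},p_{G_0})=O_P(n^{-1/2})$, and taking $\|\kappa\|_\infty$-th roots yields $\widetilde{W}_\kappa(\widehat G_n,G_0)=O_P(n^{-1/(2\|\kappa\|_\infty)})$. Nothing here is delicate beyond matching constants.

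For part (a), I would fix $\kappa'\in\mathbb{R}^d$ with $(1,\dots,1)\preceq\kappa'\prec\kappa$ (so that $r':=\|\kappa'\|_\infty\le\|\kappa\|_\infty$) and exhibit a least favorable $G_0\in\Gcal_\kappa$. The hypotheses are arranged precisely so that the conclusion of part (iii) of Theorem~\ref{theorem:singularity_connection_liminf_general}, applied with ambient class $\Gcal_\kappa$, is available: there is a sequence $G_m\in\Gcal_\kappa$ with $\delta_m:=\widetilde{W}_{\kappa'}(G_m,G_0)\to 0$ and $h(p_{G_m},p_{G_0})=\omega_m\,\delta_m^{\,r'}$ with $\omega_m\to 0$. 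The remainder is the standard two-point (Le Cam) argument from the proof of Theorem~1.1 of \cite{Ho-Nguyen-Ann-16}: re-using the explicit construction underlying part (iii) (along which the scale $\delta$ may be taken through an arbitrarily fine sequence), for each $n$ one selects the scale $\epsilon_n$ with $n\,h^2\asymp 1$; since $h=o(\epsilon^{r'})$ this forces $\epsilon_n\gtrsim n^{-1/(2r')}$, while $n\,h^2$ bounded makes $p_{G_0}^{n}$ and $p_{G_{m(n)}}^{n}$ $\tfrac12$-indistinguishable in total variation. Le Cam's inequality (the weak triangle inequality of $\widetilde{W}_{\kappa'}$ from Lemma~\ref{lemma:Wasserstein_semimetric} costing only a harmless multiplicative constant) then gives
\[
\inf_{\widehat G_n\in\Gcal_\kappa}\ \sup_{G_0\in\Gcal_\kappa}\ E_{p_{G_0}}\,\widetilde{W}_{\kappa'}(\widehat G_n,G_0)\ \gtrsim\ \epsilon_n\ \gtrsim\ n^{-1/(2\|\kappa'\|_\infty)}.
\]
One may equally replace the two points by a local packing and invoke Fano's inequality; the polynomial rate is unchanged.

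The main obstacle is the bookkeeping in part (a). First, one must check that the sequence $\{G_m\}$ furnished by Theorem~\ref{theorem:singularity_connection_liminf_general}(iii) genuinely lies in the restricted class $\Gcal_\kappa$ (so the minimax problem is really posed over that class) and that the singularity-index hypothesis of that theorem is met over $\Gcal_\kappa$ rather than over $\Gcal$. Second, the scale selection $n\mapsto m(n)$ must be carried out when $m\mapsto\delta_m$ is neither monotone nor onto; this is handled by a subsequence/diagonal argument exactly as in \cite{Ho-Nguyen-Ann-16}. Everything else is a direct transcription of the $W_r$-based arguments behind Theorem~\ref{proposition:convergence_and_minimax}, which is why the original proof could be safely omitted there, and likewise here.
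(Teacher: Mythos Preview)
Your proposal is correct and follows essentially the same approach as the paper. The paper does not give a separate proof of this theorem; it is the direct analogue of Theorem~\ref{proposition:convergence_and_minimax}, whose proof consists of exactly the two observations you make---part (b) from the total-variation lower bound in part (ii) of Theorem~\ref{theorem:singularity_connection_liminf_general} combined with $h\ge V$, and part (a) from the Hellinger upper bound in part (iii) fed into the two-point Le Cam argument of Theorem~1.1 in \cite{Ho-Nguyen-Ann-16}---so your write-up is in fact more detailed than what the paper provides.
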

%Since $\widetilde{W}_{\kappa}$ is a semi-metric that satisfies 'weak' triangle inequality, 
%the main ingredient of proof of part (a) still relies on Le Cam's result (cf. Lemma 1 in \cite{Yu-97}), which can be argued 
%in a similar fashion as that of Theorem 1.1 of \cite{Ho-Nguyen-Ann-16}. Part (b) follows 
%quickly from part (ii) of Theorem \ref{theorem:singularity_connection_liminf_general}. 
We have some remarks regarding Theorem 
\ref{proposition:convergence_and_minimax_general}: (i) If we define $\kappa_{i}
^{(\min)}=\min \limits_{\kappa \in \singset(G_{0}|\Gcal)}{\kappa_{i}}$ for any $1 \leq i 
\leq d$, then the result of part (b) 
%{\color{blue} (a) or (b)?} 
%\nhat{The result of part (b) is about the convergence rate. Therefore, I mean part (b)} 
implies that the 
best possible convergence rate of estimating i-th component of atoms of $G_{0}$, in a local minimax sense, is 
$n^{-1/2\kappa_{i}^{(\min)}}$ as $1 \leq i \leq d$; (ii) The result of part (b) implies that 
if $G_{0}$ is not $\kappa'$-singular relative to $\Gcal$ for some $\kappa' \in \mathbb{N}^{d}$, then we obtain 
\begin{eqnarray}
\widetilde{W}_{\kappa'}(\widehat{G}_{n},G_{0})=O_{P}(n^{-1/2\|\kappa'\|_{\infty}}) \nonumber
\end{eqnarray}
where $\widehat{G}_{n}$ is a sequence of estimates specified in part (b) of Theorem \ref{proposition:convergence_and_minimax_general}.

%%%%%%%%%%%%%%%%%%%%%%%%%%%%%%%%%%%%%%%%%%%%%%%%%%%%%%%%%%%%%%%%%%%%%%%%%%%%%%%%%%%%%%%%%%%%%%%%%%%%%%%%%%%%%%%%%%%%%%%%%%%%%%%%%%%%%%%%%%%%%%%%%%%%%%%%%%%%%%%%%%%%%%%%%%%%%%%%%%%%%%%%%%%%%%%%%%%%%%%%%%%%%%%%%%%%%%%%%%%%%%%%%%%%%%%%%%%%%%%%%%%%%%%%%%%%%%%%%%%%%%%%%%%%%%%%%%%%%%%%%%%%%%%%%%%%%%%%%%%%%%%%%%%%%%%%%%%%%%%%%%%%%%%%%
\newpage

%\begin{center}
%\Large{\textbf{Appendix C}}
%
%\end{center}

\section{Appendix C: Complete inhomogeneity via singularity matrices} \label{Section:singularity_matrix}

 In this appendix, we provide a new notion, namely, singularity matrix, which characterizes the inhomogeneity of convergence behavior of different atoms of a fixed mixing measure under a finite mixture model. 
To simplify the presentation, we will only consider the e-mixtures setting, i.e.,
the number of mixture components is known. Let $G_{0}=G_{0}(\vec{p}^{0},\vec{\eta}^{0})$ 
be a true mixing measure with weights $\vec{p}^{0}$ and atoms $\vec{\eta}^{0}$ 
such that it has $k_{0}$ components. To capture distinct 
convergence rates of atoms of $G_{0}$, we introduce a blocked version of transportation distance
with respect to $G_{0}$ as follows.
Recall that for any $\kappa=(\kappa_{1},\ldots,\kappa_{d}) \in \mathbb{N}^{d}$, we defined
\begin{eqnarray}
d_{\kappa}(\theta_{1},\theta_{2}) : = \biggr(\sum \limits_{i=1}^{d}{|\theta_{1}^{(i)}-\theta_{2}^{(i)}|^{\kappa_{i}}}\biggr)^{1/\| \kappa \|_{\infty}} \nonumber
\end{eqnarray}
for any $\theta_{i}=(\theta_{i}^{(1)},\ldots,\theta_{i}^{(d)}) \in \mathbb{R}^{d}$ as 
$1 \leq i \leq 2$. 
\begin{definition} \label{def:blocked_generalized_Wasserstein}
Given any matrix $K \in \mathbb{N}^{k_{0} \times d}$ and mixing 
measure $G_{0}(\vec{p}^{0},\vec{\eta}^{0})$ with $k_{0}$ components, the blocked 
transportation distance with respect to matrix $K$ and mixing measure $G_{0}
$ is given by
\begin{eqnarray}
\widehat{W}_{K}(G(\vec{p},\vec{\eta}),G_{0}(\vec{p}^{0},\vec{\eta}^{0})) := \biggr (\inf \sum_{i,j} q_{ij} 
d_{K_{j}}^{\| K_{j} \|_{\infty}}(\eta_i ,\eta_j^{0}) \biggr )^{1/\| K \|_{\infty}} \nonumber
\end{eqnarray}
for any probability measure $G=G(\vec{p},\vec{\eta})$ with weights $\vec{p}$ and 
atoms $\vec{\eta}$ such that it has exactly $k_{0}$ components. The infimum in the 
above formulation is taken over all couplings $\vec{q}$ between $\vec{p}$ and $
\vec{p}^0$, while $K_{j}$ is the $j$-th row of matrix $K$ for any $1 \leq j \leq k_{0}$ 
and $\|K\|_{\infty} := \max \limits_{1 \leq i \leq k_{0}}{\left\{\|K_{i}\|_{\infty}\right\}}$. 
\end{definition}
Henceforth in this appendix, for any matrix $K$ we denote $K_{i}$ as its $i$-th row and 
$K_{ij}$ as its element in the $i$-th row and $j$-th column. Additionally, for any two 
matrices $K, K' \in \mathbb{N}^{k_{0} \times d}$, we denote $K \preceq K'$ if $K_{i} 
\preceq K_{i}'$ for all $1 \leq i \leq k_{0}$. In general, the blocked transportation
distance is not a metric as it is not symmetric except when all the columns of $K$ are 
identical. Furthermore, it also does not satisfy the standard triangle inequality except when 
all the elements of $K$ are identical. We have the following result relating blocked 
transportation distance to generalized transportation distance and standard 
Wasserstein distance.

\begin{proposition} \label{proposition:blocked_generalized_Wasserstein}
Let $K \in \mathbb{N}^{k_{0} \times d}$ be any matrix. Then, we have
\begin{eqnarray}
\widehat{W}_{K}(G,G_{0}) \gtrsim \widetilde{W}_{\kappa}(G,G_{0}) \gtrsim W_{\|\kappa\|_{\infty}}(G,G_{0}) \nonumber
\end{eqnarray}
for any $\kappa$ such that $K_{i} \preceq \kappa$ for all $1 \leq i \leq d$ and $\|K\|_{\infty}=\|\kappa\|_{\infty}$. The first inequality holds when $K_{i}$ are all equal to $\kappa$.
The second inequality holds when all the elements of $K$ are equal to $\|\kappa\|_{\infty}$.
\end{proposition}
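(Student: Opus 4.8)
\textbf{Proof proposal for Proposition~\ref{proposition:blocked_generalized_Wasserstein}.}

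The plan is to establish the two inequalities separately by pointwise comparison of the cost functions that define the three transportation distances, and then invoke the fact that all three are infima of the same shape over the same set of couplings. First I would recall that, fixing a coupling $\vec{q} = (q_{ij})$ between $\vec{p}$ and $\vec{p}^0$, the blocked distance uses the per-pair cost $d_{K_j}^{\|K_j\|_\infty}(\eta_i,\eta_j^0)$ (raised overall to the power $1/\|K\|_\infty$), the generalized transportation distance $\widetilde{W}_\kappa$ uses the per-pair cost $d_\kappa^{\|\kappa\|_\infty}(\eta_i,\eta_j^0)$ (raised to $1/\|\kappa\|_\infty$), and $W_{\|\kappa\|_\infty}$ uses $\|\eta_i-\eta_j^0\|_{\|\kappa\|_\infty}^{\|\kappa\|_\infty}$ (raised to $1/\|\kappa\|_\infty$). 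Since $\|K\|_\infty = \|\kappa\|_\infty =: r$ by hypothesis, all three outer exponents coincide, which reduces the problem to comparing the summed costs $\sum_{i,j} q_{ij}(\cdot)$ under any fixed coupling and, importantly, under the \emph{same} exponent $r$.

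For the second inequality $\widetilde{W}_\kappa(G,G_0) \gtrsim W_r(G,G_0)$, this is exactly Lemma~\ref{lemma:generalized_standard_Wasserstein}, which already gives $\widetilde{W}_\kappa(G,G') \gtrsim W_r(G,G')$ whenever $\|\kappa\|_\infty = r \ge 1$, with equality when all $\kappa_i = r$; so nothing new is needed there and I would simply cite it. For the first inequality $\widehat{W}_K(G,G_0) \gtrsim \widetilde{W}_\kappa(G,G_0)$, the key step is the pointwise bound: for each $j$, since $K_j \preceq \kappa$ componentwise and the support of $\Theta$ is bounded (so each coordinate difference $|\theta_1^{(i)} - \theta_2^{(i)}|$ is uniformly bounded, say by a constant $D$), we have $|\theta_1^{(i)}-\theta_2^{(i)}|^{K_{ji}} \gtrsim |\theta_1^{(i)}-\theta_2^{(i)}|^{\kappa_i}$ whenever $K_{ji} \le \kappa_i$ — because raising a number in $[0,D]$ to a \emph{smaller} exponent only makes it larger, up to the constant $\max(1, D^{\kappa_i - K_{ji}})$. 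Summing over the $d$ coordinates gives $\sum_{i=1}^d |\theta_1^{(i)}-\theta_2^{(i)}|^{K_{ji}} \gtrsim \sum_{i=1}^d |\theta_1^{(i)}-\theta_2^{(i)}|^{\kappa_i}$, i.e. $d_{K_j}^{\|K_j\|_\infty}(\eta_i,\eta_j^0) \gtrsim d_\kappa^{\|\kappa\|_\infty}(\eta_i,\eta_j^0)$ once we account for $\|K_j\|_\infty = \|\kappa\|_\infty = r$. Multiplying by $q_{ij} \ge 0$ and summing over $(i,j)$ preserves the inequality; taking infima over couplings and then the $r$-th root (a monotone operation) yields $\widehat{W}_K(G,G_0) \gtrsim \widetilde{W}_\kappa(G,G_0)$.

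For the equality cases: if every row $K_i$ equals $\kappa$, then $d_{K_j} \equiv d_\kappa$ for all $j$ and the two cost functions literally coincide, forcing $\widehat{W}_K = \widetilde{W}_\kappa$; the second equality case ($K_{ij} \equiv r$) is the equality case of Lemma~\ref{lemma:generalized_standard_Wasserstein} combined with the first equality. The main obstacle — though it is more bookkeeping than a genuine difficulty — is being careful that the "$\gtrsim$" constants are uniform: they depend only on the diameter of $\Theta$ and on the finitely many exponents $\kappa_i - K_{ji}$, all of which are fixed within the setting, so this is consistent with the paper's convention for "$\gtrsim$". I would also need to note explicitly that the asymptotic "$\gtrsim$" is meant as $\widehat{W}_K(G,G_0) \downarrow 0$ (equivalently $\widetilde{W}_\kappa(G,G_0)\downarrow 0$), so that the bounded-diameter argument on the small coordinate differences is the relevant regime, although in fact the comparison holds globally on a compact $\Theta$ with the same constants.
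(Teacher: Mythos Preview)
Your approach is correct and is the natural one: the paper does not include a proof of this proposition (it is stated without proof in Appendix~C, mirroring the statement of Lemma~\ref{lemma:generalized_standard_Wasserstein}, which is also stated without proof). Your strategy of termwise comparison of the cost functions under a fixed coupling, followed by taking infima and the $r$-th root, is exactly the expected argument.

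One small slip worth flagging: you write ``once we account for $\|K_j\|_\infty = \|\kappa\|_\infty = r$'', but the hypothesis only gives $\|K\|_\infty = \max_j \|K_j\|_\infty = r$; individual rows may well have $\|K_j\|_\infty < r$. Fortunately this does not matter for your argument, because by definition $d_{K_j}^{\|K_j\|_\infty}(\theta_1,\theta_2) = \sum_{l=1}^d |\theta_1^{(l)}-\theta_2^{(l)}|^{K_{jl}}$ regardless of the value of $\|K_j\|_\infty$, so the comparison $\sum_l |\cdot|^{K_{jl}} \gtrsim \sum_l |\cdot|^{\kappa_l}$ (valid on the bounded $\Theta$ since $K_{jl} \le \kappa_l$) is already the right object to feed into $\sum_{i,j} q_{ij}(\cdot)$. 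Just delete that clause and the argument is clean.
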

The matrix $K$ utilized in the definition of blocked transportation distance helps to 
capture fine-grained differences in convergence behavior toward individual components
of each distinct atom of the true mixing measure $G_{0}$. 
In particular, assume that a sequence of probability measures $G_n \in \Ecal_{k_{0}}$ 
tending to $G_0$ under $\widehat{W}_K$ distance at a rate $\omega_n = o(1)$ 
for some matrix $K \in \mathbb{N}^{k_{0} \times d}$. Similar to the interpretation with generalized transportation 
metric, the $i$-th atom of $G_0$ receives a converging sequence from an atom of $G_n$, 
to be also labeled $i$, at the rate $(\omega_n)^{\|K\|_{\infty}/\|K_{i}\|_{\infty}}$ under $d_{K_{i}}$ 
semi-metric. Hence, the $j$-th component of $i$-th atom of $G_{n}$ converges to the $j
$-th component of corresponding atom of $G_{0}$ at rate $(\omega_{n})^{\|K\|
_{\infty}/K_{ij}}$ for any $1 \leq i \leq k_{0}$ and $1 \leq j \leq d$.

Similar to approaches to the singularity level and singularity index, we are also interested in
analyzing the behavior of likelihood function $p_{G}$ as $G \in \Ecal_{k_{0}}$ 
varies in a neighborhood of $G_{0}$ according to the blocked transportation distance.
To do so we also develop notions corresponding to our choice of blocked transportation distance. 
To avoid the ambiguity in our argument, we will 
repeat the key steps from establishing singularity level and singularity index in
developing the key notions. In particular, for 
any fixed matrix $K \in \mathbb{N}^{k_{0} \times d}$ we consider a sequence $G_{n} 
\in \Ecal_{k_{0}}$ such that $\widehat{W}(G_{n},G_{0}) \to 0$. We can argue that up 
to a permutation of atoms' labels, $G_{n}$ can be represented as 
\eqref{eqn:representation_overfit} where $\overline{l}=0$ and $s_{i}=1$ for all $1 \leq 
i \leq k_{0}$, i.e., $G_{n} = \sum \limits_{i=1}^{k_{0}}{p_{i}^{n}\delta_{\eta_{i}
^{n}}}$.  To avoid notational cluttering, we 
also drop $n$ from the superscript when the context is clear. Now, denote $\|K\|_{\infty}=r$. By carrying Taylor expansion up to $r$-th order, we achieve that
\begin{eqnarray}
%\label{Eqn-ratio_generalized_Wasserstein_first}
\frac{p_{G}(x)-p_{G_{0}}(x)}{\widehat{W}_{K}^{r}(G,G_{0})}
=
\sum_{|\alpha|=1}^{r}
\sum_{i=1}^{k_{0}}
\biggr (\frac{p_{i}(\Delta \eta_{i})^{\alpha}/\alpha!}
{\widehat{W}_{K}^{r}(G,G_{0})}\biggr)
\frac{\partial^{|\alpha|}f}{\partial \eta^{\alpha}}
(x|\eta_i^0)
+ \sum_{i=1}^{k_0} \frac{\Delta p_{i}}{\widehat{W}_{K}^{r}(G,G_{0})}
f(x|\eta_{i}^{0}) +o(1) \nonumber
\end{eqnarray}
as long as $f$ is uniform Lipschitz up to order $r$. The above representation motivates the following definition of matrix minimal form
\begin{definition} \label{definition:minimal_matrix}
For any $K \in \mathbb{N}^{k_{0} \times d}$, the following representation is called $K$-\emph{minimal} form of
the mixture likelihood for a sequence of mixing measures $G$ tending to $G_0$
in $\widehat{W}_{K}$ distance:
\begin{eqnarray}
\frac{p_{G}(x)-p_{G_{0}}(x)}{\widehat{W}_K^{\|K\|_{\infty}}(G,G_{0})} = 
\sum_{l=1}^{T_K}
\biggr (\frac{\xi_{l}^{(K)}(G)}{\widehat{W}_K^{\|K\|_{\infty}}(G,G_{0})} \biggr ) H_{l}^{(K)}(x) + o(1), 
\label{eqn:generallinearindependencerepresentation_matrix}
\end{eqnarray}
which holds for all $x$, with the index $l$ ranging from 1 to a finite $T_K$,
if 
\begin{itemize}
\item [(1)] $H_{l}^{(K)}(x)$ for all $l$ are linearly independent functions of $x$, and
\item [(2)] coefficients $\xi_{l}^{(K)}(G)$ are polynomials of
the components of $\Delta \eta_{ij}$, and $\Delta p_{i\cdot}, p_{ij}$.
\end{itemize}
\end{definition}
It is clear that $K$-minimal form is a general version of $\kappa$-minimal form for any $\kappa \in \mathbb{N}^{d}$ when $K_{i}=\kappa$ for all $1 \leq i \leq k_{0}$. Similar to $\kappa$-minimal form in Section \ref{Section:inhomogeneity_singularity}, matrix $K$-minimal form leads to our notion of matrix singularity, which we
now define.
\begin{definition}\label{def-singular_matrix}
For any $K \in \mathbb{N}^{k_{0} \times d}$, we say that $G_{0}$ is $K$-singular relative to $\Ecal_{k_{0}}$, if $G_0$ admits a $K$-minimal form 
given by Eq. \eqref{eqn:generallinearindependencerepresentation_matrix}, according to
which there exists a sequence of $G \in \Ecal_{k_{0}}$ tending to $G_0$ under $\widehat{W}_{K}$ distance
such that
\[\xi_l^{(\kappa)}(G)/\widehat{W}_{K}^{\|K\|_{\infty}}(G,G_0) \rightarrow 0 \;\textrm{for all}\; l=1,\ldots,T_K.\]
\end{definition}
We note that the limiting behavior of ratios (coefficients) $\xi_l^{(K)}(G)/\widehat{W}_{K}^{\|K\|
_{\infty}}(G,G_0)$ is generally more challenging to investigate than those of 
coefficients of $\kappa$-minimal form in Definition \ref{def-rsingular_set} and $r$-minimal 
form in Definition \ref{def-rsingular}, due to the complex nature of blocked transportation distance
 $\widehat{W}_{K}^{\|K\|_{\infty}}(G,G_0)$. In particular, we can 
demonstrate that as $\widehat{W}_K^{\|K\|_{\infty}}(G,G_0)  \to 0$, $
\widehat{W}_K^{\|K\|_{\infty}}(G,G_0) \asymp D_K(G,G_0)$ where
\begin{eqnarray}
D_K(G,G_0) := 
\sum_{i=1}^{k_{0}} p_{i}
d_{K_{i}}^{\|K_{i}\|_{\infty}}(\eta_{i},\eta_{i}^{0}) + \sum_{i=1}^{k_0} |\Delta p_{i}| . \nonumber
\end{eqnarray}
As $\widehat{W}_{K}^{\|K\|_{\infty}}(G,G_0)$ is asymptotically equivalent to a rather
complicated inhomogeneous semipolynomial form, the vanishing of ratios $\xi_l^{(K)}(G)/
\widehat{W}_{K}^{\|K\|_{\infty}}(G,G_0)$ will be difficult to fathom if the values of 
vector $K_{i}$ are very different.
Similar to singularity levels and singularity indices, the matrix 
singularity notion also possesses a crucial monotonic property in terms of a partial order of matrices:
\begin{lemma} \label{lemma:minimal_form_matrix}
(a) (Invariance) The existence of the sequence of $G$ in the statement of Definition~\ref{def-singular_matrix}
holds for all $K$-minimal forms once it holds for at least one $K$-minimal form.

(b) (Monotonicity) If $G_0$ is $K$-singular to $\Ecal_{k_{0}}$ for some $K \in \mathbb{N}^{k_{0} \times d}$, then $G_0$ is $K'$-singular for any $K' \preceq K$.
\end{lemma}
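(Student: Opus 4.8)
The plan is to mirror the proofs of Lemma~\ref{lemma:minimal_form} and Lemma~\ref{lemma:minimal_form_general}, since $K$-singularity is the matrix-indexed analogue of $r$- and $\kappa$-singularity; the only genuinely new ingredient is the asymptotic behaviour of the blocked transportation distance $\widehat{W}_K$, which has already been recorded in the form $\widehat{W}_K^{\|K\|_{\infty}}(G,G_0)\asymp D_K(G,G_0)$ as $\widehat{W}_K(G,G_0)\downarrow 0$.

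For part (a), I would first note that if a sequence of $G\in\Ecal_{k_{0}}$ tending to $G_0$ in $\widehat{W}_K$ satisfies $\xi_l^{(K)}(G)/\widehat{W}_K^{\|K\|_{\infty}}(G,G_0)\to 0$ for every $l$ in one $K$-minimal form, then $(p_G(x)-p_{G_0}(x))/\widehat{W}_K^{\|K\|_{\infty}}(G,G_0)\to 0$ for almost all $x$. Taking any other $K$-minimal form evaluated along the same sequence and setting $C(G):=\max_l |\xi_l^{(K)}(G)|/\widehat{W}_K^{\|K\|_{\infty}}(G,G_0)$, I would argue by contradiction: if $\liminf C(G)>0$, dividing representation~\eqref{eqn:generallinearindependencerepresentation_matrix} by $C(G)$ yields a linear combination of the $H_l^{(K)}$ with coefficients bounded by $1$, one of which equals $1$ along a subsequence, that tends to $0$; passing to a further subsequence along which all these coefficients converge contradicts the linear independence of the $H_l^{(K)}$. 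Hence $\liminf C(G)=0$, so after passing to a subsequence all ratios of the second $K$-minimal form vanish. This is exactly the contradiction argument of Lemma~\ref{lemma:minimal_form}(a) and presents no real difficulty.

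For part (b), I would restrict the basis functions to partial derivatives of $f$ up to the relevant order, as permitted by the invariance just proved, and fix a sequence of $G$ witnessing $K$-singularity. Since $K'\preceq K$ and the perturbations $\Delta\eta_{i},\Delta p_{i}$ vanish, each monomial $|\Delta\eta_i^{(j)}|^{K'_{ij}}$ eventually dominates $|\Delta\eta_i^{(j)}|^{K_{ij}}$, whence $D_{K'}(G,G_0)\gtrsim D_K(G,G_0)$; combined with the two asymptotic equivalences this gives $\widehat{W}_{K'}^{\|K'\|_{\infty}}(G,G_0)\gtrsim \widehat{W}_K^{\|K\|_{\infty}}(G,G_0)$ and also $\widehat{W}_{K'}(G,G_0)\to 0$. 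When $\|K'\|_{\infty}=\|K\|_{\infty}$, the elimination procedure produces the same reduced list of linearly independent partial derivatives, so a $K'$-minimal form may be chosen with $\xi_l^{(K')}(G)=\xi_l^{(K)}(G)$, and dividing by the larger denominator gives the desired vanishing. When $\|K'\|_{\infty}<\|K\|_{\infty}$, I would run the two-case analysis of Lemma~\ref{lemma:minimal_form}(b)/Lemma~\ref{lemma:minimal_form_general}(b): either each coefficient of the $K'$-minimal form coincides with one of the $K$-minimal form (and the conclusion follows from the denominator inequality), or a lower-order coefficient is obtained from a higher-order one by removing elimination monomials $p_{i}(\Delta\eta_{i})^{\kappa}$, each of which is $o(D_{K'}(G,G_0))=o(\widehat{W}_{K'}^{\|K'\|_{\infty}}(G,G_0))$; together these show $\xi_l^{(K')}(G)/\widehat{W}_{K'}^{\|K'\|_{\infty}}(G,G_0)\to 0$ for all $l$, i.e.\ $G_0$ is $K'$-singular relative to $\Ecal_{k_{0}}$.

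The main obstacle is the bookkeeping in part (b): one must verify carefully that, under the elimination scheme producing the $K'$-minimal form, the extra monomials introduced when a higher-order partial derivative is expressed through lower-order ones are genuinely negligible relative to $D_{K'}(G,G_0)$, despite the inhomogeneous weighting of the different parameter blocks encoded by $K'$. Once the comparison $D_{K'}\gtrsim D_K$ and this negligibility are in hand, the remainder is a routine adaptation of the earlier monotonicity arguments.
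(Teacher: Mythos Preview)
Your proposal is correct and follows exactly the approach the paper intends: the paper does not write out a separate proof of Lemma~\ref{lemma:minimal_form_matrix}, treating it as a direct adaptation of the proofs of Lemma~\ref{lemma:minimal_form} and Lemma~\ref{lemma:minimal_form_general}, and your sketch reproduces those arguments in the $K$-indexed setting with the appropriate replacement of $W_r^r$ and $\widetilde{W}_\kappa^{\|\kappa\|_\infty}$ by $\widehat{W}_K^{\|K\|_\infty}\asymp D_K$. The comparison $D_{K'}\gtrsim D_K$ when $K'\preceq K$ and the negligibility of the extra elimination monomials $p_i(\Delta\eta_i)^\beta$ with $|\beta|=\|K\|_\infty$ relative to $D_{K'}$ (using that each $|\Delta\eta_i^{(j)}|\to 0$ in the e-mixture setting) go through exactly as you outline, so there is no genuine gap.
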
 
%Remind that $\overline{\mathbb{N}}=\mathbb{N} \cup \left\{\infty\right\}$. 
The monotonicity of $K$-singularity leads to the following notion of
singularity matrix of a mixing measure $G_0$ relative to an ambient space $\Ecal_{k_{0}}$.

\begin{definition} \label{definition:singularity_matrix} 
For any $K \in \overline{\mathbb{N}}^{k_{0} \times d}$, we say $K$ is a singularity 
matrix of $G_{0}$ relative to a given class $\Ecal_{k_{0}}$ if and only if $G_{0}$ is 
$K'$-singular relative to $\Ecal_{k_{0}}$ for any $K' \prec K$, and there is no $K' 
\succeq K$ such that $G_{0}$ remains $K'$-singular relative to $\Ecal_{k_{0}}$.
\end{definition}
Denote $\singmat(G_{0}|\Ecal_{k_{0}})= \left\{K \in \mathbb{N}^{k_{0} \times d}: \ 
K \ \text{is singularity matrix of} \ G_{0} \right\}$, i.e., the set of all singularity 
matrices of $G_{0}$ relative to $\Ecal_{k_{0}}$. The significance of singularity matrix 
notion can be summarized by the following results
\begin{theorem} \label{theorem:singularity_connection_liminf_convergence_rate_matrix} 
Fix $G_0 \in \Ecal_{k_{0}}$. Take a $K \in \singmat(G_{0}|\Ecal_{k_{0}})$, let $r=\|K\|_{\infty}$. 
\begin{itemize}
\item [(i)] If $r<\infty$, then $\inf \limits_{G \in \mathcal{G}} \dfrac{\|p_{G}-p_{G_0}\|_\infty}{\widehat{W}_{K'}^{\|K'\|_{\infty}}
(G,G_0)} > 0$ for any $K' \succeq K$.
\item [(ii)] If $r<\infty$, then $\inf \limits_{G \in \mathcal{G}} \dfrac{V(p_{G},p_{G_0})}{\widehat{W}_{K'}^{\|K'\|_{\infty}}
(G,G_0)} > 0$ for any $K' \succeq K$.
\item [(iii)] Let $\widehat{G}_n 
\in \Ecal_{k_{0}}$ be a point estimate for $G_0$, which is obtained from an $n$-sample of i.i.d. observations
drawn from $p_{G_0}$. As long as $h(p_{\widehat{G}_n},p_{G_0}) = O_P(n^{-1/2})$ and $r<\infty$, we obtain
\begin{eqnarray}
 \widehat{W}_{K'}(\widehat{G}_{n},G_{0}) = O_P(n^{-1/2\|K'\|_{\infty}}). \nonumber
\end{eqnarray}
for any $K' \succeq K$.
\end{itemize}
\end{theorem}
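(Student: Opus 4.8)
The plan is to run, \emph{mutatis mutandis}, the proofs of Theorem~\ref{theorem:singularity_connection_liminf} and Theorem~\ref{proposition:convergence_and_minimax}, replacing the Wasserstein metric $W_s$ by the blocked transportation distance $\widehat{W}_{K'}$ and $s$-singularity by $K'$-singularity. The one structural input is that, because $K\in\singmat(G_0|\Ecal_{k_0})$ and $K'\succeq K$, Definition~\ref{definition:singularity_matrix} forces $G_0$ to be \emph{not} $K'$-singular relative to $\Ecal_{k_0}$. The supporting technical facts, all recorded in Appendix~C, are: the asymptotic equivalence $\widehat{W}_{K'}^{\|K'\|_\infty}(G,G_0)\asymp D_{K'}(G,G_0)$ as $\widehat{W}_{K'}(G,G_0)\downarrow 0$; the existence of a $K'$-minimal form~\eqref{eqn:generallinearindependencerepresentation_matrix} (with uniform Taylor remainder) whose basis functions $H_l^{(K')}$ are linearly independent and whose coefficients $\xi_l^{(K')}$ are the relevant semipolynomials; and the invariance property of Lemma~\ref{lemma:minimal_form_matrix}(a), which lets us argue with one convenient choice of $K'$-minimal form.

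For part~(i), I would first show $\liminf_{G\to G_0}\|p_G-p_{G_0}\|_\infty/\widehat{W}_{K'}^{\|K'\|_\infty}(G,G_0)>0$ by contradiction: along a sequence $G\to G_0$ on which the ratio vanishes, set $C(G)=\max_l|\xi_l^{(K')}(G)|/\widehat{W}_{K'}^{\|K'\|_\infty}(G,G_0)$; if $\liminf C(G)=0$ then some subsequence makes every normalized coefficient vanish, so $G_0$ would be $K'$-singular, which is impossible; hence $\liminf C(G)>0$, and dividing the $K'$-minimal form by $C(G)$ and passing to a subsequence on which the bounded normalized coefficients converge --- one of them to $\pm1$ --- produces a nontrivial vanishing linear combination of the $H_l^{(K')}$, contradicting their linear independence. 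The uniform lower bound over all of $\Ecal_{k_0}$ then follows the compactness step of Theorem~\ref{theorem:singularity_connection_liminf}: on $\{\widehat{W}_{K'}(G,G_0)>\epsilon_0\}$ extract a parameter-space limit $G^*\in\Ecal_{k_0}$ using compactness of $\Omega$; continuity of $(\vec{p},\vec{\eta})\mapsto\widehat{W}_{K'}(G,G_0)$ gives $\widehat{W}_{K'}(G^*,G_0)\ge\epsilon_0$, so $G^*\ne G_0$; boundedness of $\widehat{W}_{K'}^{\|K'\|_\infty}$ on $\Omega$ forces $\|p_{G'}-p_{G_0}\|_\infty\to0$, the first-order uniform Lipschitz condition gives $p_{G'}(x)\to p_{G^*}(x)$ for every $x$, hence $p_{G^*}=p_{G_0}$ almost everywhere and $G^*=G_0$ by identifiability --- a contradiction. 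Part~(ii) is the verbatim analogue with $\|\cdot\|_\infty$ replaced by $V(\cdot,\cdot)$, the contradiction now extracted through Fatou's lemma exactly as in the proof of Theorem~\ref{theorem:singularity_connection_liminf}(ii).

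Part~(iii) is then immediate in the spirit of Theorem~\ref{proposition:convergence_and_minimax}(b): applying part~(ii) with $\mathcal{G}=\Ecal_{k_0}$ to the estimate $\widehat{G}_n$ gives $\widehat{W}_{K'}^{\|K'\|_\infty}(\widehat{G}_n,G_0)\lesssim V(p_{\widehat{G}_n},p_{G_0})\lesssim h(p_{\widehat{G}_n},p_{G_0})=O_P(n^{-1/2})$, and taking $\|K'\|_\infty$-th roots yields $\widehat{W}_{K'}(\widehat{G}_n,G_0)=O_P(n^{-1/2\|K'\|_\infty})$. The genuinely delicate point --- everything else being the bookkeeping already done for $r$- and $\kappa$-minimal forms --- is the compactness step: since $\widehat{W}_{K'}$ is only a semi-metric and is moreover anchored to the fixed base measure $G_0$ rather than being a metric on pairs of measures, the ``extract $G^*$'' argument must be carried out directly in the compact parameter space $\Omega$, relying on continuity of $(\vec{p},\vec{\eta})\mapsto\widehat{W}_{K'}(G,G_0)$ and of $(\vec{p},\vec{\eta})\mapsto p_G(x)$ rather than on triangle-inequality manipulations, and one must invoke only the asymptotic relation $\widehat{W}_{K'}^{\|K'\|_\infty}\asymp D_{K'}$ and boundedness on $\Omega$, never any homogeneity of $\widehat{W}_{K'}^{\|K'\|_\infty}$, which fails when the rows of $K'$ differ.
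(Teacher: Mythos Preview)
The paper does not give an explicit proof of this theorem; it is stated in Appendix~C and left unproved, in the same spirit as the analogous Theorem~\ref{theorem:singularity_connection_liminf_general} in Appendix~B, whose proof the paper says ``is similar to that of Theorem~\ref{theorem:singularity_connection_liminf} and Theorem~\ref{theorem:tight}; it is omitted for the brevity of the paper.'' Your proposal correctly reconstructs the intended argument --- the contradiction via $C(G)$, the linear-independence step, the Fatou step for (ii), and the chaining $h\geq V\gtrsim \widehat{W}_{K'}^{\|K'\|_\infty}$ for (iii) --- and your identification of the one genuinely new wrinkle, namely that $\widehat{W}_{K'}$ is anchored at the fixed base $G_0$ and is not a metric on pairs, so the compactness extraction of $G^*$ must be carried out in the parameter space $\Omega$ rather than by a triangle-inequality argument, is apt and goes slightly beyond what the paper spells out.
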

Let $K_{ij}^{(\min)}=\min \limits_{K \in \singmat(G_{0}|\Ecal_{k_{0}})}
{K_{ij}}$ for any $1 \leq i \leq k_{0}$ and $1 \leq j \leq d$. Part (iii) provides a
guarantee for the estimation rate of the $j$-th component of $i$-th atom of $G_{0}$ 
to be at most $n^{-1/2K_{ij}^{(\min)}}$.

\subsection{Examples of singularity matrices with e-mixtures}
\label{Section:example_singularity_matrix}
In this section, we provide several examples of singularity matrices of e-mixtures models 
that we have studied thus far in the paper, including e-mixtures of first order identifiable 
kernels, Gamma e-mixtures, and skew-normal e-mixtures. 
The proofs of these results are quite similar to those for singularity levels and indices,
and will be omitted for the brevity of the paper.

\paragraph{E-mixtures of first order identifiable kernels} As studied by 
\cite{Ho-Nguyen-EJS-16}, the first order identifiability of kernel density $f$ means that the 
collection of $\left\{\partial^\kappa f/\partial \eta^\kappa(x|\eta_j)
| j=1,\ldots, k_0; |\kappa| \right. \\ \left. \leq 1 \right\}$ evaluated at $G_{0}$ are
linearly independent. 
It is not difficult to establish the following result regarding singularity matrices of 
$G_{0}$ under first order identifiability condition of $f$.
\begin{proposition} \label{proposition:strong_identifiability_singularity_matrix}
Assume that $f$ is first order identifiable and admits uniform Lipschitz condition up to 
the first order. Then, $K=\vec{1}_{k_{0} \times d}$ is the unique element of $
\singmat(G_{0}|\Ecal_{k_{0}})$ where $\vec{1}_{k_{0} \times d}$ is the matrix with 
all elements to be 1.
\end{proposition}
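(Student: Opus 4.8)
The plan is to show that under first-order identifiability, the only singularity matrix is $K = \vec{1}_{k_0 \times d}$, by verifying two things: (i) $G_0$ is $\vec{1}_{k_0 \times d}$-singular, and (ii) $G_0$ is not $K'$-singular for any $K' \succ \vec{1}_{k_0 \times d}$. Combined with the monotonicity property of Lemma \ref{lemma:minimal_form_matrix}(b), these two facts imply that $\vec{1}_{k_0 \times d}$ is the unique element of $\singmat(G_0|\Ecal_{k_0})$, matching exactly the structure of Definition \ref{definition:singularity_matrix}.

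For step (i), I would construct the $\vec{1}_{k_0\times d}$-minimal form directly from the first-order Taylor expansion. Since $f$ is first-order identifiable, the collection $\{f(x|\eta_j^0), \partial f/\partial \eta^\kappa(x|\eta_j^0) : j=1,\ldots,k_0, |\kappa|=1\}$ is linearly independent, so this collection itself serves as the basis functions $H_l^{(K)}(x)$ with no elimination needed. The associated coefficients are $\Delta p_i$ and $p_i \Delta \eta_i^{(j)}$ for each atom $i$ and each coordinate $j$. In the e-mixture setting $k = k_0$, so $\Delta p_i$ need not vanish automatically; however, one can pick a sequence $G_n \to G_0$ with $p_i^n \equiv p_i^0$ (so $\Delta p_i = 0$) and $\eta_i^n \to \eta_i^0$ along any path with $\|\eta_i^n - \eta_i^0\| = o(\widehat{W}_{\vec 1}^{1}(G_n, G_0))$ — but since $\widehat{W}_{\vec 1}^1(G_n,G_0) \asymp \sum_i p_i\|\Delta\eta_i\|$ (from the blocked analogue of Lemma \ref{lemma:bound_overfit_Wasserstein}), this forces all $p_i\Delta\eta_i^{(j)}/\widehat{W}_{\vec 1}(G_n,G_0) \to 0$ simultaneously only if the direction of perturbation is chosen so that no single coordinate dominates. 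The cleanest route: move a single atom $\eta_1$ along one coordinate, i.e., $\eta_1^n = \eta_1^0 + t_n e_1$ with $t_n \downarrow 0$; then $\widehat{W}_{\vec 1}(G_n,G_0) \asymp t_n$ and the coefficient $p_1 t_n / \widehat{W}_{\vec 1}(G_n,G_0) \to p_1^0 \neq 0$, which would \emph{not} work. So instead I would exploit that $k=k_0$ means the atoms can be perturbed toward each other: no — actually the correct observation is that $\vec 1$-singularity requires \emph{all} coefficients to vanish, and this is exactly where first-order identifiability should make $\vec 1$-singularity \emph{fail}, not hold.

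Let me reconsider: given Proposition \ref{proposition:strong_identifiability_singularity_index} establishes $\lev(G_0|\Ocal_k)=1$ and $\singset = \{(2,\ldots,2)\}$ for \emph{o}-mixtures with second-order kernels, the e-mixture first-order analogue should give $\lev(G_0|\Ecal_{k_0}) = 0$. By Proposition \ref{proposition:singularity_set_level}(i), $\lev = 0$ forces $\singset(G_0|\Ecal_{k_0}) = \{(1,\ldots,1)\}$, and the matrix-valued refinement then collapses to $K = \vec 1_{k_0\times d}$ since the singularity matrix dominates the singularity index componentwise in each row while $\|K\|_\infty = 1$ pins every entry to $1$. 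So the real content is: \textbf{first-order identifiability implies $G_0$ is not $1$-singular relative to $\Ecal_{k_0}$}, hence $\lev(G_0|\Ecal_{k_0})=0$, and then Definition \ref{definition:singularity_matrix} together with Lemma \ref{lemma:minimal_form_matrix} forces $K=\vec 1_{k_0\times d}$ to be the unique singularity matrix (a degenerate ``no nontrivial singularity'' case where $G_0$ is $K$-singular only for $K \preceq \vec 1$, i.e., vacuously, and not for any $K \succeq \vec 1$ with $K \neq \vec 1$). The argument that $G_0$ is not $1$-singular is immediate: if it were, there would be a sequence $G_n \to G_0$ under $W_1$ with every coefficient of the $1$-minimal form vanishing, but the $1$-minimal form is precisely the first-order Taylor expansion whose basis functions are the linearly independent $\{f(x|\eta_j^0), \partial_{\eta^\kappa}f(x|\eta_j^0)\}$, so by the same contradiction argument used in the proof of Lemma \ref{lemma:minimal_form}(a) (dividing by the max coefficient and extracting a limit equal to $1$), linear independence is violated. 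The main obstacle is bookkeeping the degenerate convention in Definition \ref{definition:singularity_matrix} when no nontrivial $K$-singularity ever holds — one must check that $\vec 1_{k_0\times d}$ genuinely satisfies the definition (vacuously $K'$-singular for all $K'\prec \vec 1$, since there are none, and not $K'$-singular for any $K'\succeq \vec 1$, $K'\neq\vec1$, by the non-$1$-singularity just shown) — after which uniqueness follows because any other candidate $K$ would have some entry $\geq 2$, contradicting non-$1$-singularity via monotonicity. I would also invoke the uniform Lipschitz hypothesis only to guarantee the Taylor remainder is $o(W_1(G_n,G_0))$, exactly as in Eq. \eqref{eqn:generalTaylorexpansion_overfit}.
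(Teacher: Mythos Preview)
After your self-correction, the argument is correct and matches the paper's (omitted but implied) approach: first-order identifiability gives not-$1$-singular relative to $\Ecal_{k_0}$ (this is exactly the first bullet after Definition~\ref{definition:singularity_level_emixture}), hence by the contrapositive of Lemma~\ref{lemma:minimal_form_matrix}(b) $G_0$ is not $K'$-singular for \emph{any} $K' \in \mathbb{N}^{k_0\times d}$ (since every such $K'$ satisfies $K' \succeq \vec{1}$), and Definition~\ref{definition:singularity_matrix} then pins $\vec{1}_{k_0\times d}$ as the unique singularity matrix, both defining conditions being either vacuous (no $K' \prec \vec{1}$ exists) or immediate. One minor clarification: the clause ``no $K' \succeq K$ such that $G_0$ is $K'$-singular'' in the definition includes $K' = \vec{1}$ itself, which your non-$1$-singularity already handles even though you phrase the check as $K' \neq \vec{1}$; and your initial attempt to \emph{establish} $\vec{1}$-singularity was indeed backwards, but you correctly diagnosed and abandoned it.
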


\paragraph{Gamma e-mixtures} 
The generic and pathological cases of $G_{0}$ for Gamma mixtures have been
discussed in detail in Section \ref{Section:singularity_index_classical_mixtures}. 
We have the following result for Gamma mixtures:

\begin{proposition} \label{proposition:singularity_matrix_Gamma_emix}
For any $G_{0} \in \Ecal_{k_{0}}$, we obtain
\begin{itemize}
\item[(a)] Generic cases: $K=\vec{1}_{k_0 \times 2}$ is the unique singularity matrix 
of $\singmat(G_{0}|\Ecal_{k_{0}})$.
\item[(b)] Pathological cases: let $A=\left\{i: \exists j \ \text{such that} \left\{|a_{i}
^{0}-a_{j}^{0}|,|b_{i}^{0}-b_{j}^{0}|\right\} = \left\{1,0\right\} \right\}$. Let $K \in 
\overline{\mathbb{N}}^{k_{0} \times 2}$ such that $K_{i}=(\infty,\infty)$ when $i \in A$ 
and $K_{i}=(\infty,\infty)$ when $i \in A^{c}$. Then, matrix $K$ is the unique element of $
\singmat(G_{0}|\Ecal_{k_{0}})$.
%{\color{blue} Do you mean $(1,1)$ when $i\in A^c$?}
\end{itemize}
\end{proposition}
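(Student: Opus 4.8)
The plan is to reduce both parts of the statement to the singularity‑level/index results of Proposition~\ref{proposition:singularity_index_level_Gamma}, exploiting the monotonicity and invariance of $K$‑singularity from Lemma~\ref{lemma:minimal_form_matrix} and the identity $\widehat{W}_{\vec{1}_{k_0\times 2}}=W_1$ supplied by Proposition~\ref{proposition:blocked_generalized_Wasserstein} (the case $K=\vec{1}_{k_0\times 2}$, $\kappa=(1,1)$).

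\textbf{The generic case} should follow almost formally. By Proposition~\ref{proposition:singularity_index_level_Gamma}(a1) we have $\lev(G_0|\Ecal_{k_0})=0$, so $G_0$ is not $1$‑singular; since $\|\vec{1}_{k_0\times 2}\|_\infty=1$, the $\vec{1}_{k_0\times 2}$‑minimal form is the $1$‑minimal form and $\widehat{W}_{\vec{1}_{k_0\times 2}}=W_1$, hence $G_0$ is not $\vec{1}_{k_0\times 2}$‑singular. As $\vec{1}_{k_0\times 2}\preceq K$ for every $K$, Lemma~\ref{lemma:minimal_form_matrix}(b) forces that $G_0$ being $K$‑singular would entail $G_0$ being $\vec{1}_{k_0\times 2}$‑singular — a contradiction — so $G_0$ is $K$‑singular for no finite $K$. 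Feeding this into Definition~\ref{definition:singularity_matrix}: $\vec{1}_{k_0\times 2}$ satisfies the first clause vacuously and the second because nothing $\succeq\vec{1}_{k_0\times 2}$ is singular, while any $K\neq\vec{1}_{k_0\times 2}$ violates the first clause since $\vec{1}_{k_0\times 2}\prec K$ is not singular. Hence $\singmat(G_0|\Ecal_{k_0})=\{\vec{1}_{k_0\times 2}\}$.

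\textbf{The pathological case} is where the work lies, and I would treat it in two directions, much as in Theorem~\ref{theorem:singularity_connection_liminf_general}. Write $K$ for the matrix of the statement: its rows over $A$ are $(\infty,\infty)$ and its rows over $A^c$ are $\vec{1}$, so the finite matrices $K'\prec K$ are exactly those agreeing with $\vec{1}$ on the $A^c$ rows. For the \emph{lower direction}, for each such $K'$ I would build a witnessing sequence by recycling the construction that proves $\lev(G_0|\Ecal_{k_0})=\infty$ in Proposition~\ref{proposition:singularity_index_level_Gamma}(b1), driven by the Gamma identity $\partial f/\partial b(x|a,b)=(a/b)f(x|a,b)-(a/b)f(x|a+1,b)$ and the collision relation $f(x|a_i^0+1,b_i^0)=f(x|\eta_j^0)$ for $i,j\in A$; this kills every coefficient of the $r$‑minimal form with $r=\|K'\|_\infty$, and since the perturbations and compensating mass changes can be confined to the $A$‑block, the associated $K'$‑minimal form has vanishing coefficients on the $A^c$ basis functions, so one only needs the blocked analogue $\widehat{W}_{K'}^{\|K'\|_\infty}(G,G_0)\asymp D_{K'}(G,G_0)$ of Lemma~\ref{lemma:bound_overfit_Wasserstein_first} to check that the dominant part of $D_{K'}$ lives in the $A$‑block and every coefficient ratio tends to $0$. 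For the \emph{upper direction}, for $i\in A^c$ the atom $\eta_i^0$ collides with nothing, so $f(x|a_i^0+1,b_i^0)$ is not one of the $f(x|\eta_l^0)$ and the first partial derivatives of $f$ at $\eta_i^0$ persist as genuine basis functions in every $K'$‑minimal form, with coefficients $p_i^0\Delta a_i$, $p_i^0\Delta b_i$ up to higher order; coupled with the e‑mixture constraint $\sum_l\Delta p_l=0$ (which feeds $\Delta p_i$ into the first‑order coefficient of $f(x|\eta_i^0)$), this should show that a perturbation of $\eta_i^0$ of size $\widehat{W}_{K'}$ cannot be hidden, so $G_0$ fails to be $K'$‑singular once $K'_i\succ(1,1)$ for some $i\in A^c$. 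Combining the two directions with $K$'s $A$‑rows being infinite (Proposition~\ref{proposition:singularity_index_level_Gamma}(b1)) and re‑running the uniqueness argument from the generic case identifies $K$ as the unique element of $\singmat(G_0|\Ecal_{k_0})$.

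\textbf{The hard part} will be the lower direction of the pathological case: making sure the $\lev=\infty$ construction of~\cite{Ho-Nguyen-Ann-16} can be realized entirely on the $A$‑block (in particular with no net mass leaking onto $A^c$) while still annihilating every coefficient of the blocked $K'$‑minimal form, given that $\widehat{W}_{K'}$ is an inhomogeneous semipolynomial whose monomials carry different degrees in the different atoms' blocks. The matching subtlety in the upper direction — verifying term by term that a non‑negligible move of an isolated atom is always seen at first order, no matter how aggressively the $A$‑atoms are perturbed — will likewise require a careful accounting of the asymptotics of $D_{K'}$.
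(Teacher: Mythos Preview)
Part (a) is correct and follows the route the paper has in mind (the paper omits the proof, deferring to the level/index arguments). For part (b), however, your upper direction has a genuine gap. You claim that $G_0$ fails to be $K'$-singular once $K'_i\succ(1,1)$ for some $i\in A^c$, because a perturbation of the isolated atom $\eta_i^0$ ``cannot be hidden.'' But $K'$-singularity asks only for the \emph{existence} of a witnessing sequence, and that sequence need not move $\eta_i^0$ at all: the very construction you describe in your lower direction---perturbations confined to the $A$-block, with the $A^c$-atoms and their masses held fixed---works equally well here, since the $K'$-minimal-form coefficients attached to $\eta_i^0$ are then identically zero while the $A$-block coefficients vanish by the $\lev=\infty$ construction. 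More directly, Proposition~\ref{proposition:singularity_index_level_Gamma}(b1) gives $\lev(G_0|\Ecal_{k_0})=\infty$; with $r=\|K'\|_\infty$, Proposition~\ref{proposition:blocked_generalized_Wasserstein} (take $\kappa=(r,r)$) yields $\widehat W_{K'}^{\,r}\gtrsim W_r^r$, and since the $K'$-minimal-form coefficients coincide with those of the $r$-minimal form, any $r$-singular witness already gives $\xi_l^{(K')}/\widehat W_{K'}^{\,r}\lesssim\xi_l^{(r)}/W_r^r\to 0$. Hence $G_0$ is $K'$-singular for \emph{every} finite $K'$, including those with $K'_i\succ(1,1)$ on $A^c$.

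This shows that the all-$(\infty,\infty)$ matrix is a singularity matrix in the pathological case---which is precisely what the statement says when read literally (both clauses in the paper give $(\infty,\infty)$). Under your reading with $K_i=(1,1)$ on $A^c$, that matrix could not be the \emph{unique} element of $\singmat(G_0|\Ecal_{k_0})$, since the all-$(\infty,\infty)$ matrix also qualifies. The paper's omitted proof is just the monotonicity route above, and your lower-direction work already contains it; the pathological case is easier than you anticipated, not harder.
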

We wish to emphasize that the non-polynomial convergence rates of 
atoms in $A$ under pathological cases are just the upper bounds. It is possible 
that the actual convergence rates of these atoms in $A$ may be 
better. We leave this question for future exploration.  
\paragraph{Skew-normal e-mixtures} Again, our result is restricted to e-mixtures,
under the setting of $G_{0} \in \Ecal_{k_{0}}$ under skew 
normal e-mixtures. We first start with the following result when $G_{0} \in \Scal_{0}$, 
i.e., $P_{1}(\vec{\eta}^{0})P_{2}(\vec{\eta}^{0}) \neq 0$.

\begin{proposition} \label{proposition:singularity_matrix_S0}
If $G_{0} \in \Scal_{0}$, then $K=\vec{1}_{k_{0} \times 3}$ is the unique 
element of $\singmat(G_{0}|\Ecal_{k_{0}})$.
\end{proposition}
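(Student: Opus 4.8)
The plan is to reduce the statement to the single assertion that $G_{0}$ is \emph{not} $\vec{1}_{k_{0}\times 3}$-singular relative to $\Ecal_{k_{0}}$, and then to derive this from the first-order identifiability that $G_{0}\in\Scal_{0}$ supplies. Throughout write $\vec{1}$ for $\vec{1}_{k_{0}\times 3}$. First I would observe that $\vec{1}$ is the minimal element of $\mathbb{N}^{k_{0}\times 3}$ under the partial order $\preceq$, so the clause in Definition~\ref{definition:singularity_matrix} requiring $G_{0}$ to be $K'$-singular for every $K'\prec\vec{1}$ is vacuous. Hence $\vec{1}\in\singmat(G_{0}|\Ecal_{k_{0}})$ exactly when there is no $K'\succeq\vec{1}$ with $G_{0}$ being $K'$-singular; moreover any other candidate $K\neq\vec{1}$ in $\overline{\mathbb{N}}^{k_{0}\times 3}$ satisfies $\vec{1}\prec K$, so by Definition~\ref{definition:singularity_matrix} it can be a singularity matrix only if $G_{0}$ is $\vec{1}$-singular. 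Combining this with the monotonicity of Lemma~\ref{lemma:minimal_form_matrix}(b) (any $K$-singularity entails $K'$-singularity for every $K'\preceq K$), both the membership and the uniqueness claims follow once I show that $G_{0}$ is not $\vec{1}$-singular relative to $\Ecal_{k_{0}}$.

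To prove the latter, recall that $G_{0}\in\Scal_{0}$ means the polynomials $P_{1},P_{2}$ of Lemma~\ref{proposition-notskewnormal} do not vanish at the atoms of $G_{0}$, so the collection $\{\,f(x|\eta_{j}^{0}),\ \partial_{\theta}f(x|\eta_{j}^{0}),\ \partial_{v}f(x|\eta_{j}^{0}),\ \partial_{m}f(x|\eta_{j}^{0}):\ j=1,\dots,k_{0}\,\}$ is linearly independent in $x$. Since the skew-normal kernel is uniformly Lipschitz of order $1$ over the compact set $\Theta$, the first-order Taylor expansion of $(p_{G}-p_{G_{0}})/\widehat{W}_{\vec{1}}(G,G_{0})$ about the atoms of $G_{0}$ is already a valid $\vec{1}$-minimal form in the sense of Definition~\ref{definition:minimal_matrix}: its $T_{\vec{1}}=4k_{0}$ basis functions are the linearly independent partials above, and its coefficients $\xi^{(\vec{1})}_{l}(G)$ are precisely $\Delta p_{i}$ and $p_{i}\Delta\theta_{i},\ p_{i}\Delta v_{i},\ p_{i}\Delta m_{i}$ for $i=1,\dots,k_{0}$. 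By the invariance of Lemma~\ref{lemma:minimal_form_matrix}(a), it is enough to test $\vec{1}$-singularity against this particular minimal form. (Equivalently, the proof of Proposition~\ref{proposition:strong_identifiability_singularity_matrix} applies verbatim, its sole ingredient being the linear independence of these partials at the atoms of $G_{0}$.)

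Finally, I would argue by contradiction: if $G_{0}$ were $\vec{1}$-singular, there would exist $G_{n}=\sum_{i=1}^{k_{0}}p_{i}^{n}\delta_{\eta_{i}^{n}}\in\Ecal_{k_{0}}$ with $\widehat{W}_{\vec{1}}(G_{n},G_{0})\to 0$ along which every coefficient of the above form, divided by $\widehat{W}_{\vec{1}}(G_{n},G_{0})$, tends to $0$. Summing absolute values yields $\sum_{i}|\Delta p_{i}|+\sum_{i}p_{i}\bigl(|\Delta\theta_{i}|+|\Delta v_{i}|+|\Delta m_{i}|\bigr)=D_{\vec{1}}(G_{n},G_{0})=o\bigl(\widehat{W}_{\vec{1}}(G_{n},G_{0})\bigr)$, which contradicts the asymptotic equivalence $\widehat{W}_{\vec{1}}^{1}(G_{n},G_{0})\asymp D_{\vec{1}}(G_{n},G_{0})$ recorded just after Definition~\ref{def-singular_matrix} (note $\|\vec{1}\|_{\infty}=1$). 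Therefore $G_{0}$ is not $\vec{1}$-singular, and the proposition follows. The one genuinely delicate point — and the place where the hypothesis $G_{0}\in\Scal_{0}$ is essential — is verifying that no reduction among the first-order partial derivatives is available, so that the first-order expansion is a bona fide minimal form with exactly these coefficients; the remainder is bookkeeping with the partial orders on matrices and with the semipolynomial equivalence for $\widehat{W}$.
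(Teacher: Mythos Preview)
Your proposal is correct and follows essentially the approach the paper intends: it omits the proof of this proposition, stating only that it is ``quite similar to those for singularity levels and indices,'' and your argument is precisely the matrix analogue of the level/index computation for $\Scal_{0}$ (the unlabeled theorem $\lev(G_{0}|\Ecal_{k_{0}})=0$, $\singset(G_{0}|\Ecal_{k_{0}})=\{(1,1,1)\}$) combined with the bookkeeping of Proposition~\ref{proposition:strong_identifiability_singularity_matrix}. The key input---linear independence of the first-order partials at the atoms of $G_{0}$ via Lemma~\ref{proposition-notskewnormal}---is exactly what the paper invokes, and your contradiction through $D_{\vec{1}}\asymp\widehat{W}_{\vec{1}}$ is the standard device used throughout (e.g., in the proof of Theorem~\ref{theorem:singularity_connection_liminf}).
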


When $G_{0} \not \in \Scal_{0}$, we further consider subsets of the complement of $\Scal_0$.
In particular, we give results for two subsets of the complement: $G_{0} 
\in \Scal_{1}$ or $G_{0} \in \Scal_{2}$, which is defined in 
Section \ref{Section:singularity_level_index_skewemix}.  
Briefly speaking, in that definition, for each index $i=1,\ldots, k_0$,
$I_{i}$ collects all the atoms homologous to the $i$-th atom of $G_{0}$.
The following result establishes singularity matrix of $G_{0} \in \Scal_{1}$.

\begin{proposition} \label{proposition:singularity_matrix_S1}
Given $G_{0} \in \Scal_{1}$. Denote $A=\left\{i: I_{i} \ \text{has more than one 
elements}\right\}$. Let $K \in \overline{\mathbb{N}}^{k_{0} \times 3}$ be such that 
$K_{i}=(1,1,2)$ for all $i \in A$ and $K_{i}=(1,1,1)$ for all $i \in A^{c}$. Then, matrix
$K$ is the unique element of $\singmat(G_{0}|\Ecal_{k_{0}})$.
\end{proposition}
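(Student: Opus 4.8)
The plan is to follow the same route that was used for singularity levels and indices: construct a valid $K$-minimal form for $G_0$, read off its semipolynomial coefficients, extract the polynomial constraints that any $K'$-singular sequence must satisfy, and then verify the two halves of Definition~\ref{definition:singularity_matrix} separately — that $G_0$ is $K'$-singular for every $K' \prec K$, and that $G_0$ fails to be $K'$-singular for every $K' \succeq K$ — closing with a downward-closedness argument for uniqueness.

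First I would record the block structure. Since $G_0 \in \Scal_1$ we have $P_1(\myeta^0) \neq 0$ (so every $m_i^0 \neq 0$) and $P_2(\myeta^0) = 0$, so the atoms partition into the homologous blocks $I_i$ (those sharing $\theta^0$ and satisfying $v_i^0(1+(m_j^0)^2)=v_j^0(1+(m_i^0)^2)$), and $A$ collects the indices lying in a block of size $\geq 2$. By Lemma~\ref{proposition-notskewnormal} and the argument behind it, together with the conformance hypothesis defining $\Scal_1$, the only linear dependencies among $\{f,\partial_\theta f,\partial_v f,\partial_m f\}$ evaluated at $\{\eta_i^0\}$ are block-local Type~B relations — one per nontrivial block $I$, supported on $\{\eta_i^0:i\in I\}$, with all coefficients of $\partial_m f$ nonzero — and, after each relation is used to eliminate one derivative $\partial_m f(x|\eta_{i_0}^0)$ per block, conformance also guarantees that the surviving first-order derivatives together with $\{\partial_m^2 f(x|\eta_i^0):i\in A\}$ are linearly independent. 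Using the $\widehat W_K$-analogue of Lemma~\ref{lemma:bound_overfit_Wasserstein_first} (so $\widehat W_K^{\|K\|_\infty}(G,G_0)\asymp \sum_{i\in A}p_i(|\Delta\theta_i|+|\Delta v_i|+|\Delta m_i|^2)+\sum_{i\notin A}p_i(|\Delta\theta_i|+|\Delta v_i|+|\Delta m_i|)+\sum_i|\Delta p_i|$) and a Taylor expansion up to second order, I would read off the $K$-minimal form: for $i\notin A$ the coefficients are the plain $\Delta p_i,\ p_i\Delta\theta_i,\ p_i\Delta v_i,\ p_i\Delta m_i$; for each nontrivial block $I$ with distinguished atom $i_0$, the coefficient of $\partial_m f(x|\eta_j^0)$, $j\in I\setminus\{i_0\}$, becomes $p_j\Delta m_j-c_j p_{i_0}\Delta m_{i_0}$ for explicit constants $c_j$, the coefficients of $f,\partial_\theta f,\partial_v f$ at the atoms of $I$ pick up analogous $O(p_{i_0}\Delta m_{i_0})$ corrections, and the coefficient of $\partial_m^2 f(x|\eta_i^0)$ for $i\in I$ is $p_i\Delta m_i^2/2$. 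The same construction, with the appropriate blocks kept at weight two or demoted to weight one, produces a minimal form for any $K'$.

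For the lower-bound half — $G_0$ is $K'$-singular for every $K' \prec K$ — note $P_2(\myeta^0)=0$ forces $A\neq\emptyset$, and $K'\prec K$ forces the third coordinate of at least one $A$-row to drop to $1$; pick one such block $I^*$ and choose its weight-one atom as $i_0$. I would perturb only the atoms of $I^*$, setting $\Delta m_{i_0}=\epsilon$ and then choosing $\Delta m_j,\Delta\theta_j,\Delta v_j,\Delta p_j$ for $j\in I^*$ exactly so that every block-$I^*$ coefficient of the $K'$-minimal form vanishes identically (consistency of the mass perturbations follows from $\sum_{i\in I^*}\alpha_{1i}=0$, obtained by integrating the Type~B relation over $x$), while leaving all other atoms unperturbed. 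Then every coefficient of the $K'$-minimal form is zero — the block-$I^*$ ones by design, all others because the corresponding $\Delta$'s vanish, and crucially $I^*$ contributes no $\partial_m^2 f$ term in the $K'$-form since it was demoted — and $\widehat W_{K'}(G_\epsilon,G_0)\asymp\epsilon^{1/\|K'\|_\infty}\downarrow 0$, so $G_0$ is $K'$-singular. For the upper-bound half — $G_0$ is not $K'$-singular for any $K'\succeq K$ — by the monotonicity Lemma~\ref{lemma:minimal_form_matrix}(b) it suffices to rule out $K$ itself: if some sequence made all $K$-minimal coefficients vanish relative to $\widehat W_K^2$, then in each nontrivial block the $\partial_m f$-coefficient conditions tie $\Delta m_j$ proportionally to $p_{i_0}\Delta m_{i_0}/p_j$, so the surviving coefficient $p_j\Delta m_j^2/2$ of $\partial_m^2 f(x|\eta_j^0)$ is $\asymp p_{i_0}^2\Delta m_{i_0}^2$ and can vanish relative to $\widehat W_K^2$ only if $\Delta m_{i_0}^2/\widehat W_K^2\to 0$, i.e. only if the dominant part of $\widehat W_K^2$ comes from some $\Delta\theta_i,\Delta v_i,\Delta m_i$ ($i\notin A$) or $\Delta p_i$; but in the e-mixture setting the $\eta_i^0$ are distinct and the associated derivative functions linearly independent, so whichever of these dominates, its own plain coefficient (e.g. $p_i\Delta\theta_i$) fails to vanish relative to $\widehat W_K^2$ — a contradiction in every case.

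Uniqueness then follows from the shape of the singular set. Since $K$-singularity is downward closed (Lemma~\ref{lemma:minimal_form_matrix}(b)), the collection $S$ of matrices $K'$ for which $G_0$ is $K'$-singular is a down-set, and by Definition~\ref{definition:singularity_matrix} the singularity matrices are exactly the minimal elements of $S^c$. The perturb-one-block construction above — with the weight-one atom used as $i_0$, and $\Delta p_{i_0}=\epsilon$ keeping the total contribution of order $\epsilon$ while all second-order $m$-terms stay of order $\epsilon^2=o(\epsilon)$ — shows that $S$ contains every matrix (with entries $\geq 1$) that drops the third coordinate of some $A$-row to $1$, regardless of its other entries; hence every matrix not dominating $K$ lies in $S$, while the previous paragraph gives $K\in S^c$, so $K$ is the minimum of $S^c$ and therefore its unique minimal element, i.e. $\singmat(G_0|\Ecal_{k_0})=\{K\}$. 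I expect the main obstacle to be the upper-bound step: pinning down the precise block-local shape of the Type~B dependency (through the proof of Lemma~\ref{proposition-notskewnormal} and exactly what the conformance condition buys), and then organizing the case analysis on which family of perturbations dominates $\widehat W_K^2$ so that the second-order $m$-coefficient genuinely obstructs $K$-singularity in every case.
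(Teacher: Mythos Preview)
There is a genuine gap in your minimal-form construction, and it propagates into the upper-bound step. Within a single homologous block $I$, the functions $\partial_m f(x|\eta_i^0)$ for $i\in I$ are \emph{not} linearly independent: from the formula in Lemma~\ref{lemma:skewnormaldistribution} one has
\[
\partial_m f(x|\eta_i^0)\;\propto\;(x-\theta_I^0)\exp\Bigl(-\tfrac{(m_i^0)^2+1}{2v_i^0}(x-\theta_I^0)^2\Bigr),
\qquad
\partial_m^2 f(x|\eta_i^0)\;\propto\;(x-\theta_I^0)^3\exp\Bigl(-\tfrac{(m_i^0)^2+1}{2v_i^0}(x-\theta_I^0)^2\Bigr),
\]
and the exponent $\frac{(m_i^0)^2+1}{2v_i^0}$ is constant over $i\in I$ by the very definition of a homologous block. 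So your claimed basis $\{\partial_m^2 f(x|\eta_i^0):i\in A\}$ collapses within each block, and there are no per-atom coefficients ``$p_j\Delta m_j - c_j p_{i_0}\Delta m_{i_0}$'' or ``$p_j\Delta m_j^2/2$'' in any minimal form. The correct minimal form is the one in \eqref{eqn:taylorexpansion_first_order_homologous}--\eqref{eqn:taylorexpansionsecondorder}: the skewness perturbations enter only through the block-aggregate coefficients $\gamma_l^{(r)}$, and at second order the relevant one is
\[
\gamma_4^{(2)}\;\sim\;\sum_{i\in I}\frac{p_i\,m_i^0\,(\Delta m_i)^2}{(\sigma_i^0)^4}\,.
\]
The paper's upper-bound argument (cf.\ \eqref{eqn:singularitys1third} and \eqref{eqn:exactfittedskewnormalgeneralkey}--\eqref{eqn:exactfittedskewnormalgeneral}) shows this aggregate cannot be $o\bigl(\sum_{i\in I}p_i|\Delta m_i|^2\bigr)$ precisely because \emph{conformance forces all $m_i^0$ in $I$ to share a sign}, so the sum is one-signed. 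That is where conformance is actually used --- not to produce the independence you claim (which is false), but to control a signed quadratic form. Your obstruction via an individual $p_j\Delta m_j^2$ term therefore does not exist, and without the sign argument a nonconformant block would allow the aggregate to vanish (which is exactly why $\Scal_3$ behaves differently).

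Your lower-bound and uniqueness ideas can be repaired once the minimal form is corrected: the paper's construction takes $\Delta\theta_i=\Delta v_i=\Delta p_i=0$ and chooses $\{\Delta m_i\}_{i\in I^*}$ so that the single block coefficient $\sum_{i\in I^*} p_i\Delta m_i/v_i^0$ vanishes, which is simpler than what you describe and suffices for every $K'\prec K$ as well as for the ``drop one $A$-row to weight one, let the rest be arbitrary'' matrices needed for uniqueness.
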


As a consequence of Proposition \ref{proposition:singularity_matrix_S1}, the atoms 
of $G_{0} \in \Scal_{1}$ can be divided into two blocks according to their convergence 
rates. For those without any homologous structure, their convergence rates are $n^{-1/
2}$ (up to a log factor); however, for those with homologous structure with conformant property, their 
convergence rates of location, scale parameters admit $n^{-1/2}$ convergence rate
while shape parameters admit $n^{-1/4}$ rate. Under the setting of $G_{0} \in \Scal_{2}$, 
the singularity matrix becomes somewhat more complicated
as being demonstrated by the following result.

\begin{proposition} \label{proposition:singularity_matrix_S2}
Given $G_{0} \in \Scal_{2}$. Denote $A=\left\{i: I_{i} \ \text{has more than one 
elements}\right\}$ and $B=\left\{j: m_{j}^{0} \right. \\ \left. =0 \right\}$. Let $K \in 
\overline{\mathbb{N}}^{k_{0} \times 3}$ be such that $K_{i}=(1,1,2)$ for all $i \in A
$, $K_{i}=(3,2,3)$ for all $i \in B$, and $K_{i}=(1,1,1)$ for all $i \in (A \cup B)^{c}$. 
Then, matrix $K$ is the unique element of $\singmat(G_{0}|\Ecal_{k_{0}})$.
\end{proposition}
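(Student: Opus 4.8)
The plan is to follow the same three-step structure as the proof of Theorem~\ref{theorem:generic_setting_omixtures}, but now carefully tracking how the partition of the atoms of $G_0$ into the index sets $I_i$ interacts with the blocked transportation distance $\widehat{W}_K$. First I would recall from Section~\ref{Section:singularity_level_index_skewemix} the structure of $G_0 \in \Scal_2$: the atoms split according to homologous classes $I_i$, with $A = \{i : |I_i| > 1\}$ collecting the homologous atoms, and $B = \{j : m_j^0 = 0\}$ collecting the Gaussian (symmetric) components. Since these two phenomena are governed by the two distinct polynomial relations --- Type B (homology, Eq.~\eqref{eqn:overfittedskewnormaldistributionzero}) for $A$, and the zero-skewness degeneracy (the normal relation Eq.~\eqref{key-normal}) for $B$ --- I would argue that for an atom $\eta_i^0$ with $i \notin A \cup B$, the first-order derivatives of $f$ at $\eta_i^0$ remain linearly independent and no atom other than $\eta_i^0$ itself converges to it, so by the first-order identifiability argument (as in Proposition~\ref{proposition:strong_identifiability_singularity_matrix}) that atom's block in any singularity matrix must be $(1,1,1)$.

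Next I would handle the two nontrivial blocks separately. For $i \in A$: the homologous structure forces a single linear dependence among the first derivatives involving the skewness direction $\partial f/\partial m$, exactly as in the analysis of $\Scal_1$. Constructing a $(1,1,2)$-minimal form by eliminating the dependent derivative, one checks that a sequence of $G$ can be built so that all coefficients of the $(1,1,2)$-minimal form vanish --- so $G_0$ is $(1,1,2)$-singular on this block --- but no sequence makes a $(1,1,3)$- or $(2,1,2)$-minimal form vanish, because the remaining independent partial derivatives up to the relevant order give a system of polynomial limits with no nontrivial solution. This is verbatim the $\Scal_1$ computation (Proposition~\ref{proposition:singularity_matrix_S1}), and I would import it. For $i \in B$: here $m_i^0 = 0$, and Example~\ref{ex-reduce-2} tells us that \emph{all} $\theta,v$-derivatives collapse onto pure $\theta$-derivatives via $\partial^{\kappa_1+\kappa_2} f/\partial\theta^{\kappa_1}\partial v^{\kappa_2} = 2^{-\kappa_2}\partial^{\kappa_1+2\kappa_2}f/\partial\theta^{\kappa_1+2\kappa_2}$. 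Carrying out the Taylor expansion, extracting the polynomial limits of the coefficients, and invoking a Groebner-basis argument (as in the ``$G_0$ is not $(4,2,2)$-singular'' claim of Section~\ref{Section:illustration_omixture_byone}), one shows the system of limiting polynomials forces $G_0$ to be $(3,2,3)$-singular but not $(4,2,3)$-, $(3,3,3)$-, or $(3,2,4)$-singular on this block. The appearance of $(3,2,3)$ --- rather than the $(2,2,2)$ of strongly identifiable kernels or the $(3,2,2)$-type index from the skewness PDE --- is precisely the interplay between the Gaussian collapse (which slows $\theta$ to order $3$ and $v$ to order $2$) and the residual skewness degeneracy (which slows $m$ to order $3$).

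The final step is to assemble the block-diagonal matrix $K$ with $K_i = (1,1,2)$ for $i \in A$, $K_i = (3,2,3)$ for $i \in B$, and $K_i = (1,1,1)$ otherwise, and verify it is the \emph{unique} singularity matrix. Uniqueness follows the scheme of Definition~\ref{definition:singularity_matrix} combined with the monotonicity in Lemma~\ref{lemma:minimal_form_matrix}(b): one must show $G_0$ is $K'$-singular for all $K' \prec K$ (which follows by constructing, block-by-block, sequences of $G$ as above and combining them --- the key point being that the blocked distance $\widehat{W}_{K'}$ decouples across atoms up to constants by Lemma~\ref{lemma:bound_overfit_Wasserstein_first}-type estimates, so a sequence achieving vanishing coefficients simultaneously on each block exists), and that $G_0$ is \emph{not} $K'$-singular for any $K' \succeq K$, $K' \neq K$ (which reduces, by the partial-order monotonicity, to checking the three ``barely larger'' candidates $(1,1,3)$/$(2,1,2)$ on an $A$-block and $(4,2,3)$/$(3,3,3)$/$(3,2,4)$ on a $B$-block, each ruled out by the corresponding unsolvable polynomial system). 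I expect the main obstacle to be the $B$-block analysis: disentangling the higher-order elimination scheme when \emph{both} the Gaussian relation and the skew-normal PDEs~\eqref{eqn:overfittedskewnormaldistributionzero} degenerate at $m = 0$ requires care, because the coefficients $(m^3+m)/v$ and $(m^2+1)/(2vm)$ in the reductions of Example~\ref{ex-reduce-3} become singular or vanish, so one cannot simply quote the generic $\Scal_0$ reduction --- instead one must set up the $\kappa$-minimal form for the degenerate kernel directly, as is done for the normal kernel in~\cite{Ho-Nguyen-Ann-16}, and track which monomials survive under the scaling $\Delta\theta \sim \overline M$, $\Delta v \sim \overline M^2$, $\Delta m \sim \overline M^3$ dictated by $K_i = (3,2,3)$.
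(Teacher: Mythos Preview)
Your block-by-block strategy is the right framework, and is what the paper has in mind (the paper explicitly omits this proof, pointing to the singularity-index analyses of $\Scal_1$ and $\Scal_2$ as the template). However, you have the direction of the singularity-matrix definition reversed, and this error propagates through the whole argument.

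By Definition~\ref{definition:singularity_matrix} (and the parallel Definition~\ref{definition:singularity_index}), the row $K_i$ marks the \emph{first point of non-singularity}: $G_0$ must be $K'$-singular for every $K' \prec K$, and there must be no $K' \succeq K$ for which $G_0$ is $K'$-singular. Since $\succeq$ is non-strict, this requires in particular that $G_0$ is \emph{not} $K$-singular. You claim the opposite on both nontrivial blocks: that $G_0$ \emph{is} $(1,1,2)$-singular on the $A$-block and \emph{is} $(3,2,3)$-singular on the $B$-block. The paper's own computations say otherwise: Section~\ref{Section:singularity_level_S1_setting} shows $G_0$ is $(1,1,1)$-singular but \emph{not} $(1,1,2)$-singular; Section~\ref{Section:singularity_level_S2_setting} shows $G_0$ is $(2,2,2)$-, $(3,1,3)$-, $(3,2,2)$-, and $(2,2,3)$-singular but \emph{not} $(3,2,3)$-singular. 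The verifications you need are therefore precisely the negations of the ones you propose, and your uniqueness step (checking the ``barely larger'' indices $(4,2,3)$, $(3,3,3)$, $(3,2,4)$) is unnecessary once non-$(3,2,3)$-singularity is established, by monotonicity.

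Two smaller corrections on the $B$-block. First, the scaling you attribute to $K_i = (3,2,3)$ is wrong: the block distance is $d_{(3,2,3)}^3 = |\Delta\theta|^3 + |\Delta v|^2 + |\Delta m|^3$, so $\Delta\theta$ and $\Delta m$ live on the \emph{same} scale (the paper's construction on the Gaussian block indeed sets $\Delta\theta = -2\sigma^0\Delta m/\sqrt{2\pi}$), not $\Delta m \sim \overline{M}^3$. Second, the paper's argument that $G_0$ is not $(3,2,3)$-singular does not proceed by Groebner bases; it is a direct case analysis on which of $|\Delta\theta|^3$, $|\Delta v|^2$, $|\Delta m|^3$ dominates, reducing in each case to a single-variable polynomial in the ratio $\Delta m/\Delta\theta$ that has no real root compatible with the first-order constraint. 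Importing the o-mixture Groebner machinery here would obscure this mechanism.
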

The above result indicates that the atoms of $G_{0} \in \Scal_{3}$ can be divided into 
three blocks according to their convergence rates. For the ``Gaussian atoms'' in $G_{0}$, 
i.e., those indexed by set $B$,
the convergence rates of location and shape parameters of these atoms are $n^{-1/6}$ 
while those of scale parameters of these atoms are $n^{-1/4}$. For other atoms of 
$G_{0}$, we arrive at the same regime of convergence as those in Proposition 
\ref{proposition:singularity_matrix_S1}.

%%%%%%%%%%%%%%%%%%%%
\newpage

%\begin{center}
%\Large{\textbf{Appendix D}}
%\end{center}

\section{Appendix D: Proofs for Sections \ref{Section:general_procedure_singularity} and \ref{Section:overfitskew}}
For completeness we collect the remaining proofs of the statements described in the main text.

\subsection{Proofs for Section \ref{Section:general_procedure_singularity}}
%%%%%%%%%%%%%%%%%%%%%%%%%%%%%%%%%%%%%%%%%%%%%%%%%%%%%%%%%%%%%%%%%%%%%%%%%%%%%%%%%
\paragraph{PROOF OF THEOREM~\ref{theorem:tight}}
%%%%%%%%%%%%%%%%%%%%%%%%%%%%%%%%
%%%%%%%%%%%%%%%%%%%%%%%%%%%%%%%%%%%%%%%%%%%%%%%%
Since the proofs for part (i) and (ii) are similar, we only provide the proof for part (i). 
The proof of this part is the generalization of that of part (c) in 
Theorem 3.2 in \cite{Ho-Nguyen-EJS-16}. 
By means of Taylor expansion up to $r$-th order, we have
\begin{eqnarray}
h^{2}(p_{G},p_{G_{0}})< \int \limits_{\mathcal{X}}{\dfrac{(p_{G}(x)-p_{G_{0}}(x))^{2}}{p_{G_{0}}(x)}}\textrm{d}x & = & \int \limits_{\mathcal{X}}{\dfrac{\biggr(\sum \limits_{l=1}^{T_{r}}{\xi_{l}^{(r)}(G)H_{l}^{(r)}}(x)+R_{r}(x)\biggr)^{2}}{p_{G_{0}}(x)}}\textrm{d}x \nonumber \\
& \lesssim & \int \limits_{ \mathcal{X}}{\dfrac{\sum \limits_{l=1}^{T_{r}}{\biggr(\xi_{l}^{(r)}(G)H_{l}^{(r)}(x)\biggr)^{2}}+R_{r}^{2}(x)}{p_{G_{0}}(x)}}\textrm{d}x  \nonumber
\end{eqnarray}
where the last inequality is due to Cauchy-Schwarz's inequality. Here, $R_{r}(x)$ has the following form
\begin{eqnarray}
R_{r}(x)=\sum \limits_{i=1}^{k_{0}}{\sum \limits_{j=1}^{s_{i}}{\sum \limits_{|\alpha|=r+1}{\dfrac{r+1}{\alpha !}{(\Delta \eta_{ij})^{\alpha}\int \limits_{0}^{1}{(1-t)^{r}\dfrac{\partial^{r+1}{f}}{\partial{\eta^{\alpha}}}(x|\eta_{i}^{0}+t\Delta \eta_{ij})}\textrm{d}t}}}}. \nonumber
\end{eqnarray}
Due to condition (a), the following result holds
\begin{align}
\int \limits_{\mathcal{X}} \biggr(H_{l}^{(r)}(x)\biggr)^2/p_{G_{0}}(x)dx < \infty \nonumber
\end{align}
for any $1 \leq l \leq T_{r}$. Combining the above result with the assumption that $\xi_{l}^{(r)}/W_{1}^{s}(G,G_{0}) \to 0$ in (b) for all $s \in [1,r+1)$ and $l = 1,\ldots,T_{r}$, we achieve that 
%{\color{blue} (why?)} \nhat{I put my explanation in red color. I also modify condition (a) in Theorem 3.2 to make it more rigorous.}
\begin{eqnarray}
\int \limits_{ \mathcal{X}}\dfrac{\biggr(\xi_{l}^{(r)}(G)H_{l}^{(r)}(x)\biggr)^{2}}{W_{1}^{2s}(G,G_{0})p_{G_{0}}(x)}\textrm{d}x \to 0 \label{eqn:singularity_connection_liminf_first}
\end{eqnarray}
for all $1 \leq l \leq T_{r}$. Additionally, as $p_{G_{0}}(x)>p_{i}^{0}f(x|\eta_{i}^{0})$ for all $1 \leq i \leq k_{0}$, for any $s<r+1$, we have
\begin{eqnarray}
\dfrac{h^{2}(p_{G},p_{G_{0}})}{W_{1}^{2s}(G,G_{0})} & \lesssim & \int \limits_{ \mathcal{X}}{\dfrac{R_{r}^{2}(x)}{W_{1}^{2s}(G,G_{0})p_{G_{0}}(x)}}\textrm{d}x \nonumber \\
& \lesssim & \sum \limits_{i=1}^{k_{0}}{\int \limits_{ \mathcal{X}}{\dfrac{\biggr(\sum \limits_{j=1}^{s_{i}}{\sum \limits_{|\alpha|=r+1}{\dfrac{r+1}{\alpha !}{(\Delta \eta_{ij})^{\alpha}\int \limits_{0}^{1}{(1-t)^{r}\dfrac{\partial^{r+1}{f}}{\partial{\eta^{\alpha}}}(x|\eta_{i}^{0}+t\Delta \eta_{ij})} \textrm{d}t\biggr)^{2}}}}}{W_{1}^{2s}(G,G_{0})p_{i}^{0}f(x|\eta_{i}^{0})}}\textrm{d}x} \nonumber \\
& \lesssim & \sum \limits_{i=1}^{k_{0}}{\int \limits_{ \mathcal{X}}{\dfrac{\sum \limits_{j=1}^{s_{i}}{\sum \limits_{|\alpha|=r+1}{\biggr(\dfrac{r+1}{\alpha !}{(\Delta \eta_{ij})^{\alpha}\int \limits_{0}^{1}{(1-t)^{r}\dfrac{\partial^{r+1}{f}}{\partial{\eta^{\alpha}}}(x|\eta_{i}^{0}+t\Delta \eta_{ij})} \textrm{d} t\biggr)^{2}}}}}{W_{1}^{2s}(G,G_{0})p_{i}^{0}f(x|\eta_{i}^{0})}}\textrm{d}x}, \nonumber
\end{eqnarray}
where the last inequality is due to Cauchy-Schwarz's inequality. Now, for any $s<r+1$, by utilizing Lemma \ref{lemma:bound_overfit_Wasserstein}  and the assumption that $p_{ij}$ are bounded away from 0 in condition (b), 
we obtain that
\begin{eqnarray}
\dfrac{|(\Delta \eta_{ij})^{\alpha}|}{W_{1}^{s}(G,G_{0})} \asymp \dfrac{|(\Delta \eta_{ij})^{\alpha}|}{D_{1}^{s}(G_{0},G)} \lesssim \dfrac{|(\Delta \eta_{ij})^{\alpha}|}{\|\Delta \eta_{ij}\|^{s}} \to 0, \label{eqn:singularity_connection_liminf_second}
\end{eqnarray} 
for any $|\alpha|=r+1$.
According to the hypothesis, as $\Delta \eta_{ij}<c_{0}$, we have
\begin{eqnarray}
\int \limits_{ \mathcal{X}}{\dfrac{\biggr(\int \limits_{0}^{1}{(1-t)^{r}\dfrac{\partial^{r+1}{f}}{\partial{\eta^{\alpha}}}(x|\eta_{i}^{0}+t\Delta \eta_{ij})} \textrm{d}t\biggr)^{2}}{p_{i}^{0}f(x|\eta_{i}^{0})}}\textrm{d}x < \int \limits_{x \in \mathcal{X}}{\dfrac{\biggr(\dfrac{\partial^{r+1}{f}}{\partial{\eta^{\alpha}}}(x|\eta_{i}^{0}+t\Delta \eta_{ij})\biggr)^{2}}{p_{i}^{0}f(x|\eta_{i}^{0})}}\textrm{d}x < \infty. \label{eqn:singularity_connection_liminf_third}
\end{eqnarray}
By combining \eqref{eqn:singularity_connection_liminf_first}, \eqref{eqn:singularity_connection_liminf_second}, and \eqref{eqn:singularity_connection_liminf_third}, we achieve $h(p_{G},p_{G_{0}})/W_{1}^{s}(G,G_{0}) \to 0$, which yields the conclusion of this part.

%%%%%%%%%%%%%%%%%%%%%%%%%%%%%%%%%%%%%%%%%%%%%%%%%%%%%%%%%%%%%%%%%%%%%%%%%%%%%%%%%
\paragraph{PROOF OF LEMMA~\ref{lemma:minimal_form_general}}
%%%%%%%%%%%%%%%%%%%%%%%%%%%%%%%%
%%%%%%%%%%%%%%%%%%%%%%%%%%%%%%%%%%%%%%%%%%%%%%%
%{\color{blue} This proof may be omitted in journal submission after mentioning this} \nhat{I agree.}
The proof idea of Lemma \ref{lemma:minimal_form_general} is a generalization of that of Lemma \ref{lemma:minimal_form}. 

(a) The existence of the sequence of $G$ described in the definition of a $\kappa$-minimal form
implies for that sequence, $(p_G(x)-p_{G_0}(x))/\widetilde{W}_{\kappa}^{\|\kappa\|_{\infty}}(G,G_0) 
\rightarrow 0$ holds for almost all $x$. 
Now take any $\kappa$-minimal form \eqref{eqn:generallinearindependencerepresentation_general}
given by the same sequence. Let $C(G) = \max_{l=1}^{T_\kappa}
\frac{\xi_{l}^{(\kappa)}(G)}{\widetilde{W}_{\kappa}^{\|\kappa\|_{\infty}}(G_0,G)}$. 
We will show that $\liminf C(G) = 0$, which concludes the proof.
Suppose this is not the case, so we have $\liminf C(G) > 0$. It follows that
\[\sum_{l=1}^{T_r}
\biggr (\frac{\xi_{l}^{(\kappa)}(G)}{C(G) \widetilde{W}_{\kappa}^{\|\kappa\|_{\infty}}(G_{0},G)} \biggr ) H_{l}^{(\kappa)}(x)
\rightarrow 0.\]
Moreover, all the coefficients in the above display are bounded from above by 1,
one of which is in fact 1. There exists a subsequence of $G$ by which
these coefficients have limits, one of which is 1. This is a contradiction
due to the linear independence of functions $H_l^{(\kappa)}(\cdot)$. 

(b) It suffices to establish the conclusion of this part when $\|\kappa\|_{\infty} -1 \leq \|\kappa'\|_{\infty}
\leq \|\kappa\|_\infty$. Let $G$ be an element in the sequence that admits a $\kappa$-minimal form
such that $\xi_{l}^{(\kappa)}(G)/\widetilde{W}_{\kappa}^{\|\kappa\|_{\infty}}(G_0,G) \rightarrow 0$
for all $l=1,\ldots,T_\kappa$. It suffices to assume that the basis functions $H_{l}^{(\kappa)}$
are selected from the collection of partial derivatives of $f$. 
We will show that the same sequence of $G$ and the elimination procedure for 
the $\kappa$-minimal form can be used to construct a $\kappa'$-minimal form by which
\[\xi_{l}^{(\kappa')}(G)/\widetilde{W}_{\kappa'}^{\|\kappa'\|_{\infty}}(G_0,G) \rightarrow 0\]
for all $l=1,\ldots,T_{\kappa'}$. When $\|\kappa'\|_{\infty}=\|\kappa\|_{\infty}$, as $\kappa' \preceq \kappa$ and the support points of $G$ and $G_0$ are in a bounded set, it is straightforward that $\widetilde{W}_{\kappa'}^{\|\kappa'\|_{\infty}}(G,G_{0}) \gtrsim \widetilde{W}_{\kappa}^{\|\kappa\|_{\infty}}(G,G_{0})$.  Therefore, by choosing $T_{\kappa}=T_{\kappa'}$ and $\xi_{l}^{(\kappa')}(G)=\xi_{l}^{(\kappa)}(G)$ for any $1 \leq l \leq T_{\kappa'}$, we obtain
\begin{eqnarray}
\xi_{l}^{(\kappa')}(G)/\widetilde{W}_{\kappa'}^{\|\kappa'\|_{\infty}}(G_0,G) \lesssim \xi_{l}^{(\kappa)}(G)/\widetilde{W}_{\kappa}^{\|\kappa\|_{\infty}}(G_0,G) \to 0 \nonumber
\end{eqnarray}
for any $1 \leq l \leq T_{\kappa'}$. This results in a valid $\kappa'$-form.
It remains to consider the case $\|\kappa'\|_{\infty}=\|\kappa\|_{\infty}-1$. There are two possibilities.

First, suppose that each of the $\|\kappa\|_{\infty}$-th partial derivatives of density kernel $f$
(i.e., $\partial^\alpha f/\partial \eta^\alpha$, where $|\alpha|= \|\kappa\|_{\infty}$)
is not in the linear span of the collection of
partial derivatives of $f$ at order $\|\kappa\|_{\infty}-1$ or less. Then, for each $l=1,\ldots, T_{\kappa'}$,
$\xi_{l}^{(\kappa')}(G) = \xi_{l'}^{(\kappa)}(G)$ for some $l'\in [1,T_{\kappa}]$.
Since $\widetilde{W}_{\kappa'}^{\|\kappa'\|_{\infty}}(G,G_{0}) \gtrsim \widetilde{W}_{\kappa}^{\|\kappa\|_{\infty}}(G,G_{0})$,  
we have that 
\[\xi_{l}^{(\kappa')}(G)/\widetilde{W}_{\kappa'}^{\|\kappa'\|_{\infty}}(G,G_{0}) 
\lesssim \xi_{l'}^{(\kappa)}(G)/\widetilde{W}_{\kappa}^{\|\kappa\|_{\infty}}(G,G_{0})\]
which vanishes by the hypothesis.

Second, suppose that some of the $\|\kappa\|_{\infty}$-th partial derivatives, say,
$\partial^{|\beta|}f/\partial \eta^\beta$ where $|\beta|=\|\kappa\|_{\infty}$,
can be eliminated because they can be represented by a linear combination of a subset of 
other partial derivatives $H_l^{(\kappa')}$ (in addition to possibly
a subset of other partial derivatives $H_l^{(\kappa)}$) with corresponding
finite coefficients $\alpha_{\beta,i,l}$. 
It follows that for each $l=1,\ldots, T_{\kappa'}$, the coefficient $\xi_l^{(\kappa')}(G)$
that defines the $\kappa'$-minimal form is transformed into
a coefficient in the $\kappa$-minimal form by

\[\xi_{l'}^{(\kappa)}(G) := \xi_{l}^{(\kappa')}(G) + 
\sum_{\beta; |\beta|=\|\kappa\|_{\infty}} 
\sum_{i=1}^{k_0+\extra} \alpha_{\beta,i,l}
\sum_{j=1}^{s_i}
{p_{ij}(\Delta \eta_{ij})^{\beta}/\beta!}.\]
Since $\xi_{l'}^{(\kappa)}(G)/\widetilde{W}_{\kappa}^{\|\kappa\|_{\infty}}(G,G_{0})$ tends to 0,
so does $\xi_{l'}^{(\kappa)}(G)/\widetilde{W}_{\kappa'}^{\|\kappa'\|_{\infty}}(G,G_{0})$.
By Lemma~\ref{lemma:bound_overfit_Wasserstein_first} 
for each $\beta$ such that $|\beta|=\|\kappa\|_{\infty} = r$,
$\sum_{i=1}^{k_0+\extra}\sum_{j=1}^{s_i}
{p_{ij}(\Delta \eta_{ij})^{\beta}/\beta!} = o(W_{r-1}^{r-1}(G_{0},G))
= o(D_{\kappa'}(G_0,G)) = o(\widetilde{W}_{\kappa'}^{\|\kappa'\|_{\infty}}(G_{0},G))$.
%{\color{blue} why $W_{r-1}^{r-1}$ is here?}
It follows
that $\xi_l^{(\kappa')}(G)/\widetilde{W}_{\kappa'}^{\|\kappa'\|_{\infty}}(G,G_{0})$ tends to 0,
for each $l = 1,\ldots, T_{\kappa'}$. This completes the proof.

%%%%%%%%%%%%%%%%%%%%%%%%%%%%%%%%%%%%%%%%%%%%%%%%%%%%%%%%%%%%%%%%%%%%%%%%%%%%%%%%%
\paragraph{PROOF OF PROPOSITION ~\ref{proposition:singularity_set_level}}
%%%%%%%%%%%%%%%%%%%%%%%%%%%%%%%%
%%%%%%%%%%%%%%%%%%%%%%%%%%%%%%%%%%%%%%%%%%%%%%%
Part (i) and (ii) are 
immediate from the definition of $\lev(G_{0}|\Gcal)$ and $\singset(G_{0}|\Gcal)$.

To prove part (iii), note that there are two possibilities.
First, $G_{0}$ is $\kappa$-singular relative to $\Gcal$ for any $\kappa \prec (r+1,\ldots,r
+1)$. Thus, $(r+1,\ldots,r+1) \in \singset(G_{0}|\Gcal)$, we are done. Second, there exists $\kappa 
\prec (r+1,\ldots,r+1)$ such that $G_{0}$ is not $\kappa$-singular relative to $\Gcal$. 
Denote $\mathcal{A}=\left\{\kappa' \in \mathbb{N}^{d}: \kappa' \prec \kappa \ 
\text{and} \ G_{0} \ \text{is not} \ \kappa'\text{-singular} \right\}$. It is clear that $|
\mathcal{A}|$ is finite. Since $\lev(G_{0}|\Gcal)=r$, $G_{0}$ is $(r,\ldots,r)$-singular 
relative to $\Gcal$, which also implies that it is $\kappa'$-singular for any $\kappa' \preceq 
(r,\ldots,r)$ according to part (b) of Lemma \ref{lemma:minimal_form_general}. Therefore, 
at least one component of $\kappa$ is $r+1$. If $G_{0}$ is $\kappa'$-singular relative to $
\Gcal$ for all $\kappa' \prec \kappa$, then $\kappa \in \singset(G_{0}|\Gcal)$, which 
concludes the proof. If there exists $\kappa^{(1)} \prec \kappa$ such that $G_{0}$ is not $
\kappa^{(1)}$-singular relative to $\Gcal$, then at least one component of $\kappa^{(1)}$ 
is $r+1$ by the fact that $\lev(G_{0}|\Gcal)=r$. If $G_{0}$ is $\kappa'$-singular relative to 
$\Gcal$ for any $\kappa' \prec \kappa^{(1)}$, then $\kappa' \in \singset(G_{0}|\Gcal)$. If 
the previous assumption does not hold, then we also achieve $\kappa^{(2)}$ such that 
$G_{0}$ is not $\kappa^{(2)}$-singular relative to $\Gcal$. By repeating the same 
argument, we eventually will have an index $1 \leq s \leq |\mathcal{A}|$ such that $G_{0}$ is 
not $\kappa^{(s)}$-singular while it is $\kappa'$-singular for any $\kappa' \prec 
\kappa^{(s)}$, which implies that $\kappa^{(s)} \in \singset(G_{0}|\Gcal)$. As $
\kappa^{(s)} \prec \kappa \prec (r+1,\ldots,r+1)$, we achieve the conclusion of part (iii). 

For part (iv), $r\geq 1$ implies that $G_0$ is $r$-singular
relative to $\Gcal$. Moreover, $G_0$ is $\kappa$-singular relative to $\Gcal$ for
all $\kappa \preceq (r,\ldots,r)$. Since $G_0$ is not $\overline{\kappa}$-singular
by the hypothesis, we must have $(1,\ldots,1) \prec \overline{\kappa}$. Due to the boundedness
of $\overline{\kappa}$, the existence of non-empty $\mathcal{L}(G_0|\Gcal)$ then follows.

For part (v), since $\kappa \preceq \kappa':=(\|\kappa\|_\infty,\ldots,\|\kappa\|_\infty)$, by definition
$G_0$ is not $\kappa'$-singular relative to $\Gcal$. It follows that $\lev(G_0|\Gcal) < \|\kappa\|_\infty$.
If $\kappa$ is unique, then the conclusion is immediate from part (iii).

%%%%%%%%%%%%%%%%%%%%%%%%%%%%%%%%%%%%%%%%%%%%%%%%%%%%%%%%%%%%%%%%%%%%%%%%%%%%%%%%%
\paragraph{PROOF OF PROPOSITION ~\ref{proposition:strong_identifiability_singularity_index}}
%%%%%%%%%%%%%%%%%%%%%%%%%%%%%%%%
%%%%%%%%%%%%%%%%%%%%%%%%%%%%%%%%%%%%%%%%%%%%%%%
The assumption of second order identifiability entails that $G_{0}$ is neither 2-singular  nor $(2,\ldots,2)$-singular relative to $\Ocal_{k}$
(cf. proof of Theorem 3.2 of \cite{Ho-Nguyen-Ann-16}). As a consequence of Proposition~\ref{proposition:singularity_set_level},
it suffices to demonstrate  that $G_{0}$ is $
\kappa$-singular relative to $\Ocal_{k}$ for any $\kappa$ such that all of its 
components are $r$ except for one component to be 1 as $r \geq 1$. Without loss of generality, let $\kappa=(1,r,\ldots,r)$.
For any $\eta \in \mathbb{R}^{d}$, let $
\eta^{(i)}$ denote the $i$-th component of $\eta$ for any $1 \leq i \leq d$. To simplify our proof argument, we firstly consider the
basic case of $r=2$. Now, construct
sequence of $G \rightarrow G_0$ such that $G$ always has $k_0+1$ support points. Specifically using the representation \eqref{eqn:representation_overfit}, 
$G=\sum \limits_{i=1}^{k_{0}}
{\sum \limits_{j=1}^{s_{i}}{p_{ij}\delta_{\eta_{ij}}}}$ where $s_{1}=2$ and $s_{i}
=1$ for all $2 \leq i \leq k_0$. Additionally, $\eta_{11}$ and $\eta_{12}$ are chosen 
such that $\Delta \eta_{11}= -\Delta \eta_{12}$ and $(\Delta \eta_{11}^{(i)})^{2}/
\Delta \eta_{11}^{(1)} \to 0$ for all $2 \leq i \leq d$. For other atoms of $G$, we 
choose $\eta_{ij}=\eta_{i}^{0}$ for all $2 \leq i \leq k_0$. As for the mass of $G$'s atoms, we 
choose $p_{11}=p_{12}=p_{1}^{0}/2$ and $p_{ij}=p_{i}^{0}$ for all $2 \leq i \leq k_0$. 
From this choice of $G$, we can verify that $\widetilde{W}_{\kappa}^{r}(G,G_{0}) 
\asymp p_1^0 (|\Delta \eta_{11}^{(1)}| + \sum_{i=2}^{d} (\Delta \eta_{11}^{(i)})^2)
\asymp |\Delta \eta_{11}^{(1)}|$. By carrying out Taylor expansion of the likelihood function up to the second order, 
we obtain a $\kappa$-minimal form for the sequence $G$, 
\begin{eqnarray}
\dfrac{p_{G}(x)-p_{G_{0}}(x)}{\widetilde{W}_{\kappa}^{r}(G,G_{0})} \asymp \dfrac{\sum \limits_{i=1}^{2} \sum \limits_{\alpha: |\alpha| \leq 2}(\Delta \eta_{1i})^{\alpha}\dfrac{\partial{f}^{|\alpha|}}{\partial{\eta^{\alpha}}}(x|\eta_{1}^{0})}{|\Delta \eta_{11}^{(1)}|}  \label{eqn:strong_identifiability_index_proof}
\end{eqnarray}
thanks to the second-order identifiability condition on $f$. Since the $\kappa$-minimal form coefficients all vanish due to our choice of 
$\eta_{11}$ and $\eta_{12}$,
we conclude that $G_{0}$ is $\kappa$-singular relative to $\Ocal_{k}$ when $\kappa=(1,r,\ldots,r)$ and $r=2$.

Now, for general value of $r \geq 3$ and $\kappa=(1,r,\ldots,r)$, with the choice of $
\eta_{11}$ and $\eta_{12}$ such that $\Delta \eta_{11}= -\Delta \eta_{12}$ and $
(\Delta \eta_{11}^{(i)})^{r}/\Delta \eta_{11}^{(1)} \to 0$ for any $2 \leq i \leq d$ we can verify that $\widetilde{W}_{\kappa}^{r}(G,G_{0}) 
\asymp |\Delta \eta_{11}^{(1)}|$ and 
\begin{eqnarray}
\biggr(p_{11}(\Delta \eta_{11})^{\alpha}+p_{12}(\Delta \eta_{12})^{\alpha}\biggr)/ \widetilde{W}_{\kappa}^{r}(G,G_{0}) \to 0 \label{eqn:strong_identifiability_index_proof_second}
\end{eqnarray}
for any $|\alpha| \geq 3$. By means of Taylor expansion up to the $r$ order, a $\kappa$-minimal form for the sequence $G$ is as follows
\begin{align}
\dfrac{p_{G}(x)-p_{G_{0}}(x)}{\widetilde{W}_{\kappa}^{r}(G,G_{0})} & \asymp \dfrac{\sum \limits_{i=1}^{2} \sum \limits_{\alpha: |\alpha| \leq r}(\Delta \eta_{1i})^{\alpha}\dfrac{\partial{f}^{|\alpha|}}{\partial{\eta^{\alpha}}}(x|\eta_{1}^{0})}{|\Delta \eta_{11}^{(1)}|} \nonumber \\
& \asymp \dfrac{\sum \limits_{i=1}^{2} \sum \limits_{\alpha: |\alpha| \leq 2}(\Delta \eta_{1i})^{\alpha}\dfrac{\partial{f}^{|\alpha|}}{\partial{\eta^{\alpha}}}(x|\eta_{1}^{0})}{|\Delta \eta_{11}^{(1)}|} \to 0. \nonumber
\end{align}
Here, the second asymptotic result is due to results in~
\eqref{eqn:strong_identifiability_index_proof_second} while the last 
limit is due to the choice of $G$.
%{\color{blue} Note: this is a shortcut argument, since here the Taylor expansion is up to order $|\alpha| \geq 3$.} \nhat{I put a detail explanation in red color} 
Therefore, $G_{0}$ is $\kappa$-singular relative to $\Ocal_{k}$ when $\kappa=(1,r,
\ldots,r)$ for any $r \geq 1$. It follows that $(2,\ldots,2)$ is the unique singularity index 
of $G_{0}$ relative to $\Ocal_{k}$, i.e., $\singset(G_{0}|\Ocal_{k})=\left\{(2,\ldots,2)
\right\}$.

%%%%%%%%%%%%%%%%%%%%%%%%%%%%%%%%%%%%%%%%%%%%%%%%%%%%%%%%%%%%%%%%%%%%%%%%%%%%%%%%%
\paragraph{PROOF OF PROPOSITION ~\ref{proposition:Gaussian_mulindex}}
%%%%%%%%%%%%%%%%%%%%%%%%%%%%%%%%
%%%%%%%%%%%%%%%%%%%%%%%%%%%%%%%%%%%%%%%%%%%%%%%
Here, we only provide the proof of this proposition for the case $\overline{r}(k-k_{0})$ is an even 
number as the argument for the case $\overline{r}(k-k_{0})$ is an odd number is similar. 
We follow the argument from the proof of Proposition 2.2 in \cite{Ho-Nguyen-Ann-16}. 
Denote $v=\sigma^{2}$ and $\overline{r}=\overline{r}(k-k_{0})$. For any $\|\kappa\|
_{\infty}=r$ and $r \geq 1$, let $G \in \Ocal_{k,c_{0}} \to G_{0}$ under $
\widetilde{W}_{\kappa}$ distance. According to Step 1 to Step 3 in the proof of 
Proposition 2.2 in \cite{Ho-Nguyen-Ann-16} we have the $\kappa$-minimal form for the sequence $G$ as
\begin{eqnarray}
\dfrac{p_{G}(x)-p_{G_{0}}(x)}{\widetilde{W}_{\kappa}^{r}(G,G_{0})} \asymp \dfrac{A_{1}(x)+B_{1}(x)}{\widetilde{W}_{\kappa}^{r}(G,G_{0})} \nonumber
\end{eqnarray}
where $A_{1}(x)=\sum \limits_{i=1}^{k_{0}}{(p_{i\cdot}-p_{i}^{0})f(x|\theta_{i}
^{0},v_{i}^{0})}$ and $B_{1}(x)=\mathop {\sum }\limits_{i=1}^{k_{0}}{\mathop 
{\sum }\limits_{j=1}^{s_{i}}{p_{ij}^{n}{\mathop {\sum }\limits_{\alpha \geq 1}
{\mathop {\sum }\limits_{n_{1}, n_{2}}{\dfrac{(\Delta \theta_{ij})^{n_{1}}(\Delta 
v_{ij})^{n_{2}}}{2^{n_{2}}n_{1}!n_{2}!}}\dfrac{\partial^{\alpha}{f}}{\partial{\theta}
^{\alpha}}(x|\theta_{i}^{0}}}}}$\\$,v_{i}^{0})$. The natural indices $n_{1}, n_{2}$ in the sum 
satisfy $n_{1}+2n_{2}=\alpha$ and $n_{1}+n_{2} \leq r$. To obtain the conclusion of 
the proposition, we divide the proof argument into the following key steps.

\paragraph{Step 1} We will show $G_{0}$ is not $(\overline{r},\overline{r}/2)$-singular relative to $
\Ocal_{k,c_{0}}$. In fact, by choosing $\kappa=(\overline{r},\overline{r}/2)$ and 
assume that all the coefficients of $A_{1}(x)/\widetilde{W}_{\kappa}^{r}(G,G_{0})$ 
and $B_{1}(x)/\widetilde{W}_{\kappa}^{r}(G,G_{0})$ go to 0, with the same argument 
as Step 8 of the proof of Proposition 2.2 in \cite{Ho-Nguyen-Ann-16} we eventually 
reach to the following system of limits
\begin{eqnarray}
E_{\alpha}=\dfrac{\mathop {\sum }\limits_{j=1}^{s_{1}}{p_{1j}^{n}\mathop {\sum }\limits_{\substack{n_{1}+2n_{2}=\alpha \\ n_{1}+n_{2} \leq \overline{r}}}{\dfrac{(\Delta \theta_{1j})^{n_{1}}(\Delta v_{1j})^{n_{2}}}{2^{n_{2}}n_{1}!n_{2}!}}}}{\mathop {\sum }\limits_{j=1}^{s_{1}}{p_{1j}^{n}(|\Delta \theta_{1j}|^{\overline{r}}+|\Delta v_{1j})|^{\overline{r}/2})}} \to 0 \nonumber
\end{eqnarray}
Denote $\overline{p}=\mathop {\max }\limits_{1 \leq j \leq s_{1}}{\left\{p_{1j}\right\}}$ and $\overline{M}=\mathop {\max }\biggr\{|\Delta \theta_{11}|,\ldots,|\Delta 
\theta_{1s_{1}}|,|\Delta v_{11}|^{1/2},\ldots,|\Delta v_{1s_{1}}|^{1/2}\biggr\}$. Let 
$\Delta \theta_{1j}/\overline{M} \to a_{j}$, $\Delta v_{1j}/\overline{M} \to 2b_{j}$, $
{p_{1j}/\overline{p}} \to c_{j}^{2}>0$ for all 
$j=1,\ldots,s_{1}$. By dividing both the numerator and denominator of $E_{\alpha}$ by 
$\overline{M}^{\alpha}$, we quickly achieve the system of polynomial equations 
\eqref{eqn:generalovefittedGaussianzero_Gaussian_mulindex}. From the definition of $
\overline{r}$, this system does not admit any non-trivial solution, which is a 
contradiction. As a consequence, $G_{0}$ is not $(\overline{r},\overline{r}/2)$-singular 
relative to $\Ocal_{k,c_{0}}$.
\paragraph{Step 2} We will show that $G_{0}$ is $(l,\overline{r}/2-1)$-singular and $(\overline{r}-1,l)$-singular relative to $\Ocal_{k,c_{0}}$ for any $l \geq \overline{r}$. Indeed, the sequence of $G$ provided in the proof of Proposition 2.2 in \cite{Ho-Nguyen-Ann-16} is sufficient to verify these results. In particular, let $G$ be constructed as
\[\theta_{1j}=\theta_{1}^{0}+\dfrac{a_{j}^{*}}{n},\;
v_{1j}=v_{1}^{0}+\dfrac{2b_{j}^{*}}{n^{2}},\; 
p_{1j}=\frac{p_{1}^{0}(c_{j}^{*})^{2}}{\mathop {\sum }\limits_{j=1}^{k-k_{0}+1}{(c_{j}^{*})^{2}}},\;
\text{for all}\; j=1,\ldots,k-k_{0}+1,\] 
and 
$\theta_{i1}=\theta_{i}^{0}, \; 
v_{i1}=v_{i}^{0}, \;
p_{i1}=p_{i}^{0}$
for all $i = 2,\ldots, k_{0}$ where $(c_{i}^{*},a_{i}^{*},b_{i}^{*})_{i=1}^{k-k_{0}+1}$ 
is a non-trivial solution of the system of equations \eqref{eqn:generalovefittedGaussianzero_Gaussian_mulindex} with $r=\overline{r}-1$. 
With these choices of $G$, we can easily verify that $\widetilde{W}_{\kappa}
^{\overline{r}}(G,G_{0}) \asymp (1/n)^{\overline{r}-2}$ when $\kappa=(l,\overline{r}/
2-1)$ or $\widetilde{W}_{\kappa}^{\overline{r}}(G,G_{0}) \asymp (1/
n)^{\overline{r}-1}$ when $\kappa=(\overline{r}-1,l)$ for any $l \geq \overline{r}$. As 
$B_{1}(x)=O(n^{-\overline{r}})$ (see Step 5 in the proof of Proposition 2.2 in 
\cite{Ho-Nguyen-Ann-16}) and $A_{1}(x)=0$, it implies that all the coefficients of 
$A_{1}(x)/\widetilde{W}_{\kappa}^{l}(G,G_{0})$ and $B_{1}(x)/\widetilde{W}
_{\kappa}^{l}(G,G_{0})$ go to 0 when $\kappa=(l,\overline{r}/2-1)$ or $
\kappa=(\overline{r}-1,l)$. Hence, $G_{0}$ is $(l,\overline{r}/2-1)$-singular 
and $(\overline{r}-1,l)$-singular relative to $\Ocal_{k,c_{0}}$ for any $l \geq \overline{r}$.

In summary, the results of Step 1 and Step 2 demonstrate that $(\overline{r},
\overline{r}/2)$ is the unique singularity index of $G_{0}$ relative to $\Ocal_{k,c_{0}}$, which concludes the proof.

\subsection{Proofs for Section \ref{Section:overfitskew}}
%%%%%%%%%%%%%%%%%%%%%%%%%%%%%%%%%%%%%%%%%%%%%%%%%%%%%%%%%%%%%%%%%%%%%%%%%%%%%%%%%
\paragraph{PROOF OF LEMMA~\ref{proposition-notskewnormal}}
%%%%%%%%%%%%%%%%%%%%%%%%%%%%%%%%
%%%%%%%%%%%%%%%%%%%%%%%%%%%%%%%%%%%%%%%%%%%%%%%%%
For any $k_{0} \geq 1$ and $k_{0}$ different pairs $\eta_{1}=(\theta_{1},\sigma_{1},m_{1}),
\ldots,\eta_{k_{0}}=(\theta_{k_{0}},\sigma_{k_{0}},m_{k_{0}})$, 
let $\alpha_{ij} \in \mathbb{R}$ for $i=1,\ldots,4,\; j=1,\ldots, k_{0}$
such that for almost all $x \in \mathbb{R}$
\begin{eqnarray}
\mathop {\sum }\limits_{j=1}^{k_{0}}{\alpha_{1j}f(x|\eta_{j})+
\alpha_{2j}\dfrac{\partial{f}}{\partial{\theta}}(x|\eta_{j})+\alpha_{3j}
\dfrac{\partial{f}}{\partial{\sigma^{2}}}(x|\eta_{j})
\alpha_{4j}\dfrac{\partial{f}}{\partial{m}}(x|\eta_{j})}  =  0. \nonumber
\end{eqnarray}
We can rewrite the above equation as
\begin{eqnarray}
\mathop {\sum }\limits_{j=1}^{k_{0}} \biggr \{[\beta_{1j}+\beta_{2j}(x-\theta_{j})+
\beta_{3j}(x-\theta_{j})^{2}]\Phi\left(\dfrac{m_{j}(x-\theta_{j})}{\sigma_{j}}\right)\exp
\left(-\dfrac{(x-\theta_{j})^{2}}{2\sigma_{j}^{2}}\right)+ \nonumber \\
(\gamma_{1j}+\gamma_{2j}(x-\theta_{j})) f\left(\dfrac{m_{j}(x-\theta_{j})}{\sigma_{j}}\right)\exp\left(-\dfrac{(x-\theta_{j})^{2}}{2\sigma_{j}^{2}}\right) \biggr \}=0, \label{eqn:notidentifiableskewnormalone}
\end{eqnarray}
where $\beta_{1j}=\dfrac{2\alpha_{1j}}{\sqrt{2\pi}\sigma_{j}}-\dfrac{\alpha_{3j}}
{\sqrt{2\pi}\sigma_{j}^{3}}$,\;\; $\beta_{2j}=\dfrac{2\alpha_{2j}}{\sqrt{2\pi}\sigma_{j}
^{3}}$,\;\; 
$\beta_{3j}=\dfrac{\alpha_{3j}}{\sqrt{2\pi}\sigma_{j}^{5}}$,\;\; $\gamma_{1j}=-
\dfrac{2\alpha_{2j}m_{j}}{\sqrt{2\pi}\sigma_{j}^{2}}$, 
and $\gamma_{2j}=-\dfrac{\alpha_{3j}m_{j}}{\sqrt{2\pi}\sigma_{j}^{4}}+
\dfrac{2\alpha_{4j}}{\sqrt{2\pi}\sigma_{j}^{2}}$ 
for all $j=1,\ldots,k_{0}$. 

\paragraph{"Only if" direction:} 
Assume by contrary that the conclusion does not hold,
i.e., both type A and type B conditions do not hold. Denote $\sigma_{j+k_{0}}=\dfrac{\sigma_{j}
^{2}}{1+m_{j}^{2}}$ for all $1 \leq j \leq k_{0}$. For the simplicity of the argument, we 
assume that $\sigma_{i}$ are pairwise different and $\dfrac{\sigma_{i}^{2}}{1+m_{i}^{2}} 
\not \in \left\{\sigma_{j}^{2}: 1 \leq j \leq k_{0} \right\}$ for all $1 \leq i \leq k_{0}$. The 
argument for the other cases is similar. Now, $\sigma_{j}$ are pairwise different as $1 \leq j \leq 2k_{0}$. The equation 
\eqref{eqn:notidentifiableskewnormalone} can be rewritten as
\begin{eqnarray}
\mathop {\sum }\limits_{j=1}^{2k_{0}}\biggr \{[\beta_{1j}+\beta_{2j}(x-\theta_{j})+
\beta_{3j}(x-\theta_{j})^{2}]\Phi\left(\dfrac{m_{j}(x-\theta_{j})}{\sigma_{j}}\right) \exp\left(-\dfrac{(x-\theta_{j})^{2}}{2\sigma_{j}^{2}}\right)\biggr \} & = & 0, \label{eqn:exactfittedidentifiableskewnormalsecond}
\end{eqnarray}
where $m_{j}=0$, $\theta_{j+k_{0}}=\theta_{j}$, $\beta_{1(j+k_{0})}=\dfrac{2\gamma_{1j}}{\sqrt{2\pi}},
\beta_{2(j+k_{0})}=\dfrac{2\gamma_{2j}}{\sqrt{2\pi}}, \beta_{3j}=0$ as $k_{0}+1 \leq j \leq 2k_{0}$. 
Denote $\overline{i}=\mathop {\arg \max }\limits_{1 \leq i \leq 2k_{0}}{\left\{\sigma_{i}\right\}}$. 
Multiply both sides of \eqref{eqn:exactfittedidentifiableskewnormalsecond} with $\exp\left(\dfrac{(x-\theta_{\overline{i}})^{2}}{2\sigma_{\overline{i}}^{2}}\right)/
\Phi\left(\dfrac{m_{\overline{i}}(x-\theta_{\overline{i}})}{\sigma_{\overline{i}}}\right)$ 
and let $x \to +\infty$ if $m_{\overline{i}} \geq 0$ or let $x \to -\infty$ if $m_{\overline{i}}<0$ 
on both sides of the new equation, we obtain $\beta_{1\overline{i}}+\beta_{2\overline{i}}(x-\theta_{\overline{i}})+\beta_{2\overline{i}}(x-\theta_{\overline{i}})^{2} \to 0$. 
It implies that $\beta_{1\overline{i}}=\beta_{2\overline{i}}=\beta_{3\overline{i}}=0$. 
Repeatedly apply the same 
argument to the remaining $\sigma_{i}$ until we obtain 
$\beta_{1i}=\beta_{2i}=\beta_{3i}=0$ for all $1 \leq i \leq 2k_{0}$. 
It is equivalent to $\alpha_{1i}=\alpha_{2i}=\alpha_{3i}=\alpha_{4i}=0$ for all 
$1 \leq i \leq k_{0}$, which is a contradiction. 

\paragraph{"If" direction:} There are two possible scenarios.

\paragraph{Type A singularity} There exists some $m_{j}=0$ as $1 \leq j \leq k_{0}$. In this 
case, we assume that $m_{1}=0$. If we choose $\alpha_{1j}=\alpha_{2j}=\alpha_{3j}=
\alpha_{4j}=0$ for all $2 \leq j \leq k_{0}$, then equation 
\eqref{eqn:notidentifiableskewnormalone} can be rewritten as
\begin{eqnarray}
\dfrac{\beta_{11}}{2}+\dfrac{\gamma_{11}}{\sqrt{2\pi}}+\left(\dfrac{\beta_{21}}{2}+\dfrac{\gamma_{21}}{\sqrt{2\pi}}\right)(x-\theta_{1})+\dfrac{\beta_{31}}{2}(x-\theta_{1})^{2}=0. \nonumber
\end{eqnarray}
By choosing $\alpha_{31}=0$, $\alpha_{11}=\dfrac{\alpha_{21}m_{1}}{\sqrt{2\pi}
\sigma_{1}}$, $\alpha_{21}=-\dfrac{\alpha_{41}\sigma_{1}}{\sqrt{2\pi}}$, 
the above equation always equal to 0. Since $\alpha_{11},\alpha_{21},\alpha_{41}$ are not 
necessarily zero, the first-order identifiability (i.e., linear independence condition)
is violated.

\paragraph{Type B singularity} There exists indices $1 \leq i \neq j \leq k_{0}$ such 
that $\left(\dfrac{\sigma_{i}^{2}}{1+m_{i}^{2}},\theta_{i}\right)=\left(\dfrac{\sigma_{j}
^{2}}{1+m_{j}^{2}},\theta_{j} \right)$. Without loss of generality, we assume that $i=1, 
j=2$. If we choose $\alpha_{1j}=\alpha_{2j}=\alpha_{3j}=
\alpha_{4j}=0$ for all $3 \leq j \leq k_{0}$, then equation in 
\eqref{eqn:notidentifiableskewnormalone} can be rewritten as
\begin{eqnarray}
\mathop {\sum }\limits_{j=1}^{2} \biggr \{[\beta_{1j}+\beta_{2j}(x-\theta_{j})+\beta_{3j}(x-\theta_{j})^{2}]\Phi\left(\dfrac{m_{j}(x-\theta_{j})}{\sigma_{j}}\right)\exp\left(-\dfrac{(x-\theta_{j})^{2}}{2\sigma_{j}^{2}}\right) \biggr \}+ \nonumber \\
\dfrac{1}{\sqrt{2\pi}}\left(\mathop {\sum }\limits_{j=1}^{2}{\gamma_{1j}}+\mathop {\sum }\limits_{j=1}^{2}{\gamma_{2j}}(x-\theta_{1})\right)\exp\left(-\dfrac{(m_{1}^{2}+1)(x-\theta_{1})^{2}}{2\sigma_{1}^{2}}\right)=0. \nonumber
\end{eqnarray}
Now, we choose $\alpha_{1j}=\alpha_{2j}=\alpha_{3j}=0$ for all $1 \leq j \leq 2$, $
\dfrac{\alpha_{41}}{\sigma_{1}^{2}}+\dfrac{\alpha_{42}}{\sigma_{2}^{2}}=0$ then the 
above equation always hold. 
Since $\alpha_{41}$ and $\alpha_{42}$ need not be zero, the first-order identifiability
condition is violated. This concludes the proof.

%%%%%%%%%%%%%%%%%%%%%%%%%%%%%%%%%%%%%%%%%%%%%%%
%%%%%%%%%%%%%%%%%%%%%%%%%%%%%%%%%
\paragraph{PROOF OF LEMMA \ref{lemma:recursiveequation}}
%%%%%%%%%%%%%%%%%%%%%%%%%%%%%%%%%
%%%%%%%%%%%%%%%%%%%%%%%%%%%%%%%%%%%%%%%%%%%%%%%

The proof proceeds via induction on $\alpha_{1}+2\alpha_{2}+2\alpha_{3}$. To ease the presentation, we denote throughout this proof the following notation
\begin{align}
\mathcal{U}_{\alpha} = \left\{\kappa \in \mathcal{F}_{|\alpha|}: \ \kappa_{1}+2\kappa_{2}+2\kappa_{3} \leq \alpha_{1}+2\alpha_{2}+2\alpha_{3}\right\}. \nonumber
\end{align}
As $\alpha_{1}+2\alpha_{2}+2\alpha_{3} \leq 2$, we can easily check the conclusion of the lemma. Assume that the 
conclusion holds for any $\alpha_{1}+2\alpha_{2}+2\alpha_{3} \leq k-1$. We shall demonstrate that 
it also holds for $\alpha_{1}+2\alpha_{2}+2\alpha_{3} =k$. Indeed, there are two settings:
\paragraph{Case 1: $\alpha_{1}=k$} Under this setting, 
$\alpha_{2}=\alpha_{3}=0$. From the induction hypothesis,
\begin{eqnarray}
\dfrac{\partial^{|\alpha|}{f}}{\partial{\theta^{\alpha_{1}}\partial{v^{\alpha_{2}}}
\partial{m^{\alpha_{3}}}}} & = & \dfrac{\partial}{\partial{\theta}}\biggr(\dfrac{\partial^{|\alpha|-1}{f}}{\partial{\theta^{\alpha_{1}-1}}\partial{v^{\alpha_{2}}}\partial{m^{\alpha_{3}}}}\biggr) \nonumber \\
& = & \dfrac{\partial}{\partial{\theta}}\biggr(\sum \limits_{\kappa \in \mathcal{U}_{(\alpha_{1}-1,\alpha_{2},\alpha_{3})}}{\dfrac{P_{\alpha_{1}-1,\alpha_{2},\alpha_{3}}
^{\kappa_{1},\kappa_{2},\kappa_{3}}(m)}{H_{\alpha_{1}-1,\alpha_{2},\alpha_{3}}
^{\kappa_{1},\kappa_{2},\kappa_{3}}(m)Q_{\alpha_{1}-1,\alpha_{2},\alpha_{3}}
^{\kappa_{1},\kappa_{2},\kappa_{3}}(v)}\dfrac{\partial^{|\kappa|}{f}}
{\partial{\theta^{\kappa_{1}}\partial{v}^{\kappa_{2}}\partial{m}^{\kappa_{3}}}}}\biggr)  \nonumber \\
& = & \sum \limits_{\kappa \in \mathcal{U}_{(\alpha_{1}-1,\alpha_{2},\alpha_{3})}}{\dfrac{P_{\alpha_{1}-1,\alpha_{2},\alpha_{3}}
^{\kappa_{1},\kappa_{2},\kappa_{3}}(m)}{H_{\alpha_{1}-1,\alpha_{2},\alpha_{3}}
^{\kappa_{1},\kappa_{2},\kappa_{3}}(m)Q_{\alpha_{1}-1,\alpha_{2},\alpha_{3}}
^{\kappa_{1},\kappa_{2},\kappa_{3}}(v)}\dfrac{\partial^{|\kappa|+1}{f}}
{\partial{\theta^{\kappa_{1}+1}\partial{v}^{\kappa_{2}}\partial{m}^{\kappa_{3}}}}}, \nonumber \\
& = & \sum \limits_{\kappa \in \mathcal{U}_{(\alpha_{1}-1,\alpha_{2},\alpha_{3})}: \kappa_{1}=0}{\dfrac{P_{\alpha_{1}-1,\alpha_{2},\alpha_{3}}
^{\kappa_{1},\kappa_{2},\kappa_{3}}(m)}{H_{\alpha_{1}-1,\alpha_{2},\alpha_{3}}
^{\kappa_{1},\kappa_{2},\kappa_{3}}(m)Q_{\alpha_{1}-1,\alpha_{2},\alpha_{3}}
^{\kappa_{1},\kappa_{2},\kappa_{3}}(v)}\dfrac{\partial^{|\kappa|+1}{f}}
{\partial{\theta^{\kappa_{1}+1}\partial{v}^{\kappa_{2}}\partial{m}^{\kappa_{3}}}}} \nonumber \\
& + & \sum \limits_{\kappa \in \mathcal{U}_{(\alpha_{1}-1,\alpha_{2},\alpha_{3})}: \kappa_{1}=1}{\dfrac{P_{\alpha_{1}-1,\alpha_{2},\alpha_{3}}
^{\kappa_{1},\kappa_{2},\kappa_{3}}(m)}{H_{\alpha_{1}-1,\alpha_{2},\alpha_{3}}
^{\kappa_{1},\kappa_{2},\kappa_{3}}(m)Q_{\alpha_{1}-1,\alpha_{2},\alpha_{3}}
^{\kappa_{1},\kappa_{2},\kappa_{3}}(v)}\dfrac{\partial^{|\kappa|+1}{f}}
{\partial{\theta^{\kappa_{1}+1}\partial{v}^{\kappa_{2}}\partial{m}^{\kappa_{3}}}}} \quad \label{lemma:representation_first}
\end{eqnarray}
where the second equality is due to the application of the 
hypothesis for $\alpha_{1}-1+2\alpha_{2}+2\alpha_{3} = k-1$. 
For any $\kappa \in \mathcal{U}_{(\alpha_{1}-1,\alpha_{2},\alpha_{3})}$ such that $\kappa_{1}=1$, 
\begin{eqnarray}
\dfrac{\partial^{|\kappa|+1}{f}}
{\partial{\theta^{\kappa_{1}+1}\partial{v}^{\kappa_{2}}\partial{m}^{\kappa_{3}}}} & = & \dfrac{\partial^{|\kappa|-1}{f}}{\partial{v^{\kappa_{2}}}\partial{m^{\kappa_{3}}}}\biggr(2\dfrac{\partial{f}}{\partial{v}}-\dfrac{m^{3}+m}{v}\dfrac{\partial{f}}{\partial{m}}\biggr) \nonumber \\
& = & 2\dfrac{\partial^{|\kappa|}{f}}{\partial{v^{\kappa_{2}+1}}\partial{m^{\kappa_{3}}}}-\dfrac{\partial^{|\kappa|-1}{f}}{\partial{v^{\kappa_{2}}}\partial{m^{\kappa_{3}}}}\biggr(\dfrac{m^{3}+m}{v}\dfrac{\partial{f}}{\partial{m}}\biggr). \label{lemma:representation_second}
\end{eqnarray}
From the inductive hypothesis, since $\kappa_{1}+2\kappa_{2}+2\kappa_{3}=2\kappa_{2}+2\kappa_{3}+1 \leq k-1$, 
\begin{eqnarray}
\dfrac{\partial^{|\kappa|}{f}}{\partial{v^{\kappa_{2}+1}}\partial{m^{\kappa_{3}}}} = \sum \limits_{\kappa' \in \mathcal{U}_{(0,\kappa_{2}+1,\kappa_{3})}}{\dfrac{P_{0,\kappa_{2}+1,\kappa_{3}}
^{\kappa_{1}',\kappa_{2}',\kappa_{3}'}(m)}{H_{0,\kappa_{2}+1,\kappa_{3}}
^{\kappa_{1}',\kappa_{2}',\kappa_{3}'}(m)Q_{0,\kappa_{2}+1,\kappa_{3}}
^{\kappa_{1}',\kappa_{2}',\kappa_{3}'}(v)}\dfrac{\partial^{|\kappa'|}{f}}
{\partial{\theta^{\kappa_{1}'}\partial{v}^{\kappa_{2}'}\partial{m}^{\kappa_{3}'}}}}.\label{lemma:representation_third}
\end{eqnarray}
In addition,
\begin{eqnarray}
\dfrac{\partial^{|\kappa|-1}{f}}{\partial{v^{\kappa_{2}}}\partial{m^{\kappa_{3}}}}\biggr(\dfrac{m^{3}+m}{v}\dfrac{\partial{f}}{\partial{m}}\biggr) = \sum \limits_{\beta: |\beta| \leq |\kappa|,\beta_{1} \leq \kappa_{2}, \beta_{2} \leq \kappa_{3}+1}{\dfrac{A_{\beta_{1},\beta_{2}}(m)}{B_{\beta_{1},\beta_{2}}(v)}\dfrac{\partial^{|\beta|}{f}}{\partial{v^{\beta_{1}}}\partial{m^{\beta_{2}}}}}. \label{lemma:representation_fourth}
\end{eqnarray}
Since $2\beta_{1}+2\beta_{2} \leq 2\kappa_{2}+2\kappa_{3} \leq k-1$, from the hypothesis,
\begin{eqnarray}
\dfrac{\partial^{|\beta|}{f}}{\partial{v^{\beta_{1}}}\partial{m^{\beta_{2}}}}=\sum \limits_{\kappa'' \in \mathcal{U}_{(0,\beta_{1},\beta_{2})}}{\dfrac{P_{0,\beta_{1},\beta_{2}}
^{\kappa_{1}'',\kappa_{2}'',\kappa_{3}''}(m)}{H_{0,\beta_{1},\beta_{2}}
^{\kappa_{1}'',\kappa_{2}'',\kappa_{3}''}(m)Q_{0,\beta_{1},\beta_{2}}
^{\kappa_{1}'',\kappa_{2}'',\kappa_{3}''}(v)}\dfrac{\partial^{|\kappa''|}{f}}
{\partial{\theta^{\kappa_{1}''}\partial{v}^{\kappa_{2}''}\partial{m}^{\kappa_{3}''}}}}. \label{lemma:representation_fifth}
\end{eqnarray}
Combining equations \eqref{lemma:representation_first}, \eqref{lemma:representation_second}, \eqref{lemma:representation_third}, \eqref{lemma:representation_fourth}, and \eqref{lemma:representation_fifth}, we arrive at the conclusion of the lemma.
\paragraph{Case 2: $\alpha_{1} \leq k-1$} Under this setting, assume without loss of 
generality that $\alpha_{2} \geq 1$. 
\begin{eqnarray}
\dfrac{\partial^{|\alpha|}{f}}{\partial{\theta^{\alpha_{1}}\partial{v^{\alpha_{2}}}
\partial{m^{\alpha_{3}}}}} & = &  \dfrac{\partial}{\partial{v}}\biggr(\dfrac{\partial^{|\alpha|-1}{f}}{\partial{\theta^{\alpha_{1}}\partial{v^{\alpha_{2}-1}}
\partial{m^{\alpha_{3}}}}}\biggr) \nonumber \\
& = & \dfrac{\partial}{\partial{v}}\biggr( \sum \limits_{\kappa \in \mathcal{U}_{(\alpha_{1},\alpha_{2}-1,\alpha_{3})}}{\dfrac{P_{\alpha_{1},\alpha_{2}-1,\alpha_{3}}
^{\kappa_{1},\kappa_{2},\kappa_{3}}(m)}{H_{\alpha_{1},\alpha_{2}-1,\alpha_{3}}
^{\kappa_{1},\kappa_{2},\kappa_{3}}(m)Q_{\alpha_{1},\alpha_{2}-1,\alpha_{3}}
^{\kappa_{1},\kappa_{2},\kappa_{3}}(v)}\dfrac{\partial^{|\kappa|}{f}}
{\partial{\theta^{\kappa_{1}}\partial{v}^{\kappa_{2}}\partial{m}^{\kappa_{3}}}}}\biggr)\nonumber \\
& = & \sum \limits_{\kappa \in \mathcal{U}_{(\alpha_{1},\alpha_{2}-1,\alpha_{3})}}{\dfrac{\partial{}}{\partial{v}}\biggr(\dfrac{P_{\alpha_{1},\alpha_{2}-1,\alpha_{3}}
^{\kappa_{1},\kappa_{2},\kappa_{3}}(m)}{H_{\alpha_{1},\alpha_{2}-1,\alpha_{3}}
^{\kappa_{1},\kappa_{2},\kappa_{3}}(m)Q_{\alpha_{1},\alpha_{2}-1,\alpha_{3}}
^{\kappa_{1},\kappa_{2},\kappa_{3}}(v)}\biggr)\dfrac{\partial^{|\kappa|}{f}}
{\partial{\theta^{\kappa_{1}}\partial{v}^{\kappa_{2}}\partial{m}^{\kappa_{3}}}}} \nonumber \\
& + & \sum \limits_{\kappa \in \mathcal{U}_{(\alpha_{1},\alpha_{2}-1,\alpha_{3})}}{\dfrac{P_{\alpha_{1},\alpha_{2}-1,\alpha_{3}}
^{\kappa_{1},\kappa_{2},\kappa_{3}}(m)}{H_{\alpha_{1},\alpha_{2}-1,\alpha_{3}}
^{\kappa_{1},\kappa_{2},\kappa_{3}}(m)Q_{\alpha_{1},\alpha_{2}-1,\alpha_{3}}
^{\kappa_{1},\kappa_{2},\kappa_{3}}(v)}\dfrac{\partial^{|\kappa|+1}{f}}
{\partial{\theta^{\kappa_{1}}\partial{v}^{\kappa_{2}+1}\partial{m}^{\kappa_{3}}}}}. \label{lemma:representation_sixth}
\end{eqnarray}
Denote $A:= \sum \limits_{\kappa \in \mathcal{U}_{(\alpha_{1},\alpha_{2}-1,\alpha_{3})}}{\dfrac{P_{\alpha_{1},\alpha_{2}-1,\alpha_{3}}
^{\kappa_{1},\kappa_{2},\kappa_{3}}(m)}{H_{\alpha_{1},\alpha_{2}-1,\alpha_{3}}
^{\kappa_{1},\kappa_{2},\kappa_{3}}(m)Q_{\alpha_{1},\alpha_{2}-1,\alpha_{3}}
^{\kappa_{1},\kappa_{2},\kappa_{3}}(v)}\dfrac{\partial^{|\kappa|+1}{f}}
{\partial{\theta^{\kappa_{1}}\partial{v}^{\kappa_{2}+1}\partial{m}^{\kappa_{3}}}}}$, we further have that
\begin{eqnarray}
A & = & \sum \limits_{\kappa \in \mathcal{U}_{(\alpha_{1},\alpha_{2}-1,\alpha_{3})}: \kappa_{3}=0}{\dfrac{P_{\alpha_{1},\alpha_{2}-1,\alpha_{3}}
^{\kappa_{1},\kappa_{2},\kappa_{3}}(m)}{H_{\alpha_{1},\alpha_{2}-1,\alpha_{3}}
^{\kappa_{1},\kappa_{2},\kappa_{3}}(m)Q_{\alpha_{1},\alpha_{2}-1,\alpha_{3}}
^{\kappa_{1},\kappa_{2},\kappa_{3}}(v)}\dfrac{\partial^{|\kappa|+1}{f}}
{\partial{\theta^{\kappa_{1}}\partial{v}^{\kappa_{2}+1}\partial{m}^{\kappa_{3}}}}} \nonumber \\
& + & \sum \limits_{\kappa \in \mathcal{U}_{(\alpha_{1},\alpha_{2}-1,\alpha_{3})}: \kappa_{2}=0, \kappa_{3} \geq 1}{\dfrac{P_{\alpha_{1},\alpha_{2}-1,\alpha_{3}}
^{\kappa_{1},\kappa_{2},\kappa_{3}}(m)}{H_{\alpha_{1},\alpha_{2}-1,\alpha_{3}}
^{\kappa_{1},\kappa_{2},\kappa_{3}}(m)Q_{\alpha_{1},\alpha_{2}-1,\alpha_{3}}
^{\kappa_{1},\kappa_{2},\kappa_{3}}(v)}\dfrac{\partial^{|\kappa|+1}{f}}
{\partial{\theta^{\kappa_{1}}\partial{v}^{\kappa_{2}+1}\partial{m}^{\kappa_{3}}}}}. \label{lemma:representation_seventh}
\end{eqnarray}
Since $m \neq 0$, for any $\kappa \in \mathcal{U}_{(\alpha_{1},\alpha_{2}-1,\alpha_{3})}$ such that $\kappa_{2}=0$ and $\kappa_{3} \geq 1$, we have
\begin{eqnarray}
\dfrac{\partial^{|\kappa|+1}{f}}
{\partial{\theta^{\kappa_{1}}\partial{v}^{\kappa_{2}+1}\partial{m}^{\kappa_{3}}}} & = & \dfrac{\partial^{|\kappa|-1}{f}}{\partial{\theta^{\kappa_{1}}}\partial{m^{\kappa_{3}-1}}}\biggr(-\dfrac{1}{v}\dfrac{\partial{f}}{\partial{m}}-\dfrac{m^{2}+1}{2mv}\dfrac{\partial^{2}{f}}{\partial{m^{2}}}\biggr) \nonumber \\
& = & -\dfrac{1}{v}\dfrac{\partial^{|\kappa|}{f}}{\partial{\theta^{\kappa_{1}}}\partial{m^{\kappa_{3}}}}-\dfrac{\partial^{|\kappa|-1}{f}}{\partial{\theta^{\kappa_{1}}}\partial{m^{\kappa_{3}-1}}}\biggr(\dfrac{m^{2}+1}{2mv}\dfrac{\partial^{2}{f}}{\partial{m^{2}}}\biggr). \label{lemma:representation_eighth}
\end{eqnarray}
Since $|\kappa|=\kappa_{1}+\kappa_{3} \leq k-1$ and $\kappa_{1} \leq 1$, we have $(\kappa_{1},0,\kappa_{3}) \in \mathcal{F}_{k}$. Additionally, we can represent
\begin{eqnarray}
\dfrac{\partial^{|\kappa|-1}{f}}{\partial{\theta^{\kappa_{1}}}\partial{m^{\kappa_{3}-1}}}\biggr(\dfrac{m^{2}+1}{2mv}\dfrac{\partial^{2}{f}}{\partial{m^{2}}}\biggr) = \sum \limits_{1 \leq \tau \leq \kappa_{3}+1}{\dfrac{A'_{\tau}(m)}{B'_{\tau}(m)C'_{\tau}(v)}\dfrac{\partial^{\kappa_{1}+\tau}{f}}{\partial{\theta^{\kappa_{1}}\partial{m^{\tau}}}}}, \nonumber
\end{eqnarray}
where $A'_{\tau}(m)$, $B'_{\tau}(m)$, $C'_{\tau}(v)$ are some polynomials of $m$ and 
$v$. Since $\kappa_{1}+\tau \leq \kappa_{1}+\kappa_{3}+1 \leq k$ and $\kappa_{1} \leq 
1$, we have $(\kappa_{1},0,\tau) \in \mathcal{F}_{k}$. Combining these results with 
equations \eqref{lemma:representation_sixth}, \eqref{lemma:representation_seventh}, and 
\eqref{lemma:representation_eighth}, we achieve the conclusion of the lemma.

%%%%%%%%%%%%%%%%%%%%%%%%%%%%%%%%%%%%%%%%%%%%%%%%%%%% 
\paragraph{PROOF OF LEMMA \ref{lemma:reduced_linearly_independent}}
%%%%%%%%%%%%%%%%%%%%%%%%%%%%%%%%%%%%%%%%%%%%%%%%%%%%
The proof of this lemma proceeds by induction on $r$. If $r=1$, 
\begin{eqnarray}
\left\{\dfrac{\partial^{|\alpha|}{f}}
{\theta^{\alpha_{1}}v^{\alpha_{2}}m^{\alpha_{3}}}: \ (\alpha_{1},\alpha_{2},\alpha_{3}) \in \mathcal{F}_{r}\right\}=\left\{\dfrac{\partial{f}}{\partial{\theta}},\dfrac{\partial{f}}{\partial{v}},\dfrac{\partial{f}}{\partial{m}}\right\}, \nonumber
\end{eqnarray} 
which are linearly independent with respect to $G_{0} \in \Scal_{0}$ due to the conclusion 
of Lemma \ref{proposition-notskewnormal}. 
Assume that the conclusion of the lemma holds up to $r$. We will 
demonstrate that it continues to hold for $r+1$. In fact, 
\begin{eqnarray}
\left\{\dfrac{\partial^{|\alpha|}{f}}
{\theta^{\alpha_{1}}v^{\alpha_{2}}m^{\alpha_{3}}}: \ (\alpha_{1},\alpha_{2},\alpha_{3}) \in \mathcal{F}_{r+1}\right\}=\left\{\dfrac{\partial^{|\alpha|}{f}}
{\theta^{\alpha_{1}}v^{\alpha_{2}}m^{\alpha_{3}}}: \ (\alpha_{1},\alpha_{2},\alpha_{3}) \in \mathcal{F}_{r}\right\} \cup \nonumber \\
\left\{\dfrac{\partial^{r+1}{f}}{\partial{\theta}\partial{v^{r}}},\dfrac{\partial^{r+1}{f}}{\partial{v^{r+1}}},\dfrac{\partial^{r+1}{f}}{\partial{\theta}\partial{m^{r}}},\dfrac{\partial^{r+1}{f}}{\partial{m^{r+1}}}\right\}. \label{eqn:lemma_reduced_first}
\end{eqnarray}
Assume that there are 
coefficients $\beta_{\alpha_{1},\alpha_{2},\alpha_{3}}^{(i)}$ where $1 \leq i \leq k_{0}$ and $(\alpha_{1},\alpha_{2},\alpha_{3}) \in \mathcal{F}_{r+1}$ such that for all $x$
\begin{eqnarray}
\sum \limits_{i=1}^{k_{0}}{\sum \limits_{(\alpha_{1},\alpha_{2},\alpha_{3}) \in \mathcal{F}_{r+1}}{\beta_{\alpha_{1},\alpha_{2},\alpha_{3}}^{(i)}\dfrac{\partial^{|\alpha|}{f}}
{\theta^{\alpha_{1}}v^{\alpha_{2}}m^{\alpha_{3}}}(x|\eta_{i}^{0})}}=0. \nonumber
\end{eqnarray}
Using the fact from \eqref{eqn:lemma_reduced_first}, we rewrite the above equation as
\begin{eqnarray}
\sum \limits_{i=1}^{k_{0}}{\sum \limits_{(\alpha_{1},\alpha_{2},\alpha_{3}) \in \mathcal{F}_{r}}{\beta_{\alpha_{1},\alpha_{2},\alpha_{3}}^{(i)}\dfrac{\partial^{|\alpha|}{f}}
{\theta^{\alpha_{1}}v^{\alpha_{2}}m^{\alpha_{3}}}(x|\eta_{i}^{0})}}+\beta_{1,r,0}^{(i)}\dfrac{\partial^{r+1}{f}}{\partial{\theta}\partial{v^{r}}}(x|\eta_{i}^{0})+ \nonumber \\
\beta_{0,r+1,0}^{(i)}\dfrac{\partial^{r+1}{f}}{\partial{v^{r+1}}}((x|\eta_{i}^{0})+\beta_{1,0,r}^{(i)}\dfrac{\partial^{r+1}{f}}{\partial{\theta}\partial{m^{r}}}(x|\eta_{i}^{0})+\beta_{0,0,r+1}^{(i)}\dfrac{\partial^{r+1}{f}}{\partial{m^{r+1}}}(x|\eta_{i}^{0})=0. \label{eqn:lemma_reduced_second}
\end{eqnarray}
Equation \eqref{eqn:lemma_reduced_second} can be rewritten as
\begin{eqnarray}
\sum \limits_{i=1}^{k_{0}}{{\biggr(\sum 
\limits_{j=1}^{2r+3}{\gamma_{j,i}^{(r+1)}(x-\theta_{i}^{0})^{j-1}}\biggr)f\left(\dfrac{x-\theta_{i}
^{0}}{\sigma_{i}^{0}}\right)\Phi\left(\dfrac{m_{i}^{0}(x-\theta_{i}^{0})}{\sigma_{i}
^{0}}\right)}} \nonumber \\
+\sum \limits_{i=1}^{k_{0}}{\biggr(\sum \limits_{j=1}^{2r+2}{\tau_{j,i}^{(r+1)}(x-\theta_{i}^{0})^{j-1}}\biggr)\exp
\left(-\dfrac{(m_{i}^{0})^{2}+1}{2v_{i}^{0}}(x-\theta_{i}^{0})^{2}\right)}=0, \nonumber
\end{eqnarray}
where $\gamma_{j,i}^{(r+1)}$ are a combination of $\beta_{\alpha_{1},\alpha_{2},
\alpha_{3}}^{(i)}$ when $(\alpha_{1},\alpha_{2},\alpha_{3}) \in \mathcal{F}_{r+1}$ and $
\alpha_{3}=0$. Additionally, $\tau_{j,i}^{(r+1)}$ are a combination of $\beta_{\alpha_{1},
\alpha_{2},\alpha_{3}}^{(i)}$ when $(\alpha_{1},\alpha_{2},\alpha_{3}) \in \mathcal{F}_{r+1}$. 
Due to the fact that there are no type A or type B singularities 
in $\biggr\{\eta_{1}^{0},\ldots,\eta_{k_{0}}^{0}\biggr\}$, by using the same argument as 
that of the proof of Lemma 
\ref{proposition-notskewnormal}, we obtain that $\gamma_{j,i}^{(r+1)}=0$ for all $1 \leq i \leq 
k_{0}$, $1 \leq j \leq 2r+3$ and $\tau_{j,i}^{(r+1)}=0$ for all $1 \leq i \leq k_{0}$, $1 
\leq j \leq 2r+2$. It can be checked that $\gamma_{2r+3,i}^{(r+1)}=0$ implies $\beta_{0,r
+1,0}^{(i)}=0$ while $\gamma_{2r+2,i}^{(r+1)}=0$ implies $\beta_{1,r,0}^{(i)}=0$ for all 
$1 \leq i \leq k_{0}$. Similarly, $\tau_{2r+2,i}^{(r+1)}=0$ implies $\beta_{0,0,r+1}^{(i)}
=0$ while $\tau_{2r+1,i}^{(r+1)}=0$ implies $\beta_{1,0,r}^{(i)}=0$ for all $1 \leq i \leq 
k_{0}$. As a consequence, Eq. \eqref{eqn:lemma_reduced_second} is reduced to
\begin{eqnarray}
\sum \limits_{i=1}^{k_{0}}{\sum \limits_{(\alpha_{1},\alpha_{2},\alpha_{3}) \in \mathcal{F}_{r}}{\beta_{\alpha_{1},\alpha_{2},\alpha_{3}}^{(i)}\dfrac{\partial^{|\alpha|}{f}}
{\theta^{\alpha_{1}}v^{\alpha_{2}}m^{\alpha_{3}}}(x|\eta_{i}^{0})}}=0.
\end{eqnarray} 
According to the hypothesis with $r$, we obtain 
that $\beta_{\alpha_{1},\alpha_{2},\alpha_{3}}^{(i)}=0$ for all $1 \leq i \leq k_{0}, (\alpha_{1},\alpha_{2},\alpha_{3}) \in \mathcal{F}_{r}$. 
This concludes our proof.
%%%%%%%%%%%%%%%%%%%%%%%%%%%%%%%%%%%%%%%%
%%%%%%%%%%%% 
\paragraph{PROOF OF PROPOSITION \ref{lemma:upperbound_system_nonlinear}}
%%%%%%%%%%%%
%%%%%%%%%%%%%%%%%%%%%%%%%%%%%%%%%%%%%%%%
From the formation of system of 
polynomial equations \eqref{eqn:systemnonlinear}, if we choose $\beta_{3}=0$ (i.e., 
we only reduce to derivatives with respect to the location and scale parameter), then
we have $P_{\alpha_{1},\alpha_{2},\alpha_{3}}^{\beta_{1},\beta_{2},
\beta_{3}}(m)/H_{\alpha_{1},\alpha_{2},\alpha_{3}}^{\beta_{1},\beta_{2},
\beta_{3}}(m)Q_{\alpha_{1},\alpha_{2},\alpha_{3}}^{\beta_{1},\beta_{2},
\beta_{3}}(v) = 2^{\alpha_{2}}$ when $\alpha_{3}=0$ and $P_{\alpha_{1},\alpha_{2},
\alpha_{3}}^{\beta_{1},\beta_{2},
\beta_{3}}(m)/H_{\alpha_{1},\alpha_{2},\alpha_{3}}^{\beta_{1},\beta_{2},
\beta_{3}}(m)$\\$Q_{\alpha_{1},\alpha_{2},
\alpha_{3}}^{\beta_{1},\beta_{2},
\beta_{3}}(v) = 0$ as $\alpha_{3} \geq 1$ for any 
$v,m$ and $\alpha_{1}+2\alpha_{2}+2\alpha_{3}=\beta_{1}+2\beta_{2}+2\beta_{3}
$. This shows that the system of polynomial equations 
\eqref{eqn:systemnonlinear} contains the following system of equations
\begin{eqnarray}
\sum \limits_{j=1}^{l}{\sum \limits_{\alpha_{1}+2\alpha_{2}=\beta_{1}+2\beta_{2}}\dfrac{2^{\alpha_{2}}d_{j}^{2}a_{j}^{\alpha_{1}}b_{j}
^{\alpha_{2}}}{\alpha_{1}!\alpha_{2}!}} = 0, \label{eqn:system_lemma}
\end{eqnarray}
where $\beta_{1}+2\beta_{2} \leq r$ and $\beta_{1} \leq 1$. This is precisely the system of polynomial 
equations \eqref{eqn:generalovefittedGaussianzero_Gaussian_mulindex} if we replace $d_{j}$ by $x_{j}$, $a_{j}
$ by $y_{j}$, $2b_{j}$ by $z_{j}$, $\alpha_{1}, \alpha_{2}$ by $n_{1},n_{2}$. Now, if we 
choose $r>\overline{r}(l)$, the system of polynomial equations \eqref{eqn:system_lemma} 
has only trivial solution $a_{j}=b_{j}=0$ for all $1 \leq j \leq l$. Substitute these 
results back to system of polynomial equations \eqref{eqn:systemnonlinear}, we 
also obtain $c_{j}=0$ for all $1 \leq j \leq l$, which is a contradiction. 
This completes our proof.

%%%%%%%%%%%%%%%%%%%%%%%%%%%%%%%%%%%%%%%%
%%%%%%%%%%%% 
\paragraph{PROOF OF PROPOSITION \ref{proposition:overfittedproposition}}
%%%%%%%%%%%%
%%%%%%%%%%%%%%%%%%%%%%%%%%%%%%%%%%%%%%%%
The proof of part (a) is straightforward from the discussion in Section \ref{Section:illustration_omixture_byone}. For the proof for part (b), we will present an explicit form for the system of polynomial equations 
to illustrate the variablity of $\underbar{\rVMS}(l)$ and $\overline{\rVMS}(l)$ based on the 
values of $(m,v)$.

(b) As $l=2$ and $r=6$, the system of polynomial equations \eqref{eqn:systemnonlinear} can be rewritten as
\begin{eqnarray}
\sum \limits_{i=1}^{3}{d_{i}^{2}a_{i}}=0, \ \sum \limits_{i=1}^{3}{d_{i}^{2}a_{i}^{2}+d_{i}^{2}b_{i}}=0, \ \sum \limits_{i=1}^{3}{-(m^{3}+m)d_{i}^{2}a_{i}^{2}+2v d_{i}^{2}c_{i}}=0, \nonumber \\
\sum \limits_{i=1}^{3}{\dfrac{1}{3} d_{i}^{2}a_{i}^{3}+d_{i}^{2}a_{i}b_{i}}=0, \ \sum \limits_{i=1}^{3}{-(m^{3}+m) d_{i}^{2}a_{i}^{3}+6v d_{i}^{2}a_{i}c_{i}}=0,  \nonumber \\
\sum \limits_{i=1}^{3}{\dfrac{(m^{3}+m)^{2}}{12v^{2}}d_{i}^{2}a_{i}^{4}
-\dfrac{m^{3}+m}{v}d_{i}^{2}a_{i}
^{2}c_{i}-\dfrac{m^{2}+1}{vm}d_{i}^{2}b_{i}c_{i}+d_{i}^{2}c_{i}^{2}}=0, \nonumber \\
\sum \limits_{i=1}^{3}{\dfrac{1}{6} d_{i}^{2}a_{i}^{4}+d_{i}^{2}a_{i}^{2}b_{i}+\dfrac{1}{2}d_{i}^{2}b_{i}^{2}}=0, \ \sum \limits_{i=1}^{3}{\dfrac{1}{30}d_{i}^{2}a_{i}^{5}+\dfrac{1}{3} d_{i}^{2}a_{i}^{3}b_{i}+\dfrac{1}{2}d_{i}^{2}a_{i}b_{i}^{2}}=0, \nonumber \\
\sum \limits_{i=1}^{3}{\dfrac{(m^{3}+m)^{2}}{120v^{2}}d_{i}^{2}a_{i}^{5}-\dfrac{(m^{3}+m)}{6v} d_{i}^{2}a_{i}^{3}c_{i}-\dfrac{m^{2}+1}{2vm}d_{i}^{2}a_{i}b_{i}c_{i}+\dfrac{1}{2}d_{i}^{2}a_{i}c_{i}^{2}}=0, \nonumber \\
\sum \limits_{i=1}^{3}{\dfrac{1}{90}d_{i}^{2}a_{i}^{6}+\dfrac{1}{12} d_{i}^{2}a_{i}^{4}b_{i}+\dfrac{1}{2}d_{i}^{2}a_{i}^{2}b_{i}^{2}+\dfrac{1}{6}d_{i}^{2}b_{i}^{3}}=0, \nonumber \\
\sum \limits_{i=1}^{3}{\dfrac{(m^{3}+m)^{3}}{720v^{3}}d_{i}^{2}a_{i}^{6}+\dfrac{(m^{3}+m)^{2}}{24v^{2}} d_{i}^{2}a_{i}^{4}c_{i}+\dfrac{m^{3}+m}{4v}d_{i}^{2}a_{i}^{2}c_{i}^{2}} + \nonumber \\
\dfrac{(m^{2}+1)^{2}}{8v^{2}m^{2}}d_{i}^{2}b_{i}^{2}c_{i}-\dfrac{m^{2}+1}{4mv}d_{i}^{2}b_{i}c_{i}^{2}+\dfrac{1}{6}d_{i}^{2}c_{i}^{3}=0.
\label{eqn:proposition_systemnonlinear_one}
\end{eqnarray}
When $r=4$, the system of polynomial equations \eqref{eqn:systemnonlinear} 
contains the first 7 equations in the system of polynomial equations 
\eqref{eqn:proposition_systemnonlinear_one}. Now, $m$ and $v$ are considered as two additional
variables in the above system of polynomial equations. Hence, there are 13 variables with 
only 7 equations. If we choose $d_{1}=d_{2}=d_{3}$ and take the lexicographical ordering
$a_{1} 
\succ a_{2} \succ a_{3} \succ b_{1} \succ b_{2} \succ b_{3} \succ c_{1} \succ c_{2} 
\succ c_{3} \succ m \succ v$, the Grobener bases (cf. \cite{Bruno-Thesis}) of the above 
system of polynomial 
equations will return a non-trivial solution (due to the complexity of the roots, we will not 
present them here). As a consequence, $\underbar{\rVMS}(l) \geq 5$ under the case $l=2$.

For $l=2$ and $r=5$, the system of polynomial equations 
\eqref{eqn:systemnonlinear} retains the first 9 equations in system 
\eqref{eqn:proposition_systemnonlinear_one}. It can be checked that if we 
choose $m= \pm 2, v=1$, then the system of polynomial equations when $r=5$ does not have any 
non-trivial solution (note that, we also use the same lexicographical order as that being used 
in the case $r=4$). So, $\underbar{\rVMS}(l) = 5$. However, we can check that the value of 
$m=\dfrac{1}{10}$ (close to 0 in general) and $v=1$ will lead the system of polynomial equations 
\eqref{eqn:proposition_systemnonlinear_one} to not having any non-trivial solution. 
Thus, $\overline{\rVMS}(l)=6$. 
This concludes the proof or part (b) of the proposition.

%%%%%%%%%%%%%%%%%%%%%%%%%%%%%%%%%%%%%%%%%%%%%%%%%
%%%%%%%%%%%%%%%%%%%%%%%%%%%%%%%%%%%%%%%%%%%%%%%%%%%%%%%%%%%%%%%%%%%%%%%%%%%%%%%%%%%%%%%%%%%%%%%%%%%%%%%%%%%%%%%%%%%%%%%%%%%%%%%%%%%%%%%%%%%%%%%%%%%%%%%%%%%%%%%%%%%%%%%%%%%%%%%%%%%%%%%%%%%%%%%%%%%%
%%%%%%%%%%%%%%%%%%%%%%%%%%%%%%%%%%%%%%%%%%%%%%%%%
\newpage
%\begin{center}
%\Large{\textbf{Appendix E}}
%\end{center}
%%%%%%%%%%%%%%%%%%%%%%%%%%%%%%%%%%%%%%%%%%%%%%%%%%%%%%%%%%%%%%%%%%%%%%%%%%%%%%%%%%%%%%%%%

\section{Appendix E: Theory for skew-normal e-mixtures -- a summary} 
\label{Section:singularity_level_index_skewemix}

E-mixtures are the setting in which the number of mixing components
is known $k=k_0$. In this appendix, we provide a summary of singularity level and singularity index of mixing measure 
$G_0$ relative to the ambient space $\Ecal_{k_0}$, where $k_0$ is the
number of supporting atoms for $G_0$.

Recall from the previous sections the definition of $\Scal_0$, 
the subset $\Scal_0 \subset \Ecal_{k_0}$ of measure $G_0=G_0(\vecp^0,\veceta^0)$
such that $(\vecp^0,\veceta^0)$ satisfy $P_1(\myeta^0)P_2(\myeta^0) \neq 0$.
$P_1$ and $P_2$ are polynomials given in the statement of Lemma
\ref{proposition-notskewnormal}. It is simple to verify that
for any $G_0 \in \Scal_0$, as a consequence of this lemma, the Fisher information 
matrix $I(G_0)$ is non-singular. It follows that

\begin{theorem} If $G_0 \in \Scal_0$, then $\lev(G_0|\Ecal_{k_0}) = 0$ and $\singset(G_{0}|\Ecal_{k_{0}})=\left\{(1,1,1)\right\}$.
\end{theorem}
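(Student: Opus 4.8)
The plan is to reduce the statement to two ingredients already available: Lemma~\ref{proposition-notskewnormal}, which characterizes failure of first-order linear independence of the skew-normal kernel through the polynomials $P_1,P_2$, and Proposition~\ref{proposition:singularity_set_level}(i), which converts a zero singularity level into the singleton singularity index set $\{(1,\ldots,1)\}$. First I would note that, by the very definition of $\Scal_0$ in Eq.~\eqref{eqn:generic_components}, $G_0=G_0(\vecp^0,\veceta^0)\in\Scal_0$ means $P_1(\veceta^0)\neq 0$ and $P_2(\veceta^0)\neq 0$; by Lemma~\ref{proposition-notskewnormal} this is exactly equivalent to the collection
\[
\biggr\{ f(x|\eta_j^0),\ \frac{\partial f}{\partial\theta}(x|\eta_j^0),\ \frac{\partial f}{\partial v}(x|\eta_j^0),\ \frac{\partial f}{\partial m}(x|\eta_j^0) \ \biggr|\ j=1,\ldots,k_0 \biggr\}
\]
being linearly independent as functions of $x$, i.e., $f$ is first-order identifiable at $G_0$.

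Second, I would show this forces $G_0$ to be not $1$-singular relative to $\Ecal_{k_0}$. In the e-mixture setting, any $G\in\Ecal_{k_0}$ with $W_1(G,G_0)\to 0$ can be written (after relabeling) in the form~\eqref{eqn:representation_overfit} with $s_i=1$ for all $i$ and $\extra=0$, so the first-order Taylor expansion~\eqref{eqn:generalTaylorexpansion_overfit}, together with the uniform Lipschitz property of the skew-normal kernel controlling the remainder, is already a valid $1$-minimal form: no elimination is needed since the basis functions above are linearly independent, and the coefficients $\xi^{(1)}(G)$ are precisely $\Delta p_i$, $p_i\Delta\theta_i$, $p_i\Delta v_i$, $p_i\Delta m_i$ for $i=1,\ldots,k_0$. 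If $G_0$ were $1$-singular, Definition~\ref{def-rsingular} would supply a sequence $G\to G_0$ along which all of these coefficients divided by $W_1(G,G_0)$ tend to $0$; but summing the absolute values of the numerators recovers exactly $D_1(G,G_0)$, which by Lemma~\ref{lemma:bound_overfit_Wasserstein} satisfies $D_1(G,G_0)\asymp W_1(G,G_0)$, yielding $1=o(1)$, a contradiction. Hence $\lev(G_0|\Ecal_{k_0})=0$ by Definition~\ref{definition:singularity_level_emixture}. (Equivalently, first-order identifiability makes the components of the score vector $l_{G_0}$ linearly independent, so $I(G_0)$ is non-degenerate, as remarked in the text preceding the statement.)

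Third, having established $\lev(G_0|\Ecal_{k_0})=0$, I would invoke Proposition~\ref{proposition:singularity_set_level}(i) directly: $\lev(G_0|\Gcal)=0$ implies $\singset(G_0|\Gcal)=\{(1,\ldots,1)\}$, which for the skew-normal kernel (where $d=3$) is exactly $\{(1,1,1)\}$. There is no substantive obstacle in this argument; the only point meriting care is the verification that the raw first-order Taylor expansion genuinely qualifies as a $1$-minimal form — i.e., that there is no linear relation among the first derivatives evaluated at the atoms of $G_0$ — but this is precisely what Lemma~\ref{proposition-notskewnormal} guarantees under the hypothesis $G_0\in\Scal_0$.
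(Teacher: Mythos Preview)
Your proposal is correct and follows essentially the same approach as the paper: the paper's own justification is the single sentence preceding the statement (that Lemma~\ref{proposition-notskewnormal} renders $I(G_0)$ non-singular, whence the conclusion follows), together with the general observation after Definition~\ref{definition:singularity_level_emixture} that first-order linear independence in the e-mixture setting forces $\lev(G_0|\Ecal_{k_0})=0$, and Proposition~\ref{proposition:singularity_set_level}(i) for the index set. You have simply written out in full the contradiction argument via $D_1\asymp W_1$ that the paper leaves implicit.
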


We turn our attention to the singularity structure of set $\Ecal_{k_0} \setminus \Scal_0$. 
For any $G_0 \in \Ecal_{k_0}\setminus \Scal_0$, the parameters of $G_0$
satisfy $P_1(\veceta^0) P_2(\veceta^0) =0$. Accordingly, for each pair of $(i,j) = 1,\ldots, k_0$
the two components indexed by $i$ and $j$ are said to be \textbf{homologous} if 
\[(\theta_i^0-\theta_j^0)^2 + [v_i^0(1+(m_j^0)^2) - v_j^0(1+(m_i^0)^2)]^2 = 0.\]
Moreover, for each $1 \leq i \leq k_{0}$, let $I_{i}$ denote 
the set of all components homologous to (component) $i$. By definition,
it is clear that if $i$ and $j$ are homologous, $I_{i} \equiv I_{j}$. 
Therefore, these homologous sets form equivalence classes. From here on,
when we say a homologous set $I$, we implicitly mean that it is the representation 
of the equivalent classes. 

Now, the homologous set consists of the indices of skew-normal components that share the same
location and a rescaled version of the scale parameter. 
A non-empty homologous set $I$ is said to be
{\bf conformant} if for any $i \neq j \in I$, $m_{i}^0m_{j}^0>0$. A non-empty homologous set $I$ is said to be {\bf nonconformant} if we can find two indices $i,j \in I$ such that $m_{i}^0m_{j}^0<0$. Additionally, $G_{0}$ is said to be 
\textbf{conformant} if all the homologous sets are conformant or \textbf{nonconformant} 
(NC) if at least one homologous set is nonconformant.
Now, we define a partition of $\mathcal{E}_{k_{0}} \setminus \Scal_0$ 
as follows $\Ecal_{k_0} = \Scal_0 \cup \Scal_1 \cup \Scal_2 \cup \Scal_3$, where
\begin{equation*}
\begin{cases}
\Scal_{1} = \left\{G = G(\vecp,\veceta) \in \mathcal{E}_{k_{0}} 
| \ P_{1}(\myeta) \neq 0, P_{2}(\myeta) = 0, G \ \text{is conformant} \right\} \\
\Scal_{2} = \left\{G = G(\vecp,\veceta) \in \mathcal{E}_{k_{0}} \ | 
\ P_{1}(\myeta) = 0, \ \text{if} \ P_{2}(\myeta) = 0 \ \text{then} \ G \ \text{is conformant} \right\} \\
\Scal_{3} = \left\{G = G(\vecp,\veceta) \in \mathcal{E}_{k_{0}} \ | \ P_{2}(\myeta) = 0 \ \text{and} \ G \ \text{is nonconformant} \right\}.
\end{cases}
\end{equation*}
Figure \ref{figure:partitionGone} summarizes singularity levels of elements 
residing in $\Ecal_{k_0}$, except for $\Scal_3$.

\begin{figure}
\begin{tikzpicture}
  \matrix (m) [matrix of math nodes,row sep=1 em,column sep= 0 em,minimum width=-1em]
{ & & & &  & \Ecal_{k_{0}} & & & & \\
 & & P_{1} \neq 0 & & & & & P_{1} = 0 &  & \\
0 & P_{2} \neq 0 & & P_{2} = 0 &  & & & & P_{2} = 0 & \\
1 &  & \mathcal{C}  & &  & &  &  & &  \\
2 & & & & &  & P_{2} \neq 0 & \mathcal{C} & & \\
\textrm{level}\; \in[1,\infty] & & & & \mathcal{NC} & & & & & \mathcal{NC} \\
& \Scal_{0} & \Scal_{1} & & \Scal_{3} & & \Scal_{2} & \Scal_{2} & & \Scal_{3}  \\
};
\path[-stealth]
(m-1-6) edge (m-2-3) edge (m-2-8)
(m-2-3) edge (m-3-2) edge (m-3-4)
(m-2-8) edge (m-5-7) edge (m-3-9)
(m-3-4) edge (m-4-3) edge (m-6-5)
(m-3-9) edge (m-5-8) edge (m-6-10);
\draw[->] (m-3-2) edge (m-7-2);
\draw[->] (m-4-3) edge (m-7-3);
\draw[->] (m-6-5) edge (m-7-5);
\draw[->] (m-5-7) edge (m-7-7);
\draw[->] (m-5-8) edge (m-7-8);
\draw[->] (m-6-10) edge (m-7-10);
\draw[densely dotted] (m-3-2) -- (m-3-1);
\draw[densely dotted] (m-4-3) -- (m-4-1);
\draw[densely dotted] (m-5-7) -- (m-5-1);
\draw[densely dotted] (m-5-8) -- (m-5-7);
\draw[densely dotted] (m-6-5) -- (m-6-1);
\draw[densely dotted] (m-6-10) -- (m-6-5);
%\draw[thick] (-6.5,-7) -- (-6.5,7);
\draw[thick] (-6.5,-3.2) -- (8,-3.2);
%\node at (-7.5,7) {Singularity};
%\node at (-7.5,6.5) {level};
\end{tikzpicture}

\caption{The singularity level of $G_{0}$ relative to $\Ecal_{k_0}$
is determined by partition based on zeros of polynomials $P_1,P_2$
into subsets $\Scal_0,\Scal_1,\Scal_2,\Scal_3$. Here, "$\mathcal{NC}$" stands for 
nonconformant.}
\label{figure:partitionGone}
\end{figure}
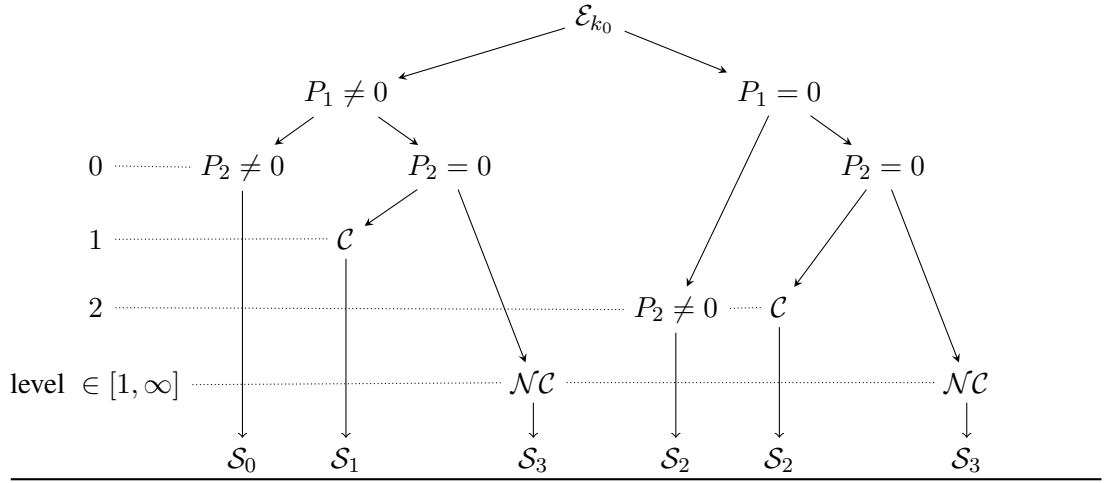

\subsection{Singularity structure of $G_0 \in \Scal_1 \cup \Scal_2$}
\label{Section:conformant_setting}
The main results of this subsection are the following two theorems.
\begin{theorem}
\label{theorem:conformant_setting}
If $G_0 \in \Scal_1$, then $\lev(G_0|\Ecal_{k_0}) = 1$ and $\singset(G_{0}|\Ecal_{k_{0}})=\left\{(1,1,2)\right\}$.
\end{theorem}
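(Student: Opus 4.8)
The plan is to establish both parts --- the singularity level $\lev(G_0|\Ecal_{k_0}) = 1$ and the singularity index $\singset(G_0|\Ecal_{k_0}) = \{(1,1,2)\}$ --- by analysing the behaviour of the coefficients of a suitable minimal form for a sequence of $G \in \Ecal_{k_0}$ tending to $G_0$ under the relevant transportation metric. Recall that $G_0 \in \Scal_1$ means $P_1(\myeta^0)\neq 0$ (no Gaussian component) but $P_2(\myeta^0) = 0$ with all homologous sets conformant. First I would fix the notation: since $k = k_0$ in the e-mixture setting, a sequence $G = \sum_{i=1}^{k_0} p_i\delta_{\eta_i}$ with $G \to G_0$ has each atom $\eta_i \to \eta_i^0$, and I write $\Delta\eta_i = \eta_i - \eta_i^0 = (\Delta\theta_i, \Delta v_i, \Delta m_i)$, $\Delta p_i = p_i - p_i^0$. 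Because $P_1(\myeta^0)\neq 0$, all $m_i^0 \neq 0$, so Lemma~\ref{lemma:recursiveequation} applies at each atom; but because $P_2(\myeta^0)=0$, the reduced collection of partial derivatives $\{\partial f/\partial\theta, \partial f/\partial v, \partial f/\partial m\}$ evaluated at the atoms $\eta_1^0,\ldots,\eta_{k_0}^0$ is \emph{not} linearly independent (Lemma~\ref{proposition-notskewnormal}), and the linear dependence is exactly the Type B relation: for each conformant homologous set $I$, there is a nontrivial relation among $\{\partial f/\partial m(x|\eta_i^0) : i \in I\}$ together with lower-order terms.

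The key technical step is to write down an explicit $1$-minimal form (and $(1,1,2)$-minimal form). For an e-mixture with $G_0 \in \Scal_1$, after the elimination dictated by the Type B dependence, one obtains a representation
\[
\frac{p_G(x) - p_{G_0}(x)}{W_1(G,G_0)} \asymp \frac{1}{W_1(G,G_0)}\Bigl( \sum_i \Delta p_i\, f(x|\eta_i^0) + \sum_i \xi_i^\theta\, \tfrac{\partial f}{\partial\theta}(x|\eta_i^0) + \sum_i \xi_i^v\, \tfrac{\partial f}{\partial v}(x|\eta_i^0) + \sum_{\text{repr. } i} \xi_i^m\, \tfrac{\partial f}{\partial m}(x|\eta_i^0)\Bigr) + o(1),
\]
where in the last sum only one representative index per conformant homologous set survives, and the coefficients $\xi_i^m$ are linear combinations (with $G_0$-dependent coefficients) of the $p_j\Delta m_j$ over $j$ in the homologous class of $i$ --- this is where the Type B relation $(\theta_i^0 - \theta_j^0)^2 + [\sigma_i^{02}(1+m_j^{02}) - \sigma_j^{02}(1+m_i^{02})]^2 = 0$ enters. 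To show $G_0$ is $1$-singular it then suffices to exhibit a sequence of $G \to G_0$ with all these coefficients vanishing: e.g. keep $\Delta\theta_i = \Delta v_i = 0$ and $\Delta p_i = 0$ for all $i$, and for each conformant homologous set $I$ (with $|I|\geq 2$) choose the $\Delta m_i$, $i \in I$, to satisfy the single linear constraint $\sum_{i\in I} (\text{coeff})_i\, p_i^0 \Delta m_i = 0$ with not all $\Delta m_i$ zero (possible since $|I|\geq 2$). This makes every coefficient of the $1$-minimal form identically zero, so $G_0$ is $1$-singular. To show $G_0$ is \emph{not} $2$-singular, I would pass to the $(2,2,2)$-minimal form obtained by the reductions of Example~\ref{ex-reduce-3}, extract the polynomial limits of the rational-semipolynomial coefficients (exactly as in Section~\ref{Section:illustration_omixture_byone}), and argue that in the \emph{exact-fitted} setting the resulting system has a variable $\Delta p_i$ for each $i$ that is now forced to appear, and the second-order system has no nontrivial solution --- intuitively, the only ``slack'' at the first order came from the homologous shape parameters, but at second order the corresponding quadratic constraints (together with the constraint that $\sum_i p_i = 1$ and the atoms stay distinct) cannot all be met. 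Combined with the monotonicity of Lemma~\ref{lemma:minimal_form}(b), this gives $\lev(G_0|\Ecal_{k_0}) = 1$.

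For the singularity index, by Proposition~\ref{proposition:singularity_set_level}(iii),(v) it is enough to show: (a) $G_0$ is $(1,1,2)$-singular, and (b) $G_0$ is not $(2,1,2)$-singular, not $(1,2,2)$-singular, and not $(2,2,1)$-singular --- equivalently, that $(1,1,2)$ is the unique maximal $\kappa$ for which $\kappa$-singularity holds, which forces $\singset = \{(1,1,2)\}$ and (by part (v), since it is a singleton) is consistent with $\lev = \|(1,1,2)\|_\infty - 1 = 1$. Part (a): take the same sequence as above --- $\Delta\theta_i = \Delta v_i = 0$, $\Delta p_i = 0$, and the $\Delta m_i$ on each homologous set satisfying the first-order linear constraint --- but now additionally choose the $\Delta m_i$ so small that, writing $\kappa = (1,1,2)$ so $\|\kappa\|_\infty = 2$, we have $\widetilde W_\kappa^2(G,G_0) \asymp \sum_i p_i^0 |\Delta m_i|^2$ (since the $\theta$- and $v$-perturbations are zero) and every coefficient $\xi_l^{(\kappa)}(G)$ --- which consists of monomials of the form $\sum_i p_i (\Delta\theta_i)^{\alpha_1}(\Delta v_i)^{\alpha_2}(\Delta m_i)^{\alpha_3}$ with $\alpha_1 + 2\alpha_2 + 2\alpha_3 \geq$ the order of the corresponding basis derivative --- divided by $\widetilde W_\kappa^2(G,G_0)$ tends to $0$: the only potentially problematic coefficients are the linear-in-$\Delta m$ ones, and these vanish exactly by the first-order constraint, while all other coefficients are $O(|\Delta m|^2) / \Theta(|\Delta m|^2)$ but with a structure that forces them to zero or to vanish in the limit (here the conformant condition $m_i^0 m_j^0 > 0$ is used to control signs in the quadratic terms). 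Part (b): for $\kappa' \in \{(2,1,2),(1,2,2),(2,2,1)\}$ one has $\|\kappa'\|_\infty = 2$ but now at least one of the $\theta$-, $v$-, or $m$-directions is ``charged'' at full order $2$; redoing the extraction of polynomial limits for the $\kappa'$-minimal form and using that we are in the exact-fitted setting (so the $\Delta p_i$ and the distinctness of atoms impose the same rigidity as in the $\lev = 1$ argument), the limiting polynomial system has no nontrivial solution --- the homologous shape slack is only a \emph{first}-order, $\|\kappa\|_\infty = 2$-with-$\kappa_3 = 2$ phenomenon and does not survive charging any coordinate further.

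The main obstacle I expect is the bookkeeping in part (b) --- verifying that charging $\theta$, $v$, or a second power structure on $m$ destroys all the slack. The cleanest route is probably to follow the template of the detailed proof of Theorem~\ref{theorem:conformant_setting} that the authors say is in the appendix (they promise in the revision notes that the full proof is included ``as it gives the direction how to deal with all the possible cases''): reduce each case to a concrete finite system of polynomial equations in the limit variables $(a_i, b_i, c_i, d_i)$ attached to each homologous set, peel off the $G_0$-dependent coefficients to get a $G_0$-independent subsystem, and invoke a Gröbner-basis computation (as in the proof of the $(4,2,2)$ claim in Section~\ref{Section:illustration_omixture_byone}) to certify unsolvability. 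The conformant hypothesis $m_i^0 m_j^0 > 0$ must be used at precisely the point where, in the nonconformant case $\Scal_3$, extra cancellations would occur and raise the singularity level --- so a careful proof should make explicit where conformance is invoked. Once these systems are set up, the rest is routine application of Lemma~\ref{lemma:minimal_form}, Lemma~\ref{lemma:minimal_form_general}, and Proposition~\ref{proposition:singularity_set_level}.
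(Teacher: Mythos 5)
You have the definition of singularity index backwards, and this flips the crux of the proof. By Definition~\ref{definition:singularity_index}, $\kappa$ is a singularity index of $G_0$ iff $G_0$ is $\kappa'$-singular for every $\kappa' \prec \kappa$ \emph{and} there is \emph{no} $\kappa' \succeq \kappa$ (including $\kappa$ itself) for which $G_0$ remains $\kappa'$-singular. So to place $(1,1,2)$ in $\singset(G_0|\Ecal_{k_0})$ you must show $G_0$ is $(1,1,1)$-singular but \emph{not} $(1,1,2)$-singular. Your ``part (a)'' asserts the opposite --- that $G_0$ \emph{is} $(1,1,2)$-singular --- and this is false, and is precisely the negation of what the paper proves. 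The paper's argument: suppose all coefficients of a $(1,1,2)$-minimal form vanish; the $\beta$-coefficients force $\Delta p_i,\Delta\theta_i,\Delta v_i$ to vanish relative to $\widetilde W_{(1,1,2)}^2$, which forces $\sum_i p_i|\Delta m_i|^2/\widetilde W_{(1,1,2)}^2 \not\to 0$; the $\gamma_4^{(2)}$-coefficient then forces $\sum_i p_i m_i^0 (\Delta m_i)^2/(v_i^0)^2$ to vanish relative to $\widetilde W_{(1,1,2)}^2$; dividing and using conformance (all $m_i^0$ of the same sign within each homologous set, so the numerator dominates $m_{\min}\sum_i p_i |\Delta m_i|^2$) yields a contradiction. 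Conformance is thus what \emph{destroys} $(1,1,2)$-singularity by making the quadratic shape-coefficient sign-definite; you instead claim that conformance ``controls signs'' so that this coefficient vanishes, which is the reverse of the actual mechanism and would fail if you tried to carry it out. This is the same point where, if $G_0$ were nonconformant ($\Scal_3$), sign cancellations in $\sum_i p_i m_i^0(\Delta m_i)^2$ could indeed raise the singularity level.

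A secondary issue is that your route to $\lev(G_0|\Ecal_{k_0}) = 1$ is unnecessarily detoured. You propose to analyze the $(2,2,2)$-minimal form separately (with Gröbner bases, etc.); but once not-$(1,1,2)$-singular is established, not-$(2,2,2)$-singular (hence not $2$-singular) follows immediately from the monotonicity in Lemma~\ref{lemma:minimal_form_general}(b), since $(1,1,2)\preceq(2,2,2)$. Similarly, not-$(2,1,2)$- and not-$(1,2,2)$-singular (which you list under ``part (b)'') are free by monotonicity, so only $(2,2,1)$ among your list is genuinely a separate direction --- and for uniqueness the right thing to show there is that $G_0$ \emph{is} $(r,r,1)$-singular for all $r$ (so no index of the form $(\kappa_1,\kappa_2,1)$ can exist), not that it fails to be. Your construction of the $1$-singular sequence ($\Delta\theta_i=\Delta v_i=\Delta p_i=0$, one linear constraint on the $\Delta m_i$ per homologous set) matches the paper's, and your general setup of the $1$-minimal form after Type~B elimination is correct; but you must flip the role of $(1,1,2)$ before the index argument can go through.
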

The results of Theorem \ref{theorem:conformant_setting} imply that the convergence 
rate of estimating mixing measure $G_{0}$ is $n^{-1/2}$ but individual parameters of $G_{0}$ 
admit different rates:  it is $n^{-1/2}$ for
location and scale parameters and $n^{-1/4}$ for skewness parameter.
Therefore, it is generally more efficient to estimate location and scale parameter than 
estimating skewness parameter under the setting $G_{0} \in \Scal_{1}$.
\begin{theorem}
\label{theorem:conformant_symmetry_setting}
If $G_0 \in \Scal_2$, 
then $\lev(G_0|\Ecal_{k_0}) = 2$ and $\singset(G_{0}|\Ecal_{k_{0}})=\left\{(3,2,3)\right\}$.
\end{theorem}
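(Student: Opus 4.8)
The plan is to follow the same three-step strategy (construct a $\kappa$-minimal form; extract polynomial limits of the coefficients; check (un)solvability of the resulting real polynomial system) that was laid out for the o-mixture case in Section~\ref{Section:general_bound_omixtures_generic}, but now in the e-mixture ambient space $\Ecal_{k_0}$ and with the additional structure imposed by $G_0\in\Scal_2$. Recall that $G_0\in\Scal_2$ means $P_1(\veceta^0)=0$ (so at least one component, say indexed by a set $B$, is Gaussian, i.e.\ $m_i^0=0$), and whenever $P_2(\veceta^0)=0$ the homologous sets are conformant. First I would fix a sequence $G_n\in\Ecal_{k_0}$ with $\widetilde{W}_\kappa(G_n,G_0)\to 0$; since $k_n=k_0$ in the e-mixture setting, each atom $\eta_i^0$ is the limit of exactly one atom $\eta_i^n$, so the representation~\eqref{eqn:representation_overfit} simplifies with $\extra=0$, $s_i=1$. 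The key structural dichotomy is: for indices $i$ with $m_i^0\neq 0$ (and no homologous partner, or conformant homologous partners), Lemma~\ref{lemma:reduced_linearly_independent}-type arguments show the relevant partial derivatives up to first order are linearly independent, forcing the local behaviour to be "regular" there; for indices $i\in B$ with $m_i^0=0$, the skew-normal kernel degenerates to the Gaussian kernel, and the reductions of Example~\ref{ex-reduce-2} (the relation $\partial^2 f/\partial\theta^2 = 2\,\partial f/\partial v$, which propagates to all orders) apply, collapsing all $\theta,v$-derivatives onto pure $\theta$-derivatives while the $m$-derivatives bring in genuinely new basis functions through the PDEs~\eqref{eqn:overfittedskewnormaldistributionzero}.

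Second, I would carry out the Taylor expansion of $p_{G_n}(x)-p_{G_0}(x)$ up to order $r=\|\kappa\|_\infty=3$ and reduce to a $\kappa$-minimal form using Lemma~\ref{lemma:recursiveequation} and the Gaussian reductions at the $B$-indices. The claim $\lev(G_0|\Ecal_{k_0})\le 2$ (equivalently, $G_0$ is not $3$-singular, and more sharply not $(3,3,3)$-singular, hence the index is bounded) will follow by showing that when $m_i^0=0$ the coefficient system extracted from a $(3,3,3)$-minimal form has no nontrivial real solution: after dividing by the appropriate power of $\overline M$, the Gaussian-type equations $\sum_{n_1+2n_2=\alpha} c^2 a^{n_1}b^{n_2}/(n_1!n_2!)=0$ for $\alpha=1,2,3$ together with the skewness coefficients $\xi^{(3)}_{0,0,1},\xi^{(3)}_{0,0,2},\xi^{(3)}_{0,0,3}$ overdetermine the unknowns $(a,b,c,d)$ for a single atom (this is exactly the single-atom specialization of the Gamma/Gaussian analysis of Section~\ref{Section:singularity_index_classical_mixtures}, noting $\overline r(1)=3$ in the $\bar l=0$, single-extra-free regime — in fact for the exact-fitted case with one component the relevant solvability threshold is $r=3$). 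Conversely, the lower bound $\lev(G_0|\Ecal_{k_0})\ge 2$, i.e.\ $G_0$ is $2$-singular, is shown by explicitly constructing a sequence: perturb only a Gaussian atom $i\in B$ by setting $\theta_{i}^n = \theta_i^0 \pm \Delta\theta$ via a "splitting into the same point" trick is impossible in $\Ecal_{k_0}$, so instead one uses the known second-order identifiability failure of the Gaussian kernel — actually here, since $k=k_0$, the $2$-singularity comes from moving a Gaussian component's $m$ away from $0$ in a way that keeps the first- and second-order coefficients vanishing: set $\Delta\theta_i, \Delta v_i, \Delta m_i$ proportional to $\epsilon, \epsilon^2, \epsilon$ respectively with the right constants so that $\xi^{(2)}_{1,0,0}=\xi^{(2)}_{0,1,0}=\xi^{(2)}_{0,0,1}=0$ and $\xi^{(2)}_{0,0,2}=o(\widetilde W_\kappa^2)$, exploiting the PDE relation at $m=0$.

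Third, for the singularity index statement $\singset(G_0|\Ecal_{k_0})=\{(3,2,3)\}$, I would invoke Proposition~\ref{proposition:singularity_set_level}(iii)–(v): having $\lev(G_0|\Ecal_{k_0})=2$ forces some $\kappa\in\singset$ with $\kappa\preceq(3,3,3)$ and at least one component equal to $3$; uniqueness and the precise value $(3,2,3)$ then require showing (a) $G_0$ is $(3,2,3)$-singular — build the sequence with $\Delta\theta_i \sim \epsilon$, $\Delta v_i\sim\epsilon^2$, $\Delta m_i\sim\epsilon$ at the Gaussian atom, for which $\widetilde W_{(3,2,3)}^3 \asymp \epsilon^3$, and verify all coefficients of a $(3,2,3)$-minimal form vanish, the crucial point being that the scale parameter only needs $\kappa_2=2$ because $v$-derivatives are slaved to $\theta$-derivatives by~\eqref{key-normal} and hence never produce a standalone obstruction; and (b) $G_0$ is not $(4,2,3)$-, not $(3,3,3)$-, and not $(3,2,4)$-singular, so no $\kappa\succ(3,2,3)$ works — each of these is an unsolvability check on a slightly enlarged polynomial system, analogous to the "$G_0$ is not $(4,2,2)$-singular" computation in Section~\ref{Section:illustration_omixture_byone}, done via Gröbner bases. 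The main obstacle I anticipate is handling the interaction between a Gaussian atom ($i\in B$) and a homologous conformant partner when $P_2(\veceta^0)=0$ as well: one must check that the conformant homologous structure, which on its own contributes only an $m$-direction singularity of index $2$ (as in $\Scal_1$, Theorem~\ref{theorem:conformant_setting}), does not combine with the Gaussian degeneracy to push the index above $(3,2,3)$ — this requires arguing that the reduced basis functions coming from the two mechanisms remain linearly independent (a Lemma~\ref{proposition-notskewnormal}-style Wronskian/asymptotics argument separating $\exp(-(x-\theta)^2/2v)\cdot(\text{polynomial})$ terms at different scales), so that the dominant obstruction is still the single-atom Gaussian system with threshold $r=3$ in $\theta$ and $r=2$ in $v$.
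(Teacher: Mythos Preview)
There are two genuine gaps.

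\textbf{First, you have inverted the definition of singularity index.} In step (a) you propose to show ``$G_0$ is $(3,2,3)$-singular'' and in step (b) to show $G_0$ is not $(4,2,3)$-, $(3,3,3)$-, $(3,2,4)$-singular. But Definition~\ref{definition:singularity_index} says $\kappa$ is a singularity index iff $G_0$ is $\kappa'$-singular for every $\kappa'\prec\kappa$ \emph{and} $G_0$ is not $\kappa'$-singular for any $\kappa'\succeq\kappa$; since $(3,2,3)\succeq(3,2,3)$, the index being $(3,2,3)$ requires $G_0$ to be \emph{not} $(3,2,3)$-singular. The paper does exactly the opposite of your plan: it shows $G_0$ is not $(3,2,3)$-singular (this is the hard part), and then shows $G_0$ \emph{is} $(3,1,3)$-, $(3,2,2)$-, and $(2,2,3)$-singular (the maximal strict predecessors), together with $(2,r,r)$-, $(r,1,r)$-, $(r,r,2)$-singularity for all $r$ to obtain uniqueness. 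Your proposed sequence in (a), with $\Delta\theta\sim\epsilon$, $\Delta v\sim\epsilon^2$, $\Delta m\sim\epsilon$, cannot make all $(3,2,3)$-minimal-form coefficients vanish: the coefficient the paper calls $C_4$ contains a term $(\Delta m)^3/\widetilde W_{(3,2,3)}^3=O(1)$, and this is precisely where the obstruction lives.

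\textbf{Second, the mechanism you cite for the upper bound is wrong.} You reduce to the Gaussian o-mixture system of Proposition~\ref{proposition:Gaussian_mulindex} and write ``$\overline r(1)=3$''. But $\overline r(\cdot)$ is defined only for $k-k_0\ge 1$, and in fact $\overline r(1)=4$ (Section~\ref{Section:illustration_omixture_byone}); more importantly, in the e-mixture setting each atom of $G_0$ is approximated by exactly one atom of $G$ ($s_i=1$, no splitting), so the multi-$j$ system~\eqref{eqn:generalovefittedGaussianzero_Gaussian_mulindex} never arises. The actual obstruction in the paper is a single-variable case analysis at a Gaussian atom: writing $\Delta m_1/\Delta\theta_1\to k_2$, the vanishing of $C_2$ forces $k_2=-\sqrt{\pi}/(\sqrt{2}\,\sigma_1^0)$, while a linear combination of $C_3$ and $C_4$ gives a cubic in $k_2$; substituting yields $\pi/6-1/3=0$, a contradiction. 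The $1/\sqrt{2\pi}$ factors enter through $\partial f/\partial m$ at $m=0$ (via $\Phi'(0)=1/\sqrt{2\pi}$), so the bound is driven by the $m$--$\theta$ interaction specific to the skew-normal degeneracy at $m^0=0$, not by a Gaussian location--scale argument.
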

Unlike the results from Theorem \ref{theorem:conformant_setting}, under the setting 
$G_{0} \in \Scal_{2}$, the convergence rate of estimating mixing measure $G_{0}$ is 
$n^{-1/4}$. However, the convergence rate of location and skewness parameter 
is $n^{-1/6}$ while that of scale parameter is $n^{-1/4}$. 
The illustration of proof idea of the above theorems will be first given shortly, and while a complete proof is
presented in Appendix F.

%%%%%%%%%%%%%%%%%%%%%%%%%%%%%%%%%%%%%%%%%%%%%%%%%%%%%%%%%%%%%%%%%%%%%%%%%%%%%%%%%%%%
%%%%%%%%%%%%%%%%%%%%%%%%%%%%%%%%%%%%%%%%%%%%%%%%%%%%%%%%%%%%%%%%%%%%%%%%%%%%%%%%%%%%
%%%%%%%%%%%%%%%%%%%%%%%%%%%%%%%%%%%%%%%%%%%%%%%%%%%%%%%%%%%%%%%%%%%%%%%%%%%%%%%%%%%%

\begin{figure}
\begin{tikzpicture}
  \matrix (m) [matrix of math nodes,row sep= 1 em,column sep= -0.2 em,minimum width=0.1em]
{& & P_{1} = 0 & & & \\  
& & \mathcal{NC} & & &  \\
\leq \mathop {\max }{\left\{\overline{s}(G_{0}),2\right\}} & P_{3} \neq 0 & & & 
P_{3} = 0 &  \\
\text{level} \geq 3 & & &  P_{4} \neq 0 & &  \\
& & & & & \\
\text{level} =\infty & & & & &  P_{4} = 0  \\
& & & & & \\
& \Scal_{31} & & \Scal_{32} & &  \Scal_{33} \\
};
\path[-stealth]
(m-1-3) edge (m-2-3)
(m-2-3) edge (m-3-2) edge (m-3-5)
(m-3-5) edge (m-4-4) edge (m-6-6);
\draw[->] (m-3-2) edge (m-8-2);
\draw[->] (m-4-4) edge (m-8-4);
\draw[->] (m-6-6) edge (m-8-6);
\draw[densely dotted] (m-3-2) -- (m-3-1);
\draw[densely dotted] (m-4-4) -- (m-4-1);
\draw[densely dotted] (m-6-6) -- (m-6-1);
%\draw[thick] (-6.5,-7) -- (-6.5,7);
\draw[thick] (-6.5,-3.3) -- (8,-3.3);
%\node at (-7.5,7) {Singularity};
%\node at (-7.5,6.5) {level};
\end{tikzpicture}
\caption{The level of singularity structure of $G_{0} \in \Scal_{3}$ when $P_{1}(\veceta^0)=0$. Here, "$\mathcal{NC}$" stands for 
nonconformant. The term $\overline{s}(G_{0})$ is defined in \eqref{eqn:sufficient_order_nonconformant_emixture}.}
\label{figure:partitionnonconformant_symmetry}
\end{figure}

\begin{figure}
\begin{tikzpicture}
  \matrix (m) [matrix of math nodes,row sep=1 em,column sep= 0 em,minimum width=-1em]
{& & P_{1} \neq 0 & & & \\  
& & \mathcal{NC} & & &  \\
\leq \overline{s}(G_{0}) & P_{3} \neq 0 & & & P_{3} = 0 &  \\
\text{level} \geq 3 & & &  P_{4} \neq 0 & &  \\
& & & & & \\
\text{level} =\infty & & & & &  P_{4} = 0  \\
& & & & & \\
& \Scal_{31} & & \Scal_{32} & &  \Scal_{33} \\
};

\path[-stealth]
(m-1-3) edge (m-2-3)
(m-2-3) edge (m-3-2) edge (m-3-5)
(m-3-5) edge (m-4-4) edge (m-6-6);
\draw[->] (m-3-2) edge (m-8-2);
\draw[->] (m-4-4) edge (m-8-4);
\draw[->] (m-6-6) edge (m-8-6);
\draw[densely dotted] (m-3-2) -- (m-3-1);
\draw[densely dotted] (m-4-4) -- (m-4-1);
\draw[densely dotted] (m-6-6) -- (m-6-1);
%\draw[thick] (-6.5,-7) -- (-6.5,7);
\draw[thick] (-6.5,-3.3) -- (8,-3.3);
%\node at (-7.5,7) {Singularity};
%\node at (-7.5,6.5) {level};
\end{tikzpicture}
\caption{The level of singularity structure of $G_{0} \in \mathcal{S}_{3}$ when $P_{1}(\veceta^0) \neq 0$. Here, "$\mathcal{NC}$" stands for 
nonconformant. The term $\overline{s}(G_{0})$ is defined in \eqref{eqn:sufficient_order_nonconformant_emixture}.}
\label{figure:partitionnonconformant_no_symmetry}
\end{figure}

%%%%%%%%%%%%%%%%%%%%%%%%%%%%%%%%%%%%%%%%%%%%%%%%%%%%%%%%%%%%%%%%%%%%%%%%%%%%%%%%%%%%
%%%%%%%%%%%%%%%%%%%%%%%%%%%%%%%%%%%%%%%%%%%%%%%%%%%%%%%%%%%%%%%%%%%%%%%%%%%%%%%%%%%%
%%%%%%%%%%%%%%%%%%%%%%%%%%%%%%%%%%%%%%%%%%%%%%%%%%%%%%%%%%%%%%%%%%%%%%%%%%%%%%%%%%%%

\subsection{Singularity structure of $G_{0} \in \Scal_{3}$: a summary} 
\label{Section:nonconformant_setting}

The singularity level and singularity index of $\Scal_3$ are much more complex than those of previous settings 
of $G_{0}$. $\Scal_3$ does not admit an uniform level of singularity structure for 
all its elements --- it needs to be partitioned into many subsets via 
intersections with additional semialgebraic sets of the parameter space. In addition,
we can establish the existence of
subsets that correspond to the infinite singularity level and singularity index. 
In most cases when the singularity level and singularity index are finite, we may be able to provide
some bounds rather than giving an exact value. As in o-mixtures setting in Section \ref{Section:general_bound_omixtures_generic}, the unifying theme
of such bounds is their connection to the solvability of
a system of real polynomial equations. 

If $G_0=G_0(\vecp^0,\veceta^0) \in \Scal_3$, then its corresponding parameters satisfy
$P_{2}(\myeta^0)=0$, i.e., there is at least one homologous set of $G_0$.
Moreover, at
least one such homologous set is nonconformant. 
For any $G_0 \in \Scal_{3}$, let $I_{1},\ldots,I_{t}$ be all nonconformant 
homologous sets of $G_0$. The singularity structures of $\Scal_3$ arise
from the zeros of the following polynomials:
\begin{itemize}
\item Type C(1): $P_{3}(\vecp^0, \veceta^0) := 
\prod \limits_{i=1}^{t}{\biggr(\prod \limits_{S \subseteq I_{i}, |S| \geq 2}{\biggr(\sum \limits_{j \in S}{p_{j}^0\prod \limits_{l \neq j}{m_{l}^0}}\biggr)}\biggr)}$.
\item Type C(2): $P_{4}(\vecp^0,\veceta^0) : = \prod \limits_{1 \leq i \neq j \leq k_{0}}
{\biggr [u_{ij}^2
+ (m_i^0\sigma_j^0+m_j^0\sigma_i^0)^{2}+
(p_i^0\sigma_j^0-p_j^0\sigma_i^0)^2\biggr ]}$,
where $u_{ij}^2 = (\theta_i^0-\theta_j^0)^2 + \left(v_i^0(1+(m_j^0)^2) - v_j^0(1+(m_i^0)^2)\right)^2$.
\end{itemize}
Type C singularities, including both C(1) and C(2), are distinguished from Type A and Type B singularities 
by the fact that the Type C polynomials are defined
by not only component parameters $\veceta^0$, but also mixing probability
parameters $\vecp^0$.
Note that C(1) singularity implies that there is some homologous set 
$I_{i}$ of $G_0$ such that
$\prod \limits_{S \subseteq I_{i}, |S| \geq 2}{\biggr(\sum \limits_{j \in S}{p_{j}^0\prod \limits_{l \neq j}{m_{l}^0}}\biggr)}=0$.
A homologous set of $G_0$ having the above property is said to contain 
type C(1) singularity locally. Similarly, type C(2) singularity implies that 
there is some pair $1 \leq i \neq j \leq k_{0}$ such that
$u_{ij}^{2}+ (m_i^0\sigma_j^0+m_j^0\sigma_i^0)^2 + (p_i^0\sigma_j^0-p_j^0\sigma_i^0)^2 = 0$.
A homologous set of $G_0$ having this pair is said to contain type C(2)
singularity locally. It can be easily checked that a homologous set containing 
type C(2) singularity must also contain type C(1) singularity,
since $P_4(\vecp^0, \veceta^0) = 0$ entails $P_3(\vecp^0,\veceta^0) = 0$. Now, we define the following partition of $\Scal_3$ according to the definition of type C(1) and C(2) singularity:
$\Scal_3 = \Scal_{31}\cup \Scal_{32} \cup \Scal_{33}$, where
\begin{equation*}
\begin{cases}
\Scal_{31}=\left\{G = G(\vecp,\veceta) \in \Scal_{3} \ | \ 
P_{3}(\vecp, \veceta) \neq 0  \right\} \\
\Scal_{32}=\left\{G = G(\vecp,\veceta)\in \Scal_{3} \ | \ P_{3}(\vecp, \veceta) = 0, 
P_{4}(\vecp, \veceta) \neq 0 \right\} \\
\Scal_{33}=\left\{G = G(\vecp,\veceta) \in \Scal_{3} \ | \ P_{3}(\vecp, \veceta) = 0, 
P_{4}(\vecp, \veceta) = 0 \right\}.
\end{cases}
\end{equation*}

Due to the highly technical nature of our analysis of the singularity structure of $\Scal_3$,
we defer the detailed analysis to Section~\ref{Section:singularity_level_S3_setting} in Appendix F. 
Here, we only provide a summary of such results. We can demonstrate that $(1,1,
\lev(G_{0}|\Ecal_{k_{0}})+1)$ is the unique singularity index of $G_{0} \in \Scal_{3}$ 
relative to $\Ecal_{k_{0}}$ when $P_1(\veceta^0) \neq 0$, i.e., there are no Gaussian 
components in $G_{0}$. Additionally, when $P_1(\veceta^0) = 0$, i.e., there are some 
Gaussian components in $G_{0}$, then we have $(3,2,\max\left\{2,\lev(G_{0}|
\Ecal_{k_{0}})\right\}+1)$ as the unique singularity index of $G_{0}$. Therefore, 
studying singularity index of $G_{0} \in \Scal_{3}$ is equivalent to studying singularity 
level of $G_{0} \in \Scal_{3}$. 

Figure
\ref{figure:partitionnonconformant_symmetry} and 
\ref{figure:partitionnonconformant_no_symmetry} provide illustrations of singularity levels of $G_{0} \in \Scal_{3}$.
Specifically, when
$G_0 \in \Scal_{31}$, it is shown that $\lev(G_0|\Ecal_{k_0}) \leq \max\{2,
\overline{s}(G_{0})\}$, where $\overline{s}(G_{0})$ is defined by a
system of polynomial
equations that we obtain via a method of greedy extraction of polynomial limits, see Section 
\ref{Section:nonconformant_no_typeC}.
In some specific cases, the precise
singularity level of $G_{0} \in \Scal_{31}$ will be given. 
If $G_0 \in \Scal_{32}$, we need a more sophisticated
method of extraction for polynomial limits; our technique is illustrated on
on a specific example of $G_{0}$ in Section \ref{Section:nonconformant_typeC}. Finally, if 
$G_0 \in \Scal_{33}$, it is shown that
$\lev(G_0|\Ecal_{k_0}) = \infty$ in Section \ref{Section:nonconformant_typeC_typeIV}.

%
%%%%%%%%%%%%%%%%%%%%%%%%%%%%%%%%%%%%%%%%%%%%%%%
%%%%%%%%%%%%%%%%%%%%%%%%%%%%%%%%%%%%%%%%%%%%%%%%%%%%%%%%%%%%%%%%%%%%%%%%%%%%%%%%%%%%%%%%%%%%%%%%%%%%%%%%%%%%%%%%%%%%%%%%%%%%%%%%%%%%%%%%%%%%%%%%%%%%%%%%%%%%%%%%%%%%%%%%%%%%%%%%%%%%%%%%%%%%%%%%%%%%%%%%%%%%%%%%%%%%%%%%%%%%%%%%%%%%%%%%%%%%%%%%%%%%%%%%%%%%%%%%%%%%%%%%%%%%%%%%%%%%%%%%%%%%%%%%%%%%%%%%%%%%%%%%%%%%%\newpage

%%
%\begin{center}
%\Large{\textbf{Appendix F}}
%\end{center}

%%%%%%%%%%%%%%%%%%%%%%%%%%%%%%%%%%%%%%%%%%%%%%%%%
\section{Appendix F: Theory of skew-normal e-mixtures -- with proofs}

This Appendix contains a self-contained and detailed treatment of singularity
structure of e-mixtures of skew-normal distributions for which a brief summary of the results 
was given earlier in Appendix E. This Appendix should be skipped at the first reading.

\subsection{Singularity structure of $\Scal_1$}
\label{Section:singularity_level_S1_setting}
In the following, we shall present the proof of Theorem \ref{theorem:conformant_setting} for a simple setting of $G_0 \in \Scal_1$,
which illustrates the complete proofs, and also helps to explain why
the partition of according to $\Scal_1$, i.e., the notion of conformant,
arises in the first place. The simplified setting is that all components of $G_0$ are homologous to
one another. By definition all components of $G_0$ are non-Gaussian (because
$P_1(\veceta^0) \neq 0$). 
Thus, we have $\theta_{1}^{0} = \ldots = 
\theta_{k_{0}}^{0}$ and $\dfrac{v_{1}^{0}}{1+(m_{1}^{0})^{2}}= \ldots = 
\dfrac{v_{k_{0}}^{0}}{1+(m_{k_{0}}^{0})^{2}}$. Additionally, $m_{i}^0 \neq 0$ for all $1 \leq i \leq k_0$. Since $G_{0}$ is conformant, 
$m_{i} ^{0}$ share the same sign for all $1 \leq i \leq k_{0}$. 
Without loss of generality, we assume $m_{i}^{0} >0$. To demonstrate that $(1,1,2)$ 
is the unique singularity index of $G_{0}$, we need
show that $G_0$ is 1-singular and (1,1,1)-singular, but not (1,1,2)-singular.

\paragraph{Claim: $G_0$ is 1-singular and (1,1,1)-singular} Given constraints on the parameters of $G_0$, 
it is simple to arrive at the following $1$-minimal and (1,1,1)-minimal form (cf. Eq.
\eqref{eqn:generallinearindependencerepresentation}):
\begin{eqnarray}
\dfrac{1}{W_{1}(G,G_{0})} \biggr \{\sum \limits_{i=1}^{k_{0}}{\biggr[\beta_{1i}^{(1)}+
\beta_{2i}^{(1)}(x-\theta_{1}^{0})+\beta_{3i}^{(1)}(x-\theta_{1}^{0})^{2})\biggr]f
\left(\dfrac{x-\theta_{1}^{0}}{\sigma_{i}^{0}}\right)\Phi\left(\dfrac{m_{i}^{0}(x-
\theta_{1}^{0})}{\sigma_{i}^{0}}\right)} \nonumber \\
+\biggr[\gamma_{1}^{(1)}+\gamma_{2}^{(1)}(x-\theta_{1}^{0})\biggr] \exp\left(-
\dfrac{(m_{1}^{0})^{2}+1}{2v_{1}^{0}}(x-\theta_{1}^{0})^{2}\right)\biggr \}
+o(1), \label{eqn:taylorexpansion_first_order_homologous}
\end{eqnarray}
where coefficients $\beta_{1i}^{(1)}, \beta_{2i}^{(1)}, \beta_{3i}^{(1)}, \gamma_{1}^{(1)}, \gamma_{2}^{(1)}$ are the polynomials of
$\Delta \theta_{j}$, $\Delta v_{j}$, $\Delta m_{j}$, and $\Delta p_{j}$:
\begin{eqnarray*}
\beta_{1i}^{(1)}=\dfrac{2\Delta p_{i}}{\sigma_{i}^{0}}-\dfrac{p_{i}\Delta v_{i}}{(\sigma_{i}^{0})^{3}}, \ \beta_{2i}^{(1)}=\dfrac{2p_{i}\Delta \theta_{i}}{(\sigma_{i}^{0})^{3}},\ \beta_{3i}^{(1)}=\dfrac{p_{i}\Delta v_{i}}{(\sigma_{i}^{0})^{5}}, \\ 
\gamma_{1}^{(1)}=\sum \limits_{j=1}^{k_{0}}{-\dfrac{p_{j}m_{j}^{0}\Delta \theta_{j}}{\pi(\sigma_{j}^{0})^{2}}}, 
\gamma_{2}^{(1)}=\sum \limits_{j=1}^{k_{0}}{-\dfrac{p_{j}m_{j}^{0}\Delta v_{j}}{2\pi(\sigma_{j}^{0})^{4}}+\dfrac{p_{j}\Delta m_{j}}{\pi(\sigma_{j}^{0})^{2}}}. \nonumber
\end{eqnarray*} 
Note that, the conditions $m_{i}^0 \neq 0$ for all $1 \leq i \leq k_0$ allow us to have that $f
\left(\dfrac{x-\theta_{1}^{0}}{\sigma_{i}^{0}}\right)\Phi\left(\dfrac{m_{i}^{0}(x-
\theta_{1}^{0})}{\sigma_{i}^{0}}\right)$ and $\exp\left(-
\dfrac{(m_{1}^{0})^{2}+1}{2v_{1}^{0}}(x-\theta_{1}^{0})^{2}\right)$ are linearly independent. It is clear that if a sequence of $G$ (represented by 
Eq.~\eqref{eqn:representation_overfit}) is chosen such that
$\Delta \theta_{i} = \Delta v_{i} =\Delta p_{i} = 0$ for all $1 \leq i \leq k_{0}$, 
and $\sum \limits_{i=1}^{k_{0}}{p_{i}\Delta m_{i}/v_{i}^{0}}=0$,
then we obtain
$\beta_{1i}^{(1)}/W_{1}(G,G_{0}) = \beta_{2i}^{(1)}/W_{1}(G,G_{0})
= \beta_{3i}^{(1)}/W_{1}(G,G_{0}) = 
\gamma_{1}^{(1)}/W_{1}(G,G_{0}) = \gamma_{2}^{(1)}/W_{1}(G,G_{0}) = 0$. Hence,
$G_0$ is 1-singular and (1,1,1)-singular relative to $\mathcal{E}_{k_{0}}$.

\paragraph{Claim: $G_0$ is not (1,1,2)-singular} Indeed, suppose that this is not
true. Let $\kappa=(1,1,2)$. Then from Definition \ref{def-rsingular_set}, for any sequence of $G$ that tends to $G_0$ under $\widetilde{W}_{\kappa}$,
all coefficients of the $\kappa$-minimal form must vanish. A $\kappa$-minimal
form is given as follows:
\begin{eqnarray}
\dfrac{1}{\widetilde{W}_{\kappa}^{2}(G,G_{0})}\biggr[\sum \limits_{i=1}^{k_{0}}{\biggr(\sum 
\limits_{j=1}^{5}{\beta_{ji}^{(2)}(x-\theta_{1}^{0})^{j-1}}\biggr)f\left(\dfrac{x-\theta_{1}
^{0}}{\sigma_{i}^{0}}\right)\Phi\left(\dfrac{m_{i}^{0}(x-\theta_{1}^{0})}{\sigma_{i}
^{0}}\right)} \nonumber \\
+\biggr(\sum \limits_{j=1}^{4}{\gamma_{j}^{(2)}(x-\theta_{1}^{0})^{j-1}}\biggr)\exp
\left(-\dfrac{(m_{1}^{0})^{2}+1}{2v_{1}^{0}}(x-\theta_{1}^{0})^{2}\right)\biggr]+o(1), \label{eqn:taylorexpansionsecondorder}
\end{eqnarray}
where $\beta_{ji}^{(2)}, \gamma_{j}^{(2)}$ are polynomials of $\Delta \theta_{l}$, $\Delta 
v_{l}$, $\Delta m_{l}$, and $\Delta p_{l}$ for $l=1,\ldots, k_0$: 
\begin{eqnarray}
\beta_{1i}^{(2)}=\dfrac{2\Delta p_{i}}{\sigma_{i}^{0}}-\dfrac{p_{i}\Delta v_{i}}{(\sigma_{i}^{0})^{3}}-\dfrac{p_{i}(\Delta \theta_{i})^{2}}{(\sigma_{i}^{0})^{3}}+\dfrac{3p_{i}(\Delta v_{i})^{2}}{4(\sigma_{i}^{0})^{5}}, \ \beta_{2i}^{(2)}=\dfrac{2p_{i}\Delta \theta_{i}}{(\sigma_{i}^{0})^{3}}-\dfrac{6p_{i}\Delta \theta_{i}\Delta v_{i}}{(\sigma_{i}^{0})^{5}}, \nonumber \\
\beta_{3i}^{(2)}=\dfrac{p_{i}\Delta v_{i}}{(\sigma_{i}^{0})^{5}}+\dfrac{p_{i}(\Delta \theta_{i})^{2}}{(\sigma_{i}^{0})^{5}}-\dfrac{3p_{i}(\Delta v_{i})^{2}}{2(\sigma_{i}^{0})^{7}}, \ \beta_{4i}^{(2)}=\dfrac{2p_{i}\Delta \theta_{i}\Delta v_{i}}{(\sigma_{i}^{0})^{7}}, \ \beta_{5i}^{(2)}=\dfrac{p_{i}(\Delta v_{i})^{2}}{4(\sigma_{i}^{0})^{9}}, \nonumber \\
\gamma_{1}^{(2)}=\sum \limits_{j=1}^{k_{0}}{-\dfrac{p_{j}m_{j}^{0}\Delta \theta_{j}}{\pi(\sigma_{j}^{0})^{2}}+\dfrac{2p_{j}m_{j}^{0}(\Delta \theta_{j})(\Delta v_{j})}{\pi(\sigma_{j}^{0})^{4}}-\dfrac{2p_{j}\Delta \theta_{j}\Delta m_{j}}{\pi(\sigma_{j}^{0})^{2}}}, \nonumber \\
\gamma_{2}^{(2)}=\sum \limits_{j=1}^{k_{0}}{-\dfrac{p_{j}m_{j}^{0}\Delta v_{j}}{2\pi(\sigma_{j}^{0})^{4}}-\dfrac{p_{j}((m_{j}^{0})^{3}+2m_{j}^{0})(\Delta \theta_{j})^{2}}{2\pi(\sigma_{j}^{0})^{4}}+\dfrac{p_{j}\Delta m_{j}}{\pi(\sigma_{j}^{0})^{2}}+\dfrac{5p_{j}m_{j}^{0}(\Delta v_{j})^{2}}{8\pi(\sigma_{j}^{0})^{6}}-\dfrac{p_{j}\Delta v_{j}\Delta m_{j}}{\pi(\sigma_{j}^{0})^{4}}}, \nonumber \\
\gamma_{3}^{(2)}=\sum \limits_{j=1}^{k_{0}}{\dfrac{p_{j}(2(m_{j}^{0})^{2}+2)\Delta \theta_{j} \Delta m_{j}}{\pi(\sigma_{j}^{0})^{4}}-\dfrac{p_{j}((m_{j}^{0})^{3}+2m_{j}^{0})\Delta \theta_{j}\Delta v_{j}}{2\pi(\sigma_{j}^{0})^{6}}}, \nonumber \\
\gamma_{4}^{(2)}=\sum \limits_{j=1}^{k_{0}}{-\dfrac{p_{j}((m_{j}^{0})^{3}+2m_{j}^{0})(\Delta v_{j})^{2}}{8\pi(\sigma_{j}^{0})^{8}}-\dfrac{p_{j}m_{j}^{0}(\Delta m_{j})^{2}}{2\pi(\sigma_{j}^{0})^{4}}+\dfrac{p_{j}((m_{j}^{0})^{2}+1)\Delta v_{j}\Delta m_{j}}{\pi(\sigma_{j}^{0})^{6}}}. \nonumber 
\end{eqnarray}
Now, $\beta_{ji}^{(2)}/\widetilde{W}_{\kappa}^{2}(G,G_{0}) \to 0$ leads to $\Delta 
p_{i}/\widetilde{W}_{\kappa}^{2}(G,G_{0}), \Delta \theta_{i}/\widetilde{W}_{\kappa}^{2}(G,G_{0}), \Delta v_{i} /\widetilde{W}_{\kappa}^{2}(G,G_{0}) \to 0$ for all $1 \leq i  \leq 
k_{0}$ (The rigorous argument for that result is in Step 1.1 of the full proof of this theorem in Appendix F). Combining with Lemma \ref{lemma:bound_overfit_Wasserstein_first}, we obtain 
\begin{eqnarray}
\sum \limits_{i=1}^{k_{0}}{p_{i}|\Delta m_{i}|^{2}}/\widetilde{W}_{\kappa}^{2}(G,G_{0}) \not \to 0. \label{eqn:singularitys1second}
\end{eqnarray}  
Additionally, the vanishing of coefficients $\gamma_{j}^{(2)}/\widetilde{W}_{\kappa}^{2}
(G,G_{0})$ for $1 \leq j \leq 4$ entails 
\begin{eqnarray}
\biggr(\sum \limits_{i=1}^{k_{0}}{p_{i}\Delta m_{i}/v_{i}^{0}}\biggr)/\widetilde{W}_{\kappa}^{2}
(G,G_{0}) \to 0, \nonumber \\
\biggr(\sum \limits_{i=1}^{k_{0}}{p_{i}m_{i}^{0}(\Delta m_{i})^{2}/(v_{i}^{0})^{2}}
\biggr)/\widetilde{W}_{\kappa}^{2}(G,G_{0}) \to 0. \label{eqn:singularitys1third}
\end{eqnarray}
Combining \eqref{eqn:singularitys1second} and \eqref{eqn:singularitys1third}, it
follows that
\begin{eqnarray}
\biggr(\sum \limits_{i=1}^{k_{0}}{p_{i}m_{i}^{0}(\Delta m_{i})^{2}/(v_{i}^{0})^{2}}\biggr)/\sum \limits_{i=1}^{k_{0}}{p_{i}|\Delta m_{i}|^{2}} \to 0, \nonumber
\end{eqnarray}
which is a contradiction due to $m_{i}^{0} >0$ for all $1 \leq i \leq k_{0}$. 
Hence, $G_0$ is not (1,1,2)-singular relative to $\mathcal{E}_{k_{0}}$. 
We conclude that $\lev(G_{0}|\mathcal{E}_{k_{0}})=1$ and $\singset(G_{0}|\Ecal_{k_{0}})=\left\{(1,1,2)\right\}$.
%%%%%%%%%%%%%%%%%%%%%%%%%%%%%%%%%%%%%%%%%%%%%%%%%%%%%%%%%%%%%%%%%%%%%%%%%%%%%%%%%%%%%%%%%%%%%%%%%%
\subsection{Singularity structure of $\Scal_2$}
\label{Section:singularity_level_S2_setting}
To illustrate the singularity level and singularity index of $G_{0} \in \Scal_{2}$, we consider a simple
setting of $G_{0} \in \mathcal{S}_{2}$ in which $m_{1}^{0},m_{2}^{0},\ldots,m_{k_{0}}^{0}=0$, leaving out
the possible setting of conformant homologous sets and generic components. 

\paragraph{Claim: $G_0$ is 2-singular and (2,2,2)-singular} To establish this, we look at
2-minimal and (2,2,2)-minimal form
for $(p_{G}(x)-p_{G_{0}}(x))/W_{2}^{2}(G,G_{0})$, which is asymptotically
equal to
\begin{eqnarray}
\dfrac{1}{W_{2}^{2}(G,G_{0})}\biggr[\sum \limits_{i=1}^{k_{0}}{\biggr(\sum 
\limits_{j=1}^{5}{\zeta_{ji}^{(2)}(x-\theta_{i}^{0})^{j-1}}\biggr)} f\left(\dfrac{x-\theta_{i}
^{0}}{\sigma_{i}^{0}}\right)\biggr], \label{eqn:taylorexpansionsecondordertwo}
\end{eqnarray}
where $\zeta_{li}^{(2)}$ are the polynomials in terms of $\Delta \theta_{j}$, $\Delta v_{j}$, $\Delta 
m_{j}$, and $\Delta p_{j}$ as $1 \leq i,j \leq k_{0}$ and $1 \leq l \leq 5$. 
To make all the coefficients vanish, 
it suffices to have 
$(\Delta v_{i})^{2}/W_{2}^{2}(G,G_{0}) \to 0$ and 
\begin{eqnarray}
\biggr[-\dfrac{p_{i}\Delta v_{i}}{2(\sigma_{i}^{0})^{3}}-\dfrac{p_{i}(\Delta 
\theta_{i})^{2}}{2(\sigma_{i}^{0})^{3}}+\dfrac{3p_{i}(\Delta v_{i})^{2}}{8(\sigma_{i}
^{0})^{5}}-\dfrac{2p_{i}\Delta \theta_{i}\Delta m_{i}}{\sqrt{2\pi}(\sigma_{i}^{0})^{2}} + 
\dfrac{\Delta p_{i}}{\sigma_{i}^{0}}\biggr]/W_{2}^{2}(G,G_{0}) \to 0, \nonumber \\
\biggr[\dfrac{\Delta \theta_{i}}{(\sigma_{i}^{0})^{3}}+\dfrac{2\Delta m_{i}}{\sqrt{2\pi}
(\sigma_{i}^{0})^{2}}-\dfrac{3\Delta \theta_{i}\Delta v_{i}}{2(\sigma_{i}^{0})^{5}}-
\dfrac{2\Delta v_{i}\Delta m_{i}}{\sqrt{2\pi}(\sigma_{i}^{0})^{4}}\biggr]/W_{2}^{2}(G,G_{0}) \to 0, \nonumber \\
\biggr[\dfrac{\Delta v_{i}}{2(\sigma_{i}^{0})^{5}}+\dfrac{(\Delta \theta_{i})^{2}}
{2(\sigma_{i}^{0})^{5}}+\dfrac{2\Delta \theta_{i}\Delta m_{i}}{\sqrt{2\pi}(\sigma_{i}
^{0})^{4}}\biggr]/W_{2}^{2}(G,G_{0}) \to 0, \nonumber \\
\biggr[\dfrac{\Delta \theta_{i}\Delta v_{i}}{2(\sigma_{i}^{0})^{7}}+\dfrac{\Delta v_{i}
\Delta m_{i}}{\sqrt{2\pi}(\sigma_{i}^{0})^{6}}\biggr]/W_{2}^{2}(G,G_{0}) \to 0. \label{eqn:singularitys2first}
\end{eqnarray}
This can be achieved by choosing a sequence of $G \rightarrow G_0$ in $W_2$ such that
$\Delta \theta_{i} = \Delta v_{i} = \Delta m_{i} =\Delta p_{i} = 0$ for 
all $2 \leq i \leq k_{0}$; only for component 1 do we set $\Delta \theta_{1} = -2 \Delta m_{1} \sigma_{1}^{0}/\sqrt{2\pi}$ and $\Delta v_{1}= (\Delta \theta_{1})^{2}/2$. It follows that $G_0$ is 2-singular and (2,2,2)-singular relative to $\mathcal{E}_{k_{0}}$.

\paragraph{Claim: $G_0$ is not (3,2,3)-singular} It also entails that $G_{0}$ is not 3-singular relative to $\Ecal_{k_{0}}$. Now, let $\kappa=(3,2,3)$. The $\kappa$-minimal form of $(p_{G}(x)-
p_{G_{0}}(x))/\widetilde{W}_{\kappa}^{3}(G,G_{0}) $ is asymptotically equal to
\begin{eqnarray}
\dfrac{1}{\widetilde{W}_{\kappa}^{3}(G,G_{0})}\biggr[\sum \limits_{i=1}^{k_{0}}{\biggr(\sum 
\limits_{j=1}^{7}{\zeta_{ji}^{(3)}(x-\theta_{i}^{0})^{j-1}}\biggr)} f\left(\dfrac{x-\theta_{i}
^{0}}{\sigma_{i}^{0}}\right)\biggr], \label{eqn:taylorexpansionthirdorder}
\end{eqnarray}
where $\zeta_{li}^{(3)}$ are the polynomials in terms of $\Delta \theta_{j}$, $\Delta v_{j}$, $\Delta 
m_{j}$, and $\Delta p_{j}$ as $1 \leq i,j \leq k_{0}$ and $1 \leq l \leq 7$. 
Suppose that there exists a sequence $G \rightarrow G_0$ under $\widetilde{W}_{\kappa}$ such that
all the coefficients of the $\kappa$-minimal form vanish. For any $1 \leq i \leq k_{0}$, it follows after some calculations 
that 
\begin{eqnarray}
C_{1}^{(i)} := \biggr[-\dfrac{p_{i}\Delta v_{i}}{2(\sigma_{i}^{0})^{3}}-\dfrac{p_{i}(\Delta \theta_{i})^{2}}{2(\sigma_{i}^{0})^{3}}+\dfrac{3p_{i}(\Delta v_{i})^{2}}{8(\sigma_{i}^{0})^{5}}-\dfrac{2p_{i}\Delta \theta_{i}\Delta m_{i}}{\sqrt{2\pi}(\sigma_{i}^{0})^{2}}+\dfrac{3p_{i}(\Delta \theta_{i})^{2}\Delta v_{i}}{4(\sigma_{i}^{0})^{5}}+\nonumber \\
\dfrac{2p_{i}\Delta \theta_{i}\Delta v_{i}\Delta m_{i}}{\sqrt{2\pi}(\sigma_{i}^{0})^{4}}+\dfrac{\Delta p_{i}}{\sigma_{i}^{0}}\biggr]/\widetilde{W}_{\kappa}^{3}(G,G_{0}) \to 0, \nonumber \\
C_{2}^{(i)} := \biggr[\dfrac{p_{i}\Delta \theta_{i}}{(\sigma_{i}^{0})^{3}}+\dfrac{2p_{i}\Delta m_{i}}{\sqrt{2\pi}(\sigma_{i}^{0})^{2}}-\dfrac{3p_{i}\Delta \theta_{i}\Delta v_{i}}{2(\sigma_{i}^{0})^{5}}-\dfrac{2p_{i}\Delta v_{i}\Delta m_{i}}{\sqrt{2\pi}(\sigma_{i}^{0})^{4}}-\dfrac{p_{i}(\Delta \theta_{i})^{3}}{2(\sigma_{i}^{0})^{5}}- \nonumber \\
\dfrac{3p_{i}(\Delta \theta_{i})^{2}\Delta m_{i}}{\sqrt{2\pi}(\sigma_{i}^{0})^{4}}+\dfrac{15p_{i}\Delta \theta_{i}(\Delta v_{i})^{2}}{8(\sigma_{i}^{0})^{7}}+\dfrac{2p_{i}(\Delta v_{i})^{2}\Delta m_{i}}{\sqrt{2\pi}(\sigma_{i}^{0})^{6}}\biggr]/\widetilde{W}_{\kappa}^{3}(G,G_{0}) \to 0, \nonumber \\
C_{3}^{(i)} := \biggr[\dfrac{p_{i}\Delta v_{i}}{2(\sigma_{i}^{0})^{5}}+\dfrac{p_{i}(\Delta \theta_{i})^{2}}{2(\sigma_{i}^{0})^{5}}-\dfrac{3p_{i}(\Delta v_{i})^{2}}{4(\sigma_{i}^{0})^{7}}+\dfrac{2p_{i}\Delta \theta_{i}\Delta m_{i}}{\sqrt{2\pi}(\sigma_{i}^{0})^{4}}-\dfrac{3p_{i}(\Delta \theta_{i})^{2}\Delta v_{i}}{2(\sigma_{i}^{0})^{7}} - \nonumber \\
\dfrac{5\Delta \theta_{i}\Delta v_{i}\Delta m_{i}}{\sqrt{2\pi}(\sigma_{i}^{0})^{6}} \biggr]/\widetilde{W}_{\kappa}^{3}(G,G_{0}) \to 0, \nonumber \\
C_{4}^{(i)} := \biggr[\dfrac{p_{i}\Delta \theta_{i}\Delta v_{i}}{2(\sigma_{i}^{0})^{7}}+\dfrac{p_{i}\Delta v_{i}\Delta m_{i}}{\sqrt{2\pi}(\sigma_{i}^{0})^{6}}+\dfrac{p_{i}(\Delta \theta_{i})^{3}}{6(\sigma_{i}^{0})^{7}}-\dfrac{p_{i}(\Delta m_{i})^{3}}{3\sqrt{2\pi}(\sigma_{i}^{0})^{4}}+\nonumber \\
\dfrac{p_{i}(\Delta \theta_{i})^{2}\Delta m_{i}}{\sqrt{2\pi}(\sigma_{i}^{0})^{6}}-\dfrac{5p_{i}\Delta \theta_{i}(\Delta v_{i})^{2}}{4(\sigma_{i}^{0})^{9}}-\dfrac{2p_{i}(\Delta v_{i})^{2}\Delta m_{i}}{\sqrt{2\pi}(\sigma_{i}^{0})^{8}}\biggr]/\widetilde{W}_{\kappa}^{3}(G,G_{0}) \to 0, \nonumber \\
C_{5}^{(i)} := \biggr[\dfrac{p_{i}(\Delta v_{i})^{2}}{8(\sigma_{i}^{0})^{9}}-\dfrac{5p_{i}(\Delta v_{i})^{3}}{16(\sigma_{i}^{0})^{11}}+\dfrac{p_{i}(\Delta \theta_{i})^{2}\Delta v_{i}}{4(\sigma_{i}^{0})^{9}}+\dfrac{p_{i}\Delta \theta_{i}\Delta v_{i}\Delta m_{i}}{\sqrt{2\pi}(\sigma_{i}^{0})^{8}}\biggr]/\widetilde{W}_{\kappa}^{3}(G,G_{0}) \to 0, \nonumber \\
C_{6}^{(i)} := \biggr[\dfrac{p_{i}\Delta \theta_{i}(\Delta v_{i})^{2}}{8(\sigma_{i}^{0})^{11}}+\dfrac{p_{i}(\Delta v_{i})^{2}\Delta m_{i}}{4\sqrt{2\pi}(\sigma_{i}^{0})^{10}}\biggr]/\widetilde{W}_{\kappa}^{3}(G,G_{0}) \to 0, \nonumber \\
C_{7}^{(i)} := p_{i}(\Delta v_{i})^{3}/48(\sigma_{i}^{0})^{3}/\widetilde{W}_{\kappa}^{3}(G,G_{0}) \to 0. \label{eqn:system_limits_symmetry_emixtures}
\end{eqnarray} 
Since the system of limits in \eqref{eqn:system_limits_symmetry_emixtures} holds for any 
$1 \leq i \leq k_{0}$, to further simplify the argument without loss of generality, we consider 
$k_{0}=1$. Under that scenario, we can rewrite $\widetilde{W}_{\kappa}^{3}(G,G_{0})=p_{1}(|\Delta \theta_{1}|
^{3}+|\Delta v_{1}|^{2}+|\Delta m_{1}|^{3})$ where $p_{1}=1$. Additionally, for the 
simplicity of the presentation, we denote $C_{i} := C_{i}^{(1)}$ for any $1 \leq i \leq 7$. 
Now, our argument is organized into the following key steps
\paragraph{Step 1.1:} We will argue that $\Delta \theta_{1}, \Delta v_{1}, \Delta 
m_{1} \neq 0$. If $\theta_{1}=0$, by combining the vanishing of $C_{5}$, we achieve 
$(\Delta v_{1})^{2}/\widetilde{W}_{\kappa}^{3}(G,G_{0}) \to 0$. Since $C_{3} \to 
0$, we further obtain that $\Delta v_{1}/\widetilde{W}_{\kappa}^{3}(G,G_{0}) \to 0$. 
Combining the previous result with $C_{4} \to 0$ eventually yields that $(\Delta 
m_{1})^{3}/\widetilde{W}_{\kappa}^{3}(G,G_{0}) \to 0$. 
Hence, $1 = p_{1}(|\Delta v_{1}|^{2}+|\Delta 
m_{1}|^{3})/\widetilde{W}_{\kappa}^{3}(G,G_{0}) \to 0$, which is a contradiction. 

If $\Delta v_{1}=0$, then $C_{1}+\Delta 
\theta_{1}C_{2} \to 0$ 
implies that $(\Delta \theta_{1})^{2}/\widetilde{W}_{\kappa}^{3}(G,G_{0}) \to 0$. 
Combining this result with $C_{4} \to 0$, we achieve $(\Delta m_{1})^{3}/
\widetilde{W}_{\kappa}^{3}(G,G_{0}) \to 0$, which also leads to a contradiction.

If $\Delta m_{1}=0$, then $C_{4} \to 0$ leads to 
\begin{eqnarray}
\biggr[\dfrac{\Delta \theta_{1}\Delta v_{1}}{2(\sigma_{1}^{0})^{7}}+\dfrac{(\Delta \theta_{1})^{3}}{6(\sigma_{1}^{0})^{7}}\biggr]/\widetilde{W}_{\kappa}^{3}(G,G_{0}) \to 0. \label{eqn:system_limits_symmetry_emixtures_first}
\end{eqnarray}
The combination of the above result and $C_{3} \to 0$ implies that $\Delta v_{1}/\widetilde{W}_{\kappa}^{3}(G,G_{0}) \to 0$. Combine the former results with \eqref{eqn:system_limits_symmetry_emixtures_first}, we obtain $(\Delta \theta_{1})^{3}/\widetilde{W}_{\kappa}^{3}(G,G_{0}) \to 0$, which is also a contradiction. Overall, we obtain the conclusion of this step.
\paragraph{Step 1.2:} If $|\Delta v_{1}|^{2}$ is the maximum among $|
\Delta \theta_{1}|^{3}$, $|\Delta v_{1}|^{2}$, and $|\Delta m_{1}|^{3}$. Since $C_{5} \to 0$, it leads to $(\Delta v_{1})^{2}/\widetilde{W}_{\kappa}^{3}(G,G_{0}) \to 0$, which is a contradiction.
\paragraph{Step 1.3:} If $|\Delta \theta_{1}|^{3}$ is the maximum among $|
\Delta \theta_{1}|^{3}$, $|\Delta v_{1}|^{2}$, and $|\Delta m_{1}|^{3}$. 
Denote $(\Delta v_{1})^{2}/(\Delta \theta_{1})^{3} \to k_{1}$ and $\Delta 
m_{1}/\Delta \theta_{1} \to k_{2}$. Since $C_{5} \to 0$ leads to $(\Delta v_{1})^{2}/\widetilde{W}_{\kappa}^{3}(G,G_{0}) \to 0$, we obtain $k_{1}=0$. As $C_{2} \to 0$, we obtain 
\begin{eqnarray}
\left[-\Delta \theta_{1}/(\sigma_{1}^{0})^{3}+2\Delta m_{1}/\sqrt{2\pi}(\sigma_{1}^{0})^{2}\right]/ 
(|\Delta \theta_{1}|+|\Delta v_{1}|+|\Delta m_{1}|) \to 0. \nonumber
\end{eqnarray}
By diving both the numerator and denominator of this ratio by $\Delta \theta_{1}$, 
we quickly obtain the equation $1/(\sigma_{1}^{0})^{3}+2k_{2}/\sqrt{2\pi}(\sigma_{1}^{0})^{2} =0$, which yields the solution $k_{2}=-\sqrt{\pi}/\sqrt{2}\sigma_{1}^{0}$.
  
Applying the result $(\Delta v_{1})^{2}/\widetilde{W}_{\kappa}^{3}(G,G_{0}) \to 0$ to $C_{3} \to 0$ and $C_{4} \to 0$, we have $M_{1} \to 0$ and $M_{2} \to 0$ where the formations of $M_{1}, M_{2}$ are as follows:
\begin{eqnarray}
M_{1} & : = & \biggr(\dfrac{\Delta v_{1}}{2(\sigma_{1}^{0})^{5}}+\dfrac{(\Delta \theta_{1})^{2}}{2(\sigma_{1}^{0})^{5}}
+\dfrac{2(\Delta \theta_{1})(\Delta m_{1})}{\sqrt{2\pi}(\sigma_{1}^{0})^{4}}\biggr)/(|\Delta \theta_{1}|^{3}+|\Delta v_{1}|^{2}+|\Delta m_{1}|^{3}), \nonumber \\
M_{2} & : = &  \biggr(\dfrac{(\Delta \theta_{1})(\Delta v_{1})}{2(\sigma_{1}^{0})^{7}}
+\dfrac{(\Delta v_{1})(\Delta m_{1})}{\sqrt{2\pi}(\sigma_{1}^{0})^{6}}+\dfrac{(\Delta \theta_{1})^{3}}{6(\sigma_{1}^{0})^{7}}-\dfrac{(\Delta m_{1})^{3}}{3\sqrt{2\pi}(\sigma_{1}^{0})^{4}}  +  \nonumber \\
& + & \dfrac{(\Delta \theta_{1})^{2}(\Delta m_{1})}{\sqrt{2\pi}(\sigma_{1}^{0})^{6}}\biggr)/(|\Delta \theta_{1}|^{3}+|\Delta v_{1}|^{2}+|\Delta m_{1}|^{3}). \nonumber 
\end{eqnarray}
Now, $\left(\dfrac{\Delta \theta_{1}}{(\sigma_{1}^{0})^{2}}+
\dfrac{2\Delta m_{1}}{\sqrt{2\pi}\sigma_{1}^{0}}\right)M_{1} - M_{2}$ yields that 
\begin{eqnarray}
\left[\dfrac{(\Delta m_{1})^{3}}{3\sqrt{2\pi}}+\dfrac{2(\theta_{1})(\Delta m_{1})^{2}}{\pi \sigma_{1}^{0}}
+\dfrac{2(\Delta \theta_{1})^{2}(\Delta m_{1})}{\sqrt{2\pi}(\sigma_{1}^{0})^{2}}+\dfrac{(\Delta \theta_{1})^{3}}{3(\sigma_{1})^{3}}\right]/
(|\Delta \theta_{1}|^{3}+|\Delta v_{1}|^{2}+|\Delta m_{1}|^{3}) \to 0. \nonumber
\end{eqnarray}
By dividing both the numerator and denominator of this term by $(\Delta \theta_{1})^{3}$, 
we obtain the equation $\dfrac{k_{2}^{3}}{3\sqrt{2\pi}}+\dfrac{2k_{2}^{2}}{\pi 
\sigma_{1}^{0}}+\dfrac{2k_{2}}{\sqrt{2\pi}(\sigma_{1}^{0})^{2}}+
\dfrac{1}{3(\sigma_{1}^{0})^{3}}=0$. 
Since $k_{2}=-\dfrac{\sqrt{\pi}}{\sqrt{2}\sigma_{1}^{0}}$, this equation yields $\pi/6-1/3=0$, which is a contradiction. Therefore, this step cannot hold.
\paragraph{Step 1.4:} If $|\Delta m_{1}|^{3}$ is the maximum among $|
\Delta \theta_{1}|^{3}$, $|\Delta v_{1}|^{2}$, and $|\Delta m_{1}|^{3}$. 
The argument in this step is similar to that of Step 1.3. In fact, by denoting $\Delta \theta_{1}/\Delta m_{1} \to k_{3}$ and $(\Delta 
v_{1})^{2}/(\Delta m_{1})^{3} \to k_{4}$
then we also achieve $k_{4} = 0$ and $k_{3}=-\dfrac{\sqrt{2}}{\sqrt{\pi}\sigma_{1}^{0}}$ (by $C_{2} \to 0$). 
Now by using the limits $C_{3}, C_{4} \to 0$ as that of Step 1.3 and after some calculations, we obtain the 
equation $\dfrac{k_{3}^{3}}{3(\sigma_{1}^{0})^{3}}+\dfrac{2k_{3}^{2}}{\sqrt{2\pi}(\sigma_{1})^{2}}
+\dfrac{2k_{3}}{\pi \sigma_{1}^{0}}+\dfrac{1}{3\sqrt{2\pi}}=0$, 
which also does not admit $k_{3}=-\dfrac{\sqrt{2}}{\sqrt{\pi}\sigma_{1}^{0}}$ as a 
solution --- a contradiction. As a consequence, $G_{0}$ is not (3,2,3)-singular relative 
to $\Ecal_{k_{0}}$.

Since $G_{0}$ is 2-singular but not 3-singular relative to $\Ecal_{k_{0}}$, we obtain 
that $\lev(G_{0}|\mathcal{E}_{k_{0}})=2$. To demonstrate that (3,2,3) is the 
singularity index of $G_{0}$, we need to verify that $G_{0}$ is (3,1,3)-singular, (3,2,2)-
singular, and (2,2,3)-singular relative to $\Ecal_{k_{0}}$. In particular, combining these 
results with the fact that $G_{0}$ is (2,2,2)-singular relative to $\Ecal_{k_{0}}$, it 
implies that for any $\kappa' \prec (3,2,3)$, $G_{0}$ is $\kappa'$-singular relative to $
\Ecal_{k_{0}}$.
\paragraph{Claim: $G_0$ is (3,1,3)-singular} Here, we choose $k_{0}=1$ and denote $\kappa_{1}=(3,1,3)$. Similar to the argument for $G_{0}$ is not (3,2,3)-singular, the vanishing of all coefficients of the (3,1,3)-minimal form leads to
\begin{eqnarray}
\biggr(\dfrac{\Delta \theta_{1}}{(\sigma_{1}^{0})^{3}}+\dfrac{2\Delta m_{1}}{\sqrt{2\pi}(\sigma_{1}^{0})^{2}}-\dfrac{(\Delta \theta_{1})^{3}}{2(\sigma_{1}^{0})^{5}}-\dfrac{3(\Delta \theta_{1})^{2}\Delta m_{1}}{\sqrt{2\pi}(\sigma_{1}^{0})^{4}}\biggr)/\widetilde{W}_{\kappa_{1}}^{3}(G,G_{0}) \to 0, \nonumber \\
\biggr(\dfrac{\Delta v_{1}}{2(\sigma_{1}^{0})^{5}}+\dfrac{(\Delta \theta_{1})^{2}}{2(\sigma_{1}^{0})^{5}}+\dfrac{2\Delta \theta_{1}\Delta m_{1}}{\sqrt{2\pi}(\sigma_{1}^{0})^{4}}\biggr)/\widetilde{W}_{\kappa_{1}}^{3}(G,G_{0}) \to 0, \nonumber \\
\biggr(\dfrac{(\Delta \theta_{1})^{3}}{6(\sigma_{1}^{0})^{7}}-\dfrac{(\Delta m_{1})^{3}}{3\sqrt{2\pi}(\sigma_{1}^{0})^{4}}+\dfrac{(\Delta \theta_{1})^{2}\Delta m_{1}}{\sqrt{2\pi}(\sigma_{1}^{0})^{6}}\biggr)/\widetilde{W}_{\kappa_{1}}^{3}(G,G_{0}) \to 0 \nonumber
\end{eqnarray}
where $\widetilde{W}_{\kappa_{1}}^{3}(G,G_{0})=|\Delta \theta_{1}|
^{3}+|\Delta v_{1}|+|\Delta m_{1}|^{3}$. By choosing $\Delta \theta_{1}=-
\dfrac{2\sigma_{1}^{0}\Delta m_{1}}{\sqrt{2\pi}}$ and $\Delta v_{1} = (\Delta 
\theta_{1})^{2}$, then all the above limits satisfy as long as $\Delta \theta_{1} \to 0$. 
Therefore, $G_{0}$ is (3,1,3)-singular relative to $\Ecal_{k_{0}}$.
\paragraph{$G_{0}$ is (3,2,2)-singular} Here, we choose $k_{0}=1$ and denote $\kappa_{2}=(3,2,2)$. The vanishing of all coefficients of the (3,2,2)-minimal form leads to
\begin{eqnarray}
\biggr(\dfrac{\Delta \theta_{1}}{(\sigma_{1}^{0})^{3}}+\dfrac{2\Delta m_{1}}{\sqrt{2\pi}(\sigma_{1}^{0})^{2}}-\dfrac{3\Delta \theta_{1}\Delta v_{1}}{2(\sigma_{1}^{0})^{5}}-\dfrac{2\Delta v_{1}\Delta m_{1}}{\sqrt{2\pi}(\sigma_{1}^{0})^{4}}-\dfrac{(\Delta \theta_{1})^{3}}{2(\sigma_{1}^{0})^{5}}\biggr)/\widetilde{W}_{\kappa_{2}}^{3}(G,G_{0}) \to 0, \nonumber \\
\biggr(\dfrac{\Delta v_{1}}{2(\sigma_{1}^{0})^{5}}+\dfrac{(\Delta \theta_{1})^{2}}{2(\sigma_{1}^{0})^{5}}+\dfrac{2\Delta \theta_{1}\Delta m_{1}}{\sqrt{2\pi}(\sigma_{1}^{0})^{4}}\biggr)/\widetilde{W}_{\kappa_{2}}^{3}(G,G_{0}) \to 0, \nonumber \\
\biggr(\dfrac{\Delta \theta_{1}\Delta v_{1}}{2(\sigma_{1}^{0})^{7}}+\dfrac{\Delta v_{1}\Delta m_{1}}{\sqrt{2\pi}(\sigma_{1}^{0})^{6}}+\dfrac{(\Delta \theta_{1})^{3}}{6(\sigma_{1}^{0})^{7}}\biggr)/\widetilde{W}_{\kappa_{2}}^{3}(G,G_{0}) \to 0, \nonumber \\
(\Delta v_{1})^{2}/\widetilde{W}_{\kappa_{2}}^{3}(G,G_{0}) \to 0 \nonumber
\end{eqnarray}
where $\widetilde{W}_{\kappa_{2}}^{3}(G,G_{0})=|\Delta \theta_{1}|
^{3}+|\Delta v_{1}|^{2}+|\Delta m_{1}|^{2}$. We can verify that by choosing $\Delta 
\theta_{1}=-\dfrac{2\sigma_{1}^{0}\Delta m_{1}}{\sqrt{2\pi}}$ and $\Delta v_{1} = 
(\Delta \theta_{1})^{2}$, then all the above limits satisfy as long as $\Delta \theta_{1} 
\to 0$. Therefore, $G_{0}$ is (3,2,2)-singular relative to $\Ecal_{k_{0}}$.
\paragraph{$G_{0}$ is (2,2,3)-singular} Here, we choose $k_{0}=1$ and denote $
\kappa_{3}=(2,2,3)$. The vanishing of all coefficients of the (2,2,3)-minimal form leads to
\begin{eqnarray}
\biggr(\dfrac{\Delta \theta_{1}}{(\sigma_{1}^{0})^{3}}+\dfrac{2\Delta m_{1}}{\sqrt{2\pi}(\sigma_{1}^{0})^{2}}-\dfrac{3\Delta \theta_{1}\Delta v_{1}}{2(\sigma_{1}^{0})^{5}}-\dfrac{2\Delta v_{1}\Delta m_{1}}{\sqrt{2\pi}(\sigma_{1}^{0})^{4}}\biggr)/\widetilde{W}_{\kappa_{3}}^{3}(G,G_{0}) \to 0, \nonumber \\
\biggr(\dfrac{\Delta v_{1}}{2(\sigma_{1}^{0})^{5}}+\dfrac{(\Delta \theta_{1})^{2}}{2(\sigma_{1}^{0})^{5}}+\dfrac{2\Delta \theta_{1}\Delta m_{1}}{\sqrt{2\pi}(\sigma_{1}^{0})^{4}}\biggr)/\widetilde{W}_{\kappa_{3}}^{3}(G,G_{0}) \to 0, \nonumber \\
\biggr(\dfrac{\Delta \theta_{1}\Delta v_{1}}{2(\sigma_{1}^{0})^{7}}+\dfrac{\Delta v_{1}\Delta m_{1}}{\sqrt{2\pi}(\sigma_{1}^{0})^{6}}-\dfrac{(\Delta m_{1})^{3}}{3\sqrt{2\pi}(\sigma_{1}^{0})^{4}}\biggr)/\widetilde{W}_{\kappa_{3}}^{3}(G,G_{0}) \to 0, \nonumber \\
(\Delta v_{1})^{2}/\widetilde{W}_{\kappa_{3}}^{3}(G,G_{0}) \to 0 \nonumber
\end{eqnarray}
where $\widetilde{W}_{\kappa_{3}}^{3}(G,G_{0})=|\Delta \theta_{1}|
^{2}+|\Delta v_{1}|^{2}+|\Delta m_{1}|^{3}$. By using $\Delta \theta_{1}=-
\dfrac{2\sigma_{1}^{0}\Delta m_{1}}{\sqrt{2\pi}}$ and $\Delta v_{1} = (\Delta 
\theta_{1})^{2}$, we can verify that all the above limits satisfy as long as $\Delta 
\theta_{1} \to 0$. Therefore, $G_{0}$ is (2,2,3)-singular relative to $\Ecal_{k_{0}}$.

In sum, we have shown under the setting of $G_{0} \in \mathcal{S}
_{2}$, (3,2,3) is the singularity index of $G_{0}$ relative to $\Ecal_{k_{0}}$. To 
demonstrate further that $(3,2,3)$ is the unique singularity index of $G_{0}$, we need 
to show that $G_{0}$ is $\kappa$-singular relative to $\Ecal_{k_{0}}$ where $\kappa 
\in \left\{(r,r,2),(2,r,r),(r,1,r)\right\}$ for any $r \geq 1$. As $r \leq 3$, we have 
verified these results hold above with the choice of $k_{0}=1$, $\Delta \theta_{1}=-
\dfrac{2\sigma_{1}^{0}\Delta m_{1}}{\sqrt{2\pi}}$, and $\Delta v_{1} = (\Delta 
\theta_{1})^{2}$. We now argue that these choices of $k_{0}$ and $G$ are also 
sufficient to obtain previous results for any $r \geq 4$. In fact, with the previous choices 
of $k_{0}$ and $G$, it is clear that $(\Delta \theta_{1})^{\alpha_{1}}(\Delta 
v_{1})^{\alpha_{2}}(\Delta m_{1})^{\alpha_{3}}/\widetilde{W}_{\kappa}^{r}
(G,G_{0}) \to 0$ for any $\kappa \in \left\{(r,r,2),(2,r,r),(r,1,r)\right\}$ where $r \geq 
4$ and $|\alpha| \geq 4$. With these results, for any $\kappa$ from the previous set, if 
all coefficients of any $\kappa$-minimal form of $G$ vanish, they will eventually lead to 
one of three systems of limits (the denominator is changed to $\widetilde{W}
_{\kappa}^{r}(G,G_{0})$) that we used to demonstrate that $G_{0}$ is (3,1,3)-
singular, (3,2,2)-singular, and (2,2,3)-singular relative to $\Ecal_{k_{0}}$ above. These 
systems of limits with new denominator still hold with our choices of $k_{0}$ and $G$.

As a consequence, (3,2,3) is the unique singularity index of $G_{0}$ relative to $
\Ecal_{k_{0}}$, i.e., $\singset(G_{0}|\Ecal_{k_{0}})=\left\{(3,2,3)\right\}$. Therefore, 
we achieve the conclusion of the theorem under the setting of $G_{0} \in \Scal_2$.

\subsection{Singularity structure of $\Scal_3$}
\label{Section:singularity_level_S3_setting}

To develop intuition and obtain bounds for singularity structure of $G_0 \in \Scal_3$,
we start by considering a simple case 
similar to the exposition of subsection
\ref{Section:singularity_level_S1_setting} and subsection~\ref{Section:singularity_level_S2_setting}. That is, $G_0$ has only one homologous set 
of size $k_0$. $G_0\in \Scal_3$ means that $m_{i}^{0}$ do not share the same 
signs for all $i=1,\ldots, k_0$. 
\comment{
Thus, it is possible to construct a sequence
of $G$ tending to $G_0$ such that (cf. Eq. \eqref{eqn:singularitys1third})
\begin{eqnarray}
\sum \limits_{i=1}^{k_{0}}{p_{i}\Delta m_{i}/v_{i}^{0}} = 0,\;
\sum \limits_{i=1}^{k_{0}}{p_{i}m_{i}^{0}(\Delta m_{i})^{2}/(v_{i}^{0})^{2}} = 0. \nonumber
\end{eqnarray}
It follows that $G_0$ is 2-singular.
}
To investigate the singularity structure for $G_0$, we first obtain
an $\kappa$-minimal form, for any $\|\kappa\|_{\infty}=r$ such that $\kappa_{3}=r$ where $r \geq 2$, of $(p_{G}(x)-p_{G_{0}}(x))/\widetilde{W}_{\kappa}^{r}
(G,G_{0})$ by
\begin{eqnarray}
\dfrac{1}{\widetilde{W}_{\kappa}^{r}
(G,G_{0})}\biggr[\sum \limits_{i=1}^{k_{0}}{\biggr(\sum \limits_{j=1}
^{2r+1}{\beta_{ji}^{(r)}(x-\theta_{1}^{0})^{j-1}}\biggr)f\left(\dfrac{x-\theta_{1}^{0}}
{\sigma_{i}^{0}}\right)\Phi\left(\dfrac{m_{i}^{0}(x-\theta_{1}^{0})}{\sigma_{i}^{0}}\right)} \nonumber \\
+\biggr(\sum \limits_{j=1}^{2r}{\gamma_{j}^{(r)}(x-\theta_{1}^{0})^{j-1}}\biggr)\exp
\left(-\dfrac{(m_{1}^{0})^{2}+1}{2v_{1}^{0}}(x-\theta_{1}^{0})^{2}\right)\biggr]+o(1), \label{eqn:Taylorexpansionnonconformant}
\end{eqnarray}
where $\beta_{ji}^{(r)}, \gamma_{j}^{(r)}$ are polynomials of $\Delta \theta_{l}$, $\Delta 
v_{l}$, $\Delta m_{l}$, and $\Delta p_{l}$ as $1 \leq i,l \leq k_{0}$ and $1 \leq j \leq 2r
+1$. 
For concrete formulas of
$\beta_{ji}^{(r)}, \gamma_{j}^{(r)}$, we note that for
any $\alpha=(\alpha_{1},\alpha_{2},\alpha_{3})$ such that $|\alpha| \leq r$, 
there holds
\begin{eqnarray}
\dfrac{\partial^{|\alpha|}{f}}{\partial{\theta^{\alpha_{1}}}\partial{v^{\alpha_{2}}}
\partial{m^{\alpha_{3}}}}=\biggr(\sum \limits_{i=1}^{2r}{\dfrac{U_{i}^{\alpha_{1},\alpha_{2},
\alpha_{3}}(m)}{V_{i}^{\alpha_{1},\alpha_{2},\alpha_{3}}(v)}(x-\theta)^{i-1}\biggr)f
\left(\dfrac{x-\theta}{\sigma}\right)f\left(\dfrac{m(x-\theta)}{\sigma}\right)}+ \nonumber \\
\dfrac{1}{\sigma}\biggr(\sum \limits_{i=1}^{2r+1}{\dfrac{L_{i}^{\alpha_{1},\alpha_{2},
\alpha_{3}}}{N_{i}^{\alpha_{1},\alpha_{2},\alpha_{3}}(v)}(x-\theta)^{i-1}\biggr)f\left(\dfrac{x-
\theta}{\sigma}\right)\Phi\left(\dfrac{m(x-\theta)}{\sigma}\right)}.\nonumber
\end{eqnarray}
In the above display $U_{i}^{\alpha_{1},\alpha_{2},\alpha_{3}}(m), V_{i}^{\alpha_{1},\alpha_{2},
\alpha_{3}}(v), N_{i}^{\alpha_{1},\alpha_{2},\alpha_{3}}(v)$ are polynomials in terms of 
$m,v$ and $L_{i}^{\alpha_{1},\alpha_{2},\alpha_{3}}$ are some constant numbers. As $
\alpha_{3} \geq 1$, we can further check that $L_{i}^{\alpha_{1},\alpha_{2},\alpha_{3}}
=0$ for all $1 \leq i \leq 2r$ and $\alpha_{1},\alpha_{2}$ such that $|\alpha| \leq r$. 
%i.e., the 
%derivative $\dfrac{\partial^{|\alpha|}{f}}{\partial{\theta^{\alpha_{1}}}\partial{v^{\alpha_{2}}}
%\partial{m^{\alpha_{3}}}}$ only consists of the part of $f
%\left(\dfrac{x-\theta}{\sigma}\right)f\left(\dfrac{m(x-\theta)}{\sigma}\right)$. 
It follows that
\begin{eqnarray}
\beta_{ji}^{(r)} & = & \dfrac{2\Delta p_{i}}{\sigma_{j}^{0}}1_{\left\{j=1\right\}}+\dfrac{1}{\sigma_{i}^{0}}\sum \limits_{|\alpha| \leq r}{\dfrac{L_{j}^{\alpha_{1},\alpha_{2},\alpha_{3}}}{N_{j}^{\alpha_{1},\alpha_{2},\alpha_{3}}(v_{i}^{0})}\dfrac{p_{i}(\Delta \theta_{i})^{\alpha_{1}}(\Delta v_{i})^{\alpha_{2}}(\Delta m_{i})^{\alpha_{3}}}{\alpha_{1}!\alpha_{2}!\alpha_{3}!}}, \nonumber \\
\gamma_{j}^{(r)} & = & \sum \limits_{i=1}^{k_{0}}{\sum \limits_{|\alpha| \leq r}{\dfrac{U_{j}^{\alpha_{1},\alpha_{2},\alpha_{3}}(m_{i}^{0})}{V_{j}^{\alpha_{1},\alpha_{2},\alpha_{3}}(v_{i}^{0})}\dfrac{p_{i}(\Delta \theta_{i})^{\alpha_{1}}(\Delta v_{i})^{\alpha_{2}}(\Delta m_{i})^{\alpha_{3}}}{\alpha_{1}!\alpha_{2}!\alpha_{3}!}}}, \nonumber
\end{eqnarray} 
where $1 \leq i \leq k_{0}$ and $1 \leq j \leq 2r+1$. Since $L_j^{\alpha_{1},\alpha_{2},\alpha_{3}}=0$ as $\alpha_{3} \geq 1$, we further obtain that
\begin{eqnarray}
\beta_{ji}^{(r)} & = & \dfrac{2\Delta p_{i}}{\sigma_{j}^{0}}1_{\left\{j=1\right\}}+\dfrac{1}{\sigma_{i}^{0}}\sum \limits_{\alpha_{1}+\alpha_{2} \leq r}{\dfrac{L_{j}^{\alpha_{1},\alpha_{2},0}}{N_{j}^{\alpha_{1},\alpha_{2},0}(v_{i}^{0})}\dfrac{p_{i}(\Delta \theta_{i})^{\alpha_{1}}(\Delta v_{i})^{\alpha_{2}}}{\alpha_{1}!\alpha_{2}!}}. \nonumber
\end{eqnarray}
Therefore, $\beta_{ji}^{(r)}$ are polynomials of $\Delta p_{i}, \Delta 
\theta_{i}, \Delta v_{i}$, while $\gamma_{j}^{(r)}$ are polynomials 
of $\Delta \theta_{i}, \Delta v_{i}, \Delta m_{i}$, for 
$1 \leq i \leq k_{0}$, $1 \leq j \leq 2r+1$.

Suppose that there is a sequence of $G$ tending to $G_0$ (in $\widetilde{W}_{\kappa}$ distance) such that
all coefficients of its $\kappa$-minimal form in \eqref{eqn:Taylorexpansionnonconformant} vanish.
It can be checked that $\beta_{ji}^{(r)}/\widetilde{W}_{\kappa}^{r}
(G,G_{0}) \to 0$ for all
even number $j \in [1,2r+1]$ entails that 
$\Delta \theta_{i}/\widetilde{W}_{\kappa}^{r}
(G,G_{0}) \to 0$ for all $1 \leq i \leq k_{0}$. 
Similarly, $\beta_{ji}^{(r)}/\widetilde{W}_{\kappa}^{r}
(G,G_{0}) \to 0$ for all
odd $j \in [3,2r+1]$ entails that 
$\Delta v_{i}/\widetilde{W}_{\kappa}^{r}
(G,G_{0}) \to 0$ for all $1 \leq i \leq 
k_{0}$. So, as $\beta_{1i}^{(r)}/\widetilde{W}_{\kappa}^{r}
(G,G_{0}) \to 0$, we obtain $\Delta 
p_{i}/\widetilde{W}_{\kappa}^{r}
(G,G_{0}) \to 0$. It follows that, as $\beta_{ji}^{(r)}/\widetilde{W}_{\kappa}^{r}
(G,G_{0}) \to 0$ for all $1 \leq j \leq 2r+1$, we must have
 $\Delta p_{i}/\widetilde{W}_{\kappa}^{r}
(G,G_{0}) \to 0$, $\Delta \theta_{i}/\widetilde{W}_{\kappa}^{r}
(G,G_{0}) \to 0$, and $\Delta v_{i}/\widetilde{W}_{\kappa}^{r}
(G,G_{0}) \to 0$ 
for all $1 \leq i \leq k_{0}$. Note that, these results hold for \emph{any choice} of $1 \leq \kappa_{1}, \kappa_{2} \leq r$. Additionally, they also imply that
\begin{eqnarray}
\dfrac{\sum \limits_{i=1}^{k_{0}}{|\Delta p_{i}|+p_{i}(|\Delta \theta_{i}|^{\kappa_{1}}+|\Delta v_{i}|^{\kappa_{2}})}}{\widetilde{W}_{\kappa}^{r}
(G,G_{0})} \to 0. \nonumber
\end{eqnarray}
If $\Delta m_{i} = 0$ for all $1 \leq i \leq k_{0}$, then by means of Lemma 
\ref{lemma:bound_overfit_Wasserstein_first}, $\sum \limits_{i=1}^{k_{0}}{|\Delta p_{i}|+p_{i}(|
\Delta \theta_{i}|^{\kappa_{1}}+|\Delta v_{i}|^{\kappa_{2}}) }= D_{\kappa}(G_{0},G) \asymp \widetilde{W}_{\kappa}^{r}
(G,G_{0})$, 
which contradicts with the above limit. Therefore, we have $\mathop {\max }\limits_{1 
\leq i \leq k_{0}}{|\Delta m_{i}|}>0$.

Turning to $\gamma_{l}^{(r)}$ and the fact that 
$\Delta p_{i}/\widetilde{W}_{\kappa}^{r}
(G,G_{0}) \to 0$, $\Delta \theta_{i}/\widetilde{W}_{\kappa}^{r}
(G,G_{0}) \to 0$, and $\Delta v_{i}/\widetilde{W}_{\kappa}^{r}
(G,G_{0})$\\$ \to 0$, if $\gamma_{l}^{(r)}/\widetilde{W}_{\kappa}^{r}
(G,G_{0}) \to 0$ as $1 \leq l \leq 2r$, we also have that
\begin{eqnarray}
\left(\sum \limits_{i=1}^{k_{0}}{\sum \limits_{\alpha_{3} \leq r}{\dfrac{U_{l}^{0,0,\alpha_{3}}(m_{i}^{0})}{V_{l}^{0,0,\alpha_{3}}(v_{i}^{0})}\dfrac{p_{i}(\Delta m_{i})^{\alpha_{3}}}{\alpha_{3}!}}}\right) /\widetilde{W}_{\kappa}^{r}
(G,G_{0})\to 0. \nonumber 
\end{eqnarray}
We can verify that as $1 \leq l \leq 2r$ is odd, $U_{j}^{0,0,\alpha_{3}}(m_{i}^{0})=0$ for all $\alpha_{3} \leq r$ and $1 \leq i \leq k_{0}$. Additionally, as $1 \leq l \leq 2r$ is even, the above system of limits becomes
\begin{eqnarray}
\biggr(\sum \limits_{i_{1}-i_{2}=l/2}{\dfrac{q_{i_{1},i_{2}}}{i_{1}!}\sum \limits_{i=1}
^{k_{0}}{\dfrac{p_{i}(m_{i}^{0})^{i_{1}-2i_{2}-1}(\Delta m_{i})^{i_{1}}}{(\sigma_{i}
^{0})^{l}}}}\biggr)/\widetilde{W}_{\kappa}^{r}
(G,G_{0}) \to 0, \label{eqn:singularitynonconformant_two}
\end{eqnarray}
where $1 \leq i_{1} \leq r$, $i_{2} \leq (i_{1}-1)/2$ as $i_{1}$ is odds or $i_{2} \leq 
i_{1}/2-1$ as $i_{1}$ is even. Here, $q_{i,j}$ are the integer coefficients 
that appear in the high order derivatives of $f(x|\theta,\sigma,m)$ with respect 
to $m$:
\begin{eqnarray}
\dfrac{\partial^{s+1}{f}}{\partial{m}^{s+1}}=\left[\sum \limits_{j=0}^{(s-1)/2}{\dfrac{q_{(s+1),j}m^{s-2j}}{\sigma^{2s+2-2j}}(x-\theta)^{2s-2j+1}}\right]f\left(\dfrac{x-\theta}{\sigma}\right)f\left(\dfrac{m(x-\theta)}{\sigma}\right) \nonumber
\end{eqnarray}
when $s$ is an odd number and
\begin{eqnarray}
\dfrac{\partial^{s+1}{f}}{\partial{m}^{s+1}}=\left[\sum \limits_{j=0}^{s/2}{\dfrac{q_{(s+1),j}m^{s-2j}}{\sigma^{2s+2-2j}}(x-\theta)^{2s-2j+1}}\right]f\left(\dfrac{x-\theta}{\sigma}\right)f\left(\dfrac{m(x-\theta)}{\sigma}\right) \nonumber
\end{eqnarray}
when $s$ is an even number. For instance, when $s=0$, we have $q_{1,0}=2$ and when 
$s=1$, we have $q_{2,0}=-2$.

Summarizing, under that simple setting of $G_{0}$ in order for all the coefficients in the $\kappa$-minimal form
\eqref{eqn:Taylorexpansionnonconformant} to
vanish, i.e., we have $\beta_{ji}^{(r)}/\widetilde{W}_{\kappa}^{r}
(G,G_{0}) \to 0$ and $\gamma_{l}^{(r)}/\widetilde{W}_{\kappa}^{r}
(G,G_{0}) \to 0$, only the third component of $\kappa$, i.e., $\kappa_{3}=r$, plays a 
key role while the first two components $\kappa_{1}$ and $\kappa_{2}$ of $\kappa$ 
can be of any values from 1 to $r$. Additionally, the value $\kappa_{3}=r$ is 
determined by the system 
of limits \eqref{eqn:singularitynonconformant_two}, i.e., that system is the important factor to determine
the singularity structure of $G_{0} \in \Scal_{3}$. Let $r_{\max}$ to be the maximum 
number $r$ such that system of limits \eqref{eqn:singularitynonconformant_two} holds. 
From the definition of singularity level, it is clear that $r_{\max} = \lev(G_{0}|
\Ecal_{k_{0}})$. These observations under this simple setting of $G_{0} \in \Scal_{3}$ 
shed light on the following important result regarding singularity index of any $G_{0} 
\in \Scal_{3}$ relative to $\Ecal_{k_{0}}$ whose rigorous proof is deferred to Appendix F.
\begin{theorem} \label{theorem:singularity_index_S3}
Suppose that $G_{0} \in \Scal_{3}$.
\begin{itemize}
\item[(a)] If $\lev(G_{0}|\Ecal_{k_{0}})<\infty$ and $P_1(\veceta^0) \neq 0$, i.e., there are no Gaussian components in $G_{0}$, then we have $\singset(G_{0}|\Ecal_{k_{0}})=\left\{(1,1,\lev(G_{0}|\Ecal_{k_{0}})+1)\right\}$.
\item[(b)] If $\lev(G_{0}|\Ecal_{k_{0}})<\infty$ and $P_1(\veceta^0) = 0$, i.e., there are some Gaussian components in $G_{0}$, then we have $\singset(G_{0}|\Ecal_{k_{0}})=\left\{(3,2,\max\left\{2,\lev(G_{0}|\Ecal_{k_{0}})\right\}+1)\right\}$.
\item[(c)] If $\lev(G_{0}|\Ecal_{k_{0}})=\infty$, then $\singset(G_{0}|\Ecal_{k_{0}})=(\infty,\infty,\infty)$.
\end{itemize}
\end{theorem}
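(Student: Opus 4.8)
\textbf{Proof plan for Theorem \ref{theorem:singularity_index_S3}.}

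The plan is to follow the template laid out for $\Scal_1$ and $\Scal_2$: construct a suitable $\kappa$-minimal form for $G_0$, reduce the condition of $\kappa$-singularity to the solvability of a system of polynomial limits, and then identify precisely which $\kappa$ admit nontrivial solutions. The key structural observation, already foreshadowed in the simple-case discussion preceding the theorem, is that for $G_0 \in \Scal_3$ only the \emph{third} component $\kappa_3$ of $\kappa$ is binding, while $\kappa_1,\kappa_2$ may take any value in $[1,r]$ where $r=\|\kappa\|_\infty$. First I would establish the general version of the $\kappa$-minimal form \eqref{eqn:Taylorexpansionnonconformant}, handling arbitrary (not necessarily all-homologous) $G_0 \in \Scal_3$ by organizing atoms into homologous equivalence classes and applying Lemma \ref{lemma:recursiveequation} and the linear independence guaranteed by the absence of Type A / Type B singularities among the distinct atoms. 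The crucial bookkeeping fact is that the coefficients $L_j^{\alpha_1,\alpha_2,\alpha_3}$ attached to the $\Phi(\cdot)$-terms vanish whenever $\alpha_3 \geq 1$, so that $\beta_{ji}^{(r)}$ depend only on $\Delta p_i,\Delta\theta_i,\Delta v_i$ and $\gamma_j^{(r)}$ carry the $\Delta m_i$ dependence through the derivative-of-$m$ structure $q_{i,j}$.

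The next step is the reduction: suppose $G_0$ is $\kappa$-singular, so some sequence $G\to G_0$ in $\widetilde W_\kappa$ makes all $\kappa$-minimal-form coefficients vanish. Parity of the polynomial degree $j$ in $\beta_{ji}^{(r)}$ forces $\Delta\theta_i/\widetilde W_\kappa^r(G,G_0)\to 0$ (odd part absent vs. even part), then $\Delta v_i/\widetilde W_\kappa^r(G,G_0)\to 0$, then $\Delta p_i/\widetilde W_\kappa^r(G,G_0)\to 0$, for all $i$; by Lemma \ref{lemma:bound_overfit_Wasserstein_first} this forces $\max_i|\Delta m_i|>0$ and, moreover, that the vanishing of the $\gamma_\ell^{(r)}$ collapses to the system of polynomial limits \eqref{eqn:singularitynonconformant_two} (after projecting onto the homologous block achieving the maximum, exactly as in Step 2 of the proof of Theorem \ref{theorem:generic_setting_omixtures}). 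This system involves only the $\Delta m_i$ (rescaled) and the weights, and its maximal solvable order is by definition $\lev(G_0|\Ecal_{k_0})$. For part (a): when there are no Gaussian components, the construction that verifies $(1,1,\lev(G_0|\Ecal_{k_0})+1)$-singularity keeps $\Delta\theta_i=\Delta v_i=\Delta p_i=0$ and moves only the $\Delta m_i$ along an admissible solution of \eqref{eqn:singularitynonconformant_two}; conversely the above reduction shows no $\kappa$ with $\kappa_3\geq\lev(G_0|\Ecal_{k_0})+2$ can be singular, while any $\kappa$ with $\kappa_3\leq\lev(G_0|\Ecal_{k_0})+1$ and $\kappa_1,\kappa_2$ arbitrary is singular by that same construction (the extra tolerance for $\kappa_1,\kappa_2$ is free because $\Delta\theta_i,\Delta v_i$ are held at zero). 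Invoking Proposition \ref{proposition:singularity_set_level}(v) gives uniqueness of the singularity index. For part (b): when some $m_j^0=0$, the Gaussian atoms obey the linear relation \eqref{key-normal}, so their location/scale perturbations interact at a \emph{higher} order and the reduction above picks up the Gaussian-mixture system from \eqref{eqn:generalovefittedGaussianzero_Gaussian_mulindex} as an additional constraint; the argument from the $\Scal_2$ analysis (subsection~\ref{Section:singularity_level_S2_setting}) then yields that the binding indices for location, scale, skewness are $3$, $2$, $\max\{2,\lev(G_0|\Ecal_{k_0})\}+1$ respectively, and one constructs the matching sequence (choosing $\Delta\theta_i\propto\Delta m_i$, $\Delta v_i\propto(\Delta\theta_i)^2$ on Gaussian atoms, $\Delta m_i$ along a solution of the skewness system on non-Gaussian atoms). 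Part (c) is immediate: if $\lev(G_0|\Ecal_{k_0})=\infty$ then \eqref{eqn:singularitynonconformant_two} is solvable for every $r$, so $G_0$ is $\kappa$-singular for every finite $\kappa$, whence the only singularity index is $(\infty,\infty,\infty)$ by Definition \ref{definition:singularity_index} and Proposition \ref{proposition:singularity_set_level}(ii).

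The main obstacle I anticipate is the general $\kappa$-minimal form for $G_0\in\Scal_3$ with \emph{several} homologous classes and possibly mixed conformant/nonconformant/Gaussian structure: one must verify that after the Type C-agnostic elimination (Lemma \ref{lemma:recursiveequation}) the surviving basis functions — products $f((x-\theta^0_i)/\sigma^0_i)\Phi(m^0_i(x-\theta^0_i)/\sigma^0_i)$ and Gaussian exponentials $\exp(-((m^0_i)^2+1)(x-\theta^0_i)^2/2v^0_i)$ across all classes — are genuinely linearly independent, which rests on a careful extension of the argument in Lemma \ref{proposition-notskewnormal} to higher-order polynomial prefactors (as was done in Lemma \ref{lemma:reduced_linearly_independent} for $\Scal_0$). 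A secondary technical point is justifying that the ``greedy'' projection onto the dominant homologous block is lossless for extracting \eqref{eqn:singularitynonconformant_two} — this mirrors Step 2 of the proof of Theorem \ref{theorem:generic_setting_omixtures} and should go through by the same boundedness-and-subsequence argument, but the inhomogeneity of $\widetilde W_\kappa$ (Lemma \ref{lemma:bound_overfit_Wasserstein_first}) makes the rescaling more delicate than in the $W_r$ case, so I would treat it explicitly.
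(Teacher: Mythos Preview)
Your plan is essentially the paper's own proof: split the $\kappa$-minimal form into a nonconformant-homologous part $B_1$ and a ``rest'' part $B_2$, use the vanishing of the $\beta_{ji}^{(r)}$ coefficients to force $\Delta p_i,\Delta\theta_i,\Delta v_i$ to be negligible, reduce to the $\Delta m$-only system \eqref{eqn:singularitynonconformant_two} on the dominant nonconformant block, and for part (b) invoke the $\Scal_2$ analysis when the dominant index falls among the Gaussian atoms. The paper organizes this as Step 1 (showing $G_0$ is \emph{not} $(3,2,\max\{2,\lev(G_0|\Ecal_{k_0})\}+1)$-singular, via the two-case split on where the dominant index $\underline{i}$ lies) and Step 2 (showing $G_0$ \emph{is} $(r,r,\max\{2,\lev(G_0|\Ecal_{k_0})\})$-, $(2,r,r)$-, and $(r,1,r)$-singular for all $r$, the last two from the Gaussian construction), which together pin down the unique index.

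One concrete slip to fix: you write that ``no $\kappa$ with $\kappa_3\geq\lev(G_0|\Ecal_{k_0})+2$ can be singular, while any $\kappa$ with $\kappa_3\leq\lev(G_0|\Ecal_{k_0})+1$ \ldots\ is singular.'' Both bounds are off by one. Since the system \eqref{eqn:singularitynonconformant_two} is solvable precisely up to order $r=\lev(G_0|\Ecal_{k_0})$, the correct statements are that $G_0$ is $\kappa$-singular whenever $\kappa_3\leq\lev(G_0|\Ecal_{k_0})$ (with $\kappa_1,\kappa_2$ arbitrary), and \emph{not} $\kappa$-singular once $\kappa_3\geq\lev(G_0|\Ecal_{k_0})+1$; in particular $(1,1,\lev(G_0|\Ecal_{k_0})+1)$ itself is \emph{not} singular, which is exactly what makes it a singularity index. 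As written, your bounds would force $(1,1,\lev(G_0|\Ecal_{k_0})+1)$ to be singular and hence disqualify it as an index. Also, Proposition~\ref{proposition:singularity_set_level}(v) does not by itself yield uniqueness---it goes the other way---so you should argue uniqueness directly from monotonicity (Lemma~\ref{lemma:minimal_form_general}(b)): any $\tilde\kappa$ with $\tilde\kappa_3\leq\lev(G_0|\Ecal_{k_0})$ is dominated by some singular $(r,r,\lev(G_0|\Ecal_{k_0}))$, and any $\tilde\kappa\succneqq(1,1,\lev(G_0|\Ecal_{k_0})+1)$ has the non-singular $(1,1,\lev(G_0|\Ecal_{k_0})+1)$ strictly below it.
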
 
The above results imply that we only need to focus on studying the singularity level to
understand singularity structure of $G_{0} \in \Scal_{3}$, i.e., we can choose $
\kappa=(r,r,r)$. To illustrate the behaviors of singularity levels of $G_{0} \in \Scal_{3}$, 
we will continue exploring the structure of 
system of limits \eqref{eqn:singularitynonconformant_two} under the simple setting of $G_0 \in \Scal_3$ when it has only one homologous set of size $k_{0}$.

\subsubsection{Singularity structure of $G_{0} \in \Scal_{31}$} 
\label{Section:nonconformant_no_typeC}
\comment{
Under this setting, we will give the insight into the condition of C.1 singularity. Due to the 
assumption of $G_{0}$, the condition $P_{3}(\vecp,\veceta) \neq 0$ becomes
\begin{eqnarray}
\prod \limits_{S \subseteq \left\{1,\ldots,k_{0}\right\}, |S| \geq 2}{\biggr(\sum \limits_{i \in S}{p_{i}^{0}\prod \limits_{j \neq i}{m_{j}^{0}}}\biggr)} \neq 0. \label{eqn:type3singularitycondition}
\end{eqnarray}
}
%Now assume that all the coefficients in \eqref{eqn:Taylorexpansionnonconformant} vanish to 
%0. 

Recall that $G_{0}$ has only one homologous set of size $k_{0}$. Let $\kappa=(r,r,r)$. From the above argument that, as we have $\beta_{ji}^{(r)}/\widetilde{W}_{\kappa}^{r}
(G,G_{0}) \to 0$ when $1 \leq i,l \leq k_{0}$ and $1 \leq 
j \leq 2r+1$, we obtain $\Delta p_{i}/\widetilde{W}_{\kappa}^{r}
(G,G_{0}) \to 0$, $\Delta \theta_{i}/\widetilde{W}_{\kappa}^{r}
(G,G_{0}) \to 0$, and $\Delta v_{i}/\widetilde{W}_{\kappa}^{r}
(G,G_{0}) \to 0$ for all $1 \leq i \leq k_{0}$. Combining with Lemma 
\ref{lemma:bound_overfit_Wasserstein_first}, it follows that
\begin{eqnarray}
\sum \limits_{i=1}^{k_{0}}{p_{i}|\Delta m_{i}|^{r}}/\widetilde{W}_{\kappa}^{r}
(G,G_{0}) \not \to 0. \label{eqn:singularitynonconformant_one}
\end{eqnarray}  
Since we have $\mathop {\max }\limits_{1 \leq i \leq k_{0}}{|\Delta m_{i}|}>0$, a combination of \eqref{eqn:singularitynonconformant_two} and 
\eqref{eqn:singularitynonconformant_one} leads to
\begin{eqnarray}
\biggr(\sum \limits_{i_{1}-i_{2}=l/2}{\dfrac{q_{i_{1},i_{2}}}{i_{1}!}\sum \limits_{i=1}
^{k_{0}}{\dfrac{p_{i}(m_{i}^{0})^{i_{1}-2i_{2}-1}(\Delta m_{i})^{i_{1}}}{(\sigma_{i}
^{0})^{l}}}}\biggr) \biggr /
\sum \limits_{i=1}^{k_{0}}{p_{i}|\Delta m_{i}|^{r}} \to 0, 
\label{eqn:singularitynonconformant_three}
\end{eqnarray}
for any even $l$ such that $1 \leq l \leq 2r$. 
Let $q_{i}=p_{i}/\sigma_{i}^{0}$, $t_{i}^{0}
=m_{i}^{0}/\sigma_{i}^{0}$, and $\Delta t_{i}=\Delta m_{i}/\sigma_{i}^{0}$ for all $1 
\leq i \leq k_{0}$, then the above limits can be rewritten as
\begin{eqnarray}
\biggr(\sum \limits_{i=1}^{k_{0}}
\sum \limits_{i_{1}-i_{2}=l/2}{\dfrac{q_{i_{1},i_{2}}}{i_{1}!}
{q_{i}(t_{i}^{0})^{i_{1}-2i_{2}-1}(\Delta t_{i})^{i_{1}}}}\biggr)/\sum 
\limits_{i=1}^{k_{0}}{q_{i}|\Delta t_{i}|^{r}} \to 0, 
\label{eqn:singularitynonconformant_four}
\end{eqnarray}
where in the summation of the above display, 
$1 \leq i_{1} \leq r$, $i_{2} \leq (i_{1}-1)/2$ as $i_{1}$ is odd, or $i_{2} \leq 
i_{1}/2-1$ as $i_{1}$ is even and $l$ is an even number ranging from 2 to $2r$. These are the 
limits of the ratio of two semipolynomial functions. The existence of these limits
will be shown to entail the existence of zeros of a system of polynomial
equations. 

\paragraph{Greedy extraction of limiting polynomials} 
As explained in the main text, it is generally difficult to obtain all polynomial limits of 
the system of rational semipolynomial functions given by \eqref{eqn:singularitynonconformant_four}. 
However, it is possible to obtain a subset of polynomial limits via
a greedy method of extraction. We shall demonstrate this technique
for the specific case $r=3$, and then present a general result, not
unlike what we have done in subsections \ref{Section:illustration_omixture_byone}
and \ref{Section:general_bound_omixtures_generic} for o-mixtures.
For $r=3$, we only have three possible choices of $l$ in 
\eqref{eqn:singularitynonconformant_four}, which are $l=2, 4$ and $6$. As $l=2$, we have 
$(i_{1}, i_{2})=(1,0)$. As $l=4$, we obtain $(i_{1},i_{2}) \in \left\{(2,0),(3,1)\right\}$. 
Finally, as $l=6$, we get $(i_{1},i_{2}) = (3,0)$. Here, we can compute that $q_{1,0}=2, 
q_{2,0}=-2, q_{3,1}=-2, q_{3,0}=2$. Therefore, as $r=3$, the system of limits 
\eqref{eqn:singularitynonconformant_four} becomes
\begin{eqnarray}
\biggr(\sum \limits_{i=1}^{k_{0}}{q_{i}\Delta t_{i}}\biggr)/\sum \limits_{i=1}^{k_{0}}{q_{i}|\Delta t_{i}|^{3}} \to 0,  \nonumber \\
\biggr(\sum \limits_{i=1}^{k_{0}}{q_{i}t_{i}^{0}(\Delta t_{i})^{2}+\dfrac{1}{3}q_{i}(\Delta t_{i})^{3}}\biggr)/\sum \limits_{i=1}^{k_{0}}{q_{i}|\Delta t_{i}|^{3}} \to 0, \nonumber \\
\biggr(\sum \limits_{i=1}^{k_{0}}{q_{i}(t_{i}^{0})^{2}(\Delta t_{i})^{3}}\biggr)/\sum \limits_{i=1}^{k_{0}}{q_{i}|\Delta t_{i}|^{3}} \to 0. \label{eqn:singularityconformant_specific}
\end{eqnarray}
Denote $|\Delta t_{k_0}| := \mathop {\max }\limits_{1 \leq i \leq k_{0}}
{\left\{|\Delta t_{i}|\right\}}$. 
In each of the limiting expressions
in the above display, we shall divide both the numerator and denominator
of the left hand side by $|\Delta t_{k_0}|^{\alpha}$, where $\alpha$
is the smallest degree that appears in one of the monomials in the
numerator. Since $|\Delta t_i|/|\Delta t_{k_0}|$ is bounded,
there exist a subsequence according to which $\Delta t_i /|\Delta t_{k_0}|$
tends to a constant, say $k_i$, for each $i=1,\ldots, k_0$. Note that
at least one of the $k_i$ is non-zero.
Moreover, we obtain the following equations in the limit
\comment{
Now, assume that $|\Delta t_{k_{0}}|= \mathop {\max }\limits_{1 \leq i \leq k_{0}}
{\left\{|\Delta t_{i}|\right\}}$. It is clear that the order of $\Delta t_{k_{0}}$ in the 
numerator of the first limit is $|\Delta t_{k_{0}}|$ while in the numerator of the third limit is 
$|\Delta t_{k_{0}}|^{3}$. This precisely means that $|\sum \limits_{i=1}^{k_{0}}{q_{i}
\Delta t_{i}}| = O(|\Delta t_{k_{0}}|)$ while $|\sum \limits_{i=1}^{k_{0}}{q_{i}(t_{i}
^{0})^{2}(\Delta t_{i})^{3}}|= O(|\Delta t_{k_{0}}|^{3})$. Therefore, the direct thinking is 
to normalize these numerators by its corresponding order of $\Delta t_{k_{0}}$, i.e we 
divide the numerator and the denominator of the first limit by $\Delta t_{k_{0}}$ and the 
numerator and the denominator of the third limit by $(\Delta t_{k_{0}})^{3}$. If we denote $\Delta t_{i}/\Delta 
t_{k_{0}} \to k_{i}$ for all $1 \leq i \leq k_{0}$, then these normalization steps will give us 
two equations
\begin{eqnarray}
\sum \limits_{i=1}^{k_{0}}{q_{i}^{0}k_{i}}=0, \nonumber \\
\sum \limits_{i=1}^{k_{0}}{q_{i}^{0}(t_{i}^{0})^{2}(k_{i})^{3}}=0, \label{eqn:minimalinformation}
\end{eqnarray}  
where $q_{i}^{0}=p_{i}^{0}/\sigma_{i}^{0}$ for all $1 \leq i \leq k_{0}$. These two 
equations essentially capture all the information of the first limit and the third 
limit in \eqref{eqn:singularityconformant_specific}. It means that, if we can choose $k_{i}$ 
satisfy the above two equations, we can easily construct $\Delta t_{i}$ based on these 
$k_{i}$ to satisfy the first and third limit in \eqref{eqn:singularityconformant_specific}. It is 
due to the \textbf{homogenous} phenomenon of orders in the numerators of these limits, i.e 
all the summations in the numerator share the same order of majority of $
\Delta t_{k_{0}}$. However, it is not the case for the second limit in 
\eqref{eqn:singularityconformant_specific}. In fact, for the numerator of the second limit in 
\eqref{eqn:singularityconformant_specific}, the order of $\Delta t_{k_{0}}$ in the first sum 
$\sum \limits_{i=1}^{k_{0}}{q_{i}t_{i}^{0}(\Delta t_{i})^{2}}$ is $(\Delta t_{k_{0}})^{2}
$ while the order of $\Delta t_{k_{0}}$ in the second sum $\sum \limits_{i=1}^{k_{0}}
{q_{i}(\Delta t_{i})^{3}}$ is $|\Delta t_{k_{0}}|^{3}$. Therefore, we do not have the 
homogeneity of order of $\Delta t_{k_{0}}$ in the numerator of this second limit. The 
inhomogeneity turns out to be very severe in the general case of order $r$ in 
\eqref{eqn:singularitynonconformant_four} as we will accumulate considerably more 
summations of different orders in the numerators of the limits in 
\eqref{eqn:singularitynonconformant_four}. If we learn all the information from all of 
these summations, it will be intricate to get some equations like 
\eqref{eqn:minimalinformation} to capture the essence of the limits in 
\eqref{eqn:singularitynonconformant_four}.

Now, go back to the system of limits \eqref{eqn:singularityconformant_specific}, it turns out 
that there is one simple solution to solve the inhomogeneity of the orders of 
majority of $\Delta t_{k_{0}}$ in the numerator of the second limit in 
\eqref{eqn:singularityconformant_specific}. That is, we will keep the summation with the 
lowest order of $(\Delta t_{k_{0}})$, i.e $\sum \limits_{i=1}^{k_{0}}{q_{i}t_{i}^{0}(\Delta 
t_{i})^{2}}$, and drop all other summations with higher order of $(\Delta t_{k_{0}})$. The 
idea is that as $\Delta t_{k_{0}} \to 0$, the higher order of $(\Delta t_{k_{0}})$ will be 
dominated by its lowest order. Therefore, the information from the summations with 
higher orders of $(\Delta t_{k_{0}})$ may be less important than the summations with the lowests orders of $(\Delta t_{k_{0}})$. We call 
such solution is \textbf{greedy} way of studying the singularity or simply \textbf{greedy de-singularity}.

Now, with such greedy argument, we divide the numerator and the denominator of the 
second limit in \eqref{eqn:singularityconformant_specific} by the lowest order $(\Delta 
t_{k_{0}})^{2}$ in its numerator, we obtain the equation
\begin{eqnarray}
\sum \limits_{i=1}^{k_{0}}{q_{i}^{0}t_{i}^{0}(k_{i})^{2}}=0. \nonumber
\end{eqnarray}
}
\begin{eqnarray}
\sum \limits_{i=1}^{k_{0}}{q_{i}^{0}k_{i}}=0,\;
\sum \limits_{i=1}^{k_{0}}{q_{i}^{0}t_{i}^{0}(k_{i})^{2}}=0,\;
\sum \limits_{i=1}^{k_{0}}{q_{i}^{0}(t_{i}^{0})^{2}(k_{i})^{3}}=0. \nonumber
\end{eqnarray}
Since $q_{i}^{0}=p_{i}^{0}/\sigma_{i}^{0}$, $t_{i}^{0}=m_{i}^{0}/\sigma_{i}^{0}$ for all $1 \leq i 
\leq k_{0}$, by rescaling $k_i$, 
the above system of polynomial equations can be rewritten as 
\begin{eqnarray}
\sum \limits_{i=1}^{k_{0}}{p_{i}^{0}k_{i}}=0, \;
\sum \limits_{i=1}^{k_{0}}{p_{i}^{0}m_{i}^{0}(k_{i})^{2}}=0, \;
\sum \limits_{i=1}^{k_{0}}{p_{i}^{0}(m_{i}^{0})^{2}(k_{i})^{3}}=0. \nonumber
\end{eqnarray}
%where $k_{i}'=k_{i}/\sigma_{i}$ for all $1 \leq i \leq k_{0}$. 
%The above system is the special case of \eqref{eqn:noncannonicalexactfittedskewnorma} 
%when $k=k_{0}-1$, $a_{i}=p_{i}^{0},b_{i}=m_{i}^{0}$, and $c_{i}=k_{i}'$ for all $1 \leq i 
%\leq k_{0}$. 

Now we shall apply the greedy extraction technique to
the general system \eqref{eqn:singularitynonconformant_four}. This involves
dividing both the numerator and the denominator of the left hand side in each
equation of the system by $(\Delta t_{k_{0}})^{l/2}$ for any $2 \leq l \leq 2r$ 
and $l$ is even. This leads to the existence of solution for the following
system of polynomial equations
\begin{eqnarray}
\sum \limits_{i=1}^{k_{0}}{p_{i}^{0}(m_{i}^{0})^{l/2-1}k_{i}^{l/2}} = 0,  
\label{eqn:singularitynonconformant_equation}
\end{eqnarray}
where the index $l$ is even and $2 \leq l \leq 2r$. In this sytem,
at least one of $k_i$ is non-zero.

At this point, by a contrapositive argument we immediately
deduces that if system of polynomial equations \eqref{eqn:singularitynonconformant_equation} does \emph{not} have
a valid solution for the $k_i$, one of which must be non-zero, 
then $G_0$ is \emph{not} $\kappa$-singular relative to $\mathcal{E}_{k_{0}}$. It follows that 
$\lev(G_0|\Ecal_{k_0}) \leq r-1$ and thus the singularity index $(1,1,\lev(G_0|\Ecal_{k_0})+1)$ of $G_{0}$ relative to $\Ecal_{k_{0}}$ is bounded by $(1,1,r)$ according to Theorem \ref{theorem:singularity_index_S3}.
This connection motivates a deeper investigation into the behavior
of the system of real polynomial 
equations~\eqref{eqn:singularitynonconformant_equation}.

\paragraph{Behavior of system of limiting polynomial equations} 
We proceed to study the solvability of the system of 
polynomial equations like \eqref{eqn:singularitynonconformant_equation}. 
Consider two parameter sequences 
$\veca = \left\{a_{i}\right\}_{i=1}^{k_0}$, $\vecb = 
\left\{b_{i}\right\}_{i=1}^{k_0}$ such that 
$a_{i}>0, b_{i} \neq 0$ for all $1 \leq i \leq l$ and $b_{i}$ are pairwise 
different. Additionally, there exists two indices $1 \leq i_{1} \neq j_{1} \leq l$ 
such that $b_{i_{1}}b_{j_{1}}<0$. 
We can think of $a_{i}$ as taking the role of $p_{i}^{0}$ and 
$b_{i}$ the role of $m_{i}^{0}$. 

Define $\overline{s}(k_0,\veca, \vecb)$ 
to be the \emph{minimum} value of $s \geq 1$ such that 
the following system of polynomial equations
\begin{eqnarray}
\mathop {\sum }\limits_{i=1}^{k_0}{a_{i}b_{i}^{u}c_{i}^{u+1}}=0, \;
\textrm{for}\; u=0,1, \ldots,s
\label{eqn:noncannonicalexactfittedskewnorma}
\end{eqnarray}
does not admit any \emph{non-trivial} solution, by which we require that at 
least one of $c_{i}$ is non-zero.
For example, if $s=2$, and $k_0=2$, the above 
system of polynomial equations is
\begin{eqnarray}
a_{1}c_{1}+a_{2}c_{2}=0, \;
a_{1}b_{1}c_{1}^{2}+a_{2}b_{2}c_{2}^{2}=0, \;
a_{1}b_{1}^{2}c_{1}^{3}+a_{2}b_{2}^{2}c_{2}^{3}=0. \nonumber
\end{eqnarray}

In general, it is difficult to determine
the exact value of $\overline{s}(k_0,\veca, \vecb)$
since it depends on the specific values of parameter sequences
$\veca$ and $\vecb$. However, it is possible to obtain some nontrivial
bounds:

\begin{proposition}
\label{proposition:upperboundskewnormalfirst} Let $k_0 \geq 2$.
%{\bf (Values of $\overline{s}$)} 
\begin{itemize}
\item[(a)] If for any subset I of $\left\{1,2,\ldots,k_0\right\}$ we have $\sum \limits_{i \in I}{a_{i}\prod \limits_{j \in I \setminus \{i\}}{b_{j}}} \neq 0$, 
then $\overline{s}(k_0,\veca, \vecb) \leq k_0-1$. 
\item[(b)] If there is a subset $I$ of $\left\{1,2,\ldots, k_0\right\}$ 
such that $\sum \limits_{i \in I}{a_{i}\prod \limits_{j \in I \setminus \{i\}}{b_{j}}} = 
0$, then $\overline{s}(k_0,\veca, \vecb)=\infty$. 
\item[(c)] Under the same condition as that of part (a):
\begin{itemize}
\item[] If $k_0=2$, then $\overline{s}(k_0,\veca,\vecb) = 1$.
\item[] If $k_0=3$, and $\sum \limits_{i=1}^{k_0}{a_{i}\prod \limits_{j \neq i, j 
\leq k_0}{b_{j}}}>0$, then $\overline{s}(k_0,\veca, \vecb) = 1$. 
Otherwise, $\overline{s}(k_0,\veca, \vecb) = 2$. 
\end{itemize}
\end{itemize}
\end{proposition}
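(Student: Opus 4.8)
The plan is to analyze the system of polynomial equations \eqref{eqn:noncannonicalexactfittedskewnorma} as a linear system in a suitably reparametrized set of unknowns. First I would set $x_i = c_i$ and observe that, after introducing the ``Vandermonde-type'' structure, the system asks whether there is a non-trivial vector $(c_1,\ldots,c_{k_0})$ annihilated by the $s+1$ linear forms whose coefficients are $a_i b_i^u c_i^{u}$ for $u=0,\ldots,s$. The key reformulation: rewrite the $u$-th equation as $\sum_{i=1}^{k_0} (a_i c_i) (b_i c_i)^u = 0$. Setting $\alpha_i := a_i c_i$ and $\beta_i := b_i c_i$, the system becomes $\sum_i \alpha_i \beta_i^u = 0$ for $u=0,\ldots,s$, i.e., the power sums $p_0,\ldots,p_s$ of the ``weighted'' configuration $(\alpha_i,\beta_i)$ all vanish. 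The presence of a non-trivial solution is then governed by how many distinct values occur among the $\beta_i$ (equivalently the $b_i c_i$), and on the vanishing of certain subsums of the $\alpha_i$. This is exactly where the quantity $\sum_{i \in I} a_i \prod_{j \in I \setminus \{i\}} b_j$ enters, since dividing through, it is the determinant-like obstruction for a non-trivial solution supported on the index set $I$.

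For part (a), I would argue by contradiction: suppose $\overline{s}(k_0,\veca,\vecb) \geq k_0$, so the system with $s = k_0-1$ has a non-trivial solution. Let $I = \{i : c_i \neq 0\}$, which is non-empty, with $|I| = t \leq k_0$. Restricting to $I$, the $t$ equations (take $u = 0,\ldots,t-1$) form a homogeneous linear system in the unknowns $(a_i c_i)_{i \in I}$ with coefficient matrix the Vandermonde matrix in the values $(b_i c_i)_{i \in I}$. If these values were pairwise distinct the Vandermonde determinant would be non-zero, forcing all $a_i c_i = 0$, hence all $c_i = 0$ on $I$, a contradiction. So two of the values $b_i c_i$ coincide, say $b_i c_i = b_j c_j$ with $i \neq j$ in $I$; since $b_i \neq b_j$ this already constrains the $c_i$. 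The standard way to finish is: collapse equal values, obtaining a reduced Vandermonde system in fewer distinct nodes, and iterate; what survives is a linear relation among the $\prod_{j} b_j$-type products, which is precisely $\sum_{i \in I'} a_i \prod_{j \in I' \setminus \{i\}} b_j = 0$ for some sub-collection $I'$ — contradicting the hypothesis of (a). The main obstacle here is bookkeeping the collapsing of repeated nodes carefully and showing the surviving obstruction has exactly the claimed product form; this is essentially the computation already carried out for Gaussian mixtures in \cite{Ho-Nguyen-Ann-16} adapted to the skew-normal coefficients, so I would cite that argument structure.

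For part (b), I would do the reverse construction. Given a subset $I$ with $\sum_{i \in I} a_i \prod_{j \in I \setminus \{i\}} b_j = 0$, I would exhibit, for every $s$, a non-trivial solution of \eqref{eqn:noncannonicalexactfittedskewnorma}. The idea: take $c_i = \lambda / b_i$ for $i \in I$ (and $c_i = 0$ otherwise) where $\lambda$ is a free scalar; then $b_i c_i = \lambda$ is constant across $i \in I$, so for $u \geq 1$ the $u$-th equation reads $\lambda^u \sum_{i \in I} a_i b_i^{-1} \cdot b_i^{?}$ — here I need to track the exponent carefully; the right substitution makes every equation for $u \geq 0$ collapse to a single scalar multiple of $\sum_{i \in I} a_i \prod_{j \in I\setminus\{i\}} b_j$ (after clearing denominators by $\prod_{i\in I} b_i$), which is zero by hypothesis. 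Since this works for all $s$, we get $\overline{s}(k_0,\veca,\vecb) = \infty$. The delicate point is getting the exponent matching exactly right so that all $s+1$ equations are simultaneously satisfied by the same one-parameter family; I expect this to be a short but error-prone calculation.

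For part (c), with the hypothesis of (a) in force, I would simply compute the small cases directly. For $k_0 = 2$: the $u=0$ equation gives $a_1 c_1 = -a_2 c_2$, and substituting into the $u=1$ equation $a_1 b_1 c_1^2 + a_2 b_2 c_2^2 = 0$ yields $c_1 c_2(a_1 b_1 \cdot (-a_2/a_1) \cdot(\ldots))$ — after simplification a condition that $b_1 = b_2$ or $c_1 = c_2 = 0$; since $b_1 \neq b_2$ and $b_1 b_2 < 0$, only the trivial solution survives, so $\overline{s} = 1$. For $k_0 = 3$, I would carry out the analogous elimination: the $u=0,1$ equations are two linear constraints among $a_i c_i$ viewed through the nodes $b_i c_i$; solving and substituting into the $u = 2$ equation produces a polynomial in the $c_i$ whose solvability depends on the sign of $\sum_i a_i \prod_{j \neq i} b_j$. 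When this quantity is positive, an explicit obstruction (a sum of squares type expression, exploiting $a_i > 0$) forces triviality already at $s = 1$; when it is not positive, one exhibits a genuine solution at $s = 1$ but shows $s = 2$ fails. The main obstacle across part (c) is the $k_0 = 3$, $s = 2$ case: it requires either a clever positivity/sum-of-squares argument or a direct Gröbner-basis-style elimination, and distinguishing the two sign regimes is the subtle part. I would model this on the $\overline{r}(1) = 4$ and $\overline{r}(2) \in \{5,6\}$ computations in Proposition \ref{proposition:overfittedproposition}, which are of exactly the same flavor.
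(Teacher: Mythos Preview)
Your approach is essentially the paper's: the substitution $\alpha_i = a_i c_i$, $\beta_i = b_i c_i$ and the Vandermonde collapse for part (a), the constant-$b_i c_i$ construction for part (b), and direct elimination for part (c) are exactly what the proof does. Two small corrections are worth making. In part (c) with $k_0=2$, the elimination does not yield the condition ``$b_1=b_2$'': substituting $c_2 = -a_1 c_1/a_2$ into $a_1 b_1 c_1^2 + a_2 b_2 c_2^2 = 0$ gives $(a_1 c_1^2/a_2)(a_2 b_1 + a_1 b_2)=0$, so the obstruction is precisely $a_1 b_2 + a_2 b_1 \neq 0$, which is the hypothesis of (a) for $I=\{1,2\}$. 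In part (c) with $k_0=3$, you do not need any separate argument that ``$s=2$ fails'': once you show that for $\sum_i a_i \prod_{j\neq i} b_j < 0$ there is a non-trivial solution at $s=1$ (via the discriminant of the quadratic form in $(c_1,c_2)$ obtained after eliminating $c_3$), part (a) already gives $\overline{s}\leq k_0-1=2$, so $\overline{s}=2$ follows immediately. The reference to Gr\"obner bases and Proposition~\ref{proposition:overfittedproposition} is unnecessary here.
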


\paragraph{Remarks}
(i) Applying part (a) of this proposition to system~\eqref{eqn:singularitynonconformant_equation}, since $G_{0} \in \mathcal{S}_{31}$, i.e $P_{3}(\vecp^0, \veceta^0)=\sum \limits_{i=1}^{k_{0}}{p_{i}^0\prod \limits_{j \neq i}{m_{j}^{0}}} \neq 0$, 
$G_0$ is \emph{not} $\overline{s}(k_0,\{p_i^0\}_{i=1}^{k_0},
\{m_i^0\}_{i=1}^{k_0})+1$-singular relative to $\mathcal{E}_{k_{0}}$. Therefore,
the singularity level of $G_0$ is at most $\overline{s}(k_0,\{p_i^0\}_{i=1}^{k_0},
\{m_i^0\}_{i=1}^{k_0})$ and the singularity index of $G_{0}$ is at most $(1,1,\overline{s}(k_0,\{p_i^0\}_{i=1}^{k_0},
\{m_i^0\}_{i=1}^{k_0})+1)$ according to Theorem \ref{theorem:singularity_index_S3}. 
(ii) Part (a) provides a mild condition of parameter sequences $\veca,\vecb$ under
which a nontrivial finite upper bound can be obtained. 
A closer investigation of the proof
establishes that this bound is tight, i.e., there exists $(\veca,\vecb)$ such
that $\overline{s}(k_0,\veca,\vecb) = k_0-1$ holds. This motivates
the definition of $\Scal_{31}$.
(iii) Part (b) suggests the possibility of infinite level of singularity as well as singularity index, 
even as $k_0$ is fixed. We will show that this happens when $G_{0} \in \Scal_{33}$.
(iv) Part (c) suggests that the singularity levels and singularity indices of $G_{0}$ may be different for
different values of $(\vecp^0,\veceta^0)$ for the same $k_0$.

\paragraph{General bounds for singularity level and singularity index of $G_0 \in \Scal_{31}$}
So far, we assume that $G_{0}$ has exactly one homologous set without C(1) singularity of 
size $k_{0}$. Now, we suppose that $G_0$ has more than one nonconformant homologous set without C(1) singularity of components,
and that there are no Gaussian components (i.e., $P_1(\veceta^0)
= \prod_{j=1}^{k_0} m_j^0\neq 0$).
It can be observed that the singularity level of $G_0$ can
be bounded in terms of a number of system of polynomial equations
of the same form as Eq.~\eqref{eqn:singularitynonconformant_equation},
which are applied to \emph{disjoint} subsets of noncomformant homologous 
components. The application to each subset yields a corresponding system of polynomial
limits like \eqref{eqn:singularitynonconformant_four}. If none of such systems admit non-trivial solutions, then 
we are absolutely certain that their corresponding systems of limiting equations
cannot hold. As a consequence, we obtain that
$\lev(G_0|\Ecal_{k_0}) \leq \overline{s}(G_0)$,
where
\begin{eqnarray}
\overline{s}(G_0):= \max_{I} 
\overline{s}(|I|, \{p_i^0\}_{i\in I},\{m_i^0\}_{i\in I}), \label{eqn:sufficient_order_nonconformant_emixture}
\end{eqnarray}
where the maximum is taken over all nonconformant homologous subsets $I$ of
components of $G_0$.

If, on the other hand, $G_0$ has one or more Gaussian components,
in addition to having some nonconformant homologous subsets, then
by combining the argument presented in Section \ref{Section:singularity_level_S2_setting} 
with the foregoing argument, we deduce that the singularity level of $G_0$
is at most $\max\{2, \overline{s}(G_0)\}$. Summarizing, combining with Theorem \ref{theorem:singularity_index_S3} we have the following theorem regarding the upper bounds of singularity levels and singularity indices of $G_{0} \in \Scal_{31}$ whose rigorous proof is deferred to Appendix F.

%%%%%%%%%%%%%%%

\begin{theorem}
\label{theorem:nonconformant_no_typeC_setting} 
Suppose that $G_0 \in \Scal_{31}$.
\begin{itemize}
\item[(a)] If $P_1(\veceta^0) \neq 0$, then 
$\lev(G_0| \Ecal_{k_0}) \leq \overline{s}(G_{0})
\leq k^{*}-1 \leq k_0-1$ and singularity index $(1,1,\lev(G_{0}|\Ecal_{k_{0}})+1) \preceq (1,1,\overline{s}(G_{0})+1)$.
\item[(b)] If $P_1(\veceta^0) = 0$, then 
$\lev(G_0| \Ecal_{k_0}) \leq \max\{2,\overline{s}(G_{0})\}
\leq \max\{2,k^{*}-1\} \leq \max\{2,k_0-1\}$ and singularity index $(3,2,\max\left\{2,\lev(G_{0}|\Ecal_{k_{0}})\right\}+1) \preceq (3,2,\max\{2,\overline{s}(G_{0})\}+1)$.
\end{itemize}
where $k^*$ is the 
maximum length among all nonconformant homologous sets without C(1) singularity of $G_0$.
\end{theorem}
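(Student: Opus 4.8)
The plan is to prove Theorem~\ref{theorem:nonconformant_no_typeC_setting} by reducing the singularity analysis for $G_0 \in \Scal_{31}$ to the combinatorial facts established in Proposition~\ref{proposition:upperboundskewnormalfirst} about the auxiliary polynomial systems \eqref{eqn:noncannonicalexactfittedskewnorma}, and to the special-case analyses already carried out for $\Scal_1$, $\Scal_2$, and the single-homologous-set instance of $\Scal_3$ in Sections~\ref{Section:singularity_level_S1_setting}--\ref{Section:nonconformant_no_typeC}. By Theorem~\ref{theorem:singularity_index_S3}, the singularity index of $G_0$ is determined by its singularity level: it is $(1,1,\lev(G_0|\Ecal_{k_0})+1)$ when $P_1(\veceta^0)\neq 0$ and $(3,2,\max\{2,\lev(G_0|\Ecal_{k_0})\}+1)$ when $P_1(\veceta^0)=0$. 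Hence the entire task is to establish the stated upper bound on $\lev(G_0|\Ecal_{k_0})$; the claimed bound on the singularity index then follows by monotonicity of the partial order $\preceq$ together with the fact (Proposition~\ref{proposition:upperboundskewnormalfirst}(a)) that $\overline{s}(G_0) \le k^*-1 \le k_0-1$.

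First I would set $r = \overline{s}(G_0)+1$ in part (a) (respectively $r = \max\{2,\overline{s}(G_0)\}+1$ in part (b)) and $\kappa = (r,r,r)$, and suppose for contradiction that $G_0$ is $\kappa$-singular relative to $\Ecal_{k_0}$. I would construct a $\kappa$-minimal form exactly as in Eq.~\eqref{eqn:Taylorexpansionnonconformant}, but now bookkeeping over all homologous sets simultaneously: grouping the atoms of $G_0$ into equivalence classes under the homologous relation, the density difference $p_G(x)-p_{G_0}(x)$ decomposes into blocks indexed by homologous classes, with linearly independent basis functions $f((x-\theta)/\sigma)\Phi(m(x-\theta)/\sigma)$ and the Gaussian-type terms $\exp(-\cdot)$ across distinct classes (this linear independence follows from the same argument as in Lemma~\ref{proposition-notskewnormal}, using that distinct classes have distinct $(\theta^0, v^0/(1+(m^0)^2))$ pairs). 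The vanishing of the $\beta$-type coefficients forces, block by block, that $\Delta p_i/\widetilde{W}_\kappa^r(G,G_0)\to 0$, $\Delta\theta_i/\widetilde{W}_\kappa^r(G,G_0)\to 0$, $\Delta v_i/\widetilde{W}_\kappa^r(G,G_0)\to 0$ for all $i$ — the argument is identical to the one in Section~\ref{Section:singularity_level_S3_setting}, applied to each homologous class, since conformant classes contribute no singularity beyond level $0$ (cf. Section~\ref{Section:singularity_level_S1_setting}) and Gaussian components contribute at most level $2$ (cf. Section~\ref{Section:singularity_level_S2_setting}). Lemma~\ref{lemma:bound_overfit_Wasserstein_first} then forces $\sum p_i |\Delta m_i|^r / \widetilde{W}_\kappa^r(G,G_0) \not\to 0$, so there must be some nonconformant homologous class $I$ on which $\sum_{i\in I} p_i |\Delta m_i|^r / \widetilde{W}_\kappa^r(G,G_0) \not\to 0$.

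Restricting attention to that class $I$, the vanishing of the $\gamma$-type coefficients yields, after the greedy extraction of limiting polynomials described in Section~\ref{Section:nonconformant_no_typeC} (dividing numerator and denominator by $(\Delta t_{i^*})^{l/2}$ for the dominating atom $i^*\in I$ and passing to a convergent subsequence of $\Delta t_i/\Delta t_{i^*}\to k_i$), the system of polynomial equations $\sum_{i\in I} p_i^0 (m_i^0)^{u} k_i^{u+1} = 0$ for $u = 0,1,\ldots, r-1$ with at least one $k_i$ nonzero. Since $G_0 \in \Scal_{31}$, no homologous class contains C(1) singularity, so $\sum_{i\in J} p_i^0 \prod_{j\in J\setminus\{i\}} m_j^0 \neq 0$ for every subset $J\subseteq I$; hence Proposition~\ref{proposition:upperboundskewnormalfirst}(a) applies with $\veca = \{p_i^0\}_{i\in I}$, $\vecb = \{m_i^0\}_{i\in I}$, giving $\overline{s}(|I|, \{p_i^0\}_{i\in I}, \{m_i^0\}_{i\in I}) \le |I|-1 \le k^*-1$. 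By definition of $r$ we have $r-1 \ge \overline{s}(G_0) \ge \overline{s}(|I|,\cdot,\cdot)$, so the extracted system has \emph{no} non-trivial solution — a contradiction. Therefore $G_0$ is not $(r,r,r)$-singular, so $\lev(G_0|\Ecal_{k_0}) \le r-1 = \overline{s}(G_0)$ in case (a), and $\le \max\{2,\overline{s}(G_0)\}$ in case (b) after additionally invoking the level-$2$ bound for the Gaussian blocks. Combining with Theorem~\ref{theorem:singularity_index_S3} and $\overline{s}(G_0)\le k^*-1\le k_0-1$ finishes the proof.

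The main obstacle I anticipate is the bookkeeping in the multi-class $\kappa$-minimal form: verifying that the $\beta$-coefficient vanishing really does force $\Delta\theta_i,\Delta v_i,\Delta p_i$ to be $o(\widetilde W_\kappa^r(G,G_0))$ uniformly across \emph{all} classes — including conformant ones and Gaussian ones — requires carefully re-running the polynomial-coefficient arguments of Sections~\ref{Section:singularity_level_S1_setting}, \ref{Section:singularity_level_S2_setting}, and \ref{Section:singularity_level_S3_setting} in combination, and checking that the greedy extraction isolates a single dominant class without cross-class interference. A secondary subtlety is ensuring that when $P_1(\veceta^0)=0$ the Gaussian components genuinely cap their contribution at level $2$ and cannot combine with nonconformant interactions to exceed $\max\{2,\overline{s}(G_0)\}$; this should follow because the Gaussian and skew-normal blocks have linearly independent basis functions, so their coefficient systems decouple, but it warrants explicit verification.
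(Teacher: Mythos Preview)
Your proposal is correct and follows essentially the same strategy as the paper: set $\kappa=(r,r,r)$ with $r=\overline{s}(G_0)+1$ (resp.\ $\max\{2,\overline{s}(G_0)\}+1$), decompose the $\kappa$-minimal form block-by-block over homologous classes using the linear independence argument from Lemma~\ref{proposition-notskewnormal}, and derive a contradiction via the greedy polynomial-limit extraction of Section~\ref{Section:nonconformant_no_typeC} combined with Proposition~\ref{proposition:upperboundskewnormalfirst}(a), then read off the index via Theorem~\ref{theorem:singularity_index_S3}. The one organizational difference is that you attempt to show \emph{all} non-nonconformant blocks (conformant, Gaussian, generic) have negligible contribution before locating a dominant nonconformant class, whereas the paper first locates a single dominant index $\underline{i}$ with $(|\Delta p_{\underline{i}}|+p_{\underline{i}}(|\Delta\theta_{\underline{i}}|^r+|\Delta v_{\underline{i}}|^r+|\Delta m_{\underline{i}}|^r))/D_\kappa(G_0,G)\not\to 0$ and then case-splits on whether $\underline{i}$ lies in a nonconformant class (Case~1.1, handled by the polynomial system) or in a Gaussian/conformant/generic block (Case~1.2, handled by importing the $(3,2,3)$-level bound from Theorem~\ref{theorem:conformant_symmetry_setting} via a truncated $D_{\kappa',new}$). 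The paper's case-split is slightly cleaner because it avoids having to argue simultaneously that the $\Delta m_i$ in conformant and Gaussian blocks are negligible (which, as you correctly flag, requires re-running the respective $\gamma$- and $\zeta$-coefficient arguments rather than just the $\beta$-coefficients), but your route also works once that bookkeeping is filled in.
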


\paragraph{Exact calculations in special cases} 
Since our proof method was to extract only an (incomplete) 
subset of polynomial limits, we could only speak of upper bounds
of the singularity level and singularity index, not lower bounds in general.
For some special cases of $G_0 \in \Scal_{31}$, with extra work
we can determine the exact singularity level and singularity index of $G_0$. This is 
based on the specific value of $k^*$, which is defined to be the 
maximum length among all nonconformant homologous sets without C(1) singularity of $G_0$ in Theorem \ref{theorem:nonconformant_no_typeC_setting}:
\begin{proposition} 
{\bf (Exact singularity structure)}\label{proposition:tight_singularity_noC.1}
Assume that $G_{0} \in \Scal_{31}$ and $P_1(\veceta^0) \neq 0$.
\begin{itemize}
\item[(a)] If $k^{*} = 2$, then $\lev(G_0|\Ecal_{k_0}) = 1$ and $\singset(G_{0}|\Ecal_{k_{0}})=\left\{(1,1,2)\right\}$.
\item[(b)] Let $k^{*} = 3$. In addition, if
all homologous sets $I$ of $G_{0}$ such that $|I| = k^*$ satisfy
$\sum \limits_{i \in I}{p_{i}^{0}\prod \limits_{j \in I\setminus \{i\}}
{m_{j}^{0}}}>0$, then $\lev(G_0|\Ecal_{k_0}) = 1$ and $\singset(G_{0}|\Ecal_{k_{0}})=\left\{(1,1,2)\right\}$.
Otherwise, $\lev(G_0|\Ecal_{k_0}) = 2$ and $\singset(G_{0}|\Ecal_{k_{0}})=\left\{(1,1,3)\right\}$.
\end{itemize}
\end{proposition}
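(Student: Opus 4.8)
Throughout, write $\ell_0:=\lev(G_0|\Ecal_{k_0})$. The plan is to pin down $\ell_0$ exactly in each case and then read off $\singset(G_0|\Ecal_{k_0})$ from Theorem~\ref{theorem:singularity_index_S3}(a): since $P_1(\veceta^0)\neq 0$ by hypothesis, and since (as the argument below shows) $\ell_0\le 2<\infty$, that theorem gives $\singset(G_0|\Ecal_{k_0})=\{(1,1,\ell_0+1)\}$. Hence the whole proposition reduces to showing $\ell_0=1$ in case (a) and in the first subcase of (b), and $\ell_0=2$ in the second subcase of (b).

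\textbf{Step 1: the upper bound.} By Theorem~\ref{theorem:nonconformant_no_typeC_setting}(a), $\ell_0\le\overline{s}(G_0)$, where $\overline{s}(G_0)=\max_I\overline{s}(|I|,\{p_i^0\}_{i\in I},\{m_i^0\}_{i\in I})$ and $I$ ranges over the nonconformant homologous sets of $G_0$. Because $G_0\in\Scal_{31}$ we have $P_3(\vecp^0,\veceta^0)\neq 0$, which forces every subset sum $\sum_{i\in S}p_i^0\prod_{j\in S\setminus\{i\}}m_j^0$ to be nonzero; thus the hypothesis of Proposition~\ref{proposition:upperboundskewnormalfirst}(a) is met for every homologous set, and part (c) of that proposition applies. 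A nonconformant homologous set has size $\ge 2$, so when $k^*=2$ every such $I$ has $|I|=2$ and contributes $\overline{s}(2,\ldots)=1$, whence $\overline{s}(G_0)=1$ and $\ell_0\le 1$. When $k^*=3$, a size-$2$ set contributes $1$ and a size-$3$ set $I$ contributes $1$ if $\sum_{i\in I}p_i^0\prod_{j\in I\setminus\{i\}}m_j^0>0$ and $2$ otherwise; hence $\overline{s}(G_0)=1$ when all size-$3$ homologous sets satisfy the sign condition (first subcase of (b), giving $\ell_0\le 1$), and $\overline{s}(G_0)=2$ when some size-$3$ homologous set violates it, i.e.\ has this sum $<0$ since $P_3\neq 0$ (second subcase of (b), giving $\ell_0\le 2$).

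\textbf{Step 2: the matching lower bound.} First, $\ell_0\ge 1$ always: $G_0\in\Scal_3\subset\Ecal_{k_0}\setminus\Scal_0$ gives $P_2(\veceta^0)=0$, so by Lemma~\ref{proposition-notskewnormal} the first-order derivatives of $f$ at the atoms of $G_0$ are linearly dependent, and the construction in the proof of Theorem~\ref{theorem:conformant_setting} — hold all parameters fixed except the skewness parameters of one nonconformant homologous set $I$ and choose the $\Delta m_i$, $i\in I$, to annihilate the single linear constraint of the $1$-minimal form (possible since $|I|\ge 2$) — furnishes a sequence witnessing $1$-singularity. In the second subcase of (b) we additionally produce a $2$-singular sequence. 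Choose a size-$3$ nonconformant homologous set $I$ with $\sum_{i\in I}p_i^0\prod_{j\in I\setminus\{i\}}m_j^0<0$; by Proposition~\ref{proposition:upperboundskewnormalfirst}(c) this $I$ has $\overline{s}(|I|,\ldots)=2$, so the system \eqref{eqn:noncannonicalexactfittedskewnorma} with $s=1$ admits a solution $(c_i)_{i\in I}$ with at least one $c_i\neq 0$. Take the sequence with $\Delta\theta_i=\Delta v_i=\Delta p_i=0$ for all components, $\Delta m_i\propto c_i\epsilon$ for $i\in I$, and $\Delta m_i=0$ otherwise, and let $\epsilon\downarrow 0$. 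Invoking the $r$-minimal form decomposition of Section~\ref{Section:singularity_level_S3_setting} (which is unaffected by the unperturbed components and homologous sets), all $\beta$-coefficients vanish identically and each surviving $\gamma^{(2)}$-coefficient is a constant multiple of one of the two equations of that $s=1$ system, hence is $0$; since $\widetilde{W}_{(2,2,2)}^{2}(G,G_0)\asymp\epsilon^{2}$, every coefficient of a $(2,2,2)$-minimal form is $o\!\left(\widetilde{W}_{(2,2,2)}^{2}(G,G_0)\right)$, so $G_0$ is $2$-singular and $\ell_0\ge 2$.

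Combining Steps 1 and 2 yields $\ell_0=1$ in case (a) and the first subcase of (b), and $\ell_0=2$ in the second subcase of (b); Theorem~\ref{theorem:singularity_index_S3}(a) then gives $\singset(G_0|\Ecal_{k_0})=\{(1,1,2)\}$ and $\{(1,1,3)\}$ respectively. The main obstacle I anticipate is the final lower-bound step: one must verify that the $(2,2,2)$-minimal form built for a general $G_0\in\Scal_{31}$ — which may carry several homologous sets and generic components — is genuinely unchanged when the perturbation is confined to one homologous set, and then carry out the precise bookkeeping showing that the nonvanishing $\gamma^{(2)}$-coefficients collapse to exactly the two polynomial equations of \eqref{eqn:noncannonicalexactfittedskewnorma} with $s=1$ (up to harmless rescalings by powers of the $\sigma_i^0$). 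Everything else — the $1$-singular construction and the translation of Proposition~\ref{proposition:upperboundskewnormalfirst}(c) into the upper bounds — is routine given the results already established.
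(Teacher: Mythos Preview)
Your proposal is correct and follows essentially the same route as the paper: pin down $\ell_0$ by combining the upper bound of Theorem~\ref{theorem:nonconformant_no_typeC_setting} with Proposition~\ref{proposition:upperboundskewnormalfirst}(c), supply matching lower bounds by explicit $1$- and $2$-singular constructions concentrated on a single nonconformant homologous set, and then read off the singularity index from Theorem~\ref{theorem:singularity_index_S3}(a). The one organisational difference is in the first subcase of (b): you obtain $\ell_0\le 1$ directly from $\overline{s}(G_0)=1$ via the upper-bound theorem, whereas the paper reproves non-$2$-singularity by hand (re-extracting the two-equation system and deriving a contradiction from either $|I|=2$ or the sign hypothesis for $|I|=3$); your shortcut is legitimate because that direct argument is already packaged inside Theorem~\ref{theorem:nonconformant_no_typeC_setting}. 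The bookkeeping you flag as the main obstacle is exactly what the paper carries out: with $\Delta\theta_i=\Delta v_i=\Delta p_i=0$ and perturbations confined to one homologous set, all $\beta$-coefficients vanish identically and the only surviving $\gamma^{(2)}$-terms are $\gamma_2^{(2)}\propto\sum_{i\in I}p_i\Delta m_i/(\sigma_i^0)^2$ and $\gamma_4^{(2)}\propto\sum_{i\in I}p_i m_i^0(\Delta m_i)^2/(\sigma_i^0)^4$, which after the rescaling $\Delta m_i=(\sigma_i^0)^2 c_i\epsilon$ become precisely the $u=0,1$ equations of \eqref{eqn:noncannonicalexactfittedskewnorma} with $a_i=p_i^0$, $b_i=m_i^0$.
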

%%%%%%%%%%%%%%%%%%%%%%%%%%%%%%%%%%%%%%%%%%%%%%%%
\comment{
\paragraph{Remark:} Here, we put the following comments regarding the results of Proposition \ref{proposition:tight_singularity_noC.1}:
\begin{itemize}
\item[(i)] The result of part (a) and (b) implies that the greedy de-singularity argument indeed yields the exact singularity level of $G_{0}$ as $k^{*}=2$ and $k^{*}=3$. However, this argument does not yields the exact singularity level of $G_{0}$ for the general value of $k^{*}$.
\item[(ii)] According to the result of both part (b), the exact level of singularity of $G_{0} \in \Scal_{31}$ will vary according to the values of $\left\{p_{i}^{0}\right\}_{i \in I}$, $\left\{m_{i}^{0}\right\}_{i \in I}$ for some nonconformant homologous set $I$ with maximum length $k^{*}$. It is really striking as it shows the difficulty in understanding the singularity structure of the Fisher information matrix by using merely the classical reparametrization method. 
\item[(C)] The assumption that $G_{0}$ has no Gaussian components is purely for the convenience of our argument. If $G_{0}$ has some Gaussian components, as we know from Theorem \ref{theorem:conformant_symmetry_setting}, we need the third order Taylor expansion to understand the singularity level of these Gaussian components. Therefore, all the results in Proposition \ref{proposition:tight_singularity_noC.1} become that $G_{0}$ contains the exact $\mathop {\max} {\left\{2,t(k^{*})\right\}}$-th level of singularity where $t(k^{*})$ is the corresponding tight order of singularity of $G_{0}$ when it does not contain any Gaussian components.
\end{itemize}
}
%%%%%%%%%%%%%%%%%%%%%%%%%%%%%%%%%%%%%%%%%%%%%%%%

\subsubsection{Singularity structure of $\Scal_{32}$} 
\label{Section:nonconformant_typeC}
For the simplicity of the argument in this section, we go back to the simple setting of $G_{0}$, 
i.e., $G_{0}$ has only one homologous set 
of size $k_{0}$. Since $G_0 \in \Scal_{32}$, we have $P_3(\vecp^0,\veceta^0) =\sum 
\limits_{i=1}^{k_{0}}{p_{i}^0\prod \limits_{j \neq i}{m_{j}^{0}}}= 0$. 
This entails that $\overline{s}(k_{0},
\left\{p_{i}^{0}\right\},\left\{m_{i}^{0}\right\})= \infty$ according 
to part (b) of Proposition \ref{proposition:upperboundskewnormalfirst}. 
As a result, $\overline{s}(G_0) = \infty$, i.e., the upper bound 
given by Theorem~\ref{theorem:nonconformant_no_typeC_setting}, that is,
$\lev(G_0|\Ecal_{k_0}) \leq \overline{s}(G_0)$, is no
longer meaningful for $\Scal_{32}$.
This does not necessarily imply that the singularity level and singularity index for $G_0\in \Scal_{32}$
is infinite. It simply means that the system of polynomial equations 
in \eqref{eqn:singularitynonconformant_equation} will not lead to any 
contradiction for any order $r$. In fact,
these equations described by~\eqref{eqn:singularitynonconformant_equation} are no longer 
sufficient to express the polynomial limits of the 
system~\eqref{eqn:singularitynonconformant_three}. The issue is that our
greedy extraction of polynomial limits for the 
system~\eqref{eqn:singularitynonconformant_three} treats each equation of the 
system separately. For instance, in system \eqref{eqn:singularityconformant_specific}, a 
special case of system~\eqref{eqn:singularitynonconformant_three} when $r=3$, we do not 
consider the interaction between two summations $\sum \limits_{i=1}^{k_{0}}{q_{i}t_{i}
^{0}(\Delta t_{i})^{2}}$ and $\sum \limits_{i=1}^{k_{0}}{\dfrac{1}{3}q_{i}(\Delta 
t_{i})^{3}}$ in the numerator of the second limit.
As a result, the limiting polynomials
obtained are dependent only on the lowest order monomial terms that
appear in the numerator of each of the $(r,r,r)$-minimal form's coefficients.

To go further with $\Scal_{32}$, we introduce a more sophisticated technique
for the polynomial limit extraction, 
which seeks to partially account for the interactions among 
different summations in the numerators of all the limits in system~\eqref{eqn:singularitynonconformant_three}.
This can be achieved by keeping not only the lowest order monomial in the 
numerator of the $(r,r,r)$-minimal form's coefficient, but also the second lowest order monomials.
As a result, we can extract a larger set of polynomial limits than
\eqref{eqn:singularitynonconformant_equation}. This would 
allow us to obtain
a tighter bound of the singularity level and singularity index for elements of $\Scal_{32}$.
Although our extraction technique is general, the system of limiting
polynomials that can be extracted is difficult to express explicitly for large values of $k_0$. For this reason
in the following we shall illustrate this technique of polynomial limit
extraction on a specific case of $k_{0}=2$.

\begin{proposition}
Assume that $G_0 \in \Scal_{32}$ and $G_{0}$ has only one homologous set of size $k_{0}$. Then as $k_0 = 2$, we have
$\lev(G_0|\Ecal_{k_0}) = 3$ and $\singset(G_{0}|\Ecal_{k_{0}})=\left\{(1,1,4)\right\}$.
\end{proposition}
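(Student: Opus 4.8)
The plan is to follow the same three-step template used in the proof of Theorem~\ref{theorem:generic_setting_omixtures} and in the exact-calculation arguments for $\Scal_{1}$, $\Scal_{2}$, and $\Scal_{31}$, but now pushed through with the refined ``second-lowest-order'' extraction of polynomial limits announced just before the statement. Throughout, $G_0 \in \Scal_{32}$ with a single homologous set of size $k_0 = 2$ means: $\theta_1^0 = \theta_2^0$, $v_1^0/(1+(m_1^0)^2) = v_2^0/(1+(m_2^0)^2)$, $m_1^0 m_2^0 < 0$ (nonconformant), and $P_3(\vecp^0,\veceta^0) = p_1^0 m_2^0 + p_2^0 m_1^0 = 0$ (Type C(1)), while $P_4(\vecp^0,\veceta^0) \neq 0$. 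First I would write down the $(r,r,r)$-minimal form of $(p_G - p_{G_0})/W_r^r(G,G_0)$ for $G$ tending to $G_0$, exactly as in Eq.~\eqref{eqn:Taylorexpansionnonconformant}; the basis functions are the linearly independent functions $(x-\theta_1^0)^{j-1} f(\cdot)\Phi(\cdot)$ and $(x-\theta_1^0)^{j-1}\exp(-\tfrac{(m_1^0)^2+1}{2v_1^0}(x-\theta_1^0)^2)$, with coefficients $\beta_{ji}^{(r)}$ (polynomials in $\Delta p_i, \Delta\theta_i, \Delta v_i$) and $\gamma_j^{(r)}$ (polynomials in $\Delta\theta_i, \Delta v_i, \Delta m_i$), whose explicit structure is recorded in Section~\ref{Section:singularity_level_S3_setting}.

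The upper bound $\lev(G_0|\Ecal_{k_0}) \le 3$ is the crux. The reasoning from Section~\ref{Section:singularity_level_S3_setting} shows that $\beta_{ji}^{(r)}/W_r^r(G,G_0)\to 0$ forces $\Delta p_i, \Delta\theta_i, \Delta v_i = o(W_r^r(G,G_0))$ for $i=1,2$, so $\sum_i p_i|\Delta m_i|^r \not\to 0$ relative to $W_r^r(G,G_0)$, and the vanishing of the $\gamma$-coefficients reduces (after the substitutions $q_i = p_i/\sigma_i^0$, $t_i^0 = m_i^0/\sigma_i^0$, $\Delta t_i = \Delta m_i/\sigma_i^0$) to the system of semipolynomial limits~\eqref{eqn:singularitynonconformant_four}. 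Because $P_3 = 0$, the \emph{greedy} extraction~\eqref{eqn:singularitynonconformant_equation} is vacuous (it is solvable for every $r$, by part (b) of Proposition~\ref{proposition:upperboundskewnormalfirst}), so I must keep the two lowest-order monomials in the numerator of each $\gamma$-coefficient. Concretely, for $k_0 = 2$ with $\Delta t_2$ the larger perturbation, I would set $\Delta t_1/\Delta t_2 \to k$ and carry out the refined extraction on the limits for $l = 2,4,6,8$ (i.e., the minimal form up to order $4$): the $l=2$ equation gives $q_1^0 k + q_2^0 = 0$; the deeper equations, retaining the second-order monomials, yield a nested family of polynomial relations in $k$, $t_1^0$, $t_2^0$ together with ratios such as $(\Delta t_1 - k\,\Delta t_2)/(\Delta t_2)^2$. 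I would then verify by a Gr\"obner-basis computation (as in Proposition~\ref{proposition:overfittedproposition}), using the identity $P_3 = 0$ but $P_4 \ne 0$, that the extracted system for $r = 4$ admits no non-trivial solution, while for $r = 3$ it does. This yields $\lev(G_0|\Ecal_{k_0}) \le 3$ and, combined with the construction below, equality.

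For the lower bound I would exhibit an explicit sequence $G \to G_0$ proving $G_0$ is $3$-singular: set $\Delta p_i, \Delta\theta_i, \Delta v_i$ to be of order $(\Delta m)^2$ or smaller (e.g.\ $\Delta\theta_i = 0$, $\Delta v_i$ chosen to cancel the induced $\gamma_2$-term) and $\Delta m_1, \Delta m_2$ chosen so that the first three equations of~\eqref{eqn:singularitynonconformant_four} (orders $l=2,4,6$) hold — this is possible precisely because $P_3 = 0$ removes the obstruction that ordinarily kills the third equation (cf.\ the $\Scal_{31}$ analysis). Checking that $W_3^3(G,G_0) \asymp |\Delta m|^3$ and that all $(3,3,3)$-minimal-form coefficients then vanish establishes $3$-singularity. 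To upgrade from singularity level to singularity index, I invoke Theorem~\ref{theorem:singularity_index_S3}(a): since $G_0 \in \Scal_3$ has no Gaussian components (a homologous set with $m_1^0 m_2^0 < 0$ and neither zero) and $\lev(G_0|\Ecal_{k_0}) = 3 < \infty$, the unique singularity index is $(1,1,\lev(G_0|\Ecal_{k_0})+1) = (1,1,4)$. The main obstacle I anticipate is the bookkeeping of the second-lowest-order extraction — making precise which monomials survive in each $\gamma_j^{(r)}$ once $\Delta p_i, \Delta\theta_i, \Delta v_i$ have been shown to be higher-order, and assembling the resulting polynomial system cleanly enough that its (un)solvability for $r = 4$ versus $r = 3$ can be certified; the algebraic content is essentially a finite Gr\"obner-basis check, but packaging the limiting argument so that it genuinely characterizes the coefficients (rather than merely bounding them) is the delicate part, exactly as it was in the $\Scal_2$ proof in Section~\ref{Section:singularity_level_S2_setting}.
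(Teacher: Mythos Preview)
Your high-level plan matches the paper's two-step structure (Step~1: $G_0$ is $3$-singular; Step~2: $G_0$ is not $4$-singular), with the singularity index then read off from Theorem~\ref{theorem:singularity_index_S3}(a). The substantive divergence is in how Step~2 is executed. You propose to assemble a static polynomial system from the second-lowest-order monomials and certify unsolvability via a Gr\"obner-basis check, but the paper's argument is a direct analytic one on \emph{nested rates}, and those rates do not reduce cleanly to a single polynomial ideal. Concretely, the paper raises the denominators of the four $r=4$ limits to $|\Delta t_2|^2,|\Delta t_2|^3,|\Delta t_2|^4,|\Delta t_2|^4$ (call the results $K_1,\dots,K_4$), sets $\Delta t_1=k_1\Delta t_2$, and from $K_1$ obtains both $k_1':=\lim k_1=-q_2^0/q_1^0=t_2^0/t_1^0$ and the sharper fact $(k_1+q_2/q_1)/\Delta t_2\to 0$. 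From $K_2$ it then extracts $P_1:=(t_1^0q_2+t_2^0q_1)/\Delta t_2\to -\tfrac{q_1^0}{3q_2^0}(q_1^0(k_1')^3+q_2^0)$, which is \emph{nonzero} precisely because $q_1^0\ne q_2^0$ (i.e.\ $P_4\ne 0$); from $K_3$ it extracts $P_2:=(t_1^0q_2-t_2^0q_1)(t_1^0q_2+t_2^0q_1)/\Delta t_2$, and the ratio $P_2/P_1$ together with the C(1) identity $q_1^0t_2^0+q_2^0t_1^0=0$ forces $q_1^0=q_2^0$, contradicting non-C(2). This ratio trick is the heart of the proof and makes transparent exactly where each hypothesis enters; a black-box Gr\"obner computation would have to encode the rate variables $(k_1-k_1')/\Delta t_2$ and $(t_1^0q_2+t_2^0q_1)/\Delta t_2$ explicitly, which you have not spelled out.

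For Step~1, your construction is less explicit than the paper's and has a gap as stated. The paper perturbs \emph{both} the weights and the shapes, taking $q_1=q_1^0+1/n$, $q_2=-q_1t_2^0/t_1^0+1/n^2$, $\Delta t_2=1/n$, and $\Delta t_1=n^{-1}(-q_2/q_1+n^{-4})$, and then checks the three $r=3$ limits directly. Your claim that ``$P_3=0$ removes the obstruction'' so that only $\Delta m$-perturbations are needed is not correct: with $q_i$ held at $q_i^0$ and $\Delta t_1/\Delta t_2=-q_2^0/q_1^0$, the $l=2$ and $l=6$ limits do vanish, and the $(\Delta t)^2$ part of the $l=4$ numerator vanishes as well (this is where C(1) enters), but the $(\Delta t)^3$ part $\tfrac{1}{3}\sum_i q_i^0(\Delta t_i)^3$ contributes the nonzero constant $\tfrac{q_2^0}{3}\bigl[1-(q_2^0/q_1^0)^2\bigr]$ after normalization---nonzero exactly because $P_4\ne 0$. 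So a pure $\Delta m$-construction cannot verify $3$-singularity here, and you need the weight perturbation (or some equivalent extra degree of freedom) that the paper builds in.
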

\paragraph{Remark:} (i) The assumption that $G_{0}$ has only one homologous set is just for 
the convenience of the argument. The conclusion of this proposition still holds when $G_{0} 
\in \Scal_{32}$ has multiple homologous sets and the maximum length of homologous sets 
with C(1) singularity is 2. (ii) By using the same technique, we can demonstrate that $
\lev(G_0|\Ecal_{k_0}) = k_{0}+1$ and $\singset(G_{0}|\Ecal_{k_{0}})=\left\{(1,1,k_{0}+2)\right\}$ when $k_{0} \leq 5$ and $G_{0} \in \Scal_{32}$ has 
only one homologous set of size $k_{0}$. We conjecture that this result also holds for general $k_{0}$.
\begin{proof}
From Theorem \ref{theorem:singularity_index_S3}, it is sufficient to demonstrate that $\lev(G_0|\Ecal_{k_0}) = 3$. The proof proceeds in two main steps
\paragraph{Step 1:} We will demonstrate that $G_{0}$ is $3$-singular and $(3,3,3)$-singular relative to 
$\Ecal_{k_0}$. As $r=3$, the 
system~\eqref{eqn:singularitynonconformant_three} consists of the following
limiting equations, as $q_i \rightarrow q_i^0 > 0$ and $\Delta t_i \rightarrow 0$
for all $i=1,2$,
\begin{eqnarray}
\sum \limits_{i=1}^{2}{q_{i}\Delta t_{i}}/\sum \limits_{i=1}^{2}{q_{i}|\Delta t_{i}|^{3}} \to 0, \nonumber \\
\left(\sum \limits_{i=1}^{2}{q_{i}t_{i}^{0}(\Delta t_{i})^{2}}+\dfrac{1}{3}q_{i}(\Delta t_{i})^{3}\right)/\sum \limits_{i=1}^{2}{q_{i}|\Delta t_{i}|^{3}} \to 0, \nonumber \\
\left(\sum \limits_{i=1}^{2}{q_{i}(t_{i}^{0})^{2}(\Delta t_{i})^{3}} \right)/\sum \limits_{i=1}^{2}{q_{i}|\Delta t_{i}|^{3}} \to 0, \nonumber
\end{eqnarray}
where $q_{i}=p_{i}/\sigma_{i}^{0}, 
q_{i}^{0}=p_{i}^{0}/\sigma_{i}^{0}, t_{i}^{0}=m_{i}^{0}/\sigma_{i}^{0}$, 
and $\Delta t_{i}=\Delta m_{i}/\sigma_{i}^{0}$ for all $i=1,2$. 
The condition of C(1) singularity means $P_3(\vecp^0,\veceta^0) = 0$.
That is $p_{1}^{0}m_{2}^{0}+p_{2}^{0}m_{1}^{0}=0$. So,
$q_{1}^{0}t_{2}^{0}+q_{2}^{0}t_{1}^{0}=0$. 
By choosing $\Delta t_{2}=1/n$, $\Delta t_{1}=\dfrac{1}{n}\left(-\dfrac{q_{2}}{q_{1}}+\dfrac{1}{n^{4}}\right)$ 
where $q_{1}=q_{1}^{0}+1/n$ and $q_{2}=-q_{1}t_{2}^{0}/t_{1}^{0}+1/n^{2}$, we can check that all of the above limits are satisfied. 
Hence, $G_{0}$ is 3-singular and (3,3,3)-singular relative to $\mathcal{E}_{k_{0}}$.
\paragraph{Step 2:} It remains to show that $G_0$ is \emph{not} 4-singular and (4,4,4)-singular relative to 
$\Ecal_{k_0}$, and hence, $G_0$'s singularity level is 3. 
Let $r=4$, the 
system~\eqref{eqn:singularitynonconformant_three} consists of the following
limiting equations
\begin{eqnarray}
 \sum \limits_{i=1}^{2}{q_{i}^{n}\Delta t_{i}^{n}}/\sum \limits_{i=1}^{2}{q_{i}^{n}|\Delta t_{i}^{n}|^{4}} \to 0, \nonumber \\
 \left(\sum \limits_{i=1}^{2}{q_{i}t_{i}^{0}(\Delta t_{i})^{2}}+\dfrac{1}{3}q_{i}(\Delta t_{i})^{3}\right)/\sum \limits_{i=1}^{2}{q_{i}|\Delta t_{i}|^{4}} \to 0, \nonumber \\
\left(\sum \limits_{i=1}^{2}{\dfrac{1}{3}q_{i}(t_{i}^{0})^{2}(\Delta t_{i})^{3}+\dfrac{1}{4}q_{i}t_{i}^{0}(\Delta t_{i})^{4}}\right)/\sum \limits_{i=1}^{2}{q_{i}|\Delta t_{i}|^{4}} \to 0, \nonumber \\
\sum \limits_{i=1}^{2}{q_{i}(t_{i}^{0})^{3}(\Delta t_{i})^{4}}/\sum \limits_{i=1}^{2}{q_{i}|\Delta t_{i}|^{4}} \to 0. \nonumber
\end{eqnarray}
In order to account for the second-lowest order monomials of the numerator 
in each of the equations, we raise the order of the denominator in each equation
to the former. That is,
\begin{eqnarray}
K_{1} &:= & \sum \limits_{i=1}^{2}{q_{i}\Delta t_{i}}/\sum \limits_{i=1}^{2}{q_{i}|\Delta t_{i}|^{2}} \to 0, \nonumber \\
K_{2} & := & \left(\sum \limits_{i=1}^{2}{q_{i}t_{i}^{0}(\Delta t_{i})^{2}}+\dfrac{1}{3}q_{i}(\Delta t_{i}^{n})^{3}\right)/\sum \limits_{i=1}^{2}{q_{i}|\Delta t_{i}|^{3}} \to 0, \nonumber \\
K_{3} & := & \left(\sum \limits_{i=1}^{2}{\dfrac{1}{3}q_{i}(t_{i}^{0})^{2}(\Delta t_{i})^{3}+\dfrac{1}{4}q_{i}t_{i}^{0}(\Delta t_{i})^{4}}\right)/\sum \limits_{i=1}^{2}{q_{i}|\Delta t_{i}|^{4}} \to 0, \nonumber \\
K_{4} & := & \sum \limits_{i=1}^{2}{q_{i}(t_{i}^{0})^{3}(\Delta t_{i})^{4}}/\sum \limits_{i=1}^{2}{q_{i}|\Delta t_{i}|^{4}} \to 0. \nonumber
\end{eqnarray}
We assume without loss of generality that
$|\Delta t_{2}|$ is the maximum between $|\Delta t_{1}|$ and 
$|\Delta t_{2}|$.
Denote $\Delta t_{1}=k_{1}\Delta t_{2}$ where 
$k_{1} \in [-1,1]$ and $k_{1} \to k_{1}'$.
The vanishing of $K_{1}$ yields $q_{1}^{0}k_{1}'+q_{2}^{0}=0$. 
So, $k_{1}'=-q_{2}^{0}/q_{1}^{0}=t_{2}^{0}/t_{1}^{0}$. 
%It is clear that with such conditions, the result of $K_{4}^{n}$ does not yield any 
%contradiction. Therefore, we only focus on $K_{1}^{n}, K_{2}^{n}$, and $K_{3}^{n}$. 
 
Divide both the numerator and denominator of $K_1$ by 
$(\Delta t_{2})^2$, we obtain $(q_1 k_1 + q_2)/\Delta t_2
\rightarrow 0$. Write $u= k_1 + q_2/q_1$, then 
$q_1 u/\Delta t_2 \rightarrow 0$, which implies that
$u/\Delta t_2 \rightarrow 0$.

Next, divide both the numerator and denominator of $K_{2}$ by
$(\Delta t_2)^3$, we obtain
\begin{eqnarray}
\left(\sum \limits_{i=1}^{2}{q_{i}t_{i}^{0}(\Delta t_{i})^{2}}+\dfrac{1}{3}q_{i}(\Delta t_{i})^{3}\right)/(\Delta t_{2})^{3} \to 0. \nonumber
\end{eqnarray}
%$\left(q_{1}^{n}t_{1}^{0}(k^{n}_{1})^{2}+q_{2}^{n}t_{2}^{0}\right)/\Delta t_{2}^{n} 
%\to  -\dfrac{1}{3}(q_{1}^{0}k_{1}^{3}+q_{2}^{0})$.
%
Plug in the formula of $k_{1}$ and the fact that $u/\Delta t_2 \to 0$,
it follows that
 \begin{eqnarray}
 \left(q_{1}t_{1}^{0}\left(\dfrac{q_{2}}{q_{1}}\right)^{2}+q_{2}t_{2}^{0}\right)/(\Delta t_{2}) \to -\dfrac{1}{3}(q_{1}^{0}(k_{1}')^{3}+q_{2}^{0}). \nonumber
 \end{eqnarray}
Thus, we get
$P_{1} := (t_{1}^{0}q_{2}+t_{2}^{0}q_{1})/\Delta t_{2} \to -\dfrac{q_{1}^{0}}{3q_{2}^{0}}(q_{1}^{0}(k_{1}')^{3}+q_{2}^{0})$. It is simple to verify that
this limit is non-zero, otherwise we would 
have $q_1^0 = q_2^0$, which violates the definition that $G_{0}$ does not have C(2) singularity, i.e., $G_{0} \in \Scal_{32}$.

Continuing, divide both the numerator and denominator of $K_{3}$ 
by $(\Delta t_2)^4$, and with the same argument, we obtain
$P_{2} := (t_{1}^{0}q_{2}-t_{2}^{0}q_{1})(t_{1}^{0}q_{2}+t_{2}^{0}q_{1})/\Delta t_{2} \to -\dfrac{3(q_{1}^{0})^{2}}{4q_{2}^{0}}(q_{1}^{0}t_{1}^{0}(k_{1}')^{4}+q_{2}^{0}t_{2}^{0})$.

By dividing $P_2$ by $P_1$ and let it to vanish, we 
can extract the following polynomial in the limit:
\[4(q_1^0(k_1')^3+q_2^0)(t_1^0q_2^0 - t_2^0q_1^0) = 
9q_1^0(q_1^0t_1^0(k_1')^4+q_2^0t_2^0).\]
By plugging in $k_{1}'=-q_{2}^{0}/q_{1}^{0}$ and 
$t_{1}^{0}q_{2}^{0}+t_{2}^{0}q_{1}^{0}=0$, we can deduce
that $q_1^0=q_2^0$, which is a contradiction.
Thus, we conclude that $G_0$ is not 4-singular and (4,4,4)-singular relative to
$\Ecal_{k_0}$. 
\end{proof}

%%%%%%%%%%%%%%%%%%%%%%%%%%%%%%%%%%%%%%%%%%%%%%%%%%%%%%%%%%%%%%%%%%%%%%%%%%%%%%%%%%%%%
%%%%%%%%%%%%%%%%%%%%%%%%%%%%%%%%%%%%%%%%%%%%%%%%%%%%%%%%%%%%%%%%%%%%%%%%%%%%%%%%%%%%%
\subsubsection{Singularity structure of $G_{0} \in \Scal_{33}$} \label{Section:nonconformant_typeC_typeIV}
%%%%%%%%%%%%%%%%%%%%%%%%%%%%%%%%%%%%%%%%%%%%%%%%%%%%%%%%%%%%%%%%%%%%%%%%%%%%%%%%%%%%%
%%%%%%%%%%%%%%%%%%%%%%%%%%%%%%%%%%%%%%%%%%%%%%%%%%%%%%%%%%%%%%%%%%%%%%%%%%%%%%%%%%%%%

As we can see from the proof of Proposition \ref{proposition:tight_singularity_noC.1}, the 
condition of without C(2) singularity plays a major role in guaranteeing that $G_{0} \in  
\Scal_{32}$ is not (4,4,4)-singular relative to $\mathcal{E}_{k_{0}}$ when $G_{0}$ has only one 
homologous set of $k_{0}=2$. Therefore, for elements $G_{0}$ in $\Scal_{33}$, we expect 
the singularity level and singularity index of $G_{0}$ may be very large. In fact, we can show that

\begin{theorem}
\label{theorem:nonconformant_typeC_typeIV_setting} 
If $G_{0} \in \Scal_{33}$, then 
$\lev(G_{0}|\Ecal_{k_0}) = \infty$ and $\singset(G_{0}|\Ecal_{k_{0}})=\left\{(\infty,\infty,\infty)\right\}$.
\end{theorem}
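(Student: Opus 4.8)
The plan is to establish the (apparently stronger) statement $\lev(G_0|\Ecal_{k_0}) = \infty$ by exhibiting an explicit sequence of mixing measures $G_\delta \to G_0$ whose mixture densities coincide identically with $p_{G_0}$; the claim about the singularity index set then follows immediately from part (c) of Theorem~\ref{theorem:singularity_index_S3}.

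First I would unpack the hypothesis $G_0 \in \Scal_{33}$, which by definition means $P_4(\vecp^0,\veceta^0) = 0$. Each factor of $P_4$ is a sum of squares, so its vanishing forces a pair of indices $i_0 \neq j_0$ for which $u_{i_0 j_0}^2 = 0$, $(m_{i_0}^0\sigma_{j_0}^0 + m_{j_0}^0\sigma_{i_0}^0)^2 = 0$, and $(p_{i_0}^0\sigma_{j_0}^0 - p_{j_0}^0\sigma_{i_0}^0)^2 = 0$ simultaneously. Combining $u_{i_0 j_0} = 0$ (homologous: $\theta_{i_0}^0 = \theta_{j_0}^0$ and $v_{i_0}^0(1+(m_{j_0}^0)^2) = v_{j_0}^0(1+(m_{i_0}^0)^2)$) with $m_{i_0}^0\sigma_{j_0}^0 = -m_{j_0}^0\sigma_{i_0}^0$ and $p_{i_0}^0\sigma_{j_0}^0 = p_{j_0}^0\sigma_{i_0}^0$, a short algebraic elimination forces the pair to be a perfect mirror pair: $\sigma_{i_0}^0 = \sigma_{j_0}^0 =: \sigma^*$, $m_{i_0}^0 = -m_{j_0}^0 =: m^*$, $p_{i_0}^0 = p_{j_0}^0 =: p^*$, and $\theta_{i_0}^0 = \theta_{j_0}^0 =: \theta^*$; moreover $m^* \neq 0$, since otherwise these two atoms of $G_0$ would coincide.

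The crux is then the elementary density identity: since $\Phi(u) + \Phi(-u) = 1$,
\[
f(x\,|\,\theta,\sigma,m) + f(x\,|\,\theta,\sigma,-m) = \frac{2}{\sigma}\,f\!\left(\frac{x-\theta}{\sigma}\right) = 2\,f(x\,|\,\theta,\sigma,0),
\]
which does not depend on $m$. I would define $G_\delta \in \Ecal_{k_0}$ to be $G_0$ with the two atoms $(\theta^*,\sigma^*,\pm m^*)$ replaced by $(\theta^*,\sigma^*,\pm(m^*+\delta))$, keeping the mass $p^*$ on each and leaving every other atom of $G_0$ untouched. For small $|\delta|$ these $k_0$ atoms stay distinct (as $m^*+\delta \neq -(m^*+\delta)$) and remain in $\Theta$, so $G_\delta \in \Ecal_{k_0}$, and $W_r(G_\delta,G_0) \asymp |\delta| \to 0$ for every $r \geq 1$. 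By the displayed identity the perturbed pair contributes the same pure Gaussian $\tfrac{2p^*}{\sigma^*}f((x-\theta^*)/\sigma^*)$ to the mixture density as the original pair, while all other atoms are unchanged, so $p_{G_\delta}(x) \equiv p_{G_0}(x)$.

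Finally I would conclude: fix $r \geq 1$ and any $r$-minimal form of the likelihood along the sequence $G_\delta$ (Definition~\ref{definition:rminimal}); since $p_{G_\delta} - p_{G_0} \equiv 0$, the representation~\eqref{eqn:generallinearindependencerepresentation} reduces to $\sum_{l=1}^{T_r}\bigl(\xi_l^{(r)}(G_\delta)/W_r^r(G_\delta,G_0)\bigr)H_l^{(r)}(x) = o(1)$, and linear independence of the $H_l^{(r)}$ forces $\xi_l^{(r)}(G_\delta)/W_r^r(G_\delta,G_0) \to 0$ for every $l$. Hence $G_0$ is $r$-singular (indeed $(r,r,r)$-singular) relative to $\Ecal_{k_0}$ for all $r$, so $\lev(G_0|\Ecal_{k_0}) = \infty$ by Definition~\ref{definition:singularity_level_emixture}, and $\singset(G_0|\Ecal_{k_0}) = \{(\infty,\infty,\infty)\}$ by Theorem~\ref{theorem:singularity_index_S3}(c). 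The only genuinely nonroutine step is the algebraic reduction showing that the C(2) condition forces an exact mirror pair: this is precisely what separates $\Scal_{33}$ from $\Scal_{32}$ --- in $\Scal_{32}$ the pair is homologous and nonconformant but not perfectly symmetric (its scales or masses differ), the density identity above does not apply, and the singularity level stays finite --- so it deserves to be carried out carefully, whereas everything after it is immediate.
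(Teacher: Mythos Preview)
Your proof is correct and is in fact somewhat cleaner than the paper's own argument. Both proofs construct the same perturbation sequence --- leave every atom of $G_0$ alone except the C(2) pair, and shift only their skewness parameters in opposite directions --- but the two differ in how this is set up and verified.

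First, you carry out an algebraic reduction the paper does not make explicit: from the homologous constraint together with $m_{i_0}^0/\sigma_{i_0}^0=-m_{j_0}^0/\sigma_{j_0}^0$ you deduce $v_{i_0}^0=v_{j_0}^0$, hence a genuine mirror pair $(\theta^*,\sigma^*,\pm m^*)$ with equal weights. The paper instead keeps the conditions in the form $p_1^0/\sigma_1^0=p_2^0/\sigma_2^0$, $m_1^0/\sigma_1^0=-m_2^0/\sigma_2^0$ and chooses $\Delta m_1/\sigma_1^0=-\Delta m_2/\sigma_2^0$; your observation shows these conditions are the same thing written less transparently. Second, and more substantively, you verify $r$-singularity by invoking the density identity $f(\cdot\,|\,\theta,\sigma,m)+f(\cdot\,|\,\theta,\sigma,-m)=2f(\cdot\,|\,\theta,\sigma,0)$ to conclude $p_{G_\delta}\equiv p_{G_0}$ exactly, so all minimal-form coefficients vanish at once via linear independence of the $H_l^{(r)}$. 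The paper instead checks by hand that the relevant moment-type sums $\sum_{j} p_j (m_j^0)^u(\Delta m_j)^v/(\sigma_j^0)^{u+v+1}$ vanish for the needed parities of $u,v$, feeding these into the explicit coefficients of \eqref{eqn:Taylorexpansionnonconformant}. Your route is more conceptual: it exposes that $\Scal_{33}$ is precisely where the skew-normal mixture model becomes non-identifiable along a one-parameter curve, which is the ``real'' reason the singularity level is infinite. The paper's route avoids any appeal to the density identity but is more computational and has to be stated separately for $k_0=2$ and then extended; yours handles all $k_0$ uniformly.
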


\begin{proof} Here, we present the proof for $k_0=2$.
For general values of $k_0$, the proof is similar and deferred to a complete proof in Section~\ref{section:remaining-emixtures}. For $k_0=2$, 
the condition that $G_0 \in \Scal_{33}$ entails $P_{4}(\vecp^{0},\veceta^0)=0$, i.e
$p_{1}^{0}/\sigma_{1}^{0}=p_{2}^{0}/\sigma_{2}^{0}$ and 
$m_{1}^{0}/\sigma_{1}^{0}=-m_{2}^{0}/\sigma_{2}^{0}$. By 
choosing $\Delta m_{1}/\sigma_{1}^{0}=-\Delta m_{2}/\sigma_{2}^{0}$, $p_{1}=p_{2}
=p_{1}^{0}=p_{2}^{0}$, we can check that
\begin{eqnarray}
\sum \limits_{i=1}^{2}{\dfrac{p_{i}(m_{i}^{0})^{u}(\Delta m_{j})^{v}}{(\sigma_{i}
^{0})^{u+v+1}}}=0, \nonumber
\end{eqnarray}
for all odd numbers $u \in [1,v]$ when $v$ is even number, or 
for all even numbers $u \in [0,v]$ when $v$ is odd number.
 
Take order $r\geq 1$ to be an arbitrary natural number and let $\kappa=(r,r,r)$. 
Incorporating the identity in the previous display into
\eqref{eqn:Taylorexpansionnonconformant} and 
\eqref{eqn:singularitynonconformant_two}, we obtain the vanishing of 
all $\gamma_{l}^{(r)}/\widetilde{W}_{\kappa}^{r}(G_{1},G)$ for all $1 \leq l \leq 2r$ and 
$l$ is even. If we choose $\Delta 
\theta_{i}= \Delta v_{i} = 0$ for all $1 \leq i \leq 2$, we also have the coefficients 
$\beta_{ji}^{(r)}/\widetilde{W}_{\kappa}^{r}(G_{1},G)=0$ for all $1 \leq i \leq 2$ 
and $1 \leq j \leq 2r+1$. 
Additionally, we also have 
$\gamma_{l}^{(r)}/\widetilde{W}_{\kappa}^{r}(G_{1},G)=0$ for all $1 \leq l \leq 2r
$ and $l$ is odd. Hence, $G_0$ is $r$-singular and $(r,r,r)$-singular relative to $\mathcal{E}_{k_{0}}$ for any $r\geq 1$. As a consequence, $\lev(G_{0}|\Ecal_{k_0}) = \infty$. Combining with the result of Theorem \ref{theorem:singularity_index_S3}, we achieve the conclusion of the theorem.
\end{proof}

\section{Remaining proofs of technical results on skew-normal e-mixtures}
\label{section:remaining-emixtures}

\subsection{Proofs of statements in Appendix E}

%%%%%%%%%%%%%%%%%%%%%%%%%%%%%%%%%%%%%%%%%%%%%%%%%%%%%%%%%%%%%%%
\paragraph{FULL PROOF OF THEOREM \ref{theorem:conformant_setting}}
%%%%%%%%%%%%%%%%%%%%%%%%%%%%%%%%%%%%%%%%%%%%%%%%%%%%%%%%%%%%%%%

Here, we shall complete the proof of Theorem 
\ref{theorem:conformant_setting}, which is the generalization of the argument in Section 
\ref{Section:singularity_level_S1_setting} for a special case of $G_0$. 
Note that, the idea of this generalization is also used to the other settings of 
$G_{0} \not \in \mathcal{S}_{1}$. Now, we consider the possible existence of 
generic components in $G_{0}$, i.e., there are no homologous sets or symmetry components. 
Let $u_{1}=1 < u_{2} < \ldots < u_{\overline{i}_{1}} \in [1,k_{0}+1]$ 
such that $(\dfrac{v_{j}^{0}}{1+(m_{j}^{0})^{2}},\theta_{j}^{0})= (\dfrac{v_{l}^{0}}{1+(m_{l}^{0})^{2}},\theta_{l}^{0})$ 
and $m_{j}^{0}m_{l}^{0}>0$ for all $u_{i} \leq j,l \leq u_{i+1}-1$, $1 \leq i \leq \overline{i}
_{1}-1$. The constraint $m_{j}^{0}m_{l}^{0}>0$ is due to the conformant property of the 
homologous sets of $G_{0}$.
By definition, we have 
$|I_{u_{i}}|=u_{i+1}-u_{i}$ for all $1 \leq i \leq \overline{i}_{1}-1$ where $I_{u_{i}}$ denotes 
the set of all components homologous to component $u_{i}$. 

To show that $G_{0}$ is 1-singular and (1,1,1)-singular, we construct a sequence of $G \in \mathcal{E}
_{k_{0}}$ such that $(p_{i},\theta_{i},v_{i},m_{i})=(p_{i}^{0},\theta_{i}^{0},v_{i}
^{0},m_{i}^{0})$ for all $u_{2} \leq i \leq k_{0}$, i.e., all the components of $G$ and $G_{0}
$ are identical from index $u_{2}$ up to $k_{0}$. Hence, in the construction of the 
components from index $u_{1}$ to $u_{2}-1$ of $G$ we consider only the 
homologous set $I_{u_{1}}$ of $G_{0}$. Utilizing the argument from the special case proof of 
Theorem \ref{theorem:conformant_setting} in Section \ref{Section:conformant_setting}, 
the construction of the sequence of $G$ is specified by
$\Delta \theta_{i}=\Delta v_{i}=\Delta p_{i}=0$ and $\sum 
\limits_{i=u_{1}}^{u_{2}-1}{p_{i}\Delta m_{i}}/v_{i}^{0}=0$. Thus $G_0$ is 1-singular and (1,1,1)-singular.
It remains to demonstrate that $G_{0} \in \mathcal{S}_{1}$ 
is not (1,1,2)-singular relative to $\mathcal{E}_{k_{0}}$.
 
Indeed, let $\kappa=(1,1,2)$ and consider any sequence $G \in \mathcal{E}_{k_{0}} \to G_{0}$ under $\widetilde{W}_{\kappa}$ distance. Since $\widetilde{W}_{\kappa}^{2}(G,G_{0}) \asymp D_{\kappa}(G_{0},G)$ (cf. 
Lemma \ref{lemma:bound_overfit_Wasserstein_first}), we have the $\kappa$-minimal form for the sequence $G$ as 
\begin{eqnarray}
\dfrac{p_{G}(x)-p_{G_{0}}(x)}{\widetilde{W}_{\kappa}^{2}(G,G_{0})} \asymp \dfrac{A_{1}(x)+A_{2}(x)}{D_{\kappa}(G_{0},G)}, \nonumber
\end{eqnarray} 
where $A_{1}(x)/D_{\kappa}(G_{0},G)$ and $A_{2}(x)/D_{\kappa}(G_{0},G)$ are linear combinations 
of the elements of the forms $\dfrac{\partial^{|\alpha|}{f}}{\theta^{\alpha_{1}}v^{\alpha_{2}}m^{\alpha_{3}}}(x|\eta_{i}^{0})$ for any $1 \leq i \leq k_{0}$ and $0 \leq |\alpha| \leq 2$. 
In $A_{1}(x)/D_{\kappa}(G_{0},G)$, 
the indices of the components range from $1$ to $s_{\overline{i}_{1}}-1$.
In $A_{2}(x)/D_{\kappa}(G_{0},G)$, the indices of the components 
range from $u_{\overline{i}_{1}}$ to $k_{0}$.
It is convenient to think of the term $A_{1}(x)/D_{\kappa}(G_{0},G)$ as the linear 
combination of homologous components, and $A_{2}(x)/D_{\kappa}(G_{0},G)$ as the 
linear combination of generic components, i.e., no Gaussian nor homologous components. 
%Note that, the switch from $W_{2}^{2}(G,G_{0})$ metric to $D_{2}(G_{0},G)$ metric is mainly for the convenience of the argument. The conceptual idea is similar to that being discussed in the special case proof of Theorem \ref{theorem:conformant_setting} in Section \ref{Section:conformant_setting}.

Regarding $A_{2}(x)/D_{\kappa}(G_{0},G)$, since we have the system of partial differential 
equations in \eqref{eqn:overfittedskewnormaldistributionzero}, the collection
of functions in $\biggr\{\dfrac{\partial^{|\alpha|}{f}}{\partial{\theta}^{\alpha_{1}}v^{\alpha_{2}}m^{\alpha_{3}}}
(x|\eta_{i}^{0}): \ |\alpha| \leq 2, \ 1 \leq i \leq k_{0} \biggr\}$ 
are not linearly independent. 
Employing the same strategy described in Section
\ref{Section:overfitskew}, we obtain a reduced system
of linearly independent partial derivatives in Lemma
\ref{lemma:reduced_linearly_independent}. This is the set 
$\biggr\{\dfrac{\partial^{|\alpha|}{f}}
{\partial{\theta}^{\alpha_{1}}v^{\alpha_{2}}m^{\alpha_{3}}}(x|\eta_{i}^{0}): \ \alpha \in 
\mathcal{F}_{2}, \ 1 \leq i \leq k_{0} \biggr\}$. Let $\lambda_{\alpha_{1}\alpha_{2}
\alpha_{3}}^{(2)}(\eta_{i}^{0})/D_{\kappa}(G_{0},G)$ be the coefficients of the terms
$\dfrac{\partial^{|\alpha|}{f}}{\theta^{\alpha_{1}}v^{\alpha_{2}}m^{\alpha_{3}}}(x|\eta_{i}^{0})$ 
for any $s_{\overline{i}_{1}} \leq i \leq k_{0}$ and $\alpha \in \mathcal{F}_{2}$. 
The formulae for $\lambda_{\alpha_{1},\alpha_{2},\alpha_{3}}^{(2)}$ will be given later in Case 2.

Regarding $A_{1}(x)/D_{\kappa}(G_{0},G)$, 
%similar to the argument in the special case proof of Theorem \ref{theorem:conformant_setting} in Section \ref{Section:conformant_setting}, the structure 
by exploiting the fact that 
$(\dfrac{v_{j}^{0}}{1+(m_{j}^{0})^{2}},\theta_{j}^{0})= (\dfrac{v_{l}^{0}}{1+(m_{l}^{0})^{2}},\theta_{l}^{0})$ 
for all $u_{i} \leq j,l \leq u_{i+1}-1$, $1 \leq i \leq \overline{i}_{1}-1$, 
the term $A_{1}(x)/D_{\kappa}(G_{0},G)$ can be written as
\begin{eqnarray}
\dfrac{A_{1}(x)}{D_{\kappa}(G_{0},G)}= \dfrac{1}{D_{\kappa}(G_{0},G)}\biggr(\sum_{l=1}^{\overline{i}_{1}-1} \biggr \{\mathop {\sum }\limits_{i=u_{l}}^{u_{l+1}-1}{\biggr[\sum \limits_{j=1}^{5}{\beta_{jil}^{(2)}(x-\theta_{u_{l}}^{0})^{j-1}}\biggr]}f\biggr(\dfrac{x-\theta_{u_{l}}^{0}}{\sigma_{i}^{0}}\biggr)\Phi\biggr(\dfrac{m_{i}^{0}(x-\theta_{u_{l}}^{0})}{\sigma_{i}^{0}}\biggr)\biggr\}+ \nonumber \\
\biggr[\sum \limits_{j=1}^{4}{\gamma_{jl}^{(2)}(x-\theta_{u_{l}}^{0}) ^{j-1}}\biggr]
\exp\biggr(-\dfrac{(m_{u_{l}}^{0})^{2}+1}{2v_{u_{l}}^{0}}(x-\theta_{u_{l}}^{0})^{2}\biggr)\biggr), \nonumber
\end{eqnarray}
where $f(x)=\dfrac{1}{\sqrt{2\pi}}\exp(-\dfrac{x^{2}}{2})$. 
%As we mentioned above, the 
%replacement of $W_{2}^{2}(G,G_{0})$ by $D_{2}(G_{0},G)$ is purely for the 
%convenience of the argument. 
(This form is a general version of Eq.
\eqref{eqn:taylorexpansionsecondorder} in Section \eqref{Section:conformant_setting} when $\overline{i}_{1}=2,u_{1}=1,u_{2}=k_{0}+1$). The detailed formulas of $\beta_{jil}^{(2)}$ and $
\gamma_{jl}^{(2)}$ for $1 \leq l \leq \overline{i}_{1}-1, u_{l} \leq i \leq u_{l+1}-1$, and $1 \leq j \leq 5$ are thus similar to that of \eqref{eqn:taylorexpansionsecondorder}. 
Here, we rewrite their general fomulations for the transparency of subsequent arguments:

\begin{eqnarray}
\beta_{1il}^{(2)} & = & \dfrac{2\Delta p_{i}}{\sigma_{i}^{0}}-\dfrac{p_{i}\Delta v_{i}}{(\sigma_{i}^{0})^{3}}-\dfrac{p_{i}(\Delta \theta_{i})^{2}}{(\sigma_{i}^{0})^{3}} + \dfrac{3p_{i}(\Delta v_{i})^{2}}{4(\sigma_{i}^{0})^{5}}, \beta_{2il}^{(2)} = \dfrac{2p_{i}\Delta \theta_{i}}{(\sigma_{i}^{0})^{3}}-\dfrac{6p_{i}\Delta \theta_{i}\Delta v_{i}}{(\sigma_{i}^{0})^{5}}, \nonumber \\
\beta_{3il}^{(2)} & = & \dfrac{p_{i}\Delta v_{i}}{(\sigma_{i}^{0})^{5}}+ \dfrac{p_{i}(\Delta \theta_{i})^{2}}{(\sigma_{i}^{0})^{5}}-\dfrac{3p_{i}(\Delta v_{i})^{2}}{2(\sigma_{i}^{0})^{7}}, \ \beta_{4il}^{(2)} = \dfrac{2p_{i}\Delta \theta_{i}\Delta v_{i}}{(\sigma_{i}^{0})^{7}}, \ \beta_{5il}^{(2)} = \dfrac{p_{i}(\Delta v_{i})^{2}}{4(\sigma_{i}^{0})^{9}}, \nonumber \\
\gamma_{1l}^{(2)} &=& \mathop {\sum }\limits_{j=u_{l}}^{u_{l+1}-1}{-\dfrac{p_{j}m_{j}^{0}\Delta \theta_{j}}{\pi(\sigma_{j}^{0})^{2}}
+\dfrac{2p_{j}m_{j}^{0}\Delta \theta_{j}\Delta v_{j}}{\pi(\sigma_{j}^{0})^{4}}}- \dfrac{2p_{j}\Delta \theta_{j}\Delta m_{j}}{\pi(\sigma_{j}^{0})^{2}}, \nonumber \\
\gamma_{2l}^{(2)} &=& \mathop {\sum }\limits_{j=s_{l}}^{s_{l+1}-1}{ -\dfrac{p_{j}m_{j}^{0}\Delta v_{j}}{2\pi(\sigma_{j}^{0})^{4}}
-\dfrac{p_{j}((m_{j}^{0})^{3}+2m_{j}^{0})(\Delta \theta_{j})^{2}}{2\pi(\sigma_{j}^{0})^{4}}} + \dfrac{p_{j}\Delta m_{j}}{\pi(\sigma_{j}^{0})^{2}} \nonumber \\
&+& \dfrac{5p_{j}m_{j}^{0}(\Delta v_{j})^{2}}{8\pi(\sigma_{i}^{0})^{6}}-\dfrac{p_{j}\Delta m_{j}\Delta v_{j}}{\pi(\sigma_{j}^{0})^{4}}, \nonumber \\
\gamma_{3l}^{(2)} &=&\mathop {\sum }\limits_{j=s_{l}}^{s_{l+1}-1}{\dfrac{p_{j}(2(m_{j}^{0})^{2}+2)\Delta m_{j}\Delta \theta_{j}}{\pi(\sigma_{j}^{0})^{4}}} - \dfrac{p_{j}((m_{j}^{0})^{3}+2m_{j}^{0})\Delta \theta_{j}\Delta v_{j}}{2\pi(\sigma_{j}^{0})^{6}}, \nonumber \\
\gamma_{4l}^{(2)} &=& \mathop {\sum }\limits_{j=s_{l}}^{s_{l+1}-1}{-\dfrac{p_{j}((m_{j}^{0})^{3}+2m_{j}^{0})(\Delta v_{j})^{2}}{8\pi(\sigma_{j}^{0})^{8}}-\dfrac{p_{j}m_{j}^{0}(\Delta m_{j})^{2}}{2\pi(\sigma_{j}^{0})^{4}}} \nonumber \\
&+& \dfrac{p_{j}((m_{j}^{0})^{2}+1)\Delta m_{j}\Delta v_{j}}{\pi(\sigma_{j}^{0})^{6}}, \nonumber
\end{eqnarray}
where $1 \leq l \leq \overline{i}_{1}-1$ and $u_{l} \leq i \leq u_{l+1}-1$. Now, suppose 
that all the coefficients of $A_{1}(x)/D_{\kappa}(G_{0},G)$ and $A_{2}(x)/D_{\kappa}(G_{0},G)$ go 
to 0. It implies that $\gamma_{jl}^{(2)}/D_{\kappa}(G_{0},G)$ ($1 \leq j \leq 4$, $1 \leq l \leq \overline{i}_{1}-1$), $\beta_{jil}^{(2)}/D_{\kappa}(G_{0},G)$
($1 \leq j \leq 5$, $u_{l} \leq i \leq u_{l+1}-1$, $1 \leq l \leq \overline{i}_{1}-1$), and $\lambda_{\alpha_{1}\alpha_{2}\alpha_{3}}^{(2)}(\eta_{i}^{0})/D_{\kappa}(G_{0},G)$
(for all $|\alpha| \leq 2$) go to $0$. From the formation of $D_{\kappa}(G_{0},G)$, we can find 
at least one index $1 \leq i^{*} \leq k_{0}$ such that $\biggr(|\Delta p_{i^{*}}|+p_{i^{*}}(|\Delta \theta_{i^{*}}|
+|\Delta v_{i^{*}}|+|\Delta m_{i^{*}}|^{2})\biggr)/D_{\kappa}(G_{0},G) \not \to 0$. Let 
\begin{eqnarray}
\tau(p_{i^{*}},\theta_{i^{*}},v_{i^{*}},m_{i^{*}})=|\Delta p_{i^{*}}|+p_{i^{*}}(|\Delta \theta_{i^{*}}|
+|\Delta v_{i^{*}}|+|\Delta m_{i^{*}}|^{2}). \nonumber
\end{eqnarray}
Now, there are two possible cases for $i^{*}$:
\paragraph{Case 1} $u_{1} \leq i^{*} \leq u_{\overline{i}_{1}}-1$. Without loss of generality, we assume that $u_{1} \leq i^{*} \leq u_{2}-1$. Denote 
\begin{eqnarray}
d(p_{i^{*}},\theta_{i^{*}},v_{i^{*}},m_{i^{*}})=\mathop {\sum }\limits_{j=u_{1}}^{u_{2}-1}{|\Delta p_{j}|+p_{j}(|\Delta \theta_{j}|+|\Delta v_{j}|+|\Delta m_{j}|^{2})}. \nonumber
\end{eqnarray}
Since $\tau(p_{i^{*}},\theta_{i^{*}},v_{i^{*}},m_{i^{*}})/D_{\kappa}(G_{0},G) \not \to 0$, we have
\begin{eqnarray}
d(p_{i^{*}},\theta_{i^{*}},v_{i^{*}},m_{i^{*}})/D_{\kappa}(G_{0},G) \not \to 0. \nonumber
\end{eqnarray}
Therefore, for $1 \leq j \leq 5$ and $u_{1} \leq i \leq u_{2}-1$, $D_{j}:=\dfrac{\alpha_{ji1}}{d(p_{i^{*}},\theta_{i^{*}},v_{i^{*}},m_{i^{*}})} \to 0$. Now, our argument for this case is organized further into two steps:
\paragraph{Step 1.1} From the vanishes of $D_{2}$ and $D_{3}$, we obtain 
\begin{eqnarray}
p_{i}\Delta \theta_{i}/d(p_{i^{*}},\theta_{i^{*}},v_{i^{*}},m_{i^{*}}) \to 0, \ \text{and} \ p_{i}\Delta v_{i}/d(p_{i^{*}},\theta_{i^{*}},v_{i^{*}},m_{i^{*}}) \to 0 \nonumber
\end{eqnarray}
for all $u_{1} \leq i \leq u_{2}-1$. Combining this result with $D_{1} \to 0$, we achieve for all $u_{1} \leq i \leq u_{2}-1$ that
\begin{eqnarray}
\Delta p_{i}/d(p_{i^{*}},\theta_{i^{*}},v_{i^{*}},m_{i^{*}}) \to 0.\nonumber
\end{eqnarray}
These results eventually show that 
\begin{eqnarray}
U : = \biggr(\mathop {\sum }\limits_{j=u_{1}}^{u_{2}-1}{p_{j}(\Delta m_{j})^{2}}\biggr)/d(p_{i^{*}},\theta_{i^{*}},v_{i^{*}},m_{i^{*}}) \not \to 0. \nonumber
\end{eqnarray}
\paragraph{Step 1.2} Since $p_{i}\Delta \theta_{i}/d(p_{i^{*}},\theta_{i^{*}},v_{i^{*}},m_{i^{*}}) \to 0$ and  $p_{i}\Delta v_{i}/d(p_{i^{*}},\theta_{i^{*}},v_{i^{*}},m_{i^{*}}) \to 0$, by using the result that $\gamma_{41}^{(2)}/d(p_{i^{*}},\theta_{i^{*}},v_{i^{*}},m_{i^{*}}) \to 0$, we have 
\begin{eqnarray}
V := \left[\mathop {\sum }\limits_{j=u_{1}}^{u_{2}-1}{\dfrac{p_{j}m_{j}^{0}(\Delta m_{j})^{2}}{(\sigma_{j}^{0})^{4}}}\right]/d(p_{i^{*}},\theta_{i^{*}},v_{i^{*}},m_{i^{*}}) \to 0. \nonumber
\end{eqnarray}
As $U \not \to 0$, we obtain
\begin{eqnarray}
V/U=\left[\mathop {\sum }\limits_{j=u_{1}}^{u_{2}-1}{\dfrac{p_{j}^{n}m_{j}^{0}(\Delta m_{j})^{2}}{(\sigma_{j}^{0})^{4}}}\right]/\mathop {\sum }\limits_{j=u_{1}}^{u_{2}-1}{p_{j}(\Delta m_{j})^{2}} \to 0. \label{eqn:exactfittedskewnormalgeneralkey}
\end{eqnarray}
Since $m_{i}^{0}m_{j}^{0}>0 $ for all $u_{1} \leq i,j \leq u_{2}-1$, 
without loss of generality we assume that $m_{j}^{0}>0$ for all $s_{1} \leq j \leq s_{2}-1$. However, it implies that
\begin{eqnarray}
\left[\mathop {\sum }\limits_{j=u_{1}}^{u_{2}-1}{\dfrac{p_{j}m_{j}^{0}(\Delta m_{j})^{2}}{(\sigma_{j}^{0})^{4}}}\right]/
\mathop {\sum }\limits_{j=u_{1}}^{u_{2}-1}{p_{j}(\Delta m_{j})^{2}} \geq m_{\min} \mathop {\sum }\limits_{j=u_{1}}^{u_{2}-1}{p_{j}(\Delta m_{j})^{2}}/
\mathop {\sum }\limits_{j=u_{1}}^{u_{2}-1}{p_{j}(\Delta m_{j})^{2}} , \label{eqn:exactfittedskewnormalgeneral}
\end{eqnarray}
where $m_{\min} := \mathop {\min }\limits_{u_{1} \leq j \leq u_{2}-1}{\left\{\dfrac{m_{j}^{0}}{(\sigma_{j}^{0})^{4}}\right\}}$. 
Combining with \eqref{eqn:exactfittedskewnormalgeneralkey}, $m_{\min}=0$ --- a contradiction. In sum, Case 1 cannot happen.

\paragraph{Case 2} $u_{\overline{i}_{1}} \leq i^{*} \leq k_{0}$. We can write down the formation of $A_{2}(x)/D_{\kappa}(G_{0},G)$ as follows
\begin{eqnarray}
\dfrac{A_{2}(x)}{D_{\kappa}(G_{0},G)} = \dfrac{1}{D_{\kappa}(G_{0},G)}\biggr(\sum \limits_{i=u_{\overline{i}_{1}}}^{k_{0}}{\sum \limits_{\alpha \in \mathcal{F}_2}{\lambda_{\alpha_{1},\alpha_{2},\alpha_{3}}^{(2)}(\eta_{i}^{0})\dfrac{\partial^{|\alpha|}{f}}{\partial{\theta^{\alpha_{1}}}\partial{v^{\alpha_{2}}}\partial{m^{\alpha_{3}}}}(x|\eta_i^0)}}\biggr), \nonumber
\end{eqnarray}
where $\lambda_{\alpha_{1},\alpha_{2},\alpha_{3}}^{(2)}(\eta_{i}^{0})$ are given by
\begin{eqnarray}
\lambda_{0,0,0}^{(2)}(\eta_{i}^{0})=\Delta p_{i}, \ \lambda_{1,0,0}^{(2)}(\eta_{i}^{0})=p_{i}\Delta \theta_{i}, \ \lambda_{0,1,0}^{(2)}(\eta_{i}^{0})= p_{i}\Delta v_{i}+
p_{i}(\Delta \theta_{i})^{2}, \nonumber \\
\lambda_{0,0,1}^{(2)}(\eta_{i}^{0})=-\dfrac{(m_{1}^{0})^{3}+m_{1}^{0}}{2v_{1}^{0}}
p_{i}(\Delta \theta_{1i})^{2}-\dfrac{1}{v_{1}^{0}}p_{i}\Delta v_{i}\Delta m_{i}+ p_{i}
\Delta m_{i}, \nonumber \\
\lambda_{0,2,0}^{(2)}(\eta_{i}^{0})= p_{i}(\Delta v_{i})^{2}, \;
\lambda_{0,0,2}^{(2)}(\eta_{i}^{0})=-\dfrac{(m_{1}^{0})^{2}+1}{2v_{1}^{0}m_{1}^{0}}
p_{i}\Delta v_{i}\Delta m_{i} +
p_{i}\Delta (m_{i})^{2}, \nonumber \\
\lambda_{1,1,0}^{(2)}(\eta_{i}^{0})= p_{i}\Delta \theta_{i}\Delta v_{i}, \;
\lambda_{1,0,1}^{(2)}(\eta_{i}^{0})= p_{i}\Delta \theta_{i}\Delta m_{i}. \nonumber
\end{eqnarray} 
From the assumption with the coefficients of $A_{2}(x)/D_{\kappa}(G_{0},G)$, we have $
\lambda_{\alpha_{1}, \alpha_{2}, \alpha_{3}}^{(2)}(\eta_{i}^{0})/D_{\kappa}
(G_{0},G) \to 0$ for any $u_{\overline{i}_{1}} \leq i \leq k_{0}$. From the hypothesis with $i^{*}$, we have $
\tau(p_{i^{*}},\theta_{i^{*}},v_{i^{*}},m_{i^{*}})/D_{\kappa}(G_{0},G) \not \to 
0$. Therefore, it leads to $\lambda_{\alpha_{1}, \alpha_{2}, \alpha_{3}}^{(2)}(\eta_{i}
^{0})/\tau(p_{i^{*}},\theta_{i^{*}},v_{i^{*}},m_{i^{*}})$ for any 
$u_{\overline{i}_{1}} \leq i \leq k_{0}$ and $\alpha \in \mathcal{F}_{2}$.

Now, since $\lambda_{1,0,0}^{(2)}(\eta_{i^{*}}^{0})/\tau(p_{i^{*}},
\theta_{i^{*}},v_{i^{*}},m_{i^{*}}) \to 0$, we obtain $\Delta \theta_{i^{*}}/
\tau(p_{i^{*}},\theta_{i^{*}},v_{i^{*}},m_{i^{*}}) \to 0$. Combining this 
result with $\lambda_{0,1,0}^{(2)}(\eta_{i^{*}}^{0})/\tau(p_{i^{*}},
\theta_{i^{*}},v_{i^{*}},m_{i^{*}}) \to 0$, we have $\Delta v_{i^{*}}/
\tau(p_{i^{*}},\theta_{i^{*}},v_{i^{*}},m_{i^{*}}) \to 0$. Furthermore, as $
\lambda_{0,0,1}^{(2)}(\eta_{i^{*}}^{0})/\tau(p_{i^{*}},
\theta_{i^{*}},v_{i^{*}},m_{i^{*}}) \to 0$, we get $\Delta m_{i^{*}}/ 
\tau(p_{i^{*}},\theta_{i^{*}},v_{i^{*}},m_{i^{*}}) \to 0$. Hence, since $
\lambda_{0,0,0}^{(2)}(\eta_{i^{*}}^{0})/\tau(p_{i^{*}},
\theta_{i^{*}},v_{i^{*}},m_{i^{*}}) \to 0$, we ultimately obtain
\begin{eqnarray}
1 = \dfrac{|\Delta p_{i^{*}}|+p_{i^{*}}(|\Delta \theta_{i^{*}}|
+|\Delta v_{i^{*}}|+|\Delta m_{i^{*}}|^{2})}{\tau(p_{i^{*}},\theta_{i^{*}},v_{i^{*}},m_{i^{*}})} \to 0, \nonumber
\end{eqnarray}
which is a contradiction. As a consequence, Case 2 cannot happen.

Summarizing,  not all the coefficients $\gamma_{jl}^{(2)}/D_{\kappa}(G_{0},G)$ ($1 \leq j \leq 4$, $1 \leq l \leq \overline{i}_{1}-1$), $\beta_{jil}^{(2)}/D_{\kappa}(G_{0},G)$
($1 \leq j \leq 5$, $u_{l} \leq i \leq u_{l+1}-1$, $1 \leq l \leq \overline{i}_{1}-1$), $\lambda_{\alpha_{1}\alpha_{2}\alpha_{3}}^{(2)}(\eta_{i}^{0})/D_{\kappa}(G_{0},G)$
(for all $\alpha \in \mathcal{F}_{2}$) go to $0$. From Definition \ref{def-rsingular_set}, 
$G_{0}$ is not (1,1,2)-singular relative to $\mathcal{E}_{k_{0}}$. 
This concludes our proof.
%%%%%%%%%%%%%%%%%%%%%%%%%%%%%%%%%%%%%%%%%%%%%%%%
\comment{\begin{eqnarray}
\tau(p_{i^{*}},\theta_{i^{*}},v_{i^{*}},m_{i^{*}})/D_{2}(G_{0},G) \not \to 0 \ \text{as } \ n \to \infty, \nonumber
\end{eqnarray} 
where $\tau(p_{i^{*}},\theta_{i^{*}},v_{i^{*}},m_{i^{*}})=|\Delta p_{i^{*}}|+p_{i^{*}}(|\Delta \theta_{i^{*}}|+|\Delta v_{i^{*}}|+|\Delta m_{i^{*}}|)$ 
and we have $\tau(p_{i^{*}},\theta_{i^{*}},v_{i^{*}},m_{i^{*}}) \gtrsim d(p_{i^{*}},\theta_{i^{*}},v_{i^{*}},m_{i^{*}})$. As a consequence, for any $|\alpha| \leq 1$,
\begin{eqnarray}
\dfrac{B_{\alpha_{1}\alpha_{2}\alpha_{3}}(\eta_{i^{*}}^{0})}{\tau(p_{i^{*}},\theta_{i^{*}},v_{i^{*}},m_{i^{*}})} = \dfrac{D_{2}(G_{0},G)}{\tau(p_{i^{*}},\theta_{i^{*}},v_{i^{*}},m_{i^{*}})}\dfrac{B_{\alpha_{1}\alpha_{2}\alpha_{3}}(\eta_{i^{*}}^{0})}{D_{2}(G_{0},G)} \to 0. \nonumber
\end{eqnarray}
However, we have
\begin{eqnarray}
\sum \limits_{\alpha_{1}+\alpha_{2}+\alpha_{3} \leq 1}{|\dfrac{B_{\alpha_{1}\alpha_{2}\alpha_{3}}(\eta_{i^{*}}^{0})|}{\tau(p_{i^{*}},\theta_{i^{*}},v_{i^{*}},m_{i^{*}}))}}=1, \nonumber
\end{eqnarray}
which is a contradiction. Therefore, Case 2 cannot happen.

\paragraph{Step 2} From the arguments of the two cases above, we conclude that
not all $\beta_{jil}^{(2)}/D_{2}(G_{0},G)$ ($1 \leq j \leq 5$, $u_{l} \leq i \leq 
u_{l+1}-1$, $1 \leq l \leq \overline{i}_{1}-1$), $\gamma_{jl}^{(2)}/D_{2}(G_{0},G)$ ($1 \leq j \leq 4$, $1 \leq l \leq \overline{i}_{1}-1$), and
$B_{\alpha_{1}\alpha_{2}\alpha_{3}}(\eta_{i}^{0})/D_{2}(G_{0},G)$ ($|\alpha| \leq 2$) go to $0$. 
Denote $\overline{m}$ to be the the maximum of the absolute values of these coefficients and 
$\overline{d}=1/\overline{m}$. 
Then, $\overline{d}\beta_{jil}^{(2)}/D_{2}(G_{0},G) \to \overline{\beta}_{ijl}$ for all $1 \leq j \leq 5, 
u_{l} \leq i \leq u_{l+1}-1$, $1 \leq l \leq \overline{i}_{1}-1$, $\overline{d}\gamma_{jl}^{(2)}/D_{2}
(G_{0},G) \to \overline{\gamma}_{jl}$ 
for all $1 \leq j \leq 4$, $1 \leq jl\leq \overline{i}_{2}-1$, 
and $\overline{d}B_{\alpha_{1}\alpha_{2}\alpha_{3}}(\eta_{i}^{0})/
D_{2}(G_{0},G) \to \lambda_{\alpha_{1}\alpha_{2}\alpha_{3}i}$ for all 
$u_{\overline{i}_{1}} \leq i \leq k_{0}$. Therefore, by letting $n \to \infty$, we obtain for all $x \in \mathbb{R}$ that
\begin{eqnarray}
\dfrac{\overline{d}(p_{G}(x)-p_{G_{0}}(x))}{D_{2}(G_{0},G)} \to \overline{A}_{1}(x)+\overline{A}_{2}(x)=0, \nonumber
\end{eqnarray}
where the formula of $\overline{A}_{1}(x), \overline{A}_{2}(x)$ are as follows
\begin{eqnarray}
\overline{A}_{1}(x) & = & \sum \limits_{l=1}^{\overline{i}_{1}-1} \biggr \{\mathop {\sum }\limits_{i=u_{i}}^{u_{i+1}-1}{\mathop {\sum }\limits_{j=1}^{5}{\overline{\beta}_{jil}(x-\theta_{u_{l}}^{0})^{j-1} 
f\left(\dfrac{x-\theta_{u_{l}}^{0}}{\sigma_{i}^{0}}\right)\Phi\left(\dfrac{m_{i}^{0}(x-\theta_{u_{l}}^{0})}{\sigma_{i}^{0}}\right)}} \nonumber \\
& + & \mathop {\sum }\limits_{j=1}^{4}{\overline{\gamma}_{jl}(x-\theta_{u_{l}}^{0})^{i-1}}\exp\left(-\dfrac{(m_{u_{l}}^{0})^{2}+1}{2v_{u_{l}}^{0}}(x-\theta_{u_{l}}^{0})^{2}\right)\biggr\}, \nonumber \\
\overline{A}_{2}(x) & = & \mathop {\sum }\limits_{i=u_{\overline{i}_{1}}}^{k_{0}}{\mathop {\sum }\limits_{|\alpha | \leq 2}{\lambda_{\alpha_{1}\alpha_{2}\alpha_{3}i}
\dfrac{\partial^{|\alpha|}{f}}{\partial{\theta}^{\alpha_{1}}v^{\alpha_{2}}m^{\alpha_{3}}}(x|\eta_{i}^{0})}}. \nonumber
\end{eqnarray}

Using the same argument as that of Lemma \ref{proposition-notskewnormal}, we obtain $\overline{\beta}_{jil}=0$ for all $1 \leq j \leq 5$, $u_{l} \leq i \leq u_{l+1}-1$, $1 \leq l \leq \overline{i}_{1}-1$ 
and $\overline{\gamma}_{jl}=0$ for all $1 \leq j \leq 4$, $1 \leq l \leq \overline{i}_{1}-1$. 
However, we do not have $\lambda_{\alpha_{1}\alpha_{2}\alpha_{3}i}=0$ for all $u_{\overline{i}_{1}} \leq i \leq k_{0}$ and $0 \leq |\alpha| \leq 2$. 
It comes from the system of partial differential equations in \eqref{eqn:overfittedskewnormaldistributionzero}, which implies that all 
$\dfrac{\partial^{|\alpha|}{f}}{\partial{\theta}^{\alpha_{1}}v^{\alpha_{2}}m^{\alpha_{3}}}(x|\eta_{i}^{0})$ are not linear independent as $0 \leq |\alpha| \leq 2$. 
Therefore, we cannot directly apply the argument after Definition 
\ref{definition:singularity_level_emixture}. Note that, as from the argument of Section 
\ref{Section:overfitskew}, one way to overcome this difficulty is to reduce the full system 
of derivatives $\dfrac{\partial^{|\alpha|}{f}}{\partial{\theta}^{\alpha_{1}}v^{\alpha_{2}}
m^{\alpha_{3}}}(x|\eta_{i}^{0})$ where $|\alpha| \leq 2$ to the 
reduced linearly independent system of derivatives in Lemma \ref{lemma:reduced_linearly_independent}. However, the argument using this 
technique under this case is rather complicated. Therefore, we will introduce in this proof a new elegant 
treatment to deal with $\lambda_{\alpha_{1}\alpha_{2}\alpha_{3}i}$, which is divided into three steps

\paragraph{Step F.1} From the definition of $\overline{m}$, at least one coefficient $\overline{\beta}_{jil}, \overline{\gamma}_{jl},\lambda_{\alpha_{1}\alpha_{2}\alpha_{3}i}$ equals to 1. 
As all $\overline{\beta}_{jil}, \overline{\gamma}_{jl}$ equal to 0, this result implies that at least one coefficient $\lambda_{\alpha_{1}\alpha_{2}\alpha_{3}i}$ equals to 1. 
Therefore, $\overline{m} = |B_{\alpha_{1}^{*}\alpha_{2}^{*}\alpha_{3}^{*}}(\eta_{i'}^{0})|/D_{2}(G_{0},G)$ 
for some $\alpha_{1}^{*},\alpha_{2}^{*},\alpha_{3}^{*}$ and $u_{\overline{i}_{1}} \leq i^{'} \leq k_{0}$. 
As $\Delta \theta_{i^{'}},\Delta v_{i^{'}},\Delta m_{i^{'}} \to 0$, $|B_{\alpha_{1}\alpha_{2}\alpha_{3}}(\eta_{i^{'}}^{0})|$ 
when $\alpha_{1}+\alpha_{2}+\alpha_{3}=2$ is totally dominated by $|B_{\alpha_{1}\alpha_{2}\alpha_{3}}(\eta_{i^{'}}^{0})|$ 
when $\alpha_{1}+\alpha_{2}+\alpha_{3}\leq 1$. Therefore, $\alpha_{1}^{*}+\alpha_{2}^{*}+\alpha_{3}^{*} \leq 1$, i.e,
 at most the first order derivative. 

\paragraph{Step F.2} As $(p_{G}(x)-p_{G_{0}}(x))/D_{2}(G_{0},G) \to 0$, 
we also have $(p_{G}(x)-p_{G_{0}}(x))/D_{1}(G_{0},G) \to 0$. It is due to the 
fact that $D_{2}(G_{0},G) \lesssim D_{1}(G_{0},G)$.
From here, by applying Taylor expansion up to the first order, we can write 
\begin{eqnarray}
(p_{G}(x)-p_{G_{0}}(x))/D_{1}(G_{0},G) \asymp (L_{1}(x)+L_{2}(x))/D_{1}(G_{0},G). \nonumber
\end{eqnarray}
Here, $L_{1}(x)/D_{1}(G_{0},G),L_{2}(x)/D_{1}(G_{0},G)$ are linear combinations of elements of $f(x|\eta_{i}^{0})$, $\dfrac{\partial{f}}{\partial{\theta}}(x|\eta_{i}^{0})$, 
$\dfrac{\partial{f}}{\partial{v}}(x|\eta_{i}^{0})$, $\dfrac{\partial{f}}{\partial{m}}(x|\eta_{i}^{0})$. In $L_{1}(x)/D_{1}(G_{0},G)$, 
the index $i$ ranges from $1$ to $u_{\overline{i}_{1}}-1$ while in $L_{2}(x)/D_{1}(G_{0},G)$ the index $i$ ranges from $u_{\overline{i}_{1}}$ to $k_{0}$ respectively. 
Assume that all of these coefficients go to $0$, then we have
\begin{eqnarray} 
|B_{\alpha_{1}^{*}\alpha_{2}^{*}\alpha_{3}^{*}}(\eta_{i^{'}}^{0})|/D_{1}(G_{0},G) \to 0, \label{eqn:exactfittedskewnormalgeneralcasef2}
\end{eqnarray}
where the limit is due to the fact that $|B_{\alpha_{1}^{*}\alpha_{2}^{*}\alpha_{3}^{*}}(\eta_{i^{'}}^{0})|/D_{1}(G_{0},G)$ is the maximum coefficient of $L_{1}(x)/D_{1}(G_{0},G)$ and $L_{2}(x)/D_{1}(G_{0},G)$. 
However, we have
\begin{eqnarray}
D_{1}(G_{0},G)
& \lesssim & \mathop {\max }\limits_{1 \leq i \leq k_{0}}{\left\{|\Delta p_{i}|, |\Delta \theta_{i}|,|\Delta v_{i}|, |\Delta m_{i}|\right\}} \nonumber \\
& = & |B_{\alpha_{1}^{*}\alpha_{2}^{*}\alpha_{3}^{*}}(\theta_{i^{'}}^{0},\sigma_{i^{'}}^{0},m_{i^{'}}^{0})|, \nonumber
\end{eqnarray}   
which contradicts to \eqref{eqn:exactfittedskewnormalgeneralcasef2}. Therefore, at least 
one coefficient of $L_{1}(x)/D_{1}(G_{0},G),L_{2}(x)/D_{1}(G_{0},G)$ does not vanish. 
\paragraph{Step F.3} As before, we denote $m'$ to be the maximum among the absolute values of the coefficients of $L_{1}(x)/D_{1}(G_{0},G),L_{2}(x)/D_{1}(G_{0},G)$ and $d'=1/m'$. Then, we achieve
\begin{eqnarray}
d'|B_{\alpha_{1}^{*}\alpha_{2}^{*}\alpha_{3}^{*}}(\eta_{i^{'}}^{0})|/D_{1}(G_{0},G)=1 \ \text{for all} \ n.\nonumber
\end{eqnarray}
Therefore, we have
\begin{eqnarray}
\mathop {\sum }\limits_{i=1}^{2}{d'L_{i}(x)} \to \sum \limits_{i=u_{\overline{i}_{1}}}^{k_{0}}\biggr \{\alpha_{1i}'f(x|\eta_{i}^{0})
+\alpha_{2i}'\dfrac{\partial{f}}{\partial{\theta}}(x|\eta_{i}^{0})+  \alpha_{3i}'\dfrac{\partial{f}}{\partial{v}}(x|\eta_{i}^{0}) 
+\alpha_{4i}'\dfrac{\partial{f}}{\partial{m}}(x|\eta_{i}^{0})\biggr \}=0. \nonumber
\end{eqnarray}
where one of $\alpha_{1i'}',\alpha_{2i'}',\alpha_{3i'}',\alpha_{4i'}'$ differs from 0 where $u_{\overline{i}_{1}} \leq i^{'} \leq k_{0}$.
However, using the same argument as that of Lemma \ref{proposition-notskewnormal}, the right hand side equation will imply that $\alpha_{ji}'=0$ for all $1 \leq j \leq 4$ 
and $u_{\overline{i}_{1}} \leq i \leq k_{0}$, which is a contradiction. We reach the conclusion of this theorem.}
%%%%%%%%%%%%%%%%%%%%%%%%%%%%%%%%%%%%%%%%%%%%%%%%%
%%%%%%%%%%%%%%%%%%%%%%%%%%%%%%%%%%%%%%%%%
\paragraph{FULL PROOF OF THEOREM \ref{theorem:conformant_symmetry_setting}}
%%%%%%%%%%%%%%%%%%%%%%%%%%%%%%%%%%%%%%%%%
%%%%%%%%%%%%%%%%%%%%%%%%%%%%%%%%%%%%%%%%%%%%%%%%%
Here, we address the general setting of 
$G_{0} \in \mathcal{S}_{2}$, which accounts for the possible presence
of both generic components and conformant homologous sets. The proof idea is the generalization of the argument in Section 
\ref{Section:singularity_level_S2_setting} for a special case of $G_0$. Without loss of generality, 
we assume that $m_{1}^{0},m_{2}^{0},\ldots,m_{\overline{i}_{2}}=0$ where $1 \leq \overline{i}_{2} \leq k_{0}$ 
denotes the largest index $i$ such that $m_{i}^{0}=0$. The remaining components are 
either conformant homologous sets or generic components. Using the exact same constructions 
as that of Section \ref{Section:singularity_level_S2_setting}, we establish easily that $G_{0}$ is $(r,1,r)$-singular, $(2,r,r)$-singular, and $(r,r,2)$-singular relative to $
\mathcal{E}_{k_{0}}$ for any $r \geq 1$. It remains to
show that $G_{0}$ is not (3,2,3)-singular relative to $\mathcal{E}_{k_{0}}$.

Let $\kappa=(3,2,3)$. Consider the $\kappa$-minimal form for any sequence $G \in \mathcal{E}_{k_{0}} \to G_{0}$ 
under $\widetilde{W}_{\kappa}$ distance. Since 
$\widetilde{W}_{\kappa}^{3}(G,G_{0}) \asymp D_{\kappa}(G_{0},G)$ (cf. Lemma \ref{lemma:bound_overfit_Wasserstein_first}), 
we have
\begin{eqnarray}
\dfrac{p_{G}(x)-p_{G_{0}}(x)}{W_{\kappa}^{3}(G,G_{0})} \asymp \dfrac{A_{1}'(x)+A_{2}'(x)}{D_{\kappa}(G_{0},G)}, \nonumber
\end{eqnarray}  
where $A_{1}'(x)/
D_{\kappa}(G_{0},G)$ is the linear combination of Gaussian components, i.e., the indices of 
components range from 1 to $\overline{i}_{2}$, 
while $A_{2}'(x)/D_{\kappa}(G_{0},G)$ is the linear combination of conformant 
homologous components and generic components. 

Suppose that all the coefficients of 
$A_{1}'(x)/D_{\kappa}(G_{0},G), A_{2}'(x)/D_{\kappa}(G_{0},G)$ go to $0$. 
Similar to the argument in the proof of Theorem \ref{theorem:conformant_setting}, observe that there
is some index $\underline{i} \in [1,k_{0}]$ such that 
$(|\Delta p_{\underline{i}}|+p_{\underline{i}}
(|\Delta \theta_{\underline{i}}|^{3}
+|\Delta v_{\underline{i}}|^{2}+|\Delta m_{\underline{i}}|^{3}))/D_{\kappa}(G_{0},G) \not \to 0$. 
There are two possible cases regarding $\underline{i}$ .
\paragraph{Case 2.1} $\underline{i} \in [1,\overline{i}_{2}]$. Applying a
similar argument as that from Section \ref{Section:singularity_level_S2_setting} where we have only 
Gaussian components, we conclude that not all of the coefficients of 
$A_{1}'(x)/D_{\kappa}(G_{0},G)$ vanish, which is a contradiction. Therefore, Case 2.1 cannot happen.

\paragraph{Case 2.2} $\underline{i} \in [\overline{i}_{2}+1,k_{0}]$. Define
\begin{eqnarray}
D_{\kappa',new}(G_{0},G)=\sum \limits_{i=\overline{i}_{2}+1}^{k_{0}}{(|\Delta p_{i}|+p_{i}(|\Delta \theta_{i}|^{\kappa_{1}'}
+|\Delta v_{i}|^{\kappa_{2}'}+|\Delta m_{i}|^{\kappa_{3}'}))}, \nonumber
\end{eqnarray}
for any $\kappa' \in \left\{(1,1,2),(3,2,3)\right\}$. The idea of $D_{\kappa',new}(G_{0},G)$ is that we truncate the value of $
D_{\kappa'}(G_{0},G)$ from the index $1$ to $\overline{i}_{2}$, i.e., all the indices 
correspond to Gaussian components. 

Let $\kappa_{1}=(1,1,2)$. Since $\kappa=(3,2,3) \succ \kappa_{1}$, it is clear that $D_{\kappa,new}(G_{0},G) \lesssim D_{\kappa_{1},new}(G_{0},G)$. Since $D_{\kappa,new}
(G_{0},G)/D_{\kappa}(G_{0},G) \not \to 0$, we have $D_{\kappa_{1},new}(G_{0},G)/D_{\kappa}(G_{0},G) 
\not \to 0$. By multiplying all the coefficients of $A_{2}'(x)/D_{\kappa}(G_{0},G)$ with 
$D_{\kappa}(G_{0},G)/D_{\kappa_{1},new}(G_{0},G)$, we eventually obtain all the coefficients of $A_{2}'(x)/
D_{\kappa_{1},new}(G_{0},G)$ go to 0. However, by utilizing the same argument as in the proof of 
Theorem \ref{theorem:conformant_setting}, we reach to the conclusion that the second order Taylor 
expansion is sufficient to have all the coefficients of $A_{2}'(x)/D_{\kappa_{1},new}(G_{0},G)
$ not vanish. Thus, not all the coefficients of $A_{2}'(x)/D_{\kappa}(G_{0},G)$ 
go to 0, which is a contradiction. As a consequence, Case 2.2 also cannot happen.

In sum, under no circumstance can all 
the coefficients of $A_{1}'(x)/D_{\kappa}(G_{0},G)$ and 
$A_{2}'(x)/D_{\kappa}(G_{0},G)$ be made to vanish. 
Hence, $G_{0} \in \mathcal{S}_{2}$ is not (3,2,3)-singular relative to $\mathcal{E}_{k_{0}}$. As a consequence, $(3,2,3)$ is the unique singularity index of $G_{0}$ relative to $\Ecal_{k_{0}}$, which concludes the proof.

\subsection{Proofs of statements in Appendix F}

%%%%%%%%%%%%%%%%%%%%%%%%%%%%%%%%%%%%%%%%%%%%%%%%%%%%%%%%%%%%%%%%%%%%
\paragraph{PROOF OF PROPOSITION \ref{proposition:upperboundskewnormalfirst}}
%%%%%%%%%%%%%%%%%%%%%%%%%%%%%%%%%%%%%%%%%%%%%%%%%%%%%%%%%%%%%%%%%%%%
(a) The proof proceeds by induction on $l$.
When $l=1$, the conclusion clearly holds. 
Assume that that conclusion of the proposition holds for $l-1$. 
We will demonstrate that it also holds for $l$. 
Denote $y_{i}=a_{i}c_{i}$ and $z_{i}=b_{i}c_{i}$ for all $1 \leq i \leq l+1$. 
Then, we can rewrite system of polynomial equations \eqref{eqn:noncannonicalexactfittedskewnorma} as follows: $\sum \limits_{i=1}^{l+1}{z_{i}^{u}y_{i}}=0$ for any $0 \leq u \leq l$. 
If there exists some $1 \leq i_{1} \leq l+1$ such that $c_{i_{1}}=0$, then we go back to the case $l-1$, 
which we have already known from the hypothesis that we do not have non-trivial solution. 
Therefore, we assume that $c_{i} \neq 0$ for all $1 \leq i \leq l+1$, which implies that $y_{i} \neq 0$ for all $1 \leq i \leq l+1$. 
Now, the system of equations has the form of Vardermonde matrix, which is
$\begin{bmatrix}
1 & 1 & \ldots & 1 \\
z_{1} & z_{2} & \ldots & z_{l+1} \\
\vdots & \vdots & \ddots & \vdots \\
z_{1}^{s} & z_{2}^{s} & \ldots & z_{l+1}^{s} 
\end{bmatrix} $. By suitable linear transformations, we can rewrite the original 
system of equations as the following equivalent equations 
$\prod \limits_{j \neq i}{(z_{j}-z_{i})}y_{i}=0$ for all $1 \leq i \leq l+1$. 
Since $y_{i} \neq 0$ for all $1 \leq i \leq l+1$, we obtain $\prod \limits_{j \neq i}{(z_{j}-z_{i})}=0$ for all $1 \leq i \leq l+1$. 
As a consequence, there exists a partition $J_{1},J_{2},\ldots,J_{s}$  of $\left\{1,2,\ldots,l+1\right\}$ for some $1 \leq s \leq [l/2]$ 
such that if $i_{2},i_{3} \in J_{u}$ for $1 \leq u \leq s$, we have $z_{i_{2}}=z_{i_{3}}$ and for any $1 \leq i \neq j \leq s$, 
any two elements $z_{i_{4}} \in J_{i}$, $z_{j_{4}} \in J_{j}$ are different. 
Choose any $j_{i} \in J_{i}$ for all $1 \leq i \leq s$. It is clear that the system of equations 
can be rewritten as $\sum \limits_{i=1}^{s}{z_{j_{i}}^{u}\sum \limits_{j \in J_{i}}{y_{j}}}=0$ for all $0 \leq u \leq l+1$. 
If $s \geq 2$, it indicates that $|J_{i}| \leq l$ for all $1 \leq i \leq s$. 
Now, if we have some $1 \leq i_{4} \leq s$ such that $\sum \limits_{j \in J_{i_{4}}}{y_{j}}=0$ 
then we obtain $\sum \limits_{j \in J_{i_{4}}}{a_{j}c_{j}}=0$. Since $z_{i_{1}}=z_{i_{2}}$ for any $i_{1},i_{2} \in J_{i_{4}}$, 
this equation can be rewritten as $\sum \limits_{j \in J_{i_{4}}}{a_{j} \prod \limits_{v \neq j}{b_{v}}}=0$, 
which is a contradiction to the assumption of part (a) of the proposition. 
Therefore, $\sum \limits_{j \in J_{i}}{y_{j}} \neq 0$ for all $1 \leq i \leq s$. 
However, by using the same argument as before, again by linear transformation, 
we can rewrite the new system of polynomial equations as $\sum \limits_{j \in J_{i}}{y_{j}}\prod \limits_{v \neq i}{(z_{j_{u}}-z_{j_{i}})}=0$ for all $1 \leq i \leq s$. 
This implies that there should be some $1 \leq u_{1} \neq u_{2} \leq s$ such that $z_{j_{u_{1}}}=z_{j_{u_{2}}}$, which is a contradiction. 

As a consequence, we have $s=1$, i.e., $|I_{1}|=l+1$. Hence, $b_{1}c_{1}=b_{2}c_{2}=\ldots=b_{l+1}c_{l+1}$. 
Combining this fact with the equation $\sum \limits_{i=1}^{l+1}{a_{i}c_{i}}=0$, we obtain $\sum \limits_{i=1}^{l+1}{a_{i}\prod \limits_{j \neq i}{b_{j}}}=0$, 
which is a contradiction to the assumption of the proposition. This concludes the proof.

(b) We choose $c_{i}=0$ for all $i \not \in I \subset \left\{1,\ldots,l \right\}$. 
The system of polynomial equations \eqref{eqn:noncannonicalexactfittedskewnorma} 
becomes $\sum \limits_{i \in s}{a_{i}b_{i}^{u}c_{i}^{u+1}}=0$ for all $u \geq 0$. 
Notice that by choosing $b_{i}c_{i}=b_{j}c_{j}$ for all $i,j \in I$, 
we have $\sum \limits_{i \in I}{a_{i}b_{i}^{u}c_{i}^{u+1}}=b_{j}c_{j}\sum \limits_{i \in I}{a_{i}c_{i}}= 0$ for some $j \in I$ and for all $u \geq 1$ 
as long as $\sum \limits_{i \in I}{a_{i}c_{i}}= 0$. 
Combining all the conditions, we obtain $\sum \limits_{i \in J}{a_{i}\prod \limits_{j \neq i}{b_{j}}}=0$, which completes the proof.

(c) The result for the case $l=1$ is obvious. For the case $l=2$, 
after replacing $c_{3}$ in terms of $c_{1},c_{2}$, 
we obtain the following quadratic equation $(a_{1}a_{3}b_{1}+a_{1}^{2}b_{3})c_{1}^{2}+2a_{1}a_{2}b_{3}c_{1}c_{2}+(a_{2}a_{3}b_{2}+a_{2}^{2}b_{3})c_{2}^{2}=0$. 
Note that, $c_{1},c_{2} \neq 0$ due to the assumption of part (c). 
Therefore, we does not have solution of this quadratic equation 
when $a_{1}^{2}a_{2}^{2}b_{3}^{2}<(a_{1}a_{3}b_{1}+a_{1}^{2}b_{1})(a_{2}a_{3}b_{2}+a_{2}^{2}b_{3})$.  It is equivalent to $\sum \limits_{i=1}^{3}{a_{i}\prod \limits_{j \neq i}{b_{j}}}>0$, which 
confirms our hypothesis. We are done.

%%%%%%%%%%%%%%%%%%%%%%%%%%%%%%%%%%%%%%%%%%%%%%%%%
%%%%%%%%%%%%%%%%%%%%%%%%%%%%%%%%%%%%%%%%%
\paragraph{FULL PROOF OF THEOREM \ref{theorem:singularity_index_S3}}
%%%%%%%%%%%%%%%%%%%%%%%%%%%%%%%%%%%%%%%%%
%%%%%%%%%%%%%%%%%%%%%%%%%%%%%%%%%%%%%%%%%%%%%%%%%
Here, we only provide the proof for part (b) as the proofs for part (a) and part (c) are similar. This is a 
generalization of the argument in Section \ref{Section:singularity_level_S3_setting}. Under 
this situation, apart from the nonconformant homologous sets, we also 
have in $G_0$ the presence of Gaussian components components and possibly some 
conformant homologous sets, in addition to some generic components. 

Let $u_{1}=1 < u_{2} < \ldots < u_{\overline{i}_{3}} \in [1,k_{0}+1]$ 
such that $(\dfrac{v_{j}^{0}}{1+(m_{j}^{0})^{2}},\theta_{j}^{0})= (\dfrac{v_{l}^{0}}{1+
(m_{l}^{0})^{2}},\theta_{l}^{0})$ for all $u_{i} \leq j,l \leq u_{i+1}-1$, $1 \leq i \leq 
\overline{i}_{3}-1$, i.e., all the nonconformant homologous components 
are from index $1$ to $u_{\overline{i}_{3}}$. 
The remaining components are either Gaussian ones or conformant 
homologous sets or generic ones. It follows that
$|I_{u_{i}}|=u_{i+1}-u_{i}$ for all $1 \leq i \leq \overline{i}_{3}-1$ and all $I_{u_{i}}$ are 
nonconformant homologous sets. We divide the argument for our proof into two main steps
\paragraph{Step 1} $G_{0}$ is not $(3,2,\max \left\{2,\lev(G_{0}|\Ecal_{k_{0}})\right\}+1)$-singular relative to $\Ecal_{k_{0}}$. In fact, for any $\kappa=(\kappa_{1},\kappa_{2},\kappa_{3}) \in \mathbb{N}^{3}$ such that $\|\kappa\|_{\infty}=r$ and $\kappa_{3}=r$ where $r \geq 1$, consider the $\kappa$-minimal form for any sequence $G \in \Ecal_{k_{0}} \to G_{0}$ under $\widetilde{W}_{\kappa}$ distance. Since $\widetilde{W}_{\kappa}^{r}(G,G_{0}) \asymp D_{\kappa}
(G_{0},G)$ (cf. Lemma \ref{lemma:bound_overfit_Wasserstein_first}), we have
\begin{eqnarray}
\dfrac{p_{G}(x)-p_{G_{0}}(x)}{\widetilde{W}_{\kappa}^{r}(G,G_{0})} \asymp \dfrac{B_{1}(x)+B_{2}(x)}{D_{\kappa}(G_{0},G)}, \nonumber
\end{eqnarray}  
where $B_{1}(x)/
D_{\kappa}(G_{0},G)$ is the linear combination of nonconformant homologous 
components, i.e., the indices of 
components range from 1 to $\overline{i}_{3}$ 
while $B_{2}(x)/D_{\kappa}(G_{0},G)$ is the linear combination of conformant 
homologous components, Gaussian components, and generic components. 

Now, suppose that all the coefficients of $B_{1}(x)/D_{\kappa}(G_{0},G), B_{2}(x)/
D_{\kappa}(G_{0},G)$ go to $0$. Similar 
to the argument employed in the proof of
Theorem \ref{theorem:conformant_setting}, there is some index $
\underline{i} \in [1,k_{0}]$ such that $(|\Delta p_{\underline{i}}|+p_{\underline{i}}
(|\Delta \theta_{\underline{i}}|^{\kappa_{1}}
+|\Delta v_{\underline{i}}|^{\kappa_{2}}+|\Delta m_{\underline{i}}|^{\kappa_{3}}))/D_{\kappa}(G_{0},G) \not \to 0$. Now, there are two possible scenarios regarding $\underline{i}$
\paragraph{Case 1.1} $\underline{i} \in [1,u_{\overline{i}_{3}}-1]$. Under that case, we can check that
\begin{eqnarray}
\dfrac{B_{1}(x)}{D_{\kappa}(G_{0},G)}= \dfrac{1}{D_{\kappa}(G_{0},G)}\biggr(\sum_{l=1}^{\overline{i}_{3}-1} \biggr \{\mathop {\sum }\limits_{i=u_{l}}^{u_{l+1}-1}{\biggr[\sum \limits_{j=1}^{2r+1}{\beta_{jil}^{r}(x-\theta_{u_{l}}^{0})^{j-1}}\biggr]}f\biggr(\dfrac{x-\theta_{u_{l}}^{0}}{\sigma_{i}^{0}}\biggr)\Phi\biggr(\dfrac{m_{i}^{0}(x-\theta_{u_{l}}^{0})}{\sigma_{i}^{0}}\biggr)\biggr\}+ \nonumber \\
\biggr[\sum \limits_{j=1}^{2r}{\gamma_{jl}^{r}(x-\theta_{u_{l}}^{0}) ^{j-1}}\biggr]
\exp\biggr(-\dfrac{(m_{u_{l}}^{0})^{2}+1}{2v_{u_{l}}^{0}}(x-\theta_{u_{l}}^{0})^{2}\biggr)\biggr). \nonumber
\end{eqnarray}
This representation 
of $B_{1}(x)/D_{\kappa}(G_{0},G)$ is the general formulation of the equation 
\eqref{eqn:Taylorexpansionnonconformant} in Section \ref{Section:singularity_level_S3_setting} 
where $\overline{i}_{3}=2,u_{1}=1$, and $u_{2}=k_{0}+1$. Since $
\underline{i} \in [1,u_{\overline{i}_{3}}-1]$, there exists some index $l^{*} \in [1,
\overline{i}_{3}-1]$ such that $\underline{i} \in [u_{l^{*}},u_{l^{*}+1}-1]$. By means of the 
same argument as that of Section \ref{Section:singularity_level_S3_setting} for $\beta_{jil}
^{(r)}/D_{\kappa}(G_{0},G) \to 0$ and $\gamma_{jl}^{(r)}/
D_{\kappa}(G_{0},G) \to 0$, we can extract the 
following system of limits
\begin{eqnarray}
\biggr(\sum \limits_{i_{1}-i_{2}=l/2}{\dfrac{q_{i_{1},i_{2}}}{i_{1}!}\sum \limits_{i=u_{l^{*}}}^{u_{l^{*}+1}-1}{\dfrac{p_{i}(m_{i}^{0})^{i_{1}-2i_{2}-1}(\Delta m_{i})^{i_{1}}}{(\sigma_{i}
^{0})^{l}}}}\biggr) \biggr /
\sum \limits_{i=u_{l^{*}}}^{u_{l^{*}+1}-1}{p_{i}|\Delta m_{i}|^{\kappa_{3}}} \to 0, \nonumber
\end{eqnarray}
for any even $l$ such that $1 \leq l \leq 2r$ where $q_{i,j}$ are the integer coefficients 
that appear in the high order derivatives of $f(x|\theta,\sigma,m)$ with respect to $m$ 
defined as those after equation \eqref{eqn:singularitynonconformant_two}, and $1 \leq 
i_{1} \leq r$, $i_{2} \leq (i_{1}-1)/2$ as $i_{1}$ is odds or $i_{2} \leq i_{1}/2-1$ as 
$i_{1}$ is even. Let $r_{\max}$ to be maximum number that the above system of limits 
holds for any $l^{*} \in [1,\overline{i}_{3}-1]$. It would indicate that $r=r_{max}$ is 
the best possible value that $G_{0}$ is $(\kappa_{1},\kappa_{2},r)$-singular relative 
to $\Ecal_{k_{0}}$ for any $\kappa_{1}, \kappa_{2} \leq r$. Hence, from the definition 
of singularity level, we would have $\lev(G_{0}|\Ecal_{k_{0}})=r_{\max}$. If we choose 
$\kappa=(3,2,\max \left\{2,\lev(G_{0}|\Ecal_{k_{0}})\right\}+1)$, then as $
\kappa_{3}=\max \left\{2,\lev(G_{0}|\Ecal_{k_{0}})\right\}+1 \geq \lev(G_{0}|
\Ecal_{k_{0}})+1$ we achieve that $G_{0}$ is not $\kappa$-singular relative to $
\Ecal_{k_{0}}$.
\paragraph{Case 1.2} $\underline{i} \in [u_{\overline{i}_{3}},k_{0}]$. Using the same 
argument as that in the proof of 
Theorem \ref{theorem:conformant_symmetry_setting}, the third order 
Taylor expansion is sufficient so that not all the coefficients of $B_{2}(x)/D_{\kappa',new}
(G_{0},G)$ go to 0 where $\kappa'=(3,2,3)$ and
\begin{eqnarray}
D_{\kappa',new}(G_{0},G)=\sum \limits_{i=u_{\overline{i}_{3}}}^{k_{0}}{(|\Delta p_{i}|+p_{i}(|\Delta \theta_{i}|^{3}
+|\Delta v_{i}|^{2}+|\Delta m_{i}|^{3}))}. \nonumber
\end{eqnarray}
If we choose $\kappa=(3,2,\max \left\{2,\lev(G_{0}|\Ecal_{k_{0}})\right\}+1)$, then we have $\kappa'=(3,2,3) \prec \kappa$, which leads to $D_{\kappa',new}(G_{0},G)/D_{\kappa}(G_{0},G) \not \to 0$. As
all the coefficients of $B_{2}(x)/D_{\kappa}(G_{0},G)$ vanish, it leads to
all the coefficients of $B_{2}(x)/D_{\kappa',new}(G_{0},G)$ go to 0, which is a contradiction. Thus, Case 1.2 cannot happen.

In summary, if we choose $\kappa=(3,2,\max \left\{2,\lev(G_{0}|\Ecal_{k_{0}})\right\}+1)$, then not all the coefficients of $B_{1}(x)/D_{\kappa}(G_{0},G), B_{2}(x)/
D_{\kappa}(G_{0},G)$ go to $0$. Therefore, $G_{0}$ is not $(3,2,\max \left\{2,\lev(G_{0}|\Ecal_{k_{0}})\right\}+1)$-singular relative to $\Ecal_{k_{0}}$. 
\paragraph{Step 2} To demonstrate that $(3,2,\max \left\{2,\lev(G_{0}|\Ecal_{k_{0}})
\right\}+1)$ is the unique singularity index of $G_{0}$, we need to verify that $G_{0}$ 
is $(r,r,\max \left\{2,\lev(G_{0}|\Ecal_{k_{0}})\right\})$-singular, $(2,r,r)$-singular, 
and $(r,1,r)$-singular relative to $\Ecal_{k_{0}}$ for any $r \geq 1$. The later two 
results are straightfoward from the fact that there is at least one Gaussian component in 
$G_{0}$. In particular, by choosing sequence $G$ such that all the masses of $G$ and 
$G_{0}$ are identical while all the atoms of $G$ and $G_{0}$ are identical except for one 
Gaussian component of $G_{0}$. With that Gaussian component of $G_{0}$, we choose 
the corresponding component of $G$ similar to that in Section 
\ref{Section:singularity_level_S2_setting}. According to this choice of $G$, we can 
easily check that $G_{0}$ is $(2,r,r)$-singular and $(r,1,r)$-singular relative to $
\Ecal_{k_{0}}$. Finally, to demonstrate that $G_{0}$ is $(r,r,\max \left\{2,\lev(G_{0}|
\Ecal_{k_{0}})\right\})$, it comes directly from our analysis in Case 1.1 with the 
definition of $r_{\max}$, which guarantees the existence of $G$ to make all of these 
systems of limits vanish. Therefore, we achieve the conclusion of this step.

As a consequence, $(3,2,\max \left\{2,\lev(G_{0}|\Ecal_{k_{0}})\right\}+1)$ is the 
unique singularity index of $G_{0}$, i.e., $\singset(G_{0}|\Ecal_{k_{0}})=\left\{(3,2,
\max \left\{2,\lev(G_{0}|\Ecal_{k_{0}})\right\}+1)\right\}$. We achieve the 
conclusion of the theorem.

%%%%%%%%%%%%%%%%%%%%%%%%%%%%%%%%%%%%%%%%%%%%%%%%%
%%%%%%%%%%%%%%%%%%%%%%%%%%%%%%%%%%%%%%%%%
\paragraph{FULL PROOF OF THEOREM \ref{theorem:nonconformant_no_typeC_setting}}
%%%%%%%%%%%%%%%%%%%%%%%%%%%%%%%%%%%%%%%%%
%%%%%%%%%%%%%%%%%%%%%%%%%%%%%%%%%%%%%%%%%%%%%%%%%
Here, we only provide the proof for part (b) as the proof for part (a) is similar. This is a 
generalization of the argument in Section \ref{Section:singularity_level_S3_setting} and similar to the proof argument of that of Theorem \ref{theorem:singularity_index_S3}. Under 
this situation, apart from the nonconformant homologous sets without C(1) singularity, we also 
have for $G_0$ the presence of Gaussian components components and possibly some 
conformant homologous sets, in addition to some generic components. 

Let $u_{1}=1 < u_{2} < \ldots < u_{\overline{i}_{3}} \in [1,k_{0}+1]$ 
such that $(\dfrac{v_{j}^{0}}{1+(m_{j}^{0})^{2}},\theta_{j}^{0})= (\dfrac{v_{l}^{0}}{1+
(m_{l}^{0})^{2}},\theta_{l}^{0})$ for all $u_{i} \leq j,l \leq u_{i+1}-1$, $1 \leq i \leq 
\overline{i}_{3}-1$, i.e., all the nonconformant homologous components 
without type C(1) singularity are from index $1$ to $u_{\overline{i}_{3}}$. 
The remaining components are either Gaussian ones or conformant 
homologous sets or generic ones. It follows that
$|I_{u_{i}}|=u_{i+1}-u_{i}$ for all $1 \leq i \leq \overline{i}_{3}-1$ and all $I_{u_{i}}$ are 
nonconformant homologous sets without C(1) singularity.

From the result of Theorem \ref{theorem:singularity_index_S3}, it is sufficient to focus on obtaining the upper bound on the singularity level of $G_{0}$. Let $\kappa=(\overline{r},\overline{r},\overline{r})$ where $\overline{r}=\mathop {\max}{\biggr\{3,\overline{s}(G_{0})+1\biggr\}}$. Consider the $\overline{r}$-minimal and $\kappa$-th minimal form for any sequence $G \in \mathcal{E}_{k_{0}} \to G_{0}$ under $\widetilde{W}_{\kappa}$ 
distance.
Since $\widetilde{W}_{\kappa}^{\overline{r}}(G,G_{0}) \asymp D_{\kappa}
(G_{0},G)$ (cf. Lemma \ref{lemma:bound_overfit_Wasserstein_first}), we have
\begin{eqnarray}
\dfrac{p_{G}(x)-p_{G_{0}}(x)}{\widetilde{W}_{\kappa}^{\overline{r}}(G,G_{0})} \asymp \dfrac{B_{1}(x)+B_{2}(x)}{D_{\kappa}(G_{0},G)}, \nonumber
\end{eqnarray}  
where $B_{1}(x)/
D_{\kappa}(G_{0},G)$ is the linear combination of nonconformant homologous 
components, i.e., the indices of 
components range from 1 to $\overline{i}_{3}$ 
while $B_{2}(x)/D_{\kappa}(G_{0},G)$ is the linear combination of conformant 
homologous components, Gaussian components, and generic components. 

Now, suppose that all the coefficients of $B_{1}(x)/D_{\kappa}(G_{0},G), B_{2}(x)/
D_{\kappa}(G_{0},G)$ go to $0$. Similar 
to the argument employed in the proof of
Theorem \ref{theorem:conformant_setting}, there is some index $
\underline{i} \in [1,k_{0}]$ such that $(|\Delta p_{\underline{i}}|+p_{\underline{i}}
(|\Delta \theta_{\underline{i}}|^{\overline{r}}
+|\Delta v_{\underline{i}}|^{\overline{r}}+|\Delta m_{\underline{i}}|^{\overline{r}}))/D_{\kappa}(G_{0},G) \not \to 0$. Now, there are two possible scenarios regarding $\underline{i}$
\paragraph{Case 1.1} $\underline{i} \in [1,u_{\overline{i}_{3}}-1]$. Similar to Case 1.1 in the proof of Theorem \ref{theorem:singularity_index_S3}, we can check that
\begin{eqnarray}
\dfrac{B_{1}(x)}{D_{\kappa}(G_{0},G)}= \dfrac{1}{D_{\kappa}(G_{0},G)}\biggr(\sum_{l=1}^{\overline{i}_{3}-1} \biggr \{\mathop {\sum }\limits_{i=u_{l}}^{u_{l+1}-1}{\biggr[\sum \limits_{j=1}^{2\overline{r}+1}{\beta_{jil}^{(\overline{r})}(x-\theta_{u_{l}}^{0})^{j-1}}\biggr]}f\biggr(\dfrac{x-\theta_{u_{l}}^{0}}{\sigma_{i}^{0}}\biggr)\Phi\biggr(\dfrac{m_{i}^{0}(x-\theta_{u_{l}}^{0})}{\sigma_{i}^{0}}\biggr)\biggr\}+ \nonumber \\
\biggr[\sum \limits_{j=1}^{2\overline{r}}{\gamma_{jl}^{(\overline{r})}(x-\theta_{u_{l}}^{0}) ^{j-1}}\biggr]
\exp\biggr(-\dfrac{(m_{u_{l}}^{0})^{2}+1}{2v_{u_{l}}^{0}}(x-\theta_{u_{l}}^{0})^{2}\biggr)\biggr). \nonumber
\end{eqnarray}
Since $
\underline{i} \in [1,u_{\overline{i}_{3}}-1]$, there exists some index $l^{*} \in [1,
\overline{i}_{3}-1]$ such that $\underline{i} \in [u_{l^{*}},u_{l^{*}+1}-1]$. By means of the 
same argument as that of Section \ref{Section:singularity_level_S3_setting} for $\beta_{jil}
^{(\overline{r})}/D_{\kappa}(G_{0},G) \to 0$ and $\gamma_{jl}^{(\overline{r})}/
D_{\kappa}(G_{0},G) \to 0$, we can extract the 
following system of polynomial limits:
\begin{eqnarray}
\sum \limits_{i=u_{l^{*}}}^{u_{l^{*}+1}-1}{p_{i}^{0}(m_{i}^{0})^{l/2-1}(k_{i})^{l/2}}=0, \nonumber
\end{eqnarray} 
where at least one of $k_{i}$ differs 
from 0. Here, $l$ is any even number such that $2 \leq l \leq 2\overline{r}$. From the formulation of $\overline{s}(G_{0})$, since $\overline{r} \geq \overline{s}
(G_{0})+1 \geq \overline{s}(|I_{u_{l^{*}}}|,\left\{p_{i}^{0}\right\}_{i \in I_{u_{I^{*}}}},
\left\{m_{i}^{0}\right\}_{i \in I_{u_{I^{*}}}})+1$, we can guarantee that the above system 
of polynomial equations does not have any non-trivial solution, which is a contradiction. 
Therefore, Case 1.1 cannot happen.
\paragraph{Case 1.2} $\underline{i} \in [u_{\overline{i}_{3}},k_{0}]$. Using the same 
argument as that in the proof of 
Theorem \ref{theorem:conformant_symmetry_setting}, the third order 
Taylor expansion is sufficient so that not all the coefficients of $B_{2}(x)/D_{\kappa',new}
(G_{0},G)$ go to 0 where $\kappa'=(3,2,3)$ and
\begin{eqnarray}
D_{\kappa',new}(G_{0},G)=\sum \limits_{i=u_{\overline{i}_{3}}}^{k_{0}}{(|\Delta p_{i}|+p_{i}(|\Delta \theta_{i}|^{3}
+|\Delta v_{i}|^{2}+|\Delta m_{i}|^{3}))}. \nonumber
\end{eqnarray}
Since $\overline{r} \geq 3$, we have $\kappa'=(3,2,3) \prec \kappa=(\overline{r},\overline{r},\overline{r})$, which leads to $D_{\kappa',new}(G_{0},G)/D_{\kappa}(G_{0},G) \not \to 0$. As
all the coefficients of $B_{2}(x)/D_{\kappa}(G_{0},G)$ vanish, it follows that
all the coefficients of $B_{2}(x)/D_{\kappa',new}$\\$(G_{0},G)$ go to 0, which is a contradiction. Thus, Case 1.2 cannot happen.

In sum, for any sequence of $G$ tending to $G_0$ in $\widetilde{W}_{\kappa}$,
not all the coefficients of $B_{1}(x)/D_{\kappa}(G_{0},G)
$ and $B_{2}(x)/D_{\kappa}(G_{0},G)$ go to 0. By Definition \ref{def-rsingular_set}, we 
conclude that $G_{0} \in \mathcal{S}_{2}$ is not $\kappa$-singular relative to $
\mathcal{E}_{k_{0}}$. As a consequence, $\lev(G_0|\Ecal_{k_0}) \leq \overline{r}-1 = 
\mathop {\max }{\biggr\{2,\overline{s}(G_{0})\biggr\}}$, which leads to the singularity index $(3,2,\max\left\{2,\lev(G_{0}|\Ecal_{k_{0}})\right\}+1) \preceq (3,2,\max\{2,\overline{s}(G_{0})\}+1)$. This concludes part (b) of the theorem.

%%%%%%%%%%%%%%%%%%%%%%%%%%%%%%%%%%%%%%%%%%%%%%%%%%%%%%%%%%%%%%%
\paragraph{PROOF OF PROPOSITION \ref{proposition:tight_singularity_noC.1}}
%%%%%%%%%%%%%%%%%%%%%%%%%%%%%%%%%%%%%%%%%%%%%%%%%%%%%%%%%%%%%%%
%%%%%%%%%%%%%%%%%%%%%%%%%%%%%%%%%%%%%%%%%%%%%%%%%%%%%%%%%%%%%%%
Here, we utilize the same assumption on $G_{0}$ as that in the proof of Theorem 
\ref{theorem:nonconformant_no_typeC_setting}, i.e., all the nonconformant homologous sets 
without C(1) singularity are from index $1$ to $u_{\overline{i}_{3}}$. We also rearrange 
the components of $G_{0}$ such that the first nonconformant homologous set without C(1) 
singularity $I_{u_{1}}$ has exactly $k^{*}$ elements, i.e., $u_{2}-u_{1}=k^{*}$. As 
$u_{1}=1$, we have $u_{2}=k^{*}+1$.

(a) We will demonstrate that $G_{0}$ is 1-singular and (1,1,1)-singular relative to $\mathcal{E}_{k_{0}}$. Indeed, 
the sequence of $G$ is constructed as follows:  $p_{i}=p_{i}^{0},  
\theta_{i}=\theta_{i}^{0}, v_{i}=v_{i}^{0}$ for all $u_{2}=k^{*}+1 \leq i \leq k_{0}$, i.e., we 
match all the components of $G$ and $G_{0}$ except the first $k^{*}$ components of 
$G_{0}$. Now, by proceeding in the same way as described in Section 
\ref{Section:singularity_level_S3_setting} up to Eq. \eqref{eqn:singularitynonconformant_four}, 
to verify that $G_{0}$ is indeed 1-singular and (1,1,1)-singular, the choice of the 
first $k^{*}$ components of $G$ needs to satisfy
\begin{eqnarray}
\sum \limits_{i=u_{1}}^{u_{2}-1}{q_{i}\Delta t_{i}}/\sum \limits_{i=u_{1}}^{u_{2}-1}{q_{i}|\Delta t_{i}|} \to 0, \nonumber
\end{eqnarray} 
where $q_{i}=p_{i}/\sigma_{i}^{0}$ and $\Delta t_{i}=\Delta m_{i}/\sigma_{i}^{0}$ as $u_{1} 
\leq i \leq u_{2}-1$. A simple choice is to take the first $k^{*}$ components of $G$ 
by $\sum \limits_{i=u_{1}}^{u_{2}-1}{q_{i}\Delta t_{i}}=q_{1}\Delta t_{1}+q_{2}\Delta t_{2}=0$, 
which is always possible. We conclude that 
$G_{0}$ is 1-singular and (1,1,1)-singular relative to $\mathcal{E}_{k_{0}}$. 
Since $\overline{s}(G_{0})=1$ as $k^{*}=2$, by combining with the upper 
bound of Theorem \ref{theorem:nonconformant_no_typeC_setting}, we have $\lev(G_0|\Ecal_{k_0})=1$. According to the result of Theorem \ref{theorem:singularity_index_S3}, we achieve $\singset(G_{0}|\Ecal_{k_{0}})=\left\{(1,1,2)\right\}$.

(b) There are two cases to consider in this part
\paragraph{Case 1:} All the homologous sets $I$ of $G_{0}$ such that $|I|=k^{*}$ satisfy $
\sum \limits_{i \in I}{p_{i}^{0}\prod \limits_{j \in I\setminus \{i\}}
{m_{j}^{0}}}>0$. To demonstrate that $G_{0}$ is 1-singular and (1,1,1)-singular
relative to $\mathcal{E}_{k_{0}}$, we utilize the same construction 
of $G$ as that of part (a), i.e., $p_{i}=p_{i}^{0}, \theta_{i}=\theta_{i}^{0}, v_{i}=v_{i}^{0}$ 
for all $u_{2}=k^{*}+1 \leq i \leq k_{0}$ and $\sum \limits_{i=u_{1}}^{u_{2}-1}{q_{i}\Delta t_{i}}=0$. 
Next, we will show that $G_{0}$ is not 2-singular and (2,2,2)-singular relative to $\mathcal{E}_{k_{0}}$. Using 
the same argument as that of the proof of
Theorem \ref{theorem:nonconformant_no_typeC_setting}, we obtain the 
following system of limiting rational polynomial functions:
\begin{eqnarray}
\sum \limits_{i=u_{l^{*}}}^{u_{l^{*}+1}-1}{q_{i}\Delta t_{i}}/\sum \limits_{i=u_{l^{*}}}^{u_{l^{*}+1}-1}{q_{i}|\Delta t_{i}|^{2}} \to 0, \nonumber \\
\sum \limits_{i=u_{l^{*}}}^{u_{l^{*}+1}-1}{q_{i}t_{i}^{0}(\Delta t_{i})^{2}}/\sum \limits_{i=u_{l^{*}}}^{u_{l^{*}+1}-1}{q_{i}|\Delta t_{i}|^{2}} \to 0, \nonumber
\end{eqnarray} 
where $l^{*}$ is some index in $[1,\overline{i}_{3}-1]$ and $q_{i}=p_{i}/\sigma_{i}^{0}$, 
$\Delta t_{i}=\Delta m_{i}/\sigma_{i}^{0}$, $t_{i}^{0}
=m_{i}^{0}/\sigma_{i}^{0}$ for all $u_{l^{*}} \leq i \leq u_{l^{*}+1}-1$. 
By employing the greedy extraction technique being described in Section \ref{Section:nonconformant_no_typeC}, we obtain the following system of polynomial 
equations:
\begin{eqnarray}
\sum \limits_{i=u_{l^{*}}}^{u_{l^{*}+1}-1}{p_{i}^{0}c_{i}}=0, \ \sum \limits_{i=u_{l^{*}}}^{u_{l^{*}+1}-1}{p_{i}^{0}m_{i}^{0}c_{i}^{2}}=0, \nonumber
\end{eqnarray}
where at least one of $c_{i}$ differs from 0. Now, we have two possible scenarios:
\paragraph{Case 1.1:} $|I_{u_{l^{*}}}|=u_{l^{*}+1}-u_{l^{*}}=2$. Then, by solving the 
above system of equations, we obtain $\sum \limits_{i \in \mathcal{I}_{u_{l^{*}}}}{p_{i}^0
\prod \limits_{j \in I_{u_{l^{*}}}\backslash \left\{i\right\}}{m_{j}^{0}}}=0$, which means 
$I_{u_{l^{*}}}$ is nonconformant homologous set with C(1) singularity of $G_{0}$ --- a contradiction to the fact that $G_{0} \in \Scal_{31}$.
\paragraph{Case 1.2:} $|I_{u_{l^{*}}}|=u_{l^{*}+1}-u_{l^{*}}=k^{*}=3$. Then, by solving 
the above system of equations, we obtain $\sum \limits_{i \in \mathcal{I}_{u_{l^{*}}}}
{p_{i}^{0}\prod \limits_{j \in I_{u_{l^{*}}}\backslash \left\{i\right\}}{m_{j}^{0}}}<0$ --- a 
contradiction to the assumption of Case 1. 

Thus, $G_{0}$ is not 2-singular and (2,2,2)-singular relative to $\mathcal{E}_{k_{0}}$.
As a consequence, $\lev(G_0|\Ecal_{k_0})=1$ and $\singset(G_{0}|\Ecal_{k_{0}})=\left\{(1,1,2)\right\}$ according to Theorem \ref{theorem:singularity_index_S3}.

\paragraph{Case 2:} There exists at least one nonconformant homologous set $I$ of $G_{0}
$ such that $|I|=k^{*}$ satisfies $\sum \limits_{i \in I}{p_{i}^{0}\prod \limits_{j \in I
\setminus \{i\}}
{m_{j}^{0}}}<0$. Without loss of generality, we assume the homologous set $I_{u_{1}}
$ of $G_{0}$ to have the property $\sum \limits_{i \in I_{u_{1}}}{p_{i}^{0}\prod \limits_{j 
\in I_{u_{1}}\setminus \{i\}}
{m_{j}^{0}}}<0$. We will show that $G_{0}$ is 2-singular and (2,2,2)-singular relative to 
$\mathcal{E}_{k_{0}}$. In fact, we construct the 
sequence of $G$ by letting 
$p_{i}=p_{i}^{0}, \theta_{i}=\theta_{i}^{0}, v_{i}=v_{i}^{0}$ for all 
$u_{2}=k^{*}+1 \leq i \leq k_{0}$. In order for $G_{0}$ to be 2-singular and (2,2,2)-singular, 
it is sufficient that
\begin{eqnarray}
\sum \limits_{i=u_{1}}^{u_{2}-1}{q_{i}\Delta t_{i}}/\sum \limits_{i=u_{1}}^{u_{2}-1}{q_{i}|\Delta t_{i}|^{2}} \to 0, \nonumber \\
\sum \limits_{i=u_{1}}^{u_{2}-1}{q_{i}t_{i}^{0}(\Delta t_{i})^{2}}/\sum \limits_{i=u_{1}}^{u_{2}-1}{q_{i}|\Delta t_{i}|^{2}} \to 0. \nonumber
\end{eqnarray} 
The simple solution to the above system of limits is $\sum \limits_{i=u_{1}}^{u_{2}-1}{q_{i}
\Delta t_{i}}=0$ and $\sum \limits_{i=u_{1}}^{u_{2}-1}{q_{i}t_{i}^{0}(\Delta t_{i})^{2}}=0$.
One solution to these two equations is $p_{i}=p_{i}^{0}$ and $\Delta m_{i}=(\sigma_{i}^0)^{2}d_{i}/n$ for all $u_{1} \leq i \leq u_{2}-1$ where $d_{1},d_{2},d_{3}$ satisfy
\begin{eqnarray}
\sum \limits_{i=u_{1}}^{u_{2}-1}{p_{i}^{0}d_{i}}=0, \ \sum \limits_{i=u_{1}}^{u_{2}-1}{p_{i}^{0}m_{i}^{0}d_{i}^{2}}=0, \nonumber
\end{eqnarray}
which is guaranteed to have non-trivial solution as $\sum \limits_{i \in I_{u_{1}}}{p_{i}^{0}
\prod \limits_{j \in I_{u_{1}} \backslash \left\{i\right\}}{m_{j}^{0}}}<0$. Therefore, 
$G_{0}$ is 2-singular and (2,2,2)-singular relative to $\mathcal{E}_{k_{0}}$. Since $\overline{s}(G_{0})=2$ as $k^{*}=3$, combining with the 
upper bound of Theorem \ref{theorem:nonconformant_no_typeC_setting}, we obtain $\
\lev(G_0|\Ecal_{k_0})=2$  under Case 2. According to Theorem \ref{theorem:singularity_index_S3}, it also indicates that $\singset(G_{0}|\Ecal_{k_{0}})=\left\{(1,1,3)\right\}$, which concludes our proof.
%%%%%%%%%%%%%%%%%%%%%%%%%%%%%%%%%%%%%%%%%%%%%%%%%%%%%%%%%%%%%%%
\comment{\paragraph{FULL PROOF OF THEOREM \ref{theorem:conformant_symmetry_setting}}
%%%%%%%%%%%%%%%%%%%%%%%%%%%%%%%%%%%%%%%%%%%%%%%%%%%%%%%%%%%%%%%
Here, we provide the complete proof of Theorem \ref{theorem:conformant_setting}, which is 
the generalization of the argument in Section \ref{Section:conformant_symmetry_setting}.  
It is sufficient to demonstrate that $G_{0}$ is not 3-singular. Now, we also take into 
account the possible 
presence of both generic components and conformant homologous sets. Without loss of generality, 
we assume that $m_{1}^{0},m_{2}^{0},\ldots,m_{\overline{i}_{1}}=0$ where $1 \leq \overline{i}_{2} \leq k_{0}$ 
denotes the largest index $i$ such that $m_{i}^{0}=0$. 
We also denote $u_{1}^{'}=\overline{i}_{2}+1 < u_{2}^{'} < \ldots < u_{\overline{i}_{3}}^{'} \in [\overline{i}_{2}+1,k_{0}]$ 
such that $(\dfrac{v_{j}^{0}}{1+(m_{j}^{0})^{2}},\theta_{j}^{0})= (\dfrac{v_{l}^{0}}{1+(m_{l}^{0})^{2}},\theta_{l}^{0})$ and $m_{j}^{0}m_{l}^{0}>0$ 
for all $u_{i}^{'} \leq j,l \leq u_{i+1}^{'}-1$, $1 \leq i \leq \overline{i}_{3}-1$. These 
arrangements mean that the components with indices from 1 to $\overline{i}_{2}$ will be 
Gaussian components while those with indices from $\overline{i}_{2}+1$ to $u_{\overline{i}_{3}}^{'}-1$ will be conformant homologous sets. The remaining components will be generic 
components.

Now, for any sequence $G_{n} \in \mathcal{E}_{k_{0}} \to G_{0}$ under $W_{3}$ 
distance, by using the result $W_{3}^{3}(G_{n},G_{0}) \asymp D_{3}(G_{0},G_{n})$ in 
Lemma \ref{lemma:bound_overfit_Wasserstein}, we have
\begin{eqnarray}
\dfrac{p_{G_{n}}(x)-p_{G_{0}}(x)}{W_{3}^{3}(G_{n},G_{0})} \asymp \dfrac{A_{n,1}'(x)+A_{n,2}'(x)+A_{n,3}'(x)}{D_{3}(G_{0},G_{n})}, \nonumber
\end{eqnarray}  
where $A_{n,1}^{'}(x)/
D_{3}(G_{0},G_{n})$ is the linear combination of Gaussian components , i.e., the indices of 
components range from 1 to $\overline{i}_{2}$ 
while $A_{n,2}^{'}(x)/D_{3}(G_{0},G_{n})$ is the linear combination of conformant 
homologous sets, i.e., the indices of components range from $\overline{i}_{2}+1$ to 
$u_{\overline{i}_{3}}^{'}-1$. Lastly, $A_{n,3}^{'}(x)/D_{3}(G_{0},G_{n})$ is the linear 
combination of generic components, i.e., the indices of components range from 
$u_{\overline{i}_{3}}^{'}$ to $k_{0}$.

Now, assume that all the coefficients of $A_{n,1}^{'}/D_{3}(G_{0},G_{n}), A_{n,2}^{'}/
D_{3}(G_{0},G_{n}), A_{n,3}^{'}/D_{3}(G_{0},G_{n})$ go to $0$ as $n \to \infty$. Similar 
to the argument of Step 1 of Theorem \ref{theorem:conformant_setting}, we have some index $
\underline{i} \in [1,k_{0}]$ such that $(|\Delta p_{\underline{i}}^{n}|+p_{\underline{i}}^{n}
(|\Delta \theta_{\underline{i}}^{n}|^{3}
+|\Delta v_{\underline{i}}^{n}|^{3}+|\Delta m_{\underline{i}}^{n}|^{3}))/D_{3}(G_{0},G_{n}) \not \to 0$ as $n \to \infty$. Now, we have two possible scenarios of  $\underline{i}$ 
\paragraph{Case 1} $\underline{i} \in [1,\overline{i}_{2}]$. The argument under this case is 
be the generalization of the argument in Section 
\ref{Section:conformant_symmetry_setting}. Regarding $A_{n,1}^{'}(x)/D_{3}
(G_{0},G_{n})$, the structure $m_{1}^{0},m_{2}^{0},\ldots,m_{\overline{i}_{2}}^{0}=0$
allow us to rewrite $A_{n,1}^{'}(x)/D_{3}(G_{0},G_{n})$ as
\begin{eqnarray}
\dfrac{A_{n,1}^{'}(x)}{D_{3}(G_{0},G_{n})}=\dfrac{1}{D_{3}(G_{0},G_{n})}\biggr(\sum_{j=1}^{\overline{i}_{2}} {\biggr[\gamma_{1j}^{n}+\gamma_{2j}^{n}(x-\theta_{j}^{0})+\gamma_{3j}^{n}(x-\theta_{j}^{0})^{2}+\gamma_{4j}^{n}(x-\theta_{j}^{0})^{3} }+ \nonumber \\
\gamma_{5j}^{n}(x-\theta_{j}^{0})^{4}+\gamma_{6j}^{n}(x-\theta_{j}^{0})^{5}+\gamma_{7j}^{n}(x-\theta_{j}^{0})^{6}\biggr]f\biggr(\dfrac{x-\theta_{j}^{0}}{\sigma_{j}^{0}}\biggr)\biggr), \nonumber
\end{eqnarray}
where $\gamma_{ij}^{n}$ are the polynomials in terms of $\Delta \theta_{j}^{n}, \Delta 
v_{j}^{n}, \Delta m_{j}^{n}$, and $\Delta p_{j}^{n}$. Note that, this representation is the 
generalization of the representation \eqref{eqn:taylorexpansionthirdorder} that we study in 
Section \ref{Section:conformant_symmetry_setting}. The 
replacement of $W_{3}^{3}(G_{n},G_{0})$ by $D_{3}(G_{0},G_{n})$ is purely for the 
convenience of the argument. The formulations of $\gamma_{lj}^{n}$ are as follows: 
\begin{eqnarray}
\gamma_{1j}^{n} & = & -\dfrac{p_{j}^{n}\Delta v_{j}^{n}}{2(\sigma_{j}^{0})^{3}}-\dfrac{p_{j}^{n}(\Delta \theta_{j}^{n})^{2}}{2(\sigma_{j}^{0})^{3}}
+\dfrac{3p_{j}^{n}(\Delta v_{j}^{n})^{2}}{8(\sigma_{j}^{0})^{5}}-\dfrac{2p_{j}^{n}(\Delta \theta_{j}^{n})(\Delta m_{j}^{n})}{\sqrt{2\pi}(\sigma_{j}^{0})^{2}}-\dfrac{5p_{j}^{n}(\Delta v_{j}^{n})^{3}}{16(\sigma_{j}^{0})^{7}} + \nonumber \\ 
& + & \dfrac{3p_{j}^{n}(\Delta \theta_{j}^{n})^{2}(\Delta v_{j}^{n})}{4(\sigma_{j}^{0})^{5}}+\dfrac{2(\Delta \theta_{j}^{n})(\Delta v_{j}^{n})(\Delta m_{j}^{n})}{\sqrt{2\pi}(\sigma_{i}^{0})^{4}} + \dfrac{\Delta p_{j}^{n}}{\sigma_{j}^{0}}, \nonumber \\
\gamma_{2j}^{n} & = & \dfrac{p_{j}^{n}\Delta \theta_{j}^{n}}{(\sigma_{j}^{0})^{3}}+\dfrac{2p_{j}^{n}\Delta m_{j}^{n}}{\sqrt{2\pi}(\sigma_{j}^{0})^{2}}
-\dfrac{3p_{j}^{n}(\Delta \theta_{j}^{n})(\Delta v_{j}^{n})}{2(\sigma_{j}^{0})^{5}}-\dfrac{2p_{j}^{n}(\Delta v_{j}^{n})(\Delta m_{j}^{n})}{\sqrt{2\pi}(\sigma_{j}^{0})^{4}}-\dfrac{p_{j}^{n}(\Delta \theta_{j}^{n})^{3}}{2(\sigma_{j}^{0})^{5}} - \nonumber \\
& - & \dfrac{3p_{j}^{n}(\Delta \theta_{j}^{n})^{2}(\Delta m_{j}^{n})}{\sqrt{2\pi}(\sigma_{j}^{0})^{4}}+\dfrac{15p_{j}^{n}(\Delta v_{j}^{n})^{2}(\Delta \theta_{j}^{n})}{8(\sigma_{j}^{0})^{7}}+\dfrac{2p_{j}^{n}(\Delta v_{j}^{n})^{2}(\Delta m_{j}^{n})}{\sqrt{2\pi}(\sigma_{j}^{0})^{6}}, \nonumber \\
\gamma_{3j}^{n} & = & \dfrac{p_{j}^{n}\Delta v_{j}^{n}}{2(\sigma_{j}^{0})^{5}}+\dfrac{p_{j}^{n}(\Delta \theta_{j}^{n})^{2}}{2(\sigma_{j}^{0})^{5}} 
-\dfrac{3p_{j}^{n}(\Delta v_{j}^{n})^{2}}{4(\sigma_{j}^{0})^{7}}+\dfrac{2p_{j}^{n}(\Delta \theta_{j}^{n})(\Delta m_{j}^{n})}{\sqrt{2\pi}(\sigma_{j}^{0})^{4}}+\dfrac{15p_{j}^{n}(\Delta v_{j}^{n})^{3}}{16(\sigma_{j}^{0})^{9}} - \nonumber \\
& - & \dfrac{3p_{j}^{n}(\Delta \theta_{j}^{n})^{2}(\Delta v_{j}^{n})}{2(\sigma_{j}^{0})^{7}}-\dfrac{5(\Delta \theta_{j}^{n})(\Delta v_{j}^{n})(\Delta m_{j}^{n})}{\sqrt{2\pi} (\sigma_{i}^{0})^{6}}, \nonumber \\
\gamma_{4j}^{n} & = & \dfrac{p_{j}^{n}(\Delta \theta_{j}^{n})(\Delta v_{j}^{n})}{2(\sigma_{j}^{0})^{7}} 
+\dfrac{p_{j}^{n}(\Delta v_{j}^{n})(\Delta m_{j}^{n})}{\sqrt{2\pi}(\sigma_{j}^{0})^{6}}+\dfrac{p_{j}^{n}(\Delta \theta_{j}^{n})^{3}}{6(\sigma_{j}^{0})^{7}}-\dfrac{p_{j}^{n}(\Delta m_{j}^{n})^{3}}{3\sqrt{2\pi}(\sigma_{j}^{0})^{4}}  +  \nonumber \\
& + & \dfrac{p_{j}^{n}(\Delta \theta_{j}^{n})^{2}(\Delta m_{j}^{n})}{\sqrt{2\pi}(\sigma_{j}^{0})^{6}} -  \dfrac{5p_{j}^{n}(\Delta \theta_{j}^{n})(\Delta v_{j}^{n})^{2}}{4(\sigma_{j}^{0})^{9}}-\dfrac{2p_{j}^{n}(\Delta v_{j}^{n})^{2}(\Delta m_{j}^{n})}{\sqrt{2\pi}(\sigma_{j}^{0})^{8}}, \nonumber \\
\gamma_{5j}^{n} & = & \dfrac{p_{j}^{n}(\Delta v_{j}^{n})^{2}}{8(\sigma_{j}^{0})^{9}}-\dfrac{5p_{j}^{n}(\Delta v_{j}^{n})^{3}}{16(\sigma_{j}^{0})^{11}}
+\dfrac{p_{j}^{n}(\Delta \theta_{j}^{n})^{2}(\Delta v_{j}^{n})}{4(\sigma_{j}^{0})^{9}}+\dfrac{(\Delta \theta_{j}^{n})(\Delta v_{j}^{n})(\Delta m_{j}^{n})}{\sqrt{2\pi}(\sigma_{i}^{0})^{8}}, \nonumber \\
\gamma_{6j}^{n} & = & \dfrac{p_{j}^{n}(\Delta \theta_{j}^{n})(\Delta v_{j}^{n})^{2}}{8(\sigma_{j}^{0})^{11}}+\dfrac{p_{j}^{n}(\Delta v_{j}^{n})^{2}(\Delta m_{j}^{n})}{4\sqrt{2\pi}(\sigma_{j}^{0})^{10}}, \gamma_{7j}^{n}  =  \dfrac{p_{j}^{n}(\Delta v_{j}^{n})^{3}}{48(\sigma_{j}^{0})^{13}}. \nonumber 
\end{eqnarray}
Denote $d^{'}(p_{\underline{i}}^{n},\theta_{\underline{i}}^{n},v_{\underline{i}}^{n},m_{\underline{i}}^{n}) = |\Delta p_{\underline{i}}^{n}|
+p_{\underline{i}}^{n}(|\Delta \theta_{\underline{i}}^{n}|^{3}+|\Delta v_{\underline{i}}
^{n}|^{3}+|\Delta m_{\underline{i}}^{n}|^{3})$. According to the hypothesis $d^{'}(p_{\underline{i}}^{n},
\theta_{\underline{i}}^{n},v_{\underline{i}}^{n},m_{\underline{i}}^{n})/D_{3}(G_{0},G_{n}) \not \to 0$, we obtain that for all $1 \leq j \leq 7$:
\begin{eqnarray}
C_{j}^{n}:=\dfrac{\gamma_{j\underline{i}}^{n}}{d^{'}(p_{\underline{i}}^{n},
\theta_{\underline{i}}^{n},v_{\underline{i}}^{n},m_{\underline{i}}^{n})} = 
\dfrac{\gamma_{j\underline{i}}^{n}}{D_{3}(G_{0},G_{n})}\dfrac{D_{3}(G_{0},G_{n})}{d^{'}
(p_{\underline{i}}^{n},\theta_{\underline{i}}^{n},v_{\underline{i}}^{n},m_{\underline{i}}
^{n})} \to 0 \ \text{as } \ n \to \infty.
\end{eqnarray}
Within this scenario, our argument is organized into the following steps
\paragraph{Step 3.1:} First of all, we will demonstrate that $\Delta p_{\underline{i}}^{n}/d^{'}(p_{\underline{i}}^{n},\theta_{\underline{i}}^{n},v_{\underline{i}}^{n},m_{\underline{i}}^{n}) \to 0$ as $n \to \infty$. In fact, we have
\begin{eqnarray}
C_{1}^{n}+(\sigma_{\underline{i}}^{0})^{2}C_{3}^{n}+3(\sigma_{\underline{i}})^{4}C_{5}^{n}=\left(-\dfrac{5p_{\underline{i}}^{n}(\Delta v_{\underline{i}}^{n})^{3}}{16(\sigma_{\underline{i}})^{7}}
+\dfrac{\Delta p_{\underline{i}}}{\sigma_{\underline{i}}^{0}}\right)/d^{'}(p_{\underline{i}}^{n},\theta_{\underline{i}}^{n},v_{\underline{i}}^{n},m_{\underline{i}}^{n}) \to 0. \nonumber
\end{eqnarray}
Combining with $C_{7}^{n}$, we easily achieve the conclusion of this step.
\paragraph{Step 3.2:} Now, we will argue that $\Delta \theta_{\underline{i}}^{n}, \Delta v_{\underline{i}}^{n}, \Delta m_{\underline{i}}^{n} \neq 0$ for infinitely many $n$. 
First of all, we denote $d^{''}(p_{\underline{i}}^{n},\theta_{\underline{i}}^{n},v_{\underline{i}}^{n},m_{\underline{i}}^{n})=p_{i}^{n}(|\Delta \theta_{i}^{n}|^{3}+|\Delta v_{i}^{n}|^{3}+|\Delta m_{i}^{n}|^{3})$. 
Since $\Delta p_{\underline{i}}^{n}/d^{'}(p_{\underline{i}}^{n},\theta_{\underline{i}}^{n},v_{\underline{i}}^{n},m_{\underline{i}}^{n}) \to 0$, we obtain
\begin{eqnarray}
d^{''}(p_{\underline{i}}^{n},\theta_{\underline{i}}^{n},v_{\underline{i}}^{n},m_{\underline{i}}^{n})/d^{'}(p_{\underline{i}}^{n},\theta_{\underline{i}}^{n},v_{\underline{i}}^{n},m_{\underline{i}}^{n}) \to 1. \nonumber
\end{eqnarray}
Therefore, we obtain 
\begin{eqnarray}
K_{1}^{n} :=\left(C_{1}^{n}-\dfrac{\Delta p_{\underline{i}}^{n}}{d^{'}(p_{\underline{i}}^{n},
\theta_{\underline{i}}^{n},v_{\underline{i}}^{n},m_{\underline{i}}^{n})}\right)\dfrac{d^{''}(p_{\underline{i}}^{n},\theta_{\underline{i}}^{n},v_{\underline{i}}^{n},m_{\underline{i}}^{n})}{d^{'}(p_{\underline{i}}^{n},\theta_{\underline{i}}^{n},v_{\underline{i}}^{n},m_{\underline{i}}^{n})} \to 0, \nonumber
\end{eqnarray}
and $K_{j}^{n} := \dfrac{\gamma_{j\underline{i}}^{n}}{d^{''}(p_{\underline{i}}^{n},
\theta_{\underline{i}}^{n},v_{\underline{i}}^{n},m_{\underline{i}}^{n})} =\dfrac{d^{''}
(p_{\underline{i}}^{n},\theta_{\underline{i}}^{n},v_{\underline{i}}^{n},m_{\underline{i}}
^{n})C_{j}^{n}}{d^{'}(p_{\underline{i}}^{n},\theta_{\underline{i}}^{n},v_{\underline{i}}
^{n},m_{\underline{i}}^{n})} \to 0$ for all $2 \leq j \leq 7$. 
The main idea of these small steps is to remove $\Delta p_{\underline{i}}$ from both the 
numerators and denominators of $C_{j}^{n}$ while maintaining the zero limits. 

If $\Delta \theta_{\underline{i}}^{n}=0$ for infinitely $n$, by combining $K_{7}^{n}$ and 
$K_{5}^{n}$, 
we achieve $(\Delta v_{\underline{i}}^{n})^{2}/d^{''}(p_{\underline{i}}^{n},
\theta_{\underline{i}}^{n},v_{\underline{i}}^{n},m_{\underline{i}}^{n}) \to 0$. 
Combining this result with $K_{3}^{n}$, we obtain $\Delta v_{\underline{i}}^{n}/d^{''}
(p_{\underline{i}}^{n},\theta_{\underline{i}}^{n},v_{\underline{i}}^{n},m_{\underline{i}}
^{n}) \to 0$. 
Therefore, $K_{4}^{n}$ yields that $(\Delta m_{\underline{i}}^{n})^{3}/d^{''}
(p_{\underline{i}}^{n},\theta_{\underline{i}}^{n},v_{\underline{i}}^{n},m_{\underline{i}}
^{n}) \to 0$. 
Hence, $p_{\underline{i}}^{n}(|\Delta \theta_{\underline{i}}^{n}|^{3}+|\Delta 
m_{\underline{i}}^{n}|^{3})/d^{''}(p_{\underline{i}}^{n},\theta_{\underline{i}}
^{n},v_{\underline{i}}^{n},m_{\underline{i}}^{n}) \to 0$, which is a contradiction. 

If $\Delta v_{\underline{i}}^{n}=0$ for infinitely $n$, then $K_{1}^{n}+\Delta 
\theta_{\underline{i}}^{n} K_{2}^{n} \to 0$ 
implies that $(\Delta \theta_{\underline{i}}^{n})^{2}/d^{''}(p_{\underline{i}}^{n},
\theta_{\underline{i}}^{n},v_{\underline{i}}^{n},m_{\underline{i}}^{n}) \to 0$. 
Combining this result with $K_{4}^{n}$, we achieve $(\Delta m_{\underline{i}}^{n})^{3}/
d^{''}(p_{\underline{i}}^{n},\theta_{\underline{i}}^{n},v_{\underline{i}}
^{n},m_{\underline{i}}^{n}) \to 0$, which also leads to a contradiction.

If $\Delta m_{\underline{i}}^{n}=0$ for infinitely $n$, then $K_{6}^{n}$ leads to $(\Delta \theta_{\underline{i}}^{n})(\Delta v_{\underline{i}}^{n})^{2}/
d^{''}(p_{\underline{i}}^{n},\theta_{\underline{i}}^{n},v_{\underline{i}}^{n},m_{\underline{i}}^{n}) \to 0$. Combine this result with $K_{4}^{n}$ leads to
\begin{eqnarray}
\left[\dfrac{(\Delta \theta_{\underline{i}}^{n})(\Delta v_{\underline{i}}^{n})}{2(\sigma_{\underline{i}})^{7}}+\dfrac{(\theta_{\underline{i}}^{n})^{3}}{6(\sigma_{\underline{i}})^{7}}\right]/
d^{''}(p_{\underline{i}}^{n},\theta_{\underline{i}}^{n},v_{\underline{i}}^{n},m_{\underline{i}}^{n}) \to 0. \label{eqn:zeroskewsecond}
\end{eqnarray}
This result and $K_{3}^{n}$ imply that $\Delta v_{\underline{i}}^{n}/d^{''}
(p_{\underline{i}}^{n},\theta_{\underline{i}}^{n},v_{\underline{i}}^{n},m_{\underline{i}}
^{n}) \to 0$. 
Combine it with \eqref{eqn:zeroskewsecond}, we obtain $(\Delta \theta_{\underline{i}}
^{n})^{3}/d^{''}(p_{\underline{i}}^{n},\theta_{\underline{i}}^{n},v_{\underline{i}}
^{n},m_{\underline{i}}^{n}) \to 0$, 
which is also a contradiction. Overall, we obtain the conclusion of this step
\paragraph{Step 3.3:} If $|v_{\underline{i}}^{n}|$ is the maximum among $|
\theta_{\underline{i}}^{n}|$, $|v_{\underline{i}}^{n}|$, and $|m_{\underline{i}}^{n}|$ for 
infinitely many $n$. 
Then from $K_{7}^{n}$, we obtain $|\Delta v_{\underline{i}}^{n}|^{3}/(|\Delta \theta_{i}
^{n}|^{3}+|\Delta v_{i}^{n}|^{3}+|\Delta m_{i}^{n}|^{3}) \to 0$, which is a contradiction. 
\paragraph{Step 3.4:} If $|\theta_{\underline{i}}^{n}|$ is the maximum among $|
\theta_{\underline{i}}^{n}|$, $|v_{\underline{i}}^{n}|$, 
and $|m_{\underline{i}}^{n}|$ for infinitely many $n$. 
Denote $\Delta v_{\underline{i}}^{n}=k_{1}^{n}\theta_{\underline{i}}^{n}$, $\Delta 
m_{\underline{i}}^{n}=k_{2}^{n}\theta_{\underline{i}}^{n}$ for all $n$. 
Since $k_{1}^{n},k_{2}^{n} \in [-1,1]$, we can find subsequence of $n$ such that $k_{1}
^{n} \to k_{1}, k_{2}^{n} \to k_{2}$. 
For simplicity of the argument, we consider this subsequence as a whole sequence. From 
$K_{7}^{n}$, we obtain $k_{1}=0$. From $K_{2}^{n}$, we obtain 
\begin{eqnarray}
\left[-\Delta \theta_{\underline{i}}^{n}/(\sigma_{\underline{i}}^{0})^{3}+2\Delta m_{\underline{i}}^{n}/\sqrt{2\pi}(\sigma_{\underline{i}}^{0})^{2}\right]/ 
(|\Delta \theta_{\underline{i}}^{n}|+|\Delta v_{\underline{i}}^{n}|+|\Delta m_{\underline{i}}^{n}|) \to 0. \nonumber
\end{eqnarray}
By diving both the numerator and denominator of this ratio by $\Delta \theta_{\underline{i}}^{n}$ and let $n \to \infty$, 
we obtain $1/(\sigma_{\underline{i}})^{3}+2k_{2}/\sqrt{2\pi}(\sigma_{\underline{i}}^{0})^{2} =0$. 
It follows that $k_{2}=-\sqrt{\pi}/\sqrt{2}\sigma_{\underline{i}}^{0}$.
 
Now, $K_{5}^{n}$ yields that $(\Delta v_{\underline{i}}^{n})^{2}/d^{''}(p_{\underline{i}}^{n},\theta_{\underline{i}}^{n},
v_{\underline{i}}^{n},m_{\underline{i}}^{n}) \to 0$ as $n \to \infty$. 
Applying this result to $K_{3}^{n},K_{4}^{n}$, we have $M_{1}^{n},M_{2}^{n} \to 0$ where the formations of $M_{1}^{n}, M_{2}^{n}$ are as follows:
\begin{eqnarray}
M_{1}^{n} & = & \biggr(\dfrac{\Delta v_{\underline{i}}^{n}}{2(\sigma_{\underline{i}}^{0})^{5}}+\dfrac{p_{\underline{i}}^{n}(\Delta \theta_{\underline{i}}^{n})^{2}}{2(\sigma_{\underline{i}}^{0})^{5}}
+\dfrac{2p_{\underline{i}}^{n}(\Delta \theta_{\underline{i}}^{n})(\Delta m_{\underline{i}}^{n})}{\sqrt{2\pi}(\sigma_{\underline{i}}^{0})^{4}}\biggr)/d^{''}(p_{\underline{i}}^{n},\theta_{\underline{i}}^{n},v_{\underline{i}}^{n},m_{\underline{i}}^{n}), \nonumber \\
M_{2}^{n} & = &  \biggr(\dfrac{p_{\underline{i}}^{n}(\Delta \theta_{\underline{i}}^{n})(\Delta v_{\underline{i}}^{n})}{2(\sigma_{\underline{i}}^{0})^{7}}
+\dfrac{p_{\underline{i}}^{n}(\Delta v_{\underline{i}}^{n})(\Delta m_{\underline{i}}^{n})}{\sqrt{2\pi}(\sigma_{\underline{i}}^{0})^{6}}+\dfrac{p_{\underline{i}}^{n}(\Delta \theta_{\underline{i}}^{n})^{3}}{6(\sigma_{\underline{i}}^{0})^{7}}-\dfrac{p_{\underline{i}}^{n}(\Delta m_{\underline{i}}^{n})^{3}}{3\sqrt{2\pi}(\sigma_{\underline{i}}^{0})^{4}}  +  \nonumber \\
& + & \dfrac{p_{\underline{i}}^{n}(\Delta \theta_{\underline{i}}^{n})^{2}(\Delta m_{\underline{i}}^{n})}{\sqrt{2\pi}(\sigma_{\underline{i}}^{0})^{6}}\biggr)/d^{''}(p_{\underline{i}}^{n},\theta_{\underline{i}}^{n},v_{\underline{i}}^{n},m_{\underline{i}}^{n}). \nonumber 
\end{eqnarray}
Now, $\left(\dfrac{\Delta \theta_{\underline{i}}^{n}}{(\sigma_{\underline{i}}^{0})^{2}}+
\dfrac{2\Delta m_{\underline{i}}^{n}}{\sqrt{2\pi}\sigma_{\underline{i}}^{0}}\right)M_{1}
^{n}-M_{2}^{n}$ yields that as $n \to \infty$
\begin{eqnarray}
\left[\dfrac{(\Delta m_{\underline{i}}^{n})^{3}}{3\sqrt{2\pi}}+\dfrac{2(\theta_{\underline{i}}^{n})(\Delta m_{\underline{i}}^{n})^{2}}{\pi \sigma_{\underline{i}}^{0}}
+\dfrac{2(\Delta \theta_{\underline{i}}^{n})^{2}(\Delta m_{\underline{i}}^{n})}{\sqrt{2\pi}(\sigma_{\underline{i}}^{0})^{2}}+\dfrac{(\Delta \theta_{\underline{i}}^{n})^{3}}{3(\sigma_{\underline{i}})^{3}}\right]/
d^{''}(p_{\underline{i}}^{n},\theta_{\underline{i}}^{n},v_{\underline{i}}^{n},m_{\underline{i}}^{n}) \to 0. \nonumber
\end{eqnarray}
By dividing both the numerator and denominator of this term by $(\Delta \theta_{\underline{i}}^{n})^{3}$, 
we obtain the equation $\dfrac{k_{2}^{3}}{3\sqrt{2\pi}}+\dfrac{2k_{2}^{2}}{\pi 
\sigma_{\underline{i}}^{0}}+\dfrac{2k_{2}}{\sqrt{2\pi}(\sigma_{\underline{i}}^{0})^{2}}+
\dfrac{1}{3(\sigma_{\underline{i}}^{0})^{3}}=0$. 
Since $k_{2}=-\dfrac{\sqrt{\pi}}{\sqrt{2}\sigma_{\underline{i}}^{0}}$, this equation yields $\pi/6-1/3=0$, which is a contradiction. Therefore, this step cannot hold.
\paragraph{Step 3.5:} If $|m_{\underline{i}}^{n}|$ is the maximum among $|\theta_{\underline{i}}^{n}|$, $|v_{\underline{i}}^{n}|$, 
and $|m_{\underline{i}}^{n}|$ for infinitely many $n$. 
The argument in this step is similar to that of Step 3.4. In fact, by denoting $\Delta 
\theta_{\underline{i}}^{n}=k_{3}^{n}\Delta m_{\underline{i}}^{n}$, $\Delta 
v_{\underline{i}}^{n}=k_{4}^{n}\Delta m_{\underline{i}}^{n}$, 
then we also achieve $k_{4}^{n} \to 0$ as $n \to \infty$ and $k_{3}^{n} \to k_{3}$ such 
that $k_{3}=-\sqrt{\dfrac{2}{\pi}}\sigma_{\underline{i}}^{0}$ (by $K_{2}^{n}$). 
Now by using $K_{3}^{n}, K_{4}^{n}$ as that of Step 3.4 and after some calculations, we 
obtain the equation $\dfrac{k_{3}^{3}}{3(\sigma_{\underline{i}}^{0})^{3}}+\dfrac{2k_{3}^{2}}{\sqrt{2\pi}(\sigma_{\underline{i}})^{2}}
+\dfrac{2k_{3}}{\pi \sigma_{\underline{i}}^{0}}+\dfrac{1}{3\sqrt{2\pi}}=0$, 
which also does not admit $k_{3}=-\sqrt{\dfrac{2}{\pi}}\sigma_{\underline{i}}^{0}$ as a 
solution --- a contradiction. Thus, this step also cannot happen. As a consequence, not all of 
the coefficients of $A_{n,1}^{'}/D_{3}(G_{0},G_{n})$ vanish as $n \to 0$, i.e., Case 1 cannot happen.
\paragraph{Case 2}  $\underline{i} \in [\overline{i}_{2}+1,k_{0}]$. We define 
\begin{eqnarray}
d_{r,new}=\sum \limits_{i=\overline{i}_{2}+1}^{k_{0}}{(|\Delta p_{\underline{i}}^{n}|+p_{\underline{i}}^{n}(|\Delta \theta_{\underline{i}}^{n}|^{3}
+|\Delta v_{\underline{i}}^{n}|^{3}+|\Delta m_{\underline{i}}^{n}|^{3}))}, \nonumber
\end{eqnarray}
for any $r \in \left\{2,3\right\}$. The idea of $d_{r,new}$ is that we truncate the value of $
d_{r}(G_{n},G_{0})$ from the index $1$ to $\overline{i}_{2}$, i.e., all the indices 
corresponding with Gaussian components. Now, it is clear that $d_{3,new} \lesssim 
d_{2,new}$. From these formations, we have $d_{2,new}/D_{3}(G_{0},G_{n}) \not \to 0$. By multiplying all 
the coefficients of $A_{n,2}^{'}/D_{3}(G_{0},G_{n}), A_{n,3}^{'}/D_{3}(G_{0},G_{n})$ with $d_{2,new}/d_{3}
(G_{n},G_{0})$, we obtain all the coefficients of $A_{n,2}^{'}/d_{2,new}, A_{n,3}^{'}/d_{2,new}$ go 
to 0. However, as we argue in Theorem \eqref{theorem:conformant_setting}, the 
second order Taylor 
expansion is sufficient to get not all the coefficients of $A_{n,2}^{'}(x)/d_{2,new}
$ and $A_{n,3}^{'}(x)/d_{2,new}$ go to 0 as $n \to \infty$. Therefore, it means that the 
second order Taylor 
expansion is enough to get not all the coefficients of $A_{n,2}^{'}(x)/D_{3}(G_{0},G_{n})$ 
and 
$A_{n,3}^{'}(x)/D_{3}(G_{0},G_{n})$ go to 0, which is a contradiction. Thus, Case 2 also cannot happen.

As a consequence, not all the coefficients of the terms $A_{n,1}^{'}(x)/D_{3}(G_{0},G_{n})
$, $A_{n,2}^{'}(x)/D_{3}(G_{0},G_{n})$, and $A_{n,3}^{'}(x)/D_{3}(G_{0},G_{n})$ go to 
0. From here, we can follow the similar argument as that of Step 2 in the proof of Theorem 
\ref{theorem:conformant_setting} to achieve the contradiction. We conclude the proof of this theorem.}
%%%%%%%%%%%%%%%%%%%%%%%%%%%%%%%%%%%%%%%%%%%%%%%%%
%%%%%%%%%%%%%%%%%%%%%%%%%%%%%%%%%%%%%%%%%%%%%%%%%
%%%%%%%%%%%%%%
\paragraph{FULL PROOF OF THEOREM \ref{theorem:nonconformant_typeC_typeIV_setting}}
%%%%%%%%%%%%%%
%%%%%%%%%%%%%%%%%%%%%%%%%%%%%%%%%%%%%%%%%%%%%%%%%
Here, we shall provide the complete proof of Theorem 
\ref{theorem:nonconformant_typeC_typeIV_setting}, which is also the generalization of the 
argument in Section \ref{Section:nonconformant_typeC_typeIV}. Indeed, without loss of generality, 
we assume that $(p_{1}^{0}/\sigma_{1}^{0},m_{1}^{0}/\sigma_{1}^{0})=(p_{2}^{0}/
\sigma_{2}^{0},-m_{2}^{0}/\sigma_{2}^{0})$. 
Next, we proceed to choosing a sequence of $G \in \Ecal_{k_0}$ as follows:  $p_{i}=p_{i}^{0},  
\theta_{i}=\theta_{i}^{0}, v_{i}=v_{i}^{0}$ for all $1 \leq i \leq k_{0}$, and 
$m_{1}=m_{1}^{0}+1/n$, $m_{2}=m_{2}^{0}-\sigma_{2}^{0}/n\sigma_{1}^{0}
$, $m_{i}=m_{i}^{0}$ for all $3 \leq i \leq k_{0}$. The choice of $m_{1},m_{2}$ is
taken to guarantee that $\Delta m_{1}/\sigma_{1}^{0}+\Delta m_{2}/\sigma_{2}^{0}=0$ 
as we have discussed in Section 
\ref{Section:nonconformant_typeC_typeIV}. Then, we can check that $\mathop {\sum }
\limits_{j=1}^{2}{p_{j}(m_{j}^{0})^{u}(\Delta m_{j})^{v}/(\sigma_{j}^{0})^{u+v+1}}=0$ 
for all odd numbers $u \leq v$ when $v$ is even number or for all even numbers 
$0 \leq u \leq v$ when $v$ is odd number. 
From here, the completion of the proof follows in the same way
as that of the special case previously described.
%%%%%%%%%%%%%%%%%%%%%%%%%%%%%%%%%%%%%%%%%%%%%%%%
%%%%%%%%%%%%%%%%%%%%%%%%%%%%%%%%%%%%%%%%%%%%%%%%

\end{document}